
\documentclass [leqno, 10pt]{amsart}

\setlength{\oddsidemargin}{+0.1cm}
\setlength{\evensidemargin}{-0.1cm}
\setlength{\textwidth}{15.1cm}
\setlength{\textheight}{20.7cm}

\usepackage{amssymb, amsmath, amsfonts, amsthm, amsbsy, amscd}
\usepackage[bbgreekl]{mathbbol} %
\usepackage[mathscr]{euscript}
\usepackage{esint}

\usepackage{url}
\urlstyle{same} %

\usepackage[linktocpage]{hyperref}
\newtheorem{theorem}{Theorem}[section]
\newtheorem*{theorem*}{Theorem}{\,}
\newtheorem{corollary}{Corollary}[section]
\newtheorem{lemma}{Lemma}[section]
\newtheorem{definition}{Definition}[section]
\newtheorem*{definition*}{Definition}{}

\theoremstyle{definition}

\newtheorem{remark}{Remark}[section]

\numberwithin{equation}{section}

\newcommand{\quat}{\mathbb{H}}
\newcommand{\cayley}{\mathbb{O}}
\newcommand{\met}{\mathsf{h}}
\newcommand{\ctr}{\mathcal{Z}}
\newcommand{\uR}{\mathsf{R}}
\newcommand{\ideal}{\mathscr{I}}
\newcommand{\fie}{\mathbb{k}}
\newcommand{\cubic}{\mathcal{C}}
\newcommand{\lich}{\Delta_{L}}
\newcommand{\culap}{\square}
\newcommand{\ck}{\mathsf{CK}}
\newcommand{\symat}{S_{0}}
\newcommand{\ih}{\mathsf{i}_{h}}
\newcommand{\tliea}{\mathscr{T}^{\ast}}
\newcommand{\tlie}{\mathscr{T}}

\newcommand{\sN}{\mathscr{N}}
\newcommand{\sT}{\mathscr{T}}

\newcommand{\uf}{U}

\renewcommand{\r}{\mathsf{r}}

\renewcommand{\H}{\mathsf{H}}
\newcommand{\so}{\mathfrak{so}}

\newcommand{\hgss}{\hat{\mathbb{g}}}

\newcommand{\gss}{\mathbb{g}}

\newcommand{\sflat}{\curlyvee}
\newcommand{\ssharp}{\curlywedge}

\newcommand{\affhar}{\mathfrak{affhar}}
\newcommand{\projhar}{\mathfrak{projhar}}

\newcommand{\nahm}{\mathfrak{Nahm}}
\newcommand{\mtens}{\mu}
\newcommand{\alg}{\mathbb{A}}
\newcommand{\halg}{\hat{\mathbb{A}}}
\newcommand{\balg}{\mathbb{B}}
\newcommand{\mprod}{\circ}
\newcommand{\pol}{\mathsf{Pol}}
\newcommand{\h}{\mathfrak{h}}
\newcommand{\har}{\mathsf{Har}}

\newcommand{\hol}{\mathsf{hol}}
\newcommand{\dev}{\mathsf{dev}}

\newcommand{\cut}{\mathsf{Cut}}
\newcommand{\cprod}{\circledcirc}
\newcommand{\sprod}{\odot}
\newcommand{\sbl}{\mathsf{\si}}

\newcommand{\tf}{\mathsf{tf}}

\renewcommand{\div}{\mathsf{div}\,}

\newcommand{\symk}{S^{k}(\ctm)}

\newcommand{\symkt}{S^{k}_{0}(\ctm)}
\newcommand{\symlt}{S^{l}_{0}(\ctm)}
\newcommand{\symkp}{S^{k+1}(\ctm)}
\newcommand{\symkpt}{S^{k+1}_{0}(\ctm)}

\newcommand{\symkmt}{S^{k-1}_{0}(\ctm)}

\newcommand{\symktv}{S^{k}_{0}(TM)}

\newcommand{\symkptv}{S^{k+1}_{0}(TM)}

\newcommand{\cmf}{\mathscr{C}}
\newcommand{\curvmod}{\mathscr{C}}

\newcommand{\om}{\omega}

\newcommand{\mlt}{\circ}
\newcommand{\hmlt}{\hat{\circ}}
\newcommand{\tbt}{\mathcal{C}}

\newcommand{\hess}{\mathsf{Hess}\,}

\renewcommand{\S}{\mathcal{S}}
\newcommand{\adj}{\mathsf{adj}}
\newcommand{\monge}{\mathcal{M}}

\newcommand{\vr}{\delta}

\newcommand{\mt}{\mathcal{M}}
\newcommand{\ten}{[\tilde{\nabla}]}
\newcommand{\hrho}{\hat{\rho}}
\newcommand{\vnabla}{\nabla(t)}
\newcommand{\ven}{[\vnabla]}

\newcommand{\hsh}{\star_{h}\,}
\newcommand{\hsth}{\star_{\tilde{h}}\,}

\newcommand{\hodge}{\square}
\newcommand{\dad}{d^{\ast}}
\newcommand{\derham}{\mathsf{dR}}
\newcommand{\vol}{\mathsf{vol}}

\newcommand{\Pis}{\Pi^{\sharp}}
\newcommand{\tD}{\tilde{D}}

\newcommand{\ka}{\kappa}

\newcommand{\lf}{\mathsf{L}}

\newcommand{\lag}{\mathsf{Lag}}
\newcommand{\bach}{\mathcal{O}}
\def\lbt{\ensuremath{\mkern+6mu{\mkern-6mu ^{\lambda}\mkern-1mu}}}
\def\ibt{\ensuremath{\mkern+6mu{\mkern-6mu ^{6-n}\mkern-1mu}}}
\def\twobt{\ensuremath{\mkern+6mu{\mkern-6mu ^{2}\mkern-1mu}}}
\newcommand{\bachl}{\lbt{\bach}}
\newcommand{\bachi}{\ibt{\bach}}
\newcommand{\bachtwo}{\twobt{\bach}}
\newcommand{\lbachl}{\lbt{\mathcal{P}}}
\newcommand{\lbachi}{\ibt{\mathcal{P}}}

\newcommand{\sR}{\mathscr{R}}
\newcommand{\sW}{\mathscr{W}}

\newcommand{\sric}{\mathscr{R}\mathit{ic}}
\newcommand{\qA}{\mathscr{Q}_{A^{\flat}}}
\newcommand{\qT}{\mathscr{Q}_{\mathscr{T}}}
\newcommand{\qR}{\mathscr{Q}_{\mathscr{R}}}
\newcommand{\qW}{\mathscr{Q}_{\mathscr{W}}}
\newcommand{\qH}{\mathscr{Q}_{\mathscr{H}}}
\newcommand{\sH}{\mathscr{H}}
\newcommand{\qY}{\mathscr{Q}_{\mathscr{Y}}}
\newcommand{\q}{\mathscr{Q}}
\newcommand{\sY}{\mathscr{Y}}
\newcommand{\clap}{\mathcal{C}}

\renewcommand{\part}{\vdash}

\newcommand{\rad}{\mathbb{E}}

\newcommand{\sile}{\sigma^{\lambda, \epsilon}}

\newcommand{\Id}{\text{Id}}

\newcommand{\dum}{\,\cdot\,\,}
\newcommand{\Ga}{\Gamma}

\newcommand{\ctn}{T^{\ast}N}
\newcommand{\mob}{\mathcal{M}}

\newcommand{\ric}{\mathsf{Ric}}

\newcommand{\Q}{\mathcal{Q}}
\newcommand{\sh}{\mathsf{S}}

\newcommand{\bD}{\bar{D}}

\newcommand{\nmb}{\mathsf{N}}
\newcommand{\nmp}{\nu}

\newcommand{\nm}{\mathsf{W}}

\newcommand{\gfunc}{\mathcal{G}}

\newcommand{\lap}{\Delta}

\renewcommand{\j}{\mathsf{i}}

\newcommand{\taut}{\mathbb{O}}

\newcommand{\la}{\lambda}
\newcommand{\ep}{\epsilon}

\newcommand{\zmodtwo}{\mathbb{Z}/2\mathbb{Z}}
\newcommand{\reat}{\mathbb{R}^{\times}}

\newcommand{\reap}{\mathbb{R}^{+}}
\newcommand{\ext}{\Omega}
\newcommand{\cinf}{C^{\infty}}

\newcommand{\Det}{\text{Det}\,}

\newcommand{\ben}{[\Bar{\nabla}]}

\newcommand{\htau}{\hat{\tau}}

\newcommand{\eno}{\text{End}}

\newcommand{\si}{\sigma}

\newcommand{\pr}{\partial}

\newcommand{\ctm}{T^{\ast}M}

\newcommand{\sign}{\text{sgn}}
\newcommand{\bnabla}{\bar{\nabla}}

\newcommand{\enb}{\{\nabla\}}
\newcommand{\enbr}{\langle \nabla \rangle}
\def\op{\ensuremath{\mkern+6mu{\mkern-6mu ^{\mathsf{op}}}}}

\def\brt#1{\ensuremath{\mkern+0mu{\mkern+1mu ^{t}\mkern-2mu#1}}}
\def\brtopt{\ensuremath{\mkern+6mu{\mkern-6mu ^{1-t}\mkern-3mu}}}
\def\brtone{\ensuremath{\mkern+6mu{\mkern-6mu ^{1}\mkern-3mu}}}
\def\brtzero{\ensuremath{\mkern+6mu{\mkern-6mu ^{0}\mkern-3mu}}}
\def\brtone{\ensuremath{\mkern+6mu{\mkern-6mu ^{1}\mkern-3mu}}}

\def\brtop{\,{\mkern-6mu ^{1-t}\mkern1mu}}

\def\brtb{{\mathchar'27\mkern-6mu}}
\newcommand{\lcp}{\mkern-6mu\brtb\mkern2mu\nabla}
\def\mr#1{\{\mkern-1mu#1\}}

\def\tbfs#1#2{\textbf{#1}\index{#2!#1}}
\def\tbsf#1#2{\textbf{#1}\index{#1!#2}}
\def\tbf#1{\textbf{#1}\index{#1}}

\def\tbss#1#2{\textbf{#1 #2}\index{#2!#1}}

\newcommand{\en}{[\nabla]}
\newcommand{\pen}{[\pnabla]}

\newcommand{\pnabla}{\brt\nabla}

\def\bkrt{\ensuremath{\mkern+6mu{\mkern-6mu ^{t}\mkern-3mu}}}
\newcommand{\pktnabla}{\bkrt\mkern1mu\tilde{\nabla}}
\newcommand{\pkten}{[\pktnabla]}

\newcommand{\lie}{\mathfrak{L}}
\newcommand{\clie}{\mathscr{L}}
\newcommand{\klie}{\mathscr{K}}
\newcommand{\kliea}{\mathscr{K}^{\ast}}

\newcommand{\re}{\text{Re\,}}
\newcommand{\im}{\text{Im\,}}

\renewcommand{\xi}{\frac{\partial}{\partial x^{i}}}

\newcommand{\F}{\mathcal{F}}

\newcommand{\C}{\mathcal{C}}

\newcommand{\B}{\mathcal{B}}

\newcommand{\lb}{\langle}
\newcommand{\ra}{\rangle}

\newcommand{\ste}{\mathbb{V}}

\newcommand{\sted}{\mathbb{V}^{\ast}}

\newcommand{\stedz}{\mathbb{V}^{\ast}\setminus\{0\}}
\newcommand{\projp}{\mathbb{P}^{+}}

\newcommand{\A}{\mathcal{A}}
\newcommand{\al}{\alpha}
\newcommand{\be}{\beta}
\newcommand{\ga}{\gamma}

\newcommand{\spn}{\text{Span}\,}

\newcommand{\emf}{\mathcal{E}}

\newcommand{\hnabla}{\widehat{\nabla}}
\newcommand{\tnabla}{\tilde{\nabla}}

\newcommand{\sll}{\mathfrak{sl}}

\newcommand{\eul}{\mathbb{X}}

\newcommand{\proj}{\mathbb{P}}

\newcommand{\con}{\mathsf{con}}
\newcommand{\vect}{\mathfrak{vec}}

\DeclareMathOperator{\diff}{Diff}
\DeclareMathOperator{\Aut}{Aut}

\newcommand{\g}{\mathfrak{g}}

\newcommand{\ad}{\text{ad}}

\newcommand{\tensor}{\otimes}

\newcommand{\rea}{\mathbb R}
\newcommand{\com}{\mathbb C}

\newcommand{\tr}{\mathsf{tr} \,}

\DeclareMathOperator{\aut}{\mathfrak{aut}}
\newcommand{\ann}{\text{Ann}\,}
\newcommand{\K}{\mathcal{F}}
\newcommand{\ct}{\mathcal{K}}
\newcommand{\bt}{\mathcal{L}}
\newcommand{\nbt}{\mathcal{L}}
\renewcommand{\L}{\mathscr{L}}

\newcommand{\defeq}{:=}

\renewcommand{\P}{\mathcal{P}}
\newcommand{\inter}{\mathcal{I}}
\newcommand{\dens}{\mathcal{V}}

\def\crn#1#2{{\vcenter{\vbox{
        \hbox{\kern#2pt \vrule width.#2pt height#1pt
           }
          \hrule height.#2pt}}}}

\begin{document}
\title[Einstein AH structures]{Geometric structures modeled on affine hypersurfaces and generalizations of the Einstein Weyl and affine hypersphere equations}

\author{Daniel J.~F. Fox} 
\address{Departamento de Matem\'atica Aplicada\\ EUIT Industrial\\ Universidad Polit\'ecnica de Madrid\\Ronda de Valencia 3\\ 28012 Madrid Espa\~na}
\email{daniel.fox@upm.es}

\begin{abstract}
An affine hypersurface (AH) structure is a pair comprising a conformal structure and a projective structure such that for any torsion-free connection representing the projective structure the completely trace-free part of the covariant derivative of any metric representing the conformal structure is completely symmetric. AH structures simultaneously generalize Weyl structures and abstract the geometric structure determined on a non-degenerate co-oriented hypersurface in flat affine space by its second fundamental form together with either the projective structure induced by the affine normal or that induced by the conormal Gau\ss \, map. There are proposed notions of Einstein equations for AH structures which for Weyl structures specialize to the usual Einstein Weyl equations and such that the AH structure induced on a non-degenerate co-oriented affine hypersurface is Einstein if and only if the hypersurface is an affine hypersphere. It is shown that a convex flat projective structure admits a metric with which it generates an Einstein AH structure, and examples are constructed on mean curvature zero Lagrangian submanifolds of certain para-K\"ahler manifolds. The rough classification of Riemannian Einstein Weyl structures by properties of the scalar curvature is extended to this setting. Known estimates on the growth of the cubic form of an affine hypersphere are partly generalized. The Riemannian Einstein equations are reformulated in terms of a given background metric as an algebraically constrained elliptic system for a cubic tensor. From certain commutative nonassociative algebras there are constructed examples of exact Riemannian signature Einstein AH structures with self-conjugate curvature but which are not Weyl and are neither projectively nor conjugate projectively flat. 
\end{abstract}

\maketitle

\setcounter{tocdepth}{2}  %

{\footnotesize }
\tableofcontents

\section{Introduction and overview of results}
An affine hypersphere is a non-degenerate hypersurface in flat affine space the affine normals of which meet in a point (which may be at infinity), its center. These are the umbilical hypersurfaces in affine geometry. Due principally to examples constructed by E. Calabi in \cite{Calabi-completeaffine} and work of S.Y. Cheng and S.T. Yau, \cite{Cheng-Yau-mongeampere, Cheng-Yau-realmongeampere, Cheng-Yau-affinehyperspheresI}, resolving a precise conjecture made by Calabi in \cite{Calabi-completeaffine}, it is known that, unlike the situation for Euclidean umbilics, there is an abundance of these hypersurfaces which are not hyperquadrics. In particular the interior of the cone over a properly convex domain is foliated by properly embedded hyperbolic affine hyperspheres asymptotic to the boundary of the cone and having center at the vertex of the cone. This means there is a hyperbolic affine hypersphere canonically associated to the universal cover of a convex flat real projective structure, and this in part explains their importance.

The idea motivating this paper is that the equations distinguishing the affine hyperspheres among all hypersurfaces in flat affine space should admit a formulation as the Einstein equations for some geometric structure induced on such hypersurfaces. The resulting intrinsically defined structures are called here AH (affine hypersurface) structures to reflect their origin, as indeed there is induced on every co-oriented non-degenerate immersed hypersuface in flat affine space a pair of such structures (one via the affine normal, and one via the conormal Gau\ss\, map). Moreover, there is a notion of Einstein equations for AH structures which for Weyl structures specializes to the usual Einstein Weyl equations.

Weyl structures are geometric structures generalizing the notion of a positive homothety class of pseudo-Riemannian metrics. The usual definition is that a Weyl structure comprises a torsion-free affine connection $\nabla$ compatible with a conformal class of metrics $[h]$ in the sense that for each representative metric $h \in [h]$ there is a one-form $\ga_{i}$ such that $\nabla_{i}h_{jk} = 2\ga_{i}h_{jk}$. Einstein Weyl structures generalized Einstein metrics and have been studied heavily. AH structures can also be viewed as a direct generalization of Weyl structures obtained by relaxing the compatibility requirement between $\nabla$ and $[h]$. There is an involutive notion of conjugacy of AH structures such that the two AH structures induced on an affine hypersurface are conjugate, and the self-conjugate AH structures are exactly the Weyl structures.

The central theme of this paper is the study of a subclass of AH structures called \textbf{Einstein} which has the following properties:
\begin{itemize}
\item The Einstein AH equations specialize for Weyl structures to the usual Einstein Weyl equations. 
\item The AH structure conjugate to an Einstein AH structure is Einstein.
\item Each member of the pair of conjugate AH structures induced on a hypersurface in flat affine space is Einstein if and only if the hypersurface is an affine hypersphere.
\item A convex flat real projective structure carries a canonically related pair of conjugate Einstein AH structures.
\item There are Einstein AH structures which are not Weyl and which are not locally equivalent to ones induced on affine hyperspheres.
\end{itemize}
Thus Einstein AH structures provide a framework accomodating simultaneously the usual Einstein Weyl structures (and hence the usual Einstein manifolds) and convex flat real projective structures, but including other spaces as well. 

Most of the basic structures and notions associated to a conformal class of metrics (more generally, a Weyl structure), e.g. the conformal Weyl tensor or the Yamabe problem, admit generalizations to AH structures. On the other hand, there appear for AH structures some new objects, which are trivial for Weyl structures. In particular, while the Einstein AH equations generalize the Einstein Weyl equations, their definition is not the most naive generalization of the Einstein Weyl condition, which turns out to be inadequate, and in order to obtain a tractable theory, it is necessary to impose a further condition which for an Einstein Weyl structure on a manifold of dimension at least $3$ is automatic. The extra condition does, however, directly generalize the condition made the definition of the two-dimensional Einstein Weyl equations by D. Calderbank in \cite{Calderbank-mobius}. 
 
The purpose of the present paper is to define AH structures, define the Einstein equations for AH structures, present some examples, and begin the work of relating properties of these structures to curvature conditions. An AH structure is said to have self-conjugate curvature if its curvature tensor is identical with that of the conjugate AH structure. It is for the class of Einstein AH structures with self-conjugate curvature that the strongest results are obtained, and it may come to be seen that this is the class of AH structures on which attention should be focused. In some still obscure sense the projectively flat Einstein AH structures are real analogues of extremal K\"ahler metrics, and the general contours of the theories describing the two bear some resemblance. This paper focuses on the Riemannian signature case, but the Lorentzian signature case is certainly interesting as well. While much more work needs to be done, particularly in regards to the systematic construction of compact examples, formulations of the Einstein equations are obtained which should be amenable to further study, and it is hoped that the article will convince the reader that AH structures merit further attention. For example, it seems that the formalism offers possibilities for bringing elliptic PDE techniques to bear on the study of convex flat projective structures. A large number of problems are suggested, and the subject seems quite rich.

In the remainder of the introduction the contents of the paper are described in more detail and some context is given. The overall organization of the paper should be evident from the table of contents.

\subsubsection{}
In sections \ref{ahsection} and \ref{codazziprojectivesection} there are given the basic definitions of AH structures and their conjugates, and the basic properties of the curvature of AH structures are worked out in detail. 

A choice of a transverse subbundle along a smoothly immersed hypersurface $M$ in the $(n+1)$-dimensional flat real affine space $\ste$ splits the exact sequence defining the normal bundle and determines on the hypersurface a torsion-free affine connection $\nabla$. When the second fundamental form of the hypersurface (viewed as a normal bundle valued covariant symmetric two tensor) is non-degenerate a distinguished affine normal subbundle is determined by imposing a compatibility condition on $\nabla$ and the second fundamental form. Locally identifying the second fundamental form with a metric $H_{ij}$ taking values in the line bundle of $2/n$-densities, the requirement is that $\nabla_{i}H_{jk}$ be trace-free (traces are taken with the bivector $H^{ij}$ dual to $H_{ij}$). If the immersion is co-oriented then the second fundamental form induces a conformal class $[h]$. %
 The conormal Gau\ss\, map sends a point of $M$ to the ray in the oriented projectivization $\projp(\sted)$ of the dual space $\sted$ determined by the annhilator of the tangent space to $M$ at $p$ and the given co-orientation. The pullback via the conormal Gau\ss\, map of the flat projective structure on $\projp(\sted)$ gives a flat projective structure $\ben$ on $M$. The projective structure $\ben$ is in a certain sense dual to the projective structure $\en$ generated by $\nabla$; there is a unique representative $\bnabla \in \ben$ such that $\bnabla_{i}H_{jk}$ is trace-free, and the difference tensor $\bnabla - \nabla$ is simply $H^{kp}\nabla_{i}H_{jp}$. The triple $(\en, [h], \ben)$ is canonically associated to a co-oriented non-degenerate hypersurface immersion in flat affine space. All these structures are invariant under affine motions of the ambient affine space, and all the affinely invariant geometry of the hypersurface is encoded in them. 

Given a pair $(\en, [h])$ comprising a projective structure $\en$ and a conformal structure $[h]$ there is a representative $\nabla \in \en$, called \textbf{aligned}, distinguished by the requirement that $\nabla_{i}H_{jk}$ be trace-free, where the normalized (density valued) representative  $H_{ij} = |\det h|^{-1/n}h_{ij}$ of the conformal structure does not depend on the choice of $h \in [h]$. The pair $(\en, [h])$ is a \textbf{generalized affine hypersurface (AH) structure} if moreover $\nabla_{i}H_{jk}$ is completely symmetric, in which case the weighted tensor $\bt_{ij}\,^{k} = H^{kp}\nabla_{i}H_{jp}$ is called its \textbf{cubic torsion}. There is a built in notion of (involutive) conjugacy of AH structures, generalizing the relation between the AH structures $(\ben, [h])$ and $(\en, [h])$ of the previous paragraph, and such that the self-conjugate AH structures are exactly the usual Weyl structures. Precisely, the aligned representative $\bnabla$ of the AH structure $(\ben, [h])$ conjugate to a given AH structure $(\en, [h])$ is defined by $\bnabla = \nabla + \bt_{ij}\,^{k}$.

An AH structure is \textbf{exact} if the aligned representative $\nabla$ makes parallel some non-vanishing density. In this case there is distinguished a positive homothety class of representatives of $h$ by the requirement that $\nabla |\det h| = 0$. If the AH structure is not exact, then to each $h \in [h]$ there is associated a one-form $\ga_{i}$ called the \textbf{Faraday primitive} and defined by $2n\ga_{i} = h^{pq}\nabla_{i}h_{pq}$. The differential $F_{ij} = -d\ga_{ij}$ does not depend on the choice of $h$ and is called the \textbf{Faraday two-form}. These notions are exacly as for Weyl structures.

The curvature $R_{ijk}\,^{l}$ of the AH structure is the curvature of $\nabla$. The sign and index conventions are consistent with $2\nabla_{[i}\nabla_{j]}X^{k} = R_{ijp}\,^{k}X^{p}$ (see section \ref{preliminariessection} for the notational conventions). The usual Ricci trace $R_{ij} = R_{pij}\,^{p}$ is not the only rank two trace of the curvature tensor. The trace $Q_{ij} = H^{pq}R_{ipqj}$ must also be considered, although there is only one (density-valued) scalar trace $R = H^{ij}R_{ij} = H^{ij}Q_{ij}$. Replacing the AH structure by its conjugate leaves some linear combination of these traces unchanged, and sends some linear combination to its opposite. More generally, the curvature can be decomposed into its self-conjugate and anti-self-conjugate parts, and conditions on the curvature which are invariant under conjugacy are particularly natural geometrically. An AH structure is \textbf{(conjugate) projectively flat} if $\en$ (resp. $\ben$) is projectively flat. For example, the AH structure induced on a non-degenerate co-oriented hypersurface in flat affine space is conjugate projectively flat because the conjugate projective structure $\ben$ is that induced via the conormal Gau\ss\, map by pullback of a flat projective structure. 

As is nicely explained in \cite{Schoen-variationaltheory} one way to try to find Einstein Riemannian metrics is the following. Given a compact manifold $M$ let $\si(M) = \sup_{[g]}\inf_{g \in [g]}\left(\vol_{g}(M)^{(2-n)/n}\int_{M} \sR_{g}\,d\vol_{g}\right)$, where the sup is taken over all conformal structures on $M$, and $\sR_{g}$ is the scalar curvature of the metric $g$. The expectation, which is often true, is that a metric for which the $\vol_{g}(M)^{(2-n)/n}\int_{M} \sR_{g}\,d\vol_{g} = \si(M)$ is Einstein. The tendency is that a metric critical for the normalized total scalar curvature minimizes within its conformal class but maximizes with respect to transverse variations. This suggests the desirability of having a structure which is to AH structures as a conformal structure is to its representative metrics. In this analogy a representative metric should be thought of as corresponding to a Weyl structure. There is such a structure, and it is called here by the not entirely pleasant name \textbf{Codazzi projective structure}. These are described in section \ref{codazziprojectivesection}. 

Given a conformal structure $[h]$ two torsion-free affine connections are \textbf{conformal projectively equivalent} if their difference tensor is pure trace, traces being taken using the conformal structure, and a conformal projective equivalence class of connections each of which generates with $[h]$ an AH structure is called a \textbf{Codazzi projective structure}. One should think of a conformal class of projective structures; if the aligned representatives of two AH structures are conformal projectively equivalent then their difference tensor has the form $2\al_{(i}\delta_{j)}\,^{k} - H_{ij}\al^{k}$ (Theorem \ref{conformalprojectivediff}), which is the conformal action of the one-form $\al_{i}$ on the space of connections. In this case the two AH structures are said to be \textbf{subordinate} to the Codazzi projective structure which their aligned representatives generate; such AH structures have the same cubic torsion. On a non-degenerate hypersurface in flat affine space the connections induced by different choices of transverse subbundle are conformal projectively equivalent, so determine a Codazzi projective structure; the condition defining the affine normal subbundle selects a particular subordinate AH structure. 

While Codazzi projective structures are not themselves studied in great detail, they serve a useful role in organizing the analysis of the curvature of AH structures. In particular, it makes sense to speak of tensors and operators associated to an AH structure which are invariant in the sense that they depend only on the underlying Codazzi projective structure. In general any conformally invariant tensor or operator has a generalization of this sort, although there appear some new tensors and operators which are identically null in the usual conformal setting. In particular there are for an AH structure analogues $W_{ijk}\,^{l}$ and $W_{ijk}$ of the usual conformal Weyl and conformal Cotton tensors; however now these decompose into their self-conjugate parts $A_{ijk}\,^{l}$ and $A_{ijk}$ and anti-self-conjugate parts $E_{ijk}\,^{l}$ and $E_{ijk}$, the latter necessarily vanishing for Weyl structures. If $A_{ijk}\,^{l}$ or $E_{ijk}\,^{l}$ vanishes then $A_{ijk}$ or $E_{ijk}$ is invariant. In particular, the tensor $A_{ijk}\,^{l}$ necessarily vanishes in $3$ dimensions, so in this case $A_{ijk}$ is invariant. It came as a bit of a surprise that in dimension at least $3$ the trace $A_{i} = A_{ipq}H^{pq}$ is always invariant, and need not be zero. As will now be explained, it seems that any reasonable generalization of the Einstein equations has to include the condition $A_{i} = 0$ (this is automatic for affine hypersurfaces). An AH structure satisfying $A_{i} = 0$ will be called \textbf{conservative}.

\subsubsection{}
In section \ref{einsteinahsection} the Einstein equations are defined, their two-dimensional specialization is briefly discussed, and there are described some important examples.

For an affine hypersurface of dimension at least $3$, the AH structure induced via the affine normal is projectively flat if and only if the hypersurface is an affine hypersphere. In two dimensions while it is still true that the AH structure on an affine hypersphere must be projectively flat, the converse is not true. Rather, in two dimensions an affine hypersurface is an affine hypersphere if and only if the induced AH structure is projectively flat and has self-conjugate curvature. This characterization of affine hyperspheres in dimension at least $3$ suggests as a possible notion of Einstein equations the requirement that an AH structure be both projectively flat and conjugate projectively flat. However, this is much too strong as it would amount in essence to defining Einstein AH structures to be those covered by affine hyperspheres; the spirit of this condition is similar to the condition that a metric be conformally flat Einstein. Moreover, it gives the wrong notion in dimension $2$. 

A less restrictive conclusion is that any notion of Einstein such that the AH structures induced on affine hyperspheres are Einstein must be preserved under conjugacy. All rank two traces of the curvature of an AH structure are expressible as linear combinations of the self-conjugate and anti-self-conjugate parts of $R_{ij}$, which in turn are linear combinations of $R_{ij}$ and $Q_{ij}$. Any definition of Einstein AH generalizing the Einstein Weyl equations has to include the condition that the symmetric trace-free part of $R_{ij}$ vanish, and if such a definition is to be preserved by conjugacy, this means, by the preceeding remark, that it must require the vanishing of the symmetric trace-free part of any rank two trace of the curvature. The \textbf{naive Einstein} equations require that the symmetrized tensors $R_{(ij)}$ and $Q_{(ij)}$ be pure trace, or, what is equivalent, that any linear combination of symmetric parts of rank two traces of the curvature be pure trace. However, for a number of reasons these conditions are insufficiently strong. 

Most importantly, they do not in general impose any condition resembling the constancy of the scalar curvature and they do not restrict to the two-dimensional Einstein Weyl equations studied by D. Calderbank in \cite{Calderbank-mobius}. In dimension at least $3$ the traced differential Bianchi identity implies that the Einstein modification $\sR_{ij} - \tfrac{1}{2}\sR h_{ij}$ of the Ricci tensor of a pseudo-Riemannian metric is divergence free, which in the four-dimensional Lorentzian case is of fundamental importance physically. When the connection and the metric are linked only weakly as in the AH condition, the traced differential Bianchi identity and the naive Einstein equations are together insufficient to imply an analogous condition on the weighted scalar curvature, and such an extra condition, at the very least, must be imposed as part of the definitions. In dimension at least $3$ the obstruction is the tensor $A_{i}$. Precisely from \eqref{ebtw} of Lemma \ref{diffbianchilemma} it can be seen that in dimension at least $3$, a naive Einstein AH structure is conservative if and only if 
\begin{align}\label{einsteini}
\nabla_{i}R + n\nabla^{p}F_{ip} = 0. 
\end{align}
Here $F_{ij} = \tfrac{1}{n}R_{ijp}\,^{p}$ is the Faraday two-form. In two dimensions the tensor $A_{i}$ is not defined, but the condition $\nabla_{i}R + 2\nabla^{p}F_{ip} = 0$ does make sense. Moreover, this is exactly the condition taken by Calderbank as the definition of the Einstein Weyl equations in dimension $2$ (in two dimensions the usual Einstein Weyl equations are vacuous, so the definition has to be supplemented by a new condition).

Thus a notion of Einstein AH structure generalizing the Einstein Weyl equations, including their two dimensional formulation, is obtained by considering conservative naive Einstein AH structures, where in $2$ dimensions a naive Einstein AH structure is said to be \textbf{conservative} if it satisfies \eqref{einsteini}.
\begin{definition*}
An AH structure on a manifold of dimension $n \geq 2$ is \textbf{naive Einstein} if there vanishes the trace-free symmetric part of every rank two trace of its curvature, and \textbf{Einstein} if it is naive Einstein and satisfies \eqref{einsteini}.
\end{definition*}\footnote{In $2017$, while preparing a new version of the present article, the author realized that in this definition and the preceding paragraphs (as well as Definition \ref{einsteindefinition} and the paragraph following) all references to $R_{ij}$ and $Q_{ij}$ were intended to refer the \textit{symmetric} parts of $R_{ij}$ and $Q_{ij}$. The definition of naive Einstein, and so also Einstein, used throughout the paper has required (in all versions) the vanishing of the trace-free \textit{symmetric} parts of these ranks two traces of the curvature, and not the stronger vanishing of their trace-free parts (which would force the vanishing of the Faraday curvature, which is not assumed as part of the definitions of the Einstein-like conditions). The expository error, which must have been terribly confusing to readers, is due to a combination of overediting aimed at streamlining the exposition of a previous even more verbose version, combined with the psychological tendency to read in one's own writing what one intends, independently of what one has written.}

An explicit example of an AH structure which is naive Einstein but not Einstein is given in section \ref{naiveexample}. While it seems clear that the gap between the two is quite large, it does not seem much easier to produce examples of naive Einstein AH structures than it is to produce examples of Einstein AH structures. In fact, the only examples of naive Einstein structures available found so far are obtained by perturbing some feature of an explicit example of an Einstein AH structure

\subsubsection{}\label{labourieloftinsection}
In the present paper the focus will be on the case of dimension at least $3$, although occasionally comments will be made about the two-dimensional case. Stronger results are available in two dimensions, and these are briefly discussed now because the picture they yield is helpful in thinking about what should be true in higher dimensions. In two dimensions the following theorems provide convincing evidence that notion of Einstein equations is correct.
\begin{theorem}\label{2dtheorem}
On a compact, orientable surface $M$ of genus at least $2$, an Einstein Riemannian signature AH structure $(\en, [h])$ is exact with negative scalar curvature and is projectively flat and conjugate projectively flat. For a distinguished metric the cubic tensor $L_{ijk} = \bt_{ij}\,^{p}h_{pk}$ is the real part of a cubic holomorphic differential with respect to the complex structure determined by the conformal structure and the given orientation
\end{theorem}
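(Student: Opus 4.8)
The plan is to reduce the statement to the structure equations for the cubic tensor $L_{ijk}=\bt_{ij}\,^{p}h_{pk}$ and then run, intrinsically, an argument parallel to the affine-sphere picture of Labourie and Loftin. First I would fix a background metric $h\in[h]$ and write the aligned representative as $\nabla=\lc(h)+S$, where $\lc(h)$ is the Levi--Civita connection of $h$ and the difference tensor $S$ is assembled from the Faraday primitive $\gamma_{i}$ (its trace part) and from $L$ (its completely symmetric trace-free part); complete symmetry of $L$ is the AH condition and trace-freeness is the aligned condition. In this gauge the curvature $R_{ijk}\,^{l}$ of $\nabla$ equals that of $\lc(h)$ plus a term linear in the $\lc(h)$-derivatives of $(\gamma,L)$ plus a term quadratic in $(\gamma,L)$; the weighted scalar curvature $R$ becomes the Gauss curvature of $h$ plus corrections, and $F_{ij}=-d\gamma_{ij}$. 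The naive Einstein condition (vanishing of the trace-free part of every rank-two trace of the curvature) then becomes a pointwise Gauss-type equation relating the Gauss curvature, $R$ and $|L|^{2}_{h}$ together with the anti-self-conjugate equations controlling $F$, and the conservative condition is \eqref{einsteini}.

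The first and hardest step is exactness, i.e.\ $F\equiv 0$ together with vanishing of the holonomy of the induced density connection; this is the one genuinely global argument. Differentiating \eqref{einsteini}, taking the $h$-divergence, and using the Ricci identity together with the naive Einstein vanishing of the trace-free curvature, one should obtain on the compact $M$ an identity expressing $\Delta R$ through an expression in $F$ whose integral over $M$ is sign-definite; integrating then forces $F\equiv 0$, after which \eqref{einsteini} gives $R$ constant. Passing to the Gauduchon gauge (where $\gamma$ is co-closed), $F=-d\gamma=0$ makes $\gamma$ harmonic; the delicate point is to rule out a nonzero harmonic part of $\gamma$, i.e.\ a nontrivial period class in $H^{1}(M;\rea)$, and for this I expect to use the sign information carried by the scalar equation. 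Vanishing of this class gives exactness and the distinguished homothety class with $\nabla|\det h|=0$. This global step is where I anticipate the main difficulty.

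With the structure exact and $R$ constant, negative scalar curvature follows from Gauss--Bonnet: integrating the scalar Einstein equation over $M$ and using $\int_{M}\scal_{h}\,d\vol_{h}=4\pi\chi(M)$ with $\chi(M)=2-2g<0$ for genus at least $2$ pins the sign of the constant $R$, the $|L|^{2}_{h}$-term entering with a sign compatible with the conclusion $R<0$. For projective and conjugate projective flatness I would use that in dimension two the rank-four Weyl-type tensors $A_{ijk}\,^{l}$ and $E_{ijk}\,^{l}$ vanish identically, so flatness of $\en$ (resp.\ $\ben$) is governed by the Cotton-type tensors $A_{ijk}$ and $E_{ijk}$. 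Since the structure is Einstein its Schouten tensor is pure trace, so these Cotton tensors collapse to expressions built from $\nabla_{[i}R\,h_{j]k}$ and from the antisymmetrized derivative of $L$; constancy of $R$ kills the first family, and the remaining Einstein equations for $(\en,[h])$ and, by conjugation invariance, for the conjugate structure then force the antisymmetrized derivative of $L$ to vanish. Hence $A_{ijk}=E_{ijk}=0$ and both $\en$ and $\ben$ are projectively flat.

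Finally, in the distinguished gauge ($\gamma=0$) the Codazzi identity obtained above is equivalent to $\mathsf{div}_{h}L=0$. On a Riemann surface a completely symmetric, trace-free, divergence-free covariant $3$-tensor is exactly the real part of a holomorphic section of $K^{3}$: writing $L=\re(U\,dz^{3})$ in a local conformal coordinate $z$ adapted to the complex structure determined by $[h]$ and the orientation, the condition $\mathsf{div}_{h}L=0$ is equivalent to the Cauchy--Riemann equation $\partial_{\bar z}U=0$. Thus $L$ is the real part of a holomorphic cubic differential, completing the proof. As a consistency check, $\dim_{\com}H^{0}(M,K^{3})=5g-5>0$ for $g\ge 2$, matching the expectation that these structures recover the Labourie--Loftin parametrization of convex projective structures by a conformal structure together with a holomorphic cubic differential.
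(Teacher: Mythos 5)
Your endgame is right---Gauss--Bonnet for the sign of $R$, the collapse of the two-dimensional projective Cotton tensor (Lemma \ref{2deinsteinlemma}) for projective and conjugate projective flatness, and the identification of a symmetric, trace-free, divergence-free cubic tensor with the real part of a holomorphic cubic differential---and this matches the route the paper indicates (the paper defers details to \cite{Fox-2dahs} but names exactly these ingredients: Lemma \ref{2deinsteinlemma}, Theorem \ref{todtheorem}, and Riemann--Roch). The gap is in your exactness step, which you correctly identify as the crux. Your claim that differentiating \eqref{einsteini}, taking a divergence, and integrating a sign-definite identity ``forces $F\equiv 0$'' cannot be right as stated, because it makes no use of the genus: on $S^{2}$ there exist non-closed Riemannian Einstein--Weyl structures (see also case $(4)$ of Theorem \ref{negexacttheorem}, which explicitly permits non-closed Einstein AH structures when $n\le 3$), and these satisfy every local and integral identity available to you at that stage. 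The identity one actually obtains in the Gauduchon gauge is \eqref{einsteinf}, namely $4\int_{M}|\ga|^{2}_{h}\uR_{h}=n\int_{M}|F|_{h}^{2}$, whose sign is useless until you already know $\uR_{h}\le 0$, which you do not yet.

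The missing idea is Theorem \ref{todtheorem}. Combining \eqref{einsteinf} with the Bochner identity \eqref{bochner5b}, and using $2\bt_{ij}=\nbt H_{ij}$ in two dimensions, yields
\begin{align*}
0=\int_{M}|\mr{D\ga}|^{2}_{h}+\tfrac{1}{8}\int_{M}|\ga|_{h}^{2}|L|_{h}^{2},
\end{align*}
so the Gauduchon one-form $\ga$ is \emph{Killing} for the Gauduchon metric and annihilates $L$. Only now does the genus enter: a Killing field on an oriented Riemannian surface is the real part of a holomorphic vector field, and $H^{0}(M,K^{-1})=0$ for genus at least $2$, so $\ga\equiv 0$; exactness, $F\equiv 0$, and the parallelism of $R$ all follow at once. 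Your fallback---ruling out a nonzero harmonic $\ga$ by ``sign information''---also fails: once $R<0$ the relevant Bochner term $\int_{M}|\ga|^{2}_{h}(\tfrac{1}{2}\uR_{h}+\tfrac{1}{8}|L|^{2}_{h})$ has no definite sign, so harmonicity alone does not kill $\ga$ on a surface with $b_{1}=2g>0$. Repair the argument by establishing the Killing property first and only then invoking the genus; the remaining steps of your proposal then go through essentially as you wrote them.
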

The two-dimensional case will be described in detail, and this theorem proved, in \cite{Fox-2dahs}. It can be deduced from Lemma \ref{2deinsteinlemma}, Theorem \ref{todtheorem}, and the Riemann-Roch Theorem. The story in genus zero or one is a bit harder to summarize briefly, so is omitted. From Theorem \ref{2dtheorem} there can be deduced
\begin{theorem*}
On a compact, orientable surface $M$ of genus at least $2$ there is an oriented mapping class group equivariant bijection between the deformation space of Einstein AH structures and the fiber bundle over the Teichm\"uller space of $M$ the fiber of which over a given conformal structure comprises the cubic holomorphic differentials with respect to the complex structure determined by the conformal structure and the given orientation. To conjugate Einstein AH structures there correspond opposite cubic differentials.
\end{theorem*}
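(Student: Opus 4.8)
The plan is to exhibit the assignment furnished by Theorem \ref{2dtheorem} as the asserted bijection, and to check that every ingredient entering its construction is natural under orientation-preserving diffeomorphism. Write $\mathcal{B} \to \teich(M)$ for the bundle whose fibre over a complex structure $J$ is the space of $J$-holomorphic cubic differentials, and let $\Phi$ send an Einstein AH structure $(\en,[h])$ to the pair formed by the complex structure $J$ determined by $[h]$ and the given orientation together with the holomorphic cubic differential $C$ whose real part is the cubic tensor $L_{ijk} = \bt_{ij}\,^{p}h_{pk}$ computed in the distinguished metric. Theorem \ref{2dtheorem} guarantees that $(\en,[h])$ is exact with negative scalar curvature and that $C$ is holomorphic, so $\Phi$ is well defined once the normalization of the distinguished metric fixed there is used to remove the homothety ambiguity in $h$ (and hence in $L$). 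Because $J$, the distinguished metric, and $\bt_{ij}\,^{k}$ are all built tensorially from $(\en,[h])$, the map $\Phi$ intertwines the pullback action of $\diffplus(M)$ with the actions on $\teich(M)$ and on $\mathcal{B}$; in particular it sends structures equivalent under $\diff_{0}(M)$ to the same point, so it descends to a mapping-class-group equivariant map of deformation spaces. The restriction to the \emph{oriented} mapping class group is forced, since the very notion of holomorphic cubic differential depends on the orientation through $J$.

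Injectivity I would obtain by reversing the construction. Fix a point $(J,C)$; its conformal class is that of $J$, and the distinguished metric $h$ is the canonically normalized representative of Theorem \ref{2dtheorem}, unique in $[h]$. Since an exact AH structure makes a nonvanishing density parallel, its Faraday form $F_{ij} = \tfrac{1}{n}R_{ijp}\,^{p}$ vanishes, so the Einstein condition \eqref{einsteini} reduces to the constancy of the weighted scalar curvature $R$ in the distinguished metric; this is precisely the normalization that pins $h$ down. From $h$ and $C$ one recovers $\bt_{ij}\,^{k} = h^{kp}\re C_{ijp}$, and then the aligned representative $\nabla$ from the Levi-Civita connection of $h$ together with $\bt$. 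Two Einstein AH structures with the same image therefore coincide.

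Surjectivity is the crux, and here I expect the main obstacle to lie. Given $(J,C)$ one sets $\bt_{ij}\,^{k} = h^{kp}\re C_{ijp}$ for $h$ varying in $[h]$ and seeks the conformal factor for which the connection assembled from the Levi-Civita connection of $h$ and $\bt$ generates a structure satisfying the naive Einstein equations together with \eqref{einsteini}. After imposing the normalizations above this collapses to a single nonlinear elliptic equation of Liouville--Toda type for the conformal factor, the holomorphicity of $C$ being exactly what forces its source term into tractable form. The genuinely hard point is the unique solvability of this equation on a surface of genus at least $2$, where the prescribed curvature is negative; this is the existence statement underlying Theorem \ref{2dtheorem}, and also the reason the genus $0$ and genus $1$ cases must be treated separately. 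The supplementary condition \eqref{einsteini}, imposed as part of the system, guarantees the solution is genuinely Einstein and not merely naive Einstein. Granting solvability, $\Phi$ is a bijection.

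Finally, for the behaviour under conjugacy I would argue directly from the definitions. The aligned representative of the conjugate structure is $\bnabla = \nabla + \bt_{ij}\,^{k}$, and since conjugacy is involutive its cubic torsion must be $-\bt_{ij}\,^{k}$; conjugacy also preserves $[h]$ and the conjugation-invariant weighted scalar curvature $R = H^{ij}R_{ij}$, so $J$ and the distinguished metric are unchanged. Lowering an index with $h$ then shows the conjugate cubic tensor is $-L_{ijk}$, whence the conjugate holomorphic cubic differential is $-C$. Thus conjugation corresponds to the fibrewise involution $C \mapsto -C$ of $\mathcal{B}$, completing the identification.
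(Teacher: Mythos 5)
Your proposal is correct and follows essentially the same route the paper takes: the forward map is supplied by Theorem \ref{2dtheorem} (with the homothety in the distinguished metric fixed by the parallel, nonvanishing scalar curvature), the inverse requires solving the Tzitzeica/Toda-type elliptic equation for the conformal factor exactly as in Loftin's Theorem 2 of \cite{Loftin-affinespheres}, and conjugacy negates the cubic torsion and hence the cubic differential. The one step you leave open --- unique solvability of that equation in genus at least $2$ --- is precisely the step the paper itself defers to Loftin's argument and to \cite{Fox-2dahs}, so your sketch matches the paper's own level of completeness.
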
 
This theorem is closely related to, though not identical with, a theorem due independently to F. Labourie and J. Loftin (see \cite{Labourie-flatprojective} and \cite{Loftin-affinespheres}) showing that the same bundle over Teichm\"uller space parameterizes the deformation space of convex flat real projective structures on the surface. The construction of a cubic holomorphic differential from an Einstein AH structure is given by Theorem \ref{2dtheorem}, while the construction of an Einstein AH structure given a cubic holomorphic differential can be effected by essentially the argument used by Loftin to prove Theorem $2$ of \cite{Loftin-affinespheres}. This theorem could be deduced from that of Labourie and Loftin if it could be shown directly that the projective structure underlying an AH structure as in Theorem \ref{2dtheorem} is necessarily convex; as things stand, this is true, but follows from Theorem \ref{2dtheorem} coupled with the Labourie-Loftin theorem.

\subsubsection{}
In the higher dimensional case it is not completely clear that the given definition of Einstein AH is the best one. In particular there is available the following somewhat stronger notion. A natural condition which implies the conservation condition is that the curvature of the AH structure be self-conjugate. In the presence of the naive Einstein equations this is equivalent to the vanishing of the anti-self-conjugate Weyl tensor $E_{ijk}\,^{l}$. This condition implies the conservation condition because $\bt^{abc}E_{iabc}$ is a multiple of $A_{i}$, as is shown in Lemma \ref{diffbianchilemma} by involved computations using the differential Bianchi identity, but examples constructed in section \ref{leftinvariantsection} show that it is more restrictive. The choice has been made to define the Einstein equations as above, and to refer to this stronger notion as \textit{Einstein AH with self-conjugate curvature}. 

Because the self-conjugate Weyl tensor vanishes in three dimensions, it can be shown that on a $3$-manifold a closed naive Einstein AH structure with self-conjugate curvature is projectively flat and conjugate projectively flat, and it follows both that it is Einstein and that it is locally immersible as an affine hypersphere in flat affine space. Thus were the self-conjugacy of the curvature included in the definition of Einstein, then in three dimensions the Einstein equations would imply projectively flat for a closed AH structure. This resembles the situation for the Einstein equations of a metric, which in three dimensions force the metric to have constant sectional curvature. Both this observation and that an affine hypersphere has always self-conjugate curvature, suggest including the self-conjugacy of the curvature in the definition of Einstein. Also, as will be discussed below, for Einstein AH structures with self-conjugate curvature than can be proved stronger theoresm about the the size of the cubic torsion than can be proved for general Einstein AH structures. This and the preceeding remarks could be taken as indicating that the self-conjugacy of the curvature should be included in the definition of Einstein. On the other hand, in two-dimensions self-conjugacy of the curvature is equivalent to the naive Einstein equations, and the conservation condition has to be imposed, and so with a definition including self-conjugacy of the curvature the two-dimensional case would, as for Weyl structures, require a special treatment. This suggests that in higher dimensions the two conditions, conservative and self-conjugate curvature, should be regarded as independent notions. It seems preferable to have a definition which applies equally in dimension two and higher dimensions, and it seems that the self-conjugacy of the curvature and the conservation condition are conceptually distinct, although related. For these reasons, and because many basic results about Einstein Weyl structure generalize straightforwardly to Einstein AH structures, it seems that the terminologies chosen here are reasonable.

It should be mentioned that a third notion, called \textbf{strongly Einstein}, is defined in Definition \ref{stronglyeinsteindefinition}, but this notion is not central, and motivating it adequately would require discussion of technicalities distracting here in the introduction.

\subsubsection{}
 In section \ref{affinehyperspheresection} there are considered non-degenerate co-oriented hypersurface immersions in manifolds with affine connections and projective structures. It is shown that a on a non-degenerate co-oriented immersed hypersurface in a manifold with projective structure there is induced a conformal projective structure, which is a Codazzi projective structure if the ambient manifold is projectively flat. It is shown that each torsion free affine connection representing the ambient projective structure determines an affine normal bundle and induces on the hypersurface a projective structure which forms with the conformal structure determined by the second fundamental form an AH structure provided the ambient affine connection is projectively flat. For immersions in flat affine space there is described the conormal Gau\ss\, map and the flat projective structure which it induces, and it is shown that the AH structure which this constitutes with the second fundamental form is conjugate to that induced via the affine normal. Finally it is shown that the induced AH structure is Einstein if and only if the hypersurface is an affine hypersphere. It should be remarked that the AH formalism may be useful for studying other classes of affine hypersurfaces, e.g. affine maximal hypersurfaces. In section \ref{convexprojectivesection} it is shown, using a fundamental theorem of Cheng and Yau on the solvability of a particular Monge-Amp\`ere equation on a properly convex domain, that a convex flat real projective structure $\en$ admits a unique conformal structure $[h]$ with which it forms an exact Einstein AH structure with negative scalar curvature. Since there is a unique affine hypersphere associated to the universal cover of a convex flat projective structure, this also follows from the results of section \ref{affinehyperspheresection}, but a direct intrinsic proof has been given rather than passing through this correspondence, which is quite deep. 

\subsubsection{}
In section \ref{leftinvariantsection} there are constructed various examples of left-invariant Einstein AH structures on semisimple Lie groups which, however, do not have self-conjugate curvature. In particular such examples having Riemannian signature are constructed on $S^{3}$.

\subsubsection{}
Some idea of the content of the definition is given by the following characterization of Riemannian signature Einstein AH structures on a compact $n$-dimensional manifold $M$. By Theorem \ref{classtheorem}, such a structure is equivalent to the data of a Riemannian metric $h_{ij}$, a one-form $\ga_{i}$, and a completely symmetric, completely trace-free tensor $L_{ijk} = L_{(ijk)}$ such that, writing $D$ for the Levi-Civita connection of $h_{ij}$, and $\sR_{ij}$ and $\sR$ for its Ricci and scalar curvatures, respectively, there hold the equations
\begin{align}\label{confeinintro}
\begin{split}
&\sR_{ij} = \tfrac{1}{n}\sR h_{ij} + \tfrac{1}{4}\left(L_{ip}\,^{q} L_{iq}\,^{p} - \tfrac{1}{n}|L|_{h}^{2}h_{ij}\right) + (2-n)\left(\ga_{i}\ga_{j} - \tfrac{1}{n}|\ga|_{h}^{2}h_{ij}\right),\\
&D_{i}(\sR - \tfrac{1}{4}|L|_{h}^{2} - (n+2)|\ga|_{h}^{2}) = 0, \quad D_{p}L_{ij}\,^{p} = 0, \quad \ga_{p}L_{ij}\,^{p} = 0,\quad D_{(i}\ga_{j)} = 0.
\end{split}
\end{align}
The usual Einstein Weyl equations are recovered in the case $L_{ijk} \equiv 0$. These equations say that the vector dual to $\ga_{i}$ is $h$-Killing, and that $L$ is divergence free and annihilated by $\ga_{i}$. When $n = 2$ and $M$ is oriented they imply more, namely that $L$ is the real part of a holomorphic cubic differential (with respect to the complex structure determined by $h$ and the given orientation). The equations \eqref{confeinintro} imply
\begin{align}\label{fieldeq}
\begin{split}
\sR_{ij} - \tfrac{1}{2}\sR h_{ij} + \tfrac{n-2}{2n}\ka h_{ij}& = \tfrac{1}{4}\left( L_{ip}\,^{q} L_{iq}\,^{p} - \tfrac{1}{2}|L|_{h}^{2}h_{ij}\right)+ (2-n)\left(\ga_{i}\ga_{j} + \tfrac{1}{2}|\ga|_{h}^{2}h_{ij}\right), 
\end{split}
\end{align}
in which $\ka = \sR - \tfrac{1}{4}|L|_{h}^{2} - (n+2)|\ga|_{h}^{2}$ is constant. This last equation looks like the gravitational field equations with cosmological constant and the righthand side as stress-energy tensor (that this is divergence free is non-trivial). Of course $h_{ij}$ is Riemannian, and this resemblance is purely formal, but in the Lorentzian case solving the field equations \eqref{fieldeq} will also lead to Einstein AH structures, and this suggests other points of view on the latter. 

That there vanish $E_{ijk}\,^{l}$ imposes the further condition that $L_{ijk}$ be what some authors call a \textbf{Codazzi tensor}:
\begin{align}\label{codazzicondition}
D_{[i}L_{j]kl} = 0.  
\end{align}
The extra strength of this condition is apparent from the Weitzenb\"ock type identities described in section \ref{structuretheoremsection} and also the refined Kato inequality of Lemma \ref{katolemma} to which it leads. This kind of Codazzi condition plays an important role in various problems, for example the analysis of the initial conditions for the (physical) Einstein equations, or in proving Bernstein type theorems for Riemannian hypersurfaces (as in \cite{Schoen-Simon-Yau}) or for affine hypersurfaces. A relevant and motivating survey is \cite{Calabi-bernsteinproblems}. The consequences here of \eqref{codazzicondition} can be seen in Theorems \ref{eigentheorem} and \ref{cubictorsiontheorem} recounted later in the introduction.

\subsubsection{}
Theorem \ref{convextheorem} shows that given a convex flat real projective structure $\en$ there is a unique conformal structure $[h]$ such that $(\en, [h])$ is an exact Einstein AH structure with negative scalar curvature. This gives a large class of examples which are not Einstein Weyl, and for which much is known by combinatorial and geometric topological methods. Since the existence of non-trivial, non-hyperbolic convex flat projective structures will not be familiar to all readers, a few remarks are made about their abundance. 

In two dimensions W. Goldman proved in \cite{Goldman-convex} that the deformation space of convex flat projective structures on a compact orientable surface of genus $g > 1$ is diffeomorphic to an open cell of real dimension $16(g-1)$; this follows also from the theorem of F. Labourie and J. Loftin discussed in section \ref{labourieloftinsection}. %
 In higher dimensions there is of course not available such a complete description. While it suggests that convex flat projective structures are abundant in higher dimensions as well, phenomena such as Mostow rigidity might be taken to suggest the contrary. However, Y. Benoist has shown that in every dimension $n \geq 2$ there exist non-symmetric divisible strictly convex sets in $\projp(\ste)$. In these examples the discrete group dividing the set is isomorphic to a cocompact lattice in $O^{+}(1, n)$. Examples not obtained by deforming hyperbolic ones were later found by Benoist and M. Kapovich. In particular, in \cite{Kapovich}, Kapovich proved:%
\begin{theorem}[M. Kapovich, \cite{Kapovich}]
For every $n \geq 4$ there is a compact $n$-manifold which admits a strictly convex flat real projective structure, and which admits no Riemannian metric of constant negative sectional curvature, although it does admit a metric of negative sectional curvature.
\end{theorem}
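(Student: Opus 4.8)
The statement is Kapovich's theorem, so the plan is to recount the strategy of \cite{Kapovich}, which rests on the \emph{Gromov--Thurston construction}. The underlying spaces will be Gromov--Thurston manifolds. Fix a closed hyperbolic $n$-manifold $M$ containing a closed codimension-two totally geodesic submanifold $\Sigma$, itself contained in a codimension-one totally geodesic hypersurface $Y$. Forming the $k$-fold cyclic cover $N = N_{k}$ branched along $\Sigma$, the hyperbolic metric pulls back off the branch locus, while near $\Sigma$ the cover acquires a cone locus of total angle $2\pi k$; smoothing the cone yields a Riemannian metric of strictly negative sectional curvature, pinched close to $-1$. The first ingredient I would invoke is the Gromov--Thurston theorem: for $n \geq 4$ and suitable $M$, $\Sigma$, $k \geq 2$, this $N$ carries a negatively curved metric but admits \emph{no} metric of constant negative sectional curvature. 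The nonexistence is the subtle half of that theorem, proved by a rigidity argument---were $N$ hyperbolic, Mostow rigidity would force its hyperbolic structure to be compatible with the branched-covering data, which a volume and commensurability analysis excludes.

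Granting the manifolds $N$ and their negatively curved metrics, the heart of the matter is to equip $N$ with a \textbf{strictly convex} flat real projective structure: a developing map onto a properly convex domain $\Omega \subset \projp(\ste)$ with strictly convex boundary, on which $\pi_{1}(N)$ acts cocompactly. The plan is to deform the hyperbolic structure, which is itself such a structure with $\Omega$ the Klein ellipsoid and holonomy in $O^{+}(1, n) \subset PGL(n+1, \mathbb{R})$. The mechanism is projective \emph{bending} along the totally geodesic hypersurface $Y$ (and its preimages in $N$): one conjugates the holonomies of the two sides of $Y$ by a one-parameter family of projective transformations fixing the wall pointwise, producing a path $\rho_{t}\colon \pi_{1}(N) \to PGL(n+1, \mathbb{R})$ with $\rho_{0}$ the hyperbolic holonomy. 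By the Ehresmann--Thurston deformation principle these integrate to nearby flat projective structures. For small $t$ the image stays properly and strictly convex because, by Benoist's theory, proper convexity is stable under small deformations of cocompact holonomies, and strict convexity is equivalent to Gromov hyperbolicity of the acting group---a property $\pi_{1}(N)$ possesses since $N$ carries a negatively curved metric, and which is itself deformation-stable.

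The main obstacle is exactly the control of convexity under bending: bending is a global modification of the developing map, and a priori it can break proper convexity of the image or create flat boundary points along the bent walls. To handle this I would (i) perform the bending in projective coordinates adapted to $Y$, tracking the developing map chart by chart and confining its image to a fixed properly convex cone; (ii) verify that $\rho_{t}$ remains discrete, faithful, and cocompact, with limit set regular enough to force strict convexity of $\partial \Omega$, invoking Benoist's characterization of divisible strictly convex sets; and (iii) check that the admissible range of $t$ includes values for which the bent structure is genuinely non-hyperbolic---automatic here, since $N$ admits no constant-curvature metric at all. The remaining assertions are then immediate: $N$ admits a metric of negative curvature by the Gromov--Thurston construction, and admits none of constant negative curvature by their rigidity theorem, yielding the full statement for every $n \geq 4$.
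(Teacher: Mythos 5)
Your proposal founders on its central mechanism. You propose to obtain the strictly convex projective structure on the Gromov--Thurston manifold $N = N_{k}$ by projective bending of ``the hyperbolic structure'' on $N$, taking $\rho_{0}\colon \pi_{1}(N) \to PGL(n+1,\rea)$ to be ``the hyperbolic holonomy'' and running a small-deformation/openness argument (Ehresmann--Thurston plus Koszul--Benoist stability of proper convexity). But for $k \geq 2$ the manifold $N_{k}$ admits \emph{no} hyperbolic structure --- that is precisely the content of the Gromov--Thurston theorem you invoke in your first paragraph --- so there is no $\rho_{0}$ to bend and no convex projective structure at $t=0$ from which stability of convexity could be propagated. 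The singular hyperbolic cone metric on $N_{k}$ (cone angle $2\pi k$ along the branch locus) does not have a holonomy representation of $\pi_{1}(N)$ into $O(1,n)$, and a perturbative argument anchored at a nonexistent structure cannot get off the ground. This is not a repairable detail: the whole difficulty of Kapovich's theorem is that the projective structure must be built on a manifold with no hyperbolic structure, so it cannot arise as a small deformation of one.

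What the cited construction actually does is develop the $k$ hyperbolic pieces of the branched cover (copies of $M$ cut along $Y$) directly into projective space, replacing the hyperbolic identifications along the walls by projective transformations stabilizing the codimension-two subspace over $\Sigma$; the point is that this stabilizer in $PGL(n+1,\rea)$ is large enough to accommodate total angle $2\pi k$ around the branch locus, which the circle of hyperbolic rotations is not. Convexity of the developed image must then be verified \emph{directly} (a local-to-global supporting-hyperplane argument), not deduced from stability under small deformations --- the resulting structure is in no sense close to a hyperbolic one. The parts of your outline that do survive are the ones the paper itself records: the underlying manifolds and their metric properties (negatively curved but not hyperbolic) come from Gromov--Thurston, and strict convexity of the divisible domain follows from Benoist's criterion because $\pi_{1}(N)$ is Gromov hyperbolic, $N$ carrying a negatively curved metric.
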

The proof is constructive; the projective structures are constructed on examples due to M. Gromov and W. Thurston, \cite{Gromov-Thurston}, who showed the claimed metric properties of these manifolds. The strict convexity is proved using a theorem of Y. Benoist, \cite{Benoist-convexesdivisblesI}, that a divisible open subset of $\projp(\ste)$ is strictly convex if and only if it is divisible by a Gromov hyperbolic discrete group of projective automorphisms.

In short, even for manifolds for which there is no hope of uniformization using hyperbolic structures, there can be convex flat projective structures. In lieu of Theorem \ref{convextheorem} and recast in the language of this paper, this shows that there are manifolds which admit no hyperbolic metric but which admit Einstein AH structures of a particularly nice sort. One motivation for the present paper is the idea that techniques for producing Einstein AH structures (in particular with self-conjugate curvature) could be useful also for producing convex flat projective structures. In particular they will be useful in three dimensions, since in that case a closed Einstein AH structure with self-conjugate curvature is necessarily projectively flat. More generally it is hoped that the analytic techniques afforded by the AH formalism will be useful in the study of convex flat projective structures. 

\subsubsection{}
Because the symplectic form identifies the normal bundle of a Lagrangian submanifold with its cotangent bundle, the second fundamental form of the Lagrangian submanifold with respect to a connection on the ambient symplectic manifold can be viewed as a covariant three tensor on the submanifold, which is completely symmetric if the connection is torsion-free and compatible with the symplectic form. Because of the similarity of the Gau\ss-Codazzi equations for the second fundamental form with \eqref{codazzicondition} this leads to the expectation that there should be natural AH structures on Lagrangian submanifolds of K\"ahler and para-K\"ahler manifolds. In section \ref{lipkcurvaturesection} this expectation is realized for Lagrangian submanifolds of para-K\"ahler manifolds. Theorem \ref{inducedcodazzitheorem} shows that on an immersed non-degenerate Lagrangian submanifold of a para-K\"ahler manifold there is induced in a natural way a pencil of Codazzi structures. Theorem \ref{mczerotheorem} shows that on an immersed mean curvature zero spacelike Lagrangian submanifold of a para-K\"ahler manifold of constant para-holomorphic sectional curvature there is induced an exact Riemannian Einstein AH structure having as a distinguished metric the induced metric, and which is projectively flat and conjugate projectively flat. This suggests a close relationship between such Lagrangian submanifolds and affine hyperspheres or convex flat projective structures, and in fact there is a sort of local correspondence, though this will be explained in detail elsewhere. 

Logically section \ref{lipkcurvaturesection} would come at the end of section \ref{einsteinahsection}, but because sketching the necessary background about para-K\"ahler structures takes some space, and it is convenient to use some notation not introduced until section \ref{structuretheoremsection}, it is located where it is.

\subsubsection{}
In section \ref{curvatureconditionssection} and those following attention is restricted to Riemannian AH structures. This is not because other signatures are not interesting; on the contrary, the example of Lorentzian Einstein-Weyl structures, as for example in N. Hitchin's \cite{Hitchin} or C. LeBrun and L. Mason's \cite{Lebrun-Mason-einsteinweyl}, suggests that the Lorentzian case will be very interesting. Rather this restriction is made because the availability of the usual tools from elliptic PDE facilitates the proof of general structural results, and leads to further problems which should be approachable with more sophisticated analysis. The Lorentzian case requires a different set of techniques, in particular the best results for Lorentzian Einstein Weyl structures are obtained using twistorial methods as in \cite{Hitchin} and \cite{Lebrun-Mason-einsteinweyl}. Another justification for the focus on the Riemannian case is that this is the case relevant for the study of convex flat projective structures.

\subsubsection{}
In section \ref{scalarcurvaturesection} there are analyzed the properties of the scalar curvature of Riemannian signature Einstein AH structures on compact manifolds. Essentially all the results of this sort for Einstein Weyl structures carry over to the more general setting. While the end results, and the ideas behind them, are essentially the same as in the Weyl case, their proofs sometimes require a bit more work. The key technical tool is the Gauduchon metric. In section \ref{bochnervanishingsection} there is proved for AH structures (not necessarily Einstein) with some positivity condition on the Ricci curvature a Bochner vanishing type theorem (Theorem \ref{bochnertheorem}) which generalizes (and slightly strengthens) a theorem of B. Alexandrov and S. Ivanov for the Weyl case. This is used to deduce Theorem \ref{negexacttheorem}, which gives a rough classification of compact Riemannian signature Einstein AH manifolds by the properties of their scalar curvature, which extends the results known for Einstein Weyl structures.

\subsubsection{}
Section \ref{structuretheoremsection} contains the main structural results of the paper. Sections \ref{symmetrictensoralgebrasection} and \ref{differentialoperatorssymmetricsection} analyze the algebra of trace-free symmetric tensors and the basic first order differential operators that a metric determines on such tensors. The latter are usually called generalized gradients or Stein-Weiss operators; see e.g. \cite{Branson-steinweiss} for the general theory. Section \ref{curvatureoperatorsection} describes the action of the curvature operator on trace-free symmetric tensors. In section \ref{weitzenbocksection} there are given Weitzenb\"ock identities, refined Kato inequalities, and the resulting differential inequalities and vanishing theorems for tensors annihilated by the differential operators of section \ref{differentialoperatorssymmetricsection}, e.g. conformal Killing tensors and trace-free Codazzi tensors. The results parallel those for conformal Killing forms obtained by U. Semmelman in \cite{Semmelmann}. It should be mentioned that the utility of these results is somewhat limited by a poor understanding of the action of the curvature operator on completely trace-free completely symmetric tensors of rank greater than two. For example the curvature conditions necessary for vanishing results are themselves opaque. The analysis of the curvature operator acting on trace-free symmetric two tensors was in essence made in \cite{Berger-Ebin}, and is reviewed in section \ref{curvatureoperatorsection}; to make progress with tensors of higher rank it seems that a better understanding is needed of the interaction of the curvature operator with the Cartan multiplication of trace-free symmetric tensors. As in this paper there is needed only the special case of this material for tensors of rank $3$, it is developed in more detail than is needed, but it seems that it has applications in many other contexts (a few of which are briefly recalled), and is itself of interest. Moreover, it is as easy to treat tensors of general rank as it is to treat those of rank $3$, and the general point of view only clarifies the structure of the results. 

In section \ref{eigensection}, using the results of the preceeding sections, the Riemannian Einstein equations are rewritten in terms of a Gauduchon metric and the cubic torsion. The theorem stated next is a special case of the slightly more general Theorem \ref{eigentheorem}; it shows that the Riemannian Einstein AH equations reduce to an algebraically constrained elliptic equation for a completely trace free completely symmetric covariant three tensor.

\begin{theorem*}[special case of Theorem \ref{eigentheorem}]
If $M$ is a manifold of dimension at least three and $(\en, [h])$ an exact Riemannian signature Einstein AH structure with self-conjugate curvature and cubic torsion $\bt_{ij}\,^{k}$, then for a distinguished metric $h \in [h]$ there is a constant $\ka$ such that $L_{ijk} \defeq \bt_{ij}\,^{p}h_{pk}$ solves the system
\begin{align}
\label{lapeinstein}
&\lap_{h}L_{ijk} = -2 \sR_{p(ij}\,^{q}L_{k)q}\,^{p} + \sR_{p(i}L_{jk)}\,^{p},& &D^{p}L_{ijp} = 0,&
& D_{[i}L_{j]kl} = 0,&\\
\label{lapeinstein0}&\sR_{ij} = \tfrac{1}{4}L_{ip}\,^{q}L_{jq}\,^{p} + \ka h_{ij}.
\end{align}
in which $D$ is the Levi-Civita connection of $h$, $\lap_{h} = D^{p}D_{p}$ is the rough Laplacian, the curvature convention is $2D_{[i}D_{j]}X^{k} = \sR_{ijp}\,^{k}X^{p}$, and $\sR_{ij} \defeq \sR_{pij}\,^{p}$. Conversely, if $M$ is compact, then a Riemannian metric $h$ with Levi-Civita connection $D$ and a completely trace-free completely symmetric covariant tensor $L_{ijk}$ solving the first equation of \eqref{lapeinstein} solves all the equations \eqref{lapeinstein}, and if $h$ and $L$ solve \eqref{lapeinstein} and \eqref{lapeinstein0} then the connection $\nabla = D - \tfrac{1}{2}h^{kp}L_{ijp}$ is the aligned representative of an exact Einstein AH structure with self-conjugate curvature for which $h$ is a distinguished metric. 
\end{theorem*}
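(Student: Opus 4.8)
The plan is to handle the two directions separately, extracting each line of \eqref{lapeinstein}--\eqref{lapeinstein0} from a distinct feature of the hypotheses. For the forward direction I would begin from the classification of Riemannian Einstein AH structures recorded in Theorem \ref{classtheorem} and its equations \eqref{confeinintro}. Exactness together with the choice of a distinguished metric $h$ forces the Faraday primitive $\ga_{i}$ to vanish, so the aligned representative satisfies $\nabla_{i}h_{jk} = L_{ijk}$ with $L_{ijk} = \bt_{ij}\,^{p}h_{pk}$ completely symmetric; since the density is $\nabla$-parallel and $\nabla$ is aligned, the only trace $h^{jk}\nabla_{i}h_{jk}$ vanishes, so by symmetry $L$ is completely trace-free. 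Specializing \eqref{confeinintro} to $\ga_{i} = 0$ gives at once the divergence constraint $D^{p}L_{ijp} = 0$, while rewriting the first line of \eqref{confeinintro} as $\sR_{ij} - \tfrac{1}{4}L_{ip}\,^{q}L_{jq}\,^{p} = \tfrac{1}{n}(\sR - \tfrac{1}{4}|L|_{h}^{2})h_{ij}$ yields \eqref{lapeinstein0} with $\ka = \tfrac{1}{n}(\sR - \tfrac{1}{4}|L|_{h}^{2})$; the conservation condition \eqref{einsteini}, which in the exact case is the scalar equation $D_{i}(\sR - \tfrac{1}{4}|L|_{h}^{2}) = 0$, makes $\ka$ constant. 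The Codazzi equation $D_{[i}L_{j]kl} = 0$ is then exactly the hypothesis of self-conjugate curvature, since, as noted after \eqref{codazzicondition}, in the naive Einstein case self-conjugacy of the curvature is the vanishing of $E_{ijk}\,^{l}$, which is \eqref{codazzicondition}.

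The remaining first equation of \eqref{lapeinstein} I would obtain as a formal consequence of the two first-order conditions just established. Using the Codazzi symmetry $D_{p}L_{ijk} = D_{i}L_{pjk}$ and then commuting covariant derivatives,
\begin{align*}
\lap_{h}L_{ijk} &= h^{pq}D_{q}D_{p}L_{ijk} = h^{pq}D_{q}D_{i}L_{pjk}\\
&= D_{i}(D^{p}L_{pjk}) + h^{pq}[D_{q}, D_{i}]L_{pjk} = h^{pq}[D_{q}, D_{i}]L_{pjk},
\end{align*}
the divergence term vanishing by $D^{p}L_{pjk} = 0$. Expanding the commutator into Riemann and Ricci contractions and symmetrizing over $ijk$ (which is automatic, as the left side is symmetric) reproduces precisely $-2\sR_{p(ij}\,^{q}L_{k)q}\,^{p} + \sR_{p(i}L_{jk)}\,^{p}$.

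For the converse on a compact $M$, the heart of the matter is that the single equation $\lap_{h}L_{ijk} = -2\sR_{p(ij}\,^{q}L_{k)q}\,^{p} + \sR_{p(i}L_{jk)}\,^{p}$ already forces $L$ to be divergence-free and Codazzi, and here I would invoke the Weitzenb\"ock machinery of section \ref{weitzenbocksection}. Decomposing $D_{i}L_{jkl}$ into its irreducible $O(n)$ components, the trace-free divergence $D^{p}L_{pjk}$ and the trace-free part of the antisymmetrization $D_{[i}L_{j]kl}$ are two of the generalized gradients of $L$, and the rough Laplacian $\lap_{h}$ is (up to positive constants) the sum of the squares of all the gradients. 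The curvature term on the right above is exactly the Weitzenb\"ock curvature endomorphism attached to these two gradients, so the equation asserts that the nonnegative second-order operator built from them annihilates $L$; pairing with $L$ and integrating by parts over the compact $M$ should give an identity of the form $\int(|D^{p}L_{pjk}|^{2} + c\,|D_{[i}L_{j]kl}|^{2})\dvh = 0$ with $c > 0$, whence both gradients vanish. The delicate point, and the step I expect to be the main obstacle, is checking that the curvature coefficients produced by the Weitzenb\"ock identities combine with the correct signs so that this pairing really is a sum of nonnegative squares; this is precisely the content of the refined identities and Kato-type inequalities of section \ref{weitzenbocksection}, and it is where dimension at least three and the particular shape of the curvature term enter.

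Finally, given $h$ and $L$ solving \eqref{lapeinstein}--\eqref{lapeinstein0}, I would verify the properties of $\nabla = D - \tfrac{1}{2}h^{kp}L_{ijp}$ by reversing the forward computation. Complete symmetry of $L$ makes $\nabla$ torsion-free and gives $\nabla_{i}h_{jk} = L_{ijk}$, while trace-freeness of $L$ shows $\nabla$ preserves $|\det h|$; hence $(\en,[h])$ is an exact aligned AH structure with distinguished metric $h$ and cubic torsion $\bt_{ij}\,^{k} = h^{kp}L_{ijp}$. Computing the curvature of $\nabla$ from the change-of-connection formula with difference tensor $-\tfrac{1}{2}h^{kp}L_{ijp}$, the algebraic constraint \eqref{lapeinstein0} forces the trace-free parts of $R_{ij}$ and $Q_{ij}$ to vanish, so the structure is naive Einstein, and the Codazzi equation $D_{[i}L_{j]kl} = 0$ makes $E_{ijk}\,^{l}$ vanish, i.e. the curvature is self-conjugate. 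Since self-conjugate curvature implies conservative, $\bt^{abc}E_{iabc}$ being a multiple of $A_{i}$ by Lemma \ref{diffbianchilemma}, the condition \eqref{einsteini} holds and the resulting exact AH structure with self-conjugate curvature is Einstein, as required.
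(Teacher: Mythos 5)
Your proposal is correct and follows essentially the same route as the paper: the forward direction extracts $D^{p}L_{ijp}=0$, $D_{[i}L_{j]kl}=0$, and \eqref{lapeinstein0} from exactness, $E_{ij}=0$, $E_{ijk}\,^{l}=0$, and the conservation condition, and then obtains the Laplacian equation from the Weitzenb\"ock identity (your direct commutator computation is just the special case of \eqref{lapom3} when $\div(L)=0$ and $\klie(L)=0$), while the converse on compact $M$ is exactly the integrated identity of Lemma \ref{culaplemma} with $\al=-1$. Two small remarks: the "delicate point" you flag is already settled in the paper, since for $\al=-1$ and $k=3$ the coefficients in \eqref{culap1} are $\tfrac{n+2}{n}$ and $2$, both positive, so the integration by parts does yield a sum of nonnegative squares; and in the forward direction you should not cite Theorem \ref{classtheorem}, which assumes $M$ compact, but rather the exact-case branch of Corollary \ref{stressenergycorollary} (or directly \eqref{divbt}, \eqref{cl1}, and \eqref{confric} with $\ga_{i}=0$), since the statement does not assume compactness there.
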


This reformulation is useful in two distinct ways. First, it can be used for analyzing the properties of Einstein AH structures, and second, it offers a way of constructing examples. With respect to the first, the main result is a partial generalization of estimates on the growth of the cubic form of an affine hypersphere obtained by E. Calabi in \cite{Calabi-completeaffine} and \cite{Calabi-improper} to bounds on the norm of the cubic torsion for Riemannian signature Einstein AH structures with self-conjugate curvature. This yields estimates on the scalar and Ricci curvatures of a Gauduchon or distinguished metric. This is contained in theorem \ref{cubictorsiontheorem}, which is restated here for convenience.
\begin{theorem*}[Theorem \ref{cubictorsiontheorem}]
Let $(\en, [h])$ be a Riemannian signature Einstein AH structure with self-conjugate curvature on a manifold $M$ of dimension $n \geq 3$. Suppose either $M$ is compact or $(\en, [h])$ is exact and a distinguished metric is complete. Suppose that $C^{ijkl}\sW_{ijkl} \geq 0$ (this is automatic if $n = 3$). Then one of the following mutually exclusive possibilities holds:
\begin{enumerate}
\item $R > 0$ and $(\en, [h])$ is an Einstein Weyl structure which is either not closed or is exact. 
\item $R \equiv 0$ and $(\en, [h])$ is closed Einstein Weyl. If $M$ is compact and $(\en, [h])$ is not exact then the universal cover of $M$ equipped with the pullback of a Gauduchon metric is isometric to a product metric on $\rea \times N$ where $N$ is simply-connected with an Einstein metric of positive scalar curvature; if $3 \leq n \leq 4$, then $N$ is diffeomorphic to $S^{n-1}$. There is induced on $N$ an exact Riemannian signature Einstein Weyl structure which has positive scalar curvature, and for which the induced metric $g$ is a distinguished metric.
\item $R$ is negative and $\nabla$-parallel and $(\en, [h])$ is exact. A distinguished metric $h \in [h]$ has non-positive scalar curvature $\sR_{h}$ and Ricci curvature $\sR_{ij}$ satisfying
\begin{align*}
\sR_{ij} \leq \tfrac{(n-2)(n+1)}{n(n+2)}\bt^{abc}\bt_{abc}H_{ij}. 
\end{align*}
Moreover, $\sR_{h} = 0$ if and only if $|L|^{2}_{h}$ is constant, in which case $L$ is $h$-parallel, $C^{ijkl}\sW_{ijkl} = 0$ and $4\sR_{ij} = \mr{L}_{ij}$. 
\item $n = 3$, $R$ is somewhere positive and somewhere non-positive, and $(\en, [h])$ is not closed. The scalar curvature $\sR$ of a Gauduchon metric $h \in [h]$ satisfies $\sR \leq 5|\ga|^{2}_{h}$. 
\end{enumerate}
If $[h]$ is locally conformally flat, then in cases $(1)$ and $(2)$, $(\en, [h])$ is exact and a distinguished metric has constant sectional curvature, positive or identically zero according to whether $R$ is positive or zero. If $(\en, [h])$ is conjugate projectively flat, then case $(4)$ does not occur, and in cases $(1)$, $(\en, [h])$ is exact and a distinguished metric has constant positive sectional curvature.
\end{theorem*}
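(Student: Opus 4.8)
The plan is to pass from the Einstein AH structure to the algebraically constrained elliptic system of the special case of Theorem \ref{eigentheorem}, and then to analyze the cubic tensor $L_{ijk} = \bt_{ij}{}^{p}h_{pk}$ by a Bochner argument applied to $|L|^{2}_{h}$ for a Gauduchon or distinguished metric $h \in [h]$. The coarse partition into the four regimes according to the sign and behavior of the weighted scalar curvature $R$ is precisely that produced by Theorem \ref{negexacttheorem}, whose proof runs through the Bochner vanishing Theorem \ref{bochnertheorem} and the Gauduchon metric; so I would first invoke that classification to obtain that $R$ is everywhere positive, identically zero, negative and $\nabla$-parallel, or (only when $n = 3$) somewhere positive and somewhere non-positive. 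Everything special to the self-conjugate hypothesis is then extracted by feeding the elliptic system into the Bochner identity.

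The computational core is a Weitzenb\"ock formula for $|L|^{2}$. Starting from $\tfrac{1}{2}\lap_{h}|L|^{2} = |DL|^{2} + L^{ijk}\lap_{h}L_{ijk}$ and substituting the first equation of \eqref{lapeinstein}, the two curvature contractions are rewritten by decomposing $\sR_{pijq}$ into its Weyl and Schouten parts and then eliminating the Ricci tensor through the Einstein relation \eqref{lapeinstein0}, $\sR_{ij} = \tfrac{1}{4}L_{ip}{}^{q}L_{jq}{}^{p} + \ka h_{ij}$. This yields an identity of the schematic shape
\begin{align*}
\tfrac{1}{2}\lap_{h}|L|^{2} = |DL|^{2} + C^{ijkl}\sW_{ijkl} + \q(L) + c(n)\,\ka\,|L|^{2},
\end{align*}
in which $C^{ijkl}$ is the curvature-type tensor assembled algebraically from $L$ so as to pair with $\sW$, $\q(L)$ is a definite non-negative quartic whose lower bound by a multiple of $|L|^{4}$ is controlled by the sharp pointwise comparison of $L_{ip}{}^{q}L_{jq}{}^{p}$ with $|L|^{2}h_{ij}$ furnished by the symmetric tensor algebra of section \ref{symmetrictensoralgebrasection}, and $c(n)$ is an explicit constant. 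Since $L$ is a divergence-free trace-free Codazzi tensor, the refined Kato inequality of Lemma \ref{katolemma} strengthens this to $|DL|^{2} \geq (1 + \ep_{n})|D|L||^{2}$ with $\ep_{n} > 0$, which is what makes the resulting differential inequality for $u = |L|$ usable in the maximum principle.

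I would then dispose of the regimes in turn. When $R \geq 0$ (cases $(1)$ and $(2)$) the term $c(n)\ka|L|^{2}$ carries the favorable sign; combined with the hypothesis $C^{ijkl}\sW_{ijkl} \geq 0$ and the positivity of $\q(L)$, integration over compact $M$ --- or, in the complete exact case, an Omori--Yau maximum principle at the supremum of $|L|$ --- forces $DL \equiv 0$ and then $L \equiv 0$, so the structure is self-conjugate, that is Einstein Weyl. Once $L$ is eliminated the refined product description of case $(2)$ is the known structure theory for scalar-flat closed Einstein Weyl structures, the splitting of the universal cover coming from the parallel field delivered by Theorem \ref{bochnertheorem}. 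When $R$ is negative (case $(3)$) one has $\ka \leq 0$, and the stated bound $\sR_{ij} \leq \tfrac{(n-2)(n+1)}{n(n+2)}\bt^{abc}\bt_{abc}H_{ij}$ is then the purely algebraic consequence of \eqref{lapeinstein0} together with the sharp eigenvalue estimate for $L_{ip}{}^{q}L_{jq}{}^{p}$; the Bochner inequality, evaluated at a maximum of $|L|^{2}$, supplies in addition the non-positivity of $\sR_{h}$, and in the equality case $\sR_{h} = 0$ the vanishing of each non-negative term yields $DL \equiv 0$, $C^{ijkl}\sW_{ijkl} = 0$, $\ka = 0$, and $4\sR_{ij} = \mr{L}_{ij}$. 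The sign-changing three-dimensional case $(4)$ is treated by specializing the formula to $n = 3$, where $C^{ijkl}\sW_{ijkl} \equiv 0$ automatically, and bounding $\sR$ through the Gauduchon-metric relation among $R$, $\sR$, $|L|^{2}$ and $|\ga|^{2}$. Finally the locally conformally flat and conjugate projectively flat refinements follow by returning the vanishing of the appropriate Weyl-type tensor to the Einstein Weyl equations: conformal flatness together with Einstein Weyl forces constant sectional curvature, while conjugate projective flatness removes the obstruction permitting case $(4)$ and, once $L \equiv 0$, pins the constant positive curvature in case $(1)$.

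The principal obstacle is the exact bookkeeping in the Weitzenb\"ock identity: producing the sharp constant $\tfrac{(n-2)(n+1)}{n(n+2)}$ demands the optimal algebraic inequality for $L_{ip}{}^{q}L_{jq}{}^{p}$ coming from the decomposition of the Cartan product of trace-free symmetric tensors, and the Weyl contraction must be organized to appear exactly as the hypothesized quantity $C^{ijkl}\sW_{ijkl}$ so that its sign assumption is applicable. A secondary difficulty is the complete non-compact case, where integration by parts is unavailable and the maximum-principle step must instead be justified by controlling $|L|$ at infinity and appealing to an Omori--Yau type theorem, together with the degeneration analysis needed to extract the rigidity in the equality cases.
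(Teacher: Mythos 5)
Your proposal follows essentially the same route as the paper: the trichotomy from Theorem \ref{negexacttheorem}, the Weitzenb\"ock identity for the cubic torsion combined with the refined Kato inequality of Lemma \ref{katolemma} (this is exactly Theorem \ref{calabiestimatetheorem}, which works with the power $|L|^{(n-2)/(n+1)}$ rather than $|L|^{2}$), the algebraic eigenvalue bound $L_{ij}\leq\tfrac{n}{n+2}|L|^{2}h_{ij}$ for the Ricci estimate, and the Eastwood--Tod theorem for the conformally flat refinement. The one point where your plan diverges is the maximum-principle step: the difficulty you flag in the complete non-compact case --- ``controlling $|L|$ at infinity'' before applying Omori--Yau --- does not arise in the paper, because the tool used is the Cheng--Yau localized estimate (Theorem \ref{cyestimatetheorem}), whose superquadratic nonlinearity $Bu^{1+\sigma}$ yields the bound $\sup_{M}u\leq|A/B|^{1/\sigma}$ with no a priori control of $u$, only a lower Ricci bound (which here follows from exactness); the same estimate handles the compact case uniformly, so no separate integration-by-parts argument is needed.
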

The undefined terminology is all defined in due course. The tensor $\sW_{ijkl}$ is the usual conformal Weyl tensor of $[h]$, and the condition $C^{ijkl}\sW_{ijkl} \geq 0$ is automatic if $[h]$ is conformally flat or $\en$ is either projectively or conjugate projectively flat. Despite this, the theorem is not wholly satisfactory in that the geometric meaning of the condition $C^{ijkl}\sW_{ijkl} \geq 0$ is obscure. Also unsatisfactory is the assumption that in the exact non-compact case a distinguished metric be complete; it would be preferable to conclude this from some assumption stated only in terms of $\en$ and $[h]$. 

The proof consists in obtaining a differential inequality for the Laplacian of a power of the norm of the cubic torsion (this is Theorem \ref{calabiestimatetheorem}) and applying the maximum principle in a form due to S.Y. Cheng and S.T. Yau and used by them in \cite{Cheng-Yau-maximalspacelike} and \cite{Cheng-Yau-affinehyperspheresI}. %
The self-conjugacy of the curvature is necessary for the results to hold, as is shown by the examples of section \ref{leftinvariantsection}.
If in addition to the hypotheses of the theorem, $(\en, [h])$ is projectively flat, then the Ricci curvature of a distinguished metric is non-positive; this is Theorem \ref{calabitheorem} which is essentially Calabi's original theorem, and is explained in section \ref{calabiriccisection}.

Theorem \ref{eigentheorem} shows that by finding a cubic tensor solving the algebraically constrained elliptic system \eqref{lapeinstein}-\eqref{lapeinstein0} there can be constructed Einstein AH structures with self-conjugate curvature. As is explained in \ref{eigensectionintro} below, this approach is used in section \ref{cubicformsection} to construct examples in the case of flat $h$, exploiting the observation that in this case \eqref{lapeinstein} is vacuous for an $h$-parallel tensor $L_{ijk}$, and the system reduces to the purely algebraic \eqref{lapeinstein0}. In general one expects that with some non-positivity condition on the curvature the equations \eqref{lapeinstein}-\eqref{lapeinstein0} should admit solutions, but no theorem is yet available. Of course the problem contains the problem of finding ordinary Einstein metrics, so one should be restrained in one's hopes.

In three dimensions this is particular interesting. Because the self-conjugate conformal Weyl tensor necessarily vanishes in three dimensions, it follows from Lemma \ref{projflatahlemma}, that in this case the resulting AH structure is necessarily projectively and conjugate projectively flat. This means that flat projective structures can be constructed on $3$-manifolds by solving an elliptic system. %

Methods are needed for solving \eqref{lapeinstein}-\eqref{lapeinstein0}. One approach is to study the heat flow associated to \eqref{lapeinstein}; the constraint \eqref{lapeinstein0} introduces complications not seen in other contexts, e.g. harmonic maps. A second approach, which seems more promising, is to develop an analogue of Ricci flow for AH structures. This may require recasting the usual Ricci flow in the metric affine (so-called Palatini) formalism, that is to say, treating the metric and the connection as \textit{a priori} uncoupled fields. Here it appears ideas coming from study of the renormalization group flow could be relevant. Equations \eqref{fieldeq} and \eqref{lapeinstein0} resemble the steady state of the equation for metric in the equations of the renormalization group flow for a nonlinear sigma model though with the $3$-form $H_{ijk}$ measuring the strength of the $B$-field replaced by the symmetric tensor $L_{ijk}$, while \eqref{lapeinstein} resembles the equations controlling $H_{ijk}$; see e.g. equations $(2.2)-(2.4)$ of \cite{Oliynyk-Suneeta-Woolgar} and the discussion of Ricci flow for connections with torsion in \cite{Streets}. While at the moment this appears to be no more than a vague analogy, it is suggestive of the form of the appropriate analogue of the Ricci flow.

For either approach one would like to have a better understanding of the variational character of the equations \eqref{lapeinstein}. Some brief remarks about this are made in section \ref{variationalsection}.

In the physics literature, a completely symmetric covariant tensor of rank $s$ is said to have \textit{spin $s$}. The problem of consistently coupling the Einstein gravitational action with the Lagrangian for a (massless) field of spin greater than two is an old one, going back to at least the work of M. Fierz and W. Pauli, \cite{Fierz-Pauli}. Some standard references studying gauge theories including such fields are \cite{Fronsdal}, \cite{deWit-Freedman}, \cite{Curtright-massless}, and \cite{Weinberg-Witten}. See \cite{Vasiliev-survey} for a recent survey. Some other relevant recent references are \cite {Dolan-Nappi-Witten} and \cite{Boulware-Deser-Kay}, \cite{Deser-Zumino}, \cite{Deser-Waldron-partiallymassless}, \cite{Deser-Waldron-conformalinvariance}, and \cite{Deser-Waldron-nullpropagation}. Most results are negative, demonstrating the inconsistency or impossibility of putative couplings, and there is little or no experimental evidence for such fields. Nonetheless, the tensor $L_{ijk}$ of Theorem \ref{eigentheorem} is a spin-$3$ field in this sense, and equations like \eqref{lapeinstein} (in Lorentz signature), and Lagrangians leading to them, have been studied in the physics literature, e.g. by Fronsdal in \cite{Fronsdal}, or Vasiliev (see \cite{Vasiliev-survey}). 
The problem of finding a variational description of the Einstein AH equations appears quite similar to the problem of consistently coupling a spin-$3$ field to the gravitational action. In \cite{Boulanger-Kirsch}, N. Boulanger and I. Kirsch, building on Kirsch's earlier \cite{Kirsch}, proposed a model of spontaneous symmetry breaking of the analytic diffeomorphism group in which gravity is modified at high energies by interactions with a massive spin-$3$ field. In \cite{Baekler-Boulanger-Hehl}, P. Baekler, N. Boulanger, and F.~W. Hehl show that in the metric affine formalism the traceless part of the nonmetricity tensor (which is essentially what is here called the cubic torsion) can be regarded as a massless spin-$3$ field and show that this perspective yields interpretations of Fronsdal's and Vasiliev's Lagrangians for such fields. As the relevant physical literature is huge and (as may be evident from this remark) this author's competence to analyze it limited, no more will be said on this point, but it would be interesting to compare the equations considered here with the models proposed in \cite{Boulanger-Kirsch} and \cite{Baekler-Boulanger-Hehl}.

\subsubsection{}
In section \ref{automorphismsection} it is shown that the analytic tools afforded by the AH formalism can be applied to deduce results about convex flat real projective structures. For example:%
\begin{theorem*}[Theorem \ref{twodconvexautotheorem}]
A convex flat real projective structure on a compact, orientable surface of genus at least $2$ admits no non-trivial infinitesimal projective automorphism.
\end{theorem*}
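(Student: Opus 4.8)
The plan is to exploit the canonical nature of the conformal structure attached to a convex flat projective structure by Theorem \ref{convextheorem}. Let $X$ be an infinitesimal projective automorphism of the convex flat real projective structure $\en$, and let $\phi_{t}$ denote its flow, which is global since $M$ is compact; each $\phi_{t}$ is an automorphism of $\en$, so $\phi_{t}^{\ast}\en = \en$. By Theorem \ref{convextheorem} there is a \emph{unique} conformal structure $[h]$ for which $(\en, [h])$ is an exact Einstein AH structure with negative scalar curvature. Since the Einstein AH condition, exactness, and the sign of the scalar curvature are all diffeomorphism invariants, $\phi_{t}^{\ast}(\en, [h]) = (\en, \phi_{t}^{\ast}[h])$ is again an exact Einstein AH structure with negative scalar curvature whose underlying projective structure is $\phi_{t}^{\ast}\en = \en$. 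The uniqueness clause of Theorem \ref{convextheorem} then forces $\phi_{t}^{\ast}[h] = [h]$ for all $t$.

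Differentiating this identity at $t = 0$, for a fixed representative $h \in [h]$ I would obtain $\mathcal{L}_{X}h = f\,h$ for some function $f$; that is, $X$ is an infinitesimal conformal automorphism (a conformal Killing field) of the Riemannian surface $(M, [h])$. The orientation of $M$ together with $[h]$ determines a complex structure $J$, and because the $\phi_{t}$ are connected to the identity they are orientation preserving, so $X$ is a real-holomorphic vector field: its $(1,0)$-part is a holomorphic section of the holomorphic tangent bundle $T^{1,0}M$. On a compact orientable surface of genus $g \geq 2$ this bundle has degree $\deg T^{1,0}M = 2 - 2g < 0$, so it admits no nonzero holomorphic sections. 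Hence the $(1,0)$-part of $X$ vanishes, and as $X$ is real, $X \equiv 0$.

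The single substantive point, and the only place where the hypotheses genuinely enter, is the passage from ``$X$ preserves $\en$'' to ``$X$ preserves $[h]$''; everything after that is the classical vanishing of holomorphic vector fields in genus $\geq 2$. The hard part is therefore to justify that $\phi_{t}^{\ast}$ carries the Einstein AH structure canonically associated to $\en$ onto the one canonically associated to $\phi_{t}^{\ast}\en$, i.e. the naturality under diffeomorphisms of the construction of Theorem \ref{convextheorem}. I would derive this cleanly from the uniqueness assertion, as above, rather than by tracing the equivariance through the Cheng--Yau Monge--Amp\`ere construction underlying that theorem; the only care needed is to check that each ingredient (the aligned representative, the trace conditions defining the AH structure, the curvature traces entering the Einstein equations, exactness, and the scalar curvature sign) is manifestly natural under pullback by a diffeomorphism, which it is.

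Finally, I would remark that the conclusion sits inside the richer two-dimensional picture of Theorem \ref{2dtheorem}: any automorphism of $\en$ preserves $[h]$, the orientation, hence $J$ and, up to a positive constant, the cubic holomorphic differential $\Phi$ whose real part is the distinguished cubic tensor. Since the conformal Killing argument already forces $X \equiv 0$, this refinement is not needed for the theorem and would be included, if at all, only to indicate compatibility with the Labourie--Loftin description of the deformation space.
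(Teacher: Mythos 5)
Your argument is correct, and it takes a genuinely different route from the paper's. The paper deduces the statement from its Theorem \ref{convexoneparametertheorem}, which in turn rests on Theorem \ref{projkilltheorem} --- a Bochner-type vanishing theorem for the larger space of \emph{projective harmonic} vector fields of an exact AH structure --- combined with Calabi's Theorem \ref{calabitheorem} (non-positivity of the Ricci curvature of a distinguished metric); the conclusion there is that a non-trivial infinitesimal automorphism would force a distinguished metric to be flat, which Gau\ss--Bonnet forbids in genus at least $2$. You instead use only the uniqueness clause of Theorem \ref{convextheorem} to show that the flow of $X$ preserves $[h]$, reducing everything to the classical non-existence of conformal Killing fields (equivalently, holomorphic vector fields) on a surface of genus at least $2$. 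The paper is aware of this shortcut: it is sketched both in the introduction (via Labourie--Loftin) and in the remark following Theorem \ref{convexoneparametertheorem}, which observes precisely that the uniqueness claim in Theorem \ref{convextheorem} implies an infinitesimal projective automorphism preserves the induced $[h]$. What your route buys is brevity and, in dimension $2$, complete independence from Calabi's curvature estimate, since the degree count $\deg T^{1,0}M = 2-2g<0$ needs no curvature hypothesis at all. What the paper's route buys is generality and method: the Bochner argument applies to the strictly larger class of projective harmonic fields, produces the higher-dimensional structural result of Theorem \ref{convexoneparametertheorem}, and does not require knowing in advance that the automorphism preserves the conformal structure. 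The one point worth stating carefully in your write-up is the naturality check you already flag --- that exactness, the Einstein AH equations, and the sign of the scalar curvature are all diffeomorphism-invariant, so that $(\en,\phi_t^{\ast}[h])$ is eligible for the uniqueness assertion --- but this is routine and you handle it adequately.
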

This theorem is certainly not new. It follows from the theorem of Labourie-Loftin that a projective automorphism of a convex flat real projective structure on a compact, orientable surface of genus at least $2$ is an automorphism of the associated conformal structure, and this theorem then follows from the non-existence of conformal Killing fields on such surfaces. Proposition $4.1.2$ of \cite{Loftin-affinespheres} shows more, namely that a generic convex flat real projective structure on a compact, orientable surface of genus at least $2$ admits no projective automorphism whatsoever. In any case, both the more general Theorem \ref{convexoneparametertheorem} from which the above theorem follows, and the use of the Bochner method in the context of flat projective structures, seem to be new. The corresponding theorem in higher dimensions, Theorem \ref{convexoneparametertheorem}, says essentially that if a convex real projective structure on a compact, orientable manifold $M$ admits a continuous group of projective automorphisms then the group is a torus and $M$ is foliated by flat Riemannian submanifolds. 

These theorems follow from Theorem \ref{projkilltheorem}, which is a Bochner style vanishing theorem for a more general class of AH structures. Given an AH structure $(\en,[h])$ there is considered the vector space of \textbf{projective harmonic} vector fields, which comprises those vector fields $X$ for which there vanishes $H^{ij}(\lie_{X}\en)_{ij}\,^{k}$, which is the trace, using $[h]$, of the Lie derivative of $\en$. This evidently contains the infinitesimal projective automorphisms of $\en$. In the presence of a particular upper bound on the Ricci tensor of $\nabla$ there is deduced a vanishing theorem for projective harmonic vector fields. The argument is motivated by a result of R. Couty \cite{Couty} showing that on a compact manifold there are no infinitesimal projective automorphisms of a Riemannian metric with non-positive curvature which is somewhere negative. By the results of section \ref{convexprojectivesection} the vanishing theorem for projective harmonic vector fields applies to convex flat real projective structures and implies for them a vanishing theorem for infinitesimal projective automorphisms. A strong conclusion is reached only because of Theorem \ref{calabitheorem} of Calabi implying the non-positivity of the Ricci curvature of a distinguished metric of the Einstein AH structure determined by a convex flat projective structure. A clean result is obtained in two dimensions because the Gau\ss-Bonnet theorem links the curvature hypothesis in the vanishing theorem to the topology.

\subsubsection{}\label{eigensectionintro}
Theorem \ref{eigentheorem} shows that the problem of constructing an exact Einstein AH structure with self-conjugate curvature for which a distinguished metric is flat reduces to the purely algebraic problem of constructing completely trace-free cubic tensors satisfying certain quadratic equations. While a complete solution is not given here, it is shown in section \ref{cubicformsection} how to produce some explicit solutions. Precisely, there are constructed on $\rea^{n}$ examples of exact Einstein AH structures with negative scalar curvature and self-conjugate curvature for which the flat Euclidean metric is a distinguished metric and which are neither projectively flat nor conjugate projectively flat. This shows that there are Einstein AH structures with self-conjugate curvature which are not Weyl and do not come from convex flat projective structures or affine hyperspheres.

These examples are constructed by constructing certain commutative nonassociative algebras. From Theorem \ref{eigentheorem} is is apparent that the condition on $L_{ijk}$ for a pair $(h, L)$ in which $h$ is flat to comprise the distinguished representative of an exact Einstein AH with cubic torsion $\bt_{ij}\,^{k} = H^{kp}L_{ijp}$ reduces to a purely algebraic condition. A cubic tensor is naturally viewed as the structure tensor of an algebra, and the problem of finding cubic tensors of the desired sort is recast as that of constructing a commutative, non-associative, non-unital algebra the trace form of which is a constant multiple of a non-degenerate symmetric invariant bilinear form. These algebras are named \textbf{Einstein commutative Codazzi algebras} and are closely related to special case of a proposed non-associative non-unital generalization of what are usually called \textit{Frobenius algebras}. In order to situate the algebraic structures which arise in the usual geographies, and because these algebraic notions seem to merit further exploration they are recounted in more detail than is needed for the uses which are made of them. In particular, it might be a tractable problem to give a structure theory for commutative Codazzi algebras. The multiplication of a commutative Codazzi algebra determines a harmonic cubic homogeneous polynomial satisfying certain conditions, and some such polynomials are constructed explicitly. In \cite{Kinyon-Sagle}, M. Kinyon and A. Sagle have defined and studied the \textbf{Nahm algebra} of a Lie algebra. In Theorem \ref{compactnahmtheorem} it is shown that the Nahm algebra of a compact simple Lie algebra is an algebra of the desired sort, so gives rise to an Einstein AH structure. In \cite{Cartan-cubic}, E. Cartan classified the isoparametric hypersurfaces of a round sphere having three distinct principal curvatures. These come in one parameter families given as the level sets of certain harmonic homogeneous cubic polynomials associated to the real definite signature composition algebras, and, as is explained in section \ref{isoparametricsection}, the corresponding multiplications also yield Einstein AH structures.

\subsubsection{}
Although the main theme of the paper is the Einstein equations, a secondary theme is that many constructions for and problems about conformal structures have analogues or generalizations for AH structures. Section \ref{codprojsection} presents some evidence for this point of view.

In section \ref{mobiussection} it is shown that the notions of M\"obius structure and the conformal Laplacian both generalize straightforwardly to the setting of Codazzi projective structures. In section \ref{yamabesection} it is shown that the usual Yamabe problem generalizes to the context of AH structure. A \textbf{restricted Codazzi projective structure} is defined to be an equivalence class of AH structures, the difference tensor of the aligned representatives of two of which is determined by an exact one-form. If one AH structure subordinate to a restricted Codazzi projective structure is exact, then all are, and it makes sense to look for a subordinate AH structure for which the unweighted scalar curvature of the aligned representative with respect to a volume-normalized distinguished metric is critical. The resulting equations \eqref{yamabe} generalize the equations arising in the usual Yamabe problem, the usual scalar curvature being replaced by its difference with a constant times the norm of the cubic torsion. This shows that the analogue of the usual Yamabe problem makes sense for restricted Codazzi projective structures. Its study is left as a problem for the future.

In section \ref{bachsection} the problem of defining a Bach tensor for $4$-dimensional Codazzi projective structures is discussed and some partial results are described. One conclusion is that objects such as Q-curvature, Paneitz operators, and the Fefferman-Graham ambient metric should admit generalizations in this setting, and that a more sophisticated point of view along these lines will be necessary, as in its absence the computations become unmanageable.

\subsubsection{}
There is an analogy between AH structures and K\"ahler structures which is more fruitful than it may at first appear. In the closely related but more restricted contexts of the K\"ahler affine metrics considered in \cite{Cheng-Yau-realmongeampere} or the special geometry of e.g.  \cite{Bershadsky-Cecotti-Ooguri-Vafa}, \cite{Freed}, and \cite{Strominger}, this relation is more than an analogy, there being for example a K\"ahler tube domain canonically associated to a K\"ahler affine metric. %
In this analogy the projective structure plays the role of the complex structure and the conformal structure plays the role of the K\"ahler metric. The present paper focuses on the conformal properties of AH structures, and the problem of constructing an Einstein AH structure given a conformal structure and a cubic tensor of the appropriate sort is somehow analogous to the problem of finding on a symplectic manifold a complex structure such that with a given metric it forms a K\"ahler Einstein metric having in its K\"ahler class the given symplectic form. The projective point of view on AH structures has been here mostly ignored outside of section \ref{convexprojectivesection}. There the basic problem is to find a conformal structure which forms with a given a projective structure (in particular a flat projective structure) an Einstein AH structure and such that the conformal metric is in some sense within a K\"ahler class; what this means exactly was already discussed some in \cite{Cheng-Yau-realmongeampere}, and there should be a formulation in which \cite{Cheng-Yau-realmongeampere} can be seen as resolving the `negative Chern class' case of the problem for flat projective structures. In such a formulation the K\"ahler class in the usual cohomology theory has to be replaced by an analogous object determined by the metric and expressed in terms of the generalized BGG sequences associated to projective structures as in \cite{Cap-Slovak-Soucek} and \cite{Calderbank-Diemer}. The related complexes constructed by Calabi in \cite{Calabi-constantcurvature} are also relevant. Some idea of what is meant can be obtained from considering Lemma \ref{buahlemma}, Theorem \ref{butheorem}, and Theorem \ref{convextheorem}, in which such a formulation is implicit. Theorem \ref{convextheorem} gives a result in this direction, but its proof passes through deep results associating to a convex flat projective structure a properly embedded affine hypersphere, and it is desirable to find a direct intrinsic proof of Theorem \ref{convextheorem}, in which the aforementioned analogy is brought to the surface.

\subsubsection{}\label{lcksection}
It should be mentioned that there is a good notion of a locally conformally K\"ahler (LCK) AH structure, which directly generalizes the case of two-dimensional AH structures and the so called \textit{K\"ahler Weyl} structures (see e.g. \cite{Calderbank-Pedersen}). Namely, this is an AH structure $(\en, [h])$ equipped with an almost complex structure $J_{i}\,^{j}$ such that the aligned representative $\nabla \in \en$ satisfies $\nabla_{[i}J_{j]}\,^{k} = 0$ and $\nabla_{i}\Omega_{jk} = 0$ in which $\Omega_{ij} \defeq J_{i}\,^{p}H_{pj}$ is the density-valued analogue of the K\"ahler form. Holomorphicity yields stronger results in this setting, and this will be treated in detail in another paper. Here these structures are mentioned only to point out another connection with structures already studied: a flat, exact LCK AH structure is a \textbf{special K\"ahler structure} in the sense of \cite{Freed} (formalizing a notion from the physics literature, e.g. \cite{Bershadsky-Cecotti-Ooguri-Vafa} or \cite{Strominger}).

\subsubsection{}
Generalizations of the Einstein equations with a similar spirit have been made before, and some of them may be related to the proposals here more directly than by simple analogy. In this regard, the \textbf{non-Hermitian Yang-Mills} equations studied by M. Verbitsky and D. Kaledin in \cite{Verbitsky-Kaledin} seem relevant. It seems likely that at least some special cases of the Einstein AH equations are related to the Yang-Mills-Higgs formalism, and this deserves further exploration.

\subsubsection{}
For any definition of a new class of mathematical structures there needs to be shown on the one hand that the class contains interesting examples, and on the other that it is sufficiently limited as to be amenable to study. Einstein Weyl structures and convex flat real projective structures are extensive families of interesting examples of Einstein AH structures with self-conjugate curvature, and it is already interesting that there is a common framework which includes both. On the other hand, the results given here show that not all such AH structures arise in these families. This is taken as evidence that the abstract notion of AH structures generalizes both in a non-trivial way. Results about the scalar curvature and Gauduchon gauge of Einstein Weyl structures and cubic torsion of affine hyperspheres mostly extend to Einstein AH structures, and such results suggest that the class of such structures is sufficiently limited as to admit fruitful study. More examples are needed to fully justify the formalism. The situation will be satisfactory once there is in hand a theorem proving the existence on some class of compact manifolds of dimension at least $4$ of Einstein AH structures having self-conjugate curvature and negative scalar curvature but which are not projectively flat.

\subsubsection{}
Despite its length, this article has a preliminary character. The main technical tools beyond tensor calculus are standard elliptic PDE techniques in their most elementary forms, namely the maximum principle, the Bochner technique, and Weitzenb\"ock type formulas. More refined and stronger results should be obtainable with more sophisticated methods. In particular the equations of Theorem \ref{eigentheorem} should be solvable under appropriate negativity conditions on the curvature. This project, which almost certainly requires studying an associated flow, seems to be the line of inquiry which needs to be pursued next. %

More sophisticated machinery for treating projective and conformal structures, e.g. the associated Cartan connections or the Fefferman-Graham ambient metric, has been avoided. In this regard it should be mentioned that in \cite{Armstrong-einstein} S. Armstrong has proposed a notion of Einstein equations for parabolic geometries, and the specialization of his notion to the context of projective structures appears to be related to that proposed here. This will be examined elsewhere.

\subsubsection{}
The main ideas here are motivated by studying the papers \cite{Calabi-improper, Calabi-bernsteinproblems, Calabi-completeaffine, Calabi-affinelyinvariant, Calabi-affinemaximal} of E. Calabi and \cite{Cheng-Yau-maximalspacelike, Cheng-Yau-mongeampere, Cheng-Yau-realmongeampere, Cheng-Yau-affinehyperspheresI} of S.Y. Cheng and S.T. Yau. While an effort has been made to reference related work, some has undoubtedly been overlooked. In particular the literature on geometry of hypersurfaces in affine space is enormous. The basics can be found in the book \cite{Nomizu-Sasaki} of K. Nomizu and T. Sasaki. What is most relevant here is the work related to affine hyperspheres going back to E. Calabi, S.Y. Cheng and S.T. Yau, K. J\"orgens, and A. Pogorelov, which focused on Bernstein type problems and the study of Monge-Amp\`ere equations. In this regard the current situation is represented by the work of N. Trudinger and X. Wang; see e.g. \cite{Wang-icm}. My personal understanding of affine hyperspheres and convex flat projective structures has been greatly influenced by the papers \cite{Loftin-cubic,  Loftin-Yau-Zaslow, Loftin-Yau-Zaslow-erratum, Loftin-affinespheres, Loftin-affinekahler,  Loftin-riemannian} of J. Loftin and his collaborators. Y. Benoist's survey \cite{Benoist-survey} is recommended as an introduction to the subject of convex flat projective structures. Discussion of open problems related to these topics can be found in the survey \cite{Yau-perspectives}. W. Goldman's \cite{Goldman-convex} and F. Labourie's \cite{Labourie-flatprojective} are basic for the two-dimensional case. As will be evident from section \ref{scalarcurvaturesection}, my understanding of Einstein Weyl structures owes a lot to the papers of D. Calderbank, M. Eastwood, P. Gauduchon, H. Pedersen, P. Tod, and their collaborators, e.g. \cite{Calderbank-mobius, Calderbank-faraday, Calderbank-Pedersen, Eastwood-Tod, Gauduchon, Pedersen-Tod, Tod-compact}.

\section{AH structures}\label{ahsection}
In this section are given the basic definitions. Claims not here justified are, as a rule, verifiable by straightforward, though sometimes lengthy, computations.

\subsection{Preliminaries}\label{preliminariessection}

\subsubsection{}
The word \textit{smooth} means always infinitely differentiable. All manifolds are smooth and without boundary, and $M$ is always a manifold, and its dimensions is always written $n$. The dimension $n$ is always at least $2$, although, because attention will often be restricted to the $n > 2$ case, sometimes $n \geq 2$ will be stated explicitly, to emphasize the inclusion of the $n = 2$ case. 

The result of applying to $X \in T_{p}M$ the differential of a smooth map $f:M \to N$ is written $Tf(p)(X)$; the letter $D$ is used to denote a covariant derivative. Smooth sections of a vector bundle $E$ are written $\Ga(E)$, although in the special case of $\Ga(TM)$ occasionally there is written instead $\vect(M)$, when it is desired to emphasize the structure as a Lie algebra.

\subsubsection{}
Tensors are indicated using the abstract index notation, so that, for instance, $a_{ij}$ indicates a covariant two tensor. Enclosure of indices in square brackets (resp. parentheses) indicates complete skew-symmetrization (resp. complete symmetrization), so that for example $a^{ij}= a^{(ij)} + a^{[ij]}$ indicates the decomposition of a contravariant two-tensor into its symmetric and skew-symmetric parts. Inclusion of an index between vertical bars $|\,|$ indicates its omission from an indicated symmetrization; for example $2a_{[i|jk|l]} = a_{ijkl} - a_{ljki}$. The summation convention is always in effect in the following form: indices are in either \textit{up} position or \textit{down} and a label appearing as both an up index and a down index indicates the trace pairing. Since polynomials on the vector space $\ste$ are tautologically identified with symmetric tensors on the dual vector space $\ste^{\ast}$, the index $i$ in $\tfrac{\pr}{\pr y_{i}}$ has to be regarded as an \textit{up} index. The horizontal position of indices is always maintained when they are raised or lowered. A line-bundle valued tensor is refered to as \textit{weighted}. In particular, by a $\la$-density is meant a section of $|\det \ctm|^{\la}$. A non-degenerate weighted covariant two-tensor $h_{ij}$ determines a contravariant two-tensor $h^{ij}$ of complementary weight defined uniquely by $h^{ip}h_{jp} = \delta_{j}\,^{i}$, in which here, as always, $\delta_{i}\,^{j}$ is the tautological $\binom{1}{1}$-tensor determined by the pairing of vectors with covectors. Note that $h_{ij}$ is not assumed symmetric, so this establishes a convention regarding the ordering of the indices. In particular, if $h_{[ij]} = h_{ij}$ is a symplectic form then $h^{ip}h_{pj} = -\delta_{j}\,^{i}$. The wedge product is defined consistently with the convention that $X \wedge Y = X \tensor Y - Y \tensor X$ for vector fields $X$ and $Y$, so $(X \wedge Y)^{ij} = 2X^{[i}Y^{j]}$.

The trace-free symmetric parts of covariant two tensors will appear so frequently that it will be convenient to have for them some notation; given a non-degenerate symmetric tensor $h_{ij}$ write $\mr{Z}_{ij} \defeq Z_{(ij)} - \tfrac{1}{n}h^{pq}Z_{pq}h_{ij}$ for the trace-free symmetric part of the covariant two-tensor $Z_{ij}$. For a vector bundle $E$, $S^{k}(E)$ denotes the $k$th symmetric power of $E$, and if there is given a fiberwise metric on $E$ then $S^{k}_{0}(E)$ denotes the subbundle of $S^{k}(E)$ comprising completely symmetric elements completely trace-free with respect to the given metric.

\subsubsection{}
To a Young diagram the boxes of which are labeled with distinct indices corresponds the irreducible $GL(n, \rea)$ module comprising tensors skew-symmetric in the indices in a given column of the Young diagram and vanishing when skew-symmetrized over the indices in a given column and any index in any box to the right of the given column. The irreducible representations of the subgroup $CO(h)$ of $GL(n, \rea)$ acting conformally with respect to a fixed metric $h$ on $\rea^{n}$ are described in \cite{Weyl}. The submodules of the irreducible $GL(n, \rea)$ representations comprising tensors completely trace-free with respect to $h$ are representations of the subgroup $CO(h)$ of $GL(n, \rea)$ acting conformally with respect to $h$ (they are usually, but not always, again irreducible). Lemma \ref{weylcriterion} will be invoked repeatedly.

\begin{lemma}[\cite{Weyl}, Theorem $5.7.$A]\label{weylcriterion}
The $CO(h)$-modules of covariant trace-free tensors on $\rea^{n}$ having symmetries corresponding to Young diagrams the sums of the lengths of the first two columns of which are greater than $n$ are trivial. 
\end{lemma}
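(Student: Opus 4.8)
The plan is to treat this as the standard vanishing theorem for trace-free tensors of a fixed Young symmetry type, proving it directly rather than merely quoting \cite{Weyl}. Write $p$ and $q$ for the lengths of the first and second columns of the Young diagram, so $p \geq q$ and the hypothesis is $p + q > n$. The first reduction is trivial: if $p > n$ then the defining skew-symmetry is over more than $n$ indices in an $n$-dimensional space, so any such tensor vanishes already before imposing trace-freeness. Hence I may assume $p \leq n$, so that $q > n - p \geq 0$, and it remains to show that a tensor $T$ on $\rea^{n}$ which is completely trace-free with respect to $h$ and carries the irreducible $GL(n, \rea)$-symmetry of the diagram must be zero.

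The key point is that the sharp bound $p + q \leq n$, as opposed to the weaker $GL(n, \rea)$-bound $p \leq n$, can only come from using the metric, since over $GL(n, \rea)$ the Schur functor is nonzero whenever $p \leq n$. The mechanism I would exploit is Hodge duality on the first column. Because $T$ is skew in the $p$ first-column indices and $p \leq n$, contracting this block against the volume form $\epsilon$ of $h$ produces a tensor $U$ skew in a complementary block of $n - p$ indices and equivalent to $T$, the star operation being invertible. Complete trace-freeness of $T$ is exactly what guarantees that $U$ again has no nonzero traces, so that the $CO(h)$-module structure is preserved under this operation. Under the same contraction the Young relation for $T$ — that skew-symmetrizing the whole first column together with any box strictly to its right gives zero — transforms into a relation coupling the new length-$(n-p)$ skew block of $U$ to the length-$q$ second column. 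Since $q > n - p$, these two demands are incompatible for a trace-free tensor: concretely one arranges a skew-symmetrization over $n+1$ slots, namely the dual block together with one extra second-column index, which must vanish in dimension $n$, and reads off that $U$, hence $T$, is forced to be expressible purely through contractions, which vanish. As an alternative route I would instead pass to highest-weight theory: the symmetry type determines a candidate $O(n)$-highest weight, and the Weyl dimension formula for the corresponding module contains a factor that vanishes precisely when $p + q > n$, giving the result without the index manipulation.

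I expect the main obstacle to be the careful bookkeeping in the second paragraph: making precise how the Young (Garnir-type) relation transforms under the $\epsilon$-contraction and confirming that the resulting over-antisymmetrization genuinely forces vanishing rather than merely a nontrivial linear dependence among components. This is delicate precisely because neither the skew-symmetry alone nor the trace-free condition alone suffices, so the two must be deployed simultaneously, and tracking the signs and normalizations in the interleaving (shuffle) expansion is where errors are easy to make. Once that identity is established, combining it with the trivial $p > n$ case completes the argument, and the statement for diagrams whose first two columns are even longer follows a fortiori.
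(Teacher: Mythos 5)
The paper does not prove this lemma at all: it is quoted verbatim as Theorem 5.7.A of Weyl's \emph{The Classical Groups}, so there is no internal argument to compare yours against. Your opening reduction is fine: if the first column has length $p>n$ the tensor is skew in more than $n$ indices and vanishes outright, so one may assume $q\leq p\leq n$ and $q>n-p\geq 0$, and the content of the lemma is that trace-freeness supplies the remaining vanishing.

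The core step, however, has a concrete gap. After dualizing the first column against the volume form, the resulting tensor $U$ carries a skew block of $n-p$ indices, so adjoining a single second-column index gives a skew-symmetrization over $n-p+1\leq n$ slots, not $n+1$; no automatic vanishing in dimension $n$ is produced, and the inequality $q>n-p$ is never actually brought to bear. There is also a bookkeeping error in the claim that trace-freeness of $T$ passes directly to $U$: under the $\epsilon$-contraction the Garnir relation for $T$ (skew-symmetrizing the full first column with a box of the second column) is what becomes a \emph{trace} of $U$ between the dual block and the second column, while a trace of $T$ between its first two columns becomes an \emph{antisymmetrization} condition on $U$; your sketch leaves these two exchanges unexamined, and it is precisely in their interplay that the proof lives. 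The alternative route via the Weyl dimension formula is likewise not available as stated: the partition with $p+q>n$ is not a dominant $O(h)$-weight in the standard labelling, the El Samra--King product formula only exhibits a vanishing factor on the boundary case $p+q=n+1$, and beyond that one needs the modification rules --- which is essentially the theorem being proved. A clean elementary repair is the classical one: in an $h$-orthonormal basis, since $p+q>n$ every potentially nonzero component must have a repeated value shared between the first two columns; one then performs a downward induction on the number of such coincidences, using the vanishing of the traces between the two columns (summed over the basis, with the column antisymmetries eliminating all but the coincident terms) to force each component to zero. Either carry that induction out, or retain the citation to Weyl as the paper does.
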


For instance, Lemma \ref{weylcriterion} implies that the usual conformal Weyl tensor of a Riemannian metric (the completely trace-free part of the Riemann curvature tensor) vanishes identically on a manifold of dimension at most $3$. The following lemmas cause various identities to simplify in the two-dimensional case.
\begin{lemma}\label{twodcubicformlemma}
Let $h_{ij}$ be a constant non-degenerate symmetric tensor on $\rea^{2}$, and for $k > 1$ let $A_{i_{1}\dots i_{k}}$ and $B_{i_{1}\dots i_{k}}$ be completely symmetric, completely $h$-trace free tensors on $\rea^{2}$. Then 
\begin{align}\label{twodab}
2A_{a_{1}\dots a_{k-1}(i}B_{j)}\,^{a_{1}\dots a_{k-1}} = A_{a_{1}\dots a_{k}}B^{a_{1}\dots a_{k}}h_{ij}, 
\end{align}
in which indices are raised and lowered with $h_{ij}$ and its inverse $h^{ij}$.
\end{lemma}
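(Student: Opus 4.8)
The plan is to reduce the asserted identity to the vanishing of a single trace-free symmetric $2$-tensor and then to kill that tensor by a weight count. Write $C_{ij} \defeq A_{a_1\dots a_{k-1}(i}B_{j)}{}^{a_1\dots a_{k-1}}$ for the symmetric $2$-tensor on the left-hand side of \eqref{twodab}, so that the claim is exactly $2C_{ij} = (A_{a_1\dots a_k}B^{a_1\dots a_k})\,h_{ij}$, i.e.\ that $C_{ij}$ is pure trace. First I would take the $h$-trace of both sides: since $h^{ij}C_{ij} = A_{a_1\dots a_{k-1}i}B^{a_1\dots a_{k-1}i} = A_{a_1\dots a_k}B^{a_1\dots a_k}$ (using the full symmetry of $B$) and $h^{ij}h_{ij} = n = 2$, the two sides automatically have the same trace. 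Hence the entire content of the lemma is the vanishing of the trace-free symmetric part $\mr{C}_{ij}$, and the normalization on the right is forced; the problem is reduced to showing $\mr{C}_{ij} = 0$.

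Next I would observe that $\mr{C}_{ij}$ depends bilinearly and $CO(h)$-equivariantly on the pair $(A,B)$, and is symmetric under interchange of $A$ and $B$, so it is the value of a $CO(h)$-equivariant linear map $\Phi\colon S^{2}(S^{k}_{0}) \to S^{2}_{0}$. To see that $\Phi = 0$ I would pass to the complexification, where the identity component of the conformal group acts through a single character, and decompose all modules into weight spaces. On $\rea^{2}$ the module $S^{k}_{0}$ of completely trace-free symmetric rank-$k$ tensors is two-dimensional, spanned by the two extreme vectors of weights $\pm k$; consequently $S^{2}(S^{k}_{0})$ carries only the weights $\{2k,\,0,\,-2k\}$, whereas $S^{2}_{0}$ carries the weights $\{2,\,-2\}$. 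Because $k > 1$ these two weight sets are disjoint, so the only equivariant map between the modules is zero, giving $\mr{C}_{ij} = 0$. This is precisely where the hypothesis $k>1$ is used: for $k=1$ the weight $2k = 2$ would coincide and the conclusion would genuinely fail. Conceptually this is the same phenomenon as Lemma \ref{weylcriterion} — a symmetry type with no room in dimension two — though here the clean statement is the weight count rather than a Young-diagram column-length bound.

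As a concrete alternative, and a useful check, I would fix a null coframe diagonalizing $h$, that is conjugate coordinates $z,\bar z$ with $h_{zz} = h_{\bar z\bar z} = 0$ and $h_{z\bar z} \ne 0$, in which a completely trace-free symmetric rank-$k$ tensor retains only its two extreme components $A_{z\dots z}$ and $A_{\bar z\dots\bar z}$, every mixed component being forced to vanish by trace-freeness in dimension two. Both sides of \eqref{twodab} then reduce to short component computations: the $(zz)$ and $(\bar z\bar z)$ components vanish identically (the left-hand side because $A_{\bar z\cdots\bar z\, z}$ is mixed, hence zero, once $k-1\ge 1$), while the $(z\bar z)$ components agree after raising the contracted indices with $h^{z\bar z}$. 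The only real obstacle is organizational rather than conceptual: keeping the multi-index contractions and the powers of $h^{z\bar z}$ straight, and phrasing the weight argument over the complexification so that it applies uniformly in Riemannian, Lorentzian, and split signatures — legitimate because \eqref{twodab} is polynomial in the entries of $A$, $B$, $h$, so it suffices to verify it after complexifying. Either route is routine; I would present the weight count as the conceptual proof and relegate the explicit null-frame computation to a remark.
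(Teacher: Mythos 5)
Your argument is correct, and it reaches the identity by a route genuinely different from the paper's. The paper's proof constructs an explicit auxiliary tensor
\begin{align*}
\om_{ija_{1}\dots a_{k}} = h_{i(a_{1}}A_{a_{2}\dots a_{k})j} + h_{j(a_{1}}A_{a_{2}\dots a_{k})i} - h_{ij}A_{a_{1}\dots a_{k}} - h_{(a_{1}a_{2}}A_{a_{3}\dots a_{k})ij},
\end{align*}
verifies that it is completely trace-free with symmetries $\om_{ija_{1}\dots a_{k}} = \om_{(ij)(a_{1}\dots a_{k})}$ and $\om_{i(ja_{1}\dots a_{k})} = 0$, concludes $\om = 0$ from Lemma \ref{weylcriterion} (the first two columns of the relevant Young diagram have lengths summing to more than $2$), and then obtains \eqref{twodab} by contracting $\om$ with $B^{a_{1}\dots a_{k}}$. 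You instead work directly with the rank-two output: after checking that both sides of \eqref{twodab} have the same $h$-trace, you show the trace-free part vanishes because the bilinear map $(A,B)\mapsto \mr{C}_{ij}$ is an equivariant map $S^{2}(S^{k}_{0})\to S^{2}_{0}$ and, in two dimensions, the complexified weight sets $\{2k,0,-2k\}$ and $\{\pm 2\}$ are disjoint for $k>1$. Both proofs are ultimately the same representation-theoretic phenomenon (no room for the given symmetry type in dimension two), but the mechanics differ: the paper's version yields a stronger intermediate statement (a tensor identity $\om=0$ depending on $A$ alone, reusable for other contractions) at the cost of the slightly fiddly verification that $\om$ is trace-free, while your weight count is more self-contained, makes the necessity of $k>1$ completely transparent, and comes with an elementary null-frame verification — whose component check ($(zz)$ and $(\bar z\bar z)$ vanish, $(z\bar z)$ components agree after accounting for the factor $h^{z\bar z}h_{z\bar z}=1$) I confirm goes through. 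Either proof would serve; yours does not need Lemma \ref{weylcriterion} at all.
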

\begin{proof}
Let $A, B \in S^{k}_{0}(\rea_{2})$. It is easily verified that the tensor
\begin{align*}
\om_{ija_{1}\dots a_{k}} = h_{i(a_{1}}A_{a_{2}\dots a_{k})j} + h_{j(a_{1}}A_{a_{2}\dots a_{k})i} - h_{ij}A_{a_{1}\dots a_{k}} - h_{(a_{1}a_{2}}A_{a_{3}\dots a_{k})ij}
\end{align*}
is completely trace-free. As $\om_{ija_{1}\dots a_{k}} = \om_{(ij)(a_{i}\dots a_{k})}$ and $\om_{i(ja_{1}\dots a_{k})} = 0$, it follows from Lemma \ref{weylcriterion} that $\om = 0$. Hence $2A_{a_{1}\dots a_{k}(i}B_{j)}\,^{a_{1}\dots a_{k}} - A_{a_{1}\dots a_{k}}B^{a_{1}\dots a_{k}}h_{ij} = B^{a_{1}\dots a_{k}}\om_{ija_{1}\dots a_{k}} = 0$.
\end{proof}

\begin{lemma}\label{twodxblemma}
Let $h_{ij}$ be a non-degenerate symmetric tensor on $\rea^{2}$, and for $k > 1$ let $B_{i_{1}\dots i_{k}} = B_{(i_{1}\dots i_{k})}$ be completely $h$-trace free and let $X^{i}$ be a vector. Then $2|i(X)B|^{2}_{h} = |X|_{h}^{2}|B|_{h}^{2}$. In particular if $h_{ij}$ has definite signature the equations $|B|_{h}^{2}|X|_{h}^{2} = 0$ and $X^{p}B_{pi_{1}\dots i_{k-1}} = 0$ are equivalent.
\end{lemma}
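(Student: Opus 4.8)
The plan is to obtain the identity as a direct corollary of Lemma \ref{twodcubicformlemma}. First I would specialize the polarization identity \eqref{twodab} to the case $A = B$, which gives
\begin{align*}
2B_{a_{1}\dots a_{k-1}(i}B_{j)}\,^{a_{1}\dots a_{k-1}} = B_{a_{1}\dots a_{k}}B^{a_{1}\dots a_{k}}h_{ij} = |B|_{h}^{2}h_{ij}.
\end{align*}
Although Lemma \ref{twodcubicformlemma} is stated for a \emph{constant} non-degenerate symmetric tensor, the content of \eqref{twodab} is a pointwise algebraic identity in the fibers, so it applies verbatim at each point for the (possibly non-constant) non-degenerate symmetric $h_{ij}$ appearing in the present statement.

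The second step is to contract the displayed identity with $X^{i}X^{j}$. Since $X^{i}X^{j}$ is symmetric, the symmetrization over $i$ and $j$ on the left-hand side is automatic, and the contraction yields
\begin{align*}
2X^{i}X^{j}B_{a_{1}\dots a_{k-1}i}B_{j}\,^{a_{1}\dots a_{k-1}} = |B|_{h}^{2}X^{i}X^{j}h_{ij} = |B|_{h}^{2}|X|_{h}^{2}.
\end{align*}
Recognizing the left-hand side as $2|i(X)B|_{h}^{2}$, where $i(X)B$ has components $X^{p}B_{pi_{1}\dots i_{k-1}}$, gives exactly $2|i(X)B|_{h}^{2} = |X|_{h}^{2}|B|_{h}^{2}$, which is the first assertion.

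For the final claim I would invoke the definiteness of $h_{ij}$: in that case $|\cdot|_{h}^{2}$ is (up to an overall sign) positive definite on each tensor space, so $|i(X)B|_{h}^{2} = 0$ holds if and only if $i(X)B = 0$, i.e.\ $X^{p}B_{pi_{1}\dots i_{k-1}} = 0$. Combined with the identity just established, the vanishing $X^{p}B_{pi_{1}\dots i_{k-1}} = 0$ is then equivalent to $|X|_{h}^{2}|B|_{h}^{2} = 0$, as claimed.

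The computation is entirely routine; the only substantive observation is that the statement is precisely the $A = B$ specialization of \eqref{twodab} contracted against $X^{i}X^{j}$. I therefore do not anticipate a genuine obstacle beyond correctly tracking the symmetrizations and noting that Lemma \ref{twodcubicformlemma} is to be applied pointwise rather than under a global constancy hypothesis on $h_{ij}$.
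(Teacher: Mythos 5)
Your proof is correct and follows essentially the same route as the paper's: the paper likewise obtains the identity by applying \eqref{twodab} with $A = B$ and contracting against $X^{i}X^{j}$, then uses definiteness for the final equivalence. Your remark that Lemma \ref{twodcubicformlemma} applies pointwise despite its constancy hypothesis is a sensible clarification the paper leaves implicit.
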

\begin{proof}
By Lemma \ref{twodcubicformlemma} there holds $2|i(X)B|^{2}_{h} = 2X^{p}B_{pi_{1}\dots i_{k-1}}X^{q}B_{q}\,^{i_{1}\dots i_{k-1}} = |X|_{h}^{2}|B|_{h}^{2}$.%
\end{proof}

\subsubsection{}
Two affine connections are \textbf{projectively equivalent} if they have the same unparameterized geodesics in the sense that the image of any geodesic of one connection is the image of a geodesic of the other connection. This is the case if and only if the symmetric part of their difference tensor is pure trace. A \textbf{projective structure} is an equivalence class of projectively equivalent affine connections. For a torsion-free affine connection $\nabla$ the \tbf{projective Weyl} and \tbf{projective Cotton} tensors $B_{ijk}\,^{l}$ and $C_{ijk}$ are defined by 
\begin{align}\label{bijkl}
&B_{ijk}\,^{l} \defeq R_{ijk}\,^{l} + 2\delta_{[i}\,^{l}P_{j]k}  - 2\delta_{k}\,^{l}P_{[ij]},& &C_{ijk} \defeq 2\nabla_{[i}P_{j]k},
\end{align}
in which $P_{ij}\defeq \tfrac{1}{1-n}R_{(ij)} - \tfrac{1}{n+1}R_{[ij]}$, %
is the \tbf{projective Schouten tensor}. The projective Weyl tensor does not depend on the choice of $\nabla \in \en$. When $n = 2$ the projective Weyl tensor is identically zero and the projective Cotton tensor does not depend on the choice of $\nabla \in \en$. Tracing the differential Bianchi identity yields $\nabla_{p}R_{ijk}\,^{p} = 2\nabla_{[i}R_{j]k}$ and so the algebraic Bianchi identity yields $0 = \nabla_{p}R_{[ijk]}\,^{p} = 2\nabla_{[i}R_{jk]} = 2(n+1)\nabla_{[i}P_{jk]} = (n+1)C_{[ijk]}$, showing that $C_{[ijk]} = 0$. In a similar fashion the trace-free parts of the Bianchi identities for $\nabla$ yield:
\begin{align}\label{projectivebianchi}
&B_{[ijk]}\,^{l} = 0,& & C_{[ijk]} = 0,&
&\nabla_{[i}B_{jk]l}\,^{p} = -\delta_{[i}\,^{p}C_{jk]l},& &\nabla_{p}B_{ijk}\,^{p} = (2-n)C_{ijk}.
\end{align}
The Ricci identity and the algebraic Bianchi identity yield $\nabla_{[i}C_{jk]l} = 2\nabla_{[i}\nabla_{j}P_{k]l} = - R_{[ij|l|}\,^{p}P_{k]p} =  - B_{[ij|l|}\,^{p}P_{k]p}$, or $\nabla_{[i}C_{jk]l} + P_{[i|p|}B_{jk]l}\,^{p} = 0$.

\subsubsection{}
A conformal structure $[h]$ means a pseudo-Riemannian metric determined up to multiplication by a positive function. A conformal structure is \textbf{Riemannian} if any representative metric is positive definite. A conformal structure $[h]$ is identified with its \textbf{normalized representative} $H_{ij}\defeq |\det h|^{-1/n}h_{ij}$ which takes values in the bundle of $-2/n$ densities (a $1$-density is the absolute value of a volume form). Note that $\det H_{ij} = 1$, so that if $\nabla$ is a torsion-free connection there holds $H^{pq}\nabla_{i}H_{pq} = 0$. 

The default convention throughout the paper is that indices are raised and lowered using $H_{ij}$ and the dual symmetric bivector $H^{ij}$ defined by $H^{ip}H_{pj} =\delta_{j}\,^{i}$. The only systematic exception is that if $h_{ij}$ is a metric, then $h^{ij}$ always denotes the dual bivector, and not $H^{ia}H^{jb}h_{ab}$. 

For conformal metrics $\tilde{h}_{ij} = fh_{ij}$ the Levi-Civita connections are written $\tD$ and $D$, and their difference tensor is written $\tD - D =   2\si_{(i}\delta_{j)}\,^{k} - h_{ij}h^{kp}\si_{p}$ in which $2\si_{i} = d\log{f}_{i}$. The curvature of $D$ is written $\sR_{ijk}\,^{l}$. Objects corresponding to $\tD$ are written with the same notations as those corresponding to $D$, but decorated with a $\tilde{\,}$. 

\subsubsection{}\label{coclosedsection}
Suppose $h$ is a pseudo-Riemannian metric with signature $(s, n-s)$. An $h$-orthonormal frame means a basis of vectors $X_{1}, \dots, X_{n}$ which are pairwise orthogonal, and such that the first $s$ have $h$-norm $-1$, and the last $(n-s)$ have $h$-norm $1$. If $\al_{i_{1}\dots i_{k}}^{j_{1}\dots j_{l}}$ is any tensor then $|\al|_{h}^{2}$ always denotes the quadratic form associated to the pairing of tensors $\lb \al, \be \ra_{h}$ given by complete contraction, $\lb \al, \be \ra = \al_{i_{1}\dots i_{k}}^{j_{1}\dots j_{l}}\be_{a_{1}\dots a_{k}}^{b_{1}\dots b_{l}}h^{i_{1}a_{1}}\dots h^{a_{k}i_{k}}h_{j_{1}b_{1}}\dots h_{j_{l}b_{l}}$. Here, as it will often be, the subscript indicating dependence on $h$ has been omitted. The conventions are such that in Riemannian signature the norm square of the wedge product of $k$ orthonormal one-forms equals $k!$, and not $1$.

If $M$ is oriented, the pseudo-Riemannian volume element $\ep_{i_{1}\dots i_{n}}$ is defined by the requirement that if $X_{1},\dots, X_{n}$ is an $h$-orthonormal frame then $X_{1}^{i_{1}}\dots X_{n}^{i_{n}}\ep_{i_{1}\dots i_{n}} = 1$. The dual $n$-vector $\ep^{i_{1}\dots i_{n}}$ is defined by raising the indices of $\ep_{i_{1}\dots i_{n}}$, so satisfies $\ep^{a_{1}\dots a_{n-k} i_{1} \dots i_{k}}\ep_{a_{1}\dots a_{n-k} j_{1}\dots j_{k}} = (-1)^{s}(n-k)!k!\delta_{[j_{1}}^{[i_{i}}\dots \delta_{j_{k}]}^{i_{k}]}$. The Hodge star operator on $k$-forms is defined by $k! \al \wedge \star \be = \lb \al, \be \ra \ep$ and is given explicitly by $k! \star \al_{i_{1}\dots i_{n-k}} = \al^{j_{1}\dots j_{k}}\ep_{j_{1}\dots j_{k}i_{1}\dots i_{n-k}}$. The factor $k!$ is forced by the normalization $\star \ep = 1$. The Hodge star operators $\hsth$ and $\hsh$ of the conformal metrics  $\tilde{h}_{ij} = fh_{ij}$ are related on $k$-forms by $\hsth = f^{(n-2k)/2}\hsh$.

Define $\dad_{h}$ on $k$-forms by $\dad_{h}= -(-1)^{nk + n + s}\star d \star$, so that for a $k$-form $\al$ and a $(k+1)$-form $\be$ there holds $\tfrac{1}{(k+1)!}\int_{M}\lb d\al,  \be\ra\ep = \int_{M} d\al \wedge \star \be = \int_{M} \dad\be \wedge \star \al = \tfrac{1}{k!}\int \lb \al, \dad_{h}\be \ra\ep$. Explicitly 
\begin{align}\label{daddefined}
\dad_{h}\al_{i_{1}\dots i_{k-1}} = - D^{p}\al_{p i_{1}\dots i_{k-1}}
\end{align}
If $M$ is not oriented \eqref{daddefined} still makes sense and is taken as the definition of $\dad_{h}$. On any possibly density-valued tensor the operator $\lap_{h}$ is defined by $\lap_{h}\al_{i_{1}\dots i_{p}}^{j_{1}\dots j_{q}} = D^{a}D_{a}\al_{i_{1}\dots i_{p}}^{j_{1}\dots j_{q}}$, and is referred to as the \textbf{Laplacian}. The \textbf{Hodge Laplacian} $\hodge$ on $p$-forms is defined by $\hodge = d \dad_{h} + \dad_{h}d$. The conventions are such that for a $k$-form $\al$, $\hodge \al + \lap_{h}\al$ is given by the action of the curvature tensor on $\al$; there is needed only the $k = 1$ case, $\hodge\al_{i} + \lap_{h}\al_{i} = \sR_{ip}\al^{p}$. The conventions are such that for $\al \in \Ga(\ext^{k}(\ctm))$,
\begin{align}\label{hodgebyparts}
\int_{M}\lb \al, \hodge\al\ra\ep = \tfrac{1}{k+1}\int_{M}|d\al|_{h}^{2}\ep + k \int_{M}|\dad_{h}\al|_{h}^{2}\ep,
\end{align}
which will annoy some readers. Observe that when $M$ is not orientable $\hodge$ is defined via \eqref{daddefined} and \eqref{hodgebyparts} makes sense if $\ep$ is replaced by the volume measure $|\ep| = |\det h|^{1/2}$. These conventions will mostly matter in section \ref{bochnervanishingsection}.

\subsection{Definition of AH structures}

\subsubsection{Tensors of difference tensor type}\label{difftensorsection}
Let $\rea_{n} = (\rea^{n})^{\ast}$. Define $\A = \tensor^{3}\rea_{n}$ and 
define subspaces
\begin{align*}
\A_{1} & \defeq \{A_{ijk} \in \A: A_{ijk} = A_{(ijk)}\} ,&
\A_{2} & \defeq \{A_{ijk} \in \A: A_{(ij)k} = 0 =  A_{[ijk]}\},\\
\A_{3} & \defeq \{A_{ijk} \in \A: A_{i(jk)} = 0 = A_{[ijk]}\},& 
\A_{4} & \defeq \{A_{ijk} \in \A: A_{ijk} = A_{[ijk]}\},
\end{align*}
and linear operators $\P_{i}:\A \to \A_{i}$
\begin{align*}
&\P_{1}(A)_{ijk}  = A_{(ijk)},& &\P_{2}(A)_{ijk} =  \tfrac{2}{3}\left(A_{i(jk)} - A_{j(ik)}\right),&\\
&\P_{3}(A)_{ijk} =  \tfrac{2}{3}\left(A_{(ij)k} - A_{(ki)j}\right), & &\P_{4}(A)_{ijk} = A_{[ijk]}.
\end{align*}
The operators $\P_{i}$ are orthogonal projection operators, which means that $\A_{i} = \P_{i}(\A)$ and $\P_{i}\circ \P_{j} = \delta_{ij}\P_{i}$, from which it follows that $\A = \A_{1} \oplus \A_{2} \oplus \A_{3} \oplus \A_{4}$ is a direct sum decomposition into irreducible $GL(n, \rea)$ modules. This decomposition is given explictly by
\begin{align}\label{decom}
A_{ijk} & = A_{(ijk)} + \tfrac{2}{3}\left(A_{i(jk)} - A_{j(ik)}\right) + \tfrac{2}{3}\left(A_{(ij)k} - A_{(ki)j}\right) + A_{[ijk]}.
\end{align}
Other (isomorphic) decompositions of $\A$ into irreducibles are possible; the decomposition \eqref{decom} is simply convenient for present purposes. The map $\inter:\A \to \A$ defined by $\inter(A)_{ijk} =  A_{kji}$ intertwines the projections $\P_{2}$ and $\P_{3}$ in the sense that $\P_{3}\circ \inter = \inter \circ \P_{2}$ so restricts to $\A_{2}$ to realize explicitly an isomorphism $\A_{2} \simeq \A_{3}$. %

Given a \tbf{metric} (a non-degenerate symmetric two-tensor) $h_{ij}$ and any module $\B$ of tensors define $\tf^{h}:\B \to \B$ to be the linear projection onto the subspace comprising completely $h$-trace free elements. The superscript $h$ will sometimes be dropped when it is clear from context. The projection $\tf^{h}$ depends only on the conformal class of $h$ and commutes with any $O(h)$-equivariant projection operator, e.g. each of the $\P_{i}$. In particular, it makes sense to restrict $\tf^{h}$ to any $O(h)$-submodule of $\A$. For example
\begin{align}
\label{tfp}
\begin{split}
\tf^{h}\circ\P_{1}(A)_{ijk} &  = A_{(ijk)} - \tfrac{1}{n+2}h^{pq}\left(h_{ij}A_{(kpq)} + h_{jk}A_{(ipq)} + h_{ki}A_{(jpq)}\right),\\
\tf^{h}\circ\P_{2}(A)_{ijk}  &  = \tfrac{2}{3}\left( A_{i(jk)} - A_{j(ik)}  + \tfrac{1}{n-1}\left( 2h_{k[i}A_{j]p}\,^{p} - A_{p[j}\,^{p}h_{i]k} - A_{p}\,^{p}\,_{[j}h_{i]k} \right)\right),
\end{split}
\end{align}
in which $h^{ik}h_{kj} = \delta_{j}^{i}$, $h^{ij} = h^{ji}$, and indices are raised and lowered using $h$.

\subsubsection{}
Let $\nabla$ be an affine connection, and $h_{ij}$ be a pseudo-Riemannian metric. All the constructions of section \ref{difftensorsection} apply fiberwise to sections of $\tensor^{3}\ctm$, in particular to $\nabla_{i}h_{jk}$. Define 
$\L_{\nabla}(h) = \tf^{h}\circ \P_{1}(\nabla h)$ and $\ct_{\nabla}(h) = \tf^{h}\circ \P_{2}(\nabla h)$, so that $\tf^{h}(\nabla h)_{ijk} = \bt_{\nabla}(h)_{ijk} + \ct_{\nabla}(h)_{i(jk)}$. Explicit expressions are given by \eqref{tfp}. Observe that $\ct_{\nabla}$ and $\bt_{\nabla}$ are conformally invariant in the sense that $\bt_{\nabla}(e^{f} h) = e^{f} \bt_{\nabla}(h)$ (and similarly for $\ct_{\nabla}$) for $f \in \cinf(M)$.

\subsubsection{}
If $[h]$ is a conformal structure, two torsion-free affine connections $\bnabla$ and $\nabla$ are \textbf{$[h]$-conformal projectively equivalent} if for any representative $h \in [h]$ the completely $h$-trace free part of their difference tensor is $0$, that is $\tf^{h}(\bnabla - \nabla) = 0$. Explicitly this means that given $h \in [g]$ there are a one-form $\al_{i}$ and a vector field $\be^{i}$ such that $\bnabla - \nabla = 2\al_{(i}\delta_{j)}\,^{k} + h_{ij}\be^{k}$. In particular, the Levi-Civita connections of conformal pseudo-Riemannian metrics are conformal projectively equivalent, and projectively equivalent connections are conformal projectively equivalent. This accounts for the terminology. A \textbf{conformal projective structure} $(\enb, [h])$ is a conformal structure $[h]$ and an equivalence class $\enb$ of $[h]$-conformal projectively equivalent affine connections. A connection with torsion is a \textbf{representative} of a given conformal projective structure if its torsion-free part is. The formalism of conformal projective structures will be needed principally in section \ref{curvaturecodazzisection}.

\begin{lemma}\label{sametorsion}
Torsion-free connections $\bnabla$ and $\nabla$ are $[h]$-conformal projectively equivalent if and only if  $\tf^{h}(\nabla h) = \tf^{h}(\bnabla h)$ for every possibly weighted representative $h \in [h]$.
\end{lemma}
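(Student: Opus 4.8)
The plan is to reduce the statement to a linear-algebra fact about the completely trace-free projection $\tf^{h}$, exploiting that $\tf^{h}$ commutes with permutations of the tensor slots and with raising and lowering indices by $h$. First I would record the relation between the two sides. Writing $D_{ij}\,^{k} = (\bnabla - \nabla)_{ij}\,^{k}$, which is symmetric in $i,j$ since both connections are torsion-free, a direct computation gives, for any (possibly weighted) representative $h$,
\[
(\bnabla h - \nabla h)_{ijk} = \bnabla_{i}h_{jk} - \nabla_{i}h_{jk} = -D_{ijk} - D_{ikj}, \qquad D_{ijk} \defeq D_{ij}\,^{p}h_{pk},
\]
so the covariant three-tensor $\Delta_{ijk} \defeq (\bnabla h - \nabla h)_{ijk}$ is symmetric in $j,k$. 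Here $D$ is independent of $h$, whereas $\Delta$ depends on $h$. By linearity of $\tf^{h}$ the condition $\tf^{h}(\nabla h) = \tf^{h}(\bnabla h)$ is exactly $\tf^{h}(\Delta) = 0$, while $[h]$-conformal projective equivalence is exactly $\tf^{h}(D) = 0$ (equivalently on the lowered tensor $D_{ijk}$, since musical isomorphisms by $h$ commute with $\tf^{h}$). Thus the lemma amounts to the equivalence $\tf^{h}(D) = 0 \iff \tf^{h}(\Delta) = 0$, together with independence of the representative.

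Next I would observe that $D \mapsto \Delta$ is invertible, the inverse being the Koszul-type identity
\[
D_{ijk} = \tfrac{1}{2}\left(\Delta_{kij} - \Delta_{ijk} - \Delta_{jki}\right),
\]
obtained by cyclically permuting the defining relation and using the symmetry of $D$ in its first two indices; this is the same linear algebra that underlies the uniqueness of the Levi-Civita connection, so the computation is already available. Both $D \mapsto \Delta$ and its inverse are built solely from permutations of the three tensor slots, insertion or contraction of $h$, and scalar multiples. Since $\tf^{h}$ is the orthogonal projection, with respect to the $h$-inner product, onto completely trace-free tensors, and both this subspace and the inner product are invariant under the symmetric group permuting slots, $\tf^{h}$ commutes with every such permutation and with raising and lowering of indices. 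Applying $\tf^{h}$ to the forward relation yields $\tf^{h}(\Delta) = 0$ whenever $\tf^{h}(D) = 0$, and applying it to the inversion formula yields the converse for the same $h$.

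Finally I would dispose of the quantifier over representatives. Because $\tf^{h}$ depends only on the conformal class and $D$ does not depend on $h$ at all, the condition $\tf^{h}(D) = 0$ is the same for every representative; hence the equivalent condition $\tf^{h}(\bnabla h - \nabla h) = 0$ holds for one representative if and only if it holds for all. For a weighted representative $h = \phi\, h_{0}$ the extra terms arising from differentiating the density factor $\phi$ and from the action of $D$ on densities are each proportional to $h_{0}$, hence pure trace and annihilated by $\tf^{h}$, so that $\tf^{h}(\bnabla h - \nabla h) = \phi\,\tf^{h}(\bnabla h_{0} - \nabla h_{0})$ and the weighted case reduces to the unweighted one. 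The main obstacle is the middle step: verifying cleanly that the trace-free projection intertwines the correspondence $D \leftrightarrow \Delta$. The permutation-equivariance argument avoids any dimensional case analysis, but one must check carefully that $\tf^{h}$ genuinely commutes with slot permutations, which follows from the uniqueness of the $h$-orthogonal projection onto completely trace-free tensors. A more computational alternative, valid in every dimension $n \geq 2$, is to substitute the pure-trace form $D_{ij}\,^{k} = 2\al_{(i}\delta_{j)}\,^{k} + h_{ij}\be^{k}$ into the two relations and match trace terms directly.
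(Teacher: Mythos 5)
Your proof is correct and follows essentially the same route as the paper: your inversion formula $D_{ijk} = \tfrac{1}{2}(\Delta_{kij} - \Delta_{ijk} - \Delta_{jki})$ is precisely the paper's Levi-Civita-type identity \eqref{lcidentity} specialized to $D_{[ij]k}=0$, and both arguments rest on $\tf^{h}$ commuting with slot permutations and with index raising by $h$. Your explicit treatment of the weighted representatives and of the quantifier over $h\in[h]$ is a welcome bit of extra care that the paper leaves implicit.
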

\begin{proof}
For any covariant $3$-tensor there holds 
\begin{align}\label{lcidentity}
A_{ijk} = A_{i(jk)} + A_{j(ik)} - A_{k(ij)} + A_{[ij]k} + A_{[ki]j} - A_{[jk]i},
\end{align}
which identity is familiar from the construction of the Levi-Civita connection. Write $\bnabla - \nabla = \Pi_{ij}\,^{k}$. Then $A_{ijk} = \Pi_{ij}\,^{p}h_{pk}$ satisfies $\Pi_{[ij]k} = 0$. From $\bnabla_{i}h_{jk} - \nabla_{i}h_{jk} = -2A_{i(jk)}$ it follows that $\tf^{h}(\bnabla h - \nabla h) = -2\tf^{h}(A)_{i(jk)}$, and the claim follows from this in conjunction with \eqref{lcidentity}.
\end{proof}

Lemma \ref{sametorsion} shows that $\tf^{h}(\bnabla h) = \tf^{h}(\nabla h)$ for $[h]$-conformal projectively equivalent connections $\bnabla$ and $\nabla$, so that it makes sense to write $\L_{\en}$ and $\ct_{\en}$, or even $\L_{\enb}$ and $\ct_{\enb}$. Hence equations such as $\L_{\en}([h]) = 0$ and $\ct_{\enb}([h]) = 0$ have sense. The \textbf{cubic torsion} $\bt_{ij}\,^{k} \defeq h^{kp}\L_{\nabla}(h)_{ijp}$, and the \textbf{conformal torsion} $\ct_{ij}\,^{k} \defeq h^{kp}\ct_{\nabla}(h)_{ijp}$ depend only on the conformal class of $h$ and the conformal projective equivalence class of $\nabla$. Given a conformal structure $[h]$, and a conformal projective equivalence class, define density valued tensors $\bt_{ijk} = \bt_{ij}\,^{p}H_{pk}$ and $\ct_{ijk} \defeq \ct_{ij}\,^{p}H_{pk}$. Note that $\bt_{ijk}$ and $\bt_{\nabla}(h)_{ijk}$, are not the same. The conformal and cubic torsions have the symmetries $\ct_{[ij]k} = \ct_{ijk}$, $\ct_{[ijk]} = 0$, and $\ct_{ip}\,^{p} = 0$, and $\bt_{ijk} = \bt_{(ijk)}$ and $\bt_{ip}\,^{p} = 0$. 

\subsubsection{}
A \textbf{conformal projective pair} (\textbf{CP pair}) is a pair $(\en, [h])$ comprising a projective structure $\en$ and a (pseudo-Riemannian) conformal structure $[h]$. The CP pair \textbf{generated} by $\nabla$ and $[h]$ is $(\en, [h])$, where $\en$ is the projective structure generated by $\nabla$. A CP pair $(\en, [h])$ is \textbf{subordinate} to a conformal projective structure $(\enb, [h])$ if $\en \subset \enb$; one says that $(\en, [h])$ \textbf{generates} $(\enb, [h])$. The \textbf{cubic torsion} and \textbf{conformal torsion} of a CP pair are the cubic torsion $\bt_{ij}\,^{k} \defeq H^{kp}\nabla_{(i}H_{jk)}$ and conformal torsion $\ct_{ij}\,^{k} \defeq \tfrac{4}{3}H^{kp}\nabla_{[i}H_{j]k}$ of the underlying conformal projective structure. A CP pair is a \textbf{generalized affine hypersurface structure} (\textbf{AH structure}) if the conformal torsion vanishes. A CP pair for which there vanish both $\bt_{ij}\,^{k}$ and $\ct_{ij}\,^{k}$ is a \textbf{Weyl structure}. A CP pair $(\en, [h])$ is \textbf{Riemannian} if $[h]$ is Riemannian.

\subsubsection{}
A torsion-free affine connection $\nabla$ and a metric $h_{ij}$ determine a pair of one-forms, namely $h^{pq}\nabla_{i}h_{pq} = |\det h|^{-1}\nabla_{i}|\det h|$, which measures the difference between the connections induced by $\nabla$ and the Levi-Civita connection of $D$ on bundles of densities of non-trivial weights, and $h^{pq}\nabla_{p}h_{qi} = -h_{ij}\nabla_{p}h^{jp}$, which is the one-form metrically dual to the divergence of the bivector $h^{ij}$. A torsion-free affine connection $\nabla$ is \textbf{aligned} with respect to the metric $h_{ij}$ if there holds $h^{pq}\nabla_{i}h_{pq} = nh^{pq}\nabla_{p}h_{qi}$. Lemma \ref{special} shows that
\begin{enumerate}
\item A torsion-free affine connection is aligned with respect to a metric $h_{ij}$ if and only if it is aligned with respect to any conformally equivalent metric $e^{f}h_{ij}$.
\item A torsion-free affine connection is projectively equivalent to a unique connection aligned with respect to a given metric.
\end{enumerate}
Consequently it makes sense to speak of an affine connection being aligned with respect to a conformal structure $[h]$, and a CP pair admits a unique \textbf{aligned} representative $\nabla \in \en$ characterized by the requirement that for an arbitrary representative $h \in [h]$ there holds $h^{pq}\nabla_{i}h_{pq} = nh^{pq}\nabla_{p}h_{qi}$. Lemma \ref{special} gives also various equivalent formulations of the alignment condition. While this is routine the proof is included because these will be used constantly in the rest of the paper, often without comment.

\begin{lemma}\label{special}
Given a conformal structure $[h]$, a projective structure $\en$ admits a unique torsion-free representative $\nabla \in \en$ such that there holds each of the following equivalent conditions.
\begin{enumerate}
\item For any $h \in [h]$ there holds $nh^{pq}\nabla_{p}h_{qi} = h^{pq}\nabla_{i}h_{pq}$.
\item For any $h \in [h]$ there holds $\nabla_{i}h_{jk}  = 2\ga_{i}h_{jk} + \bt_{\nabla}(h)_{ijk} + \ct_{\nabla}(h)_{i(jk)}$ with $2n\ga_{i} = h^{pq}\nabla_{i}h_{pq}$.
\item $\nabla_{i}H_{jk} = \bt_{ijk} + \ct_{i(jk)}$. 
\item $\bt_{ijk} = \nabla_{(i}H_{jk)}$ and $\ct_{ijk} = \tfrac{4}{3}\nabla_{[i}H_{j]k}$.
\item $H^{pq}\nabla_{p}H_{qi} = 0$. That is, $\nabla_{i}H_{jk}$ is completely trace-free.
\item $\nabla_{p}H^{ip} = 0$.
\end{enumerate}
\end{lemma}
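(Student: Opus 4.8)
The plan is to reduce the entire statement to the vanishing of a single conformally invariant one-form and then read off each of the six conditions from short trace identities. For a torsion-free $\nabla$ and a representative $h \in [h]$ introduce the two natural traces of $\nabla h$,
\begin{align*}
\tau_{i} \defeq h^{pq}\nabla_{i}h_{pq}, \qquad \sigma_{i} \defeq h^{pq}\nabla_{p}h_{qi}, \qquad \mu_{i} \defeq \tau_{i} - n\sigma_{i}.
\end{align*}
Condition $(1)$ is exactly $\mu_{i} = 0$. I would first prove existence and uniqueness of a representative $\nabla \in \en$ with $\mu \equiv 0$, and then show each of $(2)$--$(6)$ is equivalent to $\mu \equiv 0$.

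For existence and uniqueness I would establish the two transformation laws for $\mu$. Under a conformal change $\tilde{h} = fh$ with $\nabla$ fixed, a direct computation gives $\tilde{\tau}_{i} = \tau_{i} + n\nabla_{i}\log f$ and $\tilde{\sigma}_{i} = \sigma_{i} + \nabla_{i}\log f$, so $\tilde{\mu}_{i} = \mu_{i}$; hence $\mu$ depends only on $[h]$, which both shows that alignment is a conformally invariant condition (claim (i) of the preceding discussion) and licenses speaking of alignment with respect to $[h]$. Under a projective change $\nabla' = \nabla + 2\ga_{(i}\delta_{j)}\,^{k}$ one finds $\nabla'_{i}h_{jk} = \nabla_{i}h_{jk} - 2\ga_{i}h_{jk} - 2\ga_{(j}h_{k)i}$, whence $\tau'_{i} = \tau_{i} - 2(n+1)\ga_{i}$ and $\sigma'_{i} = \sigma_{i} - (n+3)\ga_{i}$, giving
\begin{align*}
\mu'_{i} = \mu_{i} + \big(n(n+3) - 2(n+1)\big)\ga_{i} = \mu_{i} + (n+2)(n-1)\ga_{i}.
\end{align*}
Since $n \geq 2$ the factor $(n+2)(n-1)$ is nonzero, so there is exactly one $\ga_{i}$, namely $\ga_{i} = -\mu_{i}/((n+2)(n-1))$, for which $\mu' \equiv 0$; this produces the unique aligned representative (claim (ii)).

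The equivalences then follow from the trace computations for the normalized representative $H_{ij} = |\det h|^{-1/n}h_{ij}$. Using the already-noted fact that $H^{pq}\nabla_{i}H_{pq} = 0$ automatically (because $\det H$ is constant), I would compute $w_{i} \defeq H^{pq}\nabla_{p}H_{qi} = \sigma_{i} - \tfrac{1}{n}\tau_{i} = -\tfrac{1}{n}\mu_{i}$. Since $w$ is the only independent trace of the $j,k$-symmetric tensor $\nabla_{i}H_{jk}$, condition $(5)$ (complete trace-freeness of $\nabla H$) is precisely $\mu = 0$. Differentiating $H^{ip}H_{pj} = \delta_{j}\,^{i}$ yields $\nabla_{p}H^{ip} = -H^{ia}w_{a} = \tfrac{1}{n}H^{ia}\mu_{a}$, so $(6)$ is equivalent to $\mu = 0$. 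For $(3)$ and $(4)$, note that $\nabla_{i}H_{jk}$, being symmetric in $j,k$, has no $\A_{3},\A_{4}$ components, so $\nabla_{i}H_{jk} = \tf^{h}(\nabla H)_{ijk} + (\text{pure trace})$ with $\tf^{h}(\nabla H)_{ijk} = \bt_{ijk} + \ct_{i(jk)}$ as recorded in \ref{difftensorsection}; the pure-trace part is a fixed nonzero multiple of the tensor built from $w$ and $H$, so it vanishes iff $\mu = 0$, giving $(3)$, while $(4)$ is its componentwise restatement, using that $\nabla_{(i}H_{jk)}$ has trace $H^{jk}\nabla_{(i}H_{jk)} = \tfrac{2}{3}w_{i}$. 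Finally $(2)$ is handled the same way for $\nabla h$: decomposing its pure-trace part as $\al_{i}h_{jk} + \be_{(j}h_{k)i}$ and matching the two traces $\tau,\sigma$ shows $\be_{i} = -2\mu_{i}/((n+2)(n-1))$, so the pure-trace part reduces to the asserted $2\ga_{i}h_{jk}$ with $2n\ga_{i} = \tau_{i}$ exactly when $\mu = 0$.

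The only genuinely delicate point is the projective transformation coefficient: the whole argument turns on $(n+2)(n-1) \neq 0$, which is precisely where the standing hypothesis $n \geq 2$ enters (for $n = 1$ there would be no unique aligned representative). Everything else is bookkeeping with the two traces $\tau,\sigma$, the induced trace $w$, and the decomposition of $\tensor^{3}\ctm$ recalled in \ref{difftensorsection}; I expect the main effort to be keeping the index symmetrizations and the various constants straight rather than any conceptual difficulty.
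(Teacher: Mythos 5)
Your proof is correct and follows essentially the same route as the paper: your invariant one-form $\mu_{i} = \tau_{i} - n\sigma_{i}$ is, up to the nonzero factor $-(n+2)(n-1)$, exactly the coefficient $\be_{i}$ in the paper's decomposition $\nabla_{i}h_{jk} = 2\ga_{i}h_{jk} + 2\be_{(j}h_{k)i} + \tf^{h}(\nabla h)_{ijk}$, and both arguments hinge on the same conformal invariance, the same affine transformation law under projective change with coefficient $(n+2)(n-1) \neq 0$, and the same trace bookkeeping for the equivalences $(2)$--$(6)$. The only cosmetic difference is that the paper runs the computation once for metrics of arbitrary weight $\la$ and specializes to $\la = 2$ for $H_{ij}$, whereas you treat $h$ and $H$ separately.
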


\begin{proof}
Part of the claim is that in $(1)$ and $(2)$ the connection $\nabla$ does not depend on the choice of $h$ in $[h]$. This follows from tracing $\nabla_{i}(e^{2f}h_{jk}) = e^{2f}(\nabla_{i}h_{jk} + 2df_{i}h_{jk})$ in two different ways. Say that metric $h$ with values in $|\Det \ctm|^{-\la/(n+1)}$ represents $[h]$ if there is a positive section $\mu$ of $|\Det \ctm|^{\la/(n+1)}$ such that $\mu \tensor h \in [h]$. For torsion-free $\nabla \in \en$ there are one-forms $\ga_{i}$ and $\be_{i}$ such that
\begin{align}\label{nablah}
\nabla_{i}h_{jk} & =2\ga_{i}h_{jk} + 2\be_{(j}h_{k)i} + \tf^{h}(\nabla h)_{ijk} = 2\ga_{i}h_{jk} + 2\be_{(j}h_{k)i} + \L_{\nabla}(h)_{ijk} + \ct_{\nabla}(h)_{i(jk)}.
\end{align} 
Tracing \eqref{nablah} in two different ways using the dual weighted bivector $h^{ij}$ shows that 
\begin{align}\label{albeh}\begin{split}
2&(n-1)(n+2)\ga_{i} = h^{pq}((n+1)\nabla_{i}h_{pq} - 2\nabla_{p}h_{qi}), \\ & (n-1)(n+2)\be_{i} = h^{pq}(n\nabla_{p}h_{qi} - \nabla_{i}h_{pq}),
\end{split}
\begin{split}
&2n\ga_{i} + 2\be_{i} = h^{pq}\nabla_{i}h_{pq},\\ &2\ga_{i} + (n+1)\be_{i} = h^{pq}\nabla_{p}h_{qi}.
\end{split}
\end{align}
Let $\bar{\ga}_{i}$ and $\bar{\be}_{i}$ be the the one-forms defined as in \eqref{nablah} with $\bnabla = \nabla + 2\si_{(i}\delta_{j)}\,^{k} \in \en$ in place of $\nabla$ in \eqref{nablah}. From
\begin{align}\label{tnablah}
\tnabla_{i}h_{jk} = \nabla_{i}h_{jk} + (\la - 2)\si_{i}h_{jk} - 2\si_{(j}h_{k)i},
\end{align}
it follows that $2(\bar{\ga}_{i} - \ga_{i}) = (\la - 2)\si_{i}$ and $\bar{\be}_{i} - \be_{i} = -\si_{i}$. Whatever is $\la$, imposing $\be_{i} = 0$ determines $\si_{i}$ uniquely as $\si_{i} = - \bar{\be}_{i}$. This shows there is a unique torsion-free $\nabla \in\en$ such that in \eqref{nablah} there holds $\be_{i} = 0$. In this case \eqref{albeh} shows that any one of equations $\be_{i} = 0$,  $2n\ga_{i} = h^{pq}\nabla_{i}h_{pq}$, and $nh^{pq}\nabla_{p}h_{qi} = h^{pq}\nabla_{i}h_{pq}$, implies the other two. This shows the equivalence of $(1)$ and $(2)$. The equivalence of $(1)$ (for an unweighted metric $h$) and $(5)$ follows from $nH^{pq}\nabla_{p}H_{qi} = nh^{pq}\nabla_{p}h_{qi} - h^{ip}h^{ab}\nabla_{p}h_{ab}$. The equivalence of $(3)$ and $(4)$ is obvious. The remaining conditions follow by specializing the preceeding discussion with $\la = 2$. Because by definition $|\det H| = 1$, there holds $H^{pq}\nabla_{i}H_{pq} = 0$, and so \eqref{albeh} shows $n\ga_{i} = -\be_{i}$. Substitued into \eqref{nablah} this gives
\begin{align}\label{nablaH}
\nabla_{i}H_{jk} & = -\tfrac{2}{n}\be_{i}H_{jk} + 2\be_{(j}H_{k)i} + \L_{ijk} + \ct_{i(jk)},
\end{align} 
in which $(n+2)(n-1)\be_{i} = nH^{pq}\nabla_{p}H_{qi} = -nH_{iq}\nabla_{p}H^{pq}$. As was shown in the preceeding paragraph there is a unique torsion-free $\nabla \in \en$ for which $\be_{i} = 0$, and the vanishing of $\be$ is equivalent to each of the conditions $(3)$, $(5)$, and $(6)$. 
\end{proof}

The aligned representative of a Weyl structure is what is usually called a \textit{Weyl connection}.

\subsubsection{}
From now on except where stated otherwise, $\nabla$ denotes the aligned representative of a CP pair and indices are raised and lowered using $H_{ij}$ and $H^{ij}$. While it may seem strange to bother speaking of the projective structure $\en$ if one works only with a distinguished representative $\nabla \in \en$, later developments, e.g. section \ref{convexprojectivesection}, will show that the perspective is useful.

\subsubsection{}\label{anpara}
The basic example of an AH structure is the following. A hypersurface immersion in flat affine space is \textbf{non-degenerate} if its second fundamental form (which takes values in the normal bundle) is non-degenerate. If the immersion is also co-oriented the second fundamental form determines a conformal structure on the hypersurface. A choice of subbundle transverse to the immersion induces on the hypersurface a torsion-free affine connection, and there is a unique choice of transverse subbundle such that the induced connection is aligned with respect to the conformal structure determined by the second fundamental form and the co-orientation. This choice of transverse subbundle is the \textbf{affine normal subbundle}. These claims are explained in detail in section \ref{affinehyperspheresection}.

\subsubsection{}
An AH structure $(\en, [h])$ determines the \textbf{pencil} of AH structure $(\pen, [h])$ defined as follows. The aligned representative $\pnabla$ of $(\pen, [h])$ is defined by $\pen = \nabla + t\bt_{ij}\,^{k}$.  Because the conformal torsion vanishes, $\pnabla$ is torsion free, and it is straightforward to check that $\pnabla_{i}H_{jk} = (1-2t) \nabla_{i}H_{jk}$, which is trace-free, so the projective structure generated by $\pnabla$ forms with $[h]$ an AH structure with cubic torsion $\brt{\bt}_{ij}\,^{k} = (1-2t)\bt_{ij}\,^{k}$. When $t = 1$ write $\bnabla$ and $\ben$ in place of $\pnabla$ and $\pen$. The AH structure \textbf{conjugate} to the AH structure $(\en, [h])$ is $(\ben, [h])$, and its cubic torsion is $\bar{\bt}_{ij}\,^{k} = -\bt_{ij}\,^{k}$. When $t = 1/2$ there are written $\lcp$ and $[\lcp]$ in place of $\pnabla$ and $\pen$, and $([\lcp], [h])$ is called the \textbf{underlying Weyl structure} of the AH structure. 

\subsubsection{}\label{conormalpara}
The conormal Gau\ss\, map of a non-degenerate co-oriented immersed hypersurface in flat affine space sends a point of the hypersurface to the annihilator of the space tangent to the hypersurface at the point. The pullback of the flat projective structure on the projectivization of the dual to the flat affine space via this conormal Gau\ss\, map forms with the conformal structure determined by the second fundamental form and the co-orientation the AH structure conjugate to that determined by the affine normal subbundle. These claims are explained in detail in section \ref{conormalsection}.

\subsubsection{}
The \textbf{curvature} of a CP pair is defined to be the curvature $R_{ijk}\,^{l}$ of the aligned representative $\nabla$, the convention being that $2\nabla_{[i}\nabla_{j}X^{k} = R_{ijp}\,^{k}X^{p}$. The \textbf{Ricci curvature} is defined by $R_{ij} = R_{pij}\,^{p}$. It is necessary to consider also the trace $Q_{ij} \defeq R_{ip}\,^{p}\,_{j}$. In general the trace-free symmetric parts of $Q_{ij}$ and $R_{ij}$ are independent. The \textbf{weighted scalar curvature} $R$ is the density $R \defeq R_{p}\,^{p} =  H^{pq}R_{pq} = Q_{p}\,^{p}$. Often the qualifier \textit{weighted} will be omitted, and $R$ will be called simply the \textit{scalar curvature}. It does not make sense to speak of the numerical value of $R$ because $R$ takes values in the line bundle $|\det \ctm|^{1/n}$; however it does make sense to speak of the vanishing of $R$ and because $|\det \ctm|^{1/n}$ is oriented, to speak of the positivity or negativity of $R$. A CP pair is \textbf{proper} if its weighted scalar curvature is non-vanishing.

\subsubsection{}
A CP pair is \textbf{exact} if there is a representative $h \in [h]$ such that $\nabla_{i}\det h = 0$ for the aligned representative $\nabla \in \en$. If there is such an $h$ it is determined uniquely up to positive homothety (on each connected component of $M$). Such an $h$ will be called a \textbf{distinguished representative} of the CP pair. A CP pair is exact if and only if there is a global $\nabla$-parallel non-vanishing density of non-trivial weight, for if there is such a density, then some power of it is a non-vanishing density $\mu$ such that $h_{ij} = \mu \tensor H_{ij} \in [h]$ verifies $\nabla |\det h| = \nabla \mu^{n} = 0$ (the converse is obvious). 

For example, as is explained in section \ref{equiaffinesection}, the AH structure induced on a non-degenerate co-oriented hypersurface in flat affine space is exact. 

The \textbf{Faraday form} $F_{ij}$ of a CP pair $(\en, [h])$ is the curvature of the covariant derivative induced on the line bundle of $-1/n$-densities by the aligned representative $\nabla \in \en$. If $R_{ijk}\,^{l}$ is the curvature of $\nabla$, then by definition and the traced algebraic Bianchi identity there hold
\begin{align}\label{faradayexplicit}
nF_{ij} = R_{ijp}\,^{p} = -2R_{[ij]}. 
\end{align} 
The CP pair $(\en, [h])$ is \textbf{closed} if $F_{ij} \equiv 0$. That a CP pair be exact (resp. closed) is not the same as that $F_{ij}$ be exact (resp. closed); in particular, because $F_{ij}$ is the curvature of a trivial line bundle, it is always exact.

The \textbf{Faraday primitive} $\ga_{i}$ associated to $h \in [h]$ is the one-form $\ga_{i}$ defined by $2n\ga_{i} = h^{pq}\nabla_{i}h_{pq}$. From the Ricci identity follows $d\ga_{ij} = 2\nabla_{[i}\ga_{j]} = -F_{ij}$, so that $\ga_{i}$ is a primitive for $-F_{ij}$. If $\tilde{h}_{ij} = e^{2f}h_{ij} \in [h]$ then the corresponding one-form $\tilde{\ga}_{i}$ differs from $\ga_{i}$ by an exact one-form, $\tilde{\ga}_{i} = \ga_{i} + df_{i}$. The equivalence class $\{\ga\}$ of one-forms so determined is the \tbf{equivalence class of Faraday primitives} \textbf{induced by $(\en, [h])$}. 

Evidently a proper CP pair is exact if its weighted scalar curvature is parallel. For a proper CP pair there holds $\nabla_{[i}\nabla_{j]}R = - F_{ij}R$, and so the one-form $2\ga_{i} = -R^{-1}\nabla_{i}R$ satisfies $d\ga_{ij} = 2\nabla_{[i}\ga_{j]} = -R^{-1}\nabla_{[i}\nabla_{j]}R = F_{ij}$.

\subsubsection{}
The Faraday primitives of the AH structures $(\en, [h])$ and $(\pen, [h])$ associated to $h \in [h]$ are the same, and so these AH structures determine the same equivalence class of Faraday primitives; in particular one is closed (resp. exact) if and only if the other is closed (resp. exact). This observation has the consequence that most properties of the Faraday curvature of Weyl structures hold equally for AH structures. For example, Lemma \ref{fcoclosedlemma} generalizes (trivially) Theorem $2.5$ of \cite{Calderbank-faraday}.
\begin{lemma}\label{fcoclosedlemma}
A definite signature CP pair on a manifold of dimension $n \neq 4$ is closed if and only if $\nabla^{p}F_{ip} = 0$.
\end{lemma}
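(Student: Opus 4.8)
The forward implication is immediate, since if $(\en,[h])$ is closed then $F_{ij}\equiv 0$ and hence $\nabla^{p}F_{ip}=0$. The content is the converse, and the plan is to reduce it by a direct computation to a pointwise consequence of $\dad_{h}^{2}=0$ together with the definiteness of $[h]$.

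First I would fix a representative $h\in[h]$ with Levi-Civita connection $D$, and let $\ga_{i}$ be the associated Faraday primitive, characterised by $2n\ga_{i}=h^{pq}\nabla_{i}h_{pq}$ and satisfying $d\ga=-F$, so that $F_{ij}=D_{j}\ga_{i}-D_{i}\ga_{j}$. Writing $S_{ij}{}^{k}$ for the difference tensor $\nabla-D$, its trace part is the Weyl tensor $-\ga_{i}\delta_{j}{}^{k}-\ga_{j}\delta_{i}{}^{k}+\ga^{k}h_{ij}$ while its completely trace-free part is built from the cubic torsion $\bt_{ij}{}^{k}$ (and, for a general CP pair, the conformal torsion). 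The key step is to compute $\nabla^{p}F_{ip}=H^{pq}\nabla_{q}F_{ip}$ in terms of $D$: a short symmetry computation shows that the trace-free part of $S$ contracts to zero against the skew tensor $F_{ij}$, while the $\ga$-terms assemble into a single scalar multiple, giving $\nabla^{p}F_{ip}=|\det h|^{1/n}\big(\dad_{h}F_{i}-(4-n)\ga^{p}F_{pi}\big)$. The coefficient $4-n$ is the origin of the dimensional restriction. Since $|\det h|^{1/n}>0$, the hypothesis $\nabla^{p}F_{ip}=0$ is equivalent to the first-order identity $\dad_{h}F_{i}=(4-n)\ga^{p}F_{pi}$.

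Next I would apply $\dad_{h}$ to this identity. The left-hand side vanishes because $\dad_{h}^{2}=0$, and as $n\neq 4$ one may divide by $4-n$ to conclude that the $1$-form $\om_{i}:=\ga^{p}F_{pi}$ satisfies $\dad_{h}\om=0$. Finally I would evaluate $\dad_{h}\om=-D^{i}(\ga^{p}F_{pi})$ directly. In the term where the derivative falls on $F$ one uses $D^{i}F_{pi}=\dad_{h}F_{p}$ and the identity just obtained to get $-\ga^{p}\dad_{h}F_{p}=-(4-n)\ga^{p}\ga^{q}F_{qp}=0$, the pairing of the symmetric $\ga^{p}\ga^{q}$ against the skew $F_{qp}$ vanishing. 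In the term where the derivative falls on $\ga$, substituting $F=-d\ga$ expresses the skew part of $D^{i}\ga^{p}$ through $F^{ip}$ and reduces it to a nonzero multiple of $|F|_{h}^{2}$. Hence $|F|_{h}^{2}\equiv 0$, and since $[h]$ has definite signature this forces $F_{ij}\equiv 0$, i.e. $(\en,[h])$ is closed.

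The one genuinely computational point, and the main obstacle to getting the statement exactly right, is the reduction in the second paragraph: one must verify both that the trace-free torsion terms drop out against $F$ and that the surviving $\ga$-contribution carries precisely the coefficient $4-n$ that isolates the excluded dimension. The conceptual crux is the second application of $\dad_{h}$, which is legitimate precisely because $\dad_{h}^{2}=0$ and which is what turns the first-order equation into the pointwise relation $|F|_{h}^{2}=0$ where definiteness can be used. Because every step is local, no compactness hypothesis is needed.
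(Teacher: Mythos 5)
Your proof is correct and reaches the same identity as the paper, but packages it differently. The paper's proof is a two-line computation with the aligned connection: it applies the Ricci identity to the weighted bivector $F^{pq}$ to get $2\nabla_{[p}\nabla_{q]}F^{pq}=-2R_{[pq]}F^{pq}-\tfrac{4}{n}R_{pqa}\,^{a}F^{pq}=(n-4)F_{pq}F^{pq}$, and observes that the left side vanishes when $\nabla^{p}F_{ip}=0$. You instead pass to the Levi-Civita connection of a representative metric, use the identity $\nabla^{p}F_{ip}=|\det h|^{1/n}\bigl(\dad_{h}F_{i}+(4-n)\ga^{p}F_{ip}\bigr)$ (this is exactly the paper's \eqref{preconserve}), and then exploit $\dad_{h}^{2}=0$ together with a direct evaluation of $\dad_{h}(\ga^{p}F_{p\,\cdot})=-\tfrac{1}{2}|F|_{h}^{2}$; I checked the coefficients and the cross term $\ga^{p}\ga^{q}F_{qp}$ does vanish as you claim. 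The two arguments are morally the same double-divergence computation, the paper's being shorter because the density-weighted Ricci identity absorbs the bookkeeping that you carry out by hand through $D$ and $\ga$. One caveat on your second paragraph: the assertion that the trace-free part of the difference tensor contracts to zero against $F_{ij}$ is immediate for the completely symmetric cubic torsion, but for a genuine CP pair the conformal torsion $\ct_{ij}\,^{k}$ is antisymmetric in $ij$ and a contraction of the form $\ct_{abi}F^{ab}$ does not vanish on symmetry grounds alone; so as written your reduction (like the paper's \eqref{preconserve}, which is derived under the AH hypothesis) really establishes the lemma in the AH case $\ct=0$, which is the only case the paper uses. This is not a defect relative to the paper — its own proof needs $\nabla_{q}F^{pq}=0$, which is subject to the same conformal-torsion term — but if you want the statement for general CP pairs you would need to track that extra contraction explicitly.
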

\begin{proof}
For any CP pair there holds
\begin{align}\label{fcoclosed}
2\nabla_{[p}\nabla_{q]}F^{pq} =  -2R_{[pq]}F^{pq}- \tfrac{4}{n}R_{pqa}\,^{a}F^{pq} = -2R_{[pq]}F^{pq} - 4F_{pq}F^{pq} = (n-4)F_{pq}F^{pq}.
\end{align}
If $n \neq 4$ then \eqref{fcoclosed} shows that $\nabla^{p}F_{ip} = 0$ implies $F_{ij}$ is $[h]$-null; in the definite signature case this holds if and only if $F_{ij} \equiv 0$.
\end{proof}

\subsubsection{}
Lemma \ref{conformalprojectivediff} gives further motivation for the name \textit{conformal projective structure}, as it shows that such a structure can be viewed as a conformal class of projective structures; the difference tensor of the aligned representatives of any two projective structures generating the conformal projective structure has the form of the difference tensor of the Levi-Civita connections of two metrics representing a conformal structure.
\begin{lemma}\label{conformalprojectivediff}
If $(\ten, [h])$ and $(\en, [h])$ are CP pairs subordinate to a given conformal projective structure $(\enb, [h])$, then the difference tensor $\tnabla - \nabla$ of the aligned reprentatives of $(\ten, [h])$ and $(\en, [h])$ has the form $2\al_{(i}\delta_{j)}\,^{k} - \al^{k}H_{ij}$, in which $-\al_{i} = \tilde{\ga}_{i} - \ga_{i}$ for the Faraday primitives $\tilde{\ga}_{i}$ and $\ga_{i}$ associated to $h \in [h]$. In particular, the Faraday curvatures of AH structures generating the same conformal projective structure are cohomologous. %
\end{lemma}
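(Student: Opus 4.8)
The plan is to exploit the alignment condition of Lemma \ref{special} together with the density weight of the normalized representative $H_{ij}$. First I would observe that, since $(\ten,[h])$ and $(\en,[h])$ are both subordinate to $(\enb,[h])$, their aligned representatives $\tnabla$ and $\nabla$ lie in the same $[h]$-conformal projective equivalence class; so by the very definition of that relation their difference tensor is pure trace, and there are a one-form $a_i$ and a vector field $\be^k$ with $\tnabla - \nabla = 2a_{(i}\delta_{j)}\,^k + \be^k H_{ij}$ (torsion-freeness forces the two symmetric slots to carry the same covector $a_i$). The entire content of the first assertion is then that alignment of \emph{both} connections forces $\be^k = -a^k$.

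The main computation, and the step where the only real subtlety lies, is to impose condition $(5)$ (equivalently $(6)$) of Lemma \ref{special} on both $\tnabla$ and $\nabla$ and subtract. Because $H_{ij}$ is valued in $(-2/n)$-densities, the induced connections on it differ not only by the two tensorial terms $-\Pi_{ij}\,^p H_{pk} - \Pi_{ik}\,^p H_{jp}$ but also by the density term $+\tfrac{2}{n}\Pi_{ip}\,^p H_{jk}$, where $\Pi = \tnabla-\nabla$. Retaining this term is essential; omitting it yields an incorrect relation between $a$ and $\be$. Using $\Pi_{ip}\,^p = (n+1)a_i + \be_i$ and contracting the resulting expression for $\tnabla_i H_{jk}-\nabla_i H_{jk}$ with $H^{jk}$ (recalling $H^{pq}\nabla_p H_{qi}=0$ for the aligned $\nabla$), all terms collapse to a multiple of $a_i + \be_i$; after clearing denominators the coefficient is $(n+2)(n-1)$, nonzero for $n\geq 2$. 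Hence $\be^k = -a^k$, and renaming $a_i =: \al_i$ gives the asserted form $\tnabla - \nabla = 2\al_{(i}\delta_{j)}\,^k - \al^k H_{ij}$. I expect this density-weight bookkeeping to be the main obstacle, since it is exactly what distinguishes the aligned-connection statement from the generic conformal projective one.

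It then remains to identify $\al_i$ with $\ga_i - \tilde{\ga}_i$ and to deduce the cohomology statement. For this I would apply the difference tensor to an \emph{unweighted} representative $h\in[h]$: a short computation, in which the density factor relating $H_{ij}$ to $h_{ij}$ makes the term $-\al^k H_{ij}$ act on $h$ exactly as $-h^{kp}\al_p\,h_{ij}$ (so the off-diagonal pieces cancel), yields $\tnabla_i h_{jk} - \nabla_i h_{jk} = -2\al_i h_{jk}$. Contracting with $h^{jk}$ and using $2n\tilde{\ga}_i = h^{pq}\tnabla_i h_{pq}$ and $2n\ga_i = h^{pq}\nabla_i h_{pq}$ gives $\tilde{\ga}_i - \ga_i = -\al_i$, as claimed. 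Finally, since each Faraday two-form is minus the exterior derivative of the corresponding Faraday primitive, so that $F = -d\ga$ and $\tilde F = -d\tilde{\ga}$, we obtain $\tilde F - F = d(\ga - \tilde{\ga}) = d\al$, an exact form; thus the Faraday curvatures of the two subordinate AH structures are cohomologous.
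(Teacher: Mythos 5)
Your argument is correct and follows the same route as the paper: the generic pure-trace form from conformal projective equivalence, the alignment condition of Lemma \ref{special} applied to $H_{ij}$ (with the density term you rightly insist on) forcing $\be^{k}=-\al^{k}$, and then the action on an unweighted $h\in[h]$ to identify $\al_{i}$ with $\ga_{i}-\tilde{\ga}_{i}$ and conclude exactness of $\tilde{F}-F$. One small index slip: the informative contraction is with $H^{ij}$ (i.e. over the derivative index and one metric index, matching the identity $H^{pq}\nabla_{p}H_{qi}=0$ you cite), not with $H^{jk}$, which kills both sides identically; your coefficient $(n+2)(n-1)$ shows you in fact performed the correct contraction.
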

\begin{proof}
Let $h \in [h]$. Because $\tnabla \in \ten$ and $\nabla\in \en$ are by assumption conformal projectively equivalent their difference tensor has the form $\Pi_{ij}\,^{k} = 2\al_{(i}\delta_{j)}\,^{k} + H_{ij}\be^{k}$ for some one-forms $\al_{i}$ and $\be_{i}$, and from Lemma \ref{special} it follows easily that if $\tnabla$ and $\nabla$ are aligned then $\be_{i} = -\al_{i}$. In this case the difference $\tnabla_{i}h_{jk} - \nabla_{i}h_{jk}$ is $-2\al_{i}h_{jk}$, from which it follows that $-\al_{i} = \tilde{\ga}_{i} - \ga_{i}$. 
\end{proof}

\subsubsection{}
If $(\en, [h])$ is a CP pair and $h \in [h]$ has corresponding Faraday primitive $\ga_{i}$ then the Levi-Civita connection $D$ of $h$ is related to the aligned representative $\nabla$ by
\begin{align}\label{dnabladiff}
D = \nabla + \tfrac{1}{2}\bt_{ij}\,^{k} - \ct^{k}\,_{(ij)} + 2\ga_{(i}\delta_{j)}\,^{k} - H_{ij}\ga^{k} .
\end{align}
Equation \eqref{dnabladiff} shows how to build from a metric and a one-form a CP pair with specified torsion. 

\subsubsection{}
It makes sense to restrict an AH structure to a non-degenerate submanifold. If $[h]$ is a conformal structure on $M$, an immersion $i:N \to M$ is \textbf{non-degenerate} if $Ti(p)(T_{p}N)$ is never coisotropic for any $p \in N$. In this case $i^{\ast}([h]) \defeq [i^{\ast}(h)]$ is a conformal structure on $N$; it makes sense to define $i^{\ast}(H)_{ij}$ to be the normalized metric of $i^{\ast}([h])$. The CP pair \tbfs{induced}{CP pair} on $N$ by the immersion $i$ is denoted $i^{\ast}([h], \en)$ and equals the CP pair $(i^{\ast}([h]), \ben$ with $\ben$ the projective structure generated by a connection $\bnabla$ defined as follows. For each $h \in [h]$ let $\ga_{i}$ be the corresponding Faraday primitive. Then $\bar{h} \defeq i^{\ast}(h)$ and $\bar{\ga} \defeq i^{\ast}(\ga)$ are defined, as are the tensors $\bar{\bt} \defeq i^{\ast}(\bt_{\nabla}(h))$ and $\bar{\ct} \defeq i^{\ast}(\ct_{\nabla}(h))$. Let $\bD$ be the Levi-Civita connection of $\bar{h}$ and define $\bnabla$ so that there holds \eqref{dnabladiff} with the barred tensors in place of the unbarred tensors. If $\tilde{h} = fh$ then $\tilde{\ga}_{i} = \ga_{i} + \si_{i}$ where $2\si_{i} = d\log{f}_{i}$, while the Levi-Civita connections of $i^{\ast}(\tilde{h})$ and $i^{\ast}(h)$ differ by $2i^{\ast}(\si)_{(i}\delta_{j)}\,^{k} - \bar{h}_{ij}\bar{h}^{kp}i^{\ast}(\si)_{p}$, from which it follows that $\bnabla$ so defined is independent of the choice of $h$. It is straightforward to check that $\bnabla$ is the aligned representative of the CP pair $(i^{\ast}([h]), \ben)$, and so it makes sense to define $(i^{\ast}([h]), \ben)$ to be the induced CP pair. 

\subsubsection{}
Structures similar to AH structures have been studied intensively. Here are made only some limited remarks. If $\nabla$ is a torsion-free affine connection a section $a_{i_{1}\dots i_{k}} \in \Ga(S^{k}(\ctm))$ is a \textbf{Codazzi tensor} if $\nabla_{[i}a_{j]i_{1}\dots i_{k-2}} =0$, and if $h_{ij}$ is a metric, a pair $(\nabla, h)$ is a \textbf{Codazzi structure} if $h$ is non-degenerate and Codazzi. By the curvature of a Codazzi structure $(\en, h)$ is meant the curvature of $\nabla$ (and not that of $h$), and a Codazzi structure is \textbf{flat} if $\nabla$ is flat. A flat Codazzi structure is called a \textbf{Hessian structure} or, following \cite{Cheng-Yau-realmongeampere}, an \textbf{affine K\"ahler structure}. Much of what is known about Hessian structures is recounted in H. Shima's book \cite{Shima}. A Hessian structure $(\nabla, h_{ij} = \nabla_{i}\nabla_{j}f)$ for which $\Pi_{ijk} = \nabla_{i}h_{jk} = \nabla_{i}\nabla_{j}\nabla_{k}f$ and which satisfies the \textbf{associativity equations} (or \textbf{WDVV equations}) $\Pi_{pl[i}\Pi_{j]k}\,^{p} = 0$ is a special kind of \textbf{Frobenius manifold} (see e.g. \cite{Manin-frobenius} or \cite{Chen-Kontsevich-Schwarz}). 

Note that a Codazzi structure $(\nabla, h)$ generates an AH structure but that $\nabla$ need \textit{not} be the aligned representative, because it need not be that the $h$-volume density is preserved by $\nabla$. A natural example of a Codazzi structure $(\nabla, h)$ for which $\nabla$ is in general not aligned with respect to $h$ is given in Theorem \ref{inducedcodazzitheorem}. The point of view taken here is that AH structures are to Codazzi structures as projective is to affine.

A pseudo-Riemannian metric $h$ on $M$ determines the map $\flat:TM \to \ctm$ define by $\flat(X) = i(X)h$. The following geometric description of Codazzi structures is Theorem $2$ of P. Delano\"e's \cite{Delanoe}.
\begin{theorem}[Theorem $2$ of \cite{Delanoe}]\label{delanoetheorem}
A torsion free affine connection $\nabla$ on $M$ forms with a pseudo-Riemannian metric $h_{ij}$ a Codazzi structure if and only if the image of the horizontal subbundle of $TTM$ determined by $\nabla$ under the differential $T\flat$ of the map $\flat:TM \to \ctm$ determined by $h$ is a Lagrangian subbundle of $T\ctm$ with respect to the tautological symplectic structure on $\ctm$.
\end{theorem}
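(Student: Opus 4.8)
The plan is to verify both directions at once by reducing the Lagrangian condition to a pointwise tensorial identity through a computation in local coordinates. First I would record the elementary linear algebra. Since $h_{ij}$ is non-degenerate, $\flat:TM \to \ctm$ is a diffeomorphism, so its differential $T\flat$ is a vector bundle isomorphism $TTM \to T\ctm$ covering $\flat$. Hence $L \defeq T\flat(\mathcal{H})$, the image of the horizontal subbundle $\mathcal{H} \subset TTM$ determined by $\nabla$, is a smooth subbundle of $T\ctm$ of rank $n$, exactly half of $\rank T\ctm = 2n$. A subbundle of this dimension is Lagrangian for the tautological symplectic form $\omega$ if and only if it is isotropic, i.e. if and only if $\omega|_{L} = 0$; and since $T\flat$ is an isomorphism, this is equivalent to $\flat^{\ast}\omega|_{\mathcal{H}} = 0$. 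Thus the whole theorem reduces to computing the restriction of $\flat^{\ast}\omega$ to the horizontal distribution.

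Next I would carry out that computation in coordinates $x^{i}$ on $M$, with induced fiber coordinates $y^{i}$ on $TM$ and $p_{i}$ on $\ctm$, so that $\flat$ is $(x^{i},y^{i}) \mapsto (x^{i}, p_{i} = h_{ij}y^{j})$ and the tautological one-form is $\theta = p_{i}\,dx^{i}$, whence $\omega = d\theta$. Then $\flat^{\ast}\theta = h_{ij}y^{j}\,dx^{i}$ and $\flat^{\ast}\omega = d(\flat^{\ast}\theta)$. Writing $\Gamma_{ij}\,^{k}$ for the Christoffel symbols of $\nabla$, the horizontal distribution is framed by the horizontal lifts $\delta_{i} = \pxi - \Gamma_{ij}\,^{k}y^{j}\pyk$, and a direct evaluation of $d(\flat^{\ast}\theta)$ on this frame gives
\begin{align*}
\flat^{\ast}\omega(\delta_{i},\delta_{j}) = \left(\pr_{i}h_{jl} - \pr_{j}h_{il}\right)y^{l} + \left(h_{ip}\Gamma_{jl}\,^{p} - h_{jp}\Gamma_{il}\,^{p}\right)y^{l}.
\end{align*}
I would then substitute $\pr_{i}h_{jl} = \nabla_{i}h_{jl} + \Gamma_{ij}\,^{p}h_{pl} + \Gamma_{il}\,^{p}h_{jp}$ and the analogous expression for $\pr_{j}h_{il}$. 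The contributions $\Gamma_{ij}\,^{p}h_{pl}$ and $\Gamma_{ji}\,^{p}h_{pl}$ cancel because $\nabla$ is torsion-free, while the remaining Christoffel terms $\Gamma_{il}\,^{p}h_{jp}$ and $\Gamma_{jl}\,^{p}h_{ip}$ cancel exactly against the second parenthesis above, leaving
\begin{align*}
\flat^{\ast}\omega(\delta_{i},\delta_{j}) = \left(\nabla_{i}h_{jl} - \nabla_{j}h_{il}\right)y^{l} = 2\nabla_{[i}h_{j]l}\,y^{l}.
\end{align*}

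To conclude, I would observe that the base point $x$ and the fiber vector $y$ are arbitrary, and as $(x,y)$ ranges over $TM$ the vector $y^{l}$ ranges over the whole of each fiber $T_{x}M$; hence $\flat^{\ast}\omega|_{\mathcal{H}}$ vanishes identically if and only if $\nabla_{[i}h_{j]l} = 0$ at every point, which is precisely the condition that $(\nabla, h)$ be a Codazzi structure. Together with the reduction of the first paragraph this establishes the claimed equivalence. The only delicate point --- and it is really just bookkeeping --- is the reorganization of the asymmetric-looking derivative terms $\pr_{i}h_{jl}$ into a covariant derivative plus symmetric Christoffel contributions, and the verification that the latter cancel; this is exactly the step where torsion-freeness is used, and for a connection with torsion the pullback $\flat^{\ast}\omega|_{\mathcal{H}}$ would instead detect the torsion tensor contracted against $h$. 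Everything else is formal, so I expect no serious obstacle.
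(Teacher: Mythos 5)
Your proof is correct: the reduction of the Lagrangian condition to isotropy of a rank-$n$ subbundle, the computation of $\flat^{\ast}\omega$ on the horizontal frame $\delta_{i} = \pxi - \Gamma_{ij}\,^{k}y^{j}\pyk$, and the cancellation of the Christoffel terms (using torsion-freeness for the $\Gamma_{[ij]}\,^{p}h_{pl}$ terms and the explicit second parenthesis for the $\Gamma_{(i|l|}\,^{p}h_{j)p}$-type terms) all check out, yielding $\flat^{\ast}\omega(\delta_{i},\delta_{j}) = 2\nabla_{[i}h_{j]l}y^{l}$, which vanishes for all $(x,y)$ exactly when $(\nabla,h)$ is Codazzi. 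Note that the paper itself gives no proof of this statement — it is quoted from Delano\"e's article — so there is nothing internal to compare against; your coordinate argument is the standard one and is complete.
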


AH structures admit a similar geometric description. Let $\rho:\ctm \to \projp(\ctm)$ be the canonical projection, and let $C$ be the canonical contact structure on $\projp(\ctm)$, which is the image under $T\rho$ of the kernel of the tautological one-form $\al$ on $\ctm$. Let $\tilde{\flat}:TM \to \ctm$ be the map determined by the conformal metric $\tilde{h} = fh$. 
If $H$ is the homogeneous horizontal subbundle on $TM$ determined by $\nabla$, then the images $T\tilde{\flat}(H)$ and $T\flat(H)$ are not the same, but their images under $T\rho$ determine the same rank $n$ subbundle of $T\projp(\ctm)$, and so the images under $T\rho$ of their intersections with $\ker \al$ determine the same rank $(n-1)$ subbundle of $C$ on $\projp(\ctm)$. By the rank $(n-1)$ subbundle $L$ of $C$ associated to the CP pair $(\en, [h])$ is meant the subbundle determined by the aligned representative $\nabla$ together with $[h]$.

\begin{theorem}\label{ahcharacterizationtheorem}
The rank $(n-1)$ subbundle $L$ of the contact structure $E$ on $\projp(\ctm)$ determined by a CP pair $(\en, [h])$ is Legendrian if and only if the CP pair is AH. 
\end{theorem}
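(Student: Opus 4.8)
The plan is to translate the Legendrian condition into a pointwise statement about the tensor $2\nabla_{[i}h_{m]k}$ and then to recognise that statement as the vanishing of the conformal torsion. Since $L$ has rank $n-1$, exactly half the rank $2n-2$ of the contact distribution $E = T\rho(\ker\al)$, being Legendrian is the same as being isotropic for the conformal symplectic structure that $\omega \defeq d\al$ induces on $E$. Indeed, if $\rad$ denotes the radial (Euler) field $p_{i}\,\pr/\pr p_{i}$ along the fibres of $\ctm$, then $i_{\rad}\omega = \al$ (Cartan's formula, using that $\al$ is homogeneous of degree one), so $\rad$ spans the kernel of $\omega|_{\ker\al}$ and $\omega$ descends, up to scale, to a symplectic form on $E$. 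It therefore suffices to fix a representative $h \in [h]$ with its map $\flat\colon TM \to \ctm$ and to show that $T\flat(H)\cap\ker\al$ is $\omega$-isotropic precisely when $\ct_{\nabla}(h) = 0$, where $H \subset TTM$ is the horizontal distribution of the aligned $\nabla$.

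First I would compute in canonical coordinates $(x^{i},p_{j})$ on $\ctm$, where $\al = p_{i}\,dx^{i}$ and $\omega = dp_{i}\wedge dx^{i}$. Writing $\flat(x^{i},y^{j}) = (x^{i}, h_{jk}y^{k})$ and pushing forward the horizontal lift of $\pr/\pr x^{i}$ (built from the Christoffel symbols $\Gamma^{l}_{ij}$ of $\nabla$), one finds that $T\flat(H)$ is spanned by
\begin{align*}
e_{i} = \frac{\pr}{\pr x^{i}} + \left(y^{k}\nabla_{i}h_{jk} + \Gamma^{l}_{ij}p_{l}\right)\frac{\pr}{\pr p_{j}}.
\end{align*}
Two facts then drop out. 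First, $\al(e_{i}) = p_{i}$, so that $T\flat(H)\cap\ker\al = \{c^{i}e_{i} : c^{i}p_{i}=0\}$ has rank $n-1$ and misses $\rad$; in particular $T\rho$ is injective on it, so $L$ has the asserted rank and lies in $E$. Second, since $\nabla$ is torsion-free the symmetric Christoffel terms cancel under antisymmetrisation, leaving
\begin{align*}
\omega(e_{i},e_{m}) = 2\nabla_{[i}h_{m]k}y^{k} = P_{im}{}^{l}p_{l},\qquad P_{imk} \defeq 2\nabla_{[i}h_{m]k}.
\end{align*}
Thus $L$ is Legendrian if and only if the $i,m$-antisymmetric form $p \mapsto P_{im}{}^{l}p_{l}$ vanishes on the hyperplane $p^{\perp} = \{c : c^{i}p_{i}=0\}$ for every covector $p$.

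The key step is the linear-algebraic claim that this holds for all $p$ if and only if $P$ is pure trace, i.e. $P_{imk} = \phi_{m}h_{ik} - \phi_{i}h_{mk}$ for some one-form $\phi_{i}$. The pure-trace direction is an immediate contraction. For the converse I would fix $c,c'$ and let $p$ range over $\ann(\mathrm{span}(c,c'))$: the linear functional $p\mapsto (P_{im}{}^{l}c^{i}c'^{m})p_{l}$ vanishes on a codimension-two subspace, forcing $P_{im}{}^{l}c^{i}c'^{m}\in\mathrm{span}(c,c')$ for all $c,c'$; expanding against a basis then shows that every component $P_{ab}{}^{l}$ with $l\notin\{a,b\}$ vanishes and that the survivors are governed by a single covector, which is exactly the pure-trace form. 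Equivalently, $P\in\A_{2}$ and the displayed condition annihilates precisely the completely trace-free part $\tf^{h}(P)$, which by Lemma \ref{weylcriterion} is already zero when $n=2$ (so the theorem is vacuous there). Finally, since $\P_{2}(\nabla h) = \tfrac{2}{3}P$ with $P\in\A_{2}$, the tensor $P$ is pure trace exactly when $\ct_{\nabla}(h) = \tf^{h}\circ\P_{2}(\nabla h) = 0$, which is the definition of the AH condition; independence of the choice of $h\in[h]$ is automatic, as $L$ was constructed to be representative-independent and $\ct_{\nabla}(h)$ is conformally invariant.

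The main obstacle is precisely this linear-algebraic claim: the covector $\phi$ realising $P = \phi\wedge h$ at a point may a priori depend on the direction $p$, and the substance of the argument is that quantifying over all $p$ removes this dependence and pins $P$ down to a single trace term. The remaining ingredients — the coordinate expression for $e_{i}$, the cancellation of the Christoffel symbols by torsion-freeness, and the bookkeeping placing $L$ inside $E$ with the correct rank — are routine.
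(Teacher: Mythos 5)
The paper does not actually contain a proof of this statement: immediately after the theorem it says ``Since no use will made of Theorem \ref{ahcharacterizationtheorem} its proof is omitted,'' so there is nothing to compare your argument against line by line. On its own merits your proof is correct, and it is surely the argument the author had in mind, since it runs exactly parallel to the natural proof of Delano\"e's Theorem \ref{delanoetheorem}: the computation $\omega(e_{i},e_{m})=2\nabla_{[i}h_{m]k}y^{k}$ is the same one that characterises the Codazzi condition as Lagrangianity of $T\flat(H)$, and passing to the contact quotient replaces ``vanishes for all $y$'' by ``vanishes on the hyperplane $y^{\flat\,\perp}$,'' which is precisely what converts the full Codazzi condition into its trace-free (conformal torsion) part. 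Your identification of the pure-trace summand of $\A_{2}$ with tensors of the form $\phi_{m}h_{ik}-\phi_{i}h_{mk}$ is correct, the linear-algebraic lemma (an antisymmetric bilinear map with $B(c,c')\in\mathrm{span}(c,c')$ for all $c,c'$ is $\phi(c')c-\phi(c)c'$) is true and your sketch of it via three distinct basis vectors works for $n\geq 3$, and you correctly dispose of $n=2$, where both sides of the equivalence are automatic. The only point worth making explicit if you write this up is the reading of ``Legendrian'' as maximal isotropic for the conformal symplectic structure on the contact distribution (rather than involving any integrability); this is forced by consistency with the paper's use of ``Lagrangian'' in Theorem \ref{delanoetheorem} and is the only reading under which the theorem is true, but it deserves a sentence.
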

Since no use will made of Theorem \ref{ahcharacterizationtheorem} its proof is omitted. Together theorems \ref{delanoetheorem} and \ref{ahcharacterizationtheorem} give some credibility to the point of view taken here that the notion of AH structure is the projectivization of the notion of Codazzi structure.

\subsection{Curvature of AH structures}
\subsubsection{}\label{curvaturetensorsection}
Let $\rea_{n}$ denote $(\rea^{n})^{\ast}$. Elements of the $GL(n, \rea)$-module $\curvmod \defeq \{B_{ijkl} \in \tensor^{4}\rea_{n}: B_{(ij)kl} = 0 = B_{[ijk]l}\}$ will be referred to as \tbf{algebraic curvature tensors}. The space $\curvmod$ decomposes as a direct sum $\curvmod = \curvmod_{1}\oplus\curvmod_{2}\oplus\curvmod_{3}$ in which
\begin{align*}
\curvmod_{1} &= \{B_{ijkl} \in \curvmod: B_{ij(kl)} = B_{ijkl}\},&
\curvmod_{2} &= \{B_{ijkl} \in \curvmod: B_{ij[kl]} = B_{ijkl}\},\\
\curvmod_{3} &= \{B_{ijkl} \in \curvmod: 8B_{ijkl} = 9B_{[ij|k|l]} - 3B_{[ij|l|k]}\},.
\end{align*}
The projections $\P_{i}:\curvmod \to \curvmod_{i}$ onto the summands are given for $B_{ijkl} \in \curvmod$ by
\begin{align*}
&\P_{1}(B)_{ijkl} = \tfrac{3}{4}\left(B_{i(jkl)} - B_{j(ikl)}\right),&
&\P_{2}(B)_{ijkl} = \tfrac{1}{2}\left(B_{ij[kl]} + B_{kl[ij]}\right),&\\
&\P_{3}(B)_{ijkl} = \tfrac{3}{8}\left(3B_{[ij|k|l]} - B_{[ij|l|k]}\right).
\end{align*}
These are orthogonal in the sense that $\P_{i}\circ \P_{j} = 0$ if $i \neq j$. The decomposition of $B_{ijkl} \in \curvmod$ into $GL(n, \rea)$-irreducible components is given explicitly by
\begin{align}\label{curvaturedecom}
B_{ijkl}   = \tfrac{3}{4}\left(B_{i(jkl)} - B_{j(ikl)}\right) + \tfrac{1}{2}\left(B_{ij[kl]} + B_{kl[ij]}\right) +\tfrac{3}{8}\left(3B_{[ij|k|l]} - B_{[jk|l|i]}\right),
\end{align}
Note that $\P_{3}(B)_{[ijk]l} = \P_{3}(B)_{[ijkl]} = 0$, while $\P_{3}(B)_{[ij|k|l]} = B_{[ij|k|l]}$; the last equality implies $\P_{3}(B)_{ijkl}$ is the result of applying $\Psi$ to the tensor $B_{[ij|k|l]}$.

\subsubsection{}
Given a metric $H_{ij}$, raise and lower indices using $H^{ij}$. From $\P_{i}\circ \P_{j} = 0$ if $i \neq j$ it follows that the subspaces $\curvmod_{i}$ are pairwise orthogonal with respect to $H_{ij}$. Each of the subspaces $\curvmod_{i}$ can be decomposed further by taking traces. Since $-2B_{ijp}\,^{p} = B_{p[ij]}\,^{p}$ for $B_{ijkl} \in \curvmod$, all traces of such a $B_{ijkl}$ can be expressed as linear combinations of $B_{pij}\,^{p}$ and $B_{ip}\,^{p}\,_{j}$. Define the \tbf{Ricci trace} of $B_{ijkl} \in \curvmod$ to be $B_{ij} \defeq B_{pij}\,^{p}$ and the \tbf{scalar trace} to be $B_{p}\,^{p} = -B_{pq}\,^{pq}$. In fact, as will be explained now, for $B_{ijkl} \in \curvmod_{i}$, $i = 1, 2, 3$, it is always the case that $B_{ip}\,^{p}\,_{j}$ is a scalar multiple of $B_{ij}$. For $B_{ijkl} \in \curvmod_{1}$, $B_{ip}\,^{p}\,_{j} = - B_{pij}\,^{p}$, while for $B_{ijkl} \in \curvmod_{2}$, $B_{ip}\,^{p}\,_{j} = B_{pij}\,^{p}$. For $B_{ijkl} \in \curvmod$, there holds
\begin{align}\label{p3bsym}
8\P_{3}(B)_{i(jk)l} = 3(B_{li(jk)} + B_{i(jk)l} - B_{l(jk)i}) = 12\P_{3}(B)_{li(jk)},
\end{align} 
so that for $B_{ijkl} \in \curvmod_{3}$ there holds $2B_{i(jk)l} = 3B_{li(jk)}$. Tracing this in $il$ shows $B_{p(ij)}\,^{p} = 0$, while tracing it in $jk$ and tracing $8B_{ijkl} = 9B_{[ij|k|l]} - 3B_{[ij|l|k]}$ in $kl$ gives $2B_{ip}\,^{p}\,_{j} = -3B_{ijp}\,^{p} = 6B_{p[ij]}\,^{p} = 6B_{pij}\,^{p}$, from which it is evident that all traces of $B_{ijkl}$ are determined by $B_{ij}$, which is in this case skew-symmetric. The relations of the Ricci traces of $\P_{i}(B)$ to the traces of $B_{ijkl} \in \curvmod$ are as follows.
\begin{align}\label{pbtraces}
\begin{split}
\P_{1}(B)_{ij} = \tfrac{1}{2}B_{p(ij)}\,^{p} - \tfrac{1}{2}B_{(i|p|}\,^{p}\,_{j)}& + \tfrac{3}{4}B_{p[ij]}\,^{p} - \tfrac{1}{4}B_{[i|p|}\,^{p}\,_{j]},\\
\P_{2}(B)_{ij} = \tfrac{1}{2}B_{p(ij)}\,^{p} + \tfrac{1}{2}B_{(i|p|}\,^{p}\,_{j)}, &\qquad
\P_{3}(B)_{ij} = \tfrac{1}{4}B_{p[ij]}\,^{p} + \tfrac{1}{4}B_{[i|p|}\,^{p}\,_{j]}.
\end{split}
\end{align}
It follows from \eqref{pbtraces} that the scalar trace of $B_{ijkl}$ is given by $-B_{pq}\,^{pq} = B_{p}\,^{p} = \P_{2}(B)_{p}\,^{p}$, while $\P_{1}(B)_{p}\,^{p} = 0 = \P_{3}(B)_{p}\,^{p}$.

\subsubsection{}\label{fgtraces}
Define a linear map $\r: \ext^{2}(\rea_{n}) \to \C$ by $\r(F)_{ijkl} \defeq F_{ij}H_{kl} - F_{k[i}H_{j]l} + F_{l[i}H_{j]k}$. Applying to $\r(F)_{ijkl}$ the decomposition \eqref{curvaturedecom}, there result $\P_{2}(\r(F))_{ijkl} = 0$ and 
\begin{align}\label{fvdecom}
\r(F)_{ijkl} = F_{ijkl} + G_{ijkl}, 
\end{align}
in which the tensors $F_{ijkl}$ and $G_{ijkl}$ are defined by
\begin{align}
F_{ijkl} \defeq \P_{1}(\r(F))_{ijkl} & = \tfrac{3}{4}(F_{i(j}H_{kl)} - F_{j(i}H_{kl)}) = \tfrac{1}{2}F_{ij}H_{kl} - \tfrac{1}{2}F_{k[i}H_{j]l} - \tfrac{1}{2}F_{l[i}H_{j]k},\\
G_{ijkl} \defeq \P_{3}(\r(F))_{ijkl} & = \tfrac{3}{4}(3F_{[ij}H_{l]k} - F_{[ij}H_{k]l}) = \tfrac{1}{2}F_{ij}H_{kl} - \tfrac{1}{2}F_{k[i}H_{j]l} + \tfrac{3}{2}F_{l[i}H_{j]k}.
\end{align}
There hold $-2\r(F)_{pij}\,^{p} = nF_{ij}$ and 
From \eqref{pbtraces} there follow
\begin{align}
\label{prftraces1} &-(n+2)F_{ij} = 4F_{pij}\,^{p} = 4\P_{1}(\r(F))_{ij},& &(2-n)F_{ij}= 4G_{pij}\,^{p} = 4\P_{3}(\r(F))_{ij},\\
\label{prftraces2} &nF_{ij} = -2\r(F)_{pij}\,^{p},& & (4-n)F_{ij} = 2\r(F)_{[i|p|}\,^{p}\,_{j]}. 
\end{align}
It follows from \eqref{prftraces2} that $\r$ is injective, so an isomorphism onto its image.

\subsubsection{}\label{fdecomsection}
Let $(\en, [h])$ be an AH structure. The constructions of section \ref{curvaturetensorsection} apply fiberwise on $\ctm$. Thus there are defined the (weighted) tensor $\r(F)_{ijkl} \defeq F_{ij}H_{kl} - F_{k[i}H_{j]l} + F_{l[i}H_{j]k}$, which is a section of the module $\curvmod(\ctm)$ of algebraic curvature tensors, and the tensors $F_{ijkl}$ and $G_{ijkl}$ determined from it as in \eqref{fvdecom}, via the the decomposition \eqref{curvaturedecom}. All traces of $F_{ijkl}$ are determined by $-4F_{pij}\,^{p} = (n+2)F_{ij}$, from which it follows that the CP pair is closed if and only if $F_{ijkl} = 0$. All traces of $G_{ijkl}$ are determined by $4G_{pij}\,^{p} = (2-n)F_{ij}$, from which it follows that on a manifold of dimension $n >2$ a CP pair is closed if and only if $G_{ijkl} = 0$.

\subsubsection{}
Let $(\en, [h])$ be an AH structure. The Ricci identity implies
\begin{align}\label{skewnablah}
 -R_{ij(kl)} + F_{ij}H_{kl} = \nabla_{[i}\nabla_{j]}H_{kl} = \nabla_{[i}\bt_{j]kl}.
\end{align}
Skew-symmetrizing \eqref{skewnablah} in $ijk$ gives $R_{[ij|l|k]}  = 2F_{[ij}H_{k]l}$, and from this and the definition of $V_{ijkl}$ there results $\P_{3}(R)_{ijkl}  = G_{ijkl}$. Define $\uf_{ijkl} \defeq \P_{1}(R - \r(F))_{ijkl}$ and $T_{ijkl} \defeq \P_{2}(R)_{ijkl}$. Since $\P_{2}(\r(F)) = 0$, the decomposition \eqref{curvaturedecom} yields $R_{ijkl} = \uf_{ijkl} + T_{ijkl} + F_{ijkl} + G_{ijkl}$. Note the orthogonality relations such as $\uf_{ijkl}G^{ijkl} = \uf_{ijkl}T^{ijkl}  = 0$. The factors involving $\r(F)$ are subtracted off because the resulting tensors transform nicely under conjugation of the AH structure, as will be seen below in section \ref{curvatureahpencilsection}.

Tracing \eqref{skewnablah} in $il$ and relabeling yields
\begin{align}
\label{qrdiff} Q_{ij}- R_{ij}& = 2F_{ij} + \nabla_{p}\bt_{ij}\,^{p}  - \bt_{ij}.
\end{align}
Taking the skew-symmetric part of \eqref{qrdiff}, and using \eqref{faradayexplicit} to simplify the result yields $2Q_{[ij]}  = (4-n)F_{ij}$. All traces of $T_{ijkl}$ and $\uf_{ijkl}$ are expressible in terms of their Ricci traces, the expressions of which in terms of $R_{ij}$ and $Q_{ij}$ follow from \eqref{pbtraces}:
\begin{align}
&T_{ij}  = T_{pij}\,^{p} = \tfrac{1}{2}R_{(ij)} + \tfrac{1}{2}Q_{(ij)},& 
&\uf_{(ij)}  = \uf_{p(ij)}\,^{p} = \tfrac{1}{2}R_{(ij)} - \tfrac{1}{2}Q_{(ij)},&  
\end{align}
and
\begin{align}
\begin{split}\label{uijskew}
\uf_{[ij]} & = \tfrac{3}{4}R_{[ij]} - \tfrac{1}{4}Q_{[ij]} + \tfrac{(n+2)}{4}F_{ij} = \tfrac{4-n}{8}F_{ij} - \tfrac{1}{4}Q_{[ij]} = 0.
\end{split}
\end{align}
Taking the symmetric part of \eqref{qrdiff} and using \eqref{uijskew} shows
\begin{align}
\label{uijsym2}& \uf_{ij} = \uf_{(ij)} = -\tfrac{1}{2}\nabla_{p}\bt_{ij}\,^{p} +\tfrac{1}{2} \bt_{ij},& 
\end{align}
which will play an important role in deducing consequences of the Einstein equations. Tracing \eqref{qrdiff} shows $Q_{p}\,^{p} = R$, so that $T_{p}\,^{p} = R$ and $\uf_{p}\,^{p} = 0$.

\subsubsection{}
The decompositions of $T_{ijkl}$ and $\uf_{ijkl}$ into conformally irreducible components are as follows.
\begin{align}
\label{aijkldefined} 
\begin{split}
&T_{ijkl}  = A_{ijkl} - 2H_{l[i}A_{j]k} + 2H_{k[i}A_{j]l}, \\
&\uf_{ijkl} = E_{ijkl} - 2H_{l[i}E_{j]k} - 2H_{k[i}E_{j]l},
\end{split}
\end{align}
in which $A_{ijkl}$ and $E_{ijkl}$ are completely $H$-trace free and
\begin{align}
\label{aij}
\begin{split}
&A_{ij}  = A_{(ij)} \defeq \tfrac{1}{2-n}\left(T_{ij} + \tfrac{R}{2(1-n)}H_{ij}\right), 
\qquad A_{p}\,^{p} = \tfrac{1}{2(1-n)}R,\\
& E_{ij} \defeq -\tfrac{1}{n}\uf_{ij}, \qquad \qquad \qquad E_{p}\,^{p}  = 0.\\
\end{split}
\end{align}
The tensors $T_{ijkl}$ and $A_{ijkl}$ have the algebraic symmetries of a metric curvature tensor, while $\uf_{ijkl}$ and $E_{ijkl}$ have the algebraic symmetries of a symplectic curvature tensor.

\subsubsection{}
It is convenient to define some tensors quadratic in the cubic torsion.
\begin{align}\label{tbtdefined}\begin{split}
&\bt_{ijkl} \defeq 2\bt_{k[i}\,^{p}\bt_{j]lp}, \qquad \bt_{ij} \defeq \bt_{ip}\,^{q}\bt_{jq}\,^{p} = \bt_{pij}\,^{p}, \qquad \nbt \defeq \bt_{p}\,^{p} = \bt^{ijk}\bt_{ijk},\\ 
&\tbt_{ij} \defeq \tfrac{1}{2-n}\left(\bt_{ij} + \tfrac{1}{2(1-n)}\nbt H_{ij} \right),\\
&\tbt_{ijkl} \defeq \bt_{ijkl} + \tfrac{2}{2-n}(H_{l[i}\bt_{j]k} - H_{k[i}\bt_{j]l})+ \tfrac{2}{(n-1)(n-2)}\nbt H_{l[i}H_{j]k}\\
 & \qquad = \bt_{ijkl} + 2H_{l[i}\tbt_{j]k} - H_{k[i}\tbt_{j]l}.
\end{split}
\end{align}
Observe that $\bt_{ijkl}$ has the algebraic symmetries of a metric curvature tensor. The completely trace-free part of $\bt_{ijkl}$ is $\tbt_{ijkl}$, and the second equality of its definition shows that it has the form of a conformal Weyl tensor, and that $\tbt_{ij}$ has the form of the conformal Schouten tensor.

\subsubsection{}
AH structures are the curved generalizations of the projective analogues of Frobenius manifolds (see e.g. \cite{Manin-frobenius} or \cite{Hitchin-frobenius}). The cubic torsion $\bt_{ij}\,^{k}$ can be regarded as making the space of sections $\Ga(TM)$ into a commutative, non-associative algebra with the multiplication $(X\mlt Y)^{i} \defeq X^{p}Y^{q}\bt_{pq}\,^{i}$. The tensor $\bt_{ijkl}$ (resp. $\tbt_{ijkl}$) will be called the \textbf{(resp. conformal) non-associativity tensor} of the AH structure, because the algebra $(\Ga(TM), \mlt)$ is associative if and only if $\bt_{ijkl} = 0$. This point of view will be useful in section \ref{cubicformsection}.

\subsubsection{}
By Lemma \ref{weylcriterion}, the tensors $A_{ijkl}$ and $\tbt_{ijkl}$ vanish identically in dimension at most $3$, and $E_{ijkl}$ vanishes in dimension $2$.

\subsubsection{}
Applying to \eqref{skewnablah} the projection $\P_{1}$ and using \eqref{skewnablah} yields
\begin{align}\label{nbtskew}
\begin{split}
R_{ij(kl)} - F_{ij}H_{kl} = \nabla_{[i}\bt_{j]kl} &= -\P_{1}(R)_{ijkl} + F_{ijkl} \\
&= -\uf_{ijkl} = -E_{ijkl} + 2H_{l[i}E_{j]k}  + 2H_{k[i}E_{j]l}.
\end{split}
\end{align}
Together \eqref{uijsym2} and \eqref{nbtskew} show that $-E_{ijkl}$ is the completely $H$-trace free part of $\nabla_{[i}\bt_{j]kl}$. From \eqref{nbtskew} there follows
\begin{align}\label{nablabtupdown}
2\nabla_{[i}\bt_{j]k}\,^{l} - \bt_{ijk}\,^{l} = 2H^{lp}\nabla_{[i}\bt_{j]kp} = -2\uf_{ijk}\,^{l}.%
\end{align}
Using \eqref{nablabtupdown}, equations \eqref{uijsym2} and \eqref{nbtskew} can be rewritten in the perhaps more natural forms
\begin{align}\label{eijup}
&\nabla^{p}\bt_{pi}\,^{j} = 2nE_{i}\,^{j},&&  \nabla^{[i}\bt^{j]}\,_{kl} + \tfrac{2}{n}\nabla^{p}\bt_{p(k}\,^{[i}\delta_{l)}\,^{j]} = -E^{ij}\,_{kl}.
\end{align}
Straightforward computation using $2\bt^{abc}\nabla_{i}\bt_{abc} = \nabla_{i}\bt + 3 \bt_{i}\,^{pq}\bt_{pq}$, the definition of $\bt_{ij}$, \eqref{nbtskew}, and \eqref{eijup} shows 
\begin{align}
\label{divbtij}2\nabla^{p}\bt_{ip} & = \nabla_{i}\bt + \bt_{i}\,^{pq}\bt_{pq} + 4\bt^{abc}E_{iabc}  + 4(n-2)\bt_{i}\,^{pq}E_{pq}.
\end{align}

\subsubsection{Curvature of the AH pencil}\label{curvatureahpencilsection}
Let $(\pen, [h])$ be the pencil of AH structures generated by the AH structure $(\en, [h])$, and let $(\ben, [h])$ be the conjugate AH structure. Decorate with a $\,\brt{\,}\,$ (resp. $\,\bar{\,}\,$) the tensors derived from the curvature $\brt{R}_{ijkl}$ of $(\pen, [h])$ (resp. $\bar{R}_{ijkl}$ of $(\ben, [h])$). By definition $\brt{R}_{ijk}\,^{l} - R_{ijk}\,^{l} = 2t\nabla_{[i}\bt_{j]k}\,^{l} - t^{2}\bt_{ijk}\,^{l}$, so by \eqref{nbtskew} there holds
\begin{align}
\label{conjugateblaschkecurvature2}
\begin{split}
\brt{R}_{ijkl} &= R_{ijkl} + 2t\nabla_{[i}\bt_{j]kl} + t(1-t)\bt_{ijkl} 
 = R_{ijkl} - 2t\uf_{ijkl} + t(1-t)\bt_{ijkl} .
\end{split}
\end{align} 
Substituting into \eqref{conjugateblaschkecurvature2} the expression for $\nabla_{[i}\bt_{j]kl}$ given by \eqref{skewnablah} yields 
\begin{align}
\label{conjugateblaschkecurvature}&\brt{R}_{ijkl} = (1-t)R_{ijkl} - tR_{ijlk} + 2tF_{ij}H_{kl}  + t(1-t)\bt_{ijkl}.
\end{align}
Tracing \eqref{conjugateblaschkecurvature2} in $kl$ and using $\uf_{[ij]} = 0$ shows $\brt{R}_{ijp}\,^{p} = R_{ijp}\,^{p}$, so that $\brt{F}_{ij} = F_{ij}$ and therefore also $\brt{F}_{ijkl} = F_{ijkl}$ and $\brt{G}_{ijkl} = G_{ijkl}$. Hence an AH structure $(\en, [h])$ is closed if and only if for every $t$ the AH structure $(\pen, [h])$ is closed. With this information, decomposing \eqref{conjugateblaschkecurvature2} by symmetries yields 
\begin{align}
\label{brttijkl}&\brt{T}_{ijkl} = T_{ijkl} + t(1-t)\bt_{ijkl},&&\brt{\uf}_{ijkl} = (1-2t)\uf_{ijkl},\\
\label{brtaijkl}&\brt{A}_{ijkl} = A_{ijkl} +  t(1-t)\tbt_{ijkl},& &\brt{E}_{ijkl} = (1-2t)E_{ijkl},&
\end{align}
while tracing \eqref{conjugateblaschkecurvature} yields
\begin{align}
\label{blconjricci} &\brt{R}_{(ij)} = (1-t)R_{(ij)} + tQ_{(ij)} + t(1-t)\bt_{ij},& &\brt{R} = R + t(1-t)\nbt,&\\
&\brt{Q}_{(ij)} = (1-t)Q_{(ij)} + tR_{(ij)} + t(1-t)\bt_{ij},& &\brt{\uf}_{ij} = (1-2t)\uf_{i)},&\\
\label{brteij}&\brt{T}_{ij}  = T_{ij} + t(1-t)\bt_{ij},&  &\brt{E}_{ij} = (1-2t)E_{ij},&\\ 
\label{brtaij}&\brt{A}_{ij} = A_{ij} + t(1-t)\tbt_{ij},&
&\brt{F}_{ij} = F_{ij},&\\
\label{nbscalinv}&\brt{\nabla}_{i}\brt{R} = \nabla_{i}R + t(1-t)\nabla_{i}\nbt,& &\brt{\nabla}^{p}\brt{F}_{ip} = \nabla^{p}F_{ip}.
\end{align}

\subsubsection{}
The equality $\brtop\bnabla =\, \pnabla$ means that expressions for the curvature of an AH structure in terms of the curvature of its conjugate can be obtained by interchanging barred and unbarred tensors, and replacing $t$ by $(1-t)$.

Let $(\en, [h])$ and $(\ben, [h])$ be conjugate AH structures. Then \eqref{brttijkl}, \eqref{brteij}, and \eqref{brtaij} show that $\bar{A}_{ij} = A_{ij}$ and $\bar{A}_{ijkl} = A_{ijkl}$ while $\bar{E}_{ij} = -E_{ij}$ and $\bar{E}_{ijkl} = -E_{ijkl}$. Tensors such as $T_{ijkl}$ and $A_{ijkl}$ (resp. $\uf_{ijkl}$ and $E_{ijkl}$) unchanged under conjugacy (multiplied by $-1$ under conjugacy) will be called \tbf{self-conjugate} (resp. \tbf{anti-self-conjugate}). %

It seems that every interesting class of AH structures defined by some condition on the curvatures of its constituents has the property that the defining conditions are preserved under conjugacy.

\begin{remark}
For $4$-dimensional AH structures there should be considered also the usual notion of (anti)-self-duality for $A_{ijk}\,^{l}$. Although this will not be discussed here, it seems likely that some results for self-dual Weyl structures will generalize to this setting. The first step will be reinterpreting AH structures in terms of the usual twistor formalism. %
\end{remark}

\subsubsection{}
Equation \eqref{conjugateblaschkecurvature2} shows that
\begin{align}\label{rijlk}
R_{ijlk} + \bar{R}_{ijkl} = 2F_{ij}H_{kl},
\end{align}
so that the anti-self-conjugate part of $R_{ijkl}$ is $R_{ij(kl)} - F_{ij}H_{kl}$, which by \eqref{nbtskew} is equal to $\uf_{ijkl}$, and so it follows from \eqref{skewnablah} that if $(\en, [h])$ is Weyl then the curvature is self-conjugate. With \eqref{qrdiff}, \eqref{uijsym2}, and \eqref{nbtskew} this implies that for a Weyl structure there hold
\begin{align}
\label{rfh}& R_{ij(kl)}  = F_{ij}H_{kl},& & Q_{ij} = R_{ij} + 2F_{ij},& &E_{ijkl} = 0 = E_{ij}.
\end{align}
The class of AH structures with self-conjugate curvature tensor is considerably larger than the class of Weyl structures. In section \ref{convexprojectivesection} it will be shown that given a convex flat real projective structure $\en$ there is a conformal structure $[h]$ such that $(\en, [h])$ is an AH structure, and in general these AH structures are not Weyl, though their curvature is self-conjugate.

\subsubsection{Relation with underlying conformal structure}\label{underlyingsection}
This section describes how the various curvature tensors associated to an AH structure, e.g. $A_{ijkl}$ and $E_{ijkl}$, relate to the curvature tensors associated to the underlying conformal structure $[h]$, e.g. the usual conformal Weyl tensor.

Fix an AH structure $(\en, [h])$ and a representative $h \in [h]$ with associated Faraday primitive $\ga_{i}$ and Levi-Civita connection $D$, which is related to $\nabla$ by \eqref{dnabladiff}. Because of the convention of raising and lowering indices with the normalized metric $H^{ij}$, some notation is needed when indices are raised using instead $h^{ij}$. If $\ga_{i}$ is a one-form then $\ga^{\sharp\,i} \defeq h^{ip}\ga_{p}$, so, for example, $h_{ij}\ga^{\sharp\,k} = H_{ij}\ga^{k}$. Were it necessary to refer to $\tilde{h}^{ip}\ga_{p}$, this would be written $\ga^{\tilde{\sharp}\,i}$. Let $\sR_{ijk}\,^{l}$ be the curvature of $D$, and write the tensors derived from it in the same script. When there is chosen a representative $h \in [h]$ there will be written $\uR_{h}$ (or simply $\uR$ if the dependence on $h$ is clear from context) for the unweighted scalar curvature $h^{ij}R_{ij}$. It is important not to confuse $\uR_{h}$ with the scalar curvature of (the Levi-Civita connection of) $h$, which is written $\sR_{h}$. Let $\mu \defeq |\det h|^{-1/n}$, $\ga^{\sharp\,i}\defeq h^{ip}\ga_{p}$, $|\ga|^{2}_{h} = \ga^{\sharp\,p}\ga_{p}$, and $|\bt|_{h}^{2} = h^{ij}\bt_{ij} = h^{ij}\bt_{ip}\,^{q}\bt_{jq}\,^{p}$. Given a one-form $\al_{i}$, it will be convenient to write 
\begin{align}\label{alijshort}
&\al_{ij} \defeq \nabla_{i}\al_{j}- \al_{i}\al_{j} + \tfrac{1}{2}H_{ij}\al_{p}\al^{p}, &\al_{p}\,^{p} = \nabla_{p}\al^{p} + \tfrac{n-2}{2}\al_{p}\al^{p}.
\end{align}
Let $\ga_{ij}$ be defined by \eqref{alijshort} with $\ga_{i}$ in place of $\al_{i}$. 
There hold
\begin{align}
&\label{gd1}D_{i}\ga_{j} = \nabla_{i}\ga_{j} - \tfrac{1}{2}\bt_{ij}\,^{p}\ga_{p} - 2\ga_{i}\ga_{j} + H_{ij}\ga_{p}\ga^{p},& 
& D^{p}\ga_{p}  = \nabla^{p}\ga_{p} + (n-2)\ga^{p}\ga_{p},\\
\label{gd2}&\ga_{ij} = D_{i}\ga_{j} + \tfrac{1}{2}\bt_{ij}\,^{p}\ga_{p} + \ga_{i}\ga_{j} - \tfrac{1}{2}\ga^{p}\ga_{p}H_{ij},& &\ga_{p}\,^{p} = D^{p}\ga_{p} + \tfrac{2-n}{2}\ga^{p}\ga_{p}. 
\end{align}
There hold
\begin{align}
\label{dga3}
\begin{split}&D_{i}\bt_{jk}\,^{l} = \\ &\nabla_{i}\bt_{jk}\,^{l} - \tfrac{1}{2}\bt_{ij}\,^{p}\bt_{kp}\,^{l} - \bt_{k[i}\,^{p}\bt_{j]p}\,^{l} + \delta_{i}\,^{l}\ga_{p}\bt_{jk}\,^{p} + 2H_{i(j}\bt_{k)p}\,^{l}\ga^{p} - 3\ga_{(i}\bt_{jk)}\,^{l} - \bt_{ijk}\ga^{l} ,
\end{split}\\
& D_{[i}\bt_{j]k}\,^{l} = \nabla_{[i}\bt_{j]k}\,^{l} - \bt_{k[i}\,^{p}\bt_{j]p}\,^{l} + \ga_{p}\delta_{[i}\,^{l}\bt_{j]k}\,^{p} + H_{k[i}\bt_{j]p}\,^{l}\ga^{p},\\
\label{dbtskew}&D_{[i}\bt_{j]kl} = \nabla_{[i}\bt_{j]kl} + H_{k[i}\bt_{j]l}\,^{p}\ga_{p} + H_{l[i}\bt_{j]k}\,^{p}\ga_{p},
\end{align}
the last of which follows from \eqref{nablabtupdown}. From \eqref{uijsym2} and \eqref{dga3} there follows
\begin{align}\label{ddivbt}
2nE_{ij}  =  \nabla_{p}\bt_{ij}\,^{p} - \bt_{ij} =D_{p}\bt_{ij}\,^{p} - n\ga_{p}\bt_{ij}\,^{p}.
\end{align}
From \eqref{fvdecom}, \eqref{nbtskew}, and \eqref{dbtskew} there follows
\begin{align}\label{uplusv}
D_{[i}\bt_{j]kl} + \uf_{ijkl}  = H_{k[i}\bt_{j]l}\,^{p}\ga_{p} + H_{l[i}\bt_{j]k}\,^{p}\ga_{p}. 
\end{align}
Calculating $\sR_{ijkl} - R_{ijkl}$ using $F_{ij} = -d\ga_{ij}$ and \eqref{uplusv} yields
\begin{align}\label{confcurvijkl}\sR_{ijkl} &=   R_{ijkl} + D_{[i}\bt_{j]kl} - \tfrac{1}{2}\bt_{lp[i}\bt_{j]k}\,^{p} - 2H_{k[i}\bt_{j]l}\,^{p}\ga_{p}  - F_{ijkl} - G_{ijkl}\\ \notag &+\ga_{(jl)}H_{ik} - \ga_{(il)}H_{jk} - \ga_{(jk)}H_{il} + \ga_{(ik)}H_{jl}\\
\notag & = T_{ijkl} +\tfrac{1}{4}\bt_{ijkl}   + H_{l[i}\bt_{j]k}\,^{p}\ga_{p} - H_{k[i}\bt_{j]l}\,^{p}\ga_{p}\\ \notag &+ \ga_{(jl)}H_{ik} - \ga_{(il)}H_{jk} - \ga_{(jk)}H_{il} + \ga_{(ik)}H_{jl}.
\end{align}
From \eqref{confcurvijkl} there follow
\begin{align}
\label{confric}&\sR_{ij}  = T_{ij} + \tfrac{1}{4}\bt_{ij} + (2-n)\ga_{(ij)} - H_{ij}\ga_{p}\,^{p} + \tfrac{n-2}{2}\ga_{p}\bt_{ij}\,^{p}\\
\notag & \quad\,\,\, = T_{ij} + \tfrac{1}{4}\bt_{ij} + (2-n)\left(D_{(i}\ga_{j)} + \ga_{i}\ga_{j} - \ga^{p}\ga_{p}H_{ij}\right) - D^{p}\ga_{p}H_{ij}\\
\notag & \quad\,\,\, = R_{(ij)} + \tfrac{1}{2}\left(D_{p}\bt_{ij}\,^{p} - n\ga_{p}\bt_{ij}\,^{p}\right)+ \tfrac{1}{4}\bt_{ij} + (2-n)\left(D_{(i}\ga_{j)} + \ga_{i}\ga_{j} - |\ga|_{h}^{2}h_{ij}\right) +\dad_{h}\ga h_{ij},\\
\label{confscal}&\sR \defeq h^{ij}\sR_{ij}  = |\det h|^{-1/n}\left(R + \tfrac{1}{4}\nbt  + 2(1-n)\ga_{p}\,^{p}\right),\\
\notag & \quad\, = \uR_{h} + \tfrac{1}{4}|\bt|_{h}^{2}  + 2(n-1)\dad_{h}\ga + (n-1)(n-2)|\ga|_{h}^{2},\\
\label{confricfree}&\mr{\sR}_{ij}  = \mr{T}_{ij} + \tfrac{1}{4}\mr{\bt}_{ij}%
 + (2-n)\left(\ga_{(ij)} - \tfrac{1}{n}\ga_{p}\,^{p}H_{ij} -\tfrac{1}{2}\ga_{p}\bt_{ij}\,^{p} \right)\\
\notag & \qquad \,\, = \mr{T}_{ij} + \tfrac{1}{4}\mr{\bt}_{ij} + (2-n)\left(D_{(i}\ga_{j)} + \tfrac{1}{n}(\dad_{h}\ga)h_{ij} + \ga_{i}\ga_{j} - \tfrac{1}{n}|\ga|_{h}^{2}h_{ij}\right)\\
\notag & \qquad\,\,\, = \mr{R}_{ij} + \tfrac{1}{4}\mr{\bt}_{ij} + (2-n)\mr{\ga\tensor\ga}_{ij} + \tfrac{1}{2}\left(D_{p}\bt_{ij}\,^{p} - n\ga_{p}\bt_{ij}\,^{p}\right)  + (2-n)\mr{D\ga}_{ij}.
\end{align}
The reason for writing an equation such as \eqref{confric} in several ways is that one or other expression is useful depending on whether one is supposing given the AH structure and deducing properties of it, or one is given a metric and a cubic form and is trying to construct an AH structure with desired properties. For instance, later it will be shown that for a Riemannian signature Einstein AH structure on a compact manifold and a particular choice of $h$ there vanish the last two terms of \eqref{confricfree}. Using $\nabla_{i}R = D_{i}R + 2\ga_{i}R$ and \eqref{gd1} it is straightforward to check 
\begin{align}
\label{preconserve} 
\begin{split}
\nabla^{p}F_{ip} &= D^{p}F_{ip} + (4-n)\ga^{p}F_{ip},\\
 \nabla_{i}R + n\nabla^{p}F_{ip} &= D_{i}R + 2\ga_{i}R  + nD^{p}F_{ip} + n(4-n)\ga^{p}F_{ip}.
\end{split}\\
\label{conserve}
\begin{split}
\mu(\nabla_{i}R + n\nabla^{p}F_{ip})  & = D_{i}\uR_{h}  + 2\ga_{i}\uR_{h} + nh^{pq}D_{p}F_{iq} + n(4-n)\ga^{\sharp\,p}F_{ip}\\
& = D_{i}\uR_{h}  + 2\ga_{i}\uR_{h} - n\dad_{h}d\ga_{i} + n(4-n)\ga^{\sharp\,p}d\ga_{pi}.
\end{split}
\end{align}

\section{Codazzi projective structures}\label{codazziprojectivesection}

\subsection{Curvature of Codazzi projective structures}\label{curvaturecodazzisection}
This section begins with an analysis of how are related the curvatures of two CP pairs inducing the same conformal projective structure. 

\subsubsection{}
A conformal projective structure is a \textbf{Codazzi projective structure} if its conformal torsion vanishes. By Lemma \ref{conformalprojectivediff} a Codazzi projective structure can be viewed as a conformal class of AH structures.

\subsubsection{}\label{cweightsection}
Let $\cmf$ be the line bundle $|\Det \ctm|^{-1/2n}$ and say that a section of $\cmf^{\la}$ has \textbf{c-weight} $\la$. The terminology \textit{c-weight} abbreviates \textit{conformal weight}. If $\tD = D + 2\si_{(i}\delta_{j)}\,^{k} - h_{ij}\si^{\sharp\,k}$ are Levi-Civita connections of conformal metrics and $u \in \Ga(\cmf^{\la})$ then $\tD_{i}u - D_{i}u = \la \si_{i} u$. For this reason when analyzing the effects of conformal changes on density-valued tensors it is convenient to speak of c-weights.

\subsubsection{}
Temporarily write $\A_{k}(\la)$ for the vector space of completely trace-free, completely symmetric tensors $\om_{i_{1}\dots i_{k}}$ having c-weight $\la$, and $\B_{k}(\la)$ for the vector space of tensors $\si_{iji_{1}\dots i_{k-1}}$ having c-weight $\la$ and satisfying $\si_{iji_{1}\dots i_{k-1}} = \si_{[ij](i_{1}\dots i_{k-1})}$ and $\si_{[ijk]i_{1}\dots i_{k-2}} = 0$. Let $\B_{k}^{0}(\la)$ comprise the trace-free elements of $\B_{k}(\la)$. For example, $\bt_{ijk} \in \A_{3}(2)$ and $E_{ijkl} \in \B_{3}^{0}(2)$.

\begin{lemma}\label{eopinvariantlemma}
Given an AH structure $(\en, [h])$ the associated differential operator $\klie:\A_{k}(k-1) \to \B_{k}(k-1)$ defined by
\begin{align}\label{colanczosdefined}
\klie(\om)_{iji_{1}\dots i_{k-1}} \defeq - H_{ia}H_{jb}\left(\nabla^{[a}\om^{b]}\,_{i_{1}\dots i_{k-1}} + \tfrac{k-1}{n+k-3}\nabla^{p}\om_{p(i_{1}\dots i_{k-2}}\,^{[a}\delta_{i_{k-1})}\,^{b]} \right),
\end{align}
is invariant in the sense that if $(\ten, [h])$ is another AH structure generating the same Codazzi projective structure as does $(\en, [h])$ then the operator $\tilde{\klie}$ associated to $(\ten, [h])$ is equal to $\klie$. 
\end{lemma}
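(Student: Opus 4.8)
The plan is to compare the operator $\tilde\klie$ built from the aligned representative $\tnabla$ of $(\ten,[h])$ directly against $\klie$ built from $\nabla\in\en$, exploiting that both AH structures carry the \emph{same} conformal structure $[h]$ and hence the same normalized metric $H_{ij}$, which is therefore never differentiated when indices are raised or lowered. Since $(\ten,[h])$ and $(\en,[h])$ are subordinate to a common Codazzi projective structure, Lemma \ref{conformalprojectivediff} supplies a one-form $\al_i$ with $\tnabla = \nabla + 2\al_{(i}\delta_{j)}{}^{k} - \al^{k}H_{ij}$. Thus the entire computation reduces to controlling $(\tnabla_a-\nabla_a)\om$ for $\om\in\A_k(k-1)$ and substituting into the two terms of \eqref{colanczosdefined}.

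First I would write out the difference explicitly. The difference tensor $\Pi_{ij}{}^{k}=2\al_{(i}\delta_{j)}{}^{k}-\al^{k}H_{ij}$ acts on each of the $k$ symmetric lower slots of $\om$ and, through its trace $\Pi_{ap}{}^{p}=n\al_a$, on the density factor; by the definition of c-weight the latter contributes $(k-1)\al_a\om$. Collecting terms and using that $\om$ is completely symmetric and completely $H$-trace-free gives
\begin{align*}
(\tnabla_a - \nabla_a)\om_{b i_1\dots i_{k-1}} &= -\al_a\om_{bi_1\dots i_{k-1}} - \al_b\om_{ai_1\dots i_{k-1}} + \al^{p}H_{ab}\om_{pi_1\dots i_{k-1}} \\
&\quad - \sum_{s}\al_{i_s}\om_{abi_1\dots\hat{i_s}\dots i_{k-1}} + \sum_{s}\al^{p}H_{ai_s}\om_{pbi_1\dots\hat{i_s}\dots i_{k-1}}.
\end{align*}
The key point is that the combined coefficient of $\al_a\om_{b\dots}$ is exactly $-1$, the density contribution $(k-1)\al_a$ cancelling all but one unit of the $-k\al_a$ coming from the tensorial slots; this precise value is forced by the c-weight being $k-1$ and is the crux of the first cancellation.

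Next I would feed this into the two pieces of \eqref{colanczosdefined}. Antisymmetrizing in the outer indices, the terms $-\al_a\om_b$ and $-\al_b\om_a$ cancel against each other, the $H_{ab}$-term drops by symmetry of $H$, and the $\al_{i_s}$-term drops by symmetry of $\om$, leaving $\tnabla_{[i}\om_{j]i_1\dots i_{k-1}}-\nabla_{[i}\om_{j]i_1\dots i_{k-1}} = \sum_{s}H_{[i|i_s}\psi_{|j]i_1\dots\hat{i_s}\dots i_{k-1}}$, where $\psi_{m_1\dots m_{k-1}}:=\al^{p}\om_{pm_1\dots m_{k-1}}$ is symmetric. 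For the divergence term I would contract the same difference with $H^{ab}$; the $\al_{i_s}$-slot term vanishes because $\om$ is trace-free, and the surviving pieces sum to $(\tnabla^{p}-\nabla^{p})\om_{pi_1\dots i_{k-1}}=(n+k-3)\al^{p}\om_{pi_1\dots i_{k-1}}$. Since $(n+k-3)$ is precisely the reciprocal of the normalizing constant $\tfrac{k-1}{n+k-3}$ in \eqref{colanczosdefined}, the divergence piece contributes exactly $-(k-1)\psi_{(i_1\dots i_{k-2}[i}H_{j]i_{k-1})}$ to the correction.

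Finally I would verify that the two residuals cancel, i.e.\ that
\[
\sum_{s}H_{[i|i_s}\psi_{|j]i_1\dots\hat{i_s}\dots i_{k-1}} + (k-1)\psi_{(i_1\dots i_{k-2}[i}H_{j]i_{k-1})} = 0.
\]
Expanding the symmetrization in the second term over which free index is attached to $H$, and using only the complete symmetry of $\psi$ to align index positions, each $H_{ii_s}\psi_{j\cdots}$ produced by the first sum is matched by an equal and opposite $H_{ii_s}\psi_{j\cdots}$ from the second, so the identity holds term-by-term; hence $\tilde\klie(\om)=\klie(\om)$. The main obstacle is entirely one of bookkeeping: keeping the density (c-weight) contribution straight so that the principal $\al_{[i}\om_{j]}$ correction cancels, and then checking the purely combinatorial trace identity above. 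The conceptual content is that the weight $k-1$ and the coefficient $\tfrac{k-1}{n+k-3}$ are the unique choices making \emph{both} cancellations occur simultaneously.
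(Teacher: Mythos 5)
Your proof is correct and is essentially the paper's own argument written out in full: the paper likewise invokes Lemma \ref{conformalprojectivediff} for the form of $\tnabla-\nabla$, records the transformation rules \eqref{eomdiff} for $\tnabla_{i}\om$, $\tnabla^{[i}\om^{j]}{}_{i_1\dots i_{k-1}}$ and $\tnabla^{p}\om_{i_1\dots i_{k-1}p}$ on $\A_k(\la)$, and observes that at $\la=k-1$ the residual corrections cancel against the $\tfrac{k-1}{n+k-3}$-weighted divergence term. Your expanded bookkeeping (the net coefficient $-1$ on $\al_a\om$, the $(n+k-3)$ factor in the divergence, and the final combinatorial trace identity) is exactly the content of those displayed formulas, so there is nothing to add.
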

\begin{proof}
By Lemma \ref{conformalprojectivediff} the difference tensor $\tnabla - \nabla$ has the form $2\al_{(i}\delta_{j)}\,^{k} - \al^{k}H_{ij}$ for some one-form $\al_{i}$. It is straightforward to verify that for trace-free $\om_{i_{1}\dots i_{k}} \in \A_{k}(\la)$ there hold
\begin{align}\label{eomdiff}
\begin{split}
\tnabla_{i}\om_{i_{1}\dots i_{k}}          & = \nabla_{i}\om_{i_{1}\dots i_{k}}  + (\la - k)\al_{i}\om_{i_{1}\dots i_{k}} - k\al_{(i_{1}}\om_{i_{2}\dots i_{k})i} + kH_{i(i_{1}}\om_{i_{2}\dots i_{k})p}\al^{p},\\
\tnabla^{[i}\om^{j]}\,_{i_{1}\dots i_{k-1}} & = \nabla^{[i}\om^{j]}\,_{i_{1}\dots i_{k-1}} - (\la - k +1)\om_{i_{1}\dots i_{k-1}}\,^{[i}\al^{j]} - (k-1)\al^{p}\om_{p(i_{1}\dots i_{k-2}}\,^{[i}\delta_{i_{k-1})}\,^{j]},\\
\tnabla^{p}\om_{i_{1}\dots i_{k-1}p}       & = \nabla^{p}\om_{i_{1}\dots i_{k-1}p}  + (n-2+\la)\al^{p}\om_{i_{1}\dots i_{k-1}p}.
\end{split}
\end{align}
from which the claim follows. 
\end{proof}
Taking the trace-free part of $\klie(\om)$ determines an invariant operator $\klie^{0}:\A_{k}(k-1) \to B^{0}_{k}(k-1)$. The difference $\klie^{0}(\om) - \klie(\om)$ comprises terms involving the cubic torsion. The complicated explicit expression is not needed here, so omitted. %
The reason for introducing $\klie$ is the following lemma.
\begin{lemma}\label{ascweyllemma}
For a Codazzi projective structure $(\enb, [h])$ with cubic torsion $\bt_{ijk}$ there holds $\klie^{0}(\bt)_{ijkl} = \klie(\bt)_{ijkl} = E_{ijkl}$.
\end{lemma}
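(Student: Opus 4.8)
The plan is to evaluate the operator $\klie$ of \eqref{colanczosdefined} directly on the cubic torsion, exploiting the fact that the second identity of \eqref{eijup} already encodes exactly what must be proved. Since $\bt_{ijk} \in \A_{3}(2) = \A_{k}(k-1)$ with $k = 3$, the cubic torsion lies in the domain of $\klie$, so the evaluation is legitimate. Specializing \eqref{colanczosdefined} to $k = 3$, so that $n + k - 3 = n$ and $k - 1 = 2$, gives
\begin{align*}
\klie(\bt)_{ijkl} = - H_{ia}H_{jb}\left(\nabla^{[a}\bt^{b]}\,_{kl} + \tfrac{2}{n}\nabla^{p}\bt_{p(k}\,^{[a}\delta_{l)}\,^{b]} \right).
\end{align*}

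The key observation is that, after relabeling $i \to a$ and $j \to b$, the parenthesized expression is precisely the left-hand side of the second equation of \eqref{eijup}, which asserts that it equals $-E^{ab}\,_{kl}$. Substituting this and lowering the indices $a, b$ using $H_{ia}H_{jb}$, the two minus signs cancel and one obtains $\klie(\bt)_{ijkl} = H_{ia}H_{jb}E^{ab}\,_{kl} = E_{ijkl}$, which is the second asserted equality. For the first equality I would recall that $\klie^{0}$ is by definition obtained from $\klie$ by taking the completely $H$-trace-free part; since $E_{ijkl} \in \B_{3}^{0}(2)$ is already completely trace-free, the computation just made shows $\klie(\bt) = E_{ijkl}$ is trace-free, so passing to its trace-free part changes nothing and $\klie^{0}(\bt)_{ijkl} = \klie(\bt)_{ijkl}$.

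There is no genuine obstacle: the entire content is carried by \eqref{eijup}, itself derived earlier from \eqref{skewnablah} and \eqref{nablabtupdown}. The only points demanding care are bookkeeping ones — checking that the combinatorial constant $\tfrac{k-1}{n+k-3}$ indeed specializes to $\tfrac{2}{n}$ at $k = 3$, that the symmetrizations and antisymmetrizations in \eqref{colanczosdefined} line up index-for-index with those in \eqref{eijup}, and that $\bt$ carries c-weight $k - 1 = 2$ so as to sit in the domain $\A_{3}(2)$.

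One interpretive remark worth recording is that, since $\klie$ is invariant over the Codazzi projective structure by Lemma \ref{eopinvariantlemma} and $\bt_{ijk}$ depends only on that structure, the identity $\klie^{0}(\bt) = E_{ijkl}$ exhibits the anti-self-conjugate Weyl tensor $E_{ijkl}$ as an invariant of the underlying Codazzi projective structure, computed by a single universal operator applied to the cubic torsion; this is precisely the role that motivates introducing $\klie$ in the first place.
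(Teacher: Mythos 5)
Your proposal is correct and is essentially identical to the paper's own proof: both specialize the definition \eqref{colanczosdefined} to $k=3$ and read off the result from the second identity of \eqref{eijup}, noting that $E_{ijkl}$ is already trace-free so that $\klie^{0}(\bt)=\klie(\bt)$. The bookkeeping you flag (the constant $\tfrac{k-1}{n+k-3}=\tfrac{2}{n}$ and the c-weight $2$ of $\bt$) is exactly what the paper's one-line argument relies on.
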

\begin{proof}
For $\om_{ijk} \in \A_{3}(2)$ there holds $\klie(\om)_{ijkl} = - H_{ia}H_{jb}\left(\nabla^{[a}\om^{b]}\,_{kl} + \tfrac{2}{n}\nabla^{p}\omega_{p(k}\,^{[a}\delta_{l)}\,^{b]}\right)$,
and comparing this with \eqref{eijup} shows that $\klie(\bt)_{ijkl} = E_{ijkl}$, which is trace-free.
\end{proof}

\subsubsection{}
For an AH structure $(\en, [h])$, define $W_{ijkl} \defeq A_{ijkl} + E_{ijkl}$ and 
\begin{align}
\label{wijdefined} W_{ij} \defeq & A_{ij} + E_{ij} + \tfrac{1}{2}F_{ij}
 = \tfrac{1}{2-n}\left(\tfrac{n-1}{n}R_{(ij)} + \tfrac{1}{n}Q_{(ij)} + \tfrac{R}{2(1-n)}H_{ij} + \tfrac{2-n}{2}F_{ij} \right).
\end{align}
From the definition of $W_{ij}$, \eqref{brteij}, and \eqref{brtaij} there follow:
\begin{align}\label{rintermsofw}
R_{ijkl} & = W_{ijkl} - 2H_{l[i}W_{j]k} + 2H_{k[i}\bar{W}_{j]l} + (W_{[ij]} + \bar{W}_{[ij]})H_{kl}.\\
\label{ricintermsofw}
R_{ij} &= (1-n)W_{ij} + \bar{W}_{ij} + \tfrac{1}{2(n-1)}RH_{ij} - F_{ij}.
\end{align}
By Theorem \ref{aeinvariant}, $W_{ijkl}$ can be regarded as the generalization to Codazzi projective structures of the usual conformal Weyl tensor of a conformal structure, and this justifies calling $A_{ijkl}$ and $E_{ijkl}$ the \textbf{self-conjugate Weyl tensor} and the \textbf{anti-self-conjugate Weyl tensor}. 
\begin{theorem}\label{aeinvariant}
 The self-conjugate and anti-self-conjugate Weyl tensors $A_{ijkl}$ and $E_{ijkl}$ of an AH structure $(\en, [h])$ depend only on the Codazzi projective structure $(\enb, [h])$ generated by $(\en, [h])$ in the sense that if $(\ten, [h])$ is another AH structure generating the same Codazzi projective structure the corresponding tensors $\tilde{A}_{ijkl}$ and $\tilde{E}_{ijkl}$) equal $A_{ijkl}$ and $E_{ijkl}$, respectively.
\end{theorem}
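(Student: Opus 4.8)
The plan is to establish the two invariances separately, relying throughout on two structural facts about AH structures $(\en,[h])$ and $(\ten,[h])$ that generate the same Codazzi projective structure $(\enb,[h])$: by Lemma~\ref{conformalprojectivediff} their aligned representatives satisfy $\tnabla-\nabla=2\al_{(i}\delta_{j)}\,^{k}-\al^{k}H_{ij}$ for a one-form $\al_i$, and by Lemma~\ref{sametorsion} they share a common cubic torsion $\bt_{ijk}$, so that every tensor built algebraically from it---in particular $\bt_{ijkl}$ and its completely trace-free part $\tbt_{ijkl}$ of \eqref{tbtdefined}---agrees for the two structures. The anti-self-conjugate tensor $E_{ijkl}$ is then immediate: Lemma~\ref{ascweyllemma} gives $E_{ijkl}=\klie(\bt)_{ijkl}$, exhibiting $E$ as the value of the operator $\klie$ on the cubic torsion, and Lemma~\ref{eopinvariantlemma} shows $\tilde\klie=\klie$, so $\tilde E_{ijkl}=\tilde\klie(\bt)_{ijkl}=\klie(\bt)_{ijkl}=E_{ijkl}$.

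For $A_{ijkl}$ I would fix one representative $h\in[h]$ and use it for both structures, so that the Levi-Civita connection $D$ and its curvature $\sR_{ijkl}$ are literally identical in the two computations; recall from \eqref{aijkldefined} that $A_{ijkl}$ is the completely trace-free part of $T_{ijkl}=\P_2(R)_{ijkl}$. Rearranging \eqref{confcurvijkl} as
\begin{align*}
T_{ijkl}=\sR_{ijkl}-\tfrac14\bt_{ijkl}+\Theta_{ijkl},
\end{align*}
the remainder $\Theta_{ijkl}$ collects the terms $-H_{l[i}\bt_{j]k}\,^{p}\ga_p+H_{k[i}\bt_{j]l}\,^{p}\ga_p$ together with the four terms $-\ga_{(jl)}H_{ik}+\ga_{(il)}H_{jk}+\ga_{(jk)}H_{il}-\ga_{(ik)}H_{jl}$; a short check shows these assemble into the form $-2H_{l[i}S_{j]k}+2H_{k[i}S_{j]l}$ of the trace part in \eqref{aijkldefined}, for a suitable symmetric covariant two-tensor $S_{jk}$ built from $\ga_i$ and $\bt_{ij}\,^{k}$. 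By the uniqueness of the decomposition \eqref{aijkldefined} into a completely trace-free part and such a trace part, $\Theta_{ijkl}$ contributes only to $A_{ij}$ and leaves $A_{ijkl}$ unchanged. Passing to completely trace-free parts then yields
\begin{align*}
A_{ijkl}=\sW_{ijkl}-\tfrac14\tbt_{ijkl},
\end{align*}
with $\sW_{ijkl}$ the conformal Weyl tensor of $[h]$; both terms on the right are fixed by the shared $h$ and the shared cubic torsion, so $\tilde A_{ijkl}=A_{ijkl}$.

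The only genuine content is in the $A_{ijkl}$ argument, and the main obstacle is the bookkeeping that identifies $\Theta_{ijkl}$ as purely of the trace shape in \eqref{aijkldefined}: this is exactly what forces $A$ to collapse onto the shared data $\sW_{ijkl}$ and $\tbt_{ijkl}$, the two structures differing only through their Faraday primitives $\ga_i$ and $\tilde\ga_i=\ga_i-\al_i$, which enter $T_{ijkl}$ solely inside $\Theta_{ijkl}$. Were \eqref{confcurvijkl} not already at hand, the obstacle would instead be the direct computation of the change of $T_{ijkl}=\P_2(R)_{ijkl}$ under $\tnabla-\nabla=2\al_{(i}\delta_{j)}\,^{k}-\al^{k}H_{ij}$ from the standard curvature transformation law, followed by the same verification that its completely trace-free part is unaffected.
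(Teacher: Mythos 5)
Your proof is correct, and for the self-conjugate part it takes a genuinely different (and somewhat more economical) route than the paper. The paper proves the invariance of $E_{ijkl}$ exactly as you do, by observing that $E_{ijkl}=\klie(\bt)_{ijkl}$ (Lemma \ref{ascweyllemma}) with $\klie$ invariant by Lemma \ref{eopinvariantlemma}; it then also records a direct computational verification. For $A_{ijkl}$, however, the paper works entirely on the connection side: it computes the difference $\tilde{R}_{ijkl}-R_{ijkl}$ under $\tnabla-\nabla=2\al_{(i}\delta_{j)}\,^{k}-\al^{k}H_{ij}$, applies the projections $\P_{1},\P_{2}$ to isolate $\tilde{T}_{ijkl}-T_{ijkl}$ and $\tilde{\uf}_{ijkl}-\uf_{ijkl}$, and observes that each difference is purely of the trace shape appearing in \eqref{aijkldefined}, so the completely trace-free parts are unchanged. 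You instead anchor both structures to the fixed Levi-Civita curvature of a representative $h\in[h]$ via \eqref{confcurvijkl} and show that $T_{ijkl}$ differs from $\sR_{ijkl}-\tfrac14\bt_{ijkl}$ only by a trace-shape term built from the Faraday primitive; this buys you the closed formula $A_{ijkl}=\sW_{ijkl}-\tfrac14\tbt_{ijkl}$, which the paper only extracts later (in the discussion preceding \eqref{qwl}) as a consequence of the theorem, and it makes the invariance manifest since both $\sW_{ijkl}$ and $\tbt_{ijkl}$ are built from shared data. The cost is that your argument presupposes \eqref{confcurvijkl}, whose derivation is comparable in length to the paper's direct computation of \eqref{rijkldiff}, so the total labor is similar. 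Two small points to make explicit: the ``uniqueness of the decomposition \eqref{aijkldefined}'' you invoke is the statement that, for $n\geq 3$, a tensor of metric curvature type of the form $-2H_{l[i}S_{j]k}+2H_{k[i}S_{j]l}$ with $S_{jk}$ symmetric has vanishing completely trace-free part (for $n\leq 3$ the conclusion $\tilde{A}_{ijkl}=A_{ijkl}$ is vacuous by Lemma \ref{weylcriterion}); and one should note that although the remainder $\Theta_{ijkl}$ differs between the two structures (since $\tilde{\ga}_{i}=\ga_{i}-\al_{i}$ and $\tilde{\ga}_{(ij)}$ is formed with $\tnabla$), this is harmless precisely because each is separately of trace shape, which you do say.
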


\begin{proof}
The decomposition via the projections $\P_{i}$ on $\curvmod$ is useful for organizing what would otherwise be a tremendous mess. The sorts of computations to be made will be familiar to anyone who has computed explicitly conformally invariant tensors. The invariance of $E_{ijkl}$ for Codazzi projective structures proved in Theorem \ref{aeinvariant} is immediate from Lemma \ref{ascweyllemma}; a direct computational proof will also be given in what follows.

Let $\tnabla$ and $\nabla$ be the aligned representatives of CP pairs generating the same conformal projective structure. By Lemma \ref{conformalprojectivediff} the difference tensor $\tnabla - \nabla$ has the form $2\al_{(i}\delta_{j)}\,^{k} - \al^{k}H_{ij}$ for some one-form $\al_{i}$. Recall the definition \eqref{alijshort} of $\al_{ij}$. Via conditions $(4)$ and $(5)$ of Lemma \ref{special} the alignment condition implies
\begin{align}
\label{aliupj}&\al_{i}\,^{j} = H^{jk}\al_{ik} = \nabla_{i}\al^{j} - \al_{i}\al^{j} + \tfrac{1}{2}\al^{p}\al_{p}\delta_{i}\,^{j} + \al^{p}\bt_{pi}\,^{j}.&
\end{align}
Decorate with a $\tilde{\,\,}$ the curvature of $\tnabla$ and all tensors derived from it. The Faraday primitives $\tilde{\ga}_{i}$ and $\ga_{i}$ associated to $h \in [h]$ are related by $\tilde{\ga}_{i} = \ga_{i} - \al_{i}$ and so $\tilde{F}_{ij} - F_{ij} = 2\nabla_{[i}\al_{j]} = d\al_{ij} = 2\al_{[ij]}$. It follows immediately that 
\begin{align}
\tilde{F}_{ijkl} + \tilde{G}_{ijkl} - F_{ijkl} - G_{ijkl} = d\al_{ij}H_{kl} - H_{l[i}d\al_{j]k} + H_{k[i}d\al_{j]l}.
\end{align}
Straightforward computations show
\begin{align}\label{rijkldiff}
\begin{split}
\tilde{R}_{ijkl} & - R_{ijkl} =  d\al_{ij}H_{kl} - H_{l[i}d\al_{j]k} + H_{k[i}d\al_{j]l} - 2H_{l[i}\mu_{j]k} + 2H_{k[i}\mu_{j]l} - 2H_{k[i}\bt_{j]lp}\al^{p},
\end{split}
\end{align}
so that, writing $\mu_{ij} = \al_{(ij)}$,
\begin{align}\label{rijkldiff2}
\begin{split}
(\tilde{\uf}_{ijkl} &+ \tilde{T}_{ijkl}) - (\uf_{ijkl} - T_{ijkl})
=   - 2H_{l[i}\mu_{j]k} + 2H_{k[i}\mu_{j]l} - 2H_{k[i}\bt_{j]lp}\al^{p}.
\end{split}
\end{align}
The transformation rules for $\uf_{ijkl}$ and $T_{ijkl}$ can now be found by applying to the righthand side of \eqref{rijkldiff2} the projections $\P_{1}$ and $\P_{2}$. In order to do this, it is convenient to observe that the image under $\P_{2}$ of $2H_{k[i}\si_{j]l} - 2H_{l[i}\si_{j]k}$ is itself or $0$ according to whether $\si_{ij}$ is symmetric or anti-symmetric. Applying $\P_{1}$ and $\P_{2}$ to \eqref{rijkldiff2} and simplifying there result 
\begin{align}
\label{uijkldiff}
\tilde{\uf}_{ijkl} & - \uf_{ijkl} = - H_{k[i}\bt_{j]lp}\al^{p} - H_{l[i}\bt_{j]kp}\al^{p},\\
\label{tijkldiff}
\tilde{T}_{ijkl} & - T_{ijkl} = - 2H_{l[i}\mu_{j]k} + 2H_{k[i}\mu_{j]l} + H_{l[i}\bt_{j]kp}\al^{p} - H_{k[i}\bt_{j]lp}\al^{p}.
\end{align}
Taking traces of \eqref{rijkldiff}, \eqref{uijkldiff}, and \eqref{tijkldiff} in various ways gives
\begin{align}
&\label{rijsymdiff}\tilde{R}_{(ij)} - R_{(ij)}  = (2-n)\mu_{ij} - H_{ij}\al_{p}\,^{p}  - \al^{p}\bt_{ijp},\\
&\tilde{Q}_{(ij)} - Q_{(ij)}  = (2-n)\mu_{ij} - H_{ij}\al_{p}\,^{p}  + (n-1)\al^{p}\bt_{ijp},\\
&\tilde{T}_{ij} - T_{ij}  = (2-n)\mu_{ij} - H_{ij}\al_{p}\,^{p}  + \tfrac{n-2}{2}\al^{p}\bt_{ijp}, &&\label{cpdiffscalar}\tilde{R} - R  = 2(1-n)\al_{p}\,^{p},\\
&\label{aijdiff}\tilde{A}_{ij} - A_{ij}  = \mu_{ij} - \tfrac{1}{2}\bt_{ijp}\al^{p},&
&\tilde{E}_{ij} - E_{ij} = \tfrac{1}{2}\bt_{ijp}\al^{p}.
\end{align}
From \eqref{aijdiff} and \eqref{tijkldiff} there follows $\tilde{A}_{ijkl} = A_{ijkl}$, while from \eqref{aijdiff} and \eqref{uijkldiff} there follows $\tilde{E}_{ijkl} = E_{ijkl}$.
\end{proof}

\subsubsection{}

Next is defined the analogue of the usual conformal Cotton tensor. Define
\begin{align}
\label{aijkdefined} 
\begin{split}
w_{ijk} & = 2\nabla_{[i}W_{j]k},\\
a_{ijk} & \defeq 2\nabla_{[i}A_{j]k} + \nabla_{[i}F_{j]k}- \bt_{k[i}\,^{p}A_{j]p}- \tfrac{1}{2}\bt_{k[i}\,^{p}F_{j]p} + \bt_{k[i}\,^{p}E_{j]p},\\
e_{ijk}& \defeq 2\nabla_{[i}E_{j]k} + \bt_{k[i}\,^{p}A_{j]p}     + \tfrac{1}{2}\bt_{k[i}\,^{p}F_{j]p}     - \bt_{k[i}\,^{p}E_{j]p}.
\end{split}
\end{align}
Let $W_{i} = w_{ip}\,^{p}$, $A_{i} = a_{ip}\,^{p}$, and $E_{i} = e_{ip}\,^{p}$ and define trace-free tensors
\begin{align}
\label{wijkdefined2} &W_{ijk}  \defeq w_{ijk} - \tfrac{2}{n-1}W_{[i}H_{j]k},& & A_{ijk}  \defeq a_{ijk} - \tfrac{2}{n-1}A_{[i}H_{j]k}, & & E_{ijk}  \defeq e_{ijk} - \tfrac{2}{n-1}E_{[i}H_{j]k}.
\end{align}
Because $\nabla_{[i}F_{jk]} = 0$, there hold $W_{[ijk]} = A_{[ijk]} = E_{[ijk]} = 0$. There holds $w_{ijk} = a_{ijk} + e_{ijk}$, so also $W_{i} = A_{i} + E_{i}$ and $W_{ijk} = A_{ijk} + E_{ijk}$. Write also $W = W_{p}\,^{p} = \tfrac{1}{2(1-n)}R$. For a Weyl structure, $A_{ijk} = a_{ijk} = W_{ijk}$ is the usual conformal Cotton tensor.

That $A_{ij}$ and $A_{ijkl}$ (resp. $E_{ij}$ and $E_{ijkl}$) are the self-conjugate (resp. anti-self-conjugate) parts of $W_{(ij)}$ and $W_{ijkl}$ suggests Lemma \ref{barabarelemma}.

\begin{lemma}\label{barabarelemma}
Let $(\en, [h])$ and $(\ben, [h])$ be conjugate AH structures. Then $a_{ijk}$, $A_{i}$, and $A_{ijk}$ (resp. $e_{ijk}$, $E_{i}$, and $E_{ijk}$) are the self-conjugate (resp. anti-self-conjugate) parts of $w_{ijk}$, $W_{i}$, and $W_{ijk}$, respectively. 
\end{lemma}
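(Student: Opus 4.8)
The plan is to exploit the conjugation transformation rules already established in section~\ref{curvatureahpencilsection}, together with the observation that the torsion corrections in the definitions \eqref{aijkdefined} are engineered precisely so that the cross-terms balance. Recall that the aligned representative of the conjugate structure is $\bnabla = \nabla + \bt_{ij}\,^{k}$, that the cubic torsion of $(\ben,[h])$ is $\bar{\bt}_{ij}\,^{k} = -\bt_{ij}\,^{k}$, and that (by \eqref{brtaij}, \eqref{brteij}, and the stated $\bar{F}_{ij}=F_{ij}$) the rank-two tensors transform by $\bar{A}_{ij}=A_{ij}$, $\bar{E}_{ij}=-E_{ij}$, $\bar{F}_{ij}=F_{ij}$. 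Since the underlying conformal structure is fixed, the normalized metric $H_{ij}$ with which all traces are taken is itself conjugation-invariant. The aim is then to compute $\bar{a}_{ijk}$ and $\bar{e}_{ijk}$ directly from \eqref{aijkdefined} and show $\bar{a}_{ijk}=a_{ijk}$ and $\bar{e}_{ijk}=-e_{ijk}$.

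The key computational step is to rewrite the skew covariant derivatives occurring in $\bar{a}_{ijk}$ and $\bar{e}_{ijk}$ in terms of $\nabla$. Because $\bt_{ij}\,^{k}=\bt_{(ij)}\,^{k}$ is symmetric in its lower indices, for any covariant two-tensor $X_{jk}$ the difference-tensor contribution collapses to $\bnabla_{[i}X_{j]k} = \nabla_{[i}X_{j]k} - \bt_{k[i}\,^{p}X_{j]p}$, the term $\bt_{[ij]}\,^{p}X_{pk}$ vanishing by symmetry. First I would record this identity for $X=A$, $X=F$, and $X=E$, since these are exactly the tensors differentiated in \eqref{aijkdefined}.

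Substituting the transformation rules and this identity into the definition of $\bar{a}_{ijk}$, the derivative pieces reproduce $2\nabla_{[i}A_{j]k}+\nabla_{[i}F_{j]k}$, while the $\bt$-contractions—those already present in \eqref{aijkdefined} and those newly produced by converting $\bnabla$ to $\nabla$—combine with coefficients $-2+1$, $-1+\tfrac{1}{2}$, and $+1$ on the $A$-, $F$-, and $E$-terms respectively, yielding exactly the torsion part of $a_{ijk}$; hence $\bar{a}_{ijk}=a_{ijk}$. The parallel computation for $\bar{e}_{ijk}$, in which the overall sign coming from $\bar{E}_{ij}=-E_{ij}$ and $\bar{\bt}=-\bt$ propagates through, gives $\bar{e}_{ijk}=-e_{ijk}$. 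Taking traces with the conjugation-invariant $H^{ij}$ then yields $\bar{A}_{i}=A_{i}$ and $\bar{E}_{i}=-E_{i}$, and the trace-free corrections in \eqref{wijkdefined2} inherit the same behaviour, so $\bar{A}_{ijk}=A_{ijk}$ and $\bar{E}_{ijk}=-E_{ijk}$. Finally, since $w_{ijk}=a_{ijk}+e_{ijk}$ is a structural identity valid for both structures, $\bar{w}_{ijk}=\bar{a}_{ijk}+\bar{e}_{ijk}=a_{ijk}-e_{ijk}$, whence $a=\tfrac{1}{2}(w+\bar{w})$ and $e=\tfrac{1}{2}(w-\bar{w})$ are the self-conjugate and anti-self-conjugate parts of $w$, as claimed, and likewise at the level of the traces and of $W_{ijk}$.

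The main obstacle is purely bookkeeping: accurately tracking the coefficients of the cubic-torsion contractions when passing from $\bnabla$ to $\nabla$, and verifying that the specific numerical coefficients hard-wired into \eqref{aijkdefined} are exactly those for which the cross-terms cancel. There is no conceptual difficulty; the entire content is that $a_{ijk}$ and $e_{ijk}$ were defined so as to be the self- and anti-self-conjugate parts of the invariantly defined $w_{ijk}$.
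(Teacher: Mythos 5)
Your proposal is correct and follows essentially the same route as the paper: both arguments convert $\bnabla$ to $\nabla$ via the difference tensor $\bt_{ij}\,^{k}$ and feed in the conjugation rules for $A_{ij}$, $E_{ij}$, $F_{ij}$, and $\bt$; the paper merely organizes the computation by skewing $\bnabla_{i}\bar{W}_{jk}$ with $\bar{W}_{ij}=W_{ij}-2E_{ij}$ to get $w_{ijk}+\bar{w}_{ijk}=2a_{ijk}$ and $w_{ijk}-\bar{w}_{ijk}=2e_{ijk}$ directly, whereas you verify $\bar{a}_{ijk}=a_{ijk}$ and $\bar{e}_{ijk}=-e_{ijk}$ termwise and then invoke $w=a+e$. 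Your coefficient bookkeeping ($-2+1$, $-1+\tfrac{1}{2}$, $+1$) checks out, and the trace statements follow as you say since $H^{ij}$ is conjugation-invariant.
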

\begin{proof}
From \eqref{brtaij} and \eqref{brteij} there follows $\bar{W}_{ij} = W_{ij} -2E_{ij}$. Skewing $\bnabla_{i}\bar{W}_{jk} = \nabla_{i}\bar{W}_{jk} - \bt_{ij}\,^{p}\bar{W}_{pk} - \bt_{ik}\,^{p}\bar{W}_{jp}$ yields $w_{ijk} + \bar{w}_{ijk} = 2a_{ijk}$ and $w_{ijk} - \bar{w}_{ijk} = 2e_{ijk}$. The remaining claims are immediate upon taking traces.
\end{proof}

\begin{remark}\label{conjugateremark}
Observe that the aligned representative at time $t$ of the pencil of AH structure generated by $(\ten, [h])$ is obtained from the aligned representative of the AH structure $(\pen, [h])$ by adding $2\al_{(i}\delta_{j)}\,^{k}- H_{ij}\al^{k}$. Hence tensors decorated by $\brt{\,}$ formed from $(\pen, [h])$ transform under conformal projective equivalence just as do the corresponding undecorated tensors formed from $(\en, [h])$. In particular these remarks apply when $t = 1$, in which case the decoration is by bars; that is the aligned representative of the AH structure conjugate to $(\ten, [h])$ is obtained from the aligned representative of the conjugate AH structure $(\ben, [h])$ by adding $2\al_{(i}\delta_{j)}\,^{k}- H_{ij}\al^{k}$, and barred tensors formed from $(\ben, [h])$ transform under conformal projective equivalence just as do the corresponding unbarred tensors formed from $(\en, [h])$. 

However, in making use of this observation it is important to keep in mind that $\al_{ij}$ depends on $\nabla$, and that $\bar{\al}_{ij}$, defined as is $\al_{ij}$, but in terms of $\bnabla$ is not equal to $\al_{ij}$; rather,
\begin{align}\label{alijconjugate}
\bar{\al}_{ij} = \al_{ij} - \bt_{ij}\,^{p}\al_{p}.
\end{align}
In particular, the self-conjugate part of $\al_{ij}$ is $\al_{ij} - \tfrac{1}{2}\bt_{ij}\,^{p}\al_{p}$.
\end{remark}

\begin{theorem}\label{codazzitransformtheorem}
Let $(\en, [h])$ and $(\ten, [h]$ be AH structures on a manifold of dimension $n > 2$ generating the same Codazzi projective structure.
There hold
\begin{align}
\label{codazzitransform} &\tilde{W}_{ijk}  = W_{ijk} - \al^{p}W_{ijkp},&  &\tilde{A}_{ijk} = A_{ijk} -\al^{p}A_{ijkp}, &%
&\tilde{E}_{ijk} = E_{ijk} - \al^{p}E_{ijkp},
\end{align}
and $\tilde{A}_{i} = A_{i}$ depends only on the underlying Codazzi projective structure. As $A_{ijkl}$ vanishes identically when $n=3$, in this case $A_{ijk}$ depends only on the underlying Codazzi projective structure. 
\end{theorem}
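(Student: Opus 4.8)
For AH structures $(\en,[h])$ and $(\ten,[h])$ on a manifold of dimension $n>2$ generating the same Codazzi projective structure,
$$\tilde{W}_{ijk} = W_{ijk} - \al^p W_{ijkp}, \quad \tilde{A}_{ijk} = A_{ijk} - \al^p A_{ijkp}, \quad \tilde{E}_{ijk} = E_{ijk} - \al^p E_{ijkp},$$
and $\tilde{A}_i = A_i$ depends only on the underlying Codazzi projective structure.

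Let me think about the structure of this proof.

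**What's available:**
- $\tnabla - \nabla = 2\al_{(i}\delta_{j)}{}^k - \al^k H_{ij}$ (Lemma \ref{conformalprojectivediff})
- $\tilde{A}_{ijkl} = A_{ijkl}$ and $\tilde{E}_{ijkl} = E_{ijkl}$ (Theorem \ref{aeinvariant}), so $\tilde{W}_{ijkl} = W_{ijkl}$
- Transformation of $A_{ij}$, $E_{ij}$: equations \eqref{aijdiff}: $\tilde{A}_{ij} - A_{ij} = \mu_{ij} - \tfrac12 \bt_{ijp}\al^p$, $\tilde{E}_{ij} - E_{ij} = \tfrac12 \bt_{ijp}\al^p$.
- $\tilde{F}_{ij} - F_{ij} = 2\al_{[ij]} = d\al_{ij}$
- The definitions \eqref{aijkdefined} of $a_{ijk}$, $e_{ijk}$, $w_{ijk}$, and \eqref{wijkdefined2} of the trace-free versions.

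**My approach:**

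The key is that $W_{ijk}$, $A_{ijk}$, $E_{ijk}$ are built from the rank-two tensors $W_{ij} = A_{ij}+E_{ij}+\tfrac12 F_{ij}$ (and its pieces) via a skew-symmetrized covariant derivative plus cubic-torsion correction terms. Since $w_{ijk} = 2\nabla_{[i}W_{j]k}$ etc., I should compute how each of these expressions changes under $\nabla \mapsto \tnabla$.

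**The plan is to compute $\tilde{w}_{ijk} - w_{ijk}$ directly.** First I would compute $2\tnabla_{[i}\tilde{W}_{j]k}$. This splits into two effects: (a) $\tilde{W}_{jk}$ differs from $W_{jk}$ by $\mu_{jk} + \tfrac12 d\al_{jk}$ (combining the $A$, $E$, $F$ transformation laws — note the $\bt$-terms cancel since $A_{ij}$ and $E_{ij}$ acquire opposite $\tfrac12\bt_{ijp}\al^p$ corrections, and $\tilde F_{ij}-F_{ij}=d\al_{ij}$ is purely skew), and (b) $\tnabla$ differs from $\nabla$, acting on the rank-two tensor $W_{jk}$ by the difference-tensor action. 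The derivative-of-the-difference terms and the action of the difference tensor must be carefully tracked; the first-derivative-of-$\al$ pieces should organize into $\nabla_{[i}\mu_{j]k}$-type terms, and the crucial observation will be that the uncontrolled pieces reassemble into $\al^p$ times the already-invariant curvature tensor $W_{ijkp}$. This is exactly the pattern familiar from the conformal Cotton tensor transformation law, where the non-invariance of the Cotton tensor is measured by $\al^p$ times the (invariant) Weyl tensor.

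**The main obstacle** will be handling the cubic-torsion correction terms in the definitions \eqref{aijkdefined}, which have no analogue in the classical conformal case, together with the subtlety that $\mu_{ij}=\al_{(ij)}$ involves $\al^p\bt_{pi}{}^j$ via \eqref{aliupj}, so the difference-tensor action and the $\mu_{ij}$ corrections are intertwined through the cubic torsion. The second-order terms $\nabla_{[i}(\al^p\bt_{j]kp})$ and similar must be shown to either cancel or combine into the stated $\al^p W_{ijkp}$ form; I expect the identity \eqref{nbtskew} relating $\nabla_{[i}\bt_{j]kl}$ to $E_{ijkl}$ (hence to the curvature) to be the mechanism by which derivatives of $\bt$ get converted back into curvature tensors. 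I would first establish the untraced version for $w_{ijk}$, then split into self-conjugate and anti-self-conjugate parts using Lemma \ref{barabarelemma} to obtain the $a_{ijk}$ and $e_{ijk}$ transformations; passing to the trace-free tensors $W_{ijk}, A_{ijk}, E_{ijk}$ via \eqref{wijkdefined2} is then routine because $W_{ijkp}$ is already trace-free in $ijk$.

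**Finally, for $\tilde{A}_i = A_i$:** Since $A_{ijkl}$ has the symmetries of a metric curvature tensor, its trace $A_{ipk}{}^p$ over the relevant index pair vanishes identically, so $\al^p A_{ijkp}$ contributes nothing to $A_i = A_{ip}{}^p$; tracing $\tilde A_{ijk} = A_{ijk} - \al^p A_{ijkp}$ over $jk$ gives $\tilde A_i = A_i$. Alternatively this follows directly since $A_{ijkp}$ is trace-free and the contraction structure forces the correction to drop out; the invariance of $A_i$ on a Codazzi projective structure is then immediate, and in dimension $n=3$ the vanishing of $A_{ijkl}$ (by Lemma \ref{weylcriterion}) makes even $A_{ijk}$ itself invariant.
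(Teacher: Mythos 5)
Your outline for the transformation laws \eqref{codazzitransform} follows essentially the paper's route: compute $\tilde{w}_{ijk}-w_{ijk}$ starting from $\tilde{W}_{ij}-W_{ij}=\al_{ij}$ and the action of the difference tensor, use the Ricci identity to convert $\nabla_{[i}\nabla_{j]}\al_{k}$ into $-\tfrac{1}{2}R_{ijk}\,^{p}\al_{p}$ and then \eqref{rintermsofw} to reassemble the result as $-\al^{l}W_{ijkl}$ plus trace terms, and finally pass to the self-conjugate and anti-self-conjugate parts via Remark \ref{conjugateremark} and Lemma \ref{barabarelemma}. That part is sound in outline.

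The argument you give for $\tilde{A}_{i}=A_{i}$, however, does not work, and this is a genuine gap. By the definition \eqref{wijkdefined2} the tensor $A_{ijk}=a_{ijk}-\tfrac{2}{n-1}A_{[i}H_{j]k}$ is trace-free, $H^{jk}A_{ijk}=0$, and $A_{i}$ is the trace of $a_{ijk}$, \emph{not} of $A_{ijk}$. Hence tracing $\tilde{A}_{ijk}=A_{ijk}-\al^{p}A_{ijkp}$ over $jk$ yields only $0=0$ and carries no information about $A_{i}$. The same reasoning would equally ``prove'' that $W_{i}$ and $E_{i}$ are invariant, since $W_{ijkp}$ and $E_{ijkp}$ are also completely trace-free; but by \eqref{witransform} one has $\tilde{W}_{i}-W_{i}=(1-n)\left(2E_{ip}\al^{p}+\tfrac{1}{2}\bt_{ipq}\al^{p}\al^{q}\right)$, which is not zero in general, so $W_{i}$ and $E_{i}$ are not invariant. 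The invariance of $A_{i}$ is a strictly finer fact: one must compute the trace part of $\tilde{w}_{ijk}-w_{ijk}$ explicitly, arriving at \eqref{witransform}, and then observe that the correction is built entirely from the anti-self-conjugate quantities $E_{ij}$ and $\bt_{ijk}$, so that it cancels against its conjugate when one forms $2\tilde{A}_{i}=\tilde{W}_{i}+\tilde{\bar{W}}_{i}$. This cancellation is the content of the claim (and is exactly what the paper flags as surprising); your proposal omits it.
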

\begin{proof}
Let $\tnabla$ and $\nabla$ be aligned connections representing AH structures generating the same Codazzi projective structure. Preparatory to the proof observe
\begin{align*}
\begin{split}
&\nabla_{[i}\al_{j]k} = -\tfrac{1}{2}R_{ijk}\,^{p}\al_{p} + \al_{[i}\nabla_{j]}\al_{k} - \al_{k}\al_{[ij]} - \al^{p}H_{k[i}\nabla_{j]}\al_{p} + \tfrac{1}{2}H_{k[i}\bt_{j]pq}\al^{p}\al^{q},\\
&\al_{[i}\al_{j]k} = \al_{[i}\nabla_{j]}\al_{k} - \tfrac{1}{2}H_{k[i}\al_{j]}\al_{p}\al^{p},\qquad 
\al^{p}H_{k[i}\al_{j]p} = \al^{p}H_{k[i}\nabla_{j]}\al_{p} -\tfrac{1}{2}H_{k[i}\al_{j]}\al_{p}\al^{p},\\
&\nabla_{[i}\al_{j]k} - \al_{[i}\al_{j]k} + \tfrac{1}{2}\al_{k}d\al_{ij}+ \al^{p}H_{k[i}\al_{j]p} = -\tfrac{1}{2}R_{ijkp}\al^{p} + \tfrac{1}{2}H_{k[i}\bt_{j]pq}\al^{p}\al^{q}.
\end{split}
\end{align*}
From \eqref{aijdiff} there follows
\begin{align}\label{wijdiff}
&\tilde{W}_{ij} - W_{ij} = \al_{ij}.
\end{align} 
It follows straightforwardly that
\begin{align}
\notag \begin{split}
& \tilde{w}_{ijk} - w_{ijk} \\
&= -\al^{p}\left(2H_{p[i}W_{j]k} - 2H_{k[i}W_{j]p} + F_{ij}H_{kp}\right)
+ 2\left(\nabla_{[i}\al_{j]k} - \al_{[i}\al_{j]k} + \al_{k}\al_{[ij]}+ \al^{p}H_{k[i}\al_{j]p}\right)\\
 &= -\al^{l}\left(W_{ijkl} - 4H_{k[i}E_{j]l}\right) + H_{k[i}\bt_{j]pq}\al^{p}\al^{q},
\end{split}\\
\label{witransform}&\tilde{W}_{i} - W_{i} = (1-n)\left(2E_{ip}\al^{p} + \tfrac{1}{2}\bt_{ipq}\al^{p}\al^{q}\right),
\end{align}
from which follows the first equality of \eqref{codazzitransform}. Together Remark \ref{conjugateremark} and Lemma \ref{barabarelemma} show that the first equality of \eqref{codazzitransform} implies the other two equalities in \eqref{codazzitransform}. 
From \eqref{witransform} and Lemma \ref{barabarelemma} there follows
\begin{align*}
\tilde{A}_{i} - A_{i} = \tfrac{1}{2}(\tilde{W}_{i} + \tilde{\bar{W}}_{i} - W_{i} - \bar{W}_{i}) = (1-n)\al^{p}(E_{ip} + \bar{E}_{ip}) +\tfrac{1}{4}\al^{p}\al^{q}(\bt_{ipq} + \bar{\bt}_{ipq}) = 0, 
\end{align*}
showing the invariance of $A_{i}$.
\end{proof}

The invariance of $A_{i}$, which vanishes for a Weyl structure, came as a surprise. 

\begin{definition}
An AH structure (or a Codazzi projective structure) on a manifold of dimension at least $3$ is \textbf{conservative} if $A_{i} = 0$. 
\end{definition}
The terminology is motivated by thinking of the constancy of the scalar curvature of an Einstein metric (in the usual sense) as a conservation law in conjunction with \eqref{ebtw} below. It will become evident that any reasonable definition of Einstein AH structures must include the vanishing of $A_{i}$.

\subsection{Differential Bianchi identities}
Lemma \ref{diffbianchilemma} records the consequences of the differential Bianchi identity.
\begin{lemma}\label{diffbianchilemma}
For an AH structure $(\en, [h])$ of dimension $n > 2$ there hold
\begin{align}
\label{wf6} \nabla_{[p}A_{ij]kl} %
 & = H_{k[p}a_{ij]l} - H_{l[p}a_{ij]k}  + \tfrac{1}{2}\bt_{l[p}\,^{q}W_{ij]kq} -\tfrac{1}{2}\bt_{k[p}\,^{q}W_{ij]lq },\\
\label{wf7} \nabla_{[p}E_{ij]kl} %
& = -H_{k[p}e_{ij]l} - H_{l[p}e_{ij]k} + \tfrac{1}{2}\bt_{l[p}\,^{q}W_{ij]kq} + \tfrac{1}{2}\bt_{k[p}\,^{q}W_{ij]lq } ,\\
\label{adiffbianchi}\nabla^{p}A_{ijkp} %
& = (3-n)a_{ijk} -2A_{[i}H_{j]k} + \bt_{[i}\,^{pq}W_{j]pqk} - \tfrac{1}{2}\bt_{k}\,^{pq}E_{ijpq}\\
\notag & = (3-n)A_{ijk} + \tfrac{4(2-n)}{n-1}A_{[i}H_{j]k} + \bt_{[i}\,^{pq}W_{j]pqk}  - \tfrac{1}{2}\bt_{k}\,^{pq}E_{ijpq},\\
\label{ediffbianchi}\nabla^{p}E_{ijkp} %
& = (1-n)E_{ijk}  - \bt_{[i}\,^{pq}W_{j]pqk} + \tfrac{1}{2}\bt_{k}\,^{pq}E_{ijpq},\\
\label{wdiffbianchi}\begin{split}\nabla^{p}W_{ijkp} & = (3-n)A_{ijk} + (1-n)E_{ijk} + \tfrac{4(2-n)}{n-1}A_{[i}H_{j]k},\\
                                                    & = (3-n)W_{ijk} - 2E_{ijk} + \tfrac{4(2-n)}{n-1}A_{[i}H_{j]k},\end{split}\\
\label{divAtrace} &H^{jk}\nabla^{p}A_{ijkp} = 0 ,\\
\label{ebtw} & \bt^{abc}E_{iabc} = H^{jk}\nabla^{p}E_{ijkp} = 2(2-n)A_{i}\\
\notag &= (n-2)\left(2\nabla^{p}\mr{A}_{ip} - \bt_{i}\,^{pq}\mr{W}_{pq}+ \nabla^{p}F_{ip} +\tfrac{1}{n}\nabla_{i}R \right).
\end{align}
\end{lemma}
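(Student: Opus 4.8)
The plan is to extract every identity from the second (differential) Bianchi identity $\nabla_{[p}R_{ij]kl}=0$ for the aligned connection $\nabla$, combined with the irreducible decomposition of the curvature recorded in \eqref{rintermsofw} and \eqref{ricintermsofw}. The one subtlety present throughout is that for an AH structure $\nabla_{i}H_{jk}=\bt_{ijk}$ is nonzero (equivalently $\nabla_{i}H^{jk}=-\bt_{i}\,^{jk}$), so traces and divergences do not commute with $\nabla$: each time a covariant derivative passes a factor of $H$ or $H^{ij}$ a cubic-torsion term is produced. The Cotton-type tensors $w_{ijk}$, $a_{ijk}$, $e_{ijk}$ of \eqref{aijkdefined} are built with exactly the $\bt$-corrections that make them the conjugation-covariant antisymmetrized derivatives of $W_{ij}$, $A_{ij}$, $E_{ij}$; this is the content of Lemma \ref{barabarelemma}, which together with the conjugation rules \eqref{brttijkl}, \eqref{brteij}, \eqref{brtaij} lets me split every ``$W$''-identity into its self-conjugate ($A$) and anti-self-conjugate ($E$) parts by taking half-sum and half-difference with the conjugate structure (which negates $\bt$ and the $E$-tensors while fixing $F$ and the $A$-tensors).

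First I would establish the divergence identity \eqref{wdiffbianchi}. Contracting the second Bianchi identity gives $\nabla^{p}R_{ijkp}=2\nabla_{[i}R_{j]k}$ with no cubic-torsion term on the left, since the single traces of the totally symmetric $\bt$ vanish (e.g.\ $H^{pq}\bt_{pqi}=0$). I would then compute $\nabla^{p}R_{ijkp}$ a second way by applying $\nabla^{p}$ to \eqref{rintermsofw}: differentiating the rank-two piece $W_{jk}$ produces $w_{ijk}$, the $\bar W_{jk}$ piece produces $\nabla^{p}\bar W_{jp}$ and $\bt\!\cdot\!\bar W$ terms, and substituting \eqref{ricintermsofw} for $R_{jk}$ on the right-hand side. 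Equating the two expressions and solving for $\nabla^{p}W_{ijkp}$ yields \eqref{wdiffbianchi} after collecting the $\bt$-contractions, and a conjugacy split then gives \eqref{adiffbianchi} and \eqref{ediffbianchi}. The cyclic identities \eqref{wf6} and \eqref{wf7} are obtained in the same spirit, by antisymmetrizing $\nabla_{[p}R_{ij]kl}=0$ over $[pij]$ with \eqref{rintermsofw} inserted: the term $\nabla_{[p}W_{ij]kl}$ survives, the derivatives of the rank-two pieces assemble into $H_{k[p}w_{ij]l}$-type terms, and a final conjugacy split (which replaces $\nabla$ by $\bnabla=\nabla+\bt$ on the $A$- and $E$-parts) produces both the Cotton tensors $a$, $e$ and the $\bt\!\cdot\!W$ corrections appearing there.

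The two trace identities are then cheap consequences. For \eqref{divAtrace}, moving the trace through the derivative gives $H^{jk}\nabla^{p}A_{ijkp}=\bt^{pjk}A_{ijkp}$ (the boundary term vanishes as $A_{ijkl}$ is completely trace-free), and this is zero because $\bt^{pjk}$ is totally symmetric while the algebraic Bianchi symmetry forces $A_{i(jkp)}=0$. The identical manipulation gives the first equality of \eqref{ebtw}, namely $H^{jk}\nabla^{p}E_{ijkp}=\bt^{pjk}E_{ijkp}=\bt^{abc}E_{iabc}$. To obtain the value $2(2-n)A_{i}$ I would contract \eqref{adiffbianchi} with $H^{jk}$: the left side vanishes by \eqref{divAtrace}, the $A_{ijk}$ term drops (trace-free), the term $\tfrac{4(2-n)}{n-1}A_{[i}H_{j]k}$ contributes $2(2-n)A_{i}$, and the $\bt\!\cdot\!W$ and $\bt\!\cdot\!E$ terms each contribute $-\tfrac12\bt^{abc}E_{iabc}$ (the $A$-part of $W$ again dropping by the Bianchi argument), giving $0=2(2-n)A_{i}-\bt^{abc}E_{iabc}$. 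Finally the explicit last expression in \eqref{ebtw} follows by expanding $A_{i}=H^{jk}a_{ijk}$ directly from \eqref{aijkdefined}, once more commuting traces past $\nabla$ and reorganizing the trace parts (using $A_{p}\,^{p}=\tfrac{1}{2(1-n)}R$ from \eqref{aij}) into $\nabla_{i}R$, $\nabla^{p}F_{ip}$, and the trace-free pieces $\nabla^{p}\mr{A}_{ip}$ and $\bt_{i}\,^{pq}\mr{W}_{pq}$.

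The main obstacle is the bookkeeping in \eqref{wdiffbianchi} and \eqref{wf6}--\eqref{wf7}: because $\nabla H=\bt$, the divergence and cyclic derivatives of the curvature do not collapse as in the metric case, and one must track a proliferation of cubic-torsion contractions and verify that they recombine exactly into the stated $\bt\!\cdot\!W$ terms rather than into some other trace of $\bt$ against the curvature. Organizing the computation through the projectors $\P_{i}$ on $\curvmod$ of section \ref{curvaturetensorsection}, and exploiting conjugacy to halve the labor by deriving each $W$-identity once and reading off its $A$- and $E$-refinements, is what keeps this manageable.
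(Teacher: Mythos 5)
Your plan is essentially the paper's own proof: differentiate the decomposition \eqref{rintermsofw}, feed in the differential Bianchi identity, use conjugacy (half-sum and half-difference with the conjugate structure, via Lemma \ref{barabarelemma} and \eqref{bach3}--\eqref{bach3b}) to split into the $A$- and $E$-parts, and then take traces for \eqref{adiffbianchi}--\eqref{ebtw}; your treatment of \eqref{divAtrace} and of the contraction of \eqref{adiffbianchi} yielding $\bt^{abc}E_{iabc}=2(2-n)A_{i}$ is exactly the paper's.

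One point in your write-up is literally wrong and would, taken at face value, lose the $\bt\cdot W$ terms in \eqref{wf6}--\eqref{wf7}: you twice write the cyclic Bianchi identity for the fully lowered curvature as $\nabla_{[p}R_{ij]kl}=0$. Since the last index is lowered with $H$ and $\nabla_{p}H_{ql}=\bt_{pql}$, the correct statement is $\nabla_{[p}R_{ij]kl}=\bt_{ql[p}R_{ij]k}\,^{q}$, and it is precisely this nonzero right-hand side, re-expanded via \eqref{rintermsofw} as in the paper's \eqref{wf2}, that produces the $\tfrac12\bt_{l[p}\,^{q}W_{ij]kq}$-type corrections in \eqref{wf6}--\eqref{wf7}; they do not come from the conjugacy split or from differentiating the rank-two pieces, as your narrative suggests. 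Your stated governing principle (every derivative passing a factor of $H$ produces a $\bt$) covers exactly this instance, so I read it as a slip rather than a conceptual gap, but in executing the computation you must start from the corrected identity or the bookkeeping will not close. Your reordering --- obtaining \eqref{wdiffbianchi} from the contracted Bianchi identity first and then splitting, rather than tracing \eqref{wf6}--\eqref{wf7} in $pl$ as the paper does --- is a legitimate alternative; note only that if you do trace the cyclic identities instead, the trace itself produces further $\bt$-terms (e.g.\ $3H^{pl}\nabla_{[p}A_{ij]kl}=\nabla^{p}A_{ijkp}-2\bt_{[i}\,^{pq}A_{j]pqk}$), a point the paper flags explicitly.
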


\begin{proof}
Let $(\ben, [h])$ be the conjugate AH structure. There hold
\begin{align}
\label{bach3}\nabla_{p}\bar{W}_{ijkl} &= \bnabla_{p}\bar{W}_{ijkl} - 2\bt_{p[i}\,^{q}\bar{W}_{j]qkl} +\bt_{pk}\,^{q}\bar{W}_{ijql} + \bt_{pl}\,^{q}\bar{W}_{ijkq},\\
\label{bach3b}  2\nabla_{[i}\bar{W}_{j]k} &= \bar{w}_{ijk} + 2\bt_{k[i}\,^{p}\bar{W}_{j]p}.
\end{align}
Differentiating \eqref{rintermsofw} gives
\begin{align}\label{fullwbi}
\begin{split}
\nabla_{p}R_{ijkl} & = \nabla_{p}W_{ijkl} -2H_{l[i}\nabla_{|p|}W_{j]k} + 2H_{k[i}\nabla_{|p|}\bar{W}_{j]k} \\  &- 2\bt_{pl[i}W_{j]k} + 2\bt_{pk[i}\bar{W}_{j]l} + H_{kl}\nabla_{p}F_{ij} + F_{ij}\bt_{pkl}.
\end{split}
\end{align}
Skew-symmetrizing \eqref{fullwbi} in $pij$, using the consequence $\nabla_{[p}R_{ij]kl} = \bt_{ql[p}R_{ij]k}\,^{q}$ of the differential Bianchi identity $\nabla_{[p}R_{ij]k}\,^{l} = 0$, and substituting into the result \eqref{bach3b} yields
\begin{align}\label{wf1}\begin{split}
\bt_{l[p}\,^{q}R_{ij]kq} &= \nabla_{[p}W_{ij]kl} + 2H_{l[p}\nabla_{i}W_{j]k}  - 2H_{k[p}\nabla_{i}\bar{W}_{j]l}  + F_{[ij}\bt_{p]kl}\\
& = \nabla_{[p}W_{ij]kl} + H_{l[p}w_{ij]k}  - H_{k[p}\bar{w}_{ij]l} - 2H_{k[p}\bt_{|l|i}\,^{q}\bar{W}_{j]q}  + F_{[ij}\bt_{p]kl}.
\end{split}
\end{align}
On the other hand, from \eqref{rintermsofw} there follows
\begin{align}\label{wf2}
\bt_{l[p}\,^{q}R_{ij]kq} =\bt_{l[p}\,^{q}W_{ij]kq} - 2H_{k[p}\bt_{|l|i}\,^{q}\bar{W}_{j]q} + F_{[ij}\bt_{p]kl}.
\end{align}
Together \eqref{wf1} and \eqref{wf2} give 
\begin{align}
\label{wf3} \nabla_{[p}W_{ij]kl} & = H_{k[p}\bar{w}_{ij]l} - H_{l[p}w_{ij]k} + \bt_{l[p}\,^{q}W_{ij]kq}.
\end{align}
The identity conjugate to \eqref{wf3} is
\begin{align}\label{wf4}
 \bnabla_{[p}\bar{W}_{ij]kl} & = H_{k[p}w_{ij]l} - H_{l[p}\bar{w}_{ij]k} - \bt_{l[p}\,^{q}\bar{W}_{ij]kq}.
\end{align}
Together \eqref{wf4} and the skew-symmetrization in $pij$ of \eqref{bach3} give
\begin{align}\label{wf5}
\begin{split}
\nabla_{[p}\bar{W}_{ij]kl} & = H_{k[p}w_{ij]l} - H_{l[p}\bar{w}_{ij]k} + \bt_{k[p}\,^{q}\bar{W}_{ij]ql}\\
 & = H_{k[p}w_{ij]l} - H_{l[p}\bar{w}_{ij]k} - \bt_{k[p}\,^{q}W_{ij]lq}.
\end{split}
\end{align}
Taking half the sum and the difference of \eqref{wf3} and \eqref{wf5} yields \eqref{wf6} and \eqref{wf7}. Tracing \eqref{wf6} and \eqref{wf7} in $pl$ yields \eqref{adiffbianchi} and \eqref{ediffbianchi}, and summing these gives \eqref{wdiffbianchi}. In taking traces one has to be careful; for example $3H^{pl}\nabla_{[p}A_{ij]kl} = \nabla^{p}A_{ijkp} - 2\bt_{[i}\,^{pq}A_{j]pqk}$. Equation \eqref{divAtrace} follows from $H^{jk}\nabla^{p}A_{ijkp} = - A_{ijk}\,^{p}\nabla_{p}H^{jk} = \bt^{abc}A_{i(abc)} = 0$. A similar computation gives the first equality of \eqref{ebtw} while tracing \eqref{adiffbianchi} and using \eqref{divAtrace} gives the second equality of \eqref{ebtw}. Tracing \eqref{aijkdefined} gives
\begin{align}
 -2A_{i} & = \tfrac{1}{n-1}\nabla_{i}R + 2\nabla^{p}A_{ip} + \nabla^{p}F_{ip} - \bt_{i}\,^{pq}(A_{pq} + E_{pq})\\
\notag & =\tfrac{1}{n}\nabla_{i}R + 2\nabla^{p}\mr{A}_{ip} + \nabla^{p}F_{ip} - \bt_{i}\,^{pq}\mr{W}_{pq}.
\end{align}
which gives the last equality in \eqref{ebtw}.
\end{proof}

Note that \eqref{ebtw} gives an alternative proof of the invariance of $A_{i}$. In examples it is usually easier to check the vanishing of $\bt^{abc}E_{iabc}$ than it is to check directly the vanishing of $A_{i}$.

\begin{corollary}
If $(\en, [h])$ is an AH structure with self-conjugate curvature on a manifold of dimension $n > 2$ then
\begin{align}
\begin{split}
&\nabla_{[p}A_{ij]kl} = H_{k[p}A_{ij]l} - H_{l[p}A_{ij]k} + \tfrac{1}{2}\bt_{l[p}\,^{q}A_{ij]kq} -  \tfrac{1}{2}\bt_{k[p}\,^{q}A_{ij]lq},\\
&\nabla^{p}A_{ijkp} = (3-n)A_{ijk} + \bt_{[i}\,^{pq}A_{j]pqk} = (3-n)A_{ijk} + (1-n)E_{ijk},\\
&(1-n)E_{ijk} = \bt_{[i}\,^{pq}A_{j]pqk}.
\end{split}
\end{align}
\end{corollary}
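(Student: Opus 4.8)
The plan is to derive the Corollary as the specialization of Lemma \ref{diffbianchilemma} to the self-conjugate case, the whole point being to translate the hypothesis ``self-conjugate curvature'' into the vanishing of the tensors that obstruct the desired simplifications. First I would record that, by \eqref{rijlk} together with \eqref{nbtskew}, the anti-self-conjugate part of $R_{ijkl}$ is $\pm \uf_{ijkl}$, so that $\bar{R}_{ijkl} = R_{ijkl}$ holds if and only if $\uf_{ijkl} = 0$. Feeding this into the decomposition \eqref{aijkldefined}, $\uf_{ijkl} = E_{ijkl} - 2H_{l[i}E_{j]k} - 2H_{k[i}E_{j]l}$, and taking the Ricci trace (recall $\uf_{[ij]} = 0$ from \eqref{uijskew} and $E_{ij} = -\tfrac{1}{n}\uf_{ij}$ from \eqref{aij}) forces $E_{ij} = 0$ when $n > 2$, whence the completely trace-free part gives $E_{ijkl} = 0$ as well. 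Thus self-conjugate curvature is exactly $E_{ij} = 0 = E_{ijkl}$.

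The key observation, which makes the two visible $a_{ijk}$- and $A_{[i}H_{j]k}$-terms in Lemma \ref{diffbianchilemma} disappear, is that self-conjugacy also forces conservativeness. Indeed, by \eqref{ebtw} one has $\bt^{abc}E_{iabc} = 2(2-n)A_i$; since $E_{ijkl} = 0$ the left side vanishes, and $n > 2$ gives $A_i = 0$. Consequently $W_{ijkl} = A_{ijkl} + E_{ijkl} = A_{ijkl}$, and from \eqref{wijkdefined2}, $a_{ijk} = A_{ijk} + \tfrac{2}{n-1}A_{[i}H_{j]k} = A_{ijk}$. With these two identifications in hand the three displayed formulas follow by direct substitution: \eqref{wf6} becomes the first identity of the Corollary; the second form of \eqref{adiffbianchi}, with $A_i = 0$, $E_{ijkl} = 0$, and $W_{j]pqk} = A_{j]pqk}$, becomes $\nabla^{p}A_{ijkp} = (3-n)A_{ijk} + \bt_{[i}{}^{pq}A_{j]pqk}$; and \eqref{ediffbianchi}, with $E_{ijkl} = 0$ so that its left side and the final $E_{ijpq}$-term vanish, reduces to $0 = (1-n)E_{ijk} - \bt_{[i}{}^{pq}A_{j]pqk}$, which is the third identity. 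Combining the last two gives the remaining equality $\nabla^{p}A_{ijkp} = (3-n)A_{ijk} + (1-n)E_{ijk}$.

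The step I expect to be the genuine content, rather than bookkeeping, is the passage $A_i = 0$. One might hope the $A_{[i}H_{j]k}$-corrections drop out for purely algebraic reasons, but a short computation shows that the difference $H_{k[p}a_{ij]l} - H_{l[p}a_{ij]k}$ minus its $A$-analogue equals $\tfrac{2}{n-1}\bigl(H_{k[p}A_{i}H_{j]l} - H_{l[p}A_{i}H_{j]k}\bigr)$, which is \emph{not} identically zero; so conservativeness is really needed, and the only reason it is available here is the identity \eqref{ebtw} relating $\bt^{abc}E_{iabc}$ to $A_i$. Everything else is routine substitution, so I would present the argument essentially as three applications of the already-proved Lemma \ref{diffbianchilemma} once the reductions $E_{ij} = 0$, $E_{ijkl} = 0$, and $A_i = 0$ are in place.
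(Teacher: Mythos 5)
Your argument is correct and is exactly the route the paper intends: the paper's proof is the single line ``Immediate from Lemma \ref{diffbianchilemma},'' and your reductions (self-conjugacy $\Leftrightarrow \uf_{ijkl}=0 \Leftrightarrow E_{ij}=0=E_{ijkl}$, then $A_i=0$ via \eqref{ebtw}, then substitution into \eqref{wf6}, \eqref{adiffbianchi}, \eqref{ediffbianchi}) are precisely the omitted details. Your observation that $A_i=0$ is the one step with genuine content, rather than pure bookkeeping, is apt.
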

\begin{proof}
Immediate from Lemma \ref{diffbianchilemma}.
\end{proof}

\subsubsection{A non-conservative AH structure}\label{aiexample}
Here is given an example of an AH structure which is not conservative. On $\rea^{n}$ with $n > 2$ let $\delta_{ij}$ be the standard flat Euclidean metric, let $D$ be its Levi-Civita connection, and raise and lower indices using $\delta_{ij}$ and the dual bivector $\delta^{ij}$. Let $\al_{i}$ and $\be_{i}$ be non-zero covectors (constant one-forms) and which satisfy $\al^{i}\be^{j}\delta_{ij} = 0$. Define a trace-free completely symmetric tensor by $a_{ijk} \defeq \al_{i}\al_{j}\al_{k} - \tfrac{3}{n+2}|\al|^{2}\al_{(i}\delta_{jk)}$, and define $a_{ij} \defeq a_{i}\,^{pq}a_{jpq}$, and observe
\begin{align}
&|a|^{2} = \tfrac{n-1}{n+2}|\al|^{6},&  &a_{ij} =\tfrac{(n-2)(n+1)}{(n+2)^{2}}|\al|^{4}\al_{i}\al_{j} + \tfrac{2}{(n+2)^{2}}|\al|^{6}\delta_{ij},&
&\be^{p}a_{ijp} = -\tfrac{2}{n+2}|\al|^{2}\al_{(i}\be_{j)}.
\end{align}
The maps $e_{ijkl} \to \om_{ijkl} \defeq e_{i(jkl)}$ and $\om_{ijkl} \to e_{ijkl} \defeq \tfrac{3}{2}\om_{[ij]kl}$ are inverse isomorphisms between the space $\A$ of trace-free tensors $e_{ijkl}$ satisfying $e_{ijkl} = e_{[ij]kl}$, $e_{[ijk]l} = 0$, $e_{ij[kl]} = 0$, and the space $\B$ of trace-free tensors $\om_{ijkl}$ satisyfing $\om_{ijkl} = \om_{i(jkl)}$ and $\om_{(ijkl)} = 0$. Define $\si_{ijkl} \defeq \be_{i}a_{jkl} - \be_{(i}a_{jkl)}$, $\si_{ij} \defeq \tfrac{1}{2}\be^{p}a_{ijp}$ and note that $\si_{ij} = \si_{pij}\,^{p} = -\si_{ijp}\,^{p}$. Let 
\begin{align*}
\begin{split}
\om_{ijkl} &\defeq \si_{ijkl} + \tfrac{6}{n}\left(\si_{i(j}\delta_{kl)} - \si_{(ij}\delta_{kl)} \right) \\
& = \si_{ijkl} + \tfrac{1}{n}\left(\si_{ij}\delta_{kl}+ \si_{ik}\delta_{jl} + \si_{il}\delta_{jk} - \delta_{ij}\si_{kl}-  \delta_{ik}\si_{jl} - \delta_{il}\si_{jk} \right),
\end{split}
\end{align*}
and observe that $\om_{ijkl}$ is in $\B$. Let $e_{ijkl} \defeq \tfrac{3}{2}\om_{[ij]kl} = \tfrac{3}{2}\si_{[ij]kl}  + \tfrac{3}{n}\left( \si_{k[i}\delta_{j]l} + \si_{l[i}\delta_{j]k}\right)$. Let $x^{i}$ be coordinates on $\rea^{n}$ such that $dx^{i}$ is a parallel frame and define $L_{ijk} = a_{ijk} - \tfrac{3}{2}x^{p}\om_{pijk}$, so that $D_{i}L_{jkl} = -\tfrac{3}{2}\om_{ijkl}$. Let $\nabla = D - \tfrac{1}{2}L_{ij}\,^{k}$, so that $\nabla$ is the aligned representative of the exact AH structure $(\en, [\delta])$ which it generates with the conformal class $[\delta]$ of $\delta$. By \eqref{ddivbt} there holds $2nE_{ij} = D_{p}L_{ij}\,^{p} = -\tfrac{3}{2}\om_{pij}\,^{p} = 0$. With \eqref{uplusv} this shows $E_{ijk}\,^{l} = -D_{[i}L_{j]k}\,^{p} =  \tfrac{3}{2}\om_{[ij]k}\,^{l}= e_{ijk}\,^{l}$. Hence $\bt^{abc}E_{iabc} = L^{abc}e_{iabc}$, and at the origin this equals $a^{abc}e_{iabc}$, and a bit of computation shows that $a^{jkl}e_{ijkl} = \tfrac{3(n-2)(n+1)}{4n(n+2)}|\al|^{6}\be_{i}$. By \eqref{ebtw} this shows $A_{i} \neq 0$ in a neighborhood of the origin.

\subsubsection{}
All traces of $\nabla_{p}A_{ijk}\,^{l}$ and $\nabla_{p}E_{ijk}\,^{l}$ are expressible as linear combinations of $\nabla_{p}A_{ijk}\,^{p}$, $H^{ab}\nabla_{i}A_{jabk}$, $\nabla_{p}E_{ijk}\,^{p}$, $\nabla^{p}E_{p(ijk)}$, and $H^{ab}\nabla_{i}E_{jabk}$. Evidently $H^{ab}\nabla_{i}A_{jabk} = \bt_{i}\,^{ab}A_{jabk}$ and $H^{ab}\nabla_{i}E_{jabk} = \bt_{i}\,^{ab}E_{jabk}$ are invariants of the underlying Codazzi projective structure. In general, there are many invariant tensors which can be built in this way, particularly when $E_{ijk}\,^{l}$ or $A_{i}$ is not zero. While these tensors are not very interesting because their geometric meaning is completely obscure, it seems worth mentioning that they are particularly abundant in dimension $4$. 
\begin{lemma}
For an AH structure on a manifold of dimension $4$ the tensors $\nabla^{p}E_{pijk} + 2E_{i(jk)}$, $\nabla^{p}E_{p(ijk)}$, and $\nabla^{p}A_{p}$ are invariants of the underlying Codazzi projective structure.
\end{lemma}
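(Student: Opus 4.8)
The plan is to exploit that, by Lemma \ref{conformalprojectivediff}, two AH structures generating the same Codazzi projective structure have aligned representatives differing by $\tnabla - \nabla = \Pi_{ij}\,^{k}$ with $\Pi_{ij}\,^{k} = \al_{i}\delta_{j}\,^{k} + \al_{j}\delta_{i}\,^{k} - \al^{k}H_{ij}$ for a one-form $\al_{i}$, and to show each of the three tensors is unchanged when $\nabla$ is replaced by $\tnabla$ once $n=4$. The essential inputs are the invariances already established: $E_{ijkl}$ and $A_{ijkl}$ are Codazzi projective invariants (Theorem \ref{aeinvariant}), while $\tilde{A}_{i} = A_{i}$ and $\tilde{E}_{ijk} = E_{ijk} - \al^{p}E_{ijkp}$ (Theorem \ref{codazzitransformtheorem}). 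I would first record the density-weight bookkeeping: since $H_{ij}$ is a $-2/n$-density, any tensor lying in $|\det \ctm|^{\mu}$ tensored with ordinary index bundles satisfies $\tnabla_{a}(\cdot) - \nabla_{a}(\cdot) = -(\Pi\text{-action on the free indices}) - \mu n\, \al_{a}(\cdot)$; in particular $E_{ijkl}$ (a $-2/n$-density) acquires the weight term $+2\al_{a}E_{ijkl}$, and $A_{i} = a_{ip}\,^{p}$ (a $+2/n$-density) acquires $-2\al_{a}A_{i}$. Throughout one uses $\Pi_{ip}\,^{p} = n\al_{i}$ and $H^{ap}\Pi_{ap}\,^{b} = (2-n)\al^{b}$.

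For $\nabla^{p}A_{p}$ the computation is immediate: because $\tilde{A}_{p} = A_{p}$, contracting the transformation law for the $+2/n$-density one-form $A_{p}$ with $H^{pa}$ gives $\tnabla^{p}A_{p} - \nabla^{p}A_{p} = \bigl[(n-2) - 2\bigr]\al^{p}A_{p} = (n-4)\al^{p}A_{p}$, which vanishes when $n=4$. The same mechanism governs the other two tensors. Expanding $\tnabla^{p}E_{pijk}$ by the difference tensor and the weight term, and repeatedly using the skew-symmetry of $E_{ijkl}$ in its first pair, its symmetry in its second pair, and its complete trace-freeness (to kill traces such as $H^{pb}E_{pbjk}$ and to convert $E_{ipjk} = -E_{pijk}$), collapses everything to
\[
\tnabla^{p}E_{pijk} - \nabla^{p}E_{pijk} = (n-4)\al^{p}E_{pijk} + \al^{p}\bigl(E_{ijkp} + E_{ikjp}\bigr).
\]
On the other hand $\tilde{E}_{ijk} = E_{ijk} - \al^{p}E_{ijkp}$ gives $2\tilde{E}_{i(jk)} - 2E_{i(jk)} = -2\al^{p}E_{i(jk)p} = -\al^{p}(E_{ijkp} + E_{ikjp})$, so the residual rank-three terms cancel exactly and the combination $\nabla^{p}E_{pijk} + 2E_{i(jk)}$ transforms by $(n-4)\al^{p}E_{pijk}$, vanishing at $n=4$. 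For $\nabla^{p}E_{p(ijk)}$ I would symmetrize the displayed identity over $(ijk)$; since $E_{ijkl}$ is skew in its first two indices one has $E_{(ijk)p} = 0$, so the residual term drops out and $\tnabla^{p}E_{p(ijk)} - \nabla^{p}E_{p(ijk)} = (n-4)\al^{p}E_{p(ijk)}$, again vanishing at $n=4$.

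The main obstacle is the index bookkeeping in the middle display: one must track the weight contribution (which is precisely what shifts $n-2$ to $n-4$) in tandem with the four $\Pi$-contractions, and use the algebraic symmetries of $E_{ijkl}$ to verify that all but the term $\al^{p}E_{pijk}$ and the symmetric remainder $\al^{p}(E_{ijkp}+E_{ikjp})$ cancel. The conceptual point worth emphasizing is that the coefficient $n-4$ arises uniformly for all three tensors from the interplay between the $\mu n$ in the weight term and the $(2-n)$ in $H^{ap}\Pi_{ap}\,^{b}$, and that the correction $+2E_{i(jk)}$ and the full symmetrization are exactly the devices that remove the non-invariant rank-three remainders, leaving only this universal coefficient, which vanishes in dimension four.
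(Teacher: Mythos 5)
Your strategy is the same as the paper's: compute how each tensor changes when the aligned representative is replaced by $\tnabla = \nabla + 2\al_{(i}\delta_{j)}\,^{k} - \al^{k}H_{ij}$, using the already established transformation laws for $E_{ijkl}$, $E_{ijk}$, and $A_{i}$. Your weight bookkeeping is right, your computation $\tnabla^{p}A_{p} - \nabla^{p}A_{p} = (n-4)\al^{p}A_{p}$ agrees with \eqref{nablaatrace}, and your argument for $\nabla^{p}E_{p(ijk)}$ is sound, since the residual rank-three term (whatever its sign) is killed by the full symmetrization because $E_{(ijk)p} = 0$.

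The problem is the sign of the residual in your displayed divergence formula. Tracking the contributions of $\Pi_{aj}\,^{b}$ and $\Pi_{ak}\,^{b}$ acting on the last two slots: the piece $-\al^{b}H_{aj}$ of $\Pi_{aj}\,^{b}$ contributes $+\al^{b}E_{jibk} = -\al^{b}E_{ijkb}$ to $-H^{ap}\Pi_{aj}\,^{b}E_{pibk}$ (skewness in the first pair, then symmetry in the last pair), and similarly the fourth slot contributes $-\al^{b}E_{ikjb}$, so one finds
\[
\tnabla^{p}E_{pijk} - \nabla^{p}E_{pijk} = (n-4)\al^{p}E_{pijk} - \al^{p}\left(E_{ijkp} + E_{ikjp}\right),
\]
with a \emph{minus} sign on the residual; this agrees with the paper's \eqref{nablaeijkltransform} and is the opposite of what you wrote. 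Your Cotton-tensor residual $2\tilde{E}_{i(jk)} - 2E_{i(jk)} = -\al^{p}(E_{ijkp} + E_{ikjp})$ is correct by \eqref{codazzitransform}, but with the corrected divergence sign the two residuals now add to $-2\al^{p}(E_{ijkp}+E_{ikjp})$ rather than cancelling, and the combination that is actually invariant in dimension four is $\nabla^{p}E_{pijk} - 2E_{i(jk)}$, not $\nabla^{p}E_{pijk} + 2E_{i(jk)}$. (You are in good company: the paper's own \eqref{nablaonee} does not in fact follow from its \eqref{nablaeijkltransform} and \eqref{codazzitransform}, and appears to carry the same sign slip; one can check that $\al^{p}E_{i(jk)p}$ is not identically zero for tensors with the symmetries of $E_{ijkl}$, so the discrepancy is real and cannot be absorbed.) Please recheck the two slot computations; as written, the cancellation carrying the first claim rests on a sign that direct computation does not support.
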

\begin{proof}
In any dimension there hold
\begin{align}\label{nablaeijkltransform}
\begin{split}
&\tnabla_{p}A_{ijk}\,^{p}  = \nabla_{p}A_{ijk}\,^{p} + (n-3)\al_{p}A_{ijk}\,^{p}, \quad \tnabla_{p}E_{ijk}\,^{p}  = \nabla_{p}E_{ijk}\,^{p} + (n-1)\al_{p}E_{ijk}\,^{p},\\
&\tnabla^{p}E_{pijk}  = \nabla^{p}E_{pijk} + (n-4)\al^{p}E_{pijk} - 2\al^{p}E_{i(jk)p}.
\end{split}
\end{align}
From \eqref{codazzitransform} and \eqref{nablaeijkltransform} there follows
\begin{align}
\label{nablaonee}&\tnabla^{p}E_{pijk} + 2\tilde{E}_{i(jk)} = \nabla^{p}E_{pijk} + 2E_{i(jk)} + (n-4)\al^{p}E_{pijk}.
\end{align}
There hold
\begin{align}\label{nablaatrace}
&\tnabla_{i}A_{j} = \nabla_{i}A_{j} - 3\al_{i}A_{j} - \al_{j}A_{i} + \al^{p}A_{p}H_{ij},&
&\tnabla^{p}A_{p} = \nabla^{p}A_{p} + (n-4)\al^{p}A_{p}.
\end{align}
The claims are evident from \eqref{nablaonee} and \eqref{nablaatrace}.
\end{proof}
It has not been shown that $\nabla^{p}E_{pijk} + 2E_{i(jk)}$, $\nabla^{p}E_{p(ijk)}$, and $\nabla^{p}A_{p}$ need not vanish identically, although this seems likely.

\subsubsection{Projectively and conjugate projectively flat AH structures}
An AH structure is \textbf{projectively flat} if $\en$ is projectively flat. If the conjugate AH structure $(\ben, [h])$ is projectively flat, then $(\en, [h])$ is \textbf{conjugate projectively flat}.

Although the projective Weyl tensor $B_{ijk}\,^{l}$ is trace-free it is not $H$-trace free. The trace-free tensor $B_{ij} \defeq B_{ip}\,^{p}\,_{j}$ satisfies 
\begin{align}
\label{middlebtraces} &B_{(ij)}  = \mr{Q}_{ij} + \tfrac{1}{n-1}\mr{R}_{ij} 
= \tfrac{n(n-2)}{n-1}\left(E_{ij} - \mr{A}_{ij}\right)= \tfrac{n(2-n)}{n-1}\mr{\bar{W}}_{ij}, &  B_{[ij]}  = \tfrac{(4-n^{2})}{2(n+1)}F_{ij}.
\end{align}
Substituting \eqref{bijkl} into \eqref{rintermsofw} and using \eqref{ricintermsofw} to write $\mr{P}_{ij} = \mr{W}_{ij} + \tfrac{1}{1-n}\mr{\bar{W}}_{ij}$ gives  
\begin{align}\label{bintermsofw}
B_{ijkl} & = W_{ijkl} + \tfrac{2}{1-n}H_{l[i}\mr{\bar{W}}_{j]k} + 2H_{k[i}\mr{\bar{W}}_{j]l} + \tfrac{1}{n+1}F_{ij}H_{kl} - \tfrac{1}{n+1}H_{l[i}F_{j]k} + H_{k[i}F_{j]l}.
\end{align}

\begin{lemma}\label{projflatahlemma}
For an AH structure $(\en, [h])$ on a manifold of dimension $n > 2$ the following are equivalent:
\begin{enumerate}
\item $(\en, [h])$ is projectively flat.
\item $F_{ij} = 0$, $A_{ijkl} = 0$, $E_{ijkl} =0$, and $\mr{\bar{W}}_{ij} = 0$.
\item There holds $R_{ijkl} = -2H_{l[i}\mr{A}_{j]k}  - 2H_{l[i}E_{j]k}+ \tfrac{2}{n(n-1)}RH_{l[i}H_{j]k}$.
\end{enumerate}
\end{lemma}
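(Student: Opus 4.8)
The plan is to treat projective flatness through the vanishing of the projective Weyl tensor $B_{ijk}{}^{l}$, which for $n>2$ is the standard Weyl--Schouten criterion: $\en$ is projectively flat if and only if $B_{ijk}{}^{l}\equiv 0$. With this, the whole statement reduces to reading off the irreducible components of the two curvature identities \eqref{bintermsofw} and \eqref{rintermsofw} already established. I would prove $(1)\Leftrightarrow(2)$ from \eqref{bintermsofw} together with the trace formulas \eqref{middlebtraces}, and $(2)\Leftrightarrow(3)$ from \eqref{rintermsofw}.

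For $(1)\Leftrightarrow(2)$: since $B_{ijkl}$ is an algebraic curvature tensor, its completely $H$-trace-free part is, by the decomposition of section \ref{curvaturetensorsection} and \eqref{aijkldefined}, the sum of its self-conjugate and anti-self-conjugate Weyl tensors. In \eqref{bintermsofw} every summand other than $W_{ijkl}=A_{ijkl}+E_{ijkl}$ is a pure-trace combination of $H$ with a rank-two tensor and so contributes only to the Ricci and scalar parts; hence the completely trace-free part of $B_{ijkl}$ is exactly $A_{ijkl}+E_{ijkl}$. Because $A_{ijkl}$ (metric type) and $E_{ijkl}$ (symplectic type) sit in distinct irreducible $CO(h)$-summands, $B_{ijkl}=0$ forces $A_{ijkl}=0$ and $E_{ijkl}=0$ separately. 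The remaining two conditions come from \eqref{middlebtraces}: when $B_{ijkl}=0$ the trace $B_{ij}=B_{ip}{}^{p}{}_{j}$ vanishes, and since for $n>2$ the coefficients $\tfrac{n(2-n)}{n-1}$ and $\tfrac{4-n^{2}}{2(n+1)}=\tfrac{(2-n)(2+n)}{2(n+1)}$ are nonzero, this gives $\mr{\bar{W}}_{ij}=0$ and $F_{ij}=0$. Thus $(1)\Rightarrow(2)$. Conversely, substituting the four vanishing conditions of (2) into \eqref{bintermsofw} annihilates every term, so $B_{ijkl}=0$ and $(2)\Rightarrow(1)$.

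For $(2)\Leftrightarrow(3)$ I would work from the always-valid identity \eqref{rintermsofw}. Assuming (2), one has $W_{ijkl}=0$ and $W_{[ij]}=\bar{W}_{[ij]}=\tfrac12 F_{ij}=0$, while \eqref{wijdefined}, \eqref{brtaij}, \eqref{brteij} and \eqref{aij} together with $\mr{\bar{W}}_{ij}=\mr{A}_{ij}-E_{ij}=0$ reduce the rank-two factors to $W_{jk}=\mr{A}_{jk}+E_{jk}+\tfrac{R}{2n(1-n)}H_{jk}$ and $\bar{W}_{jl}=\tfrac{R}{2n(1-n)}H_{jl}$. Inserting these into \eqref{rintermsofw} and collapsing the scalar terms via the elementary identity $H_{l[i}H_{j]k}-H_{k[i}H_{j]l}=2H_{l[i}H_{j]k}$ gives precisely (3). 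For the converse, the right-hand side of (3) is a pure-trace combination of $H$, so its completely trace-free part vanishes and, as above, $A_{ijkl}=E_{ijkl}=0$; contracting (3) with $H^{kl}$ yields $R_{ijp}{}^{p}=0$, whence $F_{ij}=0$ by \eqref{faradayexplicit}. Finally, substituting $A_{ijkl}=E_{ijkl}=F_{ij}=0$ back into \eqref{rintermsofw} and subtracting (3) isolates $H_{k[i}\mr{\bar{W}}_{j]l}=0$, and contraction with $H^{ki}$ gives $\tfrac{n-1}{2}\mr{\bar{W}}_{jl}=0$, i.e. $\mr{\bar{W}}_{ij}=0$. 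This recovers (2).

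The main obstacle is bookkeeping rather than conceptual. The genuinely delicate point is the claim that the completely trace-free part of an algebraic curvature tensor is $A_{ijkl}+E_{ijkl}$ and that these occupy distinct irreducible summands, so that the vanishing of their sum forces each to vanish; this is exactly what the curvature decomposition of section \ref{curvaturetensorsection} and the formulas \eqref{aijkldefined} supply, but one must verify that the $H\otimes(\text{rank two})$ terms in \eqref{bintermsofw} and \eqref{rintermsofw} really contribute nothing to the completely trace-free part. Beyond that, the only care needed is to keep the normalizations straight through the conjugation rules \eqref{brtaij} and \eqref{brteij}: $A_{ij}$ carries the nonzero trace $\tfrac{R}{2(1-n)}$ from \eqref{aij}, $E_{ij}$ is trace-free, $F_{ij}$ is skew, and $\bar{A}_{ij}=A_{ij}$, $\bar{E}_{ij}=-E_{ij}$, $\bar{F}_{ij}=F_{ij}$.
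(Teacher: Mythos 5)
Your proof is correct and follows essentially the same route as the paper: both arguments reduce everything to the identities \eqref{bintermsofw} and \eqref{rintermsofw} together with the trace formulas \eqref{middlebtraces}, using that for $n>2$ projective flatness is equivalent to $B_{ijk}\,^{l}=0$. The only cosmetic difference is that the paper closes the loop by proving $(3)\Rightarrow(1)$ directly (tracing $(3)$ to recover $P_{ij}$ and conclude $B_{ijkl}=0$) whereas you prove $(3)\Rightarrow(2)$; both work and are of comparable length.
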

\begin{proof}
The equivalence of the three conditions in the fourth part of $(2)$ is immediate from \eqref{middlebtraces}. From \eqref{bintermsofw} it is evident that $(2)$ implies $(1)$. If $(\en, [h])$ is projectively flat then by \eqref{middlebtraces} there hold $\mr{\bar{W}}_{ij} =0$ and $F_{ij} = 0$, in which case \eqref{bintermsofw} shows $W_{ijkl} = 0$; thus $(1)$ implies $(2)$. If there holds $(1)$ then $\mr{\bar{W}}_{ij} = 0$ and $F_{ij} = 0$, and so \eqref{rintermsofw} yields $(3)$. On the other hand, if there holds $(3)$, then tracing $(3)$ shows $nR_{ij} = n(1-n)\mr{W}_{ij} + RH_{ij}$, so that $F_{ij} = 0$ and $P_{ij} = \mr{W}_{ij} + \tfrac{R}{n(1-n)}H_{ij}$; substituting this last expression into $(3)$ shows $B_{ijkl} = 0$, so that $\en$ is projectively flat.
\end{proof}
Note that $\mr{\bar{W}}_{ij} = 0$ is equivalent to either of the conditions $E_{ij} = \mr{A}_{ij}$ or $\mr{R}_{ij} = (1-n)\mr{Q}_{ij}$.
\begin{corollary}\label{conjprojflatae}
AH structure $(\en, [h])$ on manifold of dimension at least $3$ is conjugate projectively flat if and only if $F_{ij} = 0$; $A_{ijkl} = 0$; $E_{ijkl} = 0$;  and $\mr{W}_{ij} = 0$.
\end{corollary}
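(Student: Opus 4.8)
The plan is to deduce the corollary directly from Lemma \ref{projflatahlemma} applied to the conjugate AH structure. By definition $(\en,[h])$ is conjugate projectively flat precisely when $(\ben,[h])$ is projectively flat, so the entire content of the statement is a translation of the four conditions in part $(2)$ of Lemma \ref{projflatahlemma}, written for $(\ben,[h])$, back into the curvature tensors of $(\en,[h])$ by means of the conjugation rules of section \ref{curvatureahpencilsection}.

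First I would record the relevant identities at $t=1$. From \eqref{brtaij} one has $\bar{F}_{ij} = F_{ij}$, and from \eqref{brtaijkl} one has $\bar{A}_{ijkl} = A_{ijkl}$ and $\bar{E}_{ijkl} = -E_{ijkl}$ (these are exactly the self- and anti-self-conjugacy statements noted after \eqref{nbscalinv}). I would also recall from the proof of Lemma \ref{barabarelemma} that the $W$-tensor of the conjugate structure satisfies $\bar{W}_{ij} = W_{ij} - 2E_{ij}$.

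Next I would apply part $(2)$ of Lemma \ref{projflatahlemma} to $(\ben,[h])$. It asserts that $(\ben,[h])$ is projectively flat if and only if $\bar{F}_{ij}=0$, $\bar{A}_{ijkl}=0$, $\bar{E}_{ijkl}=0$, and the trace-free symmetric part of the $W$-tensor of the structure \emph{conjugate to} $(\ben,[h])$ vanishes. Since conjugacy is involutive, the structure conjugate to $(\ben,[h])$ is $(\en,[h])$ itself, so this last tensor is $\mr{W}_{ij}$. Substituting $\bar{F}_{ij}=F_{ij}$, $\bar{A}_{ijkl}=A_{ijkl}$, and $\bar{E}_{ijkl}=-E_{ijkl}$ turns the four conditions into $F_{ij}=0$, $A_{ijkl}=0$, $E_{ijkl}=0$, and $\mr{W}_{ij}=0$, which is the claim.

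The only delicate point — and the main (mild) obstacle — is keeping the bars straight: the tensor written $\bar{W}$ inside Lemma \ref{projflatahlemma} denotes the $W$-tensor of the structure conjugate to the one being tested, so when the lemma is invoked for $(\ben,[h])$ this is the $W$-tensor of the double conjugate, which by involutivity of conjugacy is the original $\mr{W}_{ij}$ rather than $\mr{\bar{W}}_{ij}$. Once this identification is made correctly the equivalence is immediate, and no further computation is required.
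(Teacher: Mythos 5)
Your proof is correct and is exactly the argument the paper intends: the corollary is stated without proof as the immediate dualization of Lemma \ref{projflatahlemma} under conjugacy, using $\bar{F}_{ij}=F_{ij}$, $\bar{A}_{ijkl}=A_{ijkl}$, $\bar{E}_{ijkl}=-E_{ijkl}$, and the involutivity of conjugation to turn $\mr{\bar{W}}_{ij}$ into $\mr{W}_{ij}$. You correctly identified and resolved the one point requiring care, namely that the barred $W$-tensor in the lemma becomes the unbarred one when the lemma is applied to $(\ben,[h])$.
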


As will be shown in section \ref{affinehyperspheresection}, the AH structure induced on a non-degenerate co-oriented hypersurface in flat affine space is exact and conjugate projectively flat, and, conversely, a conjugate projectively flat AH structure is locally given in this way.

\subsubsection{Two-dimensional AH structures}\label{twodimensionalsection}
Here some of the special features of the two-dimensional case are briefly recounted (this case is treated in detail in \cite{Fox-2dahs}). This material is needed in the discussion in section \ref{einsteinsection} about the possibility of including the self-conjugacy of the curvature in the definition of the Einstein condition.

By Lemma \ref{weylcriterion}, the conformal torsion of a two-dimensional CP pair must vanish, and so it must be an AH structure, and, again by Lemma \ref{weylcriterion}, the tensors $A_{ijkl}$ and $E_{ijkl}$ must vanish, so its curvature is completely determined by the tensors $R$, $F_{ij}$, $\mr{R}_{ij}$, and $\mr{Q}_{ij}$. Lemma \ref{twodcubicformlemma} (or \eqref{confricfree}) implies $2\bt_{ij} = \nbt H_{ij}$, and by \eqref{middlebtraces} there holds  $\mr{Q}_{ij} = - \mr{R}_{ij}$, and so the local invariants of $(\en, [h])$ are expressible in terms of $R$, $F_{ij}$, $E_{ij}$, and $\bt$. In two-dimensions the projective Cotton tensor $C_{ijk}$ is the complete obstruction to local projective flatness, and, again by Lemma \ref{weylcriterion}, $C_{ijk} = 2C_{[i}H_{j]k}$ in which $C_{i} \defeq C_{ip}\,^{p}$, so it is convenient to work with the weighted one-form $C_{i}$ in lieu of $C_{ijk}$.  Substituting $E_{ij}$ into \eqref{bijkl} and using $F_{[ij}H_{k]l} = 0$ gives $R_{ijkl} = -4H_{l[i}E_{j]k} + RH_{l[i}H_{j]k} + F_{ij}H_{kl}$. However, because $H_{k[i}E_{j]l} - H_{l[i}E_{j]k}$ is trace-free and has symmetries corresponding to the Young diagram of the partition $(22)$ it vanishes by Lemma \ref{weylcriterion}, and so 
\begin{align}\label{rijkl}
& R_{ijkl} = -2H_{k[i}E_{j]l} -2H_{l[i}E_{j]k} + RH_{l[i}H_{j]k} + F_{ij}H_{kl}.
\end{align}
The following are easily verified.
\begin{align}\label{projcotton}
\begin{split}
&C_{i}   = - \tfrac{1}{2}\nabla_{i}R - \tfrac{1}{3}\nabla^{p}F_{ip} -2 \nabla^{p}E_{ip} +2\bt_{i}\,^{pq}E_{pq},\quad
 \bar{C}_{i}  = - \tfrac{1}{2}\nabla_{i}R - \tfrac{1}{3}\nabla^{p}F_{ip} +2\nabla^{p}E_{ip},\\
&\tfrac{1}{2}(C_{i} + \bar{C}_{i})  = - \tfrac{1}{2}\nabla_{i}R - \tfrac{1}{3}\nabla^{p}F_{ip} + \bt_{i}\,^{pq}E_{pq},\qquad
\tfrac{1}{2}(C_{i} - \bar{C}_{i})  = -2\nabla^{p}E_{ip} + \bt_{i}\,^{pq}E_{pq}.
\end{split}
\end{align}
On an oriented surface a Riemannian signature conformal structure determines a complex structure. It turns out that a two-dimensional AH structure can be seen as a special case of the locally conformally K\"ahler AH structures mentioned in section \ref{lcksection} of the introduction. Because of the holomorphic structure, in particular the Riemann-Roch theorem, more refined results are possible in the two-dimensional case than are possible in general, and this in part justifies giving this case a separate treatment. For example, it follows from \eqref{projcotton} that for a two-dimensional AH structure with self-conjugate projective Cotton tensor, the divergence $D^{p}E_{ip}$ is zero for $D$ the Levi-Civita connection of a Gauduchon metric, and this implies that $E_{ij}$ is the real part of a holomorphic quadratic differential. See the discussion in section \ref{labourieloftinsection} for a more relevant example.

\section{Einstein AH structures}\label{einsteinahsection}

In this section the Einstein equations are defined, and some examples are given of Einstein AH structures.

\subsection{Definition of Einstein equations}\label{einsteinsection}

\begin{definition}\label{einsteindefinition}
An AH structure on a manifold of dimension $n \geq 2$ is \textbf{naive Einstein} if there vanishes the trace-free symmetric part of every rank two trace of its curvature, and an AH structure is \textbf{Einstein} if it is naive Einstein and satisfies 
\begin{align}\label{einstein}
\nabla_{i}R + n\nabla^{p}F_{ip} = 0.
\end{align}
\end{definition}
\noindent
That $(\en, [h])$ be naive Einstein means explicitly that $\mr{R}_{ij} = 0$ and $\mr{Q}_{ij} = 0$, or, equivalently, $\mr{T}_{ij} = 0$ and $E_{ij} = 0$, or, etc. Usually it is easiest to check $\mr{R}_{ij} = 0$ and $E_{ij} = 0$. By \eqref{uijsym2} that an AH structure be Einstein is equivalent to the equations
\begin{align}\label{einsteinequations}
&R_{(ij)} = \tfrac{R}{n}H_{ij}, & &\nabla_{p}\bt_{ij}\,^{p} = \bt_{ij},& &\nabla_{i}R + n\nabla^{p}F_{ip} = 0.
\end{align}
An example of a naive Einstein AH structure which is not Einstein is given in section \ref{naiveexample}. 

\begin{lemma}
An AH structure $(\en, [h])$ is naive Einstein or Einstein if and only if the conjugate AH structure $(\ben, [h])$ has the same property.
\end{lemma}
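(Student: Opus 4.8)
The plan is to read off everything from the pencil transformation formulas of Section \ref{curvatureahpencilsection}, specialized to the conjugate structure, which corresponds to $t=1$ in the pencil $(\pen,[h])$. First I would record the effect of conjugacy on the two rank-two symmetric traces. Setting $t=1$ in \eqref{blconjricci} gives $\bar R_{(ij)} = Q_{(ij)}$ and $\bar R = R$, and the same display also yields $\bar Q_{(ij)} = R_{(ij)}$; moreover \eqref{brtaij} gives $\bar F_{ij} = F_{ij}$. Thus conjugacy interchanges $R_{(ij)}$ and $Q_{(ij)}$ while fixing the scalar curvature $R$ and the Faraday form $F_{ij}$. (Since conjugacy is involutive, the relation $\bar Q_{(ij)} = R_{(ij)}$ may also be obtained by applying $\bar R_{(ij)} = Q_{(ij)}$ twice.)

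Next I would pass to trace-free parts. Because $Q_{p}\,^{p} = R$, the tensors $R_{(ij)}$ and $Q_{(ij)}$ have the same $H$-trace, namely $R$, so subtracting $\tfrac1n R\,H_{ij}$ commutes with the swap: $\mr{\bar R}_{ij} = \mr{Q}_{ij}$ and $\mr{\bar Q}_{ij} = \mr{R}_{ij}$. By the explicit description recalled just after Definition \ref{einsteindefinition}, the AH structure $(\en,[h])$ is naive Einstein precisely when $\mr R_{ij}=0$ and $\mr Q_{ij}=0$; since conjugacy merely interchanges these two tensors, the pair vanishes for $(\en,[h])$ if and only if it vanishes for $(\ben,[h])$. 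This settles the naive Einstein half of the lemma.

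Finally, for the full Einstein property I would note that the supplementary equation \eqref{einstein} is manifestly conjugation-invariant: specializing \eqref{nbscalinv} to $t=1$ gives $\bar\nabla_{i}\bar R = \nabla_{i}R$ and $\bar\nabla^{p}\bar F_{ip} = \nabla^{p}F_{ip}$, whence $\bar\nabla_{i}\bar R + n\,\bar\nabla^{p}\bar F_{ip} = \nabla_{i}R + n\,\nabla^{p}F_{ip}$. Therefore \eqref{einstein} holds for $(\ben,[h])$ exactly when it holds for $(\en,[h])$, and combined with the naive Einstein equivalence this yields the statement for Einstein as well. The argument is entirely formal once the pencil identities are in hand; there is no genuine obstacle, the only point requiring a line of care being the observation that $R_{(ij)}$ and $Q_{(ij)}$ have equal $H$-trace, so that interchanging them descends to their trace-free parts.
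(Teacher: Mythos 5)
Your proof is correct and follows exactly the route the paper takes: the paper's proof is the single line ``Immediate from \eqref{blconjricci}--\eqref{nbscalinv},'' and your argument is precisely the specialization of those pencil formulas at $t=1$, showing that conjugacy swaps $R_{(ij)}$ and $Q_{(ij)}$ while fixing $R$, $F_{ij}$, and the conservation quantity. Nothing further is needed.
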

\begin{proof}
Immediate from \eqref{blconjricci}-\eqref{nbscalinv}.
\end{proof}

\begin{lemma}\label{einsteinconservativelemma}
On a manifold of dimension $n > 2$ a naive Einstein AH structure $(\en, [h])$ is Einstein if and only if it is conservative, and a naive Einstein AH structure with self-conjugate curvature is Einstein. On a manifold of dimension $n \geq 2$ an Einstein AH structure has parallel scalar curvature if and only if its projective Cotton tensor is trace-free.
\end{lemma}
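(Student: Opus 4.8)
The plan is to route everything through the identity \eqref{ebtw} of Lemma \ref{diffbianchilemma}, namely $\bt^{abc}E_{iabc} = 2(2-n)A_{i} = (n-2)\big(2\nabla^{p}\mr{A}_{ip} - \bt_{i}\,^{pq}\mr{W}_{pq} + \nabla^{p}F_{ip} + \tfrac{1}{n}\nabla_{i}R\big)$. First I would settle the equivalence of conservative and Einstein for a naive Einstein structure in dimension $n>2$. Since naive Einstein is equivalent to $\mr{T}_{ij} = 0$ and $E_{ij} = 0$, \eqref{aij} gives $\mr{A}_{ij} = \tfrac{1}{2-n}\mr{T}_{ij} = 0$, and then by \eqref{wijdefined} the trace-free symmetric part satisfies $\mr{W}_{ij} = \mr{A}_{ij} + E_{ij} = 0$ (the Faraday term being skew). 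Substituting $\mr{A}_{ip}\equiv 0$ (so $\nabla^{p}\mr{A}_{ip}=0$) and $\mr{W}_{pq}\equiv 0$ into the last member of \eqref{ebtw} collapses it to $2(2-n)A_{i} = (n-2)\big(\nabla^{p}F_{ip} + \tfrac1n \nabla_{i}R\big)$; dividing by $n-2\neq 0$ yields $-2n A_{i} = \nabla_{i}R + n\nabla^{p}F_{ip}$. Hence for a naive Einstein structure the condition $A_{i}=0$ is precisely \eqref{einstein}, which is the first assertion.

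For the self-conjugacy statement I would observe that self-conjugate curvature means $R_{ijkl} = \bar{R}_{ijkl}$, so \eqref{rijlk} gives $R_{ij(kl)} = F_{ij}H_{kl}$, i.e. $\uf_{ijkl} = 0$ by \eqref{nbtskew}; as $E_{ijkl}$ is the completely trace-free part of $\uf_{ijkl}$, it too vanishes. Then $\bt^{abc}E_{iabc}=0$, and the first equality of \eqref{ebtw} gives $2(2-n)A_{i}=0$, so $A_{i}=0$. The structure is thus conservative, hence Einstein by the part already proved.

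The final assertion, for all $n\geq 2$, rests on computing the trace of the projective Cotton tensor $C_{ijk} = 2\nabla_{[i}P_{j]k}$ of \eqref{bijkl}. I would first reduce trace-freeness to a single scalar condition: contracting the skew pair $ij$ against $H^{ij}$ gives $0$, while $C_{[ij]k}=C_{ijk}$ together with $C_{[ijk]}=0$ from \eqref{projectivebianchi} forces $H^{ik}C_{ijk} = -H^{jk}C_{ijk}$, so $C_{ijk}$ is trace-free if and only if $H^{jk}C_{ijk}=0$. Next I would compute this trace using $P_{ij} = \tfrac{1}{1-n}R_{(ij)} + \tfrac{n}{2(n+1)}F_{ij}$ (where $R_{[ij]} = -\tfrac n2 F_{ij}$ by \eqref{faradayexplicit}), the relation $R_{(ij)} = \tfrac Rn H_{ij}$ from naive Einstein, and the aligned-representative identities $\nabla_{p}H^{ip}=0$ and $\nabla_{i}H_{jk}=\bt_{ijk}$ with $\bt$ trace-free (Lemma \ref{special}); a short manipulation gives $H^{jk}C_{ijk} = -\tfrac1n \nabla_{i}R - \tfrac{n}{2(n+1)}\nabla^{p}F_{ip}$. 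Finally, feeding in \eqref{einstein} to eliminate $\nabla^{p}F_{ip}$ renders the right side a nonzero multiple of $\nabla_{i}R$, namely $-\tfrac{n+2}{2n(n+1)}\nabla_{i}R$, so the Cotton tensor is trace-free exactly when $\nabla_{i}R=0$, i.e. when the scalar curvature is parallel.

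I expect the main obstacle to be organizational rather than conceptual: one must correctly extract from naive Einstein that $\mr{A}_{ij}$, $E_{ij}$, and $\mr{W}_{ij}$ all vanish so that the long identity \eqref{ebtw} degenerates, and then carry out the Cotton-trace computation attentively, since $\nabla$ does not annihilate $H_{ij}$ and it is in the bookkeeping of the $\bt$-terms (which drop out precisely because $\bt$ is trace-free and $P_{(ij)}$ is pure trace under the Einstein condition) that a sign or trace error would most easily intrude.
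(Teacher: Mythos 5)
Your proposal is correct and follows essentially the paper's own route: the paper likewise collapses \eqref{ebtw} under the naive Einstein condition to obtain $2n(2-n)A_{i} = (n-2)(\nabla_{i}R + n\nabla^{p}F_{ip}) = nE_{ijkl}\bt^{jkl}$ (its \eqref{ebcontracted}), from which the first two claims are immediate, and then computes $C_{i} = -\tfrac{1}{n}\nabla_{i}R - \tfrac{n}{2(n+1)}\nabla^{p}F_{ip}$ and substitutes \eqref{einstein} to get $2n(n+1)C_{i} = -(n+2)\nabla_{i}R$. The only difference is that you spell out the intermediate verifications (vanishing of $\mr{A}_{ij}$ and $\mr{W}_{ij}$, the reduction of Cotton trace-freeness to a single contraction) that the paper leaves implicit.
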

\begin{proof}
For a naive Einstein AH structure on a manifold of dimension $n>2$
\begin{align}
&\label{ebcontracted}2n(2-n)A_{i} = (n-2)\left(\nabla_{i}R + n\nabla^{p}F_{ip}\right) = nE_{ijkl}\bt^{jkl},
\end{align}
follows from the last equality of \eqref{ebtw}, and the first two claims are immediate from \eqref{ebcontracted}. For a naive Einstein AH structure on a manifold of dimension $n \geq 2$ there hold
\begin{align}
\label{ebcipp}&C_{i} =  -\tfrac{1}{n}\nabla_{i}R - \tfrac{n}{2(n+1)}\nabla^{p}F_{ip} = 2A_{i} + \tfrac{n+2}{2(n+1)}\nabla^{p}F_{ip}  = \tfrac{n}{n+1}A_{i} - \tfrac{(n+2)}{2n(n+1)}\nabla_{i}R,
\end{align}
in which the first equality follows directly from the definitions, and the last two equalities make sense only when $n > 2$ in which case they follow from \eqref{ebcontracted}. If $(\en, [h])$ is moroever Einstein then \eqref{ebcipp} implies $2n(n+1)C_{i} = -(n+2)\nabla_{i}R$, from which the last claim follows. 
\end{proof}

It is shown in section \ref{s3examplerevisited} that the AH structure on $S^{3}$ constructed in section \ref{s3example} is Einstein but does not have self-conjugate curvature. 

Because of Lemma \ref{einsteinconservativelemma} it is convenient to refer to \eqref{einstein} as the \textbf{conservation condition}, and to say that a naive Einstein AH structure is \textbf{conservative} if it satisfies \eqref{einstein}. This convention has content only when $n = 2$, and it should be noted that meaning has been given to the phrase \textit{conservative AH structure} only when $n > 2$.

\begin{lemma}\label{parallelexactlemma}
For an Einstein AH structure $(\en ,[h])$ on an $n$-manifold the weighted scalar curvature is parallel if and only if either it vanishes identically or $(\en, [h])$ is proper and exact. If $n \neq 4$ these conditions imply $F_{ij}$ is $[h]$-null. 
\end{lemma}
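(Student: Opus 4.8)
The plan is to prove the biconditional by the standard open--closed dichotomy for parallel sections of a line bundle, and then to read off the statement about $F_{ij}$ directly from the conservation condition \eqref{einstein} together with the identity \eqref{fcoclosed}. Throughout I take $M$ connected (the general case follows componentwise), and I use that, although $R$ has no numerical value, it is a genuine section of $|\det \ctm|^{1/n}$ on which the condition $\nabla_{i}R = 0$ is meaningful. I would first dispose of the forward implication, which does not use the Einstein hypothesis at all. Suppose $\nabla_{i}R = 0$. Since $R$ is then a parallel section of a real line bundle equipped with a connection, its zero set is both open and closed, so on connected $M$ either $R \equiv 0$, or $R$ is nowhere vanishing. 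In the latter case $(\en, [h])$ is proper by definition, and $R$ is itself a global $\nabla$-parallel non-vanishing density of the non-trivial weight $1/n$, so $(\en, [h])$ is exact; this is precisely the remark recorded in the text that a proper CP pair with parallel weighted scalar curvature is exact.

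For the reverse implication the decisive input is that exactness forces $F_{ij} = 0$. If $(\en, [h])$ is exact, I would choose a distinguished representative $h \in [h]$, so that $\nabla_{i}|\det h| = 0$; then its associated Faraday primitive satisfies $2n\ga_{i} = h^{pq}\nabla_{i}h_{pq} = |\det h|^{-1}\nabla_{i}|\det h| = 0$, whence $F_{ij} = -2\nabla_{[i}\ga_{j]} = 0$. Thus an exact AH structure is closed. Now if the right-hand alternative holds, then either $R \equiv 0$, in which case trivially $\nabla_{i}R = 0$, or $(\en, [h])$ is proper and exact, in which case $F_{ij} = 0$ and the conservation condition \eqref{einstein}, namely $\nabla_{i}R + n\nabla^{p}F_{ip} = 0$, collapses to $\nabla_{i}R = 0$. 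This completes the equivalence.

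For the final claim under $n \neq 4$, I would argue directly from $R$ being parallel rather than splitting into the two alternatives. If $\nabla_{i}R = 0$, then \eqref{einstein} gives $\nabla^{p}F_{ip} = 0$, so $F_{ij}$ is co-closed. As in the proof of Lemma \ref{fcoclosedlemma}, co-closedness yields $\nabla_{q}F^{pq} = 0$, which makes the left-hand side of \eqref{fcoclosed} vanish, leaving $(n-4)F_{pq}F^{pq} = 0$. For $n \neq 4$ this forces $F_{pq}F^{pq} = 0$, that is, $F_{ij}$ is $[h]$-null. Here I only need the vanishing of the norm, not of $F_{ij}$ itself, so no definiteness hypothesis on $[h]$ is required, in contrast to the conclusion of Lemma \ref{fcoclosedlemma}.

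The only genuinely substantive step, as opposed to bookkeeping, is the implication that exactness implies $F_{ij} = 0$; everything else is either the open--closed argument for parallel sections or a one-line substitution into \eqref{einstein}. I do not anticipate a real obstacle, since the Faraday-primitive formula and the identity \eqref{fcoclosed} are already available; the only point needing slight care is verifying that co-closedness makes $\nabla_{[p}\nabla_{q]}F^{pq}$ vanish, which is exactly the computation underlying Lemma \ref{fcoclosedlemma} and can simply be cited from there.
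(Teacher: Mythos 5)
Your proof is correct and follows essentially the same route as the paper's: the open--closed dichotomy for the parallel density $R$ in the forward direction, exactness $\Rightarrow$ closed $\Rightarrow$ (via the conservation condition \eqref{einstein}) $\nabla_{i}R=0$ in the reverse, and the identity \eqref{fcoclosed} for the nullity of $F_{ij}$ when $n\neq 4$. The only cosmetic difference is that the paper exhibits exactness via the distinguished metric $RH_{ij}$ rather than by citing $R$ itself as a parallel non-vanishing density; these are the same observation.
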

\begin{proof}
If $\nabla_{i}R = 0$ then either $R$ vanishes identically or $R$ vanishes nowhere. In the latter case $RH_{ij}$ is a distinguished metric, so $(\en, [h])$ is proper and exact. Conversely, if $(\en, [h])$ is exact then it is closed so the conservation condition implies $\nabla_{i}R = 0$. If $\nabla_{i}R = 0$ then $\nabla^{p}F_{ip} = 0$ and if $n \neq 4$ the proof of Lemma \ref{fcoclosedlemma} shows that $F_{ij}$ is $[h]$-null. 
\end{proof}

\subsubsection{}
The usual Einstein Weyl equations are defined when $n > 2$ as $\mr{R}_{ij} = 0$. Since the curvature of a Weyl structure is necessarily self-conjugate, the Einstein AH equations for Weyl structure are the usual Einstein Weyl equations. When $n = 2$ the naive Einstein AH equations are just $E_{ij} = 0$, which is vacuous for a Weyl structure. In \cite{Calderbank-mobius}, Calderbank defined a Weyl structure to be Einstein Weyl if $\nabla_{i}R + 2\nabla^{p}F_{ip} = 0$. The Einstein AH equations were defined so that for $n = 2$ they specialize for Weyl structures to Calderbank's. Moreover, comparison of the two-dimensional Einstein Weyl case with \eqref{ebcontracted} motivated including the equation \eqref{einstein} in the definition of the $n$-dimensional Einstein AH equations. By the traced differential Bianchi identity the usual Einstein equations for a metric imply the constancy of the scalar curvature. Similarly, (by \eqref{ebcontracted}) the usual Einstein Weyl equations imply \eqref{einstein} when $n > 2$. In this sense the Einstein AH equations more closely resemble the usual Einstein equations for a metric than do the naive Einstein AH equations. 

\subsubsection{}
Let $(\en, [h])$ be an AH structure with aligned representative $\nabla$ and let $D = \nabla + \tfrac{1}{2}\bt_{ij}\,^{k} + 2\ga_{(i}\delta_{j)}\,^{k} - H_{ij}\ga^{k}$ be the Levi-Civita connection of $h \in [h]$. By \eqref{ddivbt} the equation $E_{ij} = 0$ is equivalent to either of the equations
\begin{align}
\label{dga0} &\nabla_{p}\bt_{ij}\,^{p} = \bt_{ij},& &D_{p}\bt_{ij}\,^{p} = n\ga_{p}\bt_{ij}\,^{p},
\end{align}
while by \eqref{confricfree} the equation $\mr{R}_{ij} = 0$ is equivalent to
\begin{align}\label{dga1}
\mr{\sR}_{ij} = \tfrac{1}{4}\mr{\bt}_{ij} + (2-n)\mr{D\ga}_{ij} + (2-n)\mr{\ga\tensor\ga}_{ij}.%
\end{align}
Equations \eqref{dga0} and \eqref{dga1} express the naive Einstein condition in terms of the underlying conformal structure. The meaning of \eqref{einstein} can be seen from \eqref{conserve}.

\subsubsection{}
Lemma \ref{projprojflateinsteinlemma} relates the Einstein equations and projective flatness.
\begin{lemma}\label{projprojflateinsteinlemma}
For an AH structure $(\en, [h])$ of dimension at least $3$ any two of the following conditions imply the third.
\begin{enumerate}
\item $(\en, [h])$ is projectively flat.
\item $(\en, [h])$ is conjugate projectively flat.
\item $(\en, [h])$ is naive Einstein.
\end{enumerate}
These conditions imply $(\en, [h])$ is Einstein with self-conjugate curvature.
\end{lemma}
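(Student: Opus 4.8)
The plan is to exploit the curvature characterizations of projective and conjugate projective flatness already established, namely Lemma \ref{projflatahlemma} and Corollary \ref{conjprojflatae}, together with the naive Einstein condition expressed through the vanishing of $\mr{R}_{ij}$, $\mr{Q}_{ij}$, and $E_{ij}$. The key observation is that each of the three conditions in the statement is, in the presence of $n \geq 3$, essentially a statement about which pieces of the decomposition $R_{ijkl} = \uf_{ijkl} + T_{ijkl} + F_{ijkl} + G_{ijkl}$ (equivalently, which of the invariants $A_{ijkl}$, $E_{ijkl}$, $F_{ij}$, $\mr{W}_{ij}$, $\mr{\bar{W}}_{ij}$) vanish. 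Recall from $\eqref{wijdefined}$ that $W_{ij} = A_{ij} + E_{ij} + \tfrac{1}{2}F_{ij}$, and that $\mr{W}_{ij} = \mr{A}_{ij} + E_{ij} + \tfrac{1}{2}F_{ij}$ while $\mr{\bar{W}}_{ij} = \mr{A}_{ij} - E_{ij} - \tfrac{1}{2}F_{ij}$ (using $\bar{A}_{ij} = A_{ij}$, $\bar{E}_{ij} = -E_{ij}$, $\bar{F}_{ij} = F_{ij}$).

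First I would assemble the three dictionaries. By Lemma \ref{projflatahlemma}, condition (1) is equivalent to $F_{ij} = 0$, $A_{ijkl} = 0$, $E_{ijkl} = 0$, and $\mr{\bar{W}}_{ij} = 0$. By Corollary \ref{conjprojflatae}, condition (2) is equivalent to $F_{ij} = 0$, $A_{ijkl} = 0$, $E_{ijkl} = 0$, and $\mr{W}_{ij} = 0$. The naive Einstein condition (3) is, by definition and the remarks following Definition \ref{einsteindefinition}, equivalent to $\mr{R}_{ij} = 0$ and $\mr{Q}_{ij} = 0$, equivalently $\mr{A}_{ij} = 0$ and $E_{ij} = 0$ (the latter from $\mr{T}_{ij} = 0$, $E_{ij} = 0$, together with $\eqref{aij}$). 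Then the three pairwise implications become direct algebra. For (1) and (2) together: both give $F_{ij} = 0$, $A_{ijkl} = 0$, $E_{ijkl} = 0$; adding $\mr{\bar{W}}_{ij} = 0$ and $\mr{W}_{ij} = 0$ and subtracting gives $\mr{A}_{ij} = \tfrac{1}{2}(\mr{W}_{ij} + \mr{\bar{W}}_{ij}) = 0$ and $E_{ij} + \tfrac{1}{2}F_{ij} = \tfrac{1}{2}(\mr{W}_{ij} - \mr{\bar{W}}_{ij}) = 0$, so with $F_{ij} = 0$ one gets $E_{ij} = 0$; hence naive Einstein, which is (3). For (1) and (3): (1) gives $F_{ij} = 0$ and $\mr{\bar{W}}_{ij} = 0$, while (3) gives $\mr{A}_{ij} = 0 = E_{ij}$, so $\mr{W}_{ij} = \mr{A}_{ij} + E_{ij} + \tfrac{1}{2}F_{ij} = 0$, and combined with $F_{ij} = 0$, $A_{ijkl} = 0$, $E_{ijkl} = 0$ from (1), Corollary \ref{conjprojflatae} yields (2). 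The implication (2) and (3) $\Rightarrow$ (1) is entirely symmetric, swapping the roles of $\mr{W}_{ij}$ and $\mr{\bar{W}}_{ij}$.

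For the final assertion, that these conditions force Einstein with self-conjugate curvature, I would argue as follows. Under all three conditions one has $A_{ijkl} = E_{ijkl} = 0$ and $F_{ij} = 0$; in particular $E_{ijkl} = 0$ means the anti-self-conjugate Weyl tensor vanishes, and $\uf_{ijkl} = E_{ijkl} - 2H_{l[i}E_{j]k} - 2H_{k[i}E_{j]l}$ then vanishes since $E_{ij} = 0$ as well, while $F_{ij} = 0$ kills $F_{ijkl}$ and $G_{ijkl}$; thus by $\eqref{rijlk}$ the anti-self-conjugate part $R_{ij(kl)} - F_{ij}H_{kl} = \uf_{ijkl} = 0$, so $R_{ijkl} = \bar{R}_{ijkl}$ and the curvature is self-conjugate. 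Self-conjugate curvature forces $E_{ijkl} = 0$ automatically, and by Lemma \ref{einsteinconservativelemma} a naive Einstein AH structure with self-conjugate curvature is Einstein. I expect the main obstacle to be purely bookkeeping: keeping the trace-free projections straight and verifying carefully that the stated curvature characterizations in Lemma \ref{projflatahlemma} and Corollary \ref{conjprojflatae} do include $\mr{\bar{W}}_{ij} = 0$ and $\mr{W}_{ij} = 0$ respectively (as opposed to some other trace), since the whole argument hinges on that asymmetry; once the dictionary is correct the implications are immediate linear algebra in the invariants.
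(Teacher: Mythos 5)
Your proof is correct and follows essentially the same route as the paper: both reduce the three conditions to the vanishing of $F_{ij}$, $A_{ijkl}$, $E_{ijkl}$, $\mr{W}_{ij}$, and $\mr{\bar{W}}_{ij}$ via Lemma \ref{projflatahlemma} and Corollary \ref{conjprojflatae}, do the linear algebra $\mr{A}_{ij} = \tfrac{1}{2}(\mr{W}_{ij}+\mr{\bar{W}}_{ij})$, $E_{ij} = \tfrac{1}{2}(\mr{W}_{ij}-\mr{\bar{W}}_{ij})$, and then conclude Einstein from $E_{ijkl}=0$ via \eqref{ebcontracted} (equivalently Lemma \ref{einsteinconservativelemma}). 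The only quibble is that $\mr{\,\cdot\,}$ denotes the trace-free \emph{symmetric} part, so the skew term $\tfrac{1}{2}F_{ij}$ should not appear in your formulas for $\mr{W}_{ij}$ and $\mr{\bar{W}}_{ij}$; this is harmless here since $F_{ij}=0$ in every case where you invoke them.
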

\begin{proof}
By Lemma \ref{projflatahlemma} a projectively and conjugate projectively flat AH structure has $\mr{W}_{ij} = 0 = \mr{\bar{W}}_{ij}$, so is naive Einstein. By \eqref{bintermsofw} and projectively flat naive Einstein AH structure is closed with $W_{ijkl} = 0$, and by the conjugated version of \eqref{bintermsofw} this implies that it is conjugate projectively flat. By duality conjugate projectively flat and naive Einstein implies projectively flat. By Lemma \ref{conjprojflatae} a projectively flat or conjugate projectively flat AH structure has $E_{ijkl} = 0$, and so by \eqref{ebcontracted} satisfies \eqref{einstein}, so an AH structure satisfying the given conditions is Einstein with self-conjugate curvature.
\end{proof}

When $n = 2$ it follows from \eqref{projcotton} that naive Einstein and projectively flat implies conjugate projectively flat, but it is no longer the case that projectively flat and conjugate projectively flat necessarily implies naive Einstein. In this case the claim analogous to Lemma \ref{projprojflateinsteinlemma} is the following immediate consequence of \eqref{projcotton}.
\begin{lemma}\label{2deinsteinlemma}
For a two-dimensional Einstein AH structure $(\en, [h])$, any one of the following statements implies the other two.
\begin{enumerate}
\item $(\en, [h])$ is projectively flat.
\item $(\en, [h])$ is conjugate projectively flat.
\item The weighted scalar curvature is parallel.
\end{enumerate}
In particular, if $(\en, [h])$ is proper then it is projectively flat and conjugate projectively flat.
\end{lemma}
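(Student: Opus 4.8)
The plan is to read everything off the formulas \eqref{projcotton}, using the two special features of dimension two recorded in section \ref{twodimensionalsection}: the projective Weyl tensor vanishes identically, so the projective Cotton tensor $C_{ijk} = 2C_{[i}H_{j]k}$ is the complete obstruction to local projective flatness, and hence $(\en,[h])$ is projectively flat if and only if $C_i = 0$ while $(\ben,[h])$ is projectively flat (i.e. $(\en,[h])$ is conjugate projectively flat) if and only if $\bar{C}_i = 0$. The first step is to insert the naive Einstein condition, which for $n=2$ is exactly $E_{ij}=0$, into \eqref{projcotton}. Since $E_{ij}$ then vanishes identically, both $\nabla^p E_{ip}$ and $\bt_i\,^{pq}E_{pq}$ vanish, so the last two lines of \eqref{projcotton} collapse to $\tfrac12(C_i-\bar{C}_i)=0$ and $C_i = \bar{C}_i = -\tfrac12\nabla_i R - \tfrac13\nabla^p F_{ip}$.

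The second step is to feed in the conservation (Einstein) condition. For $n=2$ equation \eqref{einstein} reads $\nabla_i R + 2\nabla^p F_{ip}=0$, that is $\nabla^p F_{ip} = -\tfrac12\nabla_i R$, and substituting this into the expression just obtained yields the clean identity $C_i = \bar{C}_i = -\tfrac13\nabla_i R$. From here the three equivalences are immediate: projective flatness $\Leftrightarrow C_i=0 \Leftrightarrow \nabla_i R = 0 \Leftrightarrow \bar{C}_i = 0 \Leftrightarrow$ conjugate projective flatness, and $\nabla_i R = 0$ is precisely the statement that the weighted scalar curvature is parallel. Thus each of (1), (2), (3) is equivalent to $\nabla_i R = 0$, so any one of them implies the other two.

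For the final ``in particular'' clause it suffices, by the equivalences just proved, to show that a proper Einstein AH structure has $\nabla_i R = 0$. The natural route is to use that for a proper CP pair the identity $\nabla_{[i}\nabla_{j]}R = -F_{ij}R$ produces the one-form $2\ga_i = -R^{-1}\nabla_i R$ with $d\ga_{ij}=F_{ij}$, and that, exactly as in the proof of Lemma \ref{parallelexactlemma}, once the structure is closed ($F\equiv 0$) the conservation condition gives $\nabla_i R = -2\nabla^p F_{ip} = 0$. So the clause reduces to showing that a proper Einstein surface is closed. I expect this closedness to be the main obstacle: it is \emph{not} forced by the pointwise identities above, which become tautological for proper structures (substituting $\nabla_i R = -3C_i$ back into $\nabla_{[i}\nabla_{j]}R = -F_{ij}R$ returns an identity carrying no new information). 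Establishing $F\equiv 0$ appears to need genuine analytic input beyond \eqref{projcotton}, of the Gauduchon-metric and maximum-principle type developed for the scalar curvature in the later sections; granting it, the argument closes and delivers both projective and conjugate projective flatness.
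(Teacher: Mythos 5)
Your derivation of the equivalence of (1), (2), and (3) is correct and coincides with the paper's own proof, which consists of the single assertion that the lemma is an immediate consequence of \eqref{projcotton}: imposing $E_{ij}=0$ kills the terms distinguishing $C_{i}$ from $\bar{C}_{i}$, and the conservation condition $\nabla^{p}F_{ip}=-\tfrac{1}{2}\nabla_{i}R$ then yields $C_{i}=\bar{C}_{i}=-\tfrac{1}{3}\nabla_{i}R$, so each of the three conditions is equivalent to $\nabla_{i}R=0$.

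Your suspicion about the final clause is justified, and you should not try to fill the gap you flagged: no analytic input will extract $F\equiv 0$ from properness alone, because the clause is false as literally stated. Theorem \ref{negexacttheorem}(3) already admits a non-closed Einstein AH structure with $R>0$ everywhere, and this case is realized in dimension two. Take on $S^{2}$ a metric of revolution $h=dr^{2}+f(r)^{2}d\theta^{2}$ with $-2f''/f=4f^{2}+c$ for a constant $c>0$ (the solution of $f''=-2f^{3}-\tfrac{c}{2}f$ with $f(0)=0$, $f'(0)=1$ rises to a maximum and returns symmetrically to zero with $f'=-1$, so closes up smoothly at both poles), and let $\ga_{i}$ be dual to the rotation field. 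Then $h$, $\ga$, and $\bt=0$ satisfy the hypotheses of the converse part of Theorem \ref{classtheorem} (in particular \eqref{confein3} is exactly the ODE imposed), so they determine an Einstein Weyl structure with Gauduchon metric $h$; its scalar curvature is $\uR_{h}=4|\ga|_{h}^{2}+c>0$, so the structure is proper, yet $F=-d\ga\neq 0$, hence $\nabla_{i}R\neq 0$ and, by your own formula, $C_{i}\neq 0$: proper but not projectively flat. What makes the clause true, and trivially so with no analysis at all, is the hypothesis that $(\en,[h])$ be \emph{exact} (or merely closed): exactness forces $F\equiv 0$, the conservation condition then gives $\nabla_{i}R=0$, and (3) holds. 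This is evidently the intended reading --- compare Lemma \ref{parallelexactlemma}, and note that in the deduction of Theorem \ref{2dtheorem} exactness is established first, via Theorem \ref{todtheorem} and Riemann--Roch, before Lemma \ref{2deinsteinlemma} is invoked --- so ``proper'' in the final clause should be read as ``proper and exact'', or simply ``exact''.
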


\subsubsection{}
In dimensions greater than $3$ it is arguable that self-conjugacy of the curvature should be included in the definition of Einstein. In section \ref{einsteinestimatesection} it will be shown that estimates on the growth of the cubic form of an affine hypersphere partly carry over to the Einstein AH structures with self-conjugate curvature, and there will be other indications that the subclass of Einstein AH structures having self-conjugate curvature is more amenable to study than is the class of Einstein AH structures. For example, Lemma \ref{projprojflateinsteinlemma} could be interpreted in this way. On the other hand, in two-dimensions the naive Einstein equations could be phrased simply as that the curvature of the AH structure be self-conjugate, and the Einstein equations have to be imposed on top of this, which suggests regarding the self-conjugacy and Einstein conditions as distinct. It not yet being clear whether self-conjugacy of the curvature should be regarded as an essential constituent of the Einstein condition, or as a conceptually distinct complementary condition, it seems advisable to maintain a willingness to take this stronger condition as the definition of Einstein should further evidence suggest so doing. For the time being, as it seems possible to base a tractable theory on the weaker conservation condition, this is what has been done as far as is possible. The sort of result that would demand including self-conjugacy in the definition of Einstein would be a variational description that yielded self-conjugate Einstein structures but not the more general class of Einstein structures, or a result showing something like finite-dimensionality of the local moduli of self-conjugate Einstein AH structures, but not of Einstein AH structures.

\subsubsection{}
The Einstein equations are preserved under conjugacy; a stronger condition is to ask that they hold for the entire pencil $(\pen, [h])$.

\begin{lemma}\label{naiveforalltlemma}
If $(\en, [h])$ is naive Einstein the associated pencil $(\pen, [h])$ is naive Einstein for all $t$ if and only if $\mr{\bt}_{ij} = 0$. If $(\en, [h])$ is Einstein the associated pencil $(\pen, [h])$ is Einstein for all $t$ if and only if $\mr{\bt}_{ij} = 0$ and $\nabla_{i}\bt = 0$. 
\end{lemma}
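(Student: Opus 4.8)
The plan is to read the result directly off the pencil transformation laws collected in section \ref{curvatureahpencilsection}, since each $(\pen, [h])$ is already known to be an AH structure and its curvature invariants are explicit functions of those of $(\en, [h])$ and of the cubic torsion. The naive Einstein condition is equivalent to $\mr{T}_{ij} = 0$ and $E_{ij} = 0$, and the conservation condition \eqref{einstein} is $\nabla_{i}R + n\nabla^{p}F_{ip} = 0$; the only identities I need are the self-conjugate/anti-self-conjugate laws \eqref{brteij}, $\brt{T}_{ij} = T_{ij} + t(1-t)\bt_{ij}$ and $\brt{E}_{ij} = (1-2t)E_{ij}$, together with the scalar laws \eqref{nbscalinv}, $\brt{\nabla}_{i}\brt{R} = \nabla_{i}R + t(1-t)\nabla_{i}\nbt$ and $\brt{\nabla}^{p}\brt{F}_{ip} = \nabla^{p}F_{ip}$.

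For the first statement I would assume $(\en, [h])$ naive Einstein, so $\mr{T}_{ij} = 0$ and $E_{ij} = 0$. Then $\brt{E}_{ij} = (1-2t)E_{ij} = 0$ for every $t$, so the anti-self-conjugate half of the naive Einstein condition holds along the whole pencil for free, while taking trace-free parts in the first equation of \eqref{brteij} gives $\mr{\brt{T}}_{ij} = \mr{T}_{ij} + t(1-t)\mr{\bt}_{ij} = t(1-t)\mr{\bt}_{ij}$. Hence $(\pen, [h])$ is naive Einstein exactly when $t(1-t)\mr{\bt}_{ij} = 0$, and this holds for all $t$ if and only if $\mr{\bt}_{ij} = 0$ (evaluate at $t = \tfrac12$; the converse is trivial).

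For the second statement I would use that Einstein means naive Einstein plus \eqref{einstein}. Requiring $(\pen, [h])$ Einstein for all $t$ splits into two demands: naive Einstein for all $t$, which by the first part is $\mr{\bt}_{ij} = 0$; and the conservation condition for the pencil for all $t$. Substituting \eqref{nbscalinv} and using that $(\en, [h])$ already satisfies \eqref{einstein},
\begin{align*}
\brt{\nabla}_{i}\brt{R} + n\brt{\nabla}^{p}\brt{F}_{ip} = \left(\nabla_{i}R + n\nabla^{p}F_{ip}\right) + t(1-t)\nabla_{i}\nbt = t(1-t)\nabla_{i}\nbt,
\end{align*}
so the pencil is conservative for all $t$ if and only if $\nabla_{i}\nbt = 0$, where $\nbt = \bt^{ijk}\bt_{ijk}$ is the scalar written $\bt$ in the statement. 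Combining the two demands yields $\mr{\bt}_{ij} = 0$ and $\nabla_{i}\nbt = 0$, and conversely these two together with the Einstein hypothesis on $(\en, [h])$ make every $(\pen, [h])$ Einstein.

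I do not expect a genuine obstacle: the content is entirely contained in \eqref{brteij} and \eqref{nbscalinv}. The points needing care are to keep the two pieces of the naive Einstein condition separate (the $(1-2t)$ scaling of $\brt{E}_{ij}$ is harmless, whereas the self-conjugate piece absorbs the term $t(1-t)\mr{\bt}_{ij}$), to use $\brt{F}_{ij} = F_{ij}$ so that the Faraday-divergence contribution is constant along the pencil, and to note that $t(1-t)$ is not identically zero in $t$, so that the vanishing of $t(1-t)\mr{\bt}_{ij}$ (resp. $t(1-t)\nabla_{i}\nbt$) for all $t$ forces $\mr{\bt}_{ij} = 0$ (resp. $\nabla_{i}\nbt = 0$).
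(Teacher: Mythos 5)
Your proof is correct and follows essentially the same route as the paper's: both read the result off the pencil transformation laws $\brt{T}_{ij} = T_{ij} + t(1-t)\bt_{ij}$, $\brt{E}_{ij} = (1-2t)E_{ij}$, and $\brt{F}_{ij} = F_{ij}$ together with \eqref{nbscalinv}, and then observe that $t(1-t)$ is not identically zero. No gaps.
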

\begin{proof}
By \eqref{brtaij} and \eqref{brteij} there hold $\mr{\brt{T}}_{ij} = \mr{T}_{ij} + t(1-t)\mr{\bt}_{ij}$ and $\mr{\brt{E}}_{ij} = (1-2t)\mr{E}_{ij}$, so that if $(\en, [h])$ is naive Einstein, then $(\pen, [h])$ is naive Einstein for all $t$ if and only if $\mr{\bt}_{ij} = 0$. Since $\brt{F}_{ij} = F_{ij}$, using \eqref{brteij} one finds $\pnabla_{i}\brt{R} + n\pnabla^{p}\brt{F}_{ip} = \nabla_{i}R + n\nabla^{p}F_{ip} + t(1-t)\nabla_{i}\nbt$, so that $(\pen, [h])$ is Einstein for all $t$ if and only if $\mr{\bt}_{ij} = 0$ and $\nabla_{i}\nbt = 0$.
\end{proof}

A non-trivial example of a naive Einstein AH structure with $\mr{\bt}_{ij} = 0$ is given in section \ref{naiveexample}.

\begin{lemma}\label{mrbtparallellemma}
Let $(\en, [h])$ be an Einstein AH structure on a manifold of dimension $n$. If $n > 2$ and $\mr{\bt}_{ij} = 0$ then $\nabla_{i}\nbt = 0$. %
If $n = 2$ then $\mr{\bt}_{ij} = 0$.
\end{lemma}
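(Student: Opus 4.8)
The plan is to read both assertions directly off the divergence identity \eqref{divbtij} together with the two-dimensional algebraic identity of Lemma \ref{twodcubicformlemma}, after observing that the hypothesis $\mr{\bt}_{ij} = 0$ says precisely that $\bt_{ij} = \tfrac{1}{n}\nbt H_{ij}$ (recall $\bt_{ij} = \bt_{ip}\,^{q}\bt_{jq}\,^{p}$ is symmetric, with trace $\bt_{p}\,^{p} = \nbt$).

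For the case $n > 2$, the first step is to simplify \eqref{divbtij} using the Einstein hypotheses. Since an Einstein AH structure is in particular naive Einstein, $E_{ij} = 0$; and since by Lemma \ref{einsteinconservativelemma} an Einstein AH structure of dimension $n>2$ is conservative, $A_{i} = 0$, so $\bt^{abc}E_{iabc} = 0$ by the first equality of \eqref{ebtw} (equivalently by \eqref{ebcontracted}). Hence \eqref{divbtij} collapses to
\begin{align*}
2\nabla^{p}\bt_{ip} = \nabla_{i}\nbt + \bt_{i}\,^{pq}\bt_{pq}.
\end{align*}
Next I would substitute $\bt_{pq} = \tfrac{1}{n}\nbt H_{pq}$. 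The term $\bt_{i}\,^{pq}\bt_{pq} = \tfrac{1}{n}\nbt\,\bt_{ip}\,^{p}$ vanishes because the cubic torsion is completely trace-free, while $\nabla^{p}\bt_{ip} = \tfrac{1}{n}\nabla^{p}(\nbt H_{ip})$; expanding the latter by the product rule, using $\nabla_{p}H_{jk} = \bt_{pjk}$ (condition $(3)$ of Lemma \ref{special} with vanishing conformal torsion) and again $\bt_{ip}\,^{p}=0$, gives $\nabla^{p}\bt_{ip} = \tfrac{1}{n}\nabla_{i}\nbt$. Substituting back yields $\tfrac{2-n}{n}\nabla_{i}\nbt = 0$, whence $\nabla_{i}\nbt = 0$ whenever $n \neq 2$.

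For the case $n = 2$ the statement is purely algebraic and uses nothing about the Einstein condition: the cubic torsion $\bt_{ijk}$ is always completely symmetric and completely $H$-trace-free, so Lemma \ref{twodcubicformlemma} applied fiberwise with $k = 3$ and $A = B = \bt$ gives $2\bt_{ij} = \nbt H_{ij}$, that is $\bt_{ij} = \tfrac{1}{2}\nbt H_{ij} = \tfrac{1}{n}\nbt H_{ij}$, which is exactly the vanishing $\mr{\bt}_{ij} = 0$.

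All the computations are short; the only point demanding genuine care is that it is the full Einstein hypothesis, and not merely the naive Einstein equations, that is needed in dimension $n>2$: conservativity ($A_i=0$) is what eliminates the $\bt^{abc}E_{iabc}$ term in \eqref{divbtij}, and without it that term would survive and obstruct the conclusion. Beyond this, the only bookkeeping to watch is the non-parallelism $\nabla_{p}H_{jk} = \bt_{pjk}$ when differentiating $\tfrac{1}{n}\nbt H_{ip}$, together with repeated use of $\bt_{ip}\,^{p} = 0$ to discard the resulting trace terms.
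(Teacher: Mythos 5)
Your proof is correct and follows essentially the same route as the paper: for $n>2$ it reduces \eqref{divbtij} to $2\nabla^{p}\bt_{ip} = \nabla_{i}\nbt + \bt_{i}\,^{pq}\bt_{pq}$ using the Einstein condition (via \eqref{ebtw}/conservativity), then uses $\mr{\bt}_{ij}=0$ to get $n\nabla^{p}\bt_{ip}=\nabla_{i}\nbt$ and $\bt_{i}\,^{pq}\bt_{pq}=0$, forcing $\nabla_{i}\nbt=0$; for $n=2$ it invokes Lemma \ref{twodcubicformlemma} exactly as the paper does. The only difference is that you spell out the intermediate computation $\nabla^{p}\bt_{ip}=\tfrac{1}{n}\nabla_{i}\nbt$ (correctly using $\nabla_{p}H_{jk}=\bt_{pjk}$ and trace-freeness), which the paper leaves implicit.
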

\begin{proof}
By \eqref{divbtij} and \eqref{ebtw} the Einstein condition implies $2\nabla^{p}\bt_{ip} = \nabla_{i}\bt + \bt_{i}\,^{pq}\bt_{pq}$, while that $\mr{\bt}_{ij} = 0$ implies $n\nabla^{p}\bt_{ip} = \nabla_{i}\bt$ and $\bt_{i}\,^{pq}\bt_{pq} = 0$. If $n > 2$ these imply $\nabla_{i}\bt = 0$. 
Lemma \ref{twodcubicformlemma} implies that when $n = 2$ the condition $\mr{\bt}_{ij} = 0$ is automatic.
\end{proof}

\begin{definition}\label{stronglyeinsteindefinition}
An Einstein AH structure is \textbf{strongly Einstein} if the associated pencil $(\pen, [h])$ is Einstein for all $t$. 
\end{definition}

\begin{lemma}\label{stronglyeinsteinlemma}
An Einstein AH structure $(\en, [h])$ on a manifold of dimension $n > 2$ is strongly Einstein if and only if $\mr{\bt}_{ij} = 0$. An Einstein AH structure $(\en, [h])$ on a manifold of dimension $2$ is strongly Einstein if and only if $\nabla_{i}\bt = 0$. 
\end{lemma}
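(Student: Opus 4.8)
The plan is to read the result off directly from the two immediately preceding lemmas, since between them they already isolate every relevant condition and no new computation is needed. By Definition \ref{stronglyeinsteindefinition}, $(\en, [h])$ is strongly Einstein exactly when the pencil $(\pen, [h])$ is Einstein for all $t$, and by Lemma \ref{naiveforalltlemma} this is equivalent to the two conditions $\mr{\bt}_{ij} = 0$ and $\nabla_{i}\bt = 0$ holding simultaneously. Thus the entire task is to record how this pair of conditions collapses to a single one in each dimension.

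First I would treat $n > 2$ and prove both implications of the asserted equivalence with $\mr{\bt}_{ij} = 0$. The forward direction is immediate: a strongly Einstein structure satisfies the hypotheses of Lemma \ref{naiveforalltlemma}, in particular $\mr{\bt}_{ij} = 0$. For the converse, assume $\mr{\bt}_{ij} = 0$; since $n > 2$, Lemma \ref{mrbtparallellemma} then supplies $\nabla_{i}\bt = 0$ at no extra cost, so both hypotheses of Lemma \ref{naiveforalltlemma} hold and the pencil is Einstein for every $t$, that is, $(\en, [h])$ is strongly Einstein. Hence in dimension at least three the single equation $\mr{\bt}_{ij} = 0$ already forces parallelism of the scalar $\bt$ and therefore strong Einstein-ness.

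For $n = 2$ I would invoke the other half of Lemma \ref{mrbtparallellemma}, namely that $\mr{\bt}_{ij} = 0$ holds automatically on a surface (this being a consequence of Lemma \ref{twodcubicformlemma}). Consequently the pair of conditions furnished by Lemma \ref{naiveforalltlemma} reduces to the single requirement $\nabla_{i}\bt = 0$, which is precisely the claimed criterion in dimension two.

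I expect no genuine obstacle: all of the analytic content sits in the earlier lemmas, with Lemma \ref{naiveforalltlemma} supplying the pencil variation of $\brt{R}$ and $\brt{F}_{ij}$ (via \eqref{brteij} and \eqref{blconjricci}) and Lemma \ref{mrbtparallellemma} trading $\mr{\bt}_{ij} = 0$ for $\nabla_{i}\bt = 0$ through \eqref{divbtij} and \eqref{ebtw}. The one point deserving a moment's attention is purely notational: the scalar $\bt$ occurring here is the full trace $\nbt = \bt^{ijk}\bt_{ijk}$ rather than the cubic torsion tensor, so that the condition $\nabla_{i}\bt = 0$ of Lemma \ref{naiveforalltlemma} is exactly the conclusion $\nabla_{i}\nbt = 0$ delivered by Lemma \ref{mrbtparallellemma}.
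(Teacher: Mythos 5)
Your proposal is correct and follows exactly the paper's own route: the paper's proof consists of the single line ``This follows from Lemma \ref{naiveforalltlemma} and Lemma \ref{mrbtparallellemma},'' and your write-up simply makes explicit the same reduction (using Lemma \ref{naiveforalltlemma} to characterize strong Einstein-ness by the pair $\mr{\bt}_{ij}=0$, $\nabla_{i}\nbt=0$, and Lemma \ref{mrbtparallellemma} to collapse the pair to one condition in each dimension). Your closing remark correctly identifies $\bt$ as the scalar $\nbt$, so there is nothing to fix.
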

\begin{proof}
This  follows from Lemma \ref{naiveforalltlemma} and Lemma \ref{mrbtparallellemma}.
\end{proof}

\begin{lemma}
A strongly Einstein Riemannian AH structure $(\en, [h])$ on a manifold of dimension $n \geq 2$ which is not Weyl is exact. 
\end{lemma}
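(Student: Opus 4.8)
The plan is to produce, out of the cubic torsion itself, a global nowhere-vanishing $\nabla$-parallel density of nontrivial weight; by the criterion for exactness recalled in Section \ref{ahsection} (a CP pair is exact if and only if it admits such a density) this forces $(\en,[h])$ to be exact. Throughout, $\nabla$ denotes the aligned representative, and the hypothesis that $(\en,[h])$ is \emph{not} Weyl means precisely that the cubic torsion $\bt_{ij}\,^{k}$ does not vanish identically, which in Riemannian signature is equivalent to $\nbt = \bt^{ijk}\bt_{ijk}$ not being identically zero.

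The first step is to show that $\nbt$ is $\nabla$-parallel. If $n>2$, then by Lemma \ref{stronglyeinsteinlemma} the strongly Einstein hypothesis is equivalent to $\mr{\bt}_{ij}=0$, and Lemma \ref{mrbtparallellemma} then gives $\nabla_{i}\nbt=0$. If $n=2$, Lemma \ref{stronglyeinsteinlemma} asserts directly that strongly Einstein is equivalent to $\nabla_{i}\nbt=0$. Hence in every dimension $\nbt$ is parallel for the connection that $\nabla$ induces on the relevant density line bundle. Moreover $\nbt$ has nontrivial weight: it appears additively alongside the weighted scalar curvature $R$ in \eqref{confscal}, so it is a section of the same nonzero power of $|\det\ctm|$ as $R$.

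The second step uses the Riemannian signature. Since $H_{ij}$ is positive definite, $\nbt=\bt^{ijk}\bt_{ijk}\ge 0$, and $\nbt$ vanishes at a point exactly where $\bt_{ij}\,^{k}$ vanishes there. A parallel section of a line bundle over a connected manifold is either identically zero or nowhere zero, since along any curve it satisfies the scalar linear parallel-transport equation, so its vanishing locus is both open and closed. As $(\en,[h])$ is not Weyl, $\nbt\not\equiv 0$, and therefore $\nbt$ is nowhere zero. Combining the two steps, $\nbt$ is a global, nowhere-vanishing, $\nabla$-parallel density of nontrivial weight, whence $(\en,[h])$ is exact.

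The only place the hypotheses are genuinely essential is the positivity argument: in indefinite signature $\bt_{ij}\,^{k}$ could be nonzero yet $H$-null, so $\nbt$ might vanish without $\bt_{ij}\,^{k}$ vanishing, and the dichotomy would break down; this is why the Riemannian assumption cannot be dropped. The parallelism input in the case $n>2$ rests on Lemma \ref{mrbtparallellemma}, whose proof in turn relies on the Einstein identities \eqref{divbtij} and \eqref{ebtw}, so this is the one nontrivial ingredient being imported rather than verified directly.
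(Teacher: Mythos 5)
Your proposal is correct and follows essentially the same route as the paper: both arguments reduce to showing that $\nbt$ is a $\nabla$-parallel density of nontrivial weight which, by the Riemannian signature and the non-Weyl hypothesis, is not identically zero and hence nowhere zero, so that $(\en,[h])$ is exact. The only cosmetic difference is that the paper cites Lemma \ref{naiveforalltlemma} directly for $\nabla_{i}\nbt=0$, whereas you route through Lemmas \ref{stronglyeinsteinlemma} and \ref{mrbtparallellemma}; the content is the same.
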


\begin{proof}
By Lemma \ref{naiveforalltlemma}, $\nabla_{i}\bt = 0$, so that either $\bt$ vanishes identically or it vanishes nowhere. That $(\en, [h])$ not be Weyl means $\bt_{ij}\,^{k}$ is not everywhere $0$, and in Riemannian signature this means $\nbt$ is not everywhere zero. By the preceeding, $\bt$ is nowhere vanishing, so $(\en, [h])$ is exact.
\end{proof}

An example of a strongly Einstein Riemannian AH structure on $S^{3}$ is given in section \ref{s3example}.

\subsubsection{}
Given an AH structure, define an operator $\C$ on $c$-weight $\la$ two-forms $\omega_{ij}$ by
\begin{align}
\C(\om)_{ijk} = \nabla_{k}\omega_{ij} - \nabla_{[i}\omega_{jk]} + \tfrac{2}{n-1}\nabla^{p}\omega_{p[i}H_{j]k} + 2\bt_{k[i}\,^{p}\om_{j]p}.
\end{align}
If $(\ten, [h])$ and $(\en, [h])$ generate the same Codazzi projective structure, then
\begin{align*}
\tilde{\C}(\om)_{ijk} = \C(\om)_{ijk} + 2(\la - 3)\left(\tfrac{1}{3}\al_{k}\om_{ij} - \tfrac{1}{3}\al_{[i}\om_{j]k} + \tfrac{2}{n-1}\al^{p}\om_{p[i}H_{j]k}\right),
\end{align*}
where $\tnabla - \nabla = 2\al_{(i}\delta_{j)}\,^{k}- \al^{k}H_{ij}$, and so $\C(\om)$ is invariant if $\la = 3$. A straightforward computation using \eqref{aijkdefined} shows that for an Einstein AH structure there holds $\C(F)_{ijk} = -2A_{ijk}$, an observation which in the Weyl case is due to D. Calderbank in section $5$ of \cite{Calderbank-faraday}. 
\begin{lemma}\label{faradaycottonlemma}
For a closed Einstein AH structure there hold $A_{ijk} = 0$, $E_{ijk} = 0$, and
\begin{align}\label{einsteinharmonic}
\nabla_{p}R_{ijk}\,^{p} = \nabla_{p}W_{ijk}\,^{p} = 0.
\end{align}
\end{lemma}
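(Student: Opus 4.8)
The plan is to extract the full force of the closedness and Einstein hypotheses, reduce the two Cotton-type tensors $A_{ijk}$ and $E_{ijk}$ to manifestly vanishing expressions, and then read off the two divergence identities from the differential Bianchi identities already recorded in Lemma \ref{diffbianchilemma}.

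First I would unwind the hypotheses. Closed means $F_{ij} \equiv 0$, so by \eqref{faradayexplicit} the Ricci tensor $R_{ij}$ is symmetric, and the naive Einstein condition $\mr{R}_{ij} = 0$ together with $R = R_{p}\,^{p}$ gives $R_{ij} = \tfrac{R}{n}H_{ij}$, which is the first equation of \eqref{einsteinequations}. Since $\mr{T}_{ij} = 0$ as well, \eqref{aij} shows $A_{ij} = \tfrac{R}{2n(1-n)}H_{ij}$ is pure trace, while $E_{ij} = 0$. Finally, because $F \equiv 0$, the conservation condition \eqref{einstein} reduces to $\nabla_{i}R = 0$; combined with $\nabla_{i}H_{jk} = \bt_{ijk}$ (Lemma \ref{special}) and the total symmetry of $\bt_{ijk}$, this yields $\nabla_{[i}(R H_{j]k}) = (\nabla_{[i}R)H_{j]k} + R\bt_{[ij]k} = 0$.

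Next I would kill the Cotton tensors. For $A_{ijk}$ I would invoke the identity $\C(F)_{ijk} = -2A_{ijk}$ recorded just above the lemma, valid for any Einstein AH structure; since $\C$ is linear in its argument and $F \equiv 0$, it gives $A_{ijk} = 0$ at once. For $E_{ijk}$ I would compute $e_{ijk}$ directly from \eqref{aijkdefined}. Every term vanishes: $\nabla_{[i}E_{j]k} = 0$ and $\bt_{k[i}\,^{p}E_{j]p} = 0$ because $E_{ij} = 0$; the term $\tfrac{1}{2}\bt_{k[i}\,^{p}F_{j]p}$ vanishes because $F \equiv 0$; and $\bt_{k[i}\,^{p}A_{j]p} = \tfrac{R}{2n(1-n)}\bt_{k[ij]} = 0$ because $A_{ij}$ is pure trace and $\bt_{kij}$ is totally symmetric. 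Hence $e_{ijk} = 0$, so its trace $E_{i}$ and therefore $E_{ijk} = e_{ijk} - \tfrac{2}{n-1}E_{[i}H_{j]k}$ of \eqref{wijkdefined2} both vanish. Running the same accounting on $a_{ijk}$, where $2\nabla_{[i}A_{j]k} = \tfrac{1}{n(1-n)}\nabla_{[i}(RH_{j]k}) = 0$, gives an independent confirmation that $A_{ijk} = 0$ and $A_{i} = 0$.

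Finally the divergences. The curvature divergence follows from the traced differential Bianchi identity $\nabla_{p}R_{ijk}\,^{p} = 2\nabla_{[i}R_{j]k}$ (valid for any torsion-free connection): substituting $R_{jk} = \tfrac{R}{n}H_{jk}$ and using $\nabla_{[i}(RH_{j]k}) = 0$ from the first step gives $\nabla_{p}R_{ijk}\,^{p} = 0$. The Weyl divergence follows from \eqref{wdiffbianchi}: by alignment, $\nabla_{p}H^{pq} = 0$, so $\nabla_{p}W_{ijk}\,^{p} = \nabla^{p}W_{ijkp}$, and the right-hand side of \eqref{wdiffbianchi} equals $(3-n)A_{ijk} + (1-n)E_{ijk} + \tfrac{4(2-n)}{n-1}A_{[i}H_{j]k} = 0$ once $A_{ijk} = E_{ijk} = A_{i} = 0$. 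I do not expect a genuine obstacle here; the only point requiring care is the bookkeeping of antisymmetrizations in $e_{ijk}$ and $a_{ijk}$, together with the observation that closedness plus pure-traceness of $A_{ij}$ is exactly what makes the cubic-torsion terms collapse. For $n = 2$ the tensors $A_{ijkl}$ and $E_{ijkl}$ vanish automatically and the identities \eqref{projcotton} replace \eqref{wdiffbianchi}, yielding the same conclusion.
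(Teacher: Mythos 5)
Your proposal is correct and follows essentially the same route as the paper: it uses the identity $\C(F)_{ijk} = -2A_{ijk}$ for $A_{ijk}$, verifies $E_{ijk}=0$ directly from the definitions \eqref{aijkdefined} (which the paper leaves as "easily checked"), obtains $\nabla_{p}R_{ijk}\,^{p}=0$ from the traced Bianchi identity together with parallel scalar curvature, and gets $\nabla_{p}W_{ijk}\,^{p}=0$ from \eqref{wdiffbianchi}. The only difference is that you spell out the term-by-term cancellations the paper omits; the bookkeeping is all sound.
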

\begin{proof}
That $\C(F)_{ijk} = -2A_{ijk}$ implies $A_{ijk} = 0$ for a closed Einstein AH structure. Alternatively, that both $A_{ijk}$ and $E_{ijk}$ vanish for a closed Einstein AH structure is easily checked directly from the definitions \eqref{aijkdefined}. The first equality of \eqref{einsteinharmonic} is immediate from \eqref{rintermsofw}, and the second follows from \eqref{wdiffbianchi} and the vanishing of $A_{ijk}$, $E_{ijk}$, and $A_{i}$. Alternatively, because $(\en, [h])$ is closed, $R$ is parallel, so $\nabla_{[i}R_{j]k} = 0$, and $\nabla_{p}R_{ijk}\,^{p} = 0$ follows from the differential Bianchi identity.
\end{proof}

Lemma \ref{faradaycottonlemma} shows that the curvature tensor of a closed Einstein AH structure satisfies a sort of harmonicity condition. It would be interesting to explain this. In this regard, observe that for an Einstein AH structure for which the curvature is not self-conjugate it has not been proved that $\nabla^{p}A_{ijkp}$ vanishes, and this is probably not true.

\subsection{Affine hyperspheres}\label{affinehyperspheresection}
In this section the claims made in paragraphs \ref{anpara} and \ref{conormalpara} are justified and it is proved that the AH structure induced on an affine hypersurface is Einstein if and only if the hypersurface is an affine hypersphere. 

It will be shown that a conformal projective structure is induced on a non-degenerate immersed hypersurface in a manifold equipped with projective structure, and that the induced conformal projective structure is a Codazzi structure when the ambient projective structure is projectively flat. The underlying conformal structure of the induced conformal projective structure is that determined by the second fundamental form of the immersion. Each torsion-free affine connection representative of the ambient projective structure determines an affine normal subbundle and an induced projective structure. When this representative ambient affine connection is projectively flat the connection induced by the affine normal subbundle forms with the second fundamental form an AH structure. In the case of a non-degenerate hypersurface immersion in flat affine space, the conjugate AH structure of the AH structure comprising the second fundamental form and the projective structure induced by the affine normal subbundle is that comprising the second fundamental form and the flat projective structure induced via the conormal Gau\ss\, map (because there is no metric structure on the ambient affine space, the normal and conormal Gau\ss\, maps must be distinguished).

Calabi's articles \cite{Calabi-affinelyinvariant}, and \cite{Calabi-affinemaximal} are recommended for introduction to the geometry of hypersurfaces in flat affine space. The introduction of \cite{Cheng-Yau-affinehyperspheresI} derives the structure equations of affine hypersurfaces following Chern using the method of moving frames. The material in this section is mostly standard, although the presentation is not and has been made so as to fit neatly into the viewpoint of the present article. In the case of hypersurfaces in flat affine space something equivalent to it can be found also in textbooks, e.g. \cite{Nomizu-Sasaki}. Certain points are presented with somewhat different emphasis than is usual. In particular the ambient space (the target of the immersion) is not assumed to carry a parallel volume form; the choice of an ambient volume form is viewed as an extra piece of data which can be included or not. %

\subsubsection{}
The smooth immersion $i:M \to N$ is \textbf{co-orientable} if the normal bundle $\nmb(M) \defeq i^{\ast}(TN)/Ti(TM)$ is orientable, and \textbf{co-oriented} if an orientation of $\nmb(M)$ is fixed. The \textbf{conormal bundle} $\ann Ti(TM) \subset \ctn$ is an exact Lagrangian submanifold of $\ctn$. Define the \textbf{second fundamental form} $\Pi^{\nabla}(i) \in \Ga(S^{2}(\ctm)\tensor \nmb(M))$ of $i$ relative to the torsion-free affine connection $\nabla$ on $N$ by $\Pi^{\nabla}(i)(X,Y) = \nmp(\nabla_{X} Ti(Y))$ for $X, Y \in \Ga(TM)$. Evidently $\Pi^{\nabla}(i)(X,Y)$ depends only on the projective equivalence class $\en$ of $\nabla$, and so it makes sense to speak of \textit{the second fundamental form of an immersion relative to a projective structure} on the image manifold. A subbundle $W$ of $i^{\ast}(TN)$ transverse to $TM$ is a \textbf{transverse subbundle} (along $M$). A non-vanishing (local) section of some such $W$ is a (local) \textbf{transverse vector field}, or simply a (local) transversal. %

\subsubsection{}\label{inducedconnectionsection}
Let $i:M \to N$ be a smooth immersion. The connection $\hnabla$ induced on $i^{\ast}(TN)$ by an affine connection $\hnabla$ on $N$ can be split using a transverse subbundle $W$. Namely, the \tbsf{connection}{induced} \textbf{$\nabla$ induced on $M$ by $\hnabla$ and $W$} is defined by setting $Ti(\nabla_{X}Y)$ equal to the image of $\hnabla_{X}Ti(Y)$ under the projection of $i^{\ast}(TN)$ onto $Ti(TM)$ along $W$, and the \tbf{connection $\bnabla$ induced on $W$ by $\hnabla$} is defined by setting $\bnabla_{X}s$ equal to the projection onto $W$ along $Ti(TM)$ of $\bnabla_{X}s$ for $s \in \Ga(W)$. The connection induced on $M$ is torsion free if $\hnabla$ is torsion free. Evidently the connections induced on $M$ by projectively equivalent connections on $N$ and a fixed transversal are projectively equivalent.

\subsubsection{}
A \tbf{hypersurface immersion} means an immersion $i:M \to N$ with rank one normal bundle. Since in the remainder of this section all immersions will be hypersurface immersions, they will just be called \textit{immersions}. A hypersurface immersion is \textbf{non-degenerate} (with respect to $\hnabla$) if its second fundamental form $\Pi = \Pi^{\hnabla}$ is non-degenerate. While the rank of $\Pi$ is well-defined, its signature is not unless a co-orientation of $i(M)$ in $N$ is specified, although even if $i$ is not co-orientable it makes sense to say that $\Pi$ has or has not split signature. If the signature $(p, n-p)$ is not split and $M$ is co-orientable, a co-orientation of $M$, called \tbfs{positive}{co-orientation} (resp. \tbfs{negative}{co-orientation}) is distinguished by the requirement that $p > n-p$ (resp. $p < n-p$). A co-orientable non-degenerate immersion equipped with the positive co-orientation will be called a \tbf{positively co-oriented non-degenerate immersion} (when this phrase is used in the split signature case, it is to be understood that an arbitrary choice of co-orientation has been fixed). A local section of the normal bundle consistent or not with a given co-orientation is called \tbf{positive} or \tbf{negative}. So a normal vector field can be positive with respect to the negative co-orientation, in which case it is negative for the positive co-orientation.

When $i:M \to (N, \hnabla)$ is non-degenerate and co-orientable, the phrase \textit{the induced conformal structure}, or something similar, will refer to the conformal structure induced by the positive co-orientation, unless explicitly indicated otherwise. Alternatively, one can speak of the \textit{positive} and \textit{negative} conformal classes induced by a non-degenerate co-oriented immersion, and if not indicated otherwise, there will be considered only the positive conformal class. Discussion assuming a positive co-orientation applies in the split signature case provided a particular co-orientation is fixed.

\subsubsection{}
An ellipsoid in affine space divides the space into two regions, one of which is compact and convex, and one of which is neither compact nor convex; a normal field positive with respect to the positive co-orientation points into the non-compact non-convex region. The hyperboloid $z^{n+1} = \sqrt{\sum_{i = 1}^{n}(z^{i})^{2} + 1}$ divides affine space into two non-compact regions, one of which is convex and one of which is not convex; a normal field positive with respect to the positive co-orientation points into the convex region. 

A (connected) co-orientable hypersurface in flat affine space divides the space into two regions. A choice of co-orientation determines an outside and an inside: a normal field positive for the given co-orientation points into the exterior region. Reversing the co-orientation interchanges the notions of inside and outside. 

A co-orientable non-degenerate hypersurface is \tbf{convex} (resp. \tbf{concave}) at a point $x$ if the intersection of the plane tangent to the hypersurface at $x$ with a small ball centered at $x$ lies on the same side (resp. opposite side) of the hypersurface as does any transverse vector at $x$ positive with respect to the positive co-orientation. These conventions are such that in flat affine space an ellipsoid is convex, and the exterior region determined by the positive co-orientation is the region complementary to that bounded by the ellipsoid (the ellipsoid's exterior in the colloquial sense), while the upwards opening hyperboloid $z^{n+1} = \sqrt{\sum_{i = 1}^{n}(z^{i})^{2} + 1}$ is concave, and the exterior region determined by the positive co-orientation is the convex region bounded from below by the hypersurface (which is the hyperboloid's interior in a naive sense).

\subsubsection{}
Let $i:M \to (N, \hnabla)$ be a hypersurface immersion with induced connection $\nabla$ on $M$. A (local) transverse vector field $\nm$ induces: an identification of the second fundamental form $\Pi^{\hnabla}(i)$ with a tensor $h \in \Ga(S^{2}(\ctm))$; the \tbf{shape operator} $S \in \Ga(\ctm \tensor TM)$; and the $1$-form $\tau$, all defined by 
\begin{align}\label{affineinduced2}
&\hnabla_{X}Ti(Y) = Ti(\nabla_{X}Y) + h(X, Y)\nm, &&\hnabla_{X}\nm = -Ti(S(X)) + \tau(X)\nm.
\end{align}
in which, as usual, there is required interpretation in terms of vector fields defined near $i(M)$.

If $\nm$ is replaced by $\tilde{\nm} = f(\nm + \tilde{X})$ with $f \neq 0$ and $\tilde{X} = Ti(X)$ along $i(M)$, then $\nabla$, $h$, $S$, and $\tau$ are replaced by $\tnabla$, $\tilde{h}$, $\tilde{S}$, and $\tilde{\tau}$ satisfying
\begin{align}
\label{chh} 
\begin{split}
&\tilde{h}_{ij} = f^{-1}h_{ij},\qquad  \tilde{S}_{i}\,^{j} = f\left(S_{i}\,^{j} - \nabla_{i}X^{j} + \tau_{i}X^{j} + h_{ip}X^{p}X^{j} \right),\\
&\tnabla - \nabla = - h_{ij}X^{k},\qquad   \tilde{\tau}_{i} = \tau_{i} + d\log|f|_{i} + h_{ip}X^{p}.
\end{split}
\end{align}
The positive induced conformal structure is denoted $[h]$ and comprises those $h$ induced by positive transversals. Via the trivialization of the normal bundle given by $\nm$ the induced connection on $W \simeq \nmb(M)$ determines a one-form on $M$, and this one-form is $\tau$. %
If a transversal $\nm$ is fixed and $\si$ is a one-form on $N$ the projectively equivalent connection $\hnabla + \si \tensor \delta + \delta \tensor \si$ induces on $M$ the connection $\tnabla$, and the tensors $\tilde{h}$, $\tilde{S}$, and $\tilde{\tau}$ related to $\nabla$, $h$, $S$, and $\tau$ by
\begin{align}
\label{pchh}
&\tilde{h}_{ij} = h_{ij},&
&\tilde{S}_{i}\,^{j}  = S_{i}\,^{j} - \si(\nm)\delta_{i}\,^{j},& 
&\tnabla -  \nabla =  2i^{\ast}(\si)_{(i}\delta_{j)}\,^{k},&
&\tilde{\tau}_{i}= \tau_{i} + i^{\ast}(\si)_{i}.
\end{align}
Equation \eqref{pchh} shows  that projective change of $\hnabla$ induces a projective change of the induced connection, and leaves unchanged the trace-free part of the shape operator. From \eqref{chh} and \eqref{pchh} there follows

\begin{lemma}
A non-degenerate co-oriented immersion of a smooth manifold $M$ as a hypersurface in a manifold $N$ equipped with projective structure $[\hnabla]$ induces on $M$ a conformal projective structure.
\end{lemma}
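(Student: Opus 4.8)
The plan is to produce the two ingredients of a conformal projective structure $(\enb, [h])$: a well-defined conformal class $[h]$, and a well-defined equivalence class $\enb$ of torsion-free connections any two of which differ by a tensor of the form $2\al_{(i}\delta_{j)}\,^{k} + h_{ij}\be^{k}$, i.e. which lie in the kernel of $\tf^{h}$ applied to their difference. There are exactly two sources of ambiguity in the data induced on $M$: the choice of positive transversal $\nm$ and the choice of representative $\hnabla$ of $[\hnabla]$. The transformation rules \eqref{chh} and \eqref{pchh} describe precisely how the induced quadruple $(\nabla, h, S, \tau)$ responds to each of these, so the strategy is simply to read off from those two sets of formulas that $[h]$ is left unchanged while $\nabla$ moves only within a single conformal projective equivalence class.

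First I would check that the positively co-oriented conformal class is well defined. Fixing $\hnabla$ and replacing the positive transversal $\nm$ by another positive transversal $\tilde{\nm} = f(\nm + \tilde{X})$, the first equation of \eqref{chh} gives $\tilde{h}_{ij} = f^{-1}h_{ij}$, where $f > 0$ because the co-orientation is preserved; thus $h$ and $\tilde{h}$ represent the same conformal class and the same signature. Fixing $\nm$ and passing to a projectively equivalent ambient connection, the first equation of \eqref{pchh} gives $\tilde{h}_{ij} = h_{ij}$. Together these show $[h]$ depends only on $[\hnabla]$ and the fixed co-orientation.

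Next I would treat the connections. Each induced connection is torsion-free since $\hnabla$ is (section \ref{inducedconnectionsection}), hence is a legitimate representative of its projective class. The third equation of \eqref{chh} shows that a change of transversal alters the induced connection by $\tnabla - \nabla = -h_{ij}X^{k}$, a term of the form $h_{ij}\be^{k}$; the third equation of \eqref{pchh} shows that a change of ambient representative by a one-form $\si$ alters it by $\tnabla - \nabla = 2i^{\ast}(\si)_{(i}\delta_{j)}\,^{k}$, a term of the form $2\al_{(i}\delta_{j)}\,^{k}$. Both lie in $\ker \tf^{h}$, so each elementary change is an $[h]$-conformal projective equivalence. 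The one point that needs care — and the main, though mild, obstacle — is to confirm that an arbitrary simultaneous change of both representative and transversal still lands in one equivalence class. I would decompose such a change as a change of ambient representative followed by a change of transversal, note that the two difference tensors add, and observe that tensors of the form $2\al_{(i}\delta_{j)}\,^{k} + h_{ij}\be^{k}$ constitute a linear space, so that $[h]$-conformal projective equivalence is genuinely transitive (as well as reflexive and symmetric). Declaring $\enb$ to be the resulting single equivalence class of induced connections then yields the asserted conformal projective structure $(\enb, [h])$.
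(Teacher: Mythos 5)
Your proposal is correct and follows exactly the paper's route: the paper's proof consists of the single remark that the lemma follows from \eqref{chh} and \eqref{pchh}, which is precisely the reading-off of the transformation rules that you carry out (with the additional, harmless, explicit check of transitivity). Nothing further is needed.
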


The metric $h_{ij}$ and shape operator $S_{i}\,^{j}$ induced by a positive normal vector field $\nm$ change when the vector field is rescaled; for this reason it is sometimes more convenient to work with the \tbf{scale-free shape operator} $\sh_{ij}$ defined by $\sh_{ij} = S_{i}\,^{p}h_{pj}$. By \eqref{chh}, $\sh_{ij}$ depends only on the normal bundle, and not on the scaling of the normal vector field. Its trace-free symmetric part $\mr{\sh}_{ij}$ depends only on the projective class of $\hnabla$.

\subsubsection{}
Write $\hat{R}_{IJK}\,^{L}$ for the curvature tensor of $\hnabla$. Fix a transversal $\nm$. Letting $P_{TM W}$ and $P_{W TM}$ denote the projections of $i^{\ast}(TN)$ onto $TM$ along $W$ and onto $W$ along $TM$, define on $M$ tensors $\hat{R}_{ijk}\,^{l}$, $\hat{R}_{ijk}\,^{\infty}$, $\hat{R}_{ij\infty}\,^{k}$, and $\hat{R}_{ij\infty}\,^{\infty}$ (the $\infty$'s should be regarded as dummy place-holders or as abstract indices on the tensor powers of the normal bundle) by $P_{\nm TM}(\hat{R}(X, Y)Z)^{l} = X^{i}Y^{j}Z^{k}\hat{R}_{ijk}\,^{l}$, $P_{TM \nm}(\hat{R}(X, Y)Z) = X^{i}Y^{j}Z^{k}\hat{R}_{ijk}\,^{\infty}\nm$, $P_{\nm TM}(\hat{R}(X, Y)\nm)^{l} = X^{i}Y^{j}\hat{R}_{ij\infty}\,^{l}$, and $P_{TM \nm}(\hat{R}(X, Y)\nm) = X^{i}Y^{j}\hat{R}_{ij\infty}\,^{\infty}\nm$. Then 
\begin{align}
\label{projhrijkl} &\hat{R}_{ijk}\,^{l}  = R_{ijk}\,^{l} - 2S_{[i}\,^{l}h_{j]k},&
&\hat{R}_{ijk}\,^{\infty}  = 2\nabla_{[i}h_{j]k} + 2\tau_{[i}h_{j]k},\\
\label{projhrijnk} &\hat{R}_{ij\infty}\,^{k} = -2\nabla_{[i}S_{j]}\,^{k} + 2\tau_{[i}S_{j]}\,^{k},&
&\hat{R}_{ij\infty}\,^{\infty} = d\tau_{ij} + 2\sh_{[ij]}.
\end{align}
A non-degenerate hypersurface immersion has \tbf{trace-free normal curvature} if $\Pi^{jk}\hat{R}_{ijk}\,^{\infty} = 0$, in which $\Pi^{ij}$ is the $\nmb(M)^{-1}$-valued bivector dual to the second fundamental form. If $\nm$ is fixed then by \eqref{projhrijkl}, the completely $h$-trace free part of $2\nabla_{[i}h_{j]k}$ equals the completely $h$-trace free part of $\hat{R}_{ijk}\,^{\infty}$, so that 
\begin{align}\label{khatr}
3\ct_{\nabla}(h)_{ijk} = 2\left(\hat{R}_{ijk}\,^{\infty} + \tfrac{2}{n-1}h^{pq}h_{k[i}\hat{R}_{j]pq}\,^{\infty}\right). 
\end{align}
As already observed, when $\nm$ is changed the induced connection changes conformal projectively, so that the conformal torsion of the induced connection does not change, and the conformal torsion of the induced conformal projective structure is therefore equal to the $[h]$-trace-free part of $\hat{R}_{ijk}\,^{\infty}$ for any choice of $\nm$. Let $\hat{B}_{IJK}\,^{L}$ be the projective Weyl tensor of the projective structure $[\hnabla]$ generated by $\hnabla$. Since there is not available a useful expression for the Ricci tensor of $\hnabla$, most of the components of $\hat{B}_{IJK}\,^{L}$ are not readily computable; however, $\hat{B}_{ijk}\,^{\infty} = \hat{R}_{ijk}\,^{\infty}$, so that if $[\hnabla]$ is projectively flat then by \eqref{projhrijkl} there holds $\nabla_{[i}h_{j]k} = -\tau_{[i}h_{j]k}$ for any choice of normal $\nm$, in which case the induced conformal projective structure is a Codazzi projective structure.

\begin{lemma}
A non-degenerate co-oriented immersion of a smooth manifold $M$ as a hypersurface in a manifold $N$ equipped with a flat projective structure $[\hnabla]$ induces on $M$ a Codazzi projective structure.
\end{lemma}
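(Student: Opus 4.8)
The plan is to deduce the statement almost entirely from the analysis carried out in the paragraph immediately preceding it, the only new input being the hypothesis that $[\hnabla]$ is flat. By the previous lemma the non-degenerate co-oriented immersion already induces a \emph{conformal projective} structure on $M$; by the definition of a Codazzi projective structure it therefore suffices to show that its conformal torsion vanishes. First I would recall that the conformal torsion is an invariant of the induced conformal projective structure, so it may be computed using any single local positive transversal $\nm$, with induced connection $\nabla$, induced metric $h$, and normal curvature component $\hR_{ijk}\,^{\infty}$. The key structural fact established before the lemma, via \eqref{khatr}, is that the conformal torsion equals the completely $[h]$-trace-free part of $\hR_{ijk}\,^{\infty}$; hence the problem reduces to showing $\hR_{ijk}\,^{\infty}=0$ (in fact, to showing its trace-free part vanishes).

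Next I would extract the normal curvature component from the ambient projective Weyl tensor. Writing $\hat{P}_{IJ}$ for the projective Schouten tensor of $\hnabla$ and using the definition \eqref{bijkl} of the projective Weyl tensor in the splitting $i^{\ast}(TN)=Ti(TM)\oplus W$, in which the Kronecker cross-terms between tangent and normal slots vanish, $\delta_{i}\,^{\infty}=0=\delta_{k}\,^{\infty}$, one gets
\begin{align*}
\hat{B}_{ijk}\,^{\infty} = \hR_{ijk}\,^{\infty} + 2\delta_{[i}\,^{\infty}\hat{P}_{j]k} - 2\delta_{k}\,^{\infty}\hat{P}_{[ij]} = \hR_{ijk}\,^{\infty},
\end{align*}
which is exactly the identity $\hat{B}_{ijk}\,^{\infty}=\hR_{ijk}\,^{\infty}$ noted in the text. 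Since $M$ has dimension $n\geq 2$, the ambient manifold $N$ has dimension $n+1\geq 3$, and in dimension at least three a flat (projectively flat) projective structure is precisely one whose projective Weyl tensor $\hat{B}_{IJK}\,^{L}$ vanishes identically. In particular $\hat{B}_{ijk}\,^{\infty}=0$, whence $\hR_{ijk}\,^{\infty}=0$.

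Combining these two steps, the completely $[h]$-trace-free part of $\hR_{ijk}\,^{\infty}$ vanishes, so the conformal torsion of the induced conformal projective structure is zero; equivalently, by \eqref{projhrijkl} one has $\nabla_{[i}h_{j]k}=-\tau_{[i}h_{j]k}$, whose $[h]$-trace-free part is null. By definition a conformal projective structure with vanishing conformal torsion is a Codazzi projective structure, which completes the argument. I do not expect a genuine obstacle here, since the decisive computations are already in place; the only point requiring care is the bookkeeping of tensor slots — confirming that the $\infty$-component of the ambient projective Weyl tensor in the chosen splitting coincides exactly with the normal-curvature piece $\hR_{ijk}\,^{\infty}$ that controls the conformal torsion, and that the trace corrections in \eqref{khatr} do not reintroduce anything once $\hR_{ijk}\,^{\infty}$ itself is zero.
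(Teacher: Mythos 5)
Your argument is correct and is essentially identical to the paper's own proof, which is contained in the paragraph immediately preceding the lemma: the conformal torsion of the induced conformal projective structure is the $[h]$-trace-free part of $\hat{R}_{ijk}\,^{\infty}$ by \eqref{khatr}, this equals $\hat{B}_{ijk}\,^{\infty}$, and the latter vanishes when the ambient projective structure is flat. The only difference is that you spell out the Kronecker-delta bookkeeping behind $\hat{B}_{ijk}\,^{\infty}=\hat{R}_{ijk}\,^{\infty}$, which the paper merely asserts.
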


This example motivates the definitions of conformal projective and Codazzi projective structures. 

\subsubsection{}
Let $\hat{P}_{IJ}$ be the modified Ricci tensor of $\hnabla$. Tracing the first equation of \eqref{projhrijkl} in $kl$ and rewriting the second equation of \eqref{projhrijnk} gives
\begin{align}
& \hat{B}_{ijp}\,^{p} + 2(n+1)\hat{P}_{[ij]} = R_{ijp}\,^{p} - 2\sh_{[ij]}, & & \hat{B}_{ij\infty}\,^{\infty} + 2\hat{P}_{[ij]} = d\tau_{ij} + 2\sh_{[ij]}.
\end{align}
Taking a linear combination so as to cancel the terms involving $\hat{P}_{[ij]}$, and noting $\hat{B}_{ijp}\,^{p} + \hat{B}_{ij\infty}\,^{\infty} = 0$ yields
\begin{align}
\label{hatbdoublei}
&R_{ijp}\,^{p} + (n+2)\hat{B}_{ij\infty}\,^{\infty} = (n+1)d\tau_{ij} + 2(n+2)\sh_{[ij]},\\
\label{hatprdtau}&\hat{R}_{ijQ}\,^{Q} = -2 \hat{R}_{[ij]} = 2(n+2)\hat{P}_{[ij]} = R_{ijp}\,^{p} + d\tau_{ij}.
\end{align}
\begin{lemma}
Let $i:M \to (N, \hnabla)$ be a non-degenerate positively co-oriented immersion. For any choice of transversal the two-form $-d\tau$ equals the difference of the pullback via $i$ of the curvature of the principal connection induced on $\Det T^{\ast}N$ by $\hnabla$ with the curvature of the principal connection induced on the density bundle $\dens \defeq \Det \ctm \to M$ by the connection $\nabla$ induced on $M$ by the given transversal. In particular, if $\hnabla$ has symmetric Ricci tensor then $d\tau$ is the curvature of the connection induced on $\dens$ by $\nabla$.
\end{lemma}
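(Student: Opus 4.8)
The plan is to unwind the definitions of the two principal connections referred to in the statement and to identify $-d\tau$ as the difference of their curvatures, using the structure equations \eqref{projhrijkl}--\eqref{projhrijnk} already established. First I would recall that the normal bundle $\nmb(M)$ is trivialized by the chosen transversal $\nm$, and under this trivialization the connection $\bnabla$ induced on $W \simeq \nmb(M)$ (as defined in \ref{inducedconnectionsection}) is represented by the one-form $\tau$, that is, $\bnabla_{X}\nm = \tau(X)\nm$ modulo the tangential part, exactly as in \eqref{affineinduced2}. Consequently the curvature of the connection induced by $\hnabla$ on the line bundle $\Det T^{\ast}N$ restricted along $i(M)$ decomposes into a tangential piece coming from $\nabla$ and a normal piece recording the $\tau$-twist.

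The key computational input is \eqref{hatprdtau}, which reads $\hat{R}_{ijQ}\,^{Q} = R_{ijp}\,^{p} + d\tau_{ij}$. The left-hand side $\hat{R}_{ijQ}\,^{Q}$ is precisely (minus) the curvature of the principal connection that $\hnabla$ induces on $\Det T^{\ast}N$, pulled back by $i$, since for any torsion-free connection the trace $\hat{R}_{ijQ}\,^{Q}$ is the curvature of the induced connection on the determinant line bundle. On the other side, $R_{ijp}\,^{p}$ is the analogous full trace of the curvature $R_{ijk}\,^{l}$ of the induced connection $\nabla$ on $M$, hence is (minus) the curvature of the connection induced by $\nabla$ on the density bundle $\dens = \Det \ctm \to M$. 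Reading \eqref{hatprdtau} as
\begin{align*}
d\tau_{ij} = \hat{R}_{ijQ}\,^{Q} - R_{ijp}\,^{p},
\end{align*}
and taking the negative of both sides, I obtain that $-d\tau$ equals the difference of the pullback of the $\Det T^{\ast}N$-curvature with the $\dens$-curvature, which is exactly the first assertion. The sign and index conventions must be tracked carefully: the relation $nF_{ij} = R_{ijp}\,^{p} = -2R_{[ij]}$ from \eqref{faradayexplicit} fixes the normalization of "curvature of the connection on the density bundle," and I would check that $\hat{R}_{ijQ}\,^{Q} = -2\hat{R}_{[ij]}$ as recorded in \eqref{hatprdtau} is consistent with the same convention applied to the ambient $\Det T^{\ast}N$.

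For the final sentence, observe that $\hat{R}_{ijQ}\,^{Q} = -2\hat{R}_{[ij]}$ vanishes precisely when $\hat{R}_{[ij]} = 0$, i.e. when $\hnabla$ has symmetric Ricci tensor; in that case \eqref{hatprdtau} collapses to $d\tau_{ij} = R_{ijp}\,^{p}$, so $d\tau$ is exactly the curvature of the connection induced on $\dens$ by $\nabla$. I expect no serious obstacle here: the entire proof is a matter of interpreting the two traces in \eqref{hatprdtau} as curvatures of the relevant line-bundle connections and of keeping the sign conventions straight. The only genuinely delicate point is verifying that the induced connection $\bnabla$ on the normal line bundle contributes to the ambient determinant-bundle curvature in just the way that makes $\hat{R}_{ijQ}\,^{Q}$ split as the sum of the tangential density-curvature and the normal $d\tau$-term; this is where the transversal-dependence must be shown to cancel, using that under rescaling $\tilde{\nm} = f\nm$ the transformation $\tilde{\tau}_{i} = \tau_{i} + d\log|f|_{i}$ of \eqref{chh} changes $\tau$ only by an exact form and hence leaves $d\tau$ invariant.
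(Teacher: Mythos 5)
Your proposal is correct and follows the paper's own route: the paper's proof of this lemma consists precisely of the observation that the first claim is, in symbols, the identity \eqref{hatprdtau}, with the second claim following because symmetry of the ambient Ricci tensor kills $\hat{R}_{ijQ}\,^{Q} = -2\hat{R}_{[ij]}$. Your additional remarks — identifying the two traces as (minus) the curvatures of the induced determinant-line-bundle connections and noting that $d\tau$ is unchanged under rescaling the transversal by \eqref{chh} — are exactly the interpretive steps the paper leaves implicit.
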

\begin{proof}
In symbols the first claim is just \eqref{hatprdtau}. From \eqref{hatprdtau} it follows that if $\hnabla$ is Ricci symmetric then the curvature of the connection induced on the normal bundle by the choice of a transverse bundle agrees with the curvature of the connection induced on $\Det \ctm$ by the connection $\nabla$ induced by that same transverse bundle. %
\end{proof}

\begin{theorem}\label{affinenormalbundle}
Let $(N, \hnabla)$ be an $(n+1)$-manifold with a torsion-free affine connection $\hnabla$ and let $i:M \to N$ be a non-degenerate hypersurface immersion. There is a unique transverse subbundle $W$ such that the connection $\nabla$ induced on $M$ by $W$ is aligned with respect to the metric $h$ induced on an open $U \subset M$ by a non-vanishing local section $\nm \in \Ga(U, W)$. If, moreover, $\hnabla$ is projectively flat, then the induced CP pair is an AH structure.
\end{theorem}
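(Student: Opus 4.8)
The plan is to reduce the choice of transverse subbundle to the choice of a single vector field, and to read off from the transformation rule \eqref{chh} exactly how the alignment defect changes. Fix a non-vanishing local transversal $\nm_{0}$ over an open set $U \subset M$, with induced connection $\nabla_{0}$ and induced metric $h_{0}$. Any transverse subbundle $W$ over $U$ is spanned by a unique section of the form $\nm_{0} + Ti(X)$ with $X \in \Gamma(U, TM)$, so transverse subbundles over $U$ correspond bijectively to vector fields $X$. First I would observe that the induced connection depends only on $W$ and not on the rescaling of its chosen section: taking $X = 0$ in \eqref{chh} gives $\tnabla = \nabla$, so the rescaling $\nm_{0} \mapsto f\nm_{0}$ leaves $\nabla_{0}$ unchanged while sending $h_{0} \mapsto f^{-1}h_{0}$. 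Since alignment is a conformally invariant condition (Lemma \ref{special}(1)), the statement ``the connection induced by $W$ is aligned with respect to $[h]$'' is well-defined independently of any choice of section, and it suffices to exhibit a unique $W$ for which it holds.

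Next I would carry out the key computation. Recall from the proof of Lemma \ref{special} that for any torsion-free $\nabla$ one writes $\nabla_{i}(h_{0})_{jk} = 2\gamma_{i}(h_{0})_{jk} + 2\beta_{(j}(h_{0})_{k)i} + \tf^{h_{0}}(\nabla h_{0})_{ijk}$, and that $\nabla$ is aligned with respect to $[h_{0}]$ if and only if $\beta_{i} = 0$. Replacing $\nm_{0}$ by $\nm_{0} + Ti(X)$, equation \eqref{chh} gives (with $f = 1$) $\tnabla - \nabla_{0} = -(h_{0})_{ij}X^{k}$, whence
\begin{align*}
\tnabla_{i}(h_{0})_{jk} = \nabla_{0,i}(h_{0})_{jk} + (h_{0})_{ij}X_{k} + (h_{0})_{ik}X_{j} = \nabla_{0,i}(h_{0})_{jk} + 2X_{(j}(h_{0})_{k)i},
\end{align*}
where $X_{j} = (h_{0})_{jp}X^{p}$. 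Comparing with the decomposition above, the associated one-form transforms by $\beta_{j} \mapsto \beta_{j} + X_{j}$ while $\gamma$ and $\tf^{h_{0}}(\nabla h_{0})$ are unchanged. Hence $\tnabla$ is aligned precisely when $X_{j} = -\beta^{0}_{j}$, that is, for the single vector field $X^{k} = -h_{0}^{kp}\beta^{0}_{p}$ determined by the reference data. This gives existence and uniqueness of $W|_{U}$; because the condition defining $W$ is intrinsic and its solution over each chart is unique, the local subbundles agree on overlaps and glue to a global transverse subbundle $W$, the affine normal subbundle.

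Finally, for the AH claim I would invoke the curvature identities already assembled. The conformal torsion of the induced conformal projective structure equals the completely $[h]$-trace-free part of $\hat{R}_{ijk}\,^{\infty}$, as recorded in \eqref{khatr} and the surrounding discussion, and this is independent of the transversal. Since $\hat{B}_{ijk}\,^{\infty} = \hat{R}_{ijk}\,^{\infty}$ (the correction terms in \eqref{bijkl} involve $\delta_{i}\,^{\infty}$, $\delta_{j}\,^{\infty}$, and $\delta_{k}\,^{\infty}$, which vanish on tangent indices), projective flatness of $[\hnabla]$ forces $\hat{R}_{ijk}\,^{\infty} = 0$, and hence the conformal torsion vanishes; thus the CP pair $(\en, [h])$ generated by the aligned representative $\nabla$ is an AH structure.

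I would expect the only real work to be the bookkeeping of the two independent deformations of the transversal: rescaling, which acts conformally on $h_{0}$ and trivially on $\nabla_{0}$, versus tilting by $Ti(X)$, which shifts $\nabla_{0}$ by a pure $(h_{0})_{ij}X^{k}$ term, together with the verification that only the tilt affects the alignment defect $\beta$. The projective-flatness half is then immediate from the identity $\hat{B}_{ijk}\,^{\infty} = \hat{R}_{ijk}\,^{\infty}$ and the earlier computation of the conformal torsion.
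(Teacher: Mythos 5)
Your proposal is correct and follows essentially the same route as the paper: parametrize transversals over $U$ as $\nm_{0} + Ti(X)$, observe that tilting by $X$ shifts the alignment one-form $\beta$ of Lemma \ref{special} by exactly $X^{\flat}$ (this is the same computation as the paper's identity $-(n+2)(n-1)X^{p}h_{pi} = \tilde{h}^{pq}(\tnabla_{i}\tilde{h}_{pq} - n\tnabla_{p}\tilde{h}_{qi}) - h^{pq}(\nabla_{i}h_{pq} - n\nabla_{p}h_{qi})$, repackaged via the $(\gamma,\beta)$ decomposition), solve uniquely, glue by local uniqueness, and deduce the AH property from the vanishing of the trace-free part of $\hat{R}_{ijk}\,^{\infty}$ via \eqref{khatr}. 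The only cosmetic difference is that the paper additionally rewrites the solution $X$ explicitly in terms of $\tau_{i}$ and $\hat{R}_{iab}\,^{\infty}h^{ab}$ (its equation \eqref{normaldetermined}), which is used later but is not needed for the theorem itself.
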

\begin{proof}
Let $W$ be any transverse subbundle, let $U \subset M$ be an open subset such that there is a non-vanishing $\nm \in \Ga(U, W)$, and let $\nabla$ and $h$ be the connection and metric induced by $\nm$ on $U$. If $\nm$ is replaced by $\tilde{\nm}^{\al} = \nm^{\al} + Ti_{p}\,^{\al}X^{p}$ then 
\begin{align}
\begin{split}
-(n+2)(n-1)X^{p}h_{pi} &= \tilde{h}^{pq}\left(\tnabla_{i}\tilde{h}_{pq} - n\tnabla_{p}\tilde{h}_{qi} \right) - h^{pq}\left(\nabla_{i}h_{pq} - n\nabla_{p}h_{qi} \right),
\end{split}
\end{align}
Using \eqref{projhrijkl} to rewrite the term $h^{pq}\nabla_{p}h_{qi}$, it follows that if $\nm$ is given, then $X^{i}$ defined by 
\begin{align}\label{normaldetermined}
\begin{split}
(n+2)(n-1)X^{p}h_{pi} &= -n(n-1)\left(\tau_{i} + \tfrac{1}{n}h^{pq}\nabla_{i}h_{pq} + \tfrac{1}{1-n}\hat{R}_{iab}\,^{\infty}h^{ab} \right),
\end{split}
\end{align}
is the unique vector field such that the induced $\tnabla$ is aligned on $U$. If on $U$ the condition of the theorem is satisfied by $\nm$, replacing $\nm$ by $f\nm$ with $f \neq 0$ does not affect the statement. It follows that the span $W$ of $\nm$ is uniquely determined over $U$. Since around each point of $M$ there can be found a neighborhood $U$ on which there are non-vanishing transversals, and since by the uniqueness just proved the $W$'s determined on overlapping neighborhoods of this sort must agree on the overlaps, the transverse subbundle $W$ is globally defined, whether or not $i$ is co-oriented. In the event that $i$ is co-oriented there can be chosen a global transverse vector field $\nm$ and the induced connection is aligned with respect to the conformal class of the metric which it induces. If $\hnabla$ is projectively flat then \eqref{khatr} shows the conformal torsion vanishes, so the induced CP pair is an AH structure.
\end{proof}

The distinguished transverse subbundle $W$ associated by Theorem \ref{affinenormalbundle} to a non-degenerate immersion $i:M \to N$ and a torsion-free connection $\hnabla$ on $N$ will be called the \tbf{affine normal bundle}, and a non-vanishing local section $\nm$ of $W$ will be called an \tbf{affine normal vector field}. 
Lemma \ref{affprojvarylemma} shows how the affine normal changes when the connection on $N$ is varied projectively.

\begin{lemma}\label{affprojvarylemma}
Let $\hnabla$ be a torsion-free affine connection on $N$ and let $i:M \to (N, \hnabla)$ be a non-degenerate hypersurface immersion. If $\nm$ is a (local) affine normal vector field for $\hnabla$ and $h$ is the induced metric then the affine normal bundle associated to $\hnabla + \si \tensor \delta + \delta \tensor \si$ is spanned locally by $\tilde{\nm} = \nm + \si^{\sharp}$ where $\si^{\sharp}$ is any vector field equal to $h^{ip}i^{\ast}(\si)_{p}$ along $i(M)$. 
\end{lemma}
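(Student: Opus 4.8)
The plan is to exploit the characterization of the affine normal bundle from Theorem \ref{affinenormalbundle}: it is the unique transverse line bundle whose induced connection is aligned with respect to the induced metric, and equation \eqref{normaldetermined} gives, for an arbitrary starting transversal, the explicit tangential correction that produces alignment. Writing $\hnabla' \defeq \hnabla + \si\tensor\delta + \delta\tensor\si$ and $\al \defeq i^{\ast}(\si)$, I would take $\nm$ itself as the starting transversal for $\hnabla'$, compute from \eqref{normaldetermined} the correction vector field $X'$ for which $\nm + Ti(X')$ induces an aligned connection for $\hnabla'$, and show that $X'^{p}h_{pi} = \al_{i}$, i.e. $X' = \si^{\sharp}$. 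Since by \eqref{chh} changing a transversal by a purely tangential vector field (with no rescaling) leaves the induced metric unchanged, this identifies $\tilde\nm = \nm + \si^{\sharp}$ as spanning the affine normal bundle of $\hnabla'$.

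First I would record, via \eqref{pchh}, how the induced data transform when the transversal $\nm$ is held fixed but $\hnabla$ is replaced by $\hnabla'$: the metric is unchanged, $\tau' = \tau + \al$, and $\nabla' = \nabla + 2\al_{(i}\delta_{j)}\,^{k}$. Then I would evaluate the two nonconstant ingredients of \eqref{normaldetermined} for the primed data. Applying the difference-tensor formula to $h$ and tracing gives
\begin{align*}
h^{pq}\nabla'_{i}h_{pq} = h^{pq}\nabla_{i}h_{pq} - 2(n+1)\al_{i}.
\end{align*}
For the normal-curvature term I would show $\hat R'_{iab}\,^{\infty} = \hat R_{iab}\,^{\infty}$; this is cleanest as a consequence of the projective invariance of the ambient projective Weyl tensor together with the identity $\hat B_{ijk}\,^{\infty} = \hat R_{ijk}\,^{\infty}$ noted before the lemma, but it can equally be checked directly by antisymmetrizing $2\nabla'_{[i}h_{j]k} + 2\tau'_{[i}h_{j]k}$ and observing that the $\al$-contributions cancel. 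This requires the same transverse splitting on both sides, which is why $\nm$, not $\tilde\nm$, is used throughout the intermediate computation.

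Next I would use the hypothesis that $\nm$ is the affine normal for $\hnabla$: feeding $X = 0$ into \eqref{normaldetermined} shows the combination $\tau_{i} + \tfrac{1}{n}h^{pq}\nabla_{i}h_{pq} + \tfrac{1}{1-n}\hat R_{iab}\,^{\infty}h^{ab}$ vanishes. Substituting the primed expressions into \eqref{normaldetermined} for $\hnabla'$, this vanishing combination drops out and the remainder is a multiple of $\al_{i}$; collecting coefficients (the bracketed factor works out to $-(n+2)/n$) yields $(n+2)(n-1)X'^{p}h_{pi} = (n+2)(n-1)\al_{i}$, hence $X'^{p} = h^{pi}\al_{i} = \si^{\sharp\,p}$, as desired. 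Uniqueness of the aligning correction, together with Theorem \ref{affinenormalbundle}, then identifies the span of $\tilde\nm$ with the affine normal bundle of $\hnabla'$.

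The computation is essentially bookkeeping, so the main obstacle is conceptual rather than technical: verifying that the normal-curvature component $\hat R_{ijk}\,^{\infty}$ is genuinely invariant under the projective change. Getting this right is what makes the argument go through cleanly, and it is important to keep the transversal fixed while changing the connection (rather than changing both at once) so that the $\infty$-components on the two sides are computed relative to the same splitting of $i^{\ast}(TN)$.
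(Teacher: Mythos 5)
Your proposal is correct and follows essentially the same route as the paper: hold the transversal $\nm$ fixed, use \eqref{pchh} together with the projective invariance of $\hat{B}_{ijk}\,^{\infty} = \hat{R}_{ijk}\,^{\infty}$ to see how the right-hand side of \eqref{normaldetermined} shifts, and conclude that the aligning correction changes by exactly $\si^{\sharp}$. Your coefficient bookkeeping (the bracketed term shifting by $-\tfrac{n+2}{n}\al_{i}$, hence the right-hand side by $(n+2)(n-1)\al_{i}$) is the consistent one; the paper's stated $(n-1)(n+1)$ appears to be a typo for $(n-1)(n+2)$.
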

\begin{proof}
Because $\hat{B}_{ijk}\,^{\infty} = \hat{R}_{ijk}\,^{\infty}$ does not change when $\hnabla$ is replaced by $\hnabla + \si \tensor \delta + \delta \tensor \si$, it follows from \eqref{pchh} that the righthand side of \eqref{normaldetermined} transforms by the addition of $(n-1)(n+1)i^{\ast}(\si)_{i}$ when $\nabla$ is replaced by $\hnabla + \si \tensor \delta + \delta \tensor \si$. The claim follows.
\end{proof}

The curvature $d\tau$ of the connection induced on the normal bundle by the affine normal splitting does not depend on the choice of affine normal vector field. 

\begin{lemma}\label{alignedtaulemma}
Let $\hnabla$ be a torsion-free affine connection on $N$ and let $i:M \to (N, \hnabla)$ be a non-degenerate hypersurface immersion. If $\hnabla$ has trace-free normal curvature then a transverse bundle is the affine normal bundle if and only if for the induced connection $\nabla$ and the tensors $h_{ij}$ and $\tau_{i}$ induced by any choice of transverse vector field there hold 
\begin{align}
\label{alignedtau}
&n\tau_{i} = - h^{pq}\nabla_{i}h_{pq}, & &\text{and}& & nd\tau_{ij} = 2R_{ijp}\,^{p},
\end{align}
or, what is the same, $\tau$ is the connection one-form of the connection induced on $\dens^{2(n+1)/n}$ by $\nabla$ corresponding to the local trivialization of $\dens^{2(n+1)/n}$ given by $|\det h|^{-1/n}$.
\end{lemma}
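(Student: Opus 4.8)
The plan is to reduce the characterization of the affine normal bundle to the alignment condition of Theorem \ref{affinenormalbundle}, and then, using the trace-free normal curvature hypothesis, to translate alignment into the two displayed equations of \eqref{alignedtau}. First I would recall that by Theorem \ref{affinenormalbundle} together with the definition of alignment in Lemma \ref{special}, a transverse bundle $W$ equals the affine normal bundle if and only if, for the metric $h_{ij}$ and connection $\nabla$ induced by a non-vanishing local section $\nm$ of $W$, one has $h^{pq}\nabla_i h_{pq} = n\,h^{pq}\nabla_p h_{qi}$. By the transformation rules \eqref{chh} this condition is unaffected by rescaling $\nm$, so it is genuinely a condition on $W$; hence $W$ being the affine normal bundle is equivalent to the aligned character of $\nabla$.

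Next I would extract the analytic content of trace-free normal curvature. Since $\Pi^{jk}$ is the bivector dual to the second fundamental form, which under the trivialization by $\nm$ is $h^{jk}$, the condition $\Pi^{jk}\hat{R}_{ijk}\,^{\infty}=0$ reads $h^{jk}\hat{R}_{ijk}\,^{\infty}=0$. Contracting the expression $\hat{R}_{ijk}\,^{\infty} = 2\nabla_{[i}h_{j]k} + 2\tau_{[i}h_{j]k}$ from \eqref{projhrijkl} with $h^{jk}$ gives $h^{jk}\hat{R}_{ijk}\,^{\infty} = h^{pq}\nabla_i h_{pq} - h^{pq}\nabla_p h_{qi} + (n-1)\tau_i$, so trace-free normal curvature is equivalent to the single relation $h^{pq}\nabla_i h_{pq} - h^{pq}\nabla_p h_{qi} + (n-1)\tau_i = 0$, which I will call $(\star)$. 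With $(\star)$ in hand both implications become purely algebraic. If $\nabla$ is aligned, substituting $h^{pq}\nabla_p h_{qi} = \tfrac1n h^{pq}\nabla_i h_{pq}$ into $(\star)$ gives $\tfrac{n-1}{n}h^{pq}\nabla_i h_{pq} + (n-1)\tau_i = 0$, i.e. $n\tau_i = -h^{pq}\nabla_i h_{pq}$, the first equation of \eqref{alignedtau}. Conversely, inserting $\tau_i = -\tfrac1n h^{pq}\nabla_i h_{pq}$ into $(\star)$ recovers $h^{pq}\nabla_i h_{pq} = n\,h^{pq}\nabla_p h_{qi}$, i.e. alignment. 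Thus, under trace-free normal curvature, $W$ is the affine normal bundle if and only if the first equation of \eqref{alignedtau} holds.

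It then remains to deduce the second equation and the density reformulation from the first. By the Jacobi formula $|\det h|^{-1}\nabla_i|\det h| = h^{pq}\nabla_i h_{pq}$, the first equation is precisely the statement $\nabla_i\bigl(|\det h|^{-1/n}\bigr) = \tau_i\,|\det h|^{-1/n}$; that is, $\tau$ is the connection one-form of $\nabla$ relative to the trivialization $|\det h|^{-1/n}$ of the density line bundle of which it is a section (the $2(n+1)/n$ power of $\dens$ in the paper's normalization, so that $|\det h|^{-1/n}$ is a $-2/n$-density). Applying the exterior derivative, $d\tau_{ij} = 2\nabla_{[i}\tau_{j]}$ is the curvature of $\nabla$ on this line bundle, which for a $-2/n$-density equals $\tfrac2n R_{ijp}\,^{p}$, in agreement with \eqref{faradayexplicit}; this yields $n\,d\tau_{ij} = 2R_{ijp}\,^{p}$, the second equation, so that the conjunction of the two equations is equivalent to the first alone. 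The computation is essentially routine; the only points requiring care are the correct extraction of $(\star)$ from the trace-free normal curvature condition and the bookkeeping of density weights and curvature signs in this final step, where I expect the main risk of error rather than any genuine obstacle.
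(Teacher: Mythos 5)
Your proof is correct and follows essentially the same route as the paper: reduce via Theorem \ref{affinenormalbundle} to the alignment condition, trace the $\hat{R}_{ijk}\,^{\infty}$ component of \eqref{projhrijkl} under the trace-free normal curvature hypothesis to get the linear relation between $\tau_i$, $h^{pq}\nabla_i h_{pq}$, and $h^{pq}\nabla_p h_{qi}$, and then derive the second equation of \eqref{alignedtau} from the first via $nd\tau_{ij}=2n\nabla_{[i}\tau_{j]}=2R_{ijp}\,^{p}$. If anything you are slightly more careful than the paper, which writes the untraced identity $\nabla_{[i}h_{j]k}=-\tau_{[i}h_{j]k}$ as an intermediate step even though only its trace (your $(\star)$) follows from, and is needed under, the trace-free normal curvature hypothesis.
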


\begin{proof}
The second equality of \eqref{alignedtau} follows from the first by $nd\tau_{ij} = 2n\nabla_{[i}\tau_{j]} = 2R_{ijp}\,^{p}$, so it suffices to consider the first equality of \eqref{alignedtau}. Because $\hnabla$ has trace-free normal curvature, \eqref{projhrijkl} implies $\nabla_{[i}h_{j]k} = -\tau_{[i}h_{j]k}$ so that $(1-n)\tau_{i} = h^{pq}\nabla_{i}h_{pq} - h^{pq}\nabla_{p}h_{qi}$. The connection $\nabla$ induced by a transverse bundle is independent of the choice of transverse vector, and by definition $\nabla$ is aligned if and only if $nh^{pq}\nabla_{p}h_{qi} = h^{pq}\nabla_{i}h_{pq}$, or, what is the same, $n\tau_{i} = -h^{pq}\nabla_{i}h_{pq}$. The first equality of \eqref{alignedtau} just says $\nabla_{i}|\det h|^{-1/n} = |\det h|^{-1/n}\tau_{i}$ which holds if and only if the connection induced on $\dens^{2(n+1)/n}$ by $\nabla$ has connection one-form $\tau$ with respect to the local trivialization of $\dens^{2(n+1)/n}$ given by $|\det h|^{-1/n}$.
\end{proof}

\begin{lemma}\label{tauclosed}
Let $i:M \to (N, \hnabla)$ be a non-degenerate positively co-oriented immersion with affine normal bundle $W$ and suppose $\hnabla$ has trace-free normal curvature. If $\hat{B}_{ij\infty}\,^{\infty} = 0$ (in particular if $\hnabla$ is projectively flat) then $d\tau_{ij} = -4\sh_{[ij]}$. If $\hnabla$ is also Ricci symmetric then $d\tau_{ij} = 0 = \sh_{[ij]}$, so that in this case the induced CP pair is closed. 
\end{lemma}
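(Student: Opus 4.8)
The plan is to combine the three structural identities already derived for the immersion: the trace relation \eqref{hatbdoublei}, the alignment characterization of the affine normal in Lemma \ref{alignedtaulemma}, and the Ricci-skew identity \eqref{hatprdtau}. The whole argument is a short linear computation once these are in hand; there is no genuine analytic obstacle, so the only care needed is to invoke the hypotheses in the right places and to keep track of which $R_{ijp}\,^{p}$ is meant (that of the induced connection $\nabla$).

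First I would record the consequence of $W$ being the affine normal bundle. Since $\hnabla$ has trace-free normal curvature by assumption, Lemma \ref{alignedtaulemma} applies and the second equality of \eqref{alignedtau} gives $R_{ijp}\,^{p} = \tfrac{n}{2}d\tau_{ij}$, where $R_{ijk}\,^{l}$ is the curvature of the induced connection. This is the single identity that ties $d\tau$ to the Faraday form of the induced CP pair, since by \eqref{faradayexplicit} that Faraday form satisfies $nF_{ij} = R_{ijp}\,^{p} = \tfrac{n}{2}d\tau_{ij}$, hence $F_{ij} = \tfrac{1}{2}d\tau_{ij}$; this will deliver closedness at the end for free.

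Next I would substitute $R_{ijp}\,^{p} = \tfrac{n}{2}d\tau_{ij}$ into \eqref{hatbdoublei} and impose $\hat{B}_{ij\infty}\,^{\infty} = 0$ (which holds in particular when $\hnabla$ is projectively flat, as then all components of the ambient projective Weyl tensor vanish). This reduces \eqref{hatbdoublei} to $\tfrac{n}{2}d\tau_{ij} = (n+1)d\tau_{ij} + 2(n+2)\sh_{[ij]}$, that is $-\tfrac{n+2}{2}d\tau_{ij} = 2(n+2)\sh_{[ij]}$; dividing by the nonzero factor $-(n+2)/2$ yields $d\tau_{ij} = -4\sh_{[ij]}$, the first assertion.

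Finally, for the case that $\hnabla$ is in addition Ricci symmetric, I would use $\hat{P}_{[ij]} = 0$ in \eqref{hatprdtau} to get $R_{ijp}\,^{p} + d\tau_{ij} = 0$, i.e. $R_{ijp}\,^{p} = -d\tau_{ij}$. Comparing with $R_{ijp}\,^{p} = \tfrac{n}{2}d\tau_{ij}$ forces $\tfrac{n+2}{2}d\tau_{ij} = 0$, so $d\tau_{ij} = 0$; then $\sh_{[ij]} = 0$ by the first assertion, and $F_{ij} = \tfrac{1}{2}d\tau_{ij} = 0$ shows the induced CP pair is closed. The only point to watch is that every step uses $n \geq 2$ solely through the nonvanishing of $n+2$, so the computation is uniform in the dimension.
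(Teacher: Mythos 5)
Your proof is correct and follows essentially the same route as the paper: both substitute the alignment identity $nd\tau_{ij}=2R_{ijp}\,^{p}$ from \eqref{alignedtau} into \eqref{hatbdoublei} to get $d\tau_{ij}=-4\sh_{[ij]}$, and then combine \eqref{hatprdtau} with Ricci symmetry to force $d\tau_{ij}=0$. Your explicit remark that closedness follows from $nF_{ij}=R_{ijp}\,^{p}$ via \eqref{faradayexplicit} just makes precise a step the paper leaves implicit.
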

\begin{proof}
If $\hnabla$ has trace-free normal curvature and $\hat{B}_{ij\infty}\,^{\infty} = 0$ then \eqref{hatbdoublei} and \eqref{alignedtau} imply $\tau_{ij} = -4\sh_{[ij]}$. If $\hnabla$ is also Ricci symmetric, then \eqref{hatprdtau} and \eqref{alignedtau} together imply $d\tau_{ij} = 0 = R_{ijp}\,^{p}$.
\end{proof}

\subsubsection{Equiaffine immersions}\label{equiaffinesection}
A volume density $\Psi$ on $N$ means a section of $|\Det \ctn|$. On a sufficiently small neighborhood $U$ in $N$ of a point $p \in i(M)$, $\Psi$ agrees with a volume form $\bar{\Psi}$, and the interior multiplication of a transversal $\nm$ to $i(M)$ with $\Psi$ can be defined on $i^{-1}(U \cap i(M))$ to be the volume density $|i^{\ast}(i(\nm)\bar{\Psi})|$ obtained by taking the absolute value of the restriction $i^{\ast}(i(\nm)\bar{\Psi})$. Covering a neighborhood of $i(M)$ with such small neighborhoods, it is clear that the resulting densities patch together to give on $M$ a globally defined volume density, the \tbf{restriction to $M$ of $\Psi$}, to be denoted $i^{\ast}(i(\nm)\Psi)$.

\begin{lemma}\label{equiaffinelemma}
Let $i:M \to (N, \hnabla)$ be a co-oriented non-degenerate hypersurface immersion. A  volume density $\Psi$ on $N$ determines on $M$ a unique affine normal vector field $\nm$ consistent with the co-orientation and satisfying the equality $i^{\ast}(i(\nm)\Psi) = \mu_{h}$. If $\Psi$ is $\hnabla$ parallel then this distinguished affine normal is called the \tbf{equiaffine normal} and satisfies $\nabla \mu_{h} = \tau \tensor \mu_{h}$. If $\hnabla$ has trace-free normal curvature (in particular if $\hnabla$ is projectively flat), then for the equiaffine normal vector field $W$, the connection form $\tau$ is identically zero and $\nabla \mu_{h} = 0$.
\end{lemma}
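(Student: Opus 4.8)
Let $i:M \to (N, \hnabla)$ be a co-oriented non-degenerate hypersurface immersion. A volume density $\Psi$ on $N$ determines on $M$ a unique affine normal vector field $\nm$ consistent with the co-orientation and satisfying $i^{\ast}(i(\nm)\Psi) = \mu_{h}$. If $\Psi$ is $\hnabla$-parallel this is the equiaffine normal and satisfies $\nabla\mu_{h} = \tau\tensor\mu_{h}$; if $\hnabla$ has trace-free normal curvature then $\tau \equiv 0$ and $\nabla\mu_{h} = 0$.

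Let me plan the proof.

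The plan is to argue in three stages matching the three assertions. First I would establish existence and uniqueness of a scaling of the affine normal. By Theorem \ref{affinenormalbundle} the affine normal bundle $W$ is canonical, so any affine normal vector field is of the form $f\nm_{0}$ for a fixed local section $\nm_{0}$ of $W$ consistent with the co-orientation and $f > 0$. Under rescaling $\tilde{\nm} = f\nm_{0}$, the rescaling rule \eqref{chh} gives $\tilde{h}_{ij} = f^{-1}h_{ij}$, whence the induced volume density $\mu_{\tilde{h}} = |\det \tilde{h}|^{-1/n} = f\,\mu_{h}$ scales linearly, while the restricted density $i^{\ast}(i(\tilde{\nm})\Psi) = f\, i^{\ast}(i(\nm_{0})\Psi)$ also scales linearly by linearity of interior multiplication and pullback. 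Thus the ratio $i^{\ast}(i(\nm)\Psi)/\mu_{h}$ is homogeneous of degree zero in $f$... wait, both scale by $f$, so the equation $i^{\ast}(i(\nm)\Psi) = \mu_{h}$ becomes $f\,i^{\ast}(i(\nm_{0})\Psi) = f\,\mu_{h_{0}}$, which does not determine $f$. I need to recheck the weights: $\mu_{h} = |\det h|^{-1/n}$ is the $1$-density built from $h$, and under $\tilde h = f^{-1}h$ one has $\det\tilde h = f^{-n}\det h$, so $\mu_{\tilde h} = f\mu_h$; and $i(\tilde\nm)\Psi = f\, i(\nm)\Psi$. Hence both sides scale identically and the normalization is automatically satisfied or never satisfied. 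The resolution is that $\mu_h$ denotes the $|\det h|^{1/2}$ volume \emph{measure}, not $|\det h|^{-1/n}$; then $\mu_{\tilde h} = f^{-n/2}\mu_h$ while $i(\tilde\nm)\Psi = f\,i(\nm)\Psi$, so the ratio scales as $f^{1+n/2}$, strictly monotonic in $f>0$, giving a unique solution $f$. So the first step is: verify the correct weight convention and deduce existence and uniqueness from strict monotonicity of $f \mapsto f^{1+n/2}$.

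Second, I would compute $\nabla\mu_{h}$ when $\Psi$ is $\hnabla$-parallel. The key identity is that the connection $\bnabla$ induced on the normal line bundle $W$ by $\hnabla$ (as in \S\ref{inducedconnectionsection}) has connection one-form $\tau$ relative to the frame $\nm$, by the second equation of \eqref{affineinduced2}. Since $\Psi$ is $\hnabla$-parallel and $\mu_h = i^\ast(i(\nm)\Psi)$ is, up to the canonical pairing $\dens \otimes \nmb(M) \simeq |\Det i^\ast(TN)|$, the image of the parallel section $\Psi$ tensored against $\nm$, differentiating $i^{\ast}(i(\nm)\Psi) = \mu_{h}$ and using $\hnabla\Psi = 0$ together with $\hnabla_X\nm = -Ti(S(X)) + \tau(X)\nm$ yields $\nabla_i \mu_h = \tau_i \mu_h$: the tangential part $-Ti(S(X))$ contracts with $i(\nm)\Psi$ to zero (it lies in $Ti(TM)$, already saturated), leaving only the $\tau(X)\nm$ term. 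This is the identity $\nabla\mu_h = \tau\tensor\mu_h$.

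Third, I would invoke Lemma \ref{tauclosed} (or directly Lemma \ref{alignedtaulemma}). For the \emph{equiaffine} normal, the defining normalization $i^{\ast}(i(\nm)\Psi)=\mu_h$ with $\Psi$ parallel forces $\nabla\mu_h = \tau\tensor\mu_h$ from step two, but it also forces $\nabla\mu_h=0$ once $\tau=0$, so I must show $\tau\equiv 0$ under trace-free normal curvature. By Lemma \ref{alignedtaulemma}, for the affine normal bundle one has $n\tau_i = -h^{pq}\nabla_i h_{pq}$; and the equiaffine normalization says precisely that $\nabla$ preserves the density $\mu_h$, i.e. $h^{pq}\nabla_i h_{pq} = 0$, hence $\tau_i = 0$ and $\nabla\mu_h = 0$. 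The projectively-flat case follows since projective flatness implies trace-free normal curvature (as $\hat B_{ijk}{}^\infty = \hat R_{ijk}{}^\infty$). The main obstacle I expect is pinning down the density weight conventions precisely enough that the scaling exponent in step one is strictly nonzero and the bundle pairing in step two is unwound correctly; once the weights are tracked, the remaining steps are direct applications of \eqref{affineinduced2} and Lemma \ref{alignedtaulemma}.
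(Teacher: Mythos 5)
Your first two steps match the paper's proof: the uniqueness of the scaling follows exactly as you say once the weight of $\mu_{h}$ is fixed as $|\det h|^{1/2}$ (so that the two sides of the normalization scale as $f$ and $f^{-n/2}$ and one solves $f^{1+n/2} = \mu_{h}/i^{\ast}(i(\nm)\Psi)$), and your derivation of $\nabla \mu_{h} = \tau\tensor\mu_{h}$ from $\hnabla\Psi = 0$ and the second equation of \eqref{affineinduced2} is the same computation the paper performs, with the tangential term correctly killed by the pullback.

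The final step, however, is circular as written. You assert that ``the equiaffine normalization says precisely that $\nabla$ preserves the density $\mu_{h}$, i.e.\ $h^{pq}\nabla_{i}h_{pq} = 0$.'' It does not: the equiaffine normalization together with $\hnabla\Psi = 0$ gives only $\nabla\mu_{h} = \tau\tensor\mu_{h}$, which unwinds to $2\tau_{i} = h^{pq}\nabla_{i}h_{pq}$. The statement $h^{pq}\nabla_{i}h_{pq} = 0$ is \emph{equivalent} to $\tau = 0$ and is exactly what you are trying to prove; assuming it at this point begs the question. The repair is immediate from what you already have in hand: combine $2\tau_{i} = h^{pq}\nabla_{i}h_{pq}$ (from the parallel-$\Psi$ computation) with the relation $n\tau_{i} = -h^{pq}\nabla_{i}h_{pq}$ supplied by Lemma \ref{alignedtaulemma} under the trace-free normal curvature hypothesis, to get $(n+2)\tau_{i} = 0$ and hence $\tau \equiv 0$ and $\nabla\mu_{h} = 0$. (The paper does the same elimination, but using the relation $n(n-1)\tau_{i} = (1-n)h^{pq}\nabla_{i}h_{pq} + nh^{pq}\hat{R}_{ipq}\,^{\infty}$ valid without the curvature hypothesis, concluding $(n+2)(n-1)\tau_{i} = nh^{pq}\hat{R}_{ipq}\,^{\infty}$; either route is fine once the two independent expressions for $\tau$ are actually played against each other.)
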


\begin{proof} 
Let $\nm$ be an affine normal consistent with the given co-orientation. Replacing $\nm$ by $\tilde{\nm} = f\nm$ with $f > 0$ rescales $i^{\ast}(i(\nm)\Psi)$ by $f$ and $\mu_{h}$ by $f^{-n/2}$, and so the normalization $i^{\ast}(i(\tilde{\nm})\Psi) = \mu_{\tilde{h}}$ is realized uniquely by $f = (\mu_{h}/i^{\ast}(i(\nm)\Psi))^{2/(n+2)}$. If $\Psi$ is any volume density on $N$ then $\nabla i^{\ast}(i(\nm)\Psi) = i^{\ast}(i(\nm)\hnabla \Psi) + \tau \tensor i^{\ast}(i(\nm)\Psi)$, so that if $\hnabla \Psi = 0$ then $\nabla i^{\ast}(i(\nm)\Psi) =\tau \tensor i^{\ast}(i(\nm)\Psi)$. Hence for the affine normal $\nm$ induced by $\Psi$ there holds $2\tau_{i} = 2\mu_{h}^{-1}\nabla_{i}\mu_{h} = |\det h|^{-1}\nabla_{i}|\det h| = h^{pq}\nabla_{i}h_{pq}$. On the other hand, by construction of the affine normal bundle there holds $n(n-1)\tau_{i} = (1-n)h^{pq}\nabla_{i}h_{pq} + nh^{pq}\hat{R}_{ipq}\,^{\infty}$, so that $(n+2)(n-1)\tau_{i} =  nh^{pq}\hat{R}_{ipq}\,^{\infty}$. If $\hnabla$ has trace-free normal curvature then $h^{pq}\hat{R}_{ipq}\,^{\infty} = 0$, so that $\tau = 0$.
\end{proof}
The metric $h$ induced by the equiaffine normal is called the \textbf{equiaffine metric} or \textbf{Berwald-Blaschke metric} induced by $\hnabla$ and $\Psi$. By construction the equiaffine metric is invariant under volume-preserving automorphisms of $(N, \hnabla, \Psi)$. It is determined up to positive homotheties corresponding to rescaling $\Psi$. More generally, if $\hnabla$ has trace-free normal curvature and symmetric Ricci tensor, then the induced CP pair is exact, and the affine normal vector fields inducing distinguished metrics are called \textbf{equiaffine}. This can be seen as follows; that the Ricci tensor be symmetric means that locally on $N$ there exist parallel volume densities, and on a connected open set two such differ by multiplication by a non-zero real number. The assumed orientation of $\nmb(M)$ and the orientations of $|\Det \ctm|$ and $i^{\ast}(|\Det \ctn|)$ mean that there can be chosen a global non-vanishing section of $i^{\ast}(|\Det \ctn|)$ parallel with respect to the connection induced on $i^{\ast}(|\Det \ctn|)$ by $\hnabla$, and this is all that is needed to make the argument of the proof of Lemma \ref{equiaffinelemma}.

In the event that $N$ is orientable, $i$ is co-oriented, and $\Psi$ is a volume \textit{form}, the second fundamental form induces an identification of $\Psi$ with a non-vanishing section of $\nmb(M)^{-n-2}$, and hence via the given orientation of $\nmb(M)$, an orientation of $W \simeq \nmb(M)$. The resulting co-orientation need not be the positive co-orientation. To be clear: the convention here is that the conformal structure induced on a co-oriented non-degenerate hypersurface is that determined by the positive co-orientation, which co-orientation is determined by a requirement on the signature of the second fundamental form, while a distinguished affine normal vector field is determined by an equality of volume \textit{densities}. Perhaps a better way to describe the normalization as follows. Say two affine normal vector fields are (positively) homothetic if one is a (positive) non-zero real multiple of the other. A positive homothety class of affine normal fields induces a CP pair $(\en, [h])$ with aligned representative $\nabla$ and a positive homothety class of metrics contained in the positive conformal class $[h]$; the equiaffine normalization requires that the induced CP pair be exact and that this positive homothety class of metrics be the positive homothety class of distinguished metrics of the CP pair. This induced CP pair will be called the \tbf{equiaffine CP pair}, or, when $\hnabla$ is projectively flat, the \tbf{equiaffine AH structure}.

\subsubsection{}\label{conormalsection}

The \tbf{conormal Gau\ss\, map} $\gss(i):M \to \proj(\sted)$ of a hypersurface immersion $i:M \to (\ste, \hnabla)$ in flat affine space, is the smooth map to the dual flat affine space $(\sted, \hnabla)$ defined by $\gss(i)(p) = \ann Ti(p)(T_{p}M)$, which associates to $p \in M$ the annihilator of the tangent space to $i(M)$ at $i(p)$. Because all the discussion in this section is local, it will be assumed that $i$ is co-oriented, and then $\gss(i)$ can be regarded as taking values in $\projp(\sted)$; to $p$ is associated the ray in $\ann Ti(p)(T_{p}M)$ consistent with the co-orientation. Equip $\projp(\sted)$ with the flat projective structure induced by the flat affine connection $\hnabla$ on $\sted$ induced by duality. In the interest of readability, the duality pairing $\ste \times \sted \to \rea$ is denoted by $\lb \dum, \dum \ra$. An affine normal field $\nm$ determines a factorization $\gss(i) = \rho \circ \hgss(i)$, in which $\rho:\stedz \to \projp(\sted)$ is the canonical projection and $\hgss(i):M \to \stedz$ is defined by the conditions $\lb \hgss(i)(p), \nm_{i(p)}\ra = 1$ and $\lb \hgss(i)(p), Ti(p)(X)\ra = 0$. Supposing $\nm$ is an equiaffine normal field, differentiating these conditions yields for $X \in \Ga(TM)$
\begin{align}\label{hgssid}
&0 = \lb T\hgss(i)(X), \nm\ra,& &-h(X, Y) = \lb T\hgss(i)(X), Ti(Y)\ra.
\end{align}
(The dependence on the basepoint is suppressed for readability). The map $\hgss(i)$ associated to an arbitary affine normal field will be called the \tbss{lifted}{conormal Gau\ss\, map} associated to the affine normal field $\nm$; when $\nm$ is an equiaffine normal field, then $\hgss(i)$ will be called the \tbss{centroaffine}{Gau\ss\, map}. In what follows $\hgss(i)$ will refer always to a centroaffine Gau\ss\, map; this simplifies computations. Let $\rad$ denote the radial vector field on $\stedz$. Then $\rad_{\hgss{i}(p)}$ is the position vector, which could also be written simply as $\hgss(i)(p)$, with which vector it is identified by parallel translation to the origin. This vector spans $\ker T\rho(\hgss(i)(p))$, and so $T\gss(i)(p)(X) = 0$ if and only if $T\hgss(i)(p)(X) \wedge \rad_{\hgss(i)(p)} = 0$. From the first equality of \eqref{hgssid} it follows that this last condition holds if and only if $T\hgss(i)(p)(X) = 0$, and from the second equality of \eqref{hgssid} it follows that this is equivalent to the vanishing of $h(X, \dum)$. This proves
\begin{lemma}
A hypersurface immersion in flat affine space is non-degenerate if and only if its conormal Gau\ss\, map is an immersion.
\end{lemma}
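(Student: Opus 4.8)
The plan is to establish the biconditional by analyzing when the conormal Gau\ss\, map $\gss(i)$ fails to be an immersion, reducing everything to the two identities in \eqref{hgssid} that the paper has already derived for the centroaffine Gau\ss\, map $\hgss(i)$. Recall that $\gss(i) = \rho \circ \hgss(i)$ where $\rho:\stedz \to \projp(\sted)$ is the canonical projection, so the differential factors as $T\gss(i)(p)(X) = T\rho(\hgss(i)(p))\bigl(T\hgss(i)(p)(X)\bigr)$. Since $\ker T\rho$ at the point $\hgss(i)(p)$ is exactly the line spanned by the radial (position) vector $\rad_{\hgss(i)(p)}$, the map $\gss(i)$ is an immersion at $p$ precisely when $T\hgss(i)(p)$ has image meeting this kernel line only trivially and is itself injective modulo that line; concretely, $T\gss(i)(p)(X) = 0$ if and only if $T\hgss(i)(p)(X) \wedge \rad_{\hgss(i)(p)} = 0$.

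First I would use the first equality of \eqref{hgssid}, namely $\lb T\hgss(i)(X), \nm\ra = 0$, to observe that the image of $T\hgss(i)(p)$ is $h$-orthogonal (in the appropriate pairing sense) to $\nm$, hence transverse to the radial direction: the position vector $\rad_{\hgss(i)(p)} = \hgss(i)(p)$ satisfies $\lb \hgss(i)(p), \nm\ra = 1 \neq 0$ by the normalization defining $\hgss(i)$, whereas every tangent image vector $T\hgss(i)(p)(X)$ pairs to zero with $\nm$. Therefore no nonzero element of the image of $T\hgss(i)(p)$ is a multiple of $\rad_{\hgss(i)(p)}$, which shows that the condition $T\hgss(i)(p)(X) \wedge \rad_{\hgss(i)(p)} = 0$ forces $T\hgss(i)(p)(X) = 0$. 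This is the key reduction: $\gss(i)$ is an immersion at $p$ if and only if $\hgss(i)$ is.

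Next I would use the second equality of \eqref{hgssid}, namely $-h(X, Y) = \lb T\hgss(i)(p)(X), Ti(p)(Y)\ra$, together with the first, to pin down the kernel of $T\hgss(i)(p)$. If $X \in \ker T\hgss(i)(p)$ then the right-hand side vanishes for every $Y \in \Ga(TM)$, whence $h(X, Y) = 0$ for all $Y$; since $h$ is the second fundamental form viewed as a symmetric two-tensor, this says $X$ lies in the radical of $h$. Conversely, if $h(X, \dum) \equiv 0$, then $\lb T\hgss(i)(p)(X), Ti(p)(Y)\ra = 0$ for all $Y$ and $\lb T\hgss(i)(p)(X), \nm\ra = 0$; as $Ti(p)(T_pM)$ together with $\nm$ spans $i^{\ast}(T\sted)$, the covector $T\hgss(i)(p)(X)$ annihilates a spanning set and hence vanishes. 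Thus $\ker T\hgss(i)(p)$ coincides exactly with the radical $\{X : h(X,\dum) = 0\}$ of the second fundamental form at $p$.

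Assembling these observations yields the statement: $\gss(i)$ is an immersion at $p$ iff $\hgss(i)$ is iff $\ker T\hgss(i)(p) = 0$ iff the radical of $h$ at $p$ is trivial iff $h$ (the second fundamental form) is non-degenerate at $p$, which is the definition of non-degeneracy of the immersion there. Ranging over all $p \in M$ gives the global equivalence. I expect the only genuine subtlety to be the bookkeeping with the wedge-product criterion for $\ker T\rho$ and making rigorous that the image of $T\hgss(i)(p)$ avoids the radial line; once the pairing identity $\lb \hgss(i)(p), \nm\ra = 1$ is invoked this is immediate, so the main obstacle is largely presentational rather than mathematical, and the argument is essentially the one already sketched in the paragraph preceding the lemma in the excerpt.
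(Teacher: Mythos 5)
Your proof is correct and is essentially the paper's own argument: the paper proves the lemma in the paragraph preceding its statement by the same factorization $\gss(i)=\rho\circ\hgss(i)$, the same wedge-with-$\rad$ criterion for $\ker T\rho$, and the same use of the two identities in \eqref{hgssid} to reduce injectivity of $T\hgss(i)$ to non-degeneracy of $h$. You merely spell out more explicitly why $\lb\hgss(i)(p),\nm\ra=1$ rules out radial components, which the paper leaves implicit.
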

Now suppose $i$ is non-degenerate. Pullback of the flat projective structure on $\projp(\sted)$ via $\gss(i)$ yields a flat projective structure on $M$. It follows from the preceeding that $\rad$ is transverse to $\hgss(i)(M)$. Let $\tnabla$ and $\tilde{P}$ be the connection and second fundamental form of $\hgss(i)(M)$ induced by the transverse field $\rad$. By definition $\hnabla_{X}T\hgss(i)(Y) = T\hgss(i)(\tnabla_{X}Y) + \tilde{P}(X, Y)\rad_{\hgss(i)}$. Pairing this with $\nm$ and using \eqref{hgssid} gives 
\begin{align*}
\begin{split}
\tilde{P}(X, Y) & = \lb \hnabla_{X}T\hgss(i)(Y), \nm\ra = \hnabla_{X}\lb T\hgss(i)(Y), \nm\ra + \lb T\hgss(i)(Y), Ti(S(X))\ra \\ &= -h(S(X), Y) = -\sh(X, Y).
\end{split}
\end{align*}
This proves
\begin{lemma}\label{scalefreelemma}
The scale-free shape operator of a non-degenerate hypersurface immersion in flat affine space is the negative of the second fundamental form of the centroaffine Gau\ss\, map of the immersion relative to the radial vector field on the dual affine space. 
\end{lemma}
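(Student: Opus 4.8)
The plan is to compute the second fundamental form $\tilde{P}$ of the image $\hgss(i)(M)$ directly, using the defining relation of the second fundamental form together with the two identities in \eqref{hgssid} that characterize the centroaffine Gau\ss\, map $\hgss(i)$. Specifically, I would start from the defining equation of $\tilde P$ with respect to the transversal $\rad$, namely $\hnabla_{X}T\hgss(i)(Y) = T\hgss(i)(\tnabla_{X}Y) + \tilde{P}(X,Y)\rad_{\hgss(i)}$, and pair both sides with the affine normal field $\nm$ (more precisely, evaluate the dual pairing $\lb \dum, \nm\ra$). The key point is that $\rad_{\hgss(i)(p)}$ is the position vector $\hgss(i)(p)$, which by the normalization $\lb \hgss(i)(p), \nm_{i(p)}\ra = 1$ pairs to $1$ with $\nm$, while the tangential term $T\hgss(i)(\tnabla_{X}Y)$ pairs to $0$ by the second condition in \eqref{hgssid} combined with the fact that $T\hgss(i)$ annihilates $\nm$.

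First I would write $\tilde{P}(X,Y) = \lb \hnabla_{X}T\hgss(i)(Y),\, \nm\ra$, having discarded the tangential contribution. Then I would apply the Leibniz rule for the flat connection $\hnabla$ on the pairing to get $\lb \hnabla_{X}T\hgss(i)(Y),\, \nm\ra = \hnabla_{X}\lb T\hgss(i)(Y),\, \nm\ra - \lb T\hgss(i)(Y),\, \hnabla_{X}\nm\ra$. The first term on the right vanishes because $\lb T\hgss(i)(Y),\, \nm\ra = 0$ identically (this is the first relation in \eqref{hgssid}), so its derivative along any $X$ is zero. For the second term I would substitute the structure equation $\hnabla_{X}\nm = -Ti(S(X)) + \tau(X)\nm$ from \eqref{affineinduced2}; the $\tau(X)\nm$ piece contributes $-\tau(X)\lb T\hgss(i)(Y),\,\nm\ra = 0$ again by \eqref{hgssid}, leaving $\lb T\hgss(i)(Y),\, Ti(S(X))\ra$.

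Finally I would invoke the second identity of \eqref{hgssid}, $\lb T\hgss(i)(X),\, Ti(Z)\ra = -h(X,Z)$, now with $Y$ in place of $X$ and $S(X)$ in place of $Z$, to conclude $\lb T\hgss(i)(Y),\, Ti(S(X))\ra = -h(Y, S(X)) = -h(S(X), Y)$, using symmetry of $h$. By the definition of the scale-free shape operator $\sh_{ij} = S_{i}\,^{p}h_{pj}$, this last quantity is exactly $-\sh(X,Y)$, giving $\tilde{P}(X,Y) = -\sh(X,Y)$, which is the claim.

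I do not anticipate a serious obstacle here: the argument is a short bookkeeping computation once the two relations \eqref{hgssid} and the structure equations \eqref{affineinduced2} are in hand, and it is essentially the one already sketched in the excerpt immediately preceding the statement. The only point requiring mild care is ensuring that $\hgss(i)$ is taken to be the centroaffine Gau\ss\, map (so that the normalization $\lb \hgss(i), \nm\ra = 1$ holds with $\nm$ an equiaffine normal), since this is what makes $\rad_{\hgss(i)}$ pair to $1$ with $\nm$ and thereby isolates $\tilde P$ cleanly; the convention fixing this was explicitly recorded in the definition of $\hgss(i)$. A second minor point is that the pairing-derivative step uses that $\hnabla$ on $\sted$ is the flat connection dual to $\hnabla$ on $\ste$, so that the Leibniz rule applies to the duality pairing $\lb \dum, \dum\ra$; this is exactly how $\hnabla$ on $\sted$ was defined.
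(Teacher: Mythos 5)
Your proposal is correct and follows essentially the same route as the paper: pair the defining equation of $\tilde{P}$ with the equiaffine normal $\nm$, use the normalization $\lb \hgss(i), \nm\ra = 1$ and the relations \eqref{hgssid} to isolate $\tilde{P}(X,Y)$, then apply the Leibniz rule and the structure equation \eqref{affineinduced2} to land on $-h(S(X),Y) = -\sh(X,Y)$. The only difference is that you explicitly note the $\tau(X)\nm$ term dies against \eqref{hgssid}, a detail the paper leaves implicit.
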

View $\hgss(i)$ as a one-form on $\ste$ and consider the two-tensor $\hnabla \hgss(i)$. From \eqref{hgssid} and the definition of $\hgss(i)$ there follow
\begin{align*}
&\lb \hnabla_{Ti(X)}\hgss(i), Ti(Y)\ra = -\lb \hgss(i), \hnabla_{X}Ti(Y)\ra = -h(X, Y),& &\lb \hnabla_{Ti(X)}\hgss(i), \nm\ra = \lb \hgss(i), Ti(S(X))\ra =0,
\end{align*}
which prove
\begin{lemma}
The pullback via a hypersurface immersion $i$ into flat affine space $\ste$ of the covariant derivative of the centroaffine Gau\ss\, map, considered as a one-form on $\ste$ is the negative of the second fundamental form of $i$ relative to the equiaffine normal field determining the centroaffine Gau\ss\, map.
\end{lemma}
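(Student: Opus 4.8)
The plan is to deduce the identity by differentiating the two linear conditions that pin down the centroaffine Gau\ss\ map. Recall that $\hgss(i)$ is characterized pointwise by $\lb \hgss(i), \nm\ra = 1$ and $\lb \hgss(i), Ti(X)\ra = 0$ for all $X \in \Ga(TM)$, where $\nm$ is the given equiaffine normal field. Since the ambient connection $\hnabla$ is flat and the connection it induces on $\sted$ is the dual connection, the duality pairing $\lb \dum, \dum\ra:\ste \times \sted \to \rea$ is $\hnabla$-parallel; moreover, because the target $\sted$ is a vector space carrying a flat affine connection, the covariant derivative $\hnabla_{Ti(X)}\hgss(i)$ of the $\sted$-valued map $\hgss(i)$ coincides, under the canonical identification of each tangent space of $\sted$ with $\sted$, with the ordinary differential $T\hgss(i)(X)$ appearing in \eqref{hgssid}. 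Thus differentiating the defining conditions along $i(M)$ converts them directly into statements about $\hnabla\hgss(i)$.

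Carrying this out, I would apply the Leibniz rule for the parallel pairing to the identically vanishing function $\lb \hgss(i), Ti(Y)\ra \equiv 0$, differentiating in the direction $Ti(X)$. This gives
\begin{align*}
0 = \lb \hnabla_{Ti(X)}\hgss(i), Ti(Y)\ra + \lb \hgss(i), \hnabla_{Ti(X)}Ti(Y)\ra,
\end{align*}
so that $\lb \hnabla_{Ti(X)}\hgss(i), Ti(Y)\ra = -\lb \hgss(i), \hnabla_{Ti(X)}Ti(Y)\ra$. Substituting the Gau\ss\ structure equation \eqref{affineinduced2}, namely $\hnabla_{X}Ti(Y) = Ti(\nabla_{X}Y) + h(X,Y)\nm$, and then using the defining conditions $\lb \hgss(i), Ti(\nabla_{X}Y)\ra = 0$ and $\lb \hgss(i), \nm\ra = 1$, yields $\lb \hgss(i), \hnabla_{Ti(X)}Ti(Y)\ra = h(X,Y)$, whence $\lb \hnabla_{Ti(X)}\hgss(i), Ti(Y)\ra = -h(X,Y)$. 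Since pulling $\hnabla\hgss(i)$ back by $i$ means precisely evaluating its $\ste$-argument on $Ti(Y)$, this is exactly the assertion that $i^{\ast}(\hnabla\hgss(i)) = -h$, the negative of the second fundamental form of $i$ relative to $\nm$.

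The computation presents no genuine obstacle; the only point demanding care is the bookkeeping described above, namely identifying the covariant derivative of the vector-space-valued map $\hgss(i)$ with the ordinary differential, together with the correct reading of ``viewed as a one-form on $\ste$'' and of the pullback. For completeness I would also record the companion relation obtained by differentiating $\lb \hgss(i), \nm\ra = 1$: using $\hnabla_{X}\nm = -Ti(S(X)) + \tau(X)\nm$ from \eqref{affineinduced2} together with the vanishing $\tau = 0$ of the equiaffine connection form (Lemma \ref{equiaffinelemma}, applicable since flat affine space is projectively flat with symmetric Ricci tensor) gives $\lb \hnabla_{Ti(X)}\hgss(i), \nm\ra = \lb \hgss(i), Ti(S(X))\ra = 0$, confirming that the normal component of $\hnabla\hgss(i)$ vanishes, in accordance with \eqref{hgssid}.
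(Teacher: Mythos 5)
Your argument is correct and is essentially the paper's own proof: the paper likewise differentiates the two defining relations $\lb \hgss(i), Ti(Y)\ra = 0$ and $\lb \hgss(i), \nm\ra = 1$ with the Leibniz rule for the flat pairing, substitutes \eqref{affineinduced2}, and uses $\tau = 0$ for the equiaffine normal to conclude $\lb \hnabla_{Ti(X)}\hgss(i), Ti(Y)\ra = -h(X,Y)$ together with the vanishing of the normal component. Your remarks on identifying the covariant derivative of the $\sted$-valued map with its differential are a harmless elaboration of what the paper leaves implicit.
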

Computing in two ways $\lb \hnabla_{X}T\hgss(i)(Y), Ti(Z)\ra$ yields
\begin{align}\label{conjhg0}
\begin{split}
\lb \hnabla_{X}T\hgss(i)(Y), Ti(Z)\ra & = X \lb T\hgss(i)(Y), Ti(Z)\ra - \lb T\hgss(i)(Y), Ti(\nabla_{X}Z) + h(X, Z)\nm\ra \\ &= -X(h(Y, Z)) + h(Y, \nabla_{X}Z),
\end{split}\\
\label{conjhg}\begin{split}
\lb \hnabla_{X}T\hgss(i)(Y), Ti(Z)\ra & = \lb T\hgss(i)(\tnabla_{X}Y) - \sh(X, Y)\rad_{\hgss(i)}, Ti(Z)\ra = -h(\tnabla_{X}Y, Z),
\end{split}
\end{align}
In \eqref{conjhg} the first equality uses Lemma \ref{scalefreelemma} and the second equality uses the second equality of \eqref{hgssid}. Together \eqref{conjhg0} and \eqref{conjhg} show that $\tnabla$ is the $h$-conjugate connection of $\nabla$. Since $\nabla$ is the aligned representative of the induced AH structure $(\en, [h])$ and $h$ is a distinguished metric, this implies that $\tnabla$ is the aligned representative of the conjugate AH structure $(\ben, [h])$.
\begin{theorem}\label{conormaltheorem}
The flat projective structure induced on $M$ by the conormal Gau\ss\, map of a hypersurface immersion $i$ of $M$ in flat affine space forms with the conformal structure determined by the second fundamental form of $i$ the AH structure conjugate to the AH structure induced on $M$ via the affine normal. The connection induced on $M$ via the centroaffine Gau\ss\, map associated to an equiaffine normal field is the aligned representative of this AH structure, and is the conjugate of the connection induced on $M$ by the equiaffine normal field with respect to the second fundamental form induced by that same equiaffine normal field. 
\end{theorem}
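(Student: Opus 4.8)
The plan is to assemble Theorem \ref{conormaltheorem} from the sequence of lemmas that precede it, as the essential content has already been established. First I would recall the setup: fix an equiaffine normal field $\nm$ (whose existence is guaranteed by Lemma \ref{equiaffinelemma}), let $\nabla$ and $h$ be the induced connection and metric, and let $(\en, [h])$ be the AH structure induced via the affine normal (Theorem \ref{affinenormalbundle} ensures this is an AH structure, since flat affine space is projectively flat). Because $\hnabla$ is flat, it has trace-free normal curvature and symmetric Ricci tensor, so by Lemma \ref{tauclosed} the connection form $\tau$ vanishes and $\nabla\mu_h = 0$, confirming that $h$ is a distinguished metric and $\nabla$ is the aligned representative. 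The object is to identify the connection $\tnabla$ induced on $M$ via the centroaffine Gau\ss\ map $\hgss(i)$ relative to the radial field $\rad$, and to show it is the aligned representative $\bnabla$ of the conjugate AH structure $(\ben, [h])$.

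The heart of the argument is the computation already carried out in \eqref{conjhg0} and \eqref{conjhg}. I would present these two evaluations of $\lb \hnabla_{X}T\hgss(i)(Y), Ti(Z)\ra$: the first uses the structure equation \eqref{affineinduced2} for $\nabla$ together with the defining relations \eqref{hgssid} of the lifted conormal map, yielding $-X(h(Y,Z)) + h(Y, \nabla_X Z)$; the second uses the structure equation for $\tnabla$ along with Lemma \ref{scalefreelemma} identifying $-\sh$ as the second fundamental form of $\hgss(i)$, yielding $-h(\tnabla_X Y, Z)$. Equating these gives $h(\tnabla_X Y, Z) = X(h(Y,Z)) - h(Y, \nabla_X Z)$, which is exactly the defining relation of the $h$-conjugate connection of $\nabla$. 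Thus $\tnabla$ is the conjugate connection of $\nabla$ with respect to $h$.

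It then remains only to interpret this conjugacy in the language of AH structures. Since $\nabla$ is the aligned representative of $(\en, [h])$ and $h$ is a distinguished metric, the $h$-conjugate connection coincides with the aligned representative $\bnabla$ of the conjugate AH structure $(\ben, [h])$: indeed, from the pencil description in section \ref{curvatureahpencilsection}, $\bnabla = \nabla + \bt_{ij}\,^{k}$, and a short check shows that adding the full cubic torsion $\bt_{ij}\,^{k} = h^{kp}\nabla_{(i}h_{j)p}$ (equivalently, using $\nabla_i h_{jk} = \bt_{\nabla}(h)_{ijk}$ for an exact AH structure with distinguished $h$) produces precisely the conjugate connection. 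Finally, the flat projective structure pulled back via the conormal Gau\ss\ map $\gss(i) = \rho \circ \hgss(i)$ is by definition the one generated by $\tnabla$, so it forms with $[h]$ the AH structure $(\ben, [h])$, completing the proof.

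I do not expect a genuine obstacle here, since all the nontrivial geometry resides in the preceding lemmas; the only point requiring care is the bookkeeping in the last step, namely verifying cleanly that the classical $h$-conjugate connection of an aligned $\nabla$ with distinguished $h$ is exactly $\bnabla$ and not some other representative of $\ben$. The subtlety is that conjugacy of connections relative to a metric is an \emph{a priori} different notion from the AH-conjugacy defined via the cubic torsion, and reconciling them uses the fact that for an exact AH structure the distinguished metric satisfies $\nabla_i h_{jk} = \bt_{\nabla}(h)_{ijk}$ (the Faraday primitive and conformal torsion terms drop out), so that the metric-conjugate connection $\nabla + h^{kp}\nabla_i h_{jp}$ agrees with $\nabla + \bt_{ij}\,^{k} = \bnabla$. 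This identification is the one step I would state explicitly rather than leave to the reader.
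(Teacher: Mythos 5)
Your proposal is correct and follows essentially the same route as the paper: the two evaluations \eqref{conjhg0} and \eqref{conjhg} of $\lb \hnabla_{X}T\hgss(i)(Y), Ti(Z)\ra$ show that $\tnabla$ is the $h$-conjugate of $\nabla$, and since $\nabla$ is aligned with $h$ distinguished this identifies $\tnabla$ with the aligned representative $\bnabla$ of the conjugate AH structure. Your explicit remark reconciling metric-conjugacy with AH-conjugacy via $\nabla_{i}h_{jk} = \bt_{\nabla}(h)_{ijk}$ is a useful spelling-out of a step the paper leaves implicit, but it is the same argument.
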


It would be possible to extend the preceeding discussion to the case of an immersion into a projectively flat manifold, but it seems more straightforward and technically easier to work directly with the AH structure conjugate to that induced via the affine normal, which is defined instrinsically.

\subsubsection{}\label{projflathypersurfacesection}
Suppose that $i:M \to (N, \hnabla)$ is a positively co-oriented non-degenerate immersion and $\hnabla$ is projectively flat. Let $\nm^{I}$ be an affine normal vector field inducing on $M$ the metric $h_{ij}$ and the connection $\nabla$. Let $\hat{P}_{IJ}$ be the modified Ricci tensor of $\hnabla$, and let $\hat{P}_{ij}$ and $\hat{P}_{i}$ be the restrictions to $M$ of $\hat{P}_{IJ}$ and $\hat{P}_{IQ}\nm^{Q}$, respectively. Let $H_{ij}$ be the normalized metric of the induced AH structure $(\en, [h])$, and raise and lower indices with $H^{ij}$. Define $\sh = \sh_{p}\,^{p} = H^{ij}\sh_{ij} = |\det h|^{1/n}S_{p}\,^{p}$. From \eqref{projhrijkl} there follows
\begin{align}\label{shcurv}
R_{ijkl} = -2H_{l[i}\hat{P}_{j]k} + 2H_{kl}\hat{P}_{[ij]} - 2H_{k[i}\sh_{j]l},
\end{align}
From Lemmas \ref{alignedtaulemma} and \ref{tauclosed} there follows $F_{ij} = -2\sh_{[ij]}$. Tracing \eqref{shcurv} in two ways, decomposing by symmetries, and using $F_{ij} = -2\sh_{[ij]}$ gives
\begin{align}
\label{shric} & R_{(ij)} = \sh H_{ij} - \sh_{(ij)} + (1-n)\hat{P}_{(ij)}, & &Q_{(ij)} = (n-1)\sh_{(ij)} - \hat{P}H_{ij} + \hat{P}_{(ij)},\\
\label{shscal} &2\hat{P}_{[ij]} = F_{ij} = -2\sh_{[ij]},& & R= (n-1)(\sh - \hat{P}),
\end{align}
in which $\hat{P} \defeq H^{ij}\hat{P}_{ij}$. From \eqref{shric} and \eqref{shscal} there follow
\begin{align}
\label{sheij} &2E_{ij} = \mr{\hat{P} + \sh}_{ij}, & &2A_{ij} = \hat{P}_{(ij)} - \sh_{(ij)},\\
\label{shw}   &W_{ij} = \hat{P}_{(ij)} - \tfrac{1}{2n}(\hat{P} + \sh)H_{ij} + \tfrac{1}{2}F_{ij}, && \bar{W}_{ij} = -\sh_{(ij)} +\tfrac{1}{2n}(\hat{P} + \sh)H_{ij} + \tfrac{1}{2}F_{ij}.
\end{align}
Substituting the first equation of \eqref{shscal} into \eqref{shcurv} and using \eqref{shw} to simplify the result gives
\begin{align}\label{curvprojflathypersurface}
\begin{split}
R_{ijkl} & = -H_{li}\hat{P}_{(jk)} + H_{lj}\hat{P}_{(ik)} - H_{ki}\sh_{(jl)} + H_{kj}\sh_{(il)} + F_{ijkl} + G_{ijkl}\\
         & = -2H_{l[i}W_{j]k} + 2H_{k[i}\bar{W}_{j]l} + F_{ij}H_{kl}.%
\end{split}
\end{align}
With \eqref{rintermsofw} this shows $A_{ijkl} = 0$ and $E_{ijkl} = 0$. If $\hnabla$ is moreover Ricci symmetric, then Lemma \ref{tauclosed} shows $F_{ij} = \hat{P}_{[ij]} = \sh_{[ij]} = d\tau_{ij} = 0$. Differentiating the definition of $\sh_{ij}$ and using \eqref{alignedtau} gives
\begin{align}\label{sh1}
\nabla_{i}\sh_{jk} = h_{pk}\nabla_{i}S_{j}\,^{p} - \tau_{i}\sh_{jk} + \bt_{ik}\,^{p}\sh_{jp},
\end{align}
and skewing \eqref{sh1} and using \eqref{projhrijnk} gives
\begin{align}\label{shnabla}
2\hat{P}_{[i}h_{j]k} = h_{pk}\hat{R}_{ij\infty}\,^{p} = -2\nabla_{[i}\sh_{j]k} + 2\bt_{k[i}\,^{p}\sh_{j]p}.
\end{align}
Tracing \eqref{shnabla} and using \eqref{shscal} gives
\begin{align}\label{shdiv}
(n-1)|\det h|^{1/n}\hat{P}_{i}= - \nabla_{i}\sh + \nabla^{p}\sh_{ip} = \tfrac{1-n}{n}\nabla_{i}\sh + \nabla^{p}\mr{\sh}_{ip} - \tfrac{1}{2}\nabla^{p}F_{ip}.
\end{align}

\begin{lemma}
If $i:M \to (N, \hnabla)$ is a positively co-oriented non-degenerate hypersurface immersion into a flat $(n+1)$-dimensional manifold then the AH structure $(\en, [h])$ induced on $M$ is conjugate projectively flat. If $n > 2$ then $(\en, [h])$ is projectively flat if and only if $\mr{\sh}_{ij} = 0$. If $\mr{\sh}_{ij} = 0$ then $\nabla_{i}\sh = 0$ and $R$ is parallel. If $n = 2$ then $\mr{\sh}_{ij} = 0$ implies that $(\en, [h])$ is projectively flat. 
\end{lemma}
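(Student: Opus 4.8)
The plan is to exploit the vanishing of the curvature of $\hnabla$ so as to reduce everything to the curvature identities already assembled in section \ref{projflathypersurfacesection}. Since $\hnabla$ is flat, its modified Ricci tensor vanishes ($\hat{P}_{IJ} = 0$, whence $\hat{P}_{ij} = 0$, $\hat{P}_i = 0$, $\hat{P} = 0$) and its normal curvature is identically zero, so in particular $\hnabla$ has trace-free normal curvature and is Ricci symmetric. Lemma \ref{tauclosed} then gives $d\tau_{ij} = 0 = \sh_{[ij]}$ and shows that the induced CP pair is closed, i.e. $F_{ij} = 0$; in particular $\sh_{ij}$ is symmetric. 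Moreover, as recorded just after \eqref{curvprojflathypersurface}, projective flatness of $\hnabla$ forces $A_{ijkl} = 0$ and $E_{ijkl} = 0$. These three facts, $F_{ij} = 0$, $A_{ijkl} = 0$, and $E_{ijkl} = 0$, would be used throughout.

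For the first claim I would argue as follows. Substituting $\hat{P}_{ij} = 0$ and $F_{ij} = 0$ into \eqref{shw} gives $W_{ij} = -\tfrac{1}{2n}\sh H_{ij}$, which is pure trace, so $\mr{W}_{ij} = 0$. When $n > 2$, Corollary \ref{conjprojflatae} then yields conjugate projective flatness directly from $F_{ij} = A_{ijkl} = E_{ijkl} = 0$ and $\mr{W}_{ij} = 0$. To cover $n = 2$ as well, I would instead invoke Theorem \ref{conormaltheorem}: since conjugate projective flatness is a local condition, one may identify a neighborhood in $N$ with flat affine space, and there the conjugate structure $(\ben, [h])$ is the pullback of the flat projective structure on $\projp(\sted)$ under the conormal Gau\ss\, map, hence projectively flat. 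This gives a uniform argument valid in all dimensions.

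For the second claim (with $n > 2$) I would apply Lemma \ref{projflatahlemma}, by which $(\en, [h])$ is projectively flat precisely when $F_{ij}$, $A_{ijkl}$, $E_{ijkl}$ and $\mr{\bar{W}}_{ij}$ all vanish. The first three already vanish, and substituting $\hat{P}_{ij} = 0$ and $F_{ij} = 0$ into \eqref{shw} gives $\bar{W}_{ij} = -\sh_{ij} + \tfrac{1}{2n}\sh H_{ij}$, whose trace-free symmetric part is $\mr{\bar{W}}_{ij} = -\mr{\sh}_{ij}$. Hence projective flatness is equivalent to $\mr{\sh}_{ij} = 0$. For the third claim I would feed $\hat{P}_i = 0$, $F_{ij} = 0$, and $\mr{\sh}_{ij} = 0$ (so that $\nabla^{p}\mr{\sh}_{ip} = 0$) into \eqref{shdiv}, obtaining $\tfrac{1-n}{n}\nabla_{i}\sh = 0$ and therefore $\nabla_{i}\sh = 0$ (as $n \geq 2$); then $R = (n-1)\sh$ by \eqref{shscal} together with $\hat{P} = 0$, so $R$ is parallel.

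The main point requiring separate care is the fourth claim, the $n = 2$ case, where neither Lemma \ref{projflatahlemma} nor Corollary \ref{conjprojflatae} is available because the projective Weyl tensor is identically zero and projective flatness is instead detected by the Cotton invariant $C_{i}$. Here I would first note that \eqref{sheij} with $\hat{P}_{ij} = 0$ gives $2E_{ij} = \mr{\sh}_{ij}$, so the hypothesis $\mr{\sh}_{ij} = 0$ forces $E_{ij} = 0$; combining this with $\nabla_{i}R = 0$ (the third claim, which holds verbatim for $n = 2$) and $F_{ij} = 0$, the formula \eqref{projcotton} collapses to $C_{i} = 0$, so $\en$ is projectively flat. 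I expect this two-dimensional bookkeeping, keeping track of which of the Weyl-tensor criteria must be replaced by their Cotton-tensor analogues, to be the only genuinely delicate part; the remainder is direct substitution into the identities of section \ref{projflathypersurfacesection}.
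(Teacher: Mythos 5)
Your proof is correct, and for the $n>2$ claims it is the same argument as the paper's: both establish $F_{ij}=0$, $A_{ijkl}=0=E_{ijkl}$, and $\mr{W}_{ij}=0$ from flatness of $\hnabla$ and \eqref{shw}, invoke Lemma \ref{projflatahlemma}/Corollary \ref{conjprojflatae}, identify $\mr{\bar{W}}_{ij}=-\mr{\sh}_{ij}$, and read off $\nabla_{i}\sh=0$ from \eqref{shdiv} and the parallelism of $R$ from \eqref{shscal}. The only genuine divergence is in the two-dimensional bookkeeping. The paper stays computational: it evaluates \eqref{projcotton} using \eqref{shscal}, \eqref{sheij}, and \eqref{shdiv} to get $\bar{C}_{i}=0$ outright (giving conjugate projective flatness for $n=2$) and the explicit formula $C_{i}=\nabla_{i}\sh+\bt_{i}\,^{pq}\mr{\sh}_{pq}$, whose vanishing under $\mr{\sh}_{ij}=0$ is then immediate. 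You instead obtain conjugate projective flatness uniformly in all dimensions from Theorem \ref{conormaltheorem}, localizing to identify a neighborhood of $N$ with flat affine space; this is legitimate and more conceptual, at the cost of importing the conormal Gau\ss\, map machinery where a short direct computation of $\bar{C}_{i}$ suffices. For the last claim your route through $2E_{ij}=\mr{\sh}_{ij}$ (from \eqref{sheij}) together with the already-established $\nabla_{i}R=0$ kills every term of \eqref{projcotton} without needing the paper's explicit expression for $C_{i}$; note only that your sign for that expression would come out as $-\nabla_{i}\sh+\bt_{i}\,^{pq}\mr{\sh}_{pq}$ rather than the paper's, which is immaterial since you never use the formula itself. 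Both treatments of $n=2$ are sound.
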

\begin{proof}
The preceeding shows that $A_{ijkl} = 0 = E_{ijkl}$, $F_{ij} = 0$, and \eqref{shw} shows $\mr{W}_{ij} = 0$. If $n > 2$ then by Lemma \ref{projflatahlemma}, $(\en, [h])$ is conjugate projectively flat. Also by Lemma \ref{projflatahlemma}, $(\en, [h])$ is projectively flat if and only if moreover $\mr{\bar{W}}_{ij} = 0$, which by \eqref{shw} is equivalent to $\mr{\sh}_{ij} = 0$. That $\mr{\sh}_{ij} = 0$ implies $\nabla_{i}\sh = 0$ is immediate from \eqref{shdiv}, and with \eqref{shscal} this shows $R$ is parallel. If $n = 2$, computing using \eqref{projcotton}, \eqref{shscal}, \eqref{sheij}, and \eqref{shdiv} shows that $\bar{C}_{i} = 0$ and $C_{i} = \nabla_{i}\sh + \bt_{i}\,^{pq}\mr{\sh}_{pq}$. In particular $(\en, [h])$ is conjugate projectively flat.  It follows that when $n = 2$ the condition $\mr{\sh}_{ij} = 0$ implies $C_{i} = 0$, so $(\en, [h])$ is projectively flat.
\end{proof}

It has been shown that the the AH structure induced on a non-degenerate co-oriented affine hypersurface is necessarily exact and conjugate projectively flat. The distinguished metrics are the equiaffine metrics induced by the possible choices of a determinant function on $\ste$ consistent with the given co-orientation. The scalar curvature $\uR_{h}$ with respect to a distinguished metric is a constant multiple of the trace of the shape operator (the affine mean curvature). The numerical value of this constant has no absolute meaning, but its sign has meaning, as has the comparison of its numerical values at different points.

\begin{theorem}
An AH structure is conjugate projectively flat if and only if it has the property that around each point of $M$ there is an open neighborhood $U$ and a non-degenerate co-oriented immersion $i:U \to \ste$ into flat affine space such that the AH structure via $i$ coincides with the given AH structure.
\end{theorem}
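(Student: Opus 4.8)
The claim is a local equivalence: conjugate projective flatness characterizes precisely the AH structures locally induced on affine hypersurfaces. The plan is to prove the two directions separately. One direction is already essentially contained in the excerpt: the preceding discussion in section \ref{projflathypersurfacesection} establishes that the AH structure induced on a non-degenerate co-oriented immersion $i:M \to \ste$ into flat affine space is exact and conjugate projectively flat (this uses $A_{ijkl} = 0$, $E_{ijkl} = 0$, and $\mr{W}_{ij} = 0$ together with Lemma \ref{projflatahlemma} applied to the conjugate AH structure, i.e. Corollary \ref{conjprojflatae}). So for that direction I would simply invoke this computation, noting that conjugate projective flatness is a local condition and hence passes to restrictions to open neighborhoods.

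The substantive direction is the converse: given an abstractly defined conjugate projectively flat AH structure $(\en, [h])$, I must construct, around each point, a non-degenerate co-oriented immersion into flat affine space inducing it. The natural approach is to realize the immersion via the \emph{conjugate} picture developed in section \ref{conormalsection}. Since $(\en, [h])$ is conjugate projectively flat, the conjugate AH structure $(\ben, [h])$ is projectively flat, so its aligned representative $\bnabla$ generates a flat projective structure $\ben$. Locally a flat projective structure admits a developing map; concretely, $\bnabla$ is locally projectively equivalent to a flat affine connection, so there is a local immersion $g:U \to \projp(\sted)$ realizing $\ben$ as the pullback of the standard flat projective structure on the projectivization of a dual affine space. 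The strategy is to recover the hypersurface immersion $i:U \to \ste$ as the dual object, exploiting Theorem \ref{conormaltheorem}, which shows that the conormal Gau\ss\, map of an affine hypersurface induces exactly the conjugate AH structure via the second fundamental form. In effect I would run the construction of section \ref{conormalsection} backwards: starting from the developing map $g = \gss(i)$ of $\ben$ into $\projp(\sted)$, I would use the distinguished metric (the exactness of $(\ben, [h])$, which follows from exactness of $(\en, [h])$) to lift $g$ to a centroaffine map $\hgss(i):U \to \stedz$, and then define $i:U \to \ste$ so that its conormal Gau\ss\, map is $g$.

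Concretely the key steps are: (i) observe conjugate projective flatness gives a flat $\ben$ with local developing map into $\projp(\sted)$; (ii) use exactness to fix a distinguished metric $h$ and thereby a canonical lift $\hgss(i)$ to $\stedz$, pinning down the radial normalization used in \eqref{hgssid}; (iii) integrate to produce the immersion $i$ whose tangent planes are annihilated by $\hgss(i)$, i.e. solve $\lb \hgss(i), Ti(X)\ra = 0$, with the second fundamental form prescribed by $h$; and (iv) verify, via the identities \eqref{hgssid}, \eqref{conjhg0}, \eqref{conjhg} of section \ref{conormalsection}, that $i$ is non-degenerate and that the AH structure it induces coincides with $(\en, [h])$, checking that the induced aligned connection is $h$-conjugate to $\bnabla$ and hence equals $\nabla$. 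The co-orientation is supplied by the orientation of the density line bundle determined by the distinguished metric.

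The main obstacle I anticipate is step (iii): producing the immersion $i$ itself, rather than just its Gau\ss\, data, and proving it is genuinely an immersion with the prescribed second fundamental form. The developing map gives the \emph{conormal} directions directly, but reconstructing $i$ from its conormals amounts to integrating an overdetermined system whose compatibility is exactly the flatness of $\ben$ together with the Codazzi (symmetry) conditions built into the AH structure; one must show the natural candidate — obtained by a second integration dual to the developing map, essentially viewing $\ste$ as the space of affine hyperplanes in $\sted$ — has the correct differential. I expect this to reduce, after choosing affine coordinates so that $\bnabla$ is flat, to recognizing $i$ as the Legendre-type dual of $\hgss(i)$, with non-degeneracy equivalent to non-degeneracy of $h$ by the computation in Lemma \ref{scalefreelemma}; the identities already recorded in section \ref{conormalsection} should then close the argument with only routine verification.
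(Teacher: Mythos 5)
Your first direction is fine and matches what the paper intends: the computations of the section on immersions into projectively flat manifolds, together with Corollary \ref{conjprojflatae}, show that the induced AH structure on a non-degenerate co-oriented hypersurface in flat affine space is exact and conjugate projectively flat, and this is a local statement. Note, however, that the paper's actual proof of the converse is essentially a citation: it first observes that conjugate projective flatness forces $F_{ij}=0$ by \eqref{middlebtraces}, hence local exactness, and then invokes the fundamental existence theorem of affine hypersurface theory (Theorem $8.2$ of \cite{Nomizu-Sasaki}, or \cite{Dillen-Nomizu-Vranken}) to produce the local immersion. Your proposal attempts instead a hands-on construction via the developing map of the flat conjugate projective structure and the conormal duality of Theorem \ref{conormaltheorem}. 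That is a legitimate and genuinely different route, but as written it has a real gap exactly at the step you flag as the ``main obstacle'': you never actually produce the immersion $i$. Reconstructing $i$ from the lifted developing map $\hgss$ is an envelope/Legendre-dual construction — one must solve $\lb \hgss, Ti(X)\ra = 0$ together with a normalization, and the integrability of this overdetermined system is precisely the Gau\ss--Codazzi--Ricci compatibility that the fundamental theorem packages. Saying the identities of the conormal section ``should then close the argument with only routine verification'' is a hope, not an argument; those identities were derived \emph{assuming} the hypersurface $i$ already exists, so they cannot be cited to establish its existence. Either you carry out the integrability check (at which point you have reproved the fundamental theorem in this special case), or you cite it, as the paper does.

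Two smaller points. First, you invoke ``exactness of $(\en,[h])$'' to fix a distinguished metric and the canonical lift, but exactness is not among the hypotheses; you must derive local exactness from conjugate projective flatness via $F_{ij}=0$ (Lemma \ref{conjprojflatae} or \eqref{middlebtraces}), as the paper does in its first sentence — without this the lift of the developing map to $\sted\setminus\{0\}$ is not pinned down. Second, the developing map encodes only the flat projective structure $\ben$; you must also explain how the choice of lift forces the induced connection to be the aligned representative $\bnabla$ (rather than merely projectively equivalent to it) and how the conformal class $[h]$ reappears as the second fundamental form of the reconstructed hypersurface. These are exactly the normalizations that the equiaffine/centroaffine discussion handles in the forward direction, and they need to be run backwards explicitly if you want a self-contained proof.
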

\begin{proof}
If an AH structure is conjugate projectively flat, then it is closed by \eqref{middlebtraces}, and so its restriction to a small enough neighborhood of any point is exact. Either Theorem $8.2$ of \cite{Nomizu-Sasaki} or the results of \cite{Dillen-Nomizu-Vranken} can be used to deduce the claim.
\end{proof}

\subsubsection{}
Let $i:M \to (N, \hnabla)$ be a positively co-oriented non-degenerate hypersurface immersion into a flat $(n+1)$-dimensional manifold. If there is a $\hnabla$-parallel volume form $\Psi$, by Lemma \ref{equiaffinelemma} (the homothety class of) $\Psi$ induces on $M$ the (homothety class of the) equiaffine representative $h_{ij} \in [h]$ which is distinguished by $\nabla_{i}|\det h| = 0$. The difference tensor of the Levi-Civita connection $D$ of $h_{ij}$ with $\nabla$ is $D - \nabla = \tfrac{1}{2}\bt_{ij}\,^{k}$ and, denoting the curvature of $D$ by $\sR_{ijk}\,^{l}$ as in Section \ref{underlyingsection}, from \eqref{confcurvijkl}, \eqref{confric}, \eqref{confscal}, and $ T_{ijkl}  = \sh_{l[i}H_{j]k}   - \sh_{k[i}H_{j]l}$, there follow
\begin{align}
\label{c1}\begin{split}&\sR_{ijk}\,^{p}H_{pl} = H_{k[j}\sh_{i]l}  + H_{l[i}\sh_{j]k}  + \tfrac{1}{4}\bt_{ijkl},\quad %
\sR_{ij}  = \tfrac{\sh}{2} H_{ij} +\tfrac{n-2}{2} \sh_{ij}+ \tfrac{1}{4}\bt_{ij} ,\\
&\sR = h^{ij}\sR_{ij}  =(n-1)h^{pq}S_{pq} + h^{pq}\bt_{pq}.
\end{split}
\end{align}
For an immersion into flat affine space $(\hnabla, \ste)$ these recover equations $3.15$, $3.18$, and $3.19$ of \cite{Calabi-affinelyinvariant} (Calabi's $A_{ij}\,^{k}$ is $\tfrac{1}{2}\bt_{ij}\,^{k}$). 

\subsubsection{}
A non-degenerate immersed hypersurface in flat affine space is a \textbf{proper affine hypersphere} if its affine normal bundles meet in a point and an \tbf{improper affine hypersphere} if the affine normal subspaces are all parallel. The point at which the affine normals of a proper affine hypersphere meet is called its \tbf{center}. If the induced conformal structure is Riemannian, a proper affine hypersphere is said to be \tbf{elliptic} or \tbf{hyperbolic} according to whether its center is on its inside or its outside. An improper affine hypersphere is also called a \tbf{parabolic} affine hypersphere and is said to have \tbf{center at infinity}. For the basic facts about affine hyperspheres good references are \cite{Calabi-affinelyinvariant}, \cite{Calabi-completeaffine}, \cite{Cheng-Yau-affinehyperspheresI}, and \cite{Nomizu-Sasaki}. Theorems \ref{affhypnormals} and \ref{quadrictheorem} are standard.
 
\begin{theorem}\label{affhypnormals}
A non-degenerate co-oriented hypersurface immersion into flat affine space is an affine hypersphere if and only if its shape operator is pure trace. An affine hypersphere is improper if and only if its shape operator vanishes at some point, in which case its shape operator vanishes identically. 
\end{theorem}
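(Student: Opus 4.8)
The plan is to work throughout with the equiaffine normal vector field $\nm$, which exists because flat affine space $\ste$ carries its standard $\hnabla$-parallel volume form; since the flat connection has vanishing curvature, in particular it has trace-free normal curvature and symmetric Ricci tensor, so Lemma \ref{equiaffinelemma} guarantees that for this choice the connection one-form vanishes, $\tau \equiv 0$. Consequently the second equation of \eqref{affineinduced2} reduces to $\hnabla_{X}\nm = -Ti(S(X))$, and since $\hnabla$ is the flat connection this says simply that the $\ste$-valued derivative of $\nm$ along $X$ is $-Ti(S(X))$. The condition that $S$ be pure trace is unaffected by rescaling $\nm$, so it is a property of the affine normal bundle alone and may be tested with the equiaffine normal. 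I would also record at the outset that, because the ambient curvature $\hat{R}_{ij\infty}\,^{k}$ vanishes, \eqref{projhrijnk} with $\tau = 0$ yields the Codazzi identity $\nabla_{[i}S_{j]}\,^{k} = 0$; writing $S_{j}\,^{k} = \phi\delta_{j}\,^{k}$ and contracting $\nabla_{[i}(\phi\delta_{j]}\,^{k}) = 0$ over $k = j$ forces $(n-1)\nabla_{i}\phi = 0$, so on a connected $M$ a pure-trace shape operator has $\phi$ \emph{constant}. This observation is what couples the two halves of the statement.

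For the forward implication I would treat the proper and improper cases separately, these being the two defining possibilities. In the proper case, placing the origin at the center $c$, the hypothesis that every affine normal line passes through $c$ means $i(p) = \mu(p)\nm_{i(p)}$ for a scalar function $\mu$. Differentiating along $X$ and using $\hnabla_{X}\nm = -Ti(S(X))$ gives $Ti(X) + \mu\,Ti(S(X)) = (X\mu)\nm$; the left-hand side lies in $Ti(TM)$ and the right-hand side in $\rea\nm$, so by transversality both vanish. The vanishing of the left-hand side gives $X + \mu S(X) = 0$ for all $X$; since $M$ has positive dimension this forces $\mu$ to be nowhere zero and $S = -\mu^{-1}\Id$, which is pure trace. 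In the improper case the normals are all parallel, $\nm = fv$ for a fixed vector $v$, so $\hnabla_{X}\nm = (Xf)v \in \rea\nm$; comparing with $-Ti(S(X)) \in Ti(TM)$ and again invoking transversality yields $S \equiv 0$ and $f$ constant. Thus an affine hypersphere of either type has pure-trace shape operator.

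For the converse, suppose $S = \phi\Id$; by the opening remark $\phi$ is a constant. If $\phi \neq 0$, I would form the $\ste$-valued map $c(p) = i(p) + \phi^{-1}\nm_{i(p)}$ and compute $Tc(X) = Ti(X) + \phi^{-1}\hnabla_{X}\nm = Ti(X) - Ti(X) = 0$ (the derivative of $\phi^{-1}$ vanishing since $\phi$ is constant), so $c$ is a fixed point through which every affine normal line passes: the hypersurface is a proper affine hypersphere with center $c$. If $\phi = 0$, then $\hnabla_{X}\nm = 0$, so $\nm$ is $\hnabla$-parallel, hence a constant vector of $\ste$, and all affine normal subspaces are parallel: the hypersurface is improper. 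This establishes the first assertion.

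The second assertion then falls out of the constancy of $\phi$: since $S = \phi\Id$ with $\phi$ constant, $S$ vanishes at one point if and only if $\phi = 0$ if and only if $S \equiv 0$, and the converse analysis identifies the case $\phi = 0$ exactly with the improper hyperspheres. Hence an affine hypersphere is improper precisely when its shape operator vanishes somewhere, in which case it vanishes identically. The only genuinely delicate points are the justification that $\tau$ may be taken to vanish, handled by Lemma \ref{equiaffinelemma}, and the constancy of $\phi$ from the Codazzi identity; everything else is transversality bookkeeping, so I do not anticipate a serious obstacle.
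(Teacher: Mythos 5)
Your argument is correct. Note that the paper offers no proof of this statement at all — it is declared ``standard'' and referred to the literature (e.g.\ Nomizu--Sasaki) — so there is nothing to compare against; what you have written is precisely the classical proof. The three ingredients you isolate are exactly the right ones: the vanishing of $\tau$ for the equiaffine normal (Lemma \ref{equiaffinelemma} applied to the flat ambient connection), the Codazzi identity $\nabla_{[i}S_{j]}\,^{k}=0$ from \eqref{projhrijnk} forcing constancy of $\phi$ when $S=\phi\,\Id$, and the transversality splitting of $i^{\ast}(T\ste)$ that separates the tangential and normal components after differentiating $i(p)=\mu(p)\nm_{i(p)}$ (respectively $c(p)=i(p)+\phi^{-1}\nm_{i(p)}$). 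The only implicit hypothesis worth flagging is connectedness of $M$, which you invoke for the constancy of $\phi$ and of $c$; this is the standard convention for this statement and does not constitute a gap in your argument.
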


\begin{theorem} [Maschke-Pick-Berwald Theorem]\label{quadrictheorem}
The induced AH structure on a non-degenerate co-oriented hypersurface in flat affine space is a Weyl structure if and only if the hypersurface is an open subset of a quadric.
\end{theorem}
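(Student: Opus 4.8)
The plan is to use the identification, implicit in \eqref{dnabladiff} and in Section~\ref{projflathypersurfacesection}, that for the equiaffine AH structure $(\en,[h])$ induced on $i:M\to\ste$ the cubic torsion $\bt_{ij}\,^{k}$ is (twice) the classical Fubini--Pick form: since $D-\nabla=\tfrac12\bt_{ij}\,^{k}$ for the Blaschke metric $h$, the AH structure is Weyl (that is, $\bt_{ij}\,^{k}\equiv 0$) if and only if the induced connection $\nabla$ coincides with the Levi--Civita connection $D$ of $h$. So the theorem becomes the assertion that $i(M)$ is an open subset of a quadric exactly when $\nabla=D$.

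For the direction that a quadric is Weyl, I would argue by affine invariance rather than by a coordinate computation. The AH structure is invariant under affine motions of $\ste$, so the cubic torsion $\bt_{ijk}\in S^{3}_{0}(\ctm)$ is preserved by the affine automorphism group of the quadric. For a nondegenerate quadric this group acts transitively, and the isotropy subgroup at a point acts on the tangent space as the full (pseudo-)orthogonal group $O(h)$ of the second fundamental form. Since $S^{3}_{0}(\rea^{n})$ contains no nonzero $O(h)$-invariant when $n\geq 2$ (by Lemma~\ref{weylcriterion}, or because the harmonic cubics form a nontrivial irreducible $O(h)$-module), the invariant tensor $\bt_{ijk}$ must vanish, and hence $(\en,[h])$ is Weyl. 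Alternatively one checks $\bt=0$ directly on the affine normal forms of the paraboloid and of the central quadrics.

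The substantive direction is to show that $\bt_{ij}\,^{k}\equiv 0$ forces $i(M)$ to lie on a quadric, and its conceptual crux is purely algebraic. Assume $\bt_{ij}\,^{k}\equiv 0$, so $\nabla=D$. Because $\hnabla$ is flat it is projectively flat with symmetric Ricci tensor and trace-free normal curvature, so Lemma~\ref{tauclosed} gives $F_{ij}=0$ and $\sh_{[ij]}=0$; equation \eqref{skewnablah} then reduces to $R_{ij(kl)}=0$. Since the modified Ricci tensor $\hat{P}_{IJ}$ of the flat connection $\hnabla$ vanishes, \eqref{shcurv} becomes $R_{ijkl}=H_{kj}\sh_{il}-H_{ki}\sh_{jl}$; imposing $R_{ij(kl)}=0$ and contracting with $H^{jk}$ yields $n\sh_{il}=\sh H_{il}$, that is $\mr{\sh}_{ij}=0$ and the scale-free shape operator is pure trace. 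By Theorem~\ref{affhypnormals} the hypersurface is an affine hypersphere, and as $\sh_{ij}=\tfrac{\sh}{n}H_{ij}$ is a symmetric Codazzi tensor (the Codazzi property coming from \eqref{projhrijnk} with $\hat{R}=0$), the affine mean curvature $\sh$ is constant, which in the paper's notation is \eqref{shdiv}.

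It then remains to integrate. With $S=\lambda\,\mathrm{Id}$ and $\lambda=\sh/n$ constant there are two cases. If $\lambda=0$ the Weingarten equation \eqref{affineinduced2} gives $\hnabla_{X}\nm=0$, so $\nm$ is a fixed vector; choosing linear coordinates with $\nm=e_{n+1}$, the first $n$ coordinate functions of $i$ have vanishing $D$-Hessian while the last has $D$-Hessian equal to $h$, and $\nabla h=Dh=0$, so $i(M)$ is the graph of a quadratic polynomial, a paraboloid. If $\lambda\neq 0$ then $\hnabla_{X}(i+\lambda^{-1}\nm)=0$, so the affine normals pass through a common center, which I normalize to the origin, giving $\nm=-\lambda\,i$; using the centroaffine conormal $\hgss(i)$, for which $\langle\hgss(i),\nm\rangle=1$ and $\langle\hgss(i),Ti(X)\rangle=0$ as in Section~\ref{conormalsection}, one produces a fixed nondegenerate symmetric bilinear form $B$ on $\ste$ determined by $B(i,\cdot)=-\lambda\,\hgss(i)$, and verifies from the structure equations that $\hnabla B=0$ and $B(i,i)\equiv 1$, exhibiting $i(M)$ as an open subset of the central quadric $\{B(x,x)=1\}$. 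I expect this last integration---checking that $B$ is genuinely $\hnabla$-parallel, equivalently invoking the fundamental theorem for affine immersions as in \cite{Nomizu-Sasaki}---to be the main obstacle, the earlier steps being short and algebraic.
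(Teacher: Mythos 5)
The paper does not actually prove Theorem \ref{quadrictheorem}: it is stated as ``standard'' and referred, along with the rest of the hypersurface material, to \cite{Nomizu-Sasaki}, so there is no in-paper argument to compare yours against. On its own merits your outline is correct and is essentially the classical Pick--Berwald proof transcribed into the paper's notation: the reduction to $\bt_{ij}\,^{k}\equiv 0\Leftrightarrow\nabla=D$, the deduction $R_{ij(kl)}=0$ from \eqref{skewnablah}, the contraction of \eqref{shcurv} giving $\mr{\sh}_{ij}=0$ and then constancy of $\sh$ via \eqref{shdiv}, and the split into the parabolic and central cases all check out. Two points deserve flagging. First, your citation of Lemma \ref{weylcriterion} for the absence of $O(h)$-invariants in $S^{3}_{0}$ is inapplicable: the Young diagram of a completely symmetric tensor is a single row, so the first two columns have total length $2\leq n$ and the lemma says nothing; your alternative justification (harmonic cubics form a nontrivial irreducible $O(h)$-module, hence have no fixed vectors) is the correct one, and the direct check on normal forms also works. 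Second, in the case $\lambda\neq 0$ the prescription $B(i,\cdot)=-\lambda\,\hgss(i)$ only determines $B$ on vectors based at image points paired against $\ste$; the clean construction is to define, at each $p$, $B\bigl(Ti(X)+a\nm,\,Ti(Y)+b\nm\bigr)=h(X,Y)+\lambda ab$ and verify $\hnabla B=0$ directly from the Gau\ss{} and Weingarten equations \eqref{affineinduced2} --- the tangential part uses exactly $\nabla h=0$ (the Weyl hypothesis) and the mixed part fixes the coefficient $\lambda$ of the normal block --- after which $B$ is a constant nondegenerate form with $B(i,i)$ constant. This is the verification you correctly identify as the remaining obstacle, and it does go through.
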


\subsubsection{}
The example of affine hypersurfaces gives essential motivation for the definition of the Einstein equations for AH structures. %
\begin{theorem}\label{sphereeinsteintheorem}
Let $n > 2$. For a non-degenerate positively co-oriented hypersurface immersion into flat $(n+1)$-dimensional affine space the following are equivalent:
\begin{enumerate}
\item The image of the immersion is an affine hypersphere.
\item The AH structure induced on the hypersurface is Einstein.
\item The induced AH structure is projectively flat. 
\end{enumerate}
\end{theorem}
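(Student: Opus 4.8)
The plan is to prove the equivalence of the three conditions by exploiting the structural results already established for hypersurfaces in flat affine space, principally equations \eqref{sheij}, \eqref{shw}, \eqref{shscal}, and the characterization of affine hyperspheres by Theorem \ref{affhypnormals}. The key observation is that for a hypersurface immersion into flat affine space, the ambient connection $\hnabla$ is flat, hence projectively flat and Ricci symmetric, so by Lemma \ref{tauclosed} the induced AH structure is closed, $F_{ij} = 0$, and moreover $\sh_{[ij]} = 0$, so the scale-free shape operator $\sh_{ij}$ is symmetric. Under these simplifications \eqref{sheij} and \eqref{shw} reduce to $2E_{ij} = \mr{\hat{P} + \sh}_{ij}$ and $2A_{ij} = \hat{P}_{(ij)} - \sh_{(ij)}$, while \eqref{shscal} gives $F_{ij} = 0$ and $R = (n-1)(\sh - \hat{P})$. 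Since $\hnabla$ is flat, the modified Ricci tensor $\hat{P}_{IJ}$ vanishes identically, so $\hat{P}_{ij} = 0$ and $\hat{P} = 0$; this collapses the formulas to $2E_{ij} = \mr{\sh}_{ij}$, $2A_{ij} = -\sh_{(ij)} = -\sh_{ij}$, and $R = (n-1)\sh$.

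First I would establish $(1) \Leftrightarrow (2)$. By Theorem \ref{affhypnormals} the immersion is an affine hypersphere if and only if its shape operator is pure trace, which with $\sh_{[ij]} = 0$ is equivalent to $\mr{\sh}_{ij} = 0$. From the collapsed formulas, $\mr{\sh}_{ij} = 0$ holds if and only if $E_{ij} = 0$ and $\mr{A}_{ij} = 0$ simultaneously; but by the discussion following \eqref{aijkldefined} (see also \eqref{aij}), the naive Einstein condition is precisely $\mr{T}_{ij} = 0$ and $E_{ij} = 0$, equivalently $\mr{A}_{ij} = 0$ and $E_{ij} = 0$. Thus $\mr{\sh}_{ij} = 0$ is exactly the naive Einstein condition. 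Since the induced AH structure is conjugate projectively flat (shown in section \ref{projflathypersurfacesection}), Lemma \ref{projflatahlemma} or Corollary \ref{conjprojflatae} already gives $E_{ijkl} = 0$, so by \eqref{ebcontracted} the naive Einstein structure is automatically conservative, hence Einstein. Therefore $(1) \Leftrightarrow (2)$.

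Next I would treat $(1) \Leftrightarrow (3)$ and $(2) \Leftrightarrow (3)$. The immersion is conjugate projectively flat, so $F_{ij} = 0$, $A_{ijkl} = 0$, $E_{ijkl} = 0$, and $\mr{W}_{ij} = 0$ by Corollary \ref{conjprojflatae}. By Lemma \ref{projflatahlemma}, projective flatness of $(\en, [h])$ then holds if and only if in addition $\mr{\bar{W}}_{ij} = 0$; but \eqref{shw} (with $\hat{P}_{ij} = 0$ and $F_{ij} = 0$) gives $\bar{W}_{ij} = -\sh_{(ij)} + \tfrac{\sh}{2n}H_{ij}$, so $\mr{\bar{W}}_{ij} = -\mr{\sh}_{ij}$, whence projective flatness is equivalent to $\mr{\sh}_{ij} = 0$, i.e. to condition $(1)$. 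This closes the cycle, and the equivalence with $(2)$ follows from the first part. Alternatively $(2) \Leftrightarrow (3)$ can be read off directly from Lemma \ref{projprojflateinsteinlemma}: the structure is always conjugate projectively flat here, so projectively flat implies naive Einstein, and conversely naive Einstein plus conjugate projectively flat implies projectively flat.

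The main obstacle, such as it is, is bookkeeping rather than conceptual: one must be careful to use the full flatness of $\hnabla$ (not merely projective flatness) to kill $\hat{P}_{IJ}$ and thereby obtain the clean proportionalities $2A_{ij} = -\sh_{ij}$ and $2E_{ij} = \mr{\sh}_{ij}$, and one must verify that the symmetry $\sh_{[ij]} = 0$ genuinely follows from Ricci symmetry via Lemma \ref{tauclosed} so that ``shape operator pure trace'' coincides with $\mr{\sh}_{ij} = 0$. Given the preparatory identities of section \ref{projflathypersurfacesection}, each implication reduces to reading off the vanishing of $\mr{\sh}_{ij}$ from a single displayed equation, so no substantive new computation is required.
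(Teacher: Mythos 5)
Your proposal is correct and follows essentially the same route as the paper: the identities of section \ref{projflathypersurfacesection} relating the induced curvature to the scale-free shape operator, Theorem \ref{affhypnormals} for the characterization of affine hyperspheres, and Lemma \ref{projprojflateinsteinlemma} together with conjugate projective flatness for the equivalence with projective flatness. The only cosmetic difference is that you verify the conservation condition via $E_{ijkl}=0$ and \eqref{ebcontracted}, whereas the paper invokes the parallelism of the weighted scalar curvature (which follows from \eqref{shdiv} once $\mr{\sh}_{ij}=0$); both are valid.
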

\begin{proof}
The equivalence of $(1)$ and $(2)$ follows from Theorem \ref{affhypnormals}, the equations \eqref{shric}, the observation that the scalar curvature of an affine hypersphere is parallel, and the observation that the induced AH structure on a non-degenerate affine hypersurface is closed. When $n  > 2$ the equivalence of $(2)$ and $(3)$ follows from Lemma \ref{projprojflateinsteinlemma}, because the induced AH structure is necessarily conjugate projectively flat. 
\end{proof}

\begin{remark}
If $n = 2$ then the equivalence of $(1)$ and $(2)$ holds, and, because the AH structure induced on an affine hypersurface is closed with parallel weighted scalar curvature and conjugate projectively flat, it follows from \eqref{projcotton} that $C_{i} = -2\nabla^{p}E_{ip}$, so that if the induced AH structure is Einstein, then it is projectively flat. On the other hand, it need not be the case that projective flatness implies $(1)$ and $(2)$.
\end{remark}

In section \ref{cubicformsection} there will be given examples of exact Riemannian signature Einstein AH structures with self-conjugate curvature which are neither projectively nor conjugate projectively flat.

\subsection{Convex flat real projective structures}\label{convexprojectivesection}
In this section it is explained that together the fact that the induced AH structure on an affine hypersphere be Einstein and a theorem of Cheng-Yau imply that on a manifold with a compact convex flat real projective $\en$ there is an Einstein AH structure $(\en, [h])$. This gives a large class of examples of compact Einstein AH structures which are not Einstein Weyl. 

A full development of this interesting example would take more space than is appropriate here, because it would require introducing formalism, for instance as the Thomas connection or standard tractor connection (associated to the canonical Cartan connection) of a projective structure. The approach taken is as naive as possible and some details are only sketched. Complete references are not given, but the surveys \cite{Benoist-survey} and \cite{Loftin-survey} are excellent starting points.

\subsubsection{}
Let $M$ be an $n$-manifold. In what follows $\emf^{\la} \to M$ will denote a line bundle sections of which transform as $-\la/(n+1)$ densities and on which any affine connection induces a covariant derivative. Making this precise requires a digression of several paragraphs. Since in the case in which it mainly will be needed here, the relevant bundle will be trivial (although not canonically so) and will admit an elementary description, the reader could skip to section \ref{projopsection}, thinking of $\emf^{1}$ as something such as the dual of the tautological bundle on the oriented projectivization $\projp(\ste)$ of an $(n+1)$-dimensional vector space $\ste$ or the bundle $|\Det \ctm|^{-1/(n+1)}$ of $-1/(n+1)$ densities. A section of $\emf^{\la}$ will be said to have \textbf{p-weight} $\la$. 

\subsubsection{}\label{markingsection}
Let $\dens \defeq \Det \ctm \setminus \{0\} \to M$ be the bundle of top forms with the zero section deleted, regarded as a $\reat$ principal bundle. Any continuous representation $\si:\reat \times \rea \to \rea$ of $\reat$ on $\rea$ has the form $\si(r) \cdot t = \sile(r)\cdot t \defeq \sign(r)^{\ep}|r|^{\lambda}t$ for some $(\lambda, \epsilon) \in \rea \times \zmodtwo$, in which $\sign(r) \defeq |r|^{-1}r$. Assume given a $\reat$ principal bundle $\F \to M$ and a principal bundle morphism $\Q:\F \to \dens$ such that there is $(\al, \ep) \in \reat \times \zmodtwo$ such that $\Q(u\cdot r) = \si^{\al, \ep}(r)\cdot \Q(u)$ for all $u \in \F$. Given a $\reat$ principal bundle $\F \to M$, let $|\F|$ be the $\reat$ principal bundle obtained by applying to the transition functions defining $\F$ the homomorphism $\si^{1, 0}$. By assumption $\Q$ induces an isomorphism between $|\F|^{\al}$ and $|\dens|$. Let $\emf^{\la, \ep} = \F \times_{\si^{-\al\la/(n+1), \ep}} \rea$ be the associated real line bundle, so that smooth sections $u$ of $\emf^{\la, \ep}$ correspond bijectively to functions $\tilde{u} \in \cinf(\F)$ which have positive homogeneity $\al\lambda(n+1)$ and are even or odd according to the parity of $\ep$. Precisely, $\emf^{\la, \ep}$ is the quotient of $\F \times \rea$ by the equivalence relation $(u, t) \sim (ru, \si^{\al\la/(n+1), \ep}(r)t)$; the equivariant funtion $u \in \cinf(\rho^{-1}(U))$ corresponding to the local section $u \in \Ga(U, \emf^{\la, \ep})$ is defined by $u(x) = [p, \tilde{u}(p)]$ for $x \in M$ and any $p \in \rho^{-1}(x)$. An example of $\F$ which is typical is the defining bundle $\ste \setminus \{0\}$ over the projectivization $\proj(\ste)$ of an $(n+1)$-dimensional vector space, viewed as the bundle of frames in the tautological line bundle $\taut$. The determinant on $\ste$ induces a canonical identification $\Det T^{\ast} \proj(\ste)$ with $\taut^{n+1}$ which gives the required $\Q$ with $\al = n+1$.

By construction $\Q$ induces an identification of $\emf^{\la, 0}$ with $|\Det \ctm|^{-\la/(n+1)}$. Since $\emf^{\la, \ep}$ differs from $\emf^{\la, 0}$ only by topological twisting, this gives sense to saying that sections of $\emf^{\la, \ep}$ transform like $-\la/(n+1)$ densities. An affine connection $\nabla$ induces a principal connection on $\dens$, and the pullback via $\Q$ of the induced principal connection multiplied by $1/\al$ is a principal connection $\be$ on $\F$, so induces on each $\emf^{\la, \ep}$ a covariant derivative, which will be denoted also by $\nabla_{X}u$ for $X \in \Ga(TM)$ and $u \in \Ga(\emf^{\la, \ep})$. By definition the equivariant function corresponding to $\nabla_{X}u$ is the result of applying $d\tilde{u}$ to the $\be$-horizontal lift of $X$ on $\F$. This gives sense to the statement that an affine connection induces a covariant derivative on $\emf^{\al, \ep}$.

In what follows there will be supposed fixed $\F$, $\Q$, and $\ep$, and the last will be omitted from the notation, there being written simply $\emf^{\la}$. Saying that $\emf^{1}$ is oriented means that $\emf^{1, \ep}$ is orientable and there is fixed an idenfitication with $\emf^{1, 0} \simeq |\Det \ctm|^{-1/(n+1)}$. In most of the applications there will be considered only $\emf^{1}$, and it will be oriented, but some of the results make sense in more generality.

\subsubsection{}\label{projopsection}
A differential operator defined by a connection representing a projective structure $\en$ is \textbf{projectively invariant} if the resulting operator does not depend on the choice of representative connection. It is straightforward to check the projective invariance of the following differential operators on tensors having the specified p-weights
\begin{align*}
\B_{1}(u)_{ij} & \defeq \nabla_{i}\nabla_{j}u - P_{ij}u = \nabla_{(i}\nabla_{j)}u - P_{(ij)}u& &u \in \Ga(\emf^{1}),\\
\B_{1}^{1}(a)_{ijk} &\defeq \nabla_{[i}a_{j]k}& &a_{ij}\in \Ga(S^{2}(\ctm)\tensor \emf^{1}),\\
\B_{2}(v)_{ijk} & \defeq \nabla_{i}\nabla_{j}\nabla_{k}v - 2P_{jk}\nabla_{i}v - 2P_{i(j}\nabla_{k)}v - 2(\nabla_{i}P_{jk})v,& &v \in\Ga( \emf^{2}).
\end{align*}
From the Ricci identity there follows 
\begin{align}\label{bijskewk}
&\B_{2}(v)_{[ij]k} = -\tfrac{1}{2}B_{ijk}\,^{p}\nabla_{p}v - C_{ijk}v, &\B_{2}(v)_{i[jk]} = 0,& 
\end{align}
In particular, \eqref{bijskewk} shows that for a flat projective structure $\B_{2}(v)_{ijk}$ is completely symmetric. For any torsion-free $\bnabla \in \en$ and any $u \in \Ga(\emf^{1})$ there hold
\begin{align}\label{precubu}
&\tfrac{1}{2}\B_{2}(u^{2})_{ijk} = u\bnabla_{i}\B_{1}(u)_{jk} +\B_{1}(u)_{jk} \bnabla_{i}u + \B_{1}(u)_{ik} \bnabla_{j}u+ \B_{1}(u)_{ij}\bnabla_{k}u,\\
\label{obstruct1}
&\B^{1}_{1}(\B_{1}(u))_{ijk} = \bnabla_{[i}\B_{1}(u)_{j]k}= -\tfrac{1}{2}B_{ijk}\,^{p}\bnabla_{p}u - \tfrac{1}{2}C_{ijk}u.
\end{align}
While identities such as \eqref{bijskewk}, \eqref{precubu}, and \eqref{obstruct1} can be demonstrated by direct computation, they are better understood by working with the Thomas connection or, what is equivalent, the standard tractor connection); for example solutions of $\B_{2}(u) = 0$ correspond to parallel tractors. For these formalisms see for instance \cite{Bailey-Eastwood-Gover} and \cite{Cap-Gover}. The definitions of $\B_{1}$, $\B^{1}_{1}$, and $\B_{2}$ may appear \textit{ad hoc}; in fact $B_{1}$ and $\B_{1}^{1}$ are the first two operators appearing in the generalized BGG sequences associated to the standard representation of $\sll(n+1, \rea)$ and $\B_{2}$ is the first operator in the BGG sequence for a different representation. For what are BGG sequences, consult \cite{Cap-Slovak-Soucek} and \cite{Calderbank-Diemer}. From \eqref{obstruct1} it is evident that $\B^{1}_{1}(\B_{1}(u))_{ijk} = 0$ is flat, which is a special case of a general fact about BGG sequences. However, while such points of view are powerful and conceptually clarifying, they are omitted because to adequately develop the necessary formalism would require more preparation than would be justified by the limited use which would be made of them here.

\subsubsection{}
A $u \in \Ga(\emf^{1})$ is \textbf{non-degenerate} (with respect to $\en$) if it satisfies
\begin{align}\label{ubu}
&\det \B_{1}(u) \neq 0.
\end{align}
A non-degenerate $u$ is \textbf{convex} (resp. \textbf{concave} or \textbf{indefinite}) if $\B_{1}(u)$ is positive definite (resp. negative definite or indefinite). Note that the definition permits that a non-degenerate $u$ have zeroes, although it will turn out that in many cases of interest such a $u$ does not. For $u \in \Ga(\emf^{1})$, $\det \B_{1}(u)$ has p-weight $-n-2$, and so $u^{n+2}\det \B_{1}(u)$ is a function (even when $\emf^{1}$ is topologically non-trivial), and hence for any $f \in \cinf(M)$ it makes sense to consider the projectively invariant Monge-Amp\'ere equation 
\begin{align}
\monge(u) \defeq u^{n+2}\det \B_{1}(u) = f.
\end{align}
An equation of this sort was probably first studied as such in \cite{Loewner-Nirenberg}. Of particular interest are non-degenerate solutions to the equation $\monge(u) = \ka$ with $\ka \in \reat$.

The operator $\monge(u)$ is usually encountered in the following setting. There is given a convex domain $\Omega$ in an affine subspace of $\proj(\ste)$, and $\pr$ is the flat affine connection on this affine subspace representing the flat projective structure on $\proj(\ste)$. With respect to flat affine coordinates $x^{i}$ the operator $\B_{1}(u)$ is given by the Hessian $\tfrac{\pr^{2}\tilde{u}}{\pr x^{i} \pr x^{j}}$ where $u = \tilde{u}|dx^{1}\wedge \dots dx^{n}|^{-1/(n+1)}$. Written in terms of $\tilde{u}$ and a flat affine connection on $\rea^{n}$, the equation $\monge(u) = \ka$ has been studied intensively, some of the most important references being \cite{Loewner-Nirenberg}, \cite{Jorgens}, \cite{Pogorelov}, \cite{Calabi-improper}, \cite{Calabi-completeaffine}, and \cite{Cheng-Yau-mongeampere}.

In what follows it is useful to keep in mind the simplest examples, among which are the following
\begin{align}
\label{ballexample}&\tilde{u} = -(1 - |x|^{2})^{1/2} \,\,\text{ on the ball} \,\, \Omega = \{x \in \rea^{n}: u < 0\},\\
\label{sphereexample}&\tilde{u} = (1 + |x|^{2})^{1/2} \,\,\text{ on}\,\, \rea^{n},\\
\label{orthantexample}&\tilde{u} = -\sqrt{n+1}(\prod_{i}x^{i})^{1/(n+1)} \,\,\text{on the orthant}\,\,\{x \in \rea^{n}: \prod_{i}x^{i} > 0\} .
\end{align}
Examples \eqref{ballexample} and \eqref{orthantexample} solve $\monge(u) = (-1)^{n+2}$, while \eqref{sphereexample} solves $\monge(u) = 1$. The last example is due to E. Calabi, \cite{Calabi-completeaffine}.

\subsubsection{}
Since it makes sense to speak of a metric taking values in an oriented line bundle as a representative of a conformal structure, if $\emf^{1}$ is oriented and $u \in \Ga(\emf^{1})$ is non-degenereate, it makes sense to speak of the conformal struture generated by the p-weight $1$ tensor $\B_{1}(u)_{ij}$. For this conformal structure the unweighted metric $h_{ij} = |\det \B_{1}(u)|^{1/(n+2)}\B_{1}(u)_{ij}$ is a representative and $H_{ij} = |\det \B_{1}(u)|^{-1/n}\B_{1}(u)_{ij}$ is the normalized representative. To $u$ which is moreover nowhere vanishing there is associated also the unweighted metric $\sign(u)u^{-1}\B_{1}(u)_{ij} = |\monge(u)|^{-1/(n+2)}h_{ij}$.

\begin{lemma}\label{buahlemma}
Suppose $\emf^{1}$ is oriented. If a non-degenerate $u \in \Ga(\emf^{1})$ solves $\B^{1}_{1}(\B_{1}(u))_{ijk} = \si_{[i}\B_{1}(u)_{j]k}$ for some one-form $\si_{i}$ then the CP pair $(\en, [h])$ determined by $\en$ and the conformal structure $[h]$ generated by the weighted metric $\B_{1}(u)_{ij}$ is an AH structure for which the Faraday primitive $\ga_{i}$ associated to the metric $h_{ij} = |\det \B_{1}(u)|^{1/(n+2)}\B_{1}(u)_{ij}$ satisfies $(n+2)\ga_{i} =(n+1)\si_{i}$. In particular, if $\si_{i}$ is $0$ then $(\en, [h])$ is exact, $h_{ij}$ is a distinguished metric, and $|\det \B_{1}(u)|$ is parallel.
\end{lemma}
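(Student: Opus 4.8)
The plan is to unwind the definitions so that the hypothesis $\B^{1}_{1}(\B_{1}(u))_{ijk} = \si_{[i}\B_{1}(u)_{j]k}$ becomes a statement about the covariant derivative of the normalized metric $H_{ij}$ of $[h]$ with respect to the aligned representative $\nabla \in \en$. First I would fix an arbitrary torsion-free representative $\bnabla \in \en$ and write $a_{ij} \defeq \B_{1}(u)_{ij}$, a p-weight $1$ symmetric two-tensor generating $[h]$. The definition $\B^{1}_{1}(a)_{ijk} = \bnabla_{[i}a_{j]k}$ then lets me rewrite the hypothesis as $\bnabla_{[i}a_{j]k} = \si_{[i}a_{j]k}$. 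The essential computation is to show that this skew-symmetry condition forces the completely $h$-trace-free part of $\bnabla a$ to be completely symmetric, i.e. that the conformal torsion $\ct$ of the CP pair $(\en,[h])$ vanishes, which is exactly the condition for $(\en,[h])$ to be an AH structure in the sense of the definition in section \ref{ahsection}.

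The key step is that $\B^{1}_{1}(a)_{ijk} = \bnabla_{[i}a_{j]k}$ is precisely (up to the standard normalization \eqref{khatr}--\eqref{tfp}) the tensor whose $[h]$-trace-free part controls $\ct_{\bnabla}(h)$; recall from section \ref{difftensorsection} that $\ct_{\bnabla}(h) = \tf^{h}\circ \P_{2}(\bnabla h)$, and $\P_{2}$ extracts exactly the $\bnabla_{[i}a_{j]k}$ part of $\bnabla a$. Since the hypothesis asserts $\bnabla_{[i}a_{j]k} = \si_{[i}a_{j]k}$ is pure trace (it lies in the image of the conformal action of the one-form $\si$), its complete $h$-trace-free part vanishes; hence $\ct_{\bnabla}(h) = 0$ and $(\en,[h])$ is an AH structure. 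Because $\ct$ depends only on the conformal projective class (Lemma \ref{sametorsion}), this conclusion is independent of the choice of $\bnabla$.

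Next I would identify the Faraday primitive. Passing to the aligned representative $\nabla$, condition $(4)$ of Lemma \ref{special} gives $\ct_{ijk} = \tfrac{4}{3}\nabla_{[i}H_{j]k}$, and I would trace the relation $\bnabla_{[i}a_{j]k} = \si_{[i}a_{j]k}$ to read off the Faraday primitive associated to $h_{ij} = |\det a|^{1/(n+2)}a_{ij}$. The cleanest route is to compute $h^{pq}\nabla_{i}h_{pq}$ directly: since $h_{ij}$ is a fixed positive multiple of $a_{ij}$ and the logarithmic derivative of $|\det a|$ enters through the trace of $\bnabla_{i}a_{jk}$, the skew condition $\bnabla_{[i}a_{j]k} = \si_{[i}a_{j]k}$ yields upon contraction a relation of the form $h^{pq}\nabla_{i}h_{pq} = c\,\si_{i}$ for an explicit constant $c$. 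Comparing with the defining relation $2n\ga_{i} = h^{pq}\nabla_{i}h_{pq}$ and tracking the p-weight bookkeeping (the factor $(n+1)$ from $\Det$ against the factor $(n+2)$ from $\det \B_{1}(u)$) produces the asserted $(n+2)\ga_{i} = (n+1)\si_{i}$.

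Finally, the case $\si_{i} = 0$: then $\ga_{i} = 0$, so the Faraday primitive of $h$ vanishes, which means $\nabla_{i}|\det h| = 0$, i.e. $(\en,[h])$ is exact with $h$ a distinguished metric, and $|\det \B_{1}(u)| = |\det a|$ is $\nabla$-parallel since $|\det h|$ differs from it by a constant power. The main obstacle I anticipate is purely bookkeeping rather than conceptual: getting the weight factors $(n+1)$ versus $(n+2)$ correct in the contraction that produces $\ga_{i}$, since $a_{ij}$ carries p-weight $1$ while $h_{ij}$ is unweighted, and one must carefully track how the density normalization $|\det a|^{1/(n+2)}$ interacts with the projectively-invariant operator $\B_{1}$ and the definition of c-weight in section \ref{cweightsection}. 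This requires care but no new ideas, so I would streamline it by first reducing to the aligned gauge and then comparing with Lemma \ref{special}(4).
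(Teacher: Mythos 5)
Your proposal is correct and follows essentially the same route as the paper's proof: both reduce the hypothesis via the identity $\B^{1}_{1}(\B_{1}(u))_{ijk}=\bnabla_{[i}\B_{1}(u)_{j]k}$ to the statement that the skew part of $\bnabla a$ is of the pure-trace form $\si_{[i}a_{j]k}$, whose completely trace-free part vanishes, so the conformal torsion is zero, and then obtain $\ga_{i}$ by tracing and tracking the exponents $1/(n+2)$ versus $2(n+1)/(n+2)$ relating $|\det \B_{1}(u)|$ to $|\det h|$. The only difference is organizational — the paper explicitly exhibits the aligned representative as $\nabla=\bnabla+2\tau_{(i}\delta_{j)}\,^{k}$ with $(n+2)\tau_{i}=\bnabla_{i}\log|\det\B_{1}(u)|-n\si_{i}$ before computing $\nabla_{i}|\det h|$, whereas you compute $h^{pq}\nabla_{i}h_{pq}$ directly — and you leave that final bookkeeping unexecuted, but it goes through exactly as you describe.
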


In particular if $\en$ is flat, by \eqref{obstruct1} the hypothesis of Lemma \ref{buahlemma} holds with $\ga \equiv 0$ for any non-degenerate $u \in \Ga(\emf^{1})$.

\begin{proof}
By \eqref{obstruct1} the hypothesis implies $\bnabla_{[i}\B_{1}(u)_{j]k}= \si_{[i}\B_{1}(u)_{j]k}$ for any $\bnabla \in \en$ and so $n\bnabla_{[i}H_{j]k} = -(n+2)\tau_{[i}H_{j]k}$ for $(n+2)\tau_{i} = \bnabla_{i}\log |\det \B^{1}(u)| - n\si_{i}$. This implies that $\en$ generates with the conformal structure determined by $h = |\det \B_{1}(u)|^{1/(n+2)}\B_{1}(u)_{ij}$ an AH structure for which the aligned representative $\nabla \in \en$ has the form $\nabla = \bnabla + 2\tau_{(i}\delta_{j)}\,^{k}$. By construction $\nabla$ satisfies $\nabla_{i}|\det \B_{1}(u)| = \bnabla_{i}|\det \B_{1}(u)| - (n+2)\tau_{i}|\det \B_{1}(u)| = n\si_{i}$, and since $|\det h| = |\det \B_{1}(u)|^{2(n+1)/(n+2)}$, the Faraday primitive $\ga_{i}$ associated to $h$ is given by $2n \ga_{i} = |\det h|^{-1}\nabla_{i}|\det h| = \tfrac{2n(n+1)}{(n+2)} \si_{i}$, so that $(n+2)\ga_{i} = (n+1)\si_{i}$. If $\si_{i} = 0$ then $(\en, [h])$ is exact and $h_{ij}$ is a distinguished metric. %
\end{proof}

It is suggestive to think of $u$ as in Lemma \ref{buahlemma} as an analogue of a K\"ahler potential, the non-degeneracy of $\B_{1}(u)_{ij}$ being analogous to the condition that the K\"ahler form be symplectic, and the condition $\B^{1}_{1}(\B_{1}(u))_{ijk} = 0$ being analogous to the condition that it be closed. In this analogy, projective structures correspond to almost complex structures, and flat projective structures to complex structures. Whatever the worth of this analogy, it motivates calling $u \in \Ga(\emf^{1})$ an \textbf{almost AH potential} if $\B^{1}_{1}(\B_{1}(u))_{ijk} = 0$, and an \textbf{AH potential} if moreover $u$ is non-degenerate.

\subsubsection{}
Given $\la \in \rea$ let $\hat{\la} = (n-1)\la - 2(n+1)$. There is a canonical map $\adj:\Ga(S^{2}(\ctm)\tensor \emf^{\la}) \to \Ga(S^{2}(TM) \tensor \emf^{\hat{\la}})$ determined uniquely by the requirement that there hold $\adj(a)^{jp}a_{pi} = (\det a)\delta_{i}\,^{j}$ for all $a_{ij} \in \Ga(S^{2}(\ctm)\tensor \emf^{\la})$. The tensor $\adj(a)^{ij}$ is called the \textbf{adjugate tensor} of $a_{ij}$. When $a_{ij}$ is non-degenerate the weighted symmetric bivector $a^{ij}$ defined by $a^{jp}a_{ip} = \delta_{i}\,^{j}$ is given by $a^{ij} = (\det a)^{-1}\adj(a)^{ij}$. The adjugate appears most naturally in the computation of the derivative of a determinant. If $\nabla$ is a torsion-free affine connection then $\nabla_{i}\det a = \adj(a)^{pq}\nabla_{i}a_{pq}$.

Theorem \ref{butheorem} is the principal technical result of this section.

\begin{theorem}\label{butheorem}
Let $\en$ be a projective structure on a connected manifold and let $u$ be a section of $\emf^{1}$ which is not identically zero. The following conditions are equivalent.
\begin{enumerate}
\item There is a $\kappa \in \rea$ such that $u$ solves $\monge(u) = \ka$.
\item $u^{n+1}\adj(\B_{1}(u))^{pq}\B_{2}(u^{2})_{ipq} =0 $.
\end{enumerate}
If $u$ is an almost AH potential (e.g. $\en$ is flat) then $(1)$ and $(2)$ are equivalent to
\begin{enumerate}
\setcounter{enumi}{2}
\item  $u^{n+1}\adj(\B_{1}(u))^{pq}\B_{2}(u^{2})_{(ipq)} = 0$.
\end{enumerate}
If $\emf^{1}$ is oriented and $u$ is an AH potential, then $(1)$ and $(2)$ are equivalent to 
\begin{enumerate}
\setcounter{enumi}{3}
\item There holds $\nabla u = 0$ for the unique representative $\nabla \in \en$ aligned with respect to the conformal structure $[h]$ generated by $\B_{1}(u)_{ij}$,
\end{enumerate}
If $\emf^{1}$ is oriented, $\en$ is flat, and $u$ is non-degenerate and satisfies $(1)$ and $(2)$ then $u$ is non-vanishing, $\ka \neq 0$, and $(\en, [h])$ is proper Einstein with scalar curvature having the same sign as has $u$.
\end{theorem}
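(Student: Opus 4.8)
The plan is to run everything through the equivalence of (1) and (4) established in the earlier part of the theorem: under the stated hypotheses the flatness of $\en$ makes $u$ an almost AH potential, and non-degeneracy promotes it to an AH potential, so conditions (1)--(2) are equivalent to $\nabla u = 0$, where $\nabla$ is the aligned representative of the AH structure $(\en,[h])$ with $[h]$ generated by $\B_1(u)_{ij}$. First I would extract the two cheap consequences. Since $u$ is a $\nabla$-parallel section of a line bundle on the connected manifold $M$ and is not identically zero, it is nowhere zero. Since $\nabla_iu=0$ forces $\nabla_{(i}\nabla_{j)}u=0$, the defining formula $\B_1(u)_{ij}=\nabla_{(i}\nabla_{j)}u-P_{(ij)}u$ collapses to $\B_1(u)_{ij}=-u\,P_{ij}$, and the symmetry of $\B_1(u)$ then forces $P_{[ij]}=0$. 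Non-degeneracy gives $\det\B_1(u)\neq 0$, so $\ka=\monge(u)=u^{n+2}\det\B_1(u)$ is a nowhere-vanishing constant, i.e.\ $\ka\neq 0$.

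The conceptual crux is to pin down the Schouten tensor. Combining $H_{ij}=|\det\B_1(u)|^{-1/n}\B_1(u)_{ij}$ with $\B_1(u)_{ij}=-u\,P_{ij}$ yields $P_{ij}=c\,H_{ij}$ with $c=-u^{-1}|\det\B_1(u)|^{1/n}$ nowhere zero. By Lemma \ref{buahlemma}, applied with vanishing one-form (legitimate because $\en$ is flat and $u$ is an AH potential), the density $|\det\B_1(u)|$ is $\nabla$-parallel; together with $\nabla u=0$ this gives $\nabla c=0$. Thus the Schouten tensor of the aligned representative is a $\nabla$-parallel multiple of the normalized metric, and this identification is precisely where hypotheses (1)--(2) do their work.

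Feeding $P_{ij}=c\,H_{ij}$ and $P_{[ij]}=0$ into the projectively flat curvature identity $R_{ijk}\,^{l}=-2\delta_{[i}\,^{l}P_{j]k}+2\delta_{k}\,^{l}P_{[ij]}$ coming from \eqref{bijkl} with $B_{ijk}\,^{l}=0$, I get the constant-curvature form $R_{ijk}\,^{l}=-2c\,\delta_{[i}\,^{l}H_{j]k}$. A one-line trace then gives $R_{ij}=Q_{ij}=(1-n)c\,H_{ij}$, both pure trace, so $\mr{R}_{ij}=\mr{Q}_{ij}=0$ and the structure is naive Einstein; moreover $R_{[ij]}=0$, hence $F_{ij}=0$ and the structure is closed. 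Since $R=n(1-n)c$ with $c$ parallel, $\nabla_iR=0$, and with $F_{ij}=0$ the conservation condition $\nabla_iR+n\nabla^pF_{ip}=0$ of \eqref{einstein} holds automatically, so $(\en,[h])$ is Einstein. (For $n\geq 3$ one may instead invoke Lemma \ref{projprojflateinsteinlemma}, projectively flat plus naive Einstein forcing Einstein, but the direct trace computation also covers $n=2$.) As $c$ is nowhere zero, $R$ is nowhere zero, so the structure is proper.

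For the sign statement I would compare the two expressions $\B_1(u)_{ij}=-uc\,H_{ij}$ and $\B_1(u)_{ij}=|\det\B_1(u)|^{1/n}H_{ij}$, obtaining $-uc=|\det\B_1(u)|^{1/n}>0$, hence $\sign(u)\sign(c)=-1$; since $R=n(1-n)c$ with $n(1-n)<0$, this gives $\sign(R)=-\sign(c)=\sign(u)$, so the scalar curvature has the same sign as $u$. The one genuinely delicate point in the whole argument is the bookkeeping of weights and orientations of the density line bundles in this last sign comparison (and in checking that $c$, $R$ live in the oriented bundle $|\Det\ctm|^{1/n}$ where ``sign'' is meaningful); everything else is short once $P_{ij}=c\,H_{ij}$ with $c$ parallel is in hand.
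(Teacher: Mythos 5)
Your argument for the final assertion (non-vanishing of $u$, $\ka\neq 0$, proper Einstein, sign of $R$) is correct and essentially the paper's: the paper likewise gets $u$ nowhere zero from $\nabla u=0$, reads $\mr{R}_{ij}=0$ off $\B_{1}(u)_{ij}=-uP_{ij}$, and obtains the sign from $R=\tfrac{n}{n-1}u^{-1}|\det\B_{1}(u)|^{1/n}$. Your only real variation is that you reconstitute the full constant-curvature form of $R_{ijk}{}^{l}$ from projective flatness and trace it to get $\mr{Q}_{ij}=0$ and closedness, where the paper instead quotes \eqref{middlebtraces}; both routes work, and yours has the small advantage of handling $n=2$ without appeal to Lemma \ref{projprojflateinsteinlemma}.

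The genuine gap is that you never prove the equivalences $(1)\Leftrightarrow(2)\Leftrightarrow(3)\Leftrightarrow(4)$, which are the bulk of the theorem; you declare them ``established in the earlier part of the theorem,'' but they \emph{are} the earlier part of the theorem. The missing engine is the identity obtained by tracing \eqref{precubu} with the adjugate tensor,
\begin{align*}
\tfrac{1}{2}u^{n+1}\adj(\B_{1}(u))^{pq}\B_{2}(u^{2})_{ipq} = u^{n+2}\bnabla_{i}\det \B_{1}(u) +(n+2)u^{n+1}\det \B_{1}(u)\bnabla_{i}u = \bnabla_{i}\monge(u),
\end{align*}
which identifies condition $(2)$ with the local constancy of $\monge(u)$ and hence, on a connected manifold, with $(1)$. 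The equivalence with $(3)$ requires the separate computation $\B_{2}(u^{2})_{[ij]k}=2u\B^{1}_{1}(\B_{1}(u))_{ijk}$ (from skewing \eqref{precubu} and invoking \eqref{obstruct1}), which shows $\B_{2}(u^{2})_{ijk}$ is completely symmetric precisely when $u$ is an almost AH potential; and the equivalence with $(4)$ requires Lemma \ref{buahlemma} to conclude $\nabla\det\B_{1}(u)=0$ for the aligned representative, so that the first equality of the displayed identity reduces to a nonzero multiple of $\nabla_{i}u$. Without these three steps the chain of equivalences on which your entire argument for the final claim rests is unsupported, so the proposal as written proves only the last sentence of the theorem conditionally on the rest.
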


\begin{proof}
Tracing \eqref{precubu} with the adjugate tensor $\adj(\B_{1}(u))^{jk}$ gives
\begin{align}\label{nablaimonge}
&\tfrac{1}{2}u^{n+1}\adj(\B_{1}(u))^{pq}\B_{2}(u^{2})_{ipq} = u^{n+2}\bnabla_{i}\det \B_{1}(u) +(n+2)u^{n+1}\det \B_{1}(u)\bnabla_{i}u = \bnabla_{i}\monge(u),
\end{align}
from which the equivalence of $(1)$ and $(2)$ is evident. Skewing \eqref{precubu} and using \eqref{obstruct1} show
\begin{align*}
&\B_{2}(u^{2})_{[ij]k} =  2u\nabla_{[i}\B_{1}(u)_{j]k} = -uB_{ijk}\,^{p}\nabla_{p}u - C_{ijk}u^{2} = 2u\B^{1}_{1}(\B_{1}(u))_{ijk},
\end{align*}
so that if $u$ is an almost AH potential then $\B_{2}(u^{2})_{[ij]k} = 0$, and there holds $\B_{2}(u^{2})_{ijk} = \B_{2}(u^{2})_{(ijk)}$. Consequently $(2)$ and $(3)$ are equivalent if $u$ is an almost AH potential. If $\emf^{1}$ is oriented and $u$ is an AH potential, let $(\en, [h])$ be the exact AH structure determined by $\B^{1}(u)_{ij}$ as in Lemma \ref{buahlemma}, and let $\nabla$ be its aligned representative. By Lemma \ref{buahlemma}, $\nabla \det \B_{1}(u) = 0$. From the first equality of \eqref{nablaimonge} it follows that $\nabla u = 0$ if and only if there holds $(2)$, so with these hypotheses $(4)$ and $(2)$ are equivalent. 

If $\emf^{1}$ is oriented, $\en$ is flat, and $u$ is non-degenerate then $(4)$ and the assumption that $u$ is not identically zero imply $u$ is non-vanishing. Since $u$ is non-vanishing and non-degenerate, $(1)$ implies that $\ka \neq 0$. Because $(\en, [h])$ is exact, the Ricci curvature $R_{ij}$ is symmetric. By $(4)$, $(n-1)R_{ij}u = -P_{ij}u = \B_{1}(u)_{ij}$, from which $\mr{R}_{ij} = 0$ is immediate. Because $\en$ is projectively flat it follows from \eqref{middlebtraces} that $\mr{Q}_{ij} = 0$. The weighted scalar curvature is $R = H^{ij}R_{ij} = \tfrac{1}{n-1}u^{-1}H^{ij}h_{ij} = \tfrac{n}{n-1}u^{-1}|\det \B_{1}(u)|^{1/n}$, which is $\nabla$-parallel. Since $(\en, [h])$ is exact this shows it is Einstein. Because $u$ is non-vanishing and non-degenerate, it is proper, and from the explicit expression for $R$ it is evident that its scalar curvature has the same sign as has $u$.
 \end{proof}

Theorem \ref{butheorem} shows that proper Einstein AH structures can be constructed by finding a non-degenerate solution to $\monge(u) = \ka$. Lemma \ref{buuniqlemma} shows that when it exists the solution is unique up to positive rescaling.

\begin{lemma}\label{buuniqlemma}
Let $\en$ be a projective structure on a connected manifold and suppose $\emf^{1}$ is oriented. If AH potentials $u$ and $v$ generate conformally equivalent metrics $\B_{1}(u)$ and $\B_{1}(v)$ and solve $\monge(u) = \ka_{u}$ and $\monge(v) = \ka_{v}$ for constants $\ka_{u}, \ka_{v} \in \reat$, then there is $c \in \reap$ such that $v = cu$ and $\ka_{v} = c^{2(n+1)}\ka_{u}$.
\end{lemma}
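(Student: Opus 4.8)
The plan is to reduce everything to the characterization of AH potentials provided by Theorem \ref{butheorem}, and in particular to its condition $(4)$. First I would invoke that condition: since $\emf^{1}$ is oriented and $u$ is an AH potential solving $\monge(u) = \ka_{u}$ with $\ka_{u} \in \reat$, the section $u$ is parallel, $\nabla^{u}u = 0$, with respect to the unique representative $\nabla^{u} \in \en$ aligned with respect to the conformal structure $[h_{u}]$ generated by $\B_{1}(u)_{ij}$; likewise $\nabla^{v}v = 0$ for the aligned representative $\nabla^{v}$ associated to $\B_{1}(v)_{ij}$. The hypothesis that $\B_{1}(u)$ and $\B_{1}(v)$ generate conformally equivalent metrics means precisely that $[h_{u}] = [h_{v}]$. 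Since the representative of $\en$ aligned with respect to a fixed conformal structure is unique (Lemma \ref{special}), it follows that $\nabla^{u} = \nabla^{v} =: \nabla$, so that both $u$ and $v$ are $\nabla$-parallel sections of the line bundle $\emf^{1}$.

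Next I would argue that $v$ is a constant multiple of $u$. A non-trivial parallel section of a line bundle over a connected manifold is nowhere vanishing: its zero set is closed by continuity and open because a parallel section vanishing at a point vanishes along every curve issuing from that point, so the zero set is empty or everything. As neither $u$ nor $v$ is identically zero, both are non-vanishing. Hence $v/u$ is a well-defined smooth function, and from $0 = \nabla v = d(v/u)\otimes u + (v/u)\nabla u = d(v/u)\otimes u$ together with the non-vanishing of $u$ one gets $d(v/u) = 0$, so $v/u$ is a constant $c \in \reat$; that is, $v = cu$.

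To pin down the sign of $c$, I would compute the conformal factor relating $h_{v}$ and $h_{u}$. Since $\B_{1}$ is linear in its argument and $c$ is constant, $\B_{1}(v)_{ij} = c\,\B_{1}(u)_{ij}$, whence $\det \B_{1}(v) = c^{n}\det \B_{1}(u)$ and the unweighted representatives satisfy
\begin{align*}
h_{v} = |\det \B_{1}(v)|^{1/(n+2)}\B_{1}(v) = c\,|c|^{n/(n+2)}\,|\det \B_{1}(u)|^{1/(n+2)}\B_{1}(u) = c\,|c|^{n/(n+2)}\,h_{u}.
\end{align*}
Conformal equivalence requires the factor $c\,|c|^{n/(n+2)}$ to be positive, which forces $c \in \reap$. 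Finally, the homogeneity of $\monge$ gives the relation between the constants: using $v = cu$ and $\det \B_{1}(v) = c^{n}\det \B_{1}(u)$,
\begin{align*}
\ka_{v} = \monge(v) = v^{n+2}\det \B_{1}(v) = c^{n+2}c^{n}\,u^{n+2}\det \B_{1}(u) = c^{2(n+1)}\monge(u) = c^{2(n+1)}\ka_{u}.
\end{align*}

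The one step requiring genuine care is the passage from conformal equivalence to the identity $\nabla^{u} = \nabla^{v}$ of aligned representatives, where one must be sure that conformal (not merely projective) equivalence of the weighted metrics is what is being assumed, so that Lemma \ref{special} applies to the \emph{same} conformal class. Once $v = cu$ is established the rest is routine. I expect the only modest obstacle to be the weight and sign bookkeeping in the computation of the conformal factor, which is exactly what makes the positivity of $c$ follow from conformal, rather than projective, equivalence.
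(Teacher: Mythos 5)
Your proof is correct and follows essentially the same route as the paper's: both invoke condition $(4)$ of Theorem \ref{butheorem} to conclude that $u$ and $v$ are parallel for the common aligned representative, deduce $v = cu$ with $c$ constant, extract the positivity of $c$ from conformal (as opposed to merely projective) equivalence of $\B_{1}(v) = c\B_{1}(u)$, and finish with the homogeneity computation $\ka_{v} = c^{2(n+1)}\ka_{u}$. The only cosmetic difference is that you derive the non-vanishing of $u$ and $v$ from parallelism, whereas the paper reads it off directly from $u^{n+2}\det\B_{1}(u) = \ka_{u} \neq 0$; both are fine.
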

\begin{proof}
Because $u$ and $v$ solve $\monge(u) = \ka_{u}$ and $\monge(v) = \ka_{v}$ they are nowhere vanishing. By $(4)$ of Theorem \ref{butheorem}, $\nabla u = 0 = \nabla v$, so there is $c \in \reat$ such that $v = cu$. Since $\B_{1}(v) = c\B_{1}(u)$ is by assumption in the conformal class of $\B_{1}(u)$, it must be $c$ is positive. Then $c^{-n-2}\ka_{v}u^{-n-2} = \ka_{v}v^{-n-2} = \det \B_{1}(v) = c^{n}\det \B_{1}(u) = c^{n}\ka_{u}u^{-n-2}$, from which there follows $\ka_{v} = c^{2(n+1)}\ka_{u}$.
\end{proof}

\begin{theorem}\label{potentialtheorem}
Let $(\en, [h])$ be a proper closed Einstein AH structure with weighted scalar curvature $R$ on a connected manifold $M$. There is a non-vanishing non-degenerate section $u \in |\Det \ctm|^{-1/(n+1)}$, unique up to positive homothety, such that $\sign(R)u^{-1}\B_{1}(u)$ generates $[h]$, and $u$ satisfies $(1)$, $(2)$, and $(4)$ of Theorem \ref{butheorem}. 
\end{theorem}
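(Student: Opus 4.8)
The plan is to construct the section $u$ directly from the distinguished structure and then verify the required properties by appealing to Theorem~\ref{butheorem}. Since $(\en, [h])$ is proper and closed, by Lemma~\ref{parallelexactlemma} it is proper and exact (the conservation condition together with closedness forces $\nabla_i R = 0$, and properness rules out $R \equiv 0$). Hence there is a distinguished metric $h \in [h]$ with $\nabla|\det h| = 0$, determined up to positive homothety. The weighted scalar curvature $R$ is then $\nabla$-parallel and non-vanishing, and $RH_{ij}$ is a distinguished metric. The natural candidate for $u$ is the $\nabla$-parallel section of $\emf^1 = |\Det\ctm|^{-1/(n+1)}$ whose associated metric reproduces $[h]$ via $\B_1$; concretely I would set $u = \sign(R)\,\tfrac{n}{(n-1)}\,|\det h|^{-1/(n+1)}$ up to normalization, so that the relation $(n-1)R_{ij}u = -P_{ij}u = \B_1(u)_{ij}$ of the proof of Theorem~\ref{butheorem} holds and $\sign(R)u^{-1}\B_1(u)$ generates $[h]$.

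First I would show such a parallel $u$ exists. Because the structure is exact, the aligned representative $\nabla$ preserves the trivialization of $\emf^1$ given by $|\det h|^{-1/(n+1)}$, so $\nabla u = 0$ for this choice of $u$, and $u$ is non-vanishing by construction. Next I would establish the key algebraic identity $\B_1(u)_{ij} = -P_{ij}u$ for a parallel $u$. Since $\nabla u = 0$, one has $\B_1(u)_{ij} = \nabla_i\nabla_j u - P_{ij}u = -P_{ij}u$. Because $(\en,[h])$ is proper, closed, and Einstein, its Ricci tensor is symmetric and its trace-free part vanishes; combining $\mr{R}_{ij}=0$ with closedness ($F_{ij}=0$, so $R_{[ij]}=0$) gives $R_{ij} = \tfrac{R}{n}H_{ij}$, hence $P_{ij} = \tfrac{1}{1-n}R_{ij} = \tfrac{R}{n(1-n)}H_{ij}$ is a non-vanishing multiple of $H_{ij}$. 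Therefore $\B_1(u)_{ij} = -P_{ij}u$ is a non-vanishing multiple of $H_{ij}$, which shows $\B_1(u)$ is non-degenerate (so $u$ is non-degenerate as a section) and generates the conformal class $[h]$, with the sign of $u$ matching the sign of $R$ through the explicit factor $-P_{ij}u = \tfrac{R}{n(n-1)}H_{ij}u$.

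Once non-degeneracy is in hand, I would verify that $u$ is an AH potential, i.e.\ $\B_1^1(\B_1(u))_{ijk}=0$. By \eqref{obstruct1}, $\B^1_1(\B_1(u))_{ijk} = -\tfrac{1}{2}B_{ijk}\,^p\nabla_p u - \tfrac{1}{2}C_{ijk}u$; since $\nabla u = 0$ the first term drops, and it remains to see the projective Cotton term vanishes. For a closed Einstein AH structure the scalar curvature is parallel, so by Lemma~\ref{einsteinconservativelemma} the projective Cotton tensor is trace-free, and more is true: Lemma~\ref{faradaycottonlemma} gives $A_{ijk}=E_{ijk}=0$ and the harmonicity $\nabla_p W_{ijk}\,^p = 0$. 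I would then use the projective-flatness analysis—for a proper Einstein AH structure with these vanishing invariants, $B_{ijk}\,^l$ and $C_{ijk}$ are controlled by $A_{ijkl}$, $E_{ijkl}$, $F_{ij}$, and $\mr{\bar W}_{ij}$ via \eqref{bintermsofw}; closedness and the Einstein condition force $C_{ijk}u = 0$, so $\B_1^1(\B_1(u))_{ijk}=0$. With $u$ an AH potential that is $\nabla$-parallel, condition $(4)$ of Theorem~\ref{butheorem} holds, and by the equivalences established there (for the oriented $\emf^1$, AH-potential case) this yields $(1)$ and $(2)$ as well, giving $\monge(u)=\ka$ for some constant $\ka$.

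Finally, uniqueness up to positive homothety follows from Lemma~\ref{buuniqlemma}: any two such sections generating conformally equivalent metrics and solving $\monge = \ka$ with nonzero constant differ by a positive scalar. The main obstacle I anticipate is the clean verification that the projective Cotton contribution $C_{ijk}u$ vanishes—equivalently, that a parallel $u$ is genuinely an AH potential rather than merely an almost AH potential in the non-flat case. This is where the interplay between the Einstein and closed conditions is essential, and I would route it carefully through Lemma~\ref{faradaycottonlemma} and \eqref{obstruct1} rather than attempting a direct computation of $C_{ijk}$, since $\nabla u=0$ collapses exactly the term that would otherwise require the flatness of $\en$.
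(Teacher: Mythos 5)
Your construction and overall architecture coincide with the paper's: take $u$ to be the $\nabla$-parallel non-vanishing section of $\emf^{1}$ supplied by exactness (the paper takes $u = |R|^{-n/2(n+1)}$, a power of the parallel weighted scalar curvature, which agrees with your choice up to positive homothety since parallel sections on a connected manifold are unique up to scale), use $\nabla u = 0$ to get $\B_{1}(u)_{ij} = -P_{ij}u = \tfrac{R}{n(n-1)}H_{ij}u$, read off non-degeneracy and the generation of $[h]$, verify that $u$ is an AH potential, and close with condition $(4)$ of Theorem \ref{butheorem} and Lemma \ref{buuniqlemma}.

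The one step that does not go through as you have written it is the verification that $u$ is an AH potential, i.e.\ that the term $C_{ijk}u$ in \eqref{obstruct1} vanishes. Neither \eqref{bintermsofw} nor Lemma \ref{faradaycottonlemma} delivers this: \eqref{bintermsofw} expresses the projective \emph{Weyl} tensor $B_{ijkl}$, not the Cotton tensor, and Lemma \ref{faradaycottonlemma} gives the vanishing of $A_{ijk}$ and $E_{ijk}$, which are built from $W_{ij} = A_{ij} + E_{ij} + \tfrac{1}{2}F_{ij}$ rather than from the projective Schouten tensor $P_{ij}$; moreover a proper closed Einstein AH structure need not be projectively flat (see the left-invariant examples of section \ref{leftinvariantsection}), so no projective-flatness analysis is available. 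The fact you need is nonetheless true and is a one-line computation: for a closed Einstein AH structure $P_{ij} = \tfrac{R}{n(1-n)}H_{ij}$ with $R$ parallel, and the aligned representative satisfies $\nabla_{[i}H_{j]k} = \tfrac{3}{4}\ct_{ijk} = 0$ by Lemma \ref{special}, so $C_{ijk} = 2\nabla_{[i}P_{j]k} = \tfrac{2R}{n(1-n)}\bt_{[ij]k} = 0$. Equivalently, and this is how the paper phrases it, $n(n-1)\nabla_{[i}\B_{1}(u)_{j]k} = uR\,\nabla_{[i}H_{j]k} = 0$ directly. With that repair your argument is the paper's proof.
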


\begin{proof}
By definition a closed Einstein AH structure has parallel scalar curvature, so a proper closed Einstein AH structure is necessarily exact with non-vanishing parallel weighted scalar curvature $R$. The desired $u$ is $u = |R|^{-n/2(n+1)}$. 
Since $\nabla_{i}u = 0$, there holds $n(n-1)\B_{1}(u)_{ij} = n(1-n)P_{ij}u = nuR_{ij} = uRH_{ij}$, which shows that $\sign(R)u^{-1}\B_{1}(u)_{ij}$ generates the given conformal structure. As $n(n-1)\nabla_{[i}\B_{1}(u)_{j]k} = uR\nabla_{[i}H_{j]k} = 0$ and $u$ is evidently non-degenerate, $u$ is an AH potential. As $(4)$ of Theorem \ref{butheorem} is satisfied, and $u$ is an AH potential, there hold $(1)$ and $(2)$ of Theorem \ref{butheorem}. The uniqueness up to positive homothety follows from Lemma \ref{buuniqlemma}.
\end{proof}

\subsubsection{}
For background on convex subsets of $\projp(\ste)$ and convex projective structures, Y. Benoist's survey \cite{Benoist-survey} is an excellent starting point, and the terminology used there is adopted here. A subset $\Sigma \subset\projp(\ste)$ is \textbf{convex} if its intersection with every geodesic line (great circle) is connected. A convex subset $\Sigma \subset \projp(\ste)$ is \textbf{properly convex} if its closure contains no pair of antipodal points, and a properly convex $\Sigma$ is \textbf{strictly convex} if its boundary does not contain a non-empty open subset of a geodesic line. Finally, a properly convex subset $\Sigma \subset \projp(\ste)$ is \textbf{divisible} if the group of projective automorphisms of $\Sigma$ contains a discrete subgroup which acts on $\Sigma$ properly and cocompactly. An open properly convex set is contained in the complement of its image under the antipodal map from which it follows that it can be identified with a bounded convex domain in an affine space.

The restriction to a properly convex set $\Omega \subset \projp(\ste)$ of the dual of the tautological line bundle is an orientable line bundle $\emf^{1}$ the $(n+1)$st power of which is identified canonically with $\Det T \Omega$. The following deep theorem of Cheng-Yau is fundamental.

\begin{theorem}[Cheng-Yau, Theorem $6$ of \cite{Cheng-Yau-mongeampere}]\label{cyconvextheorem}
Let $\Omega \subset \projp(\ste)$ be a properly convex domain in the oriented projectivization of an $(n+1)$-dimensional vector space $\ste$, equipped with the restriction $\en$ of the standard flat projective structure. Then there is a unique negative, convex $u \in \Ga(\Omega, \emf^{1})$ vanishing continuously on the boundary of $\Omega$ and solving $\monge(u) = (-1)^{n}$.
\end{theorem}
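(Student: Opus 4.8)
The plan is to reduce the projectively invariant equation to a scalar real Monge--Amp\`ere equation in flat affine coordinates and to solve it by an exhaustion/continuity argument, following \cite{Cheng-Yau-mongeampere}. First I would identify the properly convex $\Omega$ with a bounded convex domain in an affine chart $\rea^{n} \subset \projp(\ste)$, trivialize $\emf^{1}$ by the flat affine volume, and write $u = \tilde{u}\,|dx^{1}\wedge \dots \wedge dx^{n}|^{-1/(n+1)}$. By the remarks preceding \eqref{ballexample}, in these coordinates $\B_{1}(u)_{ij}$ is the Hessian $\tfrac{\pr^{2}\tilde{u}}{\pr x^{i}\pr x^{j}}$, so the weight factors cancel in $\monge(u) = u^{n+2}\det \B_{1}(u)$ and the equation $\monge(u) = (-1)^{n}$ becomes $\tilde{u}^{n+2}\det(\pr^{2}\tilde{u}) = (-1)^{n}$; for negative convex $\tilde{u}$ this reads $|\tilde{u}|^{n+2}\det(\pr^{2}\tilde{u}) = 1$, a Monge--Amp\`ere equation elliptic on the cone of strictly convex functions. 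The model solutions \eqref{ballexample} and \eqref{orthantexample}, which solve $\monge(u) = (-1)^{n+2} = (-1)^{n}$, indicate the expected boundary behavior.

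Next I would exhaust $\Omega$ by a nested sequence of smooth, bounded, strictly convex subdomains $\Omega_{k}\uparrow \Omega$ and solve on each the Dirichlet problem for the same equation with a small negative constant (or vanishing) boundary value, obtaining smooth strictly convex solutions $\tilde{u}_{k}$ from the standard existence theory for nondegenerate Monge--Amp\`ere equations. The crux is to establish interior a priori estimates uniform on compact subsets of $\Omega$: a $C^{0}$ bound from the maximum principle together with convex barriers, an interior gradient bound, and --- the genuinely hard step --- an interior second-derivative estimate. This last is obtained by Cheng and Yau through the affine-differential-geometric reading of the equation, by computing the affine Laplacian of a suitable function of the gradient and Hessian and invoking the completeness of the affine metric determined by the solution (cf.\ the estimates of \cite{Calabi-completeaffine} and \cite{Cheng-Yau-affinehyperspheresI}). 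Once the $C^{2}$ bound is in hand, the Evans--Krylov theorem upgrades it to an interior $C^{2,\al}$ bound and elliptic bootstrapping yields uniform interior $C^{\infty}$ bounds.

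A diagonal Arzel\`a--Ascoli argument then extracts a subsequence converging in $C^{\infty}_{\mathrm{loc}}$ to a negative convex solution $\tilde{u}$ on all of $\Omega$. To see that $\tilde{u}$ extends continuously by $0$ to $\pr\Omega$ I would use proper convexity to build, near each boundary point, an upper barrier vanishing on $\pr\Omega$ and a lower barrier, squeezing the solution to $0$ at the boundary; the explicit examples \eqref{ballexample} and \eqref{orthantexample} supply the barriers in the locally strictly convex and locally flat cases respectively. Finally, uniqueness follows from the comparison principle for $\det(\pr^{2}\cdot)$ on convex functions: if $u$ and $v$ are two negative convex solutions vanishing on $\pr\Omega$, comparing them on the open set where one dominates the other and using monotonicity of the Monge--Amp\`ere operator forces $u = v$.

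The main obstacle is the interior second-derivative estimate, which is the analytic heart of Cheng and Yau's work and does not follow from the earlier material of this paper; it is precisely here that properness of the convex domain enters, guaranteeing completeness of the associated affine metric and hence the applicability of the Cheng--Yau maximum principle on complete manifolds. For this reason, rather than reproduce these estimates, the statement is taken here with a reference to \cite{Cheng-Yau-mongeampere} (and the exposition in \cite{Cheng-Yau-realmongeampere}).
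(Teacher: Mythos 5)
This statement is not proved in the paper at all: it is imported verbatim as ``Theorem $6$ of \cite{Cheng-Yau-mongeampere}'' and used as a black box, so there is no internal argument to compare yours against. Your sketch is a faithful outline of how Cheng and Yau actually proceed. The reduction is right: identifying the properly convex $\Omega$ with a bounded convex domain in an affine chart and trivializing $\emf^{1}$ by the flat volume turns $\monge(u) = (-1)^{n}$ into $\det(\pr^{2}\tilde{u}) = (-\tilde{u})^{-(n+2)}$ for a negative convex $\tilde{u}$ vanishing on $\pr\Omega$, which is precisely the equation Cheng--Yau solve; the exhaustion by smooth strictly convex subdomains, the convex barriers at the boundary, and the comparison-principle uniqueness (on the open set where one solution dominates the other, monotonicity of $\det(\pr^{2}\cdot)$ in $\tilde{u}$ through the right-hand side $(-\tilde{u})^{-(n+2)}$ forces the set to be empty) are all as in the original.

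That said, what you have written is an outline rather than a proof: the decisive step --- the interior second-derivative estimate uniform on compact subsets, together with the third-order (Calabi/Pogorelov-type) estimate that Cheng and Yau used in place of the then-unavailable Evans--Krylov theorem --- is exactly the content that makes the theorem deep, and you defer it to the reference. That is a legitimate thing to do (it is what the paper does by citing), but you should be explicit that you are recording the statement with attribution rather than proving it. Two small points worth tightening if you ever expand this: (i) the identification of $\Omega$ with a \emph{bounded} convex domain in an affine chart uses proper convexity (the closure contains no antipodal pair), and this should be said, since it is what makes the $C^{0}$ barrier argument work; (ii) in the uniqueness step the open set $\{u < v\}$ may have closure meeting $\pr\Omega$, and one needs the continuous vanishing of both solutions there to run the comparison argument on its boundary.
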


A locally flat projective structure on $M$ is \textbf{convex} if its developing map $\dev:\tilde{M} \to \projp(\ste)$ is a diffeomorphism from its universal cover $\tilde{M}$ onto a properly convex subset of $\projp(\ste)$.

\begin{theorem}\label{convextheorem}
Let $\en$ be a convex flat projective structure on a connected $n$-manifold $M$. Then there exists a unique conformal structure $[h]$ such that $(\en, [h])$ is an exact Riemannian signature Einstein AH structure with negative scalar curvature.
\end{theorem}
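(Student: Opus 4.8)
The plan is to reduce Theorem \ref{convextheorem} to the Cheng-Yau existence theorem (Theorem \ref{cyconvextheorem}) together with the machinery of Theorem \ref{butheorem}, and then to descend from the universal cover to $M$. First I would pass to the universal cover $\tilde{M}$ via the developing map $\dev:\tilde{M} \to \projp(\ste)$, which by convexity of $\en$ is a diffeomorphism onto a properly convex domain $\Omega \subset \projp(\ste)$. On $\Omega$ the flat projective structure is the restriction of the standard one, and the line bundle $\emf^{1}$ (the dual of the tautological bundle) is oriented with $(\emf^{1})^{n+1} \simeq \Det T\Omega$. Theorem \ref{cyconvextheorem} then furnishes a unique negative convex $u \in \Ga(\Omega, \emf^{1})$ vanishing on $\partial\Omega$ and solving $\monge(u) = (-1)^{n}$.

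Next I would invoke Theorem \ref{butheorem}. Since $\en$ is flat on $\Omega$, every non-degenerate section is an AH potential, so the convex solution $u$ of $\monge(u) = (-1)^{n}$ with $\ka = (-1)^{n} \neq 0$ satisfies hypotheses $(1)$ and $(2)$, and the final assertion of Theorem \ref{butheorem} gives directly that the CP pair $(\en, [\tilde h])$ determined by the conformal structure $[\tilde h]$ generated by the weighted metric $\B_{1}(u)_{ij}$ is a proper Einstein AH structure. Because $u$ is convex, $\B_{1}(u)_{ij}$ is positive definite, so $[\tilde h]$ is Riemannian; and because $u$ is negative, the scalar curvature $R$, which Theorem \ref{butheorem} shows has the same sign as $u$, is negative. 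By Lemma \ref{buahlemma} with $\si_i = 0$ the structure is exact. This produces an exact Riemannian Einstein AH structure with negative scalar curvature on $\tilde{M}$.

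The main obstacle, and the step I would dwell on, is the descent to $M$ and the uniqueness. For descent I must check that the conformal structure $[\tilde h]$ (equivalently the positive homothety class of the distinguished metric, equivalently the AH potential $u$ up to positive scaling) is invariant under the deck transformation group $\pi_1(M)$ acting by projective automorphisms of $\Omega$. This follows from the uniqueness clause of the Cheng-Yau theorem: any deck transformation $\phi$ carries the convex solution of $\monge(u)=(-1)^{n}$ to another such solution of the same projectively invariant equation with the same boundary behavior, and since $\monge$ is projectively invariant and $u$ is the unique negative convex solution vanishing on $\partial\Omega$, one gets $\phi^{\ast}u = u$ up to the action on sections of $\emf^{1}$. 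Hence $\B_{1}(u)$, and therefore $[\tilde h]$ and the aligned representative $\nabla$, are equivariant and descend to give $(\en, [h])$ on $M$. For global uniqueness on $M$ I would argue that any exact Riemannian Einstein AH structure $(\en, [h])$ with negative scalar curvature is proper and closed, so Theorem \ref{potentialtheorem} associates to it a non-vanishing AH potential $u = |R|^{-n/2(n+1)}$, unique up to positive homothety, solving $\monge(u) = \ka$; pulling back to $\Omega$ this $u$ is convex and negative (negative scalar curvature forces $u < 0$ after fixing the orientation), hence by the uniqueness in Theorem \ref{cyconvextheorem} coincides up to positive scale with the Cheng-Yau solution, forcing $[h]$ to coincide with the structure constructed above. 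The delicate points requiring care are the matching of sign and orientation conventions between $u$, $R$, and the co-orientation, and verifying that the equivariance of $u$ is genuinely at the level of sections of $\emf^{1}$ and not merely up to a scalar cocycle; I expect these to be routine but to require attention.
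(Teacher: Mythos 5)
Your construction and descent are exactly the paper's proof: pass to the properly convex image $\Omega$ of the developing map, take the Cheng--Yau solution of $\monge(u) = (-1)^{n}$ from Theorem \ref{cyconvextheorem}, use its uniqueness to get equivariance under the holonomy action of the deck group so that $\dev^{\ast}(u)$ descends to $M$, and then apply Theorem \ref{butheorem} (with Lemma \ref{buahlemma}) to read off exactness, Riemannian signature, the Einstein property, and the sign of the scalar curvature.

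The one place you diverge is the uniqueness step, and there your argument has a gap. You take the potential $w$ furnished by Theorem \ref{potentialtheorem} for a competing exact Einstein AH structure, pull it back to $\Omega$, observe it is negative and convex, and then invoke the uniqueness clause of Theorem \ref{cyconvextheorem}. But that uniqueness is only among negative convex solutions \emph{vanishing continuously on $\partial\Omega$}, and nothing in Theorem \ref{potentialtheorem} gives you any boundary behavior for the pulled-back $w$ --- it is produced purely from the interior data $|R|^{-n/2(n+1)}$ of the competing structure. The paper avoids this by using the purely interior uniqueness statement, Lemma \ref{buuniqlemma}: two AH potentials generating conformally equivalent metrics and solving $\monge(\cdot)=\ka$ for nonzero constants are positive multiples of one another (the proof goes through the parallelism $\nabla u = 0$ of part $(4)$ of Theorem \ref{butheorem}, not through boundary asymptotics). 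You should replace your appeal to Cheng--Yau uniqueness in this step by an appeal to Lemma \ref{buuniqlemma}, or else supply the missing argument that the competitor's potential extends continuously by zero to $\partial\Omega$, which is not automatic.
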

\begin{proof}
By assumption $\dev:\tilde{M} \to \projp(\ste)$ is a diffeomorphism onto a properly convex domain $\Omega \in \proj(\ste)$. By Theorem \ref{cyconvextheorem} there is a unique negative, convex $u \in \Ga(\Omega, \emf^{1})$ solving $\monge(u) = (-1)^{n}$. For a deck transformation $g$ of $\tilde{M}$ there holds $\dev \circ g = \hol(g)\circ \dev$, in which $\hol$ is the holonomy representation. Since $\hol(g)$ is a projective automorphism of $\Omega$, the uniqueness in Theorem \ref{cyconvextheorem} implies $\hol(g)^{\ast}(u) = u$, so that $g^{\ast}\dev^{\ast}(u) = \dev^{\ast}\hol(g)^{\ast}(u) = \dev^{\ast}(u)$. The projective structure on $\tilde{M}$ is the pullback of the flat projective structure on $\Omega$ via the developing map, and it follows that $\tilde{v} = \dev^{\ast}(u)$ solves the equation $\monge(\tilde{v}) = (-1)^{n}$ on $\tilde{M}$ and is left unchanged by deck transformations. Hence it descends to a solution $v$ of $\monge(v) = (-1)^{n}$ for the given projective structure on $M$. By Theorem \ref{butheorem} the conformal class $[h]$ of $\B_{1}(v)$ determines with the flat projective structure on $\Omega$ an exact Riemannian signature Einstein AH structure with negative scalar curvature. 

If $(\en, [h])$ is an exact Einstein AH structure having negative scalar curvature and with the given underlying convex flat projective structure $\en$, then by Theorem \ref{potentialtheorem} there is a non-vanishing and non-degenerate $w \in \Ga(M;\emf^{1})$ such that $\B_{1}(w)$ represents $[h]$ and such that $w$ solves $\monge(w) = \ka$ for some $\ka \in \reat$ having the same sign as has $w$. By Lemma \ref{buuniqlemma} some positive constant multiple of $W$ is equal to the $v$ constructed in the previous paragraph.
\end{proof}

An interesting question, the answer to which is not clear when $n > 2$, is the following: let $M$ be a compact manifold admitting at least one convex flat projective structure; for which conformal structures $[h]$ on $M$ is there a convex flat projective structure $\en$ such that $(\en, [h])$ is an Einstein AH structure? For $n = 2$ a complete answer is given by the theorem of Labourie-Loftin mentioned in section \ref{labourieloftinsection}. However, the situation in higher dimensions is unclear.

\subsubsection{}
Note that in deducing Theorem \ref{convextheorem}, while essential use has been made of the solvability of the Monge-Amp\`ere equation on a properly convex domain, no use has been made of the deep results, also due in essence to Cheng-Yau, relating the solution to a properly embedded hyperbolic affine hypersphere. The existence of the Einstein AH structure could also be deduced via this correspondence, since it has been shown that the conjugate AH structures induced on such an affine hypersphere are Einstein. However, the preceeding intrinsic approach has the virtue that it deduces Theorem \ref{convextheorem} directly from Theorem \ref{cyconvextheorem}, and does not require use of results relating the completeness of the affine metric and the properness of the immersion of an affine hypersphere. Here, the relation with affine hyperspheres is briefly recounted in order to clarify the remark just made, and to motivate the attention paid to affine hyperspheres in Section \ref{affinehyperspheresection}. The formulations given follow those in the papers of Loftin, of which \cite{Loftin-survey} is a suitable summary.

\begin{theorem}\label{affinehyperspheretheorem} 
Let $\ste$ be flat $(n+1)$-dimensional affine space with its standard parallel volume form.
\begin{enumerate}
\item In $\ste$ let $K$ be an open convex cone which contains no affine lines. For each negative real number $-r < 0$ there is a unique convex properly embedded hyperbolic affine hypersphere $L_{r}$ of affine mean curvature $-r$ which has as its center the vertex of $K$ and is asymptotic to the boundary $\pr K$ and $\textit{int}\, K = \cup_{r > 0}L_{r}$. 
\item An immersed hyperbolic affine hypersphere $L$ in $\ste$ is properly immersed if and only if the equiaffine metric is complete. In this case $L$ is properly embedded and asymptotic to the boundary of the cone obtained by forming the convex hull of $L$ with respect to its center.
\end{enumerate}
\end{theorem}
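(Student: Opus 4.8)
The plan is to reduce both assertions to the solvability theory and the a priori estimates for the real Monge--Amp\`ere equation, and then to invoke the results of Cheng--Yau, of which the solvability statement already appears above as Theorem \ref{cyconvextheorem}. Throughout I would fix the vertex of $K$ at the origin, so that ``center at the vertex'' means the equiaffine normal field at each point of $L_{r}$ is a multiple of the position vector, which by the structure equation \eqref{affineinduced2} (using that $\hnabla$ of the position field is the identity in flat space) is equivalent to the shape operator being a multiple of the identity, i.e. to $L_{r}$ being an affine hypersphere in the sense of Theorem \ref{affhypnormals}.

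For part $(1)$ I would first set up the classical correspondence between hyperbolic affine hyperspheres centered at the vertex of $K$ and convex solutions of a Monge--Amp\`ere boundary value problem. Since $K$ is an open convex cone containing no affine line, its image $\Omega = \rho(K)$ under the projection $\rho:\stez \to \projp(\ste)$ is a properly convex domain carrying the restriction $\en$ of the standard flat projective structure, and a section $u$ of $\emf^{1}$ over $\Omega$ is the same datum as a homogeneous function on $K$ whose level set is a radial graph over a cross-section of $K$. A direct computation with the equiaffine structure equations \eqref{c1} (this is standard; see \cite{Cheng-Yau-affinehyperspheresI} and the summary \cite{Loftin-survey}) shows that such a radial graph is a hyperbolic affine hypersphere, centered at the vertex and with affine mean curvature fixed by the constant $\ka$, precisely when $u$ is a negative convex solution of $\monge(u) = \ka$ on $\Omega$ vanishing continuously on $\pr \Omega$. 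Theorem \ref{cyconvextheorem} furnishes a unique such solution for $\ka = (-1)^{n}$, hence one hypersphere $L_{1}$; applying the dilations of $\ste$ fixing the origin, which carry hyperbolic affine hyperspheres centered at the vertex to hyperbolic affine hyperspheres centered at the vertex while scaling the affine mean curvature by a positive factor, I would produce from $L_{1}$ a hypersphere $L_{r}$ for every $r>0$. Uniqueness of $L_{r}$ for fixed $r$ is the uniqueness clause of Theorem \ref{cyconvextheorem} (two such hyperspheres of equal mean curvature asymptotic to the same cone correspond to two solutions with the same $\ka$, hence coincide). That the $L_{r}$ are pairwise disjoint and exhaust $\textit{int}\,K$ follows because they are the dilation orbit of $L_{1}$, and that $L_{r}$ is asymptotic to $\pr K$ follows from $u \to 0$ on $\pr \Omega$, which forces the radial graph to escape to the boundary rays of $K$.

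For part $(2)$ the equivalence of proper immersion with completeness of the equiaffine metric, together with the consequent proper embeddedness and cone asymptotics, is exactly the deep content of the Cheng--Yau resolution of Calabi's conjecture, and here I would cite rather than reprove it. The substance is that completeness of the Blaschke metric of a hyperbolic affine hypersphere forces Euclidean completeness and properness, via the interior gradient and second-derivative estimates and barrier constructions for $\monge(u) = \ka$ on a convex domain \cite{Cheng-Yau-mongeampere, Cheng-Yau-affinehyperspheresI}; the converse, that a properly immersed such hypersphere has complete equiaffine metric, rests on the same estimates. Once properness is in hand, embeddedness is automatic: a hyperbolic affine hypersphere is locally strongly convex (its second fundamental form is the positive definite Blaschke metric), and a connected, complete, properly immersed, locally convex hypersurface in $\ste$ is embedded as the boundary of a convex body; the cone obtained as the convex hull of $L$ with respect to its center then has $L$ asymptotic to its boundary, again by the estimates above.

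The main obstacle lies entirely in part $(2)$: the passage between the intrinsic completeness of the equiaffine metric and the extrinsic properness of the immersion, and the identification of the asymptotic cone, are not formal and require the full strength of the Monge--Amp\`ere a priori estimates of Cheng--Yau. Part $(1)$, by contrast, is essentially a packaging of Theorem \ref{cyconvextheorem} together with the homogeneity of the problem under dilation, and should go through routinely once the radial-graph correspondence is recorded.
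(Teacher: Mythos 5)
Your outline is essentially the same as the paper's own treatment: the paper does not prove Theorem \ref{affinehyperspheretheorem} but presents it as a cited result, assembled exactly as you describe from Theorem \ref{cyconvextheorem} via the radial-graph correspondence for part $(1)$ and from the Cheng--Yau gradient and second-derivative estimates for part $(2)$ (with the correction of Li \cite{Li-1}, \cite{Li-2}). Two small caveats so you do not understate what is being cited: the verification that the radial graph of the Cheng--Yau solution really is a hyperbolic affine hypersphere with all the stated properties is itself nontrivial and was only written down later by Sasaki \cite{Sasaki} and Gigena \cite{Gigena} --- the paper explicitly leaves this to the reader after Lemma \ref{linearlemma} --- and your embeddedness step for part $(2)$ silently invokes a van Heijenoort--Sacksteder type theorem for complete locally strongly convex hypersurfaces, which deserves an explicit citation. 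Neither point is a gap in the approach; they are places where your sketch, like the paper's, defers to the literature.
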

Theorem \ref{affinehyperspheretheorem} was conjectured in a quite precise form by E. Calabi in \cite{Calabi-completeaffine}. The theorem has two difficult parts. The affine hypersphere of part $(1)$ is constructed using the solution of the Monge-Amp\`ere equation yielding Theorem \ref{cyconvextheorem}. The properness of the embedding in part $(2)$ is deduced from the completeness of the equiaffine metric via a gradient estimate. The resolution of each is due to Cheng-Yau, although the proof they published of the second part has a small mistake corrected by  A-M. Li \cite{Li-1}, \cite{Li-2}. Similarly, the verification that the affine hypersphere constructed using Theorem \ref{cyconvextheorem} has all the claimed properties seems never to have been published by Cheng and Yau, and was described in detail independently by T. Sasaki \cite{Sasaki} and S. Gigena \cite{Gigena}. Consult \cite{Loftin-survey} for a more complete discussion of the history of Theorem \ref{affinehyperspheretheorem}. 

For a convex flat projective structure on $M$, the universal cover is identified with a properly convex domain $\Omega$, and there is induced on the affine hypersphere asymptotic to the cone over $\Omega$ given by Theorem \ref{affinehyperspheretheorem}, and hence also on $\Omega$, a pair of conjugate Einstein AH structures, $(\en, [h])$ and $(\ben, [h])$, the former induced via the affine normal and the latter induced via the conormal Gau\ss map. These evidently descend to $M$ by an argument like that in the proof of Theorem \ref{convextheorem}. It is not hard to see that it is the AH structure induced on $M$ by $(\ben, [h])$ that coincides with that constructed in Theorem \ref{convextheorem}, although this will not be explained in detail.

\subsubsection{}
Because a projectively flat Einstein AH structure is closed, it necessarily has parallel scalar curvature, so if $\emf^{1}$ is orientable, Theorem \ref{potentialtheorem} and Lemma \ref{buuniqlemma} imply that a proper projectively flat Einstein AH structure determines a non-degenerate $u \in \emf^{1}$, unique up to positive homothety, satisfying $(1)$-$(4)$ of Theorem \ref{butheorem} with $\ka \neq 0$. 

\begin{lemma}\label{linearlemma}
Let $(\en, [h])$ be a projectively flat proper Einstein AH structure with weighted scalar curvature $R$ on a connected $n$-manifold $M$ and suppose $\emf^{1}$ is orientable. If there is $v \in \Ga(\emf^{1})$ such that $\B_{1}(v) = 0$ then for $u$ solving $\monge(u) = \ka \in \reat$ the function $f \defeq u^{-1}v$ solves $(1-n)\lap_{h}f = \uR_{h}f$, in which $h$ is a distinguished metric.
\end{lemma}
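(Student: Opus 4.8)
The plan is to exploit the projective invariance of the operators $\B_{1}$ and $\B_{2}$ together with the explicit form of the metric and the Einstein relation established in Theorem \ref{butheorem}. Since $(\en, [h])$ is projectively flat, proper, and Einstein, it is closed with parallel scalar curvature, hence exact; by Theorem \ref{potentialtheorem} and the discussion preceding the lemma, there is a non-degenerate $u \in \Ga(\emf^{1})$, unique up to positive homothety, satisfying conditions $(1)$--$(4)$ of Theorem \ref{butheorem} with $\monge(u) = \ka \in \reat$. In particular $\nabla u = 0$ for the aligned representative $\nabla \in \en$, and $n(n-1)\B_{1}(u)_{ij} = uR H_{ij}$, so that $h_{ij} = |\det \B_{1}(u)|^{1/(n+2)}\B_{1}(u)_{ij}$ is a distinguished metric and $R = \tfrac{n}{n-1}u^{-1}|\det \B_{1}(u)|^{1/n}$ is $\nabla$-parallel. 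The first thing I would record is the relation between the unweighted scalar curvature $\uR_{h}$ and $R$: since $h$ is distinguished, $\uR_{h} = h^{ij}R_{ij} = |\det h|^{1/n}R$ up to the appropriate density normalization, and because $\mr{R}_{ij} = 0$ we have $R_{ij} = \tfrac{R}{n}H_{ij}$, giving $\uR_{h}$ a clean multiple of $u^{-1}$ times a density.

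**The key computation.** The heart of the proof is to compute $\lap_{h}f$ for $f = u^{-1}v$ using the hypothesis $\B_{1}(v)_{ij} = 0$, i.e.\ $\nabla_{i}\nabla_{j}v = P_{ij}v$. First I would pass from $\nabla$ (the aligned representative) to $D$ (the Levi-Civita connection of $h$), which are related by $D = \nabla + \tfrac{1}{2}\bt_{ij}\,^{k}$ since $(\en, [h])$ is exact and $\ga_{i} = 0$ for a distinguished metric (the conformal torsion vanishes for an AH structure). Because $\nabla u = 0$, one has $\nabla_{i} f = u^{-1}\nabla_{i} v$ and, applying $\nabla$ again, $\nabla_{i}\nabla_{j} f = u^{-1}\nabla_{i}\nabla_{j} v = u^{-1} P_{ij} v = P_{ij} f$. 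The point is that $f$ satisfies $\B_{1}(f) = 0$ as a p-weight-zero (genuine function) analogue, so $\nabla_{i}\nabla_{j} f = P_{ij} f$. Now I convert the $\nabla$-Hessian into the $D$-Hessian and trace with $h^{ij}$; the cubic torsion terms contract against $\nabla_{k}f = u^{-1}\nabla_{k}v$, and tracing $P_{ij}$ against $H^{ij}$ recovers a multiple of $R$. Using the projective Schouten relation $P_{ij} = \tfrac{1}{1-n}R_{(ij)}$ (valid since $R_{ij}$ is symmetric for an exact structure) and $R_{ij} = \tfrac{R}{n}H_{ij}$, the trace $H^{pq}P_{pq} = \tfrac{1}{1-n}R$, which after restoring the $|\det h|$ factors gives exactly $\tfrac{1}{1-n}\uR_{h}$. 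Assembling, $(1-n)\lap_{h} f = \uR_{h} f$, possibly after checking that the cubic-torsion contraction vanishes or is absorbed; I expect it to vanish because $\bt_{ij}\,^{p}$ is completely trace-free and symmetric, so $h^{ij}\bt_{ij}\,^{p} = 0$.

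**Main obstacle.** The delicate step is the careful bookkeeping of density weights and the transition between $\nabla$ and $D$. The quantity $f = u^{-1}v$ is genuinely a function (the p-weights of $u$ and $v$ cancel), but $\lap_{h}$, $P_{ij}$, and $R$ all carry density weights, and the identity $\nabla_{i}\nabla_{j} f = P_{ij} f$ must be interpreted with $\nabla$ acting on the weighted object $u^{-1}v$ consistently. I would resolve this by working throughout with the equivariant functions on the frame bundle $\F$ from section \ref{markingsection}, where $u$ and $v$ are honest homogeneous functions, so that $f$ descends to a function on $M$ and $\B_{1}(v) = 0$ reads $\nabla_{i}\nabla_{j} v = P_{ij} v$ without ambiguity. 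The second subtlety is confirming that when I trace the $D$-Hessian of $f$ I genuinely obtain $\lap_{h} f = D^{p}D_{p} f$ and not some weighted Laplacian; this follows because $f$ has weight zero. The remaining verification — that the coefficient works out to exactly $(1-n)$ on the left and $\uR_{h}$ on the right — is then a direct, if careful, trace computation using $R_{ij} = \tfrac{R}{n}H_{ij}$, $\uR_{h} = |\det h|^{1/n} R$, and $\mu = |\det h|^{-1/n}$, together with the normalization $h_{ij} = \mu^{-1} H_{ij}$ from section \ref{underlyingsection}.
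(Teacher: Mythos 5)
Your proposal is correct and follows essentially the same route as the paper's proof: use $\nabla u = 0$ to get $df = u^{-1}\nabla v$, invoke $\B_{1}(v) = 0$ to replace $\nabla_{i}\nabla_{j}v$ by $P_{ij}v$, pass to the Levi-Civita Hessian via $D = \nabla + \tfrac{1}{2}\bt_{ij}\,^{k}$, and trace, with the cubic-torsion term dying because $\bt_{ij}\,^{p}$ is trace-free and $P_{ij} = \tfrac{1}{1-n}R_{ij} = -\tfrac{1}{n(n-1)}\uR_{h}h_{ij}$ by the Einstein and exactness conditions. The density-weight worries you flag are genuine but resolve exactly as you say, since $f$ has weight zero.
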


\begin{proof}
Because $u$ is non-vanishing, $f$ is defined. By $(3)$ of Theorem \ref{potentialtheorem} the aligned representative $\nabla \in \en$ satisfies $\nabla u = 0$. Hence $df_{i} = \nabla_{i}(u^{-1}v) = u^{-1}\nabla_{i}v$. The Levi-Civita connection of $h$ is $D = \nabla + \tfrac{1}{2}\bt_{ij}\,^{k}$ so 
\begin{align*}\begin{split}
D_{i}df_{j} &= \nabla_{i}df_{j} - \tfrac{1}{2}\bt_{ij}\,^{p}df_{p} = u^{-1}\left(\nabla_{i}\nabla_{j}v - \tfrac{1}{2}\bt_{ij}\,^{p}\nabla_{p}v\right) \\
& = u^{-1}\left(P_{ij}v - \tfrac{1}{2}\bt_{ij}\,^{p}\nabla_{p}v\right) = -u^{-1}\left(\tfrac{1}{n(n-1)}\uR_{h}vh_{ij} + \tfrac{1}{2}\bt_{ij}\,^{p}\nabla_{p}v\right). 
\end{split}
\end{align*}
Hence $(1-n)\lap_{h}f = \uR_{h}f$.
\end{proof}

Let $u$ solve $\monge(u) = \ka$ on the properly convex domain $\Omega$ and let $x^{i}$ be flat affine coordinates. Taking as $v$ any one of the densities $|dx^{1}\wedge \dots \wedge dx^{n}|^{-1/(n+1)}$ and $x^{i}|dx^{1}\wedge \dots \wedge dx^{n}|^{-1/(n+1)}$ and writing $u = \tilde{u}|dx^{1}\wedge \dots \wedge dx^{n}|^{-1/(n+1)}$ there result the $(n+1)$ functions $\tilde{u}^{-1}$ and $\tilde{u}^{-1}x^{i}$. The image of these $(n+1)$ functions describes a non-degenerate hypersurface in $\ste$ called the \textbf{radial graph} of $u$. For instance, the radial graph of example \ref{ballexample} is one of the sheets of the two-sheeted hyperboloid for the standard Lorentz metric. Lemma \ref{linearlemma} shows that the position vector of the radial graph has the property that its Laplacian with respect to the metric $h$ is a particular positive multiple of itself. As the Laplacian in the equiaffine metric of the position vector of a non-denerate hypersurface is $n$ times the affine normal vector field (see Theorem $6.5$ of \cite{Nomizu-Sasaki}; this is one way to define the affine normal), this last property shows that the radial graph is an affine hypersphere. In fact, the radial graph is the affine hypersphere associated to the cone over $\Omega$ as in $(1)$ of Theorem \ref{affinehyperspheretheorem}, although justifying this is left to the reader, as it is not germane to the main thrust of the present article. To conclude the section there is mentioned an easy consequence of Lemma \ref{linearlemma}.

\begin{corollary}\label{noparalleltractorscorollary}
Let $\en$ be a convex flat real projective structure on a compact $n$-manifold. If $v \in \Ga(\emf^{1})$ solves $\B_{1}(v) = 0$, then $v$ is identically $0$.
\end{corollary}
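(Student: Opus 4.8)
The plan is to combine Lemma~\ref{linearlemma} with the maximum principle on the compact manifold $M$. First I would apply Theorem~\ref{convextheorem} (or Theorem~\ref{potentialtheorem} together with the constructions preceding it) to produce from the convex flat real projective structure $\en$ a non-vanishing, non-degenerate $u \in \Ga(\emf^{1})$ solving $\monge(u) = \ka$ for some $\ka \in \reat$, with the associated exact Einstein AH structure $(\en, [h])$ having negative weighted scalar curvature $R$; let $h$ be a distinguished metric, so that $\uR_{h}$ has the same (negative) sign as $R$. Here I use that, since $M$ is compact and carries a convex flat projective structure, the line bundle $\emf^{1}$ is orientable, which is exactly the hypothesis needed to invoke these results.

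Next, given $v \in \Ga(\emf^{1})$ with $\B_{1}(v) = 0$, I would form the function $f \defeq u^{-1}v \in \cinf(M)$, which is well-defined because $u$ is nowhere vanishing. By Lemma~\ref{linearlemma} this $f$ satisfies
\begin{align*}
(1-n)\lap_{h}f = \uR_{h}f.
\end{align*}
Rearranging, $\lap_{h}f = \tfrac{\uR_{h}}{1-n}f$, and since $\uR_{h} < 0$ and $n \geq 2$ the coefficient $\tfrac{\uR_{h}}{1-n} = \tfrac{-|\uR_{h}|}{1-n} = \tfrac{|\uR_{h}|}{n-1}$ is strictly positive. Thus $f$ is an eigenfunction of the rough Laplacian $\lap_{h} = D^{p}D_{p}$ with a strictly positive eigenvalue.

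Finally I would apply the maximum principle. On the compact manifold $M$ the function $f$ attains its maximum at some point $x_{0}$; at an interior maximum $\lap_{h}f(x_{0}) \leq 0$ (with the convention $\lap_{h} = D^{p}D_{p}$, the maximum of $f$ forces the trace of the Hessian to be nonpositive). But $\lap_{h}f(x_{0}) = \tfrac{|\uR_{h}|}{n-1}f(x_{0})$, so $f(x_{0}) \leq 0$; hence $\max_{M} f \leq 0$. Applying the same argument to $-f$, which satisfies the same equation, gives $\max_{M}(-f) \leq 0$, i.e. $\min_{M} f \geq 0$. Therefore $f \equiv 0$, and since $u$ is nowhere vanishing, $v = uf \equiv 0$. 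The only subtlety to be careful about is the sign convention for $\lap_{h}$ and $\uR_{h}$: one must confirm that the eigenvalue $\tfrac{\uR_{h}}{1-n}$ has the sign making the maximum principle conclude $f \le 0$ at a max rather than the reverse. With the conventions fixed in section~\ref{preliminariessection} (where $\lap_{h} = D^{p}D_{p}$) and the negativity of $\uR_{h}$ established via Theorem~\ref{convextheorem}, this is exactly the favorable sign, so there is no real obstacle; the argument is a short and standard maximum-principle vanishing statement once Lemma~\ref{linearlemma} is in hand.
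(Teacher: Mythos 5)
Your proposal is correct and follows the paper's own proof essentially verbatim: invoke Theorem \ref{convextheorem} to obtain the exact Einstein AH structure with negative scalar curvature and the non-vanishing potential $u$, apply Lemma \ref{linearlemma} to $f = u^{-1}v$, and conclude by the maximum principle from the negativity of $\uR_{h}$. Your sign check on the eigenvalue $\uR_{h}/(1-n) > 0$ and the two-sided application of the maximum principle correctly fill in the one step the paper leaves implicit.
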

\begin{proof}
Let $(\en, [h])$ be the exact Riemannian signature Einstein AH structure with negative scalar curvature given by Theorem \ref{convextheorem}, and let $u$ be the solution of $\monge(u) = \ka$ given by Theorem \ref{butheorem}, which is non-vanishing. If $\B_{1}(v) = 0$ then by Lemma \ref{linearlemma} the function $f = u^{-1}v$ solves $(1-n)\lap_{h}f = \uR_{h}f$, where $h \in [h]$ is a distinguished metric. Since $\uR_{h}$ is negative the maximum principle implies $f$ is identically $0$.
\end{proof}

Although the claim will not be proved here, Corollary \ref{noparalleltractorscorollary} means that over a convex flat projective manifold there are no parallel standard tractors.

\subsection{Left invariant Einstein AH structures on semisimple Lie groups}\label{leftinvariantsection}
In this section there are constructed on $S^{3}$ and on all noncompact semisimple Lie groups examples of left-invariant proper Einstein AH structures for which the anti-self-conjugate Weyl tensor does not vanish, and so which, by Lemma \ref{projflatahlemma}, are neither projectively nor conjugate projectively flat.

\subsubsection{}
This paragraph recalls some well known facts about left-invariant affine connections on Lie groups. Proofs can be found in chapter II of \cite{Helgason}. Let $G$ be an $n$-dimensional semisimple Lie group with Lie algebra $\g$. Let $L_{g}$ denote left multiplication by $g \in G$. Define for each $a \in \g$ a left invariant vector field $\lf^{a}$ by $\lf^{a}_{g} = \tfrac{d}{dt}_{|t = 0}g\cdot \exp(ta)$. The map $a \to \lf^{a}$ is a Lie algebra homomorphism from $\g$ to $\vect(G)$. 
An affine connection $\nabla$ on $G$ is \textbf{left-invariant} if every left translation $L_{g}$ is a $\nabla$-affine transformation, i.e. $L_{g}^{\ast}(\nabla) = \nabla$ for all $g \in G$. For $A \in \g^{\ast} \tensor \g^{\ast} \tensor \g$ the affine connection $\nabla$ defined by $\nabla_{\lf^{a}}\lf^{b} = \lf^{A(a, b)}$ is left-invariant, and this association gives a bijection between elements of $\g^{\ast} \tensor \g^{\ast} \tensor \g$ and left-invariant affine connections on $G$. Theorem II.1.1.4 of \cite{Helgason} shows that if $\nabla$ is the connection corresponding to $A \in \g^{\ast} \tensor \g^{\ast} \tensor \g$, the geodesic through the identity of $G$ in the direction of $a \in \g$ is a one-parameter subgroup if and only if $A(a, a) = 0$. It follows that the left-invariant affine connection $\nabla$ corresponding to $A$ has the property that its geodesics are the left translates of one-parameter subgroups of $G$ if and only if $A \in \ext^{2}(\g^{\ast})\tensor \g$. The torsion of $\nabla$ is $\tau(\lf^{a}, \lf^{b}) = \lf^{A(a, b) - A(b, a) - [a, b]}$, so that $\nabla$ is torsion free if and only if $A(a, b) = \tfrac{1}{2}\ad(a)(b) + \Gamma$ with $\Gamma \in S^{2}(\g^{\ast})\tensor \g$. Thus the affine space of left-invariant torsion-free affine connections on $G$ is modeled on $S^{2}(\g^{\ast}) \tensor \g$. Among the geodesics of a torsion free $\nabla$ are the left translates of one-parameter subgroups if and only if $A$ is skew-symmetric, which implies that $\Gamma = 0$, so that the unique torsion-free left-invariant affine connection on $G$ admitting among its geodesics the left translates of one-parameter subgroups is that defined by $A = \tfrac{1}{2}\ad$.

\subsubsection{}
Let $\nabla$ be a left-invariant torsion-free connection on $G$. Let $e_{i}$ be a basis of $\g$, write $[e_{i}, e_{j}] = c_{ij}\,^{k}e_{k}$ and $A(e_{i}, e_{j}) = A_{ij}\,^{k}e_{k}$, and express components of tensors using the left-invariant frame $E_{i} = \lf^{e_{i}}$. The Jacobi identity is $c_{[ij}\,^{p}c_{k]p}\,^{l} = 0$. 
Let $h \in S^{2}(\g^{\ast})$ be the left-invariant form determined by the \textit{negative} of the Killing form, $h(\lf^{a}, \lf^{b}) \defeq - B(a, b)$, or, equivalently $h_{ij} = -c_{ip}\,^{q}c_{jq}\,^{p}$. The invariance of the Killing form means that $h$ is in fact bi-invariant. Because $G$ is assumed semisimple, $h$ is a pseudo-Riemannian metric. Let $h^{ij}$ be the bivector dual to $h_{ij}$ and in the remainder of this section raise and lower indices with $h^{ij}$ and $h_{ij}$.

The invariance of the Killing form implies $c_{i(j}\,^{p}h_{k)p} = 0$, so that $c_{ijk} \defeq c_{ij}\,^{p}h_{pk}$ is completely antisymmetric, $c_{ijk} = c_{[ijk]}$. Define $A_{ijk} = A_{ij}\,^{p}h_{pk}$ and write $A_{ijk} = \tfrac{1}{2}c_{ijk} + \Ga_{ijk}$; that $\nabla$ be torsion-free is equivalent to $\Ga_{[ij]k} = 0$. Then $\nabla_{i}h_{jk} = 2A_{i(jk)} = 2\Ga_{i(jk)}$. A sufficient condition for the Ricci tensor of $\nabla$ to be symmetric is that $\nabla \det h = 0$, and, as $h^{jk}\nabla_{i}h_{jk} = 2A_{ip}\,^{p} = 2\Ga_{ip}\,^{p}$, this condition holds if and only if $\Ga_{ijk}$ is completely $h$-trace-free. From $\nabla_{[i}h_{j]k} = \Ga_{k[ij]}$ it follows that $(\en, [h])$ is an AH structure if and only if $\Ga_{k[ij]}$ is pure $h$-trace, or, what is the same, $\Ga_{ijk} = \Ga_{(ijk)} + \tfrac{2}{3(n-1)}\left(\Ga_{p[i}\,^{p}h_{j]k} - \Ga_{p}\,^{p}\,_{[i}h_{j]k}\right)$. If $(\en, [h])$ is AH, then $\nabla$ is its aligned representative if and only if $n\Ga_{p}\,^{p}\,_{i} = (2-n)\Ga_{ip}\,^{p}$, or, what is the same, $\Ga_{ijk} = \Ga_{(ijk)} + \tfrac{4}{3n}\Ga_{p[i}\,^{p}h_{j]k}$. For $(\en, [h])$ to be exact AH with aligned representative $\nabla$ and distinguished metric $h_{ij}$, it suffices that $\Ga_{(ijk)} = \Ga_{ijk}$ be completely $h$-trace free.

Again suppose only that $\nabla$ is left-invariant and torsion-free. The curvature of $\nabla$ is $R(\lf^{a}, \lf^{b})\lf^{c} = \lf^{A(a, A(b, c)) - A(b, A(a, c)) - A([a, b], c)}$. %
With respect to the frame $E_{i} = \lf^{e_{i}}$ the components of the curvature of $\nabla$ have the forms 
\begin{align}
\label{liecurv}
\begin{split}
R_{ijk}\,^{l} &= A_{ip}\,^{l}A_{jk}\,^{p} - A_{jp}\,^{l}A_{ik}\,^{p} - c_{ij}\,^{p}A_{pk}\,^{l} \\
& = -\tfrac{1}{4}c_{ij}\,^{p}c_{pk}\,^{l} + 2\Ga_{p[i}\,^{l}\Ga_{j]k}\,^{p}  + \left(c_{p[j}\,^{l}\Ga_{i]k}\,^{p} + c_{k[i}\,^{p}\Ga_{j]p}\,^{l} - c_{ij}\,^{p}\Ga_{pk}\,^{l} \right),\\
R_{ij} %
&= \tfrac{1}{4}h_{ij} -\Ga_{ip}\,^{q}\Ga_{jq}\,^{p} + \Ga_{pq}\,^{q}\left(\tfrac{1}{2}c_{ij}\,^{p} + \Ga_{ij}\,^{p}\right) -c_{pq(i}\Ga_{j)}\,^{[pq]} ,\\
Q_{ij} & \defeq R_{iab}\,^{p}H_{pj}H^{ab}= \tfrac{1}{4}h_{ij} -\Ga_{pqj}\Ga_{i}\,^{qp} + \Ga_{q}\,^{qp}\left(\Ga_{pij} - \tfrac{1}{2}c_{pij}\right) - \tfrac{1}{2}c_{pqj}\Ga_{i}\,^{[pq]},\\
\uR_{h} & = \tfrac{n}{4} - \Ga_{ijk}\Ga^{ikj} + \Ga_{pq}\,^{q}\Ga_{a}\,^{ap}.
\end{split}
\end{align}
Now suppose $\Ga_{ijk}$ is completely symmetric and completely $h$-trace free, so that $(\en, [h])$ is an exact AH structure with aligned representative $\nabla$ and distinguished metric $h$ and cubic torsion $\bt_{ij}\,^{k} = 2\Ga_{ij}\,^{k}$. Then $R_{ij} = Q_{ij} = \tfrac{1}{4}h_{ij} - \Ga_{ip}\,^{q}\Ga_{jq}\,^{p} = \tfrac{1}{4}(h_{ij} - \bt_{ij})$, so that $E_{ij} = 0$, and $\uR_{h} = \tfrac{n}{4} - \Ga_{ijk}\Ga^{ijk}$. Let $\mu = |\det h|^{-1/n}$. From $\mu^{-1} R_{i(jkl)} = -2c_{pi(j}\Ga_{kl)}\,^{p}$ and $E_{ij} =0$ there follows 
\begin{align}\label{lieeijkl}
\mu^{-1}E_{ijkl} = \mu^{-1} \uf_{ijkl} = \Ga_{k[i}\,^{p}c_{j]lp} + \Ga_{l[i}\,^{p}c_{j]kp} - c_{pij}\Ga_{kl}\,^{p}. 
\end{align}
Combining \eqref{lieeijkl} and \eqref{liecurv} shows
\begin{align}\label{liecurv2}
R_{ijk}\,^{l} = -\tfrac{1}{4}c_{ij}\,^{p}c_{pk}\,^{l} + 2\Ga_{p[i}\,^{l}\Ga_{j]k}\,^{p} + E_{ijk}\,^{l} = T_{ijk}\,^{l} + E_{ijk}\,^{l}.
\end{align}
It is evident from \eqref{lieeijkl} that $\bt^{jkl}E_{ijkl} = 0$, so that $(\en, [h])$ is conservative. Hence for $(\en, [h])$ to be Einstein, it suffices that it be naive Einstein, and this is the case if and only if there holds
\begin{align}\label{liecrit}
\Ga_{ip}\,^{q}\Ga_{jq}\,^{p} = \tfrac{1}{n}h^{ab}\Ga_{ap}\,^{q}\Ga_{bq}\,^{p}h_{ij}.
\end{align}
Equivalently, $(\en, [h])$ is Einstein if and only if $\mr{\bt}_{ij} = 0$, or, what is the same, $(\en, [h])$ is strongly Einstein.

\subsubsection{}
Let $x, y, z \in \g$ be Killing orthogonal and extend them globally as left-invariant vector fields $X^{i}$, $Y^{i}$, $Z^{i}$. Write $\ep_{x} = x^{i}x^{j}h_{ij}$, and similarly for $y$ and $z$, and suppose $\ep_{x}, \ep_{y}, \ep_{z} \in \{0, \pm 1\}$. Let $t \in \rea$. Then $\Ga_{ijk} \defeq t x_{(i}y_{j}z_{k)}$ satisfies
\begin{align}\label{liethree}
&18\Ga_{ip}\,^{q}\Ga_{jq}\,^{p} =  t^{2}\left(\ep_{y}\ep_{z}x_{i}x_{j} + \ep_{z}\ep_{x}y_{i}y_{j} + \ep_{x}\ep_{y}z_{i}z_{j}\right),&
&6\Ga_{pqr}\Ga^{pqr} = t^{2}\ep_{x}\ep_{y}\ep_{z}, && \Ga_{ip}\,^{p} =0.
\end{align}

\subsubsection{}\label{s3example}
Let $G = S^{3}$. Then $h_{ij}$ is positive definite. Choose $x, y, z$ to satisfy the bracket relations $[x, y] = z/\sqrt{2}$, $[y, z] = x/\sqrt{2}$, and $[z, x] = y/\sqrt{2}$ so that $\{x, y, z\}$ is an $h$-orthonormal basis of $\g$ (for which $c_{ijk} = \sqrt{2}\left(x_{[i}y_{j]}z_{k} + y_{[i}z_{j]}x_{k} + z_{[i}x_{j]}y_{k} \right)$) and $h_{ij} = X_{i}X_{j} + Y_{i}Y_{j} + Z_{i}Z_{j}$. 
 Define $\Ga_{ijk} =  t X_{(i}Y_{j}Z_{k)}$ for $t \in \reap$. Then \eqref{liethree} shows that $\Ga_{ip}\,^{p} = 0$ and $18\Ga_{ip}\,^{q}\Ga_{jq}\,^{p} = t^{2} h_{ij}$, so that $\Ga_{ij}\,^{k}$ solves \eqref{liecrit}, and hence $(\en, [h])$ is a strongly Einstein Riemannian AH structure with scalar curvature $\uR_{h} = \tfrac{3}{4} - \tfrac{t^{2}}{6}$, which is positive for $t < 3/\sqrt{2}$, negative for $t > 3/\sqrt{2}$, and zero for $t = 3/\sqrt{2}$. Since the dimension is $3$ there holds $A_{ijk}\,^{l} = 0$. There hold $\nabla_{X}Y = (\tfrac{t}{6} + \tfrac{1}{2\sqrt{2}})Z$ and $\nabla_{Y}X + (\tfrac{t }{6} - \tfrac{1}{2\sqrt{2}})Z$, and their cyclic permutations, and $\nabla_{X}X = 0$, etc. Using \eqref{lieeijkl} one finds
\begin{align*}
&X^{i}Y^{j}Z^{k}Z^{l}E_{ijkl} = 0,& &X^{i}Y^{j}X^{k}X^{l}E_{ijkl} = 0,& &\mu^{-1} X^{i}Y^{j}X^{k}Y^{l}E_{ijkl} = - t\sqrt{2}/6. 
\end{align*}
The last of these shows that $E_{ijkl}$ is not zero, so, by Lemma \ref{projflatahlemma}, $(\en, [h])$ is neither projectively flat nor conjugate projectively flat. In fact it can be checked that 
\begin{align*}
E_{ijkl} = -\tfrac{2\sqrt{2} t}{3}\left(X_{[i}Y_{j]}X_{(k}Y_{l)} + Y_{[i}Z_{j]}Y_{(k}Z_{l)} + Z_{[i}X_{j]}Z_{(k}X_{l)} \right),
\end{align*}
from which follows $|E|_{h}^{2} = 2t^{2}/3$, though these computations are left to the reader. 

This example will be revisited in Section \ref{s3examplerevisited}, where it will be used to show the necessity in Theorem \ref{cubictorsiontheorem} of the self-conjugacy of the curvature.

\subsubsection{}
For $G = SL(2, \rea)$ a construction just like that of Subsection \eqref{s3example} yields an indefinite signature Einstein AH structure. In this case one chooses $x, y, z \in \g$ satisfying $[x, y] = z/\sqrt{2}$, $[y, z] = -x/\sqrt{2}$, and $[z, x] = -y/\sqrt{2}$, so that $h(x, x) = -1 = h(y, y)$ and $h(z, z) = 1$. Defining $\Ga_{ijk}$ as before yields a signature $(1, 2)$ Einstein AH structure, the scalar curvature of which can be positive, negative, or zero, depending on $t$.

\subsubsection{}
Suppose $G$ is semisimple of noncompact type. Since $h_{ij}$ is indefinite there is a non-zero $h$-null vector $x_{i} \in \g$. Then $\Ga_{ijk} = X_{i}X_{j}X_{k}$ solves \eqref{liecrit}, and so $(\en, [h])$ is an Einstein AH structure with $4R_{ij} = h_{ij}$, and so which is proper. If $G = SL(2, \rea)$ then the null elements of $\g$ are exactly the nilpotent elements, and so the construction just described associates to each nilpotent element of $\sll(2, \rea)$ an Einstein AH structure on $SL(2, \rea)$. From \eqref{liecurv2} there follow $4\mu^{-1} T_{ijkl} =  - c_{ij}\,^{p}c_{pk}\,^{l}$ and
\begin{align}\label{nileijkl}
\begin{split}
&2\mu^{-1} E_{ijkl} = X^{p}\left(X_{i}X_{k}c_{pjl} - X_{j}X_{k}c_{pil} - X_{j}X_{l}c_{pik} + X_{i}X_{l}c_{pjk} - 2X_{k}X_{l}c_{pij} \right).
\end{split}
\end{align}
If for some $x$ it can be shown that $E_{ijkl}$ is not identically $0$, then by Lemma \ref{conjprojflatae} the Einstein AH structure determined by $x$ is neither projectively nor conjugate projectively flat. To prove this there is needed:
\begin{lemma}\label{tdslemma}
If $\g$ is a finite-dimensional semisimple Lie algebra over a field of characteristic zero and $0 \neq e \in \g$ is a nilpotent element then there is an $\sll(2, \rea)$ triple $\{f, h, e\}$ containing $e$ as the nilpositive element.
\end{lemma}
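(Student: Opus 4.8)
The statement is precisely the Jacobson--Morozov theorem, and the plan is to reproduce its standard proof, which proceeds in two stages: first produce a \emph{neutral} (characteristic) element $h$ with $[h,e]=2e$, and then complete $(e,h)$ to a triple by constructing the nilnegative element $f$. Throughout one uses that $\ad e$ is a nilpotent endomorphism of $\g$ (nilpotency of $e$), that the Killing form $B$ is nondegenerate and $\ad$-invariant (semisimplicity of $\g$), and that the ground field has characteristic zero, so that the exponentials $\exp(t\,\ad e)$ are automorphisms of $\g$ and the finite-dimensional representation theory of $\sll_2$ is available.

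First I would establish Morozov's lemma: there exists $h$ with $[h,e]=2e$ and $h\in\operatorname{im}(\ad e)=[\g,e]$. Writing $h=[e,y]$ and expanding $[h,e]$ with the Jacobi identity reduces this to solving $\ad_e^2 y=-2e$, i.e.\ to showing $e\in\operatorname{im}(\ad_e^2)$. This is the heart of the matter and I expect it to be the main obstacle. The naive argument --- $\operatorname{im}(\ad_e^2)=(\ker\ad_e^2)^\perp$ by skew-adjointness of $\ad_e$ under $B$, so reduce to proving $B(e,z)=0$ whenever $\ad_e^2 z=0$ --- does not close up: invariance of $B$ under $\exp(t\,\ad e)$ gives $B(e,\,z+t[e,z])=B(e,z)$ for all $t$ and hence only $B(e,[e,z])=0$, not $B(e,z)=0$, and attempting to pass the orthogonality up to $\operatorname{End}(\g)$ is circular. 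The correct proof needs the genuine structural content of Jacobson--Morozov (an inductive reduction along an $\ad h$-grading, or the algebraic-group fact that $e$ lies in the closure of its adjoint orbit); in a paper of this kind one would simply invoke it as a classical theorem.

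Granting the neutral element $h$, I would construct $f$ as follows. Since $h=[e,y]$, the element $f_0:=y$ satisfies $[e,f_0]=h$ while $[h,e]=2e$, but in general $[h,f_0]\neq-2f_0$. I would then look for $f=f_0+c$ with $c\in\ker(\ad e)=\mathfrak{z}_\g(e)$, which preserves $[e,f]=h$, and impose $[h,f]=-2f$. Because $\ad h$ preserves $\mathfrak{z}_\g(e)$ and, on the span of the eigenvectors involved, the pair $(e,h)$ behaves like part of an $\sll_2$-action, an eigenvalue count for $\ad h$ shows that $\ad h+2$ is invertible on the relevant complement, so the required $c$ exists (and is unique). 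This is the routine $\sll_2$-representation-theoretic correction step once $h$ is available.

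Finally I would note that no field extension is needed: every step above is linear algebra over the given field, so for a real semisimple $\g$ the construction yields a genuine real $\sll(2,\rea)$-triple $\{f,h,e\}$ with $e$ as the nilpositive element, as required. The hypothesis $e\neq0$ enters only to force $h\neq0$, so that the resulting triple is nondegenerate.
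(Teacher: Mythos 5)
Your proposal is correct and follows essentially the same route as the paper: both reduce the statement to (i) the fact that $e \in \operatorname{im}(\ad e)^2$, which yields a neutral element $h = [e,y]$ with $[h,e]=2e$, and (ii) Morozov's completion lemma producing $f$ by correcting $y$ by an element of $\mathfrak{z}_{\g}(e)$ on which $\ad h + 2$ is invertible. The paper simply cites Kostant (Lemma 3.3 of \cite{Kostant-triples}, noting the proof works in characteristic zero) for step (i) and Jacobson for step (ii), exactly as you propose to do for the first step and sketch for the second.
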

\begin{proof}
The claim means there is a  triple $\{e, f, h\} \subset \g$ such that $[e, f] = h$, $[h, e] = 2e$, and $[h, f] = -2f$. If $\g$ is a finite-dimensional semisimple Lie algebra over a field of characteristic zero and $0 \neq e \in \g$ is a nilpotent element then $e \in \im \ad(e)^{2}$. This is proved over the complex field as Lemma $3.3$ of \cite{Kostant-triples}, but the proof given there works over any field of characteristic $0$. Because $e \in \im\ad(e)^{2}$, there can be chosen $y \in \g$ such that $h = [e, y]$ satisfies $[h, e] = 2e$. Morozov's Lemma (see e.g. Lemma $\text{III}.11.7$ of \cite{Jacobson}) shows that there is an $\sll(2, \rea)$ triple $\{f, h, e\}$ containing $e$ as nilpositive element and $h$ as semisimple element.
\end{proof}
By invariance of the Killing form, for any $\sll(2, \rea)$ triple $\{f, h, e\}$ there hold $B(e, e) = B(f, f) = B(e, h) = B(f, h) =0$ (e.g. $B(e, h) = B(e, [e, f]) = B([e, e], f) = 0$). In particular the nilpositive element is null so generates an Einstein AH structure as above. Were $B(e, f) = 0$ then $B(h, h) = B(h, [e ,f]) = B([h, e], f) = 2B(e, f) = 0$, and the restriction to $\h \defeq \spn\{f, h, e\}$ of $B$ would be null. As $h$ is a semisimple element of $\h$, and its adjoint action on $\g$ has integer eigenvalues, $h$ is semisimple as an element of $\g$; standard arguments show $h$ is contained in a Cartan subalgebra of $\g$ and $B(h, h) \neq 0$. Hence $B(e, f) \neq 0$ also. Temporarily write $e = x$ and let $\{e, f, h\} \subset \g$ be an $\sll(2, \rea)$ triple containing $e$ as the nilpositive element. From \eqref{nileijkl} there follows $\mu^{-1}H^{i}F^{j}F^{k}F^{l}E_{ijkl} = 2B(e, f)^{3} \neq 0$, so that $E_{ijkl}$ is not identically zero. By lemma \ref{projflatahlemma}, this shows that these Einstein AH structures on $G$ are neither projectively nor conjugate projectively flat when $x \neq 0$. This proves constructively Theorem \ref{leftinvarianttheorem}.
\begin{theorem}\label{leftinvarianttheorem}
Let $G$ be a noncompact semisimple, finite-dimensional real Lie group with Lie algebra $\g$. To every non-zero nilpotent $x \in \g$ there is associated a left-invariant proper Einstein AH structure on $G$ for which the anti-self conjugate Weyl tensor is not identically zero, and so which is neither projectively nor conjugate projectively flat.
\end{theorem}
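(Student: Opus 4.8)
The plan is to verify Theorem~\ref{leftinvarianttheorem} constructively by taking the left-invariant connection associated with a completely symmetric, completely $h$-trace-free tensor built from a nonzero nilpotent element, checking it satisfies the Einstein criterion~\eqref{liecrit}, and then exhibiting a single nonzero component of $E_{ijkl}$ to rule out (conjugate) projective flatness via Lemma~\ref{projflatahlemma}. First I would recall from the left-invariant framework that choosing $\Ga_{ijk}$ completely symmetric and completely $h$-trace-free makes $\nabla = D - \tfrac12\bt_{ij}{}^k$ (with $\bt_{ij}{}^k = 2\Ga_{ij}{}^k$) the aligned representative of an exact AH structure for which the bi-invariant metric $h$ (the negative Killing form) is a distinguished metric, and for which $E_{ij}=0$ automatically so the structure is conservative by~\eqref{lieeijkl}. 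By Lemma~\ref{einsteinconservativelemma}, conservative plus naive Einstein gives Einstein, and naive Einstein reduces to the algebraic condition~\eqref{liecrit}, equivalently $\mr{\bt}_{ij}=0$.

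Next I would fix a nonzero nilpotent $x \in \g$ and set $\Ga_{ijk} = X_i X_j X_k$ using the left-invariant extension $X^i$. The key point is that $x$ is $h$-null: since $h = -B$ and $B(e,e)=0$ for the nilpositive element of any $\sll(2,\rea)$-triple (a consequence of invariance of the Killing form, $B(e,e)=B(e,[h,e]/2)=\tfrac12 B([e,h],e)$ and nilpotency), one has $\ep_x = x^i x^j h_{ij} = 0$. Then~\eqref{liethree} with $\Ga_{ijk}=X_iX_jX_k$ (the degenerate case of the three-vector construction) gives $\Ga_{ip}{}^q\Ga_{jq}{}^p = \ep_x^2 X_iX_j=0$ and $\Ga_{ip}{}^p=0$, so~\eqref{liecrit} holds trivially (both sides vanish). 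Hence $(\en,[h])$ is Einstein, and from~\eqref{liecurv} one reads off $4R_{ij} = h_{ij}$, so the weighted scalar curvature is nonzero and the structure is proper.

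The substantive step is showing $E_{ijkl}\not\equiv 0$, and here I would invoke Lemma~\ref{tdslemma}: the nilpotent $e=x$ embeds in an $\sll(2,\rea)$-triple $\{f,h,e\}$ as nilpositive element. I would then verify $B(e,f)\neq 0$ by the argument indicated in the excerpt: if $B(e,f)$ vanished, then $B(h,h)=B(h,[e,f])=B([h,e],f)=2B(e,f)=0$ would force $B|_{\h}$ to be null on $\h=\spn\{f,h,e\}$, contradicting that the semisimple element $h$ lies in a Cartan subalgebra where $B(h,h)\neq 0$. Finally, contracting the explicit formula~\eqref{nileijkl} for $E_{ijkl}$ against $H^iF^jF^kF^l$ (with $H,F$ the left-invariant extensions of $h,f$) and using the bracket relations of the triple together with $B(e,e)=B(e,h)=B(f,h)=0$ yields $\mu^{-1}H^iF^jF^kF^lE_{ijkl} = 2B(e,f)^3 \neq 0$. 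Thus $E_{ijkl}$ is not identically zero, and by Lemma~\ref{projflatahlemma} (its vanishing being necessary for both projective and conjugate projective flatness) the structure is neither projectively nor conjugate projectively flat.

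The main obstacle I anticipate is the bookkeeping in the contraction of~\eqref{nileijkl}: one must carefully track which structure-constant terms $c_{pjl}$ survive after pairing with $F$'s and the single $H$, and confirm that the nonnull pairings $B(e,f)$ (equivalently $h(X,F)$) assemble into exactly $2B(e,f)^3$ rather than cancelling. This is a finite computation using only the triple's bracket relations and the vanishing Killing pairings, but it is the one place where a sign or combinatorial slip could wrongly suggest $E\equiv 0$, so I would double-check it by evaluating directly in the standard $2\times 2$ realization of $\sll(2,\rea)$ before asserting the general semisimple case follows from restricting to the subalgebra $\h$.
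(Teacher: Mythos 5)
Your proposal is correct and follows essentially the same route as the paper: take $\Ga_{ijk}=X_iX_jX_k$ for the ($h$-null, because nilpotent) element $x$, observe that \eqref{liecrit} holds trivially so the structure is Einstein and proper with $4R_{ij}=h_{ij}$, and then use Lemma \ref{tdslemma} together with $B(e,f)\neq 0$ to evaluate $\mu^{-1}H^{i}F^{j}F^{k}F^{l}E_{ijkl}=2B(e,f)^{3}\neq 0$ from \eqref{nileijkl}. This is exactly the paper's argument, including the final appeal to Lemma \ref{projflatahlemma}.
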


\section{Scalar curvature of Riemannian signature Einstein AH structures}\label{curvatureconditionssection}
In this section it is shown that many results about the scalar curvature of compact Riemannian signature Einstein Weyl structures generalize to compact Riemannian signature Einstein AH structures. The key technical result is Theorem \ref{todtheorem} which shows that for a Riemannian signature Einstein AH on a compact $n$-manifold the vector field dual to the Faraday primitive of a Gauduchon metric is conformal Killing. The principal structural result is Theorem \ref{negexacttheorem}. While the strategies of the proofs are generally the same as in the Weyl case, their realization often requires further argument. For example, there can be given a proof of Theorem \ref{todtheorem}, in which it is proved that the one-form associated to the Gauduchon gauge of an at least three-dimensional compact Riemannian signature Einstein AH structure is Killing for the Gauduchon metric, following exactly that of the proof given by K.P. Tod of its specialization for Einstein Weyl structures, but the computations require the full strength of the Einstein condition, in particular the vanishing of $\nabla_{i}R + n\nabla^{p}F_{ip}$, and become somewhat involved. Here the result is deduced from Theorem \ref{bochnertheorem}, essentially via a Bochner vanishing argument.

\subsection{Gauduchon metrics}\label{gauduchongaugesection}
Let $(\en, [h])$ be an AH structure, $\nabla \in \en$ the aligned representative, and $\{\ga\} = \{\ga + df: f\in \cinf(M)\}$ the associated equivalence class of Faraday primitves. A \textbf{Gauduchon metric} is a representative $h \in [h]$ such that the associated Faraday primitive $\ga_{i}$ is co-closed with respect to $h$; that is $\dad_{h}\ga = 0$. By \eqref{gd1}, $h$ is a Gauduchon gauge if and only if $\nabla^{p}\ga_{p} + (n-2)\ga^{p}\ga_{p} = 0$. Let $(\pen, [h])$ be the AH pencil generated by an AH structure $(\en, [h])$. Because the difference tensor of $\pnabla$ with $\nabla$ is trace-free, they induce on any line bundle of densities of non-trivial weight the same covariant derivative, and thus the Faraday primitive associated to the representative $h \in [h]$ with respect to $\pnabla$ does not depend on $t$. In particular, the equation $\pnabla^{p}\ga_{p} + (n-2)\ga^{p}\ga_{p} = 0$ does not depend on $t$. It follows that to prove the existence of a Gauduchon gauge for $(\en, [h])$, it suffices to prove it for the underlying Weyl structure. It is well known that for a Weyl structure on a compact manifold there exists a Gauduchon gauge unique up to positive homothety. The proof, which is a straightforward application of standard elliptic operator theory, is sketched in the first paragraph of page $503$ of \cite{Gauduchon} (this sketch is repeated in Appendix $1$ of \cite{Tod-compact} and in section $4$ of \cite{Calderbank-Pedersen}). The precise statement needed, which will be used in section \ref{yamabesection}, is
\begin{theorem}[P. Gauduchon, \cite{Gauduchon}]\label{gauduchontheorem}
Let $h_{ij}$ be a Riemannian metric on a compact manifold $M$ of dimension $n \geq 2$ and let $\ga_{i}$ be a one-form. There exists a positive $f \in \cinf(M)$, determined uniquely up to multiplication by a positive constant, such that the one-form $\tilde{\ga}_{i} \defeq \ga_{i} + \tfrac{1}{2}d\log{f}_{i}$ is co-closed with respect to the conformal metric $\tilde{h}_{ij} \defeq fh_{ij}$.
\end{theorem}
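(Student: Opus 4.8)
\textbf{Proof proposal for Theorem \ref{gauduchontheorem}.} The statement is Gauduchon's theorem on the existence and uniqueness (up to scale) of a conformal gauge in which a given one-form becomes co-closed. The plan is to recast the problem as the solvability of a linear second-order elliptic equation on the compact manifold $M$ and to invoke the Fredholm theory together with a maximum-principle argument for uniqueness. First I would compute how the co-closedness condition transforms under the conformal change $\tilde{h}_{ij} = fh_{ij}$. Writing $\tilde{\ga}_{i} = \ga_{i} + \tfrac{1}{2}d\log f_{i}$ and using the conformal transformation rule for the codifferential on one-forms, the condition $\dad_{\tilde h}\tilde\ga = 0$ becomes, after multiplying through by an appropriate power of $f$, a linear equation for $f$ of the schematic form
\begin{align*}
\lap_{h} f + b^{p}(\ga) D_{p} f + c(\ga) f = 0,
\end{align*}
where $\lap_h = D^pD_p$ is the rough Laplacian of $h$, and $b$ and $c$ are expressions built from $\ga_i$ and its first derivatives. (The precise coefficients come from expanding $\dad_{\tilde h}(\ga + \tfrac12 d\log f)$ using the relation between $\dad_{\tilde h}$ and $\dad_h$ recorded in section \ref{coclosedsection}.) The substitution $f = e^{2u}$ would turn this into an equation for $u$ whose leading part is again $\lap_h u$ together with a gradient term and a zeroth-order term.

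The key structural point is that the resulting operator $P f = \lap_h f + b^p D_p f + c\,f$, or rather its adjoint, has a kernel detectable by the maximum principle. The standard route is to observe that $P$ (up to reordering) is the formal adjoint of an operator $P^{\ast}$ of the form $P^{\ast} g = \lap_h g - D_p(b^p g) + c\,g$, and that the operator $L g \defeq \dad_h d g + (\text{first order})$ governing the problem annihilates constants or has index zero. The cleaner formulation, which I would follow, is Gauduchon's: the operator sending $f$ to the $h$-codifferential of $\tilde\ga$ (suitably normalized) is a second-order linear elliptic operator $P$ on functions with no zeroth-order term in the formally self-adjoint model, so that its kernel on a compact manifold consists of the constants and, by the Hopf maximum principle applied to $P$ and to its adjoint $P^{\ast}$, the solution space of $P^{\ast}f = 0$ with $f > 0$ is one-dimensional. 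Concretely, one checks that $P\mathbf{1} = \dad_h\ga$ need not vanish, but that the adjoint problem $P^{\ast}f = 0$ admits, by the Krein-Rutman / Perron-Frobenius theory for elliptic operators (or equivalently by the strong maximum principle combined with the Fredholm alternative), a unique-up-to-scale positive solution $f$. This positive $f$ is exactly the conformal factor sought.

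The step I expect to be the main obstacle is the verification that the relevant elliptic operator genuinely has a one-dimensional positive kernel, i.e.\ that the zeroth-order terms arrange themselves so that the maximum principle applies. This is precisely where the special algebraic form of the transformation of $\dad$ under conformal scaling matters: one must confirm that after the substitution $f = e^{2u}$ the operator can be written so that a constant lies in the kernel of the principal-plus-drift part, which forces the zeroth-order coefficient to be absent (or of favorable sign), so that the strong maximum principle yields both existence of a positive solution and its uniqueness up to a multiplicative positive constant. Once positivity and simplicity of the principal eigenvalue are established, existence follows from the Fredholm alternative applied to $P^{\ast}$, and uniqueness up to positive homothety follows from the simplicity of the eigenvalue. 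Since this is exactly the statement proved by Gauduchon in \cite{Gauduchon} (with the argument repeated in \cite{Tod-compact} and \cite{Calderbank-Pedersen}), I would, rather than reproducing the full elliptic estimate, reduce the claim to that reference after establishing the conformal transformation formula above, noting that the computation is a standard application of the theory of linear elliptic operators on compact manifolds and carries over verbatim to the present setting because the construction depends only on the pair $(h,\ga)$ and not on any further structure of the AH pencil.
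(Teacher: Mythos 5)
Your strategy is the same as the paper's (which itself just repeats Gauduchon's sketch): rewrite $\dad_{\tilde h}\tilde\ga=0$ as an elliptic problem on $(M,h)$ and get existence and uniqueness-up-to-scale from the Fredholm alternative combined with the maximum principle applied to an operator whose formal adjoint has no zeroth-order term. But there is a concrete gap at the linearization step. Writing $2\si=d\log f$, the condition $\dad_{\tilde h}\tilde\ga=0$ reads $\dad_h(\ga+\si)-(n-2)\left(\si^p\ga_p+|\si|^2_h\right)=0$, and because of the $|\si|^2_h$ term this is \emph{not} a linear equation for $f$ (multiplying through by powers of $f$ leaves a term proportional to $(4-n)|df|^2_h/f$, so it is linear only when $n=4$); likewise your substitution $f=e^{2u}$ leaves a term quadratic in $du$, so the equation for $u$ is quasilinear and the Fredholm/Krein--Rutman machinery you invoke does not apply to it directly. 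The whole proof turns on the specific power substitution $f=\phi^{2/(n-2)}$ (for $n>2$), which kills the quadratic term and yields the homogeneous linear divergence-form operator $L(\phi)=-\dad_h\left(d\phi+(n-2)\phi\ga\right)$. For this $L$ the argument you sketch does go through: $L^{\ast}(u)=\lap_hu+(2-n)\ga^pD_pu$ has no zeroth-order term, so $\ker L^{\ast}$ consists of constants; equality of indices gives $\dim\ker L=1$; and the generator $\phi$ of $\ker L$ cannot change sign, since otherwise some positive function would be orthogonal to $\phi$, hence would lie in $\im L^{\ast}=(\ker L)^{\perp}$, contradicting the fact (again by the maximum principle, using the absence of a zeroth-order term in $L^{\ast}$) that nonzero elements of $\im L^{\ast}$ change sign; the strong maximum principle then upgrades $\phi\geq 0$ to $\phi>0$.

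A second omission: the theorem is stated for $n\geq 2$, and the case $n=2$ cannot be handled by your scheme at all, since the exponent $2/(n-2)$ degenerates and the quadratic term drops out for a different reason. There one sets $f=e^{-2\phi}$ and the condition becomes the inhomogeneous Poisson equation $\lap_h\phi=D^p\ga_p$, whose solvability comes not from a positive-kernel argument but from the fact that $D^p\ga_p$ integrates to zero; uniqueness up to an additive constant for $\phi$ gives uniqueness up to a positive multiplicative constant for $f$. So your proposal needs both the correct substitution in dimension $n>2$ and a separate two-dimensional argument before the elliptic machinery you cite actually applies.
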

\begin{proof}
For convenience, the proof is repeated here. Let $2\si_{i} = d\log{f}_{i}$. Let $\tD$ be the Levi-Civita connection of $\tilde{h} = fh$ and let $\tilde{\ga} = \ga + \si$. If $\dad_{\tilde{h}}\tilde{\ga} = 0$ then $f$ must solve $-\dad_{h}(\ga + \si) + (n-2)(\si^{p}\ga_{p} +  |\ga|^{2}) = 0$. If $n =2$ writing $f = e^{-2\phi}$, $\phi$ must solve $\lap_{h}\phi = D^{p}\ga_{p}$. Since $D^{p}\ga_{p}$ integrates to $0$ this has always a solution, unique up to addition of a constant, and the claim is proved in the $n = 2$ case. Now suppose $n > 2$ and let $f = \phi^{2/(n-2)}$. The equation to be solved simplifies to
\begin{align}\label{gaud1}
L(\phi) \defeq \lap_{h}\phi + (2-n)\dad_{h}(\phi \ga) = -\dad_{h}(d\phi + (n-2)\phi \ga) = 0.
\end{align}
Because $L$ is elliptic it is Fredholm. Because the formal adjoint $L^{\ast}(u) = \lap_{h}u + (2-n)\ga^{p}D_{p}u$ of $L$ has no zeroth order term, the maximum principle shows its kernel comprises constants. Because $L$ and $L^{\ast}$ have the same principal symbol their analytic indices are the same, so the dimension of kernel of $L$ equals that of the kernel of $L^{\ast}$, which is one. Hence a solution $\phi$ of \eqref{gaud1} is determined up to addition of a constant. Because $L^{\ast}$ has no zeroth order term, by the maximum principle a function in its image must change sign. Since $\im L^{\ast}$ is orthogonal to $\ker L$, were $\phi$ to change signs there would be a positive function in $\im L^{\ast}$ orthogonal to $\phi$, which there is not. Hence $\phi$ is either non-negative or non-positive, and the maximum principle applied to $L$ shows that if $\phi$ is non-zero then it is either positive or negative. 
\end{proof}
Applying Theorem \ref{todtheorem} to an arbitrary $h \in [h]$ with associated Faraday primitive $\ga$ yields
\begin{corollary}\label{gdgauge}
For a Riemannian AH structure on a compact manifold of dimension $n \geq 2$ there is a Gauduchon gauge determined uniquely up to homothety.
\end{corollary}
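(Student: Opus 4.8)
The plan is to deduce Corollary \ref{gdgauge} as an essentially immediate consequence of Theorem \ref{gauduchontheorem} together with the $t$-independence of the Gauduchon equation already established in the introductory discussion of section \ref{gauduchongaugesection}. Let $(\en, [h])$ be a Riemannian AH structure on a compact manifold $M$ of dimension $n \geq 2$, let $\nabla$ be its aligned representative, and let $\{\ga\}$ be the associated equivalence class of Faraday primitives. Fix any representative $h \in [h]$ with associated Faraday primitive $\ga_{i}$, defined by $2n\ga_{i} = h^{pq}\nabla_{i}h_{pq}$.

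First I would invoke Theorem \ref{gauduchontheorem}: since $h$ is Riemannian and $M$ is compact, there is a positive $f \in \cinf(M)$, unique up to multiplication by a positive constant, such that $\tilde{\ga}_{i} = \ga_{i} + \tfrac{1}{2}d\log f_{i}$ is co-closed with respect to $\tilde{h}_{ij} = fh_{ij}$. The key point to check is that this $\tilde{\ga}_{i}$ is precisely the Faraday primitive of the AH structure associated to the conformally rescaled metric $\tilde{h}$. This is recorded earlier in the excerpt: if $\tilde{h}_{ij} = e^{2\phi}h_{ij}$ then the corresponding Faraday primitive differs from $\ga_{i}$ by the exact one-form $d\phi_{i}$, so with $e^{2\phi} = f$, i.e. $\phi = \tfrac{1}{2}\log f$, the Faraday primitive attached to $\tilde{h}$ is exactly $\ga_{i} + \tfrac{1}{2}d\log f_{i} = \tilde{\ga}_{i}$. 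Thus $\tilde{h}$ is a Gauduchon metric for $(\en,[h])$ in the sense defined at the start of the section, namely $\dad_{\tilde{h}}\tilde{\ga} = 0$.

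For uniqueness up to homothety I would argue directly from the uniqueness clause of Theorem \ref{gauduchontheorem}. Suppose $h_{1}$ and $h_{2}$ are two Gauduchon metrics in $[h]$, written $h_{2} = f h_{1}$ for some positive $f \in \cinf(M)$, with associated Faraday primitives $\ga^{(1)}$ and $\ga^{(2)} = \ga^{(1)} + \tfrac{1}{2}d\log f$ satisfying $\dad_{h_{1}}\ga^{(1)} = 0$ and $\dad_{h_{2}}\ga^{(2)} = 0$. Applying Theorem \ref{gauduchontheorem} with the input metric $h_{1}$ and one-form $\ga^{(1)}$, the conformal factor producing a co-closed Faraday primitive is unique up to a positive multiplicative constant; since $f$ is such a factor and so is the constant function $1$, uniqueness forces $f$ to be constant. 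Hence $h_{2}$ is a positive homothety of $h_{1}$, which is the asserted uniqueness up to homothety.

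The only genuine subtlety, rather than an obstacle, is making sure the statement does not depend on which representative $\nabla \in \pen$ of the pencil one uses; but this is exactly the observation made in the opening paragraph of section \ref{gauduchongaugesection}, that the difference tensor of $\pnabla$ with $\nabla$ is trace-free and hence induces the same covariant derivative on density line bundles, so the Faraday primitive associated to a given $h \in [h]$ is independent of $t$. Consequently the Gauduchon condition $\dad_{h}\ga = 0$ is the same for every member of the pencil, and it suffices, as noted there, to establish existence for the underlying Weyl structure, which is what Theorem \ref{gauduchontheorem} provides. I would remark that the corollary is literally just the specialization of Theorem \ref{todtheorem} (here, Theorem \ref{gauduchontheorem}) to the Faraday primitive of an arbitrary $h \in [h]$, so beyond the bookkeeping above no further computation is required.
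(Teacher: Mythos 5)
Your proposal is correct and follows the same route as the paper, which proves the corollary in one line by applying Theorem \ref{gauduchontheorem} to an arbitrary $h \in [h]$ with its associated Faraday primitive (the paper's citation of Theorem \ref{todtheorem} at that point is indeed a slip for Theorem \ref{gauduchontheorem}, as you inferred). Your additional checks — that the transformed one-form is the Faraday primitive of the rescaled metric, that the uniqueness clause forces the conformal factor between two Gauduchon metrics to be constant, and that the condition is independent of the pencil parameter $t$ — are exactly the bookkeeping the paper leaves implicit.
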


\subsection{Properties of the Gauduchon metric of a Riemannian Einstein AH structure}\label{bochnervanishingsection}
\begin{theorem}\label{bochnertheorem}
Let $(\en, [h])$ be a Riemannian AH structure on a compact manifold $M$ of dimension $n \geq 2$. Let $h \in [h]$ be a Gauduchon metric, $\ga_{i}$ the corresponding Faraday primitive, and $D$ the Levi-Civita connection of $h$. In the statement of this theorem and in its proof raise and lower indices using $h_{ij}$ and $h^{ij}$. If $\al_{i}$ is an $h$-harmonic one-form then
\begin{align}\label{bochner5}
0 = \int_{M}|D\al|_{h}^{2} + \int_{M}\al^{i}\al^{j}\left(T_{ij} + \tfrac{1}{4}\bt_{ij}\right) + (n-2)\int_{M}\left(|\ga|^{2}_{h}|\al|_{h}^{2} - \lb \al, \ga \ra^{2}\right) , 
\end{align}
in which all integrals are with respect to the $h$ volume measure. Also,
\begin{align}\label{bochner5b}
\tfrac{1}{4}\int_{M}|d\ga|_{h}^{2} = \int_{M}|\mr{D\ga}|^{2}_{h} + \int_{M}\ga^{i}\ga^{j}\left(T_{ij} + \tfrac{1}{4}\bt_{ij}\right).
\end{align}
\end{theorem}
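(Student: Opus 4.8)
The plan is to establish both identities \eqref{bochner5} and \eqref{bochner5b} by the Bochner method, i.e. by integrating a Weitzenb\"ock formula over the compact manifold $M$ and using the harmonicity (resp. co-closedness) hypotheses to discard boundary-type terms. The crucial input is that $h$ is a \emph{Gauduchon} metric, so $\dad_{h}\ga = -D^{p}\ga_{p} = 0$, together with the relation \eqref{confric} expressing the conformal Ricci tensor $\sR_{ij}$ of $D$ in terms of the AH data $T_{ij}$, $\bt_{ij}$, $\ga$. First I would recall that for an $h$-harmonic one-form $\al_{i}$ (meaning $d\al = 0$ and $\dad_{h}\al = 0$, i.e. $\hodge\al = 0$), the standard Weitzenb\"ock formula gives $\lap_{h}\al_{i} = -\sR_{ip}\al^{p}$ pointwise (using the sign conventions of section \ref{coclosedsection}, where $\hodge\al_{i} + \lap_{h}\al_{i} = \sR_{ip}\al^{p}$ and $\hodge\al = 0$). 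Integrating $\langle \al, \lap_{h}\al\rangle$ against the volume measure and integrating by parts yields
\begin{align*}
0 = \int_{M}|D\al|_{h}^{2} + \int_{M}\sR_{ij}\al^{i}\al^{j},
\end{align*}
the classical Bochner identity. The entire content then is to substitute for $\sR_{ij}\al^{i}\al^{j}$ using \eqref{confric}.

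The second step is that substitution. From the first line of \eqref{confric},
\begin{align*}
\sR_{ij} = T_{ij} + \tfrac{1}{4}\bt_{ij} + (2-n)\ga_{(ij)} - H_{ij}\ga_{p}\,^{p} + \tfrac{n-2}{2}\ga_{p}\bt_{ij}\,^{p},
\end{align*}
where (with the abbreviation \eqref{alijshort} applied to $\ga$) one has $\ga_{(ij)} = D_{(i}\ga_{j)} + \ga_{i}\ga_{j} - \tfrac12|\ga|_h^2 h_{ij}$ via \eqref{gd2}, and $\ga_p{}^p = D^p\ga_p + \tfrac{2-n}{2}|\ga|_h^2$, also from \eqref{gd2}. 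Here I must remember that the terms $T_{ij}$, $\bt_{ij}$, etc.\ are weighted, and the passage to unweighted quantities introduces the factor $\mu = |\det h|^{-1/n}$; since the statement is written with $T_{ij}+\tfrac14\bt_{ij}$ appearing directly, I would work in the normalized gauge where indices are raised with $h^{ij}$ after multiplying through by $\mu$, matching the convention used in passing from \eqref{confric} to \eqref{confscal}. Contracting \eqref{confric} with $\al^i\al^j$ and discarding the $D^p\ga_p$ terms on integration (they vanish after integration by parts using harmonicity of $\al$, or vanish pointwise by the Gauduchon condition $\dad_h\ga=0$), the pure-trace contributions $-H_{ij}\ga_p{}^p\al^i\al^j$ and the $D_{(i}\ga_{j)}$ contributions must be shown to combine, after one integration by parts against $\al$, into the stated $(n-2)(|\ga|_h^2|\al|_h^2 - \langle\al,\ga\rangle^2)$ term. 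The term $\tfrac{n-2}{2}\ga_p\bt_{ij}{}^p\al^i\al^j$ is the one I would watch most carefully; I expect it to integrate away or cancel against a contribution from integrating $D_{(i}\ga_{j)}\al^i\al^j$ by parts, leaving only the advertised expression. This bookkeeping of the $(n-2)$-weighted terms, keeping track of which vanish pointwise (Gauduchon) versus which vanish only after integration (harmonicity of $\al$), is the main obstacle.

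For \eqref{bochner5b} the plan is identical in spirit but now applied to the one-form $\ga$ itself, which is co-closed but \emph{not} closed; its differential is $d\ga = -F$ (the Faraday form). Here I would start from the Bochner-type identity obtained by integrating $\langle \ga, \hodge\ga\rangle$ and using \eqref{hodgebyparts} with $k=1$, which gives $\int_M\langle\ga,\hodge\ga\rangle = \tfrac12\int_M|d\ga|_h^2 + \int_M|\dad_h\ga|_h^2 = \tfrac12\int_M|d\ga|_h^2$ by the Gauduchon condition. On the other hand $\hodge\ga_i = \sR_{ip}\ga^p - \lap_h\ga_i$, and integrating $\langle\ga,\lap_h\ga\rangle = -\int_M|D\ga|_h^2$ gives
\begin{align*}
\tfrac12\int_M|d\ga|_h^2 = \int_M|D\ga|_h^2 + \int_M\sR_{ij}\ga^i\ga^j.
\end{align*}
Now the Ricci term is evaluated by contracting \eqref{confric} with $\ga^i\ga^j$; the key simplifications are that $\ga_p\bt_{ij}{}^p\ga^i\ga^j$ and many cubic-in-$\ga$ terms drop by symmetry or by the Gauduchon condition, and that $\int_M|D\ga|_h^2 - \tfrac14\int_M|d\ga|_h^2$ must be reorganized, using $|D\ga|_h^2 = |D_{(i}\ga_{j)}|_h^2 + |D_{[i}\ga_{j]}|_h^2 = |D\ga|_h^2_{\mathrm{sym}} + \tfrac14|d\ga|_h^2$, into $\int_M|\mr{D\ga}|_h^2$ (the trace-free symmetrized part) plus a trace term that the Gauduchon condition forces to contribute correctly. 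Matching the remaining algebra to the clean right-hand side $\int_M|\mr{D\ga}|_h^2 + \int_M\ga^i\ga^j(T_{ij}+\tfrac14\bt_{ij})$ is again a matter of careful accounting with the $(n-2)$ terms, and I expect the same step—reconciling the symmetrized-derivative decomposition with the trace contributions under the Gauduchon gauge—to be where the real work lies.
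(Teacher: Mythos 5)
Your plan is correct and is essentially the paper's own proof: integrate the Bochner formula for a one-form, substitute \eqref{confric} for $\sR_{ij}$, integrate the $D\ga$ terms by parts using closedness and co-closedness of $\al$ together with the Gauduchon condition to obtain \eqref{bochner5}, and then take $\al = \ga$ and split $\int_{M}|D\ga|_{h}^{2}$ into its exterior, trace-free symmetric, and trace parts to obtain \eqref{bochner5b}. The one step you flag as the main obstacle dissolves at once if you use the full identity \eqref{gd2}: you dropped the $\tfrac{1}{2}\bt_{ij}\,^{p}\ga_{p}$ term from $\ga_{ij}$, and it is precisely this term that cancels $\tfrac{n-2}{2}\ga_{p}\bt_{ij}\,^{p}$ pointwise, which is why the second displayed form of \eqref{confric} contains no $\bt\ga$ term and the quadratic $(n-2)\left(|\ga|_{h}^{2}h_{ij} - \ga_{i}\ga_{j}\right)$ contribution appears directly rather than emerging from an integration by parts.
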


\begin{proof}
Let $h \in [h]$ be any representative and $D$ its Levi-Civita connection. Let $\square = dd^{\ast} + d^{\ast}d$ be the Hodge Laplacian (in the non-orientable case intepreted as explained in section \ref{coclosedsection}). For any one-form $\al_{i}$, substituting \eqref{confric} into the Bochner formula $\tfrac{1}{2}\lap_{h}|\al|^{2} + \al^{i}\square\al_{i} = \sR_{ij}\al^{i}\al^{j} + D^{i}\al^{j}D_{i}\al_{j}$ gives
\begin{align}\label{bochner1}
\tfrac{1}{2}\lap_{h}|\al|^{2}_{h} + \al^{i}\square\al_{i} &= |D\al|_{h}^{2} + \al^{i}\al^{j}\left(T_{ij} + \tfrac{1}{4}\bt_{ij} + (n-2)\left(|\ga|^{2}_{h}h_{ij} - \ga_{i}\ga_{j}\right)\right) \\
\notag &+ (2-n)\al^{i}\al^{j}D_{i}\ga_{j} + |\al|_{h}^{2}\dad_{h}\ga.
\end{align}
Integrating by parts several times yields
\begin{align}\label{bch3}
 \int_{M}|\al|_{h}^{2}\dad_{h}\ga &+ 2\int_{M}\al^{i}\al^{j}D_{i}\ga_{j} = 2\int_{M}\lb \al, \ga \ra \dad_{h}\al +  2\int_{M}\ga^{i}\al^{j}d\al_{ij}.
\end{align}
Integrating \eqref{bochner1} using \eqref{bch3} gives
\begin{align}\label{bch4}
\begin{split}
\int_{M}&\al^{i}\square\al_{i} + (n-2) \int_{M} \lb \al, \ga \ra \dad_{h}\al + (n-2)\int_{M}\ga^{i}\al^{j}d\al_{ij}  \\ 
&=  \int_{M}|D\al|_{h}^{2} + \int_{M}\al^{i}\al^{j}\left(T_{ij} + \tfrac{1}{4}\bt_{ij} + (n-2)\left(|\ga|^{2}_{h}h_{ij} - \ga_{i}\ga_{j}\right)\right) + \tfrac{n}{2}\int_{M}|\al|_{h}^{2}\dad_{h}\ga.
\end{split}
\end{align}
If $\al$ is harmonic, then it is closed and co-closed, and so there vanish the first three terms of \eqref{bch4} and so if $h$ is Gauduchon there results \eqref{bochner5}. Taking $\al_{i}$ to be $\ga_{i}$ in \eqref{bch4}, using \eqref{hodgebyparts},
and writing $\int_{M}|D\ga|^{2} = \tfrac{1}{4}\int_{M}|d\ga|^{2} + \int_{M}|\mr{D\ga}|^{2}_{h} - \tfrac{1}{n}\int_{M}(\dad_{h}\ga)^{2}$ gives
\begin{align}\label{bchgauge}
\tfrac{n-4}{2}\int_{M}|\ga|_{h}^{2}\dad_{h}\ga + \tfrac{n+1}{n}\int_{M}(\dad_{h}\ga)^{2} + \tfrac{1}{4}\int_{M}|d\ga|_{h}^{2} = \int_{M}|\mr{D\ga}|^{2}_{h} + \int_{M}\ga^{i}\ga^{j}\left(T_{ij} + \tfrac{1}{4}\bt_{ij}\right).
\end{align}
If $\ga$ is the Gauduchon gauge, then \eqref{bchgauge} becomes \eqref{bochner5b}, and the last term of \eqref{bch4} vanishes. 
\end{proof}

The following theorem generalizes to Einstein AH structures a property of the Gauduchon gauge for Einstein Weyl structures shown by P. Tod as Theorem $2.2$ of \cite{Tod-compact}.
\begin{theorem}\label{todtheorem}
Let $(\en, [h])$ be a Riemannian signature Einstein AH structure on a compact manifold of dimension $n \geq 2$. If $h \in [h]$ is the Gauduchon gauge and $\ga_{i}$ the corresponding one-form, then the vector field $h^{ip}\ga_{p}$ is $h$-Killing and for the Levi-Civita connection $D$ of $h$ there holds $D_{p}\bt_{ij}\,^{p} = n\ga_{p}\bt_{ij}\,^{p} = 0$.
\end{theorem}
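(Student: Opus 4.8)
The plan is to deduce Theorem \ref{todtheorem} from the Bochner-type integral identity \eqref{bochner5b} of Theorem \ref{bochnertheorem}, exploiting the Einstein condition to control the sign of the curvature term. First I would observe that by the definition of Einstein and the expression $T_{ij} = \tfrac{1}{2}R_{(ij)} + \tfrac{1}{2}Q_{(ij)}$ from section \ref{fdecomsection}, the naive Einstein equations $\mr{R}_{ij} = 0 = \mr{Q}_{ij}$ give $\mr{T}_{ij} = 0$, so that $T_{ij} = \tfrac{R}{n}H_{ij}$ is pure trace. Hence in a Gauduchon gauge, writing everything with respect to $h$ and using $H_{ij} = \mu h_{ij}$ with $\mu = |\det h|^{-1/n}$, the quadratic form $\ga^{i}\ga^{j}T_{ij}$ becomes a multiple of $|\ga|_h^2$. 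The remaining curvature contribution is $\tfrac14 \ga^i\ga^j \bt_{ij}$, and here I would invoke Lemma \ref{twodxblemma} type reasoning together with the fact that $\bt_{ij} = \bt_{ip}{}^q\bt_{jq}{}^p$ is a nonnegative form in Riemannian signature: indeed $\ga^i\ga^j\bt_{ij} = |i(\ga)\bt|_h^2 \ge 0$.

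The key step is to rewrite \eqref{bochner5b} so that every term has a definite sign. The left side $\tfrac14\int_M |d\ga|_h^2$ is manifestly nonnegative; on the right, $\int_M|\mr{D\ga}|_h^2 \ge 0$ and, by the previous paragraph, $\int_M \ga^i\ga^j(T_{ij} + \tfrac14\bt_{ij}) \ge 0$ once one knows the sign of $R$. The delicate point is that the curvature term need not be obviously nonnegative unless $R \ge 0$; so I expect the main obstacle to be handling the possibility that $R$ is negative or changes sign. Here I would use the conservation/exactness structure: for an Einstein AH structure the scalar curvature combined with the Faraday form is constrained by \eqref{conserve}, and I would integrate the conservation identity $\nabla_i R + n\nabla^p F_{ip} = 0$ against a suitable test density in the Gauduchon gauge. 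Pairing $\mu(\nabla_i R + n\nabla^p F_{ip}) = D_i\uR_h + 2\ga_i \uR_h - n\dad_h d\ga_i + n(4-n)\ga^{\sharp p}d\ga_{pi}$ from \eqref{conserve} with $\ga^{\sharp i}$ and integrating over the compact manifold should produce, after integration by parts and use of $\dad_h\ga = 0$, a relation linking $\int_M |d\ga|_h^2$ and $\int_M |\ga|_h^2 \uR_h$ that combines with \eqref{bochner5b} to force both $d\ga = 0$ away from the relevant sign region and $\mr{D\ga}=0$.

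Once $\int_M |\mr{D\ga}|_h^2 = 0$ is established, I conclude that $\mr{D\ga}_{ij} = 0$ pointwise, which is precisely the statement that $h^{ip}\ga_p$ is a conformal Killing field; combined with $\dad_h\ga = 0$ (the Gauduchon condition, i.e. $D^p\ga_p = 0$) and the compactness of $M$, a standard argument upgrades conformal Killing plus divergence-free to genuinely $h$-Killing, that is $D_{(i}\ga_{j)} = 0$. For the second assertion, I would return to the identity $2nE_{ij} = D_p\bt_{ij}{}^p - n\ga_p\bt_{ij}{}^p$ from \eqref{ddivbt}. Since the Einstein condition forces $E_{ij}=0$, this already gives $D_p\bt_{ij}{}^p = n\ga_p\bt_{ij}{}^p$; the vanishing of the common value should then follow by contracting with $\bt^{ij}{}_k$ or with $\ga^k$ and integrating, using that $\ga$ is now Killing (so $\lie_\ga$ preserves $h$ and hence the various curvature quantities) together with $\ga_p \bt_{ij}{}^p$ appearing in \eqref{dga0}. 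Concretely I expect that pairing $D_p\bt_{ij}{}^p = n\ga_p\bt_{ij}{}^p$ with $\ga^i$ or integrating $|\ga_p\bt_{ij}{}^p|^2_h$ by parts, exploiting that the Killing field $\ga^\sharp$ generates a one-parameter group of isometries under which $\bt$ and $R$ are invariant, yields $\ga_p\bt_{ij}{}^p = 0$, whence also $D_p\bt_{ij}{}^p = 0$. The hardest part throughout is keeping the density weights and the distinction between $H_{ij}$ and $h_{ij}$ straight while converting the weighted Einstein identities into the unweighted integral estimates on which the Bochner argument depends.
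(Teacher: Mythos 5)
Your strategy for the Killing conclusion is the paper's: contract the conservation identity \eqref{conserve} with $\ga^{\sharp\,i}$, integrate by parts using $\dad_{h}\ga = 0$ to obtain $4\int_{M}|\ga|_{h}^{2}\uR_{h} = n\int_{M}|F|_{h}^{2}$, and feed this into \eqref{bochner5b}. Since $T_{ij} = \tfrac{\uR_{h}}{n}h_{ij}$ for an Einstein structure and $|d\ga|_{h}^{2} = |F|_{h}^{2}$, the term $\tfrac{1}{4}\int_{M}|d\ga|_{h}^{2}$ on the left cancels \emph{exactly} against $\int_{M}\ga^{i}\ga^{j}T_{ij}$ on the right, leaving $0 = \int_{M}|\mr{D\ga}|_{h}^{2} + \tfrac{1}{4}\int_{M}\ga^{i}\ga^{j}\bt_{ij}$. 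Note that this does \emph{not} force $d\ga = 0$ anywhere — the structure need not be closed (see case $(4)$ of Theorem \ref{negexacttheorem}) — so your remark that the argument forces ``$d\ga = 0$ away from the relevant sign region'' is a misstatement, though a harmless one since the conclusion you actually need is only $\mr{D\ga}_{ij}=0$, which together with $\dad_{h}\ga = 0$ gives $D_{(i}\ga_{j)}=0$.

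The genuine gap is in your treatment of $\ga_{p}\bt_{ij}\,^{p} = 0$. You observe at the outset that $\ga^{i}\ga^{j}\bt_{ij} = |i(\ga)\bt|_{h}^{2} \geq 0$, but then abandon this and propose a separate argument: contract $D_{p}\bt_{ij}\,^{p} = n\ga_{p}\bt_{ij}\,^{p}$ with $\ga^{i}$ or integrate $|\ga_{p}\bt_{ij}\,^{p}|_{h}^{2}$ by parts using that $\ga^{\sharp}$ is Killing. Neither implementation works as stated: contracting with $\ga^{i}\ga^{j}$ and integrating by parts (the Killing condition kills $\bt_{i}\,^{jp}\ga^{i}D_{p}\ga_{j}$ by symmetry) yields only $\int_{M}\ga^{i}\ga^{j}\ga^{k}\bt_{ijk} = 0$, the integral of an unsigned cubic quantity, which gives nothing pointwise; while integrating $|\ga_{p}\bt_{ij}\,^{p}|_{h}^{2}$ by parts produces the term $\int_{M}\ga^{p}\bt^{ijq}D_{q}\bt_{ijp}$, which requires controlling $D_{[q}\bt_{p]ij}$ and hence $E_{ijkl}$ — not assumed to vanish here. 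The correct and much shorter route is the one you already set up: the identity $0 = \int_{M}|\mr{D\ga}|_{h}^{2} + \tfrac{1}{4}\int_{M}\ga^{i}\ga^{j}\bt_{ij}$ is a sum of two nonnegative integrals, so \emph{both} vanish, giving $\ga_{p}\bt_{ij}\,^{p} = 0$ pointwise in the same stroke as the Killing condition; the final assertion $D_{p}\bt_{ij}\,^{p} = n\ga_{p}\bt_{ij}\,^{p} = 0$ then follows from \eqref{dga0}, exactly as you say.
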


\begin{proof}[Proof of Theorem \ref{todtheorem}]
Let $h \in [h]$ be a Gauduchon metric with Levi-Civita connection $D$ and associated Faraday primitive $\ga_{i}$. Integrating by parts gives 
\begin{align}\label{ttinter}
n\int_{M}|F|_{h}^{2}\,d\vol_{h} & = -2n\int_{M}\lb D\ga, F\ra\,d\vol_{h} = -2n\int_{M}\ga^{\sharp\,i}\dad_{h}F_{i}\,d\vol_{h}.
\end{align}
Contracting \eqref{conserve} with $\ga^{\sharp\,i}$ gives $n\ga^{\sharp\,i}\dad_{h}F_{i} = -\ga^{\sharp\,i}D_{i}\uR_{h} - 2|\ga|_{h}^{2}\uR_{h}$. Substituting this into \eqref{ttinter} and integrating by parts using $\dad_{h}\ga = 0$ shows
\begin{align}\label{einsteinf}
4\int_{M}|\ga|_{h}^{2}\uR_{h} \, d\vol_{h} = n\int_{M}|F|_{h}^{2}\, d\vol_{h}.
\end{align}
Substituting \eqref{einsteinf} in \eqref{bochner5b} gives $4\int_{M}|\mr{D\ga}|^{2}_{h} + \int_{M}\ga^{i}\ga^{j}\bt_{ij} = 0$. This implies $D_{(i}\ga_{j)} = 0$ and $\ga_{p}\bt_{ij}\,^{p} = 0$, and so, by \eqref{dga0}, there follows $D_{p}\bt_{ij}\,^{p} = n\ga_{p}\bt_{ij}\,^{p} = 0$. 
\end{proof}

\begin{remark}
The proof of Theorem \ref{todtheorem} uses in an essential way the conservation condition which distinguishes the Einstein condition from the weaker naive Einstein condition. This can be taken as motivation for imposing the conservation condition.
\end{remark}

In \cite{Calderbank-mobius}, Calderbank classified the Riemannian signature Einstein Weyl structures on compact Riemann surfaces. The key result is Theorem $3.7$ of \cite{Calderbank-mobius}, part of which is directly generalized by the two-dimensional case of Theorem \ref{classtheorem}

\begin{theorem}\label{classtheorem}
For a Riemannian signature Einstein AH structure $(\en, [h])$ on a compact manifold of dimension at least $2$ if $h \in [h]$ is a Gauduchon metric there are satisfied the equations
\begin{align}\label{confein1} &D_{p}\bt_{ij}\,^{p} = 0, \qquad \ga_{p}\bt_{ij}\,^{p} = 0,\qquad D_{(i}\ga_{j)} = 0,\\
\label{confein2}&\mr{\sR}_{ij} = \tfrac{1}{4}\mr{\bt}_{ij} + (2-n)\mr{\ga\tensor \ga}_{ij},\\
\label{confein3} &D_{i}(\sR - \tfrac{1}{4}|\bt|_{h}^{2} - (n+2)|\ga|_{h}^{2}) = D_{i}\left(\uR_{h} + n(n-4)|\ga|_{h}^{2}\right) = 0,\\
\label{confein4} &\ga^{\sharp\,i}D_{i}\uR_{h} = 0, \qquad \ga^{\sharp\,i}D_{i}|\bt|_{h}^{2} = 0.
\end{align}
If $n =2$ then \eqref{confein2} is vacuous and $|\ga|_{h}^{2}|\bt|_{h}^{2} = 0$. Conversely, if on a manifold of dimension at least $2$ (not necessarily compact) there are a Riemannian metric $h$ (with Levi-Civita connection $D$), an $h$-Killing field $X^{i}$, and a completely symmetric, completely $h$-trace free tensor $B_{ijk} = B_{(ijk)}$, such that $\ga_{i}\defeq X^{i}h_{pi}$ and $\bt_{ij}\,^{k} \defeq h^{kp}B_{ijp}$ solve \eqref{confein1}-\eqref{confein3}, then $\nabla \defeq D - \tfrac{1}{2}\bt_{ij}\,^{k} - 2\ga_{(i}\delta_{j)}\,^{k} + h_{ij}X^{k}$ is the aligned representative of an Einstein AH structure $(\en, [h])$ with cubic torsion $\bt_{ij}\,^{k}$ and for which $h$ is a Gauduchon metric. 
\end{theorem}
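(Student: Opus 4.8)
The plan is to prove the forward direction by assembling the conformal identities of section \ref{underlyingsection} with the consequences of the Einstein condition that have already been extracted, then to verify that these conditions are not merely necessary but sufficient. For the forward direction I would start from the fact that $(\en,[h])$ is naive Einstein, which by \eqref{dga0} gives $D_{p}\bt_{ij}\,^{p} = n\ga_{p}\bt_{ij}\,^{p}$, and by \eqref{dga1} (the rewriting of $\mr{R}_{ij}=0$ via \eqref{confricfree}) gives $\mr{\sR}_{ij} = \tfrac14\mr{\bt}_{ij} + (2-n)\mr{D\ga}_{ij} + (2-n)\mr{\ga\tensor\ga}_{ij}$. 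The crucial input is Theorem \ref{todtheorem}: since $h$ is the Gauduchon gauge of a \emph{Riemannian signature Einstein} AH structure, $\ga^{\sharp}$ is $h$-Killing, so $D_{(i}\ga_{j)}=0$, and moreover $D_{p}\bt_{ij}\,^{p} = n\ga_{p}\bt_{ij}\,^{p}=0$. Feeding $D_{(i}\ga_{j)}=0$ (hence $\mr{D\ga}_{ij}=0$) back into \eqref{dga1} collapses it to \eqref{confein2}, and the two vanishing statements from Theorem \ref{todtheorem} give the first line \eqref{confein1}. This disposes of \eqref{confein1} and \eqref{confein2} directly.

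For \eqref{confein3} and \eqref{confein4} I would exploit the conservation condition \eqref{einstein} through its conformal form \eqref{conserve}. Since $\ga^{\sharp}$ is Killing and $h$ is Gauduchon, $\dad_{h}d\ga = \square\ga = \lap_{h}\ga$ vanishes appropriately; more concretely, a Killing field is automatically harmonic, so $F_{ij}=-d\ga_{ij}$ is harmonic and $\dad_{h}F=0$. Substituting $\dad_{h}d\ga=0$ into \eqref{conserve} yields $D_{i}\uR_{h} + 2\ga_{i}\uR_{h} + n(4-n)\ga^{\sharp\,p}d\ga_{pi}=0$; contracting with $\ga^{\sharp\,i}$ and using that $\ga^{\sharp}$ is Killing (so $\ga^{\sharp\,i}\ga^{\sharp\,p}d\ga_{pi}=0$ by skew-symmetry) gives $\ga^{\sharp\,i}D_{i}\uR_{h} = -2|\ga|_{h}^{2}\uR_{h}$, which combined with integration by parts against $d\vol_h$ forces $\ga^{\sharp\,i}D_{i}\uR_{h}=0$, the first half of \eqref{confein4}. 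The identity \eqref{einsteinf} from the proof of Theorem \ref{todtheorem} together with the pointwise relation then produces the parallelism statement \eqref{confein3}, where I use \eqref{confscal} to rewrite $\sR - \tfrac14|\bt|_h^2 - (n+2)|\ga|_h^2$ as $\uR_h + n(n-4)|\ga|_h^2$ up to the $\dad_h\ga=0$ term; differentiating and applying $D_{(i}\ga_{j)}=0$ and $\ga_{p}\bt_{ij}\,^{p}=0$ gives the second half of \eqref{confein4} as well. The $n=2$ degeneracy ($\mr\bt_{ij}=0$, hence \eqref{confein2} vacuous, and $|\ga|_h^2|\bt|_h^2=0$) follows from Lemma \ref{twodcubicformlemma} and Lemma \ref{twodxblemma} applied to the orthogonality $\ga_p\bt_{ij}{}^p=0$.

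For the converse I would run the construction in reverse: given $h$, a Killing $\ga^\sharp$, and a trace-free symmetric $B_{ijk}$ solving \eqref{confein1}--\eqref{confein3}, define $\nabla \defeq D - \tfrac12\bt_{ij}\,^{k} - 2\ga_{(i}\delta_{j)}\,^{k} + h_{ij}X^{k}$, which is exactly \eqref{dnabladiff} rearranged, so that $\nabla$ is the aligned representative of the CP pair generated by $h$ and the one-form $\ga$, with $h$ a Gauduchon metric (since $D_{(i}\ga_{j)}=0$ forces $\dad_h\ga=0$). I then reverse the identities \eqref{dga0} and \eqref{confricfree}: equation \eqref{confein1} gives $E_{ij}=0$ via \eqref{ddivbt}, and \eqref{confein2} together with $\mr{D\ga}_{ij}=0$ and $\ga_p\bt_{ij}{}^p=0$ gives $\mr{R}_{ij}=0$ via \eqref{confricfree}, so the structure is naive Einstein. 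Finally I must recover the conservation condition \eqref{einstein}; this is where \eqref{confein3}--\eqref{confein4} re-enter, via \eqref{conserve} run backwards, using that $\ga^\sharp$ Killing makes $F$ harmonic. \textbf{The main obstacle} I anticipate is precisely this last point: verifying that the scalar-curvature conditions \eqref{confein3}--\eqref{confein4} are exactly strong enough to reconstitute $\nabla_i R + n\nabla^p F_{ip}=0$ without compactness, since the forward direction used integration by parts freely but the converse is asserted on an arbitrary (possibly non-compact) manifold. I would need to check carefully that the pointwise conservation identity, rather than its integrated form, is what \eqref{confein3}--\eqref{confein4} encode, tracking the $(4-n)\ga^{\sharp\,p}d\ga_{pi}$ term in \eqref{conserve} and confirming it vanishes pointwise from $\ga^\sharp$ Killing and $F$ harmonic.
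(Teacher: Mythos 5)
Your handling of \eqref{confein1} and \eqref{confein2} coincides with the paper's: Theorem \ref{todtheorem} together with \eqref{dga0} gives \eqref{confein1}, and substituting $D_{(i}\ga_{j)}=0$ into \eqref{dga1} collapses it to \eqref{confein2}. The derivation of \eqref{confein3}--\eqref{confein4}, however, rests on a false premise: you assert that a Killing field is automatically harmonic, so that $\dad_{h}d\ga=0$. A Killing field is co-closed but not closed, and is Hodge-harmonic only when $\sR_{ip}\ga^{\sharp\,p}=0$; indeed the Ricci identity together with $D_{(i}\ga_{j)}=0$ gives $\dad_{h}d\ga_{i}=-D^{p}d\ga_{pi}=2\sR_{ip}\ga^{\sharp\,p}$, and by \eqref{confric} and \eqref{confein1} this equals $\tfrac{2}{n}\uR_{h}\ga_{i}$, which is not zero in general. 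The correct route, and the paper's, is to substitute this \emph{nonzero} value into \eqref{conserve}: the term $-n\dad_{h}d\ga_{i}=-2\uR_{h}\ga_{i}$ exactly cancels the $2\ga_{i}\uR_{h}$ term, and since $D_{i}|\ga|_{h}^{2}=\ga^{\sharp\,p}d\ga_{ip}$ the remaining term $n(4-n)\ga^{\sharp\,p}d\ga_{pi}$ equals $n(n-4)D_{i}|\ga|_{h}^{2}$, so the conservation condition becomes the \emph{pointwise} identity $D_{i}(\uR_{h}+n(n-4)|\ga|_{h}^{2})=0$, i.e.\ \eqref{confein3}; then \eqref{confein4} follows by contracting with $\ga^{\sharp\,i}$ and using that $\ga^{\sharp}$ is Killing. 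Your version instead arrives at $\ga^{\sharp\,i}D_{i}\uR_{h}=-2|\ga|_{h}^{2}\uR_{h}$ and appeals to integration by parts; this yields only $\int_{M}|\ga|_{h}^{2}\uR_{h}\,d\vol_{h}=0$, not pointwise vanishing, and combined with \eqref{einsteinf} it would force $F\equiv 0$ for every compact Riemannian Einstein AH structure, which is too strong (non-closed cases are permitted by Theorem \ref{negexacttheorem}) --- a symptom that the premise $\dad_{h}d\ga=0$ is wrong.

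The same false premise reappears in your converse, where you propose to recover $\nabla_{i}R+n\nabla^{p}F_{ip}=0$ from the harmonicity of $F$. The obstacle you correctly flag about non-compactness is resolved not by harmonicity but by the fact that the whole computation is pointwise: from \eqref{confric}, \eqref{confein1}, and \eqref{confein2} one obtains $n\sR_{ip}\ga^{\sharp\,p}=\uR_{h}\ga_{i}$, hence $\dad_{h}d\ga_{i}=\tfrac{2}{n}\uR_{h}\ga_{i}$, and substituting into \eqref{conserve} exhibits the conservation condition as literally equivalent to \eqref{confein3}, which is hypothesized. No integration, and hence no compactness, is needed anywhere in the converse; this is how the paper proceeds.
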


\begin{proof}
The equations \eqref{confein1} follow immediately from \eqref{dga0} and Theorem \ref{todtheorem}. Substituting these into \eqref{dga1} gives \eqref{confein2}. Because $D_{(i}\ga_{j)} = 0$ there holds $d\ga_{ij} = 2D_{i}\ga_{j}$, and so $D_{i}|\ga|_{h}^{2} = \ga^{\sharp\,p}d\ga_{ip}$. By the Ricci identity, $\dad_{h}d\ga_{i} = -D^{p}d\ga_{pi} = 2\sR_{ip}\ga^{\sharp\,p}$. Together \eqref{confein1} and \eqref{confric} show that $n\ga^{\sharp\,p}\sR_{ip} = \uR_{h}\ga_{i}$. Substituting the preceeding three observations into \eqref{conserve} gives the second equality of \eqref{confein3}. The first equality in \eqref{confein3} is true for any AH structure in any gauge, by \eqref{confscal} (it is included only for convenience). If $n = 4$ then by \eqref{confein3}, $D_{i}\uR_{h} = 0$, so the first equality of \eqref{confein4} holds trivially; if $n \neq 4$ then by \eqref{confein1} and \eqref{confein3}, $\ga^{\sharp\,i}D_{i}\uR_{h} = 2n(4-n)\ga^{p}\ga^{q}D_{(p}\ga_{q)} = 0$, showing the first equality of \eqref{confein4}. Since $\ga^{\sharp\,i}$ is $h$-Killing, there holds $\ga^{\sharp\,p}D_{p}\sR = 0$, and with the first equality of \eqref{confein4} and \eqref{confein3} this shows the second equality of \eqref{confein4}. If $n =2$ Lemma \ref{twodxblemma} shows that $\ga_{p}\bt_{ij}\,^{p} = 0$ implies that $|\ga|_{h}^{2}|\bt|_{h}^{2} = 0$.

If given $(h, X, B)$ as in the statement of the theorem, then it is straightforward to check that $(\en, [h])$ is an AH structure with cubic torsion $\bt_{ij}\,^{k}$, aligned representative $\nabla$, and Gauduchon metric $h$. The curvatures of $\nabla$ and $D$ are related as in \eqref{confcurvijkl}, and there hold \eqref{ddivbt}, \eqref{confric}, \eqref{confscal}, and \eqref{conserve}. Together \eqref{ddivbt} and \eqref{confein1} show $E_{ij} = 0$. Together \eqref{confric}, \eqref{confscal}, \eqref{confein1}, and \eqref{confein2} show $\mr{T}_{ij} = 0$, and so show the naive Einstein equations. Finally, substituting $D_{i}|\ga|_{h}^{2} = \ga^{\sharp\,p}d\ga_{ip}$ and $D^{p}d\ga_{pi} = -2\sR_{ip}\ga^{\sharp\,p}$ into \eqref{conserve}, and using \eqref{confein2} shows that $\nabla_{i}R + 2\nabla^{p}F_{ip} = 0$.
\end{proof}

When $n = 2$, on an oriented surface a Riemannian conformal structure determines a complex structure, and that $\ga^{\sharp}$ be Killing and the second equation of \eqref{confein1} imply that $X$ is the real part of a holomorphic vector field and $L_{ijk}$ is the real part of a cubic holomorphic differential. In combination with the Riemann-Roch theorem these observations lead to stronger results which are reported in \cite{Fox-2dahs}. In particular, in two-dimensions the holomorphicity implies that the zeroes of $X$ and $L$ are isolated, and it follows from $\ga_{p}L_{ij}\,^{p} = 0$ that a Riemannian signature Einstein AH structure which is not exact is Weyl, and which is not Weyl is exact. In higher dimensions the zeroes of a Killing field need not be isolated (see e.g. \cite{Kobayashi-fixedpoints}), and it is not clear to what extent this last dichotomy carries over to higher dimensions.

\subsection{A theorem of Bochner vanishing type for AH structures}
Theorem $1.1$ of B. Alexandrov and S. Ivanov's \cite{Alexandrov-Ivanov} is a Bochner vanishing theorem for Weyl structures on a compact $n$-manifold satisfying the non-negativity condition that for the Faraday primitive associated to a Gauduchon metric there hold $R_{(ij)} - \tfrac{(n-2)(n-4)}{2}(|\ga|_{h}^{2}h_{ij} - \ga_{i}\ga_{j}) \geq 0$. Theorem \ref{ivanovtheorem} generalizes (with a similar proof) Theorem $1.1$ of \cite{Alexandrov-Ivanov} to AH structures, though with a non-negativity condition which is weaker even in the Weyl case. 

\begin{theorem}\label{ivanovtheorem}
Let $(\en, [h])$ be a Riemannian signature AH structure on a compact manifold $M$ of dimension $n \geq 2$ and having first Betti number $b_{1}$. Suppose there holds 
\begin{align}\label{bochnerass}
T_{ij} + \tfrac{1}{4}\bt_{ij} + (n-2)\left(|\ga|^{2}_{h}h_{ij} - \ga_{i}\ga_{j}\right) \geq 0,
\end{align}
for a Gauduchon metric $h\in [h]$ and the associated Gauduchon gauge $\ga_{i}$. Then 

\begin{enumerate}
\item Any $h$-harmonic one-form is parallel and $b_{1} \leq n$. There holds $b_{1} = n$ if and only if any Gauduchon metric is flat and the universal cover of $M$ is isometric to $\rea^{n}$ with the Euclidean metric.
\item If $b_{1} = 0$ and $(\en, [h])$ is closed then $(\en, [h])$ is exact.
\item If $b_{1}\geq 1$ the universal cover of $M$ with a Gauduchon metric is isometric to a product metric on $\rea \times N$ with $N$ simply-connected and complete in the induced metric $g$. %
If $n > 2$ and $(\en, [h])$ is not exact, then $(\en, [h])$ is closed, $b_{1} = 1$, $H^{1}_{\derham}(M;\rea)$ is generated by the Gauduchon one-form, and the Ricci tensor of the metric induced on $N$ is non-negative. If, moreover, the restriction to $\ker \ga$ of $T_{ij} + \tfrac{1}{4}\bt_{ij}$ is positive definite, then $N$ is compact; in particular if $n$ is $3$ or $4$ then $N$ is $S^{n-1}$ with a metric of constant curvature. 
\item If either at some point $\ga^{i}\ga^{j}(T_{ij} + \tfrac{1}{4}\bt_{ij}) > 0$ or the inequality \eqref{bochnerass} is strict somewhere on $M$, then $b_{1} = 0$. 
\end{enumerate}
\end{theorem}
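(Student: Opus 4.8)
The plan is to obtain the conclusion $b_{1}=0$ directly from the integrated Bochner identity \eqref{bochner5} of Theorem \ref{bochnertheorem}, which is the master identity underlying the whole of Theorem \ref{ivanovtheorem}. Let $\al_{i}$ be any $h$-harmonic one-form, where $h\in[h]$ is the given Gauduchon metric with associated Gauduchon gauge $\ga_{i}$. By the Hodge theorem the space of $h$-harmonic one-forms is isomorphic to $H^{1}_{\derham}(M;\rea)$, so $b_{1}$ is exactly the dimension of this space; thus it suffices to show that every $h$-harmonic one-form vanishes identically.

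First I would rewrite the integrand of \eqref{bochner5} so that the assumed non-negativity is manifest. The identity reads
\begin{align*}
0 = \int_{M}|D\al|_{h}^{2} + \int_{M}\al^{i}\al^{j}\left(T_{ij} + \tfrac{1}{4}\bt_{ij}\right) + (n-2)\int_{M}\left(|\ga|^{2}_{h}|\al|_{h}^{2} - \lb \al, \ga \ra^{2}\right).
\end{align*}
The key algebraic point is that the last integrand equals $\al^{i}\al^{j}(|\ga|^{2}_{h}h_{ij} - \ga_{i}\ga_{j})$, so the second and third integrands combine into $\al^{i}\al^{j}\left(T_{ij} + \tfrac{1}{4}\bt_{ij} + (n-2)(|\ga|^{2}_{h}h_{ij} - \ga_{i}\ga_{j})\right)$, which is pointwise non-negative precisely by hypothesis \eqref{bochnerass}. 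Since $|D\al|_{h}^{2}\geq 0$ as well, every one of the three summands in the rewritten identity is a non-negative integrand, and their sum integrates to zero; hence each integral vanishes separately. In particular $\int_{M}|D\al|_{h}^{2}=0$, so $D\al = 0$, i.e. $\al$ is $h$-parallel, and moreover the middle quadratic form vanishes pointwise: $\al^{i}\al^{j}\left(T_{ij} + \tfrac{1}{4}\bt_{ij} + (n-2)(|\ga|^{2}_{h}h_{ij} - \ga_{i}\ga_{j})\right)=0$ everywhere.

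Now I would invoke the two alternative strictness hypotheses of the final statement. Suppose first that at some point $p\in M$ one has $\ga^{i}\ga^{j}(T_{ij}+\tfrac14\bt_{ij})>0$; I would observe that the extra Cauchy--Schwarz term $(n-2)(|\ga|^{2}_{h}|\al|^{2}_{h}-\lb\al,\ga\ra^{2})$ is itself non-negative, so the pointwise vanishing of the combined quadratic form forces, upon taking $\al=\ga$, the relation $\ga^{i}\ga^{j}(T_{ij}+\tfrac14\bt_{ij})\,|\ga|^{2}\le 0$ type constraint; more efficiently, since $\al$ is parallel it has constant $h$-norm, and if $\al\not\equiv 0$ its norm is nowhere zero, whence the vanishing of the full quadratic form at $p$ combined with the assumed strict positivity at $p$ of the $\ga$-direction contribution yields a contradiction once one tests against $\al=\ga$ (using that $\ga$ is also forced by Theorem \ref{todtheorem}-type reasoning to interact controllably — but here I only need the pointwise inequality). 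In the second case, where \eqref{bochnerass} is strict at some point $q$, the middle integrand $\al^{i}\al^{j}(\cdots)$ is strictly positive at $q$ whenever $\al(q)\neq 0$; but a nonzero parallel form has constant nonzero norm and hence $\al(q)\neq0$, again forcing the integrand to be positive at $q$, contradicting its pointwise vanishing. Either way, every $h$-harmonic one-form must vanish identically, giving $b_{1}=0$.

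The step I expect to require the most care is the first strictness case, where the hypothesis controls only the $\ga$-direction of the form $T_{ij}+\tfrac14\bt_{ij}$ rather than the full form. The clean route is to feed the \emph{specific} harmonic form into the argument is not available because $\al$ need not be proportional to $\ga$; instead I would argue that the pointwise-vanishing condition $\al^{i}\al^{j}(T_{ij}+\tfrac14\bt_{ij}+(n-2)(\cdots))=0$, valid for \emph{every} harmonic $\al$ and in particular for $\al=\ga$ once one checks (via Theorem \ref{todtheorem}, under the Einstein hypothesis, or directly via the Gauduchon condition $\dad_{h}\ga=0$ in the general AH case) that $\ga$ contributes to the relevant kernel, reduces the positivity at $p$ to a contradiction. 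If one does not wish to assume $\ga$ is harmonic, the alternative is to note that the non-negativity \eqref{bochnerass} together with the strict positivity in the $\ga$-direction means the combined form restricted to the span containing the parallel $\al$ must degenerate, and parallelism propagates this degeneracy to all of $M$, contradicting strictness at the single point $p$. Resolving this cleanly — deciding exactly which testing vector to use and whether $\ga$ itself may be assumed in the harmonic space — is the one genuinely delicate bookkeeping point, and I would settle it by a direct pointwise linear-algebra argument at $p$ rather than any further integration.
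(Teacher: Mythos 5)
Your proposal addresses only the last of the four assertions of Theorem \ref{ivanovtheorem}. Parts (1) (beyond the observation that a harmonic one-form is parallel), (2), and (3) are not touched: you prove neither the bound $b_{1}\leq n$ and its equality case, nor the exactness statement in (2), nor the de Rham splitting, the identity $b_{1}=1$, and the structure of $N$ in (3). These are not optional refinements — the paper derives them from the same identity \eqref{bochner5} by the de Rham decomposition applied to a nowhere-vanishing parallel form, and, crucially, part (3) supplies an ingredient that your argument for part (4) itself needs. So even as a proof of the theorem as stated, the proposal is substantially incomplete.

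Within part (4), your treatment of the second alternative (strict inequality in \eqref{bochnerass} at a point) is correct and is exactly the paper's argument. The first alternative is the genuine gap, and you have correctly located but not closed it. The hypothesis $\ga^{i}\ga^{j}(T_{ij}+\tfrac{1}{4}\bt_{ij})>0$ at $p$ controls only the $\ga$-direction, while the pointwise vanishing you extract from \eqref{bochner5} concerns the $\al$-direction of an arbitrary harmonic form; there is no contradiction between the two unless $\al$ and $\ga$ are related, and no ``direct pointwise linear-algebra argument at $p$'' can relate them, because the needed fact is global. The paper's route is: the hypothesis forces $(\en,[h])$ to be non-exact (otherwise $\ga\equiv 0$ in the Gauduchon gauge), and then the argument of part (3) shows that if $b_{1}\geq 1$ every harmonic form $\al$ satisfies $\al\wedge\ga=0$, so $\ga=f\al$, and a separate computation (using $\dad_{h}\al=\dad_{h}\ga=0$, the Ricci identity $\al^{\sharp\,p}\sR_{ip}=0$ for a parallel form, and the curvature lower bound) shows $f$ is \emph{constant}; hence $\ga$ is itself parallel and harmonic, and feeding $\al=\ga$ into \eqref{bochner5} — where the Cauchy--Schwarz term drops out and \eqref{bochnerass} contracted with $\ga^{i}\ga^{j}$ gives $\ga^{i}\ga^{j}(T_{ij}+\tfrac{1}{4}\bt_{ij})\geq 0$ pointwise — yields the contradiction. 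Without establishing that $\ga$ lies in the harmonic space (or is a constant multiple of a parallel form), your case-one argument does not go through; this is precisely why part (3) must be proved first.
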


From the non-negativity of each of $\bt_{ij}$ and $|\ga|_{h}^{2}h_{ij} - \ga_{i}\ga_{j}$, it follows that it is not stronger to assume \eqref{bochnerass} than it is to assume the non-positivity of something gauge independent such as $T_{ij} + \tfrac{1}{4}\bt_{ij}$. Hence the reader unhappy with imposing on an AH structure a condition which makes reference to a particular choice of representative metric can substitute for the main hypothesis of Theorem \ref{ivanovtheorem} the non-positivity of $T_{ij} + \tfrac{1}{4}\bt_{ij}$.

For Weyl structures the condition \eqref{bochnerass} is \textit{a priori} weaker than condition $(1.1)$ of \cite{Alexandrov-Ivanov}. When $(\en, [h])$ is exact and $M$ is compact, by \eqref{confric} the condition \eqref{bochnerass} means that the Ricci tensor of a Gauduchon metric is non-negative. For a Riemannian Einstein AH structure on a compact $n$-manifold it follows from \eqref{confric} and \eqref{confein1} that the lefthand side of \eqref{bochnerass} is simply the Ricci curvature of a Gauduchon metric.

\begin{proof}[Proof of Theorem \ref{ivanovtheorem}]
By \eqref{confric} if $n = 2$ then by \eqref{confric} the assumption \eqref{bochnerass} implies that the curvature of a Gauduchon metric is non-negative. In this case the classical Bochner argument shows $(1)$ and so $M$ is diffeomorphic to one of the sphere, projective plane, torus, or Klein bottle. 

The core of the argument is based on that of the proof of Theorem $1.1$ of \cite{Alexandrov-Ivanov}. If $\al$ is an $h$-harmonic one-form then \eqref{bochnerass} implies that the righthand side of \eqref{bochner5} is non-negative, so there must hold $D_{i}\al_{j} = 0$ and, when $n >2$, $\al \wedge \ga = 0$. As every de Rham cohomology class contains a unique harmonic representative and there are at most $n$ linearly independent parallel one-forms, $b_{1} \leq n$. If $b_{1} = n$ then there are $n$ parallel, non-vanishing one-forms, so $M$ with a Gauduchon metric is flat, and the universal cover of $M$ is $\rea^{n}$.

By \eqref{bochnerass} the righthand side of \eqref{bochner5b} is non-negative, and it follows that if $(\en, [h])$ is closed then the Gauduchon gauge must be parallel, $D_{i}\ga_{j} = 0$, and so, in particular, $\ga$ is harmonic. In this case, because $\ga$ is parallel, either it is identically zero or it is nowhere vanishing, so either $(\en, [h])$ is exact, or $b_{1} > 0$. If $b_{1} = 0$ and $(\en, [h])$ is closed then $\ga$ is harmonic, so must be identically zero, so $(\en, [h])$ is exact. 

If $b_{1} > 0$ then there is a non-trivial harmonic one-form $\al_{i}$, and because $\al_{i}$ is $D$-parallel it must be nowhere vanishing; the de Rham decomposition implies that the universal cover $\tilde{M}$ of $M$ is isometric to a direct product $\rea \times N$ for some simply-connected $(n-1)$-manifold $N$ and complete metrics on $\rea$ and $N$. A simply connected one-manifold is isometric to the line, so if $n = 2$ this means the universal cover of $M$ is isometric to a flat $\rea^{2}$. Suppose $n >2$. Then $\al \wedge \ga = 0$ so there is $f \in \cinf(M)$ such that $\ga = f\al$. Because $\al$ and $\ga$ are co-closed there holds $df(\al^{\sharp})  = 0$. Write $X^{i} = \la \al^{\sharp\,i} + Y^{i}$ with $\al_{i}Y^{i} = 0$. Contracting $0 = 2D_{[i}D_{j]}\al_{k} = -\al_{p}\sR_{ijk}\,^{p}$ gives $\al^{\sharp\,p}\sR_{ip} = 0$, so $Y^{i}Y^{j}\sR_{ij} = X^{i}X^{j}\sR_{ij}$. By \eqref{confric} and \eqref{bochnerass}, $Y^{i}Y^{j}\sR_{ij} = X^{i}X^{j}\sR_{ij} \geq (2-n)X^{i}X^{j}D_{i}\ga_{j} = (2-n)df(X)\al(X) = (2-n)\la |\al|_{h}^{2}df(Y)$. Since $|\al|_{h}^{2} > 0$ and this holds for arbitary real constants $\la \in \rea$, it must be $df(Y) = 0$. Since this holds for all $Y$ in the kernel of $\al$ and also $df(\al^{\sharp}) = 0$, $f$ must be constant, so $\ga$ is a constant multiple of $\al$. Either $(\en, [h])$ is exact, or every harmonic one-form is a constant multiple of the Gauduchon gauge, and so $b_{1} = 1$. Since $\ga$ is parallel, $(\en, [h])$ is closed. From \eqref{confric} and \eqref{bochnerass} it follows that the Ricci tensor of the metric induced on $N$ is non-negative. 

If, moreover, the restriction to $\ker \ga$ of $T_{ij} + \tfrac{1}{4}\bt_{ij}$ is positive definite then by \eqref{confric} the restriction to $\ker \ga$ of $\sR_{ij}$ is positive definite and so, because the splitting $\tilde{M} \simeq \rea \times N$ is isometric, the induced metric on $N$ has Ricci curvature bounded strictly away from zero. Because $N$ is complete, it is compact by Myers' Theorem. If $n = 3$ then $N$ is a compact, simply-connected surface admitting a metric of positive Ricci curvature, so must be diffeomorphic to a sphere; if $n = 4$ then $N$ is a compact, simply-connected three-manifold admitting a metric of positive Ricci curvature, so is diffeomorphic to a sphere by Hamilton's theorem, \cite{Hamilton}.

Suppose $n > 2$ and at some point there holds $\ga^{i}\ga^{j}(T_{ij} + \tfrac{1}{4}\bt_{ij}) > 0$. By the preceeding, were $b_{1} \geq 0$ then $\ga_{i}$ would be harmonic, but with \eqref{bochnerass} this would give a contradiction in \eqref{bochner5}; hence $b_{1} = 0$. If the inequality \eqref{bochnerass} is strict at $p \in M$ then \eqref{bochner5} implies $0 = \al^{i}\al^{j}(T_{ij} + \tfrac{1}{4}\bt_{ij}) > 0$ at $p$, so $\al_{i}$ vanishes at $p$; because $\al$ is parallel, it vanishes identically, so there is no non-trivial harmonic one-form and $b_{1} = 0$. 
\end{proof}

\subsection{Rough classification by scalar curvature of Riemannian signature Einstein AH structures on compact manifolds}\label{scalarcurvaturesection}
The simplest results about the scalar curvature of  Riemannian signature Einstein AH structure on compact manifolds are exactly as in the Einstein Weyl case. Such results depend fundamentally on the conservation condition; since covariant derivatives of densities are insensitive to the cubic torsion, the latter has no effect on the resulting formulas. In particular, Theorem $3.3$ of \cite{Calderbank-faraday} and the results of Section $6$ of \cite{Calderbank-faraday} carry over to AH structures with minor modifications, although the proofs given here are superficially different than those in \cite{Calderbank-faraday}. The principal result is Theorem \ref{negexacttheorem}.

\begin{lemma}
If $(\en, [h])$ is a Riemannian signature Einstein AH structure on a compact $n$-dimensional manifold and $h \in [h]$ is a Gauduchon metric with associated Faraday primitive $\ga_{i}$, then there hold
\begin{align}
\label{ne3}&\lap_{h}|\ga|^{2}_{h} = \tfrac{1}{2}|d\ga|_{h}^{2} - \tfrac{2}{n}\uR_{h}|\ga|_{h}^{2},& \\
\label{ne4}&n\lap_{h}\log |\ga| \geq -\uR_{h}, & & (\text{wherever}\,\, |\ga|^{2}_{h} > 0),\\
\label{scalar3}&\lap_{h}\uR_{h} - 2(n-4)|\ga|^{2}\uR_{h} = -2n(n-4)|D\ga|^{2}. 
\end{align}
\end{lemma}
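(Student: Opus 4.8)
The plan is to extract everything from Theorem \ref{todtheorem}, which guarantees that for the Gauduchon gauge of a Riemannian Einstein AH structure the dual vector field $h^{ip}\ga_p$ is $h$-Killing. Thus $D_{(i}\ga_{j)} = 0$, so $D_i\ga_j = \tfrac{1}{2}d\ga_{ij}$, which immediately yields the normalization $|D\ga|_h^2 = \tfrac14|d\ga|_h^2$ and, via the one-form Weitzenb\"ock identity $\hodge\ga_i + \lap_h\ga_i = \sR_{ip}\ga^{\sharp\,p}$ of section \ref{coclosedsection} together with $\hodge\ga_i = \dad_h d\ga_i = -2\lap_h\ga_i$ (using $\dad_h\ga = 0$ and the Killing property), the Bochner relation $\lap_h\ga_i = -\sR_{ip}\ga^{\sharp\,p}$. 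The second input is the Ricci eigenvalue relation $n\ga^{\sharp\,p}\sR_{ip} = \uR_h\ga_i$ established in the proof of Theorem \ref{classtheorem} from \eqref{confein1} and \eqref{confric}; contracting with $\ga^{\sharp\,i}$ gives $\sR_{pq}\ga^{\sharp\,p}\ga^{\sharp\,q} = \tfrac{1}{n}\uR_h|\ga|_h^2$.

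For \eqref{ne3} I would expand $\tfrac12\lap_h|\ga|_h^2 = |D\ga|_h^2 + \ga^{\sharp\,p}\lap_h\ga_p$ and substitute the two facts above: $\tfrac12\lap_h|\ga|_h^2 = \tfrac14|d\ga|_h^2 - \sR_{pq}\ga^{\sharp\,p}\ga^{\sharp\,q} = \tfrac14|d\ga|_h^2 - \tfrac{1}{n}\uR_h|\ga|_h^2$. Multiplying by $2$ gives \eqref{ne3} directly.

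For \eqref{ne4} the plan is to write $\lap_h\log|\ga| = \tfrac12\lap_h\log|\ga|_h^2 = \tfrac12\big(|\ga|_h^{-2}\lap_h|\ga|_h^2 - |\ga|_h^{-4}|d|\ga|_h^2|_h^2\big)$ wherever $|\ga|_h^2 > 0$. Using the Killing property once more, $d|\ga|_h^2 = -i(\ga^\sharp)d\ga$, and the key pointwise estimate is $|i(\ga^\sharp)d\ga|_h^2 \leq \tfrac12|\ga|_h^2|d\ga|_h^2$; this follows by diagonalizing the antisymmetric endomorphism $d\ga$ at a point, since the eigenvalues of $(d\ga)(d\ga)^{T}$ are the $\lambda_k^2$ each with multiplicity two, whose sum is $|d\ga|_h^2$, so the largest is at most $\tfrac12|d\ga|_h^2$. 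Combining this bound with \eqref{ne3} (in the form $|\ga|_h^{-2}\lap_h|\ga|_h^2 = \tfrac12|\ga|_h^{-2}|d\ga|_h^2 - \tfrac2n\uR_h$) makes the two $|d\ga|_h^2$ terms cancel against each other and leaves $\lap_h\log|\ga|_h^2 \geq -\tfrac2n\uR_h$, i.e. \eqref{ne4}.

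For \eqref{scalar3} I would invoke \eqref{confein3}, which asserts that $\uR_h + n(n-4)|\ga|_h^2$ is locally constant; hence $\lap_h\uR_h = -n(n-4)\lap_h|\ga|_h^2$. Feeding in \eqref{ne3} rewritten via $|d\ga|_h^2 = 4|D\ga|_h^2$, namely $\lap_h|\ga|_h^2 = 2|D\ga|_h^2 - \tfrac2n\uR_h|\ga|_h^2$, gives $\lap_h\uR_h = -2n(n-4)|D\ga|_h^2 + 2(n-4)\uR_h|\ga|_h^2$, which rearranges to \eqref{scalar3}. The computations themselves are short; the only genuine care needed is bookkeeping of the paper's norm conventions (the factor relating $|D\ga|_h^2$, $|d\ga|_h^2$, and $|i(\ga^\sharp)d\ga|_h^2$), so the main obstacle is simply to verify the spectral inequality $|i(\ga^\sharp)d\ga|_h^2 \leq \tfrac12|\ga|_h^2|d\ga|_h^2$ with the correct constant under these conventions, everything else following mechanically from Theorems \ref{todtheorem} and \ref{classtheorem}.
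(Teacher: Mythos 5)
Your proof is correct and follows essentially the same route as the paper: the Killing and eigenvalue identities from Theorems \ref{todtheorem} and \ref{classtheorem} give \eqref{ne3}, the logarithmic expansion gives \eqref{ne4}, and the constancy of $\uR_{h} + n(n-4)|\ga|_{h}^{2}$ from \eqref{confein3} combined with \eqref{ne3} gives \eqref{scalar3}. The only cosmetic difference is in \eqref{ne4}, where the paper cites the refined Kato inequality of Lemma \ref{katolemma} (the $k=1$ case of \eqref{kato2}, which applies since $\ga \in \ker\clie\cap\ker\div$) whereas you re-derive exactly that inequality pointwise from the spectral structure of the antisymmetric endomorphism $d\ga$ — your estimate $|i(\ga^{\sharp})d\ga|_{h}^{2} \leq \tfrac{1}{2}|\ga|_{h}^{2}|d\ga|_{h}^{2}$ is equivalent to $|d|\ga||_{h}^{2} \leq \tfrac{1}{2}|D\ga|_{h}^{2}$ and carries the correct constant under the paper's norm conventions.
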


\begin{proof}
Because by \eqref{confein1}, $D_{(i}\ga_{j)} = 0$ and $\ga_{p}\bt_{ij}\,^{p} = 0$, there follows from the Ricci identity and \eqref{confscal},
\begin{align*}
\begin{split}
\lap_{h}|\ga|_{h}^{2} &  = 2|D\ga|_{h}^{2} + 2\ga^{\sharp\,p}D^{i}D_{i}\ga_{p} = \tfrac{1}{2}|d\ga|_{h}^{2}  - 2\ga^{\sharp\,p}\ga^{\sharp\,q}\sR_{pq} = \tfrac{1}{2}|d\ga|_{h}^{2} - \tfrac{2}{n}\uR_{h}|\ga|_{h}^{2},
\end{split}
\end{align*}
which shows \eqref{ne3}. By Lemma \ref{katolemma} there holds the refined Kato inequality, $\tfrac{1}{4}|d\ga|_{h}^{2} = |D\ga|^{2}_{h} \geq 2|d|\ga||_{h}^{2}$ wherever $|\ga|^{2}_{h} \neq 0$. With \eqref{ne3} this implies that wherever $|\ga|_{h}^{2} \neq 0$ there holds 
\begin{align*}
\begin{split}
\lap_{h}\log|\ga|_{h}^{2} &= |\ga|_{h}^{-2}\lap_{h}|\ga|_{h}^{2} - |\ga|^{-4}_{h} = -\tfrac{2}{n}\uR_{h} + |\ga|_{h}^{-2}\left(\tfrac{1}{2}|d\ga|^{2}_{h} - 4|d|\ga||^{2}_{h} \right) \geq -\tfrac{2}{n}\uR_{h},
\end{split}
\end{align*}
which shows \eqref{ne4}. By \eqref{confein3}, $\uR_{h} + n(n-4)|\ga|^{2}$ is constant, and in \eqref{ne3} this yields \eqref{scalar3}.
\end{proof}

Theorem \ref{negexacttheorem} is the partial generalization to AH structures of Theorems $4.7-4.9$ of \cite{Calderbank-Pedersen}. 

\begin{theorem}\label{negexacttheorem}
If $(\en, [h])$ is a Riemannian signature Einstein AH structure on a compact $n$-manifold then there holds one of the following mutually exclusive possibilities.
\begin{enumerate}
\item $R$ is negative and $\nabla$-parallel and $(\en, [h])$ is exact.
\item $R = 0$ and $(\en, [h])$ is closed. The Faraday primitive associated to a Gauduchon metric is parallel with respect to the Gauduchon metric. The Gauduchon metric has non-negative Ricci curvature. Any $h$-harmonic one-form is parallel and the first Betti number $b_{1}$ of $M$ satisfies $b_{1} \leq n$. There holds $b_{1} = n$ if and only if any Gauduchon metric is flat and $M$ is diffeomorphic to a torus. %
If $n = 2$ and $(\en, [h])$ is not exact then $(\en, [h])$ is Weyl, and a Gauduchon metric is flat. If $n > 2$ and $(\en, [h])$ is not exact, then $b_{1} = 1$, the the Ricci curvature of the Gauduchon metric is positive definite on the kernel of $\ga$ and has positive scalar curvature, and the universal cover of $M$ equipped with the pullback of a Gauduchon metric is isometric to a product metric on $\rea \times N$ where $N$ is simply connected and compact and equipped with a metric $g$ which has positive Ricci curvature; if $3 \leq n \leq 4$, then $N$ is diffeomorphic to $S^{n-1}$. There is induced on $N$ an exact Riemannian Einstein AH structure which has positive weighted scalar curvature, and for which the induced metric $g$ is a distinguished metric.
\item $R > 0$ and either $(\en, [h])$ is exact with parallel scalar curvature or $(\en, [h])$ is not closed and its scalar curvature is not parallel. In either case a Gauduchon metric has positive Ricci curvature and $M$ has finite fundamental group.
\item $R$ is somewhere positive and somewhere non-positive, $n \leq 3$, $(\en, [h])$ is not closed, and for a Gauduchon metric $h \in [h]$ the quantity $\uR_{h} + n(n-4)|\ga|_{h}^{2}$ is a non-positive constant.
\end{enumerate}
\end{theorem}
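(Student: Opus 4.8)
The plan is to fix a Gauduchon metric $h \in [h]$ (which exists and is unique up to homothety by Corollary \ref{gdgauge}), with associated Faraday primitive $\ga_i$ and Levi-Civita connection $D$, and to run a case analysis on the pointwise sign of the weighted scalar curvature $R$; this is meaningful precisely because $|\det\ctm|^{1/n}$ is oriented, and $\sign R = \sign \uR_h$ pointwise. By Theorem \ref{todtheorem} the dual field $\ga^\sharp$ is $h$-Killing and $\ga_p\bt_{ij}{}^p = 0 = D_p\bt_{ij}{}^p$, and by Theorem \ref{classtheorem} the relations \eqref{confein1}--\eqref{confein4} hold; in particular \eqref{confein3} makes $\uR_h + n(n-4)|\ga|_h^2$ a constant, call it $\ka$. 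The three analytic inputs are the integral identity \eqref{einsteinf}, giving $4\int_M|\ga|_h^2\uR_h\,d\vol_h = n\int_M|F|_h^2\,d\vol_h \geq 0$; the subsolution estimate \eqref{ne4}; and the elliptic equation \eqref{scalar3} for $\uR_h$. The remark following Theorem \ref{ivanovtheorem} identifies, via \eqref{confric} and \eqref{confein1}, the left-hand side of \eqref{bochnerass} with the Ricci tensor $\sR_{ij}$ of the Gauduchon metric; since $T_{ij} = \tfrac{R}{n}H_{ij}$ for a naive Einstein structure and the remaining terms of $\sR_{ij}$ are nonnegative in Riemannian signature, the hypothesis of Theorem \ref{ivanovtheorem} holds whenever $R \geq 0$.

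First I would dispose of the closed case. Then $F \equiv 0$, so $d\ga = 0$, and with $D_{(i}\ga_{j)} = 0$ from \eqref{confein1} this forces $D\ga = 0$; hence $|\ga|_h^2$ and, by \eqref{confein3}, $\uR_h$ are constant. If $\ga \not\equiv 0$ the integral identity forces $\uR_h = 0$, giving $R \equiv 0$; if $\ga \equiv 0$ the structure is exact and \eqref{scalar3} again gives $\uR_h$ constant. Thus in every closed case $R$ has constant sign, and by Lemma \ref{parallelexactlemma} $R$ is $\nabla$-parallel with $(\en,[h])$ exact unless $R \equiv 0$. This produces Case $(1)$ when $R < 0$, the exact branch of Case $(3)$ when $R > 0$, and Case $(2)$ when $R \equiv 0$; the Bochner consequences in Case $(2)$ (parallel $\ga$, $b_1 \leq n$, flat-torus rigidity, and the product splitting when $b_1 \geq 1$) follow from Theorem \ref{ivanovtheorem}, whose hypothesis holds because $\sR_{ij} \geq 0$ when $R = 0$.

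Next suppose $(\en,[h])$ is not closed, so $F \not\equiv 0$ and $\ga \not\equiv 0$, and the integral identity gives $\int_M|\ga|_h^2\uR_h > 0$, whence $R > 0$ somewhere. The crux is to show $R$ cannot change sign once $n \geq 4$. For $n = 4$, equation \eqref{scalar3} reads $\lap_h\uR_h = 0$, so $\uR_h$ is constant and of one sign. For $n > 4$ I would argue at a maximum of $|\ga|_h^2$ (attained since $M$ is compact and $\ga \not\equiv 0$): there $\lap_h\log|\ga|_h \leq 0$, so \eqref{ne4} forces $\uR_h \geq 0$, and since \eqref{confein3} (with $n(n-4) > 0$) makes this the global minimum of $\uR_h$, we get $\uR_h \geq 0$ everywhere. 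Were $\uR_h$ to vanish somewhere, it would attain an interior zero minimum of the nonnegative supersolution of $\lap_h - 2(n-4)|\ga|_h^2$ furnished by \eqref{scalar3} (whose zeroth-order coefficient is $\leq 0$ for $n > 4$), so the strong maximum principle would give $\uR_h \equiv 0$, contradicting $R > 0$ somewhere. Hence for $n \geq 4$ either $R > 0$ everywhere or $R \equiv 0$, and the latter is excluded; so $R > 0$ everywhere and a genuine sign change forces $n \leq 3$. In the positive branch $R$ is not $\nabla$-parallel (else Lemma \ref{parallelexactlemma} would make the structure exact, hence closed), giving the non-closed branch of Case $(3)$, while $\sR_{ij} \geq \tfrac{R}{n}H_{ij} > 0$ yields finite fundamental group by Myers' theorem and $b_1 = 0$ by part $(4)$ of Theorem \ref{ivanovtheorem}.

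In the remaining situation $R$ is positive somewhere and non-positive somewhere with $n \leq 3$ and $(\en,[h])$ not closed, which is Case $(4)$; the constraint $\ka \leq 0$ follows by evaluating $\ka = \uR_h + n(n-4)|\ga|_h^2$ at a point where $\uR_h \leq 0$, using $n(n-4) \leq 0$ for $n \leq 3$. Mutual exclusivity is then immediate from the sign behavior of $R$ together with the closed/exact dichotomy. The step I expect to be the main obstacle is the dimension-dependent maximum-principle analysis of the third paragraph: correctly combining the constancy \eqref{confein3}, the refined Kato inequality underlying \eqref{ne4}, and the sign of the zeroth-order term in \eqref{scalar3} to exclude both a sign change and an interior zero of $\uR_h$ when $n \geq 4$. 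A secondary delicacy is the bookkeeping in Case $(2)$ needed to extract the splitting $\rea \times N$ with $N$ compact of positive Ricci curvature: here the positivity of $\sR_{ij}$ on $\ker\ga$ (which feeds Myers' theorem) must be read off directly from the identification $\sR_{ij} = \tfrac14\bt_{ij} + (n-2)(|\ga|_h^2 h_{ij} - \ga_i\ga_j)$ valid when $R = 0$, rather than quoted verbatim from Theorem \ref{ivanovtheorem}.
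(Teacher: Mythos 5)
Your overall strategy coincides with the paper's: fix a Gauduchon metric, invoke Theorems \ref{todtheorem} and \ref{classtheorem}, and run the sign analysis on $\uR_{h}$ using the integral identity \eqref{einsteinf}, the constancy of $\uR_{h}+n(n-4)|\ga|_{h}^{2}$, and the maximum principle applied to \eqref{scalar3}, with Theorem \ref{ivanovtheorem} supplying the Bochner consequences. Your organization (closed versus non-closed first, then sign) is a harmless permutation of the paper's trichotomy, and your treatment of $n>4$ --- locating the minimum of $\uR_{h}$ at the maximum of $|\ga|_{h}^{2}$ via \eqref{confein3}, applying \eqref{ne4} there to get $\uR_{h}\geq 0$, then the strong minimum principle on \eqref{scalar3} to exclude an interior zero --- is a correct variant of the paper's more direct observation that, since the zeroth-order coefficient $-2(n-4)|\ga|_{h}^{2}$ is non-positive, \eqref{scalar3} forbids a non-positive interior minimum of $\uR_{h}$ unless $\uR_{h}$ is constant. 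Both routes yield the same dichotomy for $n\geq 4$.

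The one genuine omission is the final assertion of case $(2)$: that when $n>2$ and the structure is not exact, the factor $N$ of the splitting carries an induced \emph{exact Riemannian Einstein AH structure} with positive weighted scalar curvature for which $g$ is a distinguished metric. You flag the positivity of $\sR_{ij}$ on $\ker\ga$ as a delicacy, but that only feeds Myers' theorem; it does not produce the AH structure on $N$. Theorem \ref{ivanovtheorem} gives the isometric splitting $\rea\times N$ and nothing more. The paper must still define $\tnabla=\nabla-h_{ij}\ga^{\sharp\,k}$, check that it preserves $\ker\ga$ (using $D\ga=0$) and hence descends to $N$, verify via $\ga_{p}\bt_{ij}\,^{p}=0$ that the cubic torsion restricts, compute $\tnabla_{I}g_{JK}=\bt_{IJ}\,^{Q}g_{QK}$ to see the induced pair is an exact AH structure, and then use $D_{p}\bt_{ij}\,^{p}=0$ together with \eqref{ddivbt} and \eqref{confric} to conclude $\tilde{E}_{IJ}=0$ and $\tilde{T}_{IJ}=(n-2)|\ga|_{h}^{2}g_{IJ}$, whence the induced structure is Einstein with positive parallel scalar curvature. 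This is a substantive construction, roughly a third of the paper's proof, and your proposal neither carries it out nor identifies the mechanism; as written, that clause of the theorem is unproved.
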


\begin{proof}
Let $h \in [h]$ be a Gauduchon metric with Levi-Civita connection $D$ and associated Faraday primitive $\ga_{i}$. Integrating $\mu \ga^{\sharp\,i}\nabla_{i}R = \ga^{\sharp\,i}D_{i}\uR_{h} + 2|\ga|_{h}^{2}\uR_{h}$ by parts and using \eqref{einsteinf} gives $2\int_{M} \mu \ga^{\sharp\,i}\nabla_{i}R\,d\vol_{h} = 4\int_{M}|\ga|_{h}^{2}\uR_{h} \, d\vol_{h} = n\int_{M}|F|_{h}^{2}$, from which it is evident that if $\nabla_{i}R = 0$ then $F_{ij} = 0$ (the point is this holds also when $n = 4$); the converse is immediate from the conservation condition. It follows immediately from the definition and Lemma \ref{fcoclosedlemma} that a Riemannian signature Einstein AH structure on a compact $n$-manifold is closed if and only if its scalar curvature is parallel; the preceeding shows this is true when $n = 4$ as well. By Lemma \ref{parallelexactlemma}, if $(\en, [h])$ is Riemannian it is closed if and only if either $(\en, [h])$ is proper and exact, or $R$ is identically zero. This shows that for a Riemannian signature Einstein AH structure on a compact $n$-manifold there holds one of the following mutually exclusive possiblities:
\begin{itemize}
\item It is proper and exact with parallel scalar curvature.
\item Its scalar curvature is identically zero and it is closed.
\item It is not closed, and its scalar curvature is not parallel.
\end{itemize}
If $R$ is non-positive, then by \eqref{ne3} the function $|\ga|^{2}_{h}$ is subharmonic, so by the maximum principle is constant. In \eqref{ne3} this implies that either $R$ is identically $0$ and $d\ga = 0$, which with \eqref{confein1} shows that $D\ga = 0$, or that $\ga$ is identically $0$ and $R$ is somewhere negative. In the latter case $R$ must be parallel, and so $R$ must be everywhere negative. Thus if $R$ is non-positive there holds either $(1)$ or the first part of $(2)$. By the trichotomy established above, if $(\en, [h])$ does not satisfy either $(1)$ or $(2)$ then it must be that $(\en, [h])$ either is exact with positive parallel scalar curvature or $(\en, [h])$ is not closed with scalar curvature which is not parallel. If $R$ is positive, then by \eqref{confscal} and \eqref{confein2} the Ricci curvature $\sR_{ij}$ of a Gauduchon metric is positive definite, so by Myers' theorem $M$ has finite fundamental group. There remain to show that in the last case if $R$ is neither positive nor non-negative then it must be that there holds $(4)$, and the structural claims for the $R \equiv 0$ case.

If $n \geq 4$ then $-2(n-4)|\ga|^{2} \leq 0$, so by the maximum principle applied to \eqref{scalar3}, $\uR_{h}$ cannot have a non-positive minimum unless $\uR_{h}$ is constant. In particular, if $n \geq 4$ and $R$ is not everywhere non-positive then it is positive. The only remaining possibility is that $n \leq 3$, $(\en, [h])$ is not exact, and $R$ is somewhere positive and somewhere non-positive. By \eqref{confein3}, $\uR_{h} + n(n-4)|\ga|_{h}^{2}$ is a constant; since $n(n-4) < 0$ and the minimum of $\uR_{h}$ is non-positive, at a point where $\uR_{h}$ attains its minimum, $\uR_{h} + n(n-4)|\ga|_{h}^{2}$ must be non-positive.

Now suppose $R \equiv 0$. Since $\ga$ is parallel and $T_{ij} = 0$, the assumption \eqref{bochnerass} of Theorem \ref{bochnertheorem} is satisfied. This yields the first part of $(2)$. Suppose $(\en, [h])$ is not exact. Since $\ga$ is parallel it is nowhere vanishing and $|\ga|^{2}_{h}$ is a positive constant. Let $\tilde{M}$ be the universal cover of $M$ and $\rho:\tilde{M} \to M$ the covering projection. As in Theorem \ref{ivanovtheorem}, every harmonic one-form is a multiple of $\ga$ and $\rho^{\ast}(h)$ is isometric to a product metric on $\rea \times N$, where $N$ is simply-connected with a complete metric $g$. If $n = 2$ then by Lemma \ref{negexacttheorem} there holds $0 = |\bt|_{h}^{2}|\ga|_{h}^{2}$, so $|\bt|^{2}_{h} = 0$, and from \eqref{confscal} there follows $\sR = 0$, so that $(\en, [h])$ is Weyl and a Gauduchon metric is flat. If $n \geq 3$ then $(n-1)(n-2)|\ga|_{h}^{2} > 0$, so by \eqref{confscal} a Gauduchon metric has positive scalar curvature. By \eqref{confric} and \eqref{confein1}, for any $Y \in \Ga(TM)$ there holds $4Y^{i}Y^{j}\sR_{ij} = |i(Y)\bt|^{2}_{h} + 2(n-2)|\ga^{\sharp}\wedge Y|_{h}^{2} \geq 0$, with equality if and only if $Y$ is a multiple of $\ga^{\sharp}$, and so the Ricci curvature of $h$ is non-negative and its restriction to the kernel of $\ga$ is positive. The metric $\tilde{g}_{ij}$ on $N$ is the restriction to $N$ of the pullback to $N$ of $g_{ij} = h_{ij} - |\ga|^{-2}_{h}\ga_{i}\ga_{j}$. Because the splitting $\tilde{M}\simeq \rea \times N$ is isometric, the Ricci curvature $\tilde{\sR}_{ij}$ of $\tilde{g}$ equals the restriction to $N$ of the Ricci curvature of $\rho^{\ast}(h)$. By what was just shown $\tilde{\sR}_{ij}$ is positive definite on $N$. Because by the de Rham decomposition theorem the induced metric $\tilde{g}$ is complete, by Myers' theorem, $N$ is compact. In particular, if $3 \leq n \leq 4$ then $N$ must be a sphere, as in Theorem \ref{ivanovtheorem}.

Because $\rho$ is a local diffeomorphism the pullback $\rho^{\ast}(\nabla)$ is defined. Henceforth it will serve the interest of readability to indicate the pullbacks via $\rho$ of tensors on $M$ and the original tensors by identical notation. For example there will be written $\nabla$ in lieu of $\rho^{\ast}(\nabla)$, and there will be written $\bt_{ij}\,^{k}$; what the latter really means is $\rho^{\ast}(L)_{ijp}\rho^{\ast}(h)^{kp}$ where $L_{ijk} \defeq \bt_{ij}\,^{p}h_{pk}$. On $\tilde{M}$ define $\tnabla = \nabla - h_{ij}\ga^{\sharp\,k} = D - \tfrac{1}{2}\bt_{ij}\,^{k} - 2\ga_{(i}\delta_{j)}\,^{k}$. If $X^{i}\ga_{i} = 0$ then $\ga_{p}\tnabla_{i}X^{p} = \ga_{p}D_{i}X^{p} = -X^{p}D_{i}\ga_{p} = 0$, so that $\tnabla$ preserves $\ker \ga$ and hence induces a connection on $N$, to be denoted also $\tnabla$. There holds $\tnabla_{i}\ga_{j} = 2\ga_{i}\ga_{j}$ and it follows that 
\begin{align}\label{ng1}
\tnabla_{i}g_{jk} = \bt_{ij}\,^{p}h_{pk} + 2\ga_{i}g_{jk} + 2\ga_{(j}g_{k)i}
\end{align}
 on $\tilde{M}$. Since $\ga_{p}\bt_{ij}\,^{p} = 0$, $\bt_{ij}\,^{k}$ restricts to a tensor on $N$, to be denoted $\bt_{IJ}\,^{K}$. Here temporarily captial Latin indices are used to decorate tensors on $N$. For instance, \eqref{ng1} becomes $\tnabla_{I}g_{JK} = \bt_{IJ}\,^{Q}g_{QK}$, since by construction $\ga_{I} = 0$. This shows that the CP pair $([\tnabla], [g])$ generated on $N$ by $\tnabla$ and $g$ is an exact AH structure with cubic torsion $\bt_{IJ}\,^{K}$. Let $\tilde{D}$ be the Levi-Civita connection of $g_{IJ}$; because $\ga_{i}$ is $D$-parallel, $\tilde{D}$ is obtained from $D$ by orthogonal projection onto $N$ along $\rea$. Because the splitting $\tilde{M} = \rea \times N$ is isometric, the identity $D_{p}\bt_{ij}\,^{p} = 0$ implies $\tilde{D}_{Q}\bt_{IJ}\,^{Q} = 0$, and so by \eqref{ddivbt} and the exactness of $(\ten, [g])$, the curvature $\tilde{E}_{IJ}$ of $(\ten, [g])$ is zero. By \eqref{confric} there holds $\sR_{ij}  = \tfrac{1}{4}\bt_{ij} + (n-2)|\ga|^{2}_{h}g_{ij}$. The Ricci curvature $\tilde{\sR}_{IJ}$ of $g_{IJ}$ is the restriction to $N$ of $\sR_{ij}$ and so, by \eqref{confric}, the curvature $\tilde{T}_{IJ}$ of $(\ten, [g])$ satisfies $\tilde{T}_{IJ} = \tilde{\sR}_{IJ} - \bt_{IJ} = (n-2)|\ga|_{h}^{2}g_{IJ}$. This shows $(\ten, [g])$ is naive Einstein and exact with positive weighted scalar curvature $(n-1)(n-2)|\ga|_{h}^{2}|\det g|^{1/(n-1)}$. Because $\tnabla |\det g| = 0$, the scalar curvature is $\tnabla$ parallel; as $(\ten, [g])$ is exact with parallel scalar curvature, it is Einstein. 
\end{proof}

\subsubsection{}
There will be needed the following theorem of M. Eastwood and P. Tod, \cite{Eastwood-Tod}.
\begin{theorem}[\cite{Eastwood-Tod}]\label{eastwoodtodtheorem}
If $n\geq 3$, a Riemannian signature Einstein Weyl structure $(\en, [h])$ for which $[h]$ is locally conformally flat is closed. 
\end{theorem}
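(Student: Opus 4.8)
\textbf{Proof proposal for Theorem \ref{eastwoodtodtheorem}.}

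The plan is to reduce the statement for Einstein Weyl structures to the general machinery already developed for Einstein AH structures, which contains Weyl structures as the self-conjugate case. Since a Weyl structure has automatically self-conjugate curvature, its anti-self-conjugate Weyl tensors satisfy $E_{ijkl} = 0$ and $E_{ij} = 0$ by \eqref{rfh}, and its cubic torsion vanishes, so the apparatus simplifies considerably. The hypothesis that $[h]$ is locally conformally flat means that the usual conformal Weyl tensor $\sW_{ijkl}$ of $[h]$ vanishes identically. The goal is to conclude that the Faraday two-form $F_{ij}$ is identically zero, which is exactly the statement that the Weyl structure is closed.

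First I would record the consequences of the Einstein and conformal-flatness hypotheses for the invariant curvature tensors. For a Weyl structure the naive Einstein condition forces $A_{ij}$ to be pure trace and $E_{ijkl} = E_{ij} = 0$, so by Lemma \ref{diffbianchilemma} (with $\bt_{ij}\,^{k} \equiv 0$) the self-conjugate Weyl tensor $A_{ijkl}$ satisfies a clean divergence identity, namely $\nabla^{p}A_{ijkp} = (3-n)A_{ijk}$ together with $A_{i} = 0$. Next, because $[h]$ is locally conformally flat, I would relate $A_{ijkl}$ to $\sW_{ijkl}$ using the identities of section \ref{underlyingsection}, in particular \eqref{confcurvijkl}; when $\bt \equiv 0$ this expresses $\sR_{ijkl}$ in terms of $R_{ijkl}$ and the $\ga$-terms, and taking completely trace-free parts shows that the completely trace-free part of $A_{ijkl}$ coincides with $\sW_{ijkl}$ up to terms built from $d\ga$, which itself equals $-F_{ij}$. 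Thus conformal flatness pins down $A_{ijkl}$ in terms of $F_{ij}$ alone.

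Having isolated $F_{ij}$, the decisive step is to run a Bochner/Weitzenb\"ock argument against the Faraday form. I would pass to the Gauduchon gauge: Corollary \ref{gdgauge} furnishes a Gauduchon metric $h$, and Theorem \ref{todtheorem} then gives that the dual of the associated Faraday primitive $\ga_{i}$ is $h$-Killing and, crucially for the Weyl case, that $D_{(i}\ga_{j)} = 0$. The formula \eqref{einsteinf}, $4\int_{M}|\ga|_{h}^{2}\uR_{h}\,d\vol_{h} = n\int_{M}|F|_{h}^{2}\,d\vol_{h}$, together with \eqref{bochner5b} specialized to $\bt \equiv 0$, yields $\tfrac14\int_{M}|d\ga|_h^2 = \int_M |\mr{D\ga}|_h^2 + \int_M \ga^i\ga^j T_{ij}$. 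The local conformal flatness, through the identification of $A_{ijkl}$ with the $F$-terms, controls the sign of $\int_M \ga^i\ga^j T_{ij}$; combined with the vanishing of $\mr{D\ga}$ coming from Theorem \ref{todtheorem}, this should force $\int_M |d\ga|_h^2 = 0$, hence $F_{ij} = -d\ga_{ij} \equiv 0$.

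The main obstacle I anticipate is the third step: assembling the conformal-flatness hypothesis into a definite sign for the integral curvature term, rather than merely an identity. The difficulty is that conformal flatness is a condition on $\sW_{ijkl}$, which by \eqref{confcurvijkl} differs from the intrinsic $A_{ijkl}$ by explicit $\ga\otimes\ga$ and $D\ga$ corrections; one must verify that after substituting the Gauduchon identities $D_{(i}\ga_{j)} = 0$ these corrections combine with the Einstein relation \eqref{einsteinequations} to make the relevant quadratic form in $\ga$ nonpositive. I expect this to require a careful but routine manipulation using Lemma \ref{weylcriterion} in low dimensions (where $A_{ijkl}$ vanishes automatically, covering $n = 3$ directly) and, for $n \geq 4$, the full divergence identity \eqref{adiffbianchi} paired with an integration by parts of $\nabla^p A_{ijkp}$ against $A_{ijk}$. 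If that sign determination goes through, the conclusion $F_{ij} \equiv 0$, i.e. that the Einstein Weyl structure is closed, follows immediately.
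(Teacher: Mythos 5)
There is a genuine gap, and it is structural rather than a matter of detail. The theorem carries no compactness hypothesis: it is a purely local statement (Einstein--Weyl, local conformal flatness, and closedness are all local conditions), and indeed the paper does not prove it but cites it from Eastwood--Tod, describing their argument as a local one --- they write the Einstein--Weyl equations in terms of the one-form $\ga_{i}$ relating $\nabla$ to the Levi-Civita connection of an arbitrary gauge, differentiate to obtain a closed overdetermined system of linear PDEs in $\ga_{i}$, $F_{ij}$, and derived tensors, and extract $F_{ij}=0$ by involved algebra. Your entire strategy, by contrast, is global: Corollary \ref{gdgauge}, Theorem \ref{todtheorem}, \eqref{einsteinf}, and \eqref{bochner5b} all require $M$ compact (they rest on the existence of the Gauduchon gauge and on integration by parts). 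A Bochner argument of this kind cannot establish a local theorem, so the approach fails at the outset for the statement as given.

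Even if you add compactness as a hypothesis, the argument as sketched does not close. For an Einstein AH structure $T_{ij}$ is pure trace, $T_{ij}=\tfrac{R}{n}H_{ij}$, so $\int_{M}\ga^{i}\ga^{j}T_{ij}=\tfrac{1}{n}\int_{M}\uR_{h}|\ga|_{h}^{2}$, which by \eqref{einsteinf} equals $\tfrac{1}{4}\int_{M}|F|_{h}^{2}$. Substituting this and $\mr{D\ga}_{ij}=0$ (from Theorem \ref{todtheorem}) into \eqref{bochner5b} yields $\tfrac{1}{4}\int_{M}|d\ga|_{h}^{2}=\tfrac{1}{4}\int_{M}|F|_{h}^{2}$, which is the tautology $|F|^{2}=|d\ga|^{2}$ and gives no information. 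Notice also that local conformal flatness never actually enters your computation with a definite sign: $T_{ij}$ is pure trace whether or not $\sW_{ijkl}$ vanishes, and the proposed identification of $A_{ijkl}$ with $F$-terms is not brought to bear on any of the integrals you write down. The step you flag as the ``main obstacle'' is in fact the entire content of the theorem, and it is resolved in Eastwood--Tod by local prolongation of the equations, not by a sign argument.
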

To prove Theorem \ref{eastwoodtodtheorem} Eastwood-Tod write the Einstein Weyl equations in terms of the one-form $\ga_{i}$ determined by the difference tensor of the Levi-Civita connection $D$ of an arbitrary choice of gauge $h \in [h]$ and the aligned representative $\nabla \in \en$. Differentiating these equations they obtain a closed system of linear partial differential equations in $\ga_{i}$, $F_{ij}$, and some tensors derived from them. Suprisingly involved algebra then yields $F_{ij} = 0$. %

\begin{corollary}
If $n \geq 3$ a proper Riemmannian signature Einstein Weyl structure $(\en, [h])$ for which $[h]$ is locally conformally flat is exact and a distinguished metric has constant sectional curvature.
\end{corollary}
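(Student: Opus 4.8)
The plan is to combine Theorem \ref{eastwoodtodtheorem} with the results on closed Einstein AH structures already established in the excerpt, specializing to the Weyl case. The statement to prove is that for $n \geq 3$, a proper Riemannian signature Einstein Weyl structure $(\en, [h])$ with $[h]$ locally conformally flat is exact and a distinguished metric has constant sectional curvature.

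First I would invoke Theorem \ref{eastwoodtodtheorem} directly: since $n \geq 3$, $(\en, [h])$ is Einstein Weyl, and $[h]$ is locally conformally flat, the structure is \emph{closed}, i.e. $F_{ij} \equiv 0$. Next, since $(\en, [h])$ is proper (weighted scalar curvature nowhere vanishing) and closed, Lemma \ref{parallelexactlemma} applies: for a closed Einstein AH structure the conservation condition $\nabla_i R + n\nabla^p F_{ip} = 0$ together with $F_{ij} = 0$ forces $\nabla_i R = 0$, so the scalar curvature is parallel, and since $R$ is nowhere zero this exhibits $R H_{ij}$ as a distinguished metric. Thus $(\en, [h])$ is exact, which is the first claim.

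It remains to show a distinguished metric $h \in [h]$ has constant sectional curvature. Here I would use the structure equations relating the aligned representative $\nabla$ to the Levi-Civita connection $D$ of the distinguished metric. For a Weyl structure the cubic torsion vanishes, $\bt_{ij}{}^{k} = 0$, so by \eqref{dnabladiff} the only difference between $\nabla$ and $D$ comes from the Faraday primitive $\ga_i$; for a distinguished metric $\nabla |\det h| = 0$ means $\ga_i = 0$, hence $D = \nabla$. Consequently the curvature $\sR_{ijk}{}^{l}$ of $h$ coincides with the curvature $R_{ijk}{}^{l}$ of the Einstein AH structure. The naive Einstein condition gives $\mr{R}_{ij} = 0$, so $h$ is an ordinary Einstein metric; and because $F_{ij} = 0$ with $\bt_{ij}{}^{k}=0$, equation \eqref{rfh} shows the curvature is self-conjugate with $R_{ij(kl)} = 0$, i.e. $R_{ijkl}$ has the usual Riemann symmetries and equals $\sR_{ijkl}$. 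Since $[h]$ is locally conformally flat, the conformal Weyl tensor $\sW_{ijkl}$ of $h$ vanishes; combined with $\mr{\sR}_{ij} = 0$ (Einstein, so the trace-free Ricci vanishes) and the constancy of the scalar curvature, the standard decomposition of the Riemann tensor into Weyl, trace-free Ricci, and scalar parts collapses to the pure-scalar piece, which is exactly the curvature tensor of a space form. Hence $h$ has constant sectional curvature.

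The main obstacle is not conceptual but bookkeeping: verifying cleanly that for a distinguished metric of a \emph{Weyl} structure the aligned representative equals the Levi-Civita connection, so that the Einstein AH curvature conditions transfer verbatim to curvature conditions on $h$. Once $D = \nabla$ is in hand, everything reduces to the classical fact that a locally conformally flat Einstein metric (with $n \geq 3$) has constant sectional curvature, which follows from the vanishing of all three irreducible components of its Riemann tensor except the scalar one. The only subtlety requiring care is the case $n \geq 3$ versus the borderline behaviour of the Weyl tensor in low dimensions, but since Theorem \ref{eastwoodtodtheorem} and local conformal flatness are both assumed outright for $n \geq 3$, no separate argument is needed there.
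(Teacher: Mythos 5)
Your proposal is correct, and its first half follows the paper's route exactly: Theorem \ref{eastwoodtodtheorem} gives closedness, and then properness forces exactness. The one genuine difference is how exactness is extracted. The paper's own proof cites Theorem \ref{negexacttheorem}, whose trichotomy is stated only for \emph{compact} manifolds, even though the corollary carries no compactness hypothesis; your route through the conservation identity $\nabla_{i}R + n\nabla^{p}F_{ip} = 0$ (automatic for Einstein Weyl when $n>2$) together with Lemma \ref{parallelexactlemma} is purely local and therefore cleaner for the statement as written. You also supply the constant-sectional-curvature argument, which the paper's proof leaves entirely implicit: for a Weyl structure the cubic torsion vanishes and the Faraday primitive of a distinguished metric is zero, so $D=\nabla$, the distinguished metric is Einstein in the ordinary sense with constant scalar curvature, and vanishing of the conformal Weyl tensor collapses the Riemann tensor to its scalar part. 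The only point worth tightening is your closing remark about low dimensions: for $n=3$ local conformal flatness is not equivalent to vanishing of $\sW_{ijkl}$, but since $\sW_{ijkl}\equiv 0$ there anyway, the decomposition argument still goes through (equivalently, a three-dimensional Einstein metric has constant sectional curvature with no conformal flatness hypothesis), so the conclusion is unaffected.
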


\begin{proof}
By Theorem \ref{eastwoodtodtheorem}, $(\en, [h])$ is closed, and so by Theorem \ref{negexacttheorem} if it is not exact (in which case the claim is proved) then it has scalar curvature which is identically zero. 
\end{proof}

\section{Riemannian Einstein AH structures with self-conjugate curvature}\label{structuretheoremsection}
The goal of this section is to prove Theorem \ref{cubictorsiontheorem}, which is a sort of structural theorem for Riemannian signature Einstein AH structures with self-conjugate curvature on a compact manifold satisfying a certain geometric condition. The theorem and its proof are based directly on the description of affine hyperspheres, in particular the Bernstein type results for improper affine hyperspheres due to K. J\"orgens in \cite{Jorgens}; E. Calabi in \cite{Calabi-improper}, and finally A. V. Pogorelov in \cite{Pogorelov}; and the estimates of the growth of the cubic form of elliptic and hyperbolic affine hyperspheres made by Calabi in \cite{Calabi-completeaffine}.
All these results can be reduced to a maximum principle argument relying on a differential inequality for the Laplacian of the square of the norm of a completely symmetric tensor satisfying a Codazzi condition which is deduced using Weitzenb\"ock type formulas and refined Kato inequalities for such tensors, in a manner which follows a program probably first outlined in \cite{Calabi-bernsteinproblems}, and since realized in diverse contexts, of which \cite{Cheng-Yau-maximalspacelike} or \cite{Schoen-Simon-Yau} are perhaps representative.

While the results are needed only for completely trace-free symmetric cubic tensors, it is equally straightforward, and perhaps conceptually clarifying, to work them out for such tensors of any rank. In the process there are obtained some vanishing theorems for conformal Killing tensors and trace and divergence free Codazzi tensors. It would be unsurprising were these results already known to experts. They are analogous to the somewhat stronger vanishing theorems for symmetric tensors on K\"ahler manifolds obtained by S. Kobayashi in \cite{Kobayashi-holomorphicsymmetric} and \cite{Kobayashi-holomorphictensor}.

Throughout this section, in departure from the convention in effect in the rest of the paper, indices are raised and lowered using the metric $h_{ij}$, when such is given.

\subsection{Symmetric tensor algebra}\label{symmetrictensoralgebrasection}
The section begins with some background on symmetric tensors and harmonic polynomials, some of which will not be need until section \ref{cubicformsection}.

\subsubsection{}\label{symmetricharmonictensorsection}
In this section $\ste$ is a real $n$-dimensional vector space. Let $D$ be the standard flat affine connection on $\ste$ and let $x^{i}$ be global coordinates on $\ste$ such that the differentials $dx^{i}$ constitute a $D$-parallel coframe. A function on $\ste$ is a \textbf{polynomial} of \textbf{degree} $k$ if it is in the kernel of $D^{(j)}\defeq D \dots D$ ($j$ times) for some $j \geq 1$, and $k+1$ is the minimum such $j$. Let $\pol^{k}(\ste)$ be the vector space of degree $k$ polynomials which are also homogeneous (necessarily of degree $k$), and let $\pol(\ste) = \oplus_{k \geq 0}\pol^{k}(\ste)$ be the graded algebra of polynomial functions on $\ste$. Let $S(\sted) \defeq \oplus_{k \geq 0}S^{k}(\sted)$ be the graded vector space of finite linear combinations of completely symmetric covariant tensors on $\ste$. The vector spaces $\pol^{k}(\ste)$ and $S^{k}(\sted)$ are canonically isomorphic. The isomorphism and its inverse are given explicitly by
\begin{align*}
&\om \in S^{k}(\sted) \to P^{\om} \in \pol^{k}(\ste),& & P^{\om}(x) \defeq \om_{i_{1}\dots i_{k}}x^{i_{1}}\dots x^{i_{k}},\\
& P \in \pol^{k}(\ste) \to \om^{P} \in S^{k}(\sted),& &\om^{P} \defeq \tfrac{1}{k!}D^{(k)}P.
\end{align*}
Equip $S(\sted)$ with the structure of a graded algebra by pulling back the algebra structure on $\pol(\ste)$. That is, $P^{\al}P^{\be} = P^{\al\sprod \be}$, so that, for $\al \in S^{k}(\sted)$ and $\be \in S^{l}(\sted)$, the product $\al \sprod \be$ is simply the symmetrized tensor product $(\al \sprod \be)_{i_{1}\dots i_{k+l}} = \al_{(i_{1}\dots i_{k}}\be_{i_{k+1}\dots i_{k+l})}$.

\subsubsection{}
Let $h_{ij}$ be a constant non-degenerate symmetric tensor on $\ste$, or, what is the same, a pseudo-Riemannian metric parallel with respect to the standard flat affine connection $D$. Let $\lap$ be the $h$ Laplacian $D^{i}D_{i}$. Define $\tr(\om)_{i_{1} \dots i_{k-2}} = \om_{i_{1}\dots i_{k-2}p}\,^{p}$. Let $S^{k}_{0}(\sted) = S^{k}(\sted) \cap \ker \tr$ be the trace-free elements of $S^{k}(\sted)$. Then $\lap^{i} P^{\om} = k(k-1)\dots (k-2i+1)P^{\tr^{i}(\om)}$; in particular, $\om \in S^{k}_{0}(\sted)$ if and only if $\lap P^{\om} = 0$. Let $\har^{k}(\ste) \defeq \ker \lap \cap \pol^{k}(\ste)$ be the subspace of degree $k$ \textit{harmonic} (or \textit{$h$-harmonic}, for clarity) polynomials.  This terminology is used even for $h$ of indefinite signature. %
 
Observe that $\lb DP^{h}, DP^{\om}\ra = 2dP^{\al}(\eul)$ in which $\lb \dum, \dum \ra$ is the pairing induced by $h$, and $\eul$ is the radial vector field generating dilations by $e^{t}$, so $dP^{\al}(\eul) = kP^{\al}$ if $\al \in S^{k}(\sted)$. Induction shows $\lap (P^{h})^{i} = 2i(n+2(i-1))(P^{h})^{i-1}$. Using these observations it is straightforward to check that the harmonic part $\har(P^{\om})$ of $P^{\om}$ is
\begin{align}\label{harpart}
\begin{split}
\har(P^{\om}) &= P^{\om} - \sum_{i \geq 1} \tfrac{1}{2^{i}i!(n+2(k-2))\dots (n+2(k-i-1))}(P^{h})^{i}\lap^{i}P^{\om}.
\end{split}
\end{align}
Define $\met(\al) = h \sprod \al$. In terms of tensors \eqref{harpart} gives
\begin{align}
\begin{split}\label{tfom}
\tf(\om) & = \om - \sum_{i \geq 1}\tfrac{ k(k-1)\dots (k-2i+1)}{2^{i}i!(n+2(k-2))\dots (n+2(k-i-1))}\met^{i}(\tr^{i}(\om)).
\end{split}
\end{align}
For $\al \in S^{k}(\sted)$ and $\be \in S^{l}(\sted)$ define for $0 \leq j \leq \min\{k, l\}$,
\begin{align}
\con^{j}(\al, \be)_{i_{1}\dots i_{k+l-2j}} = \al_{q_{1}\dots q_{j}(i_{1}\dots i_{k-j}}\be_{i_{k-j+1}\dots j_{k+l-2j})}\,^{q_{1}\dots q_{j}}. 
\end{align}
In particular, if $k = l$ then $\con^{k}(\al, \be) = \lb \al, \be \ra$, and the conventions are $\con^{0}(\al, \be) = \al \sprod \be$ and $\con^{j}(\al, \be) = 0$ for $j > \min\{k, l\}$. Observe that $(k-j)!(l-j)!\lb D^{(j)}P^{\al}, D^{(j)}P^{\be}\ra = k!l!P^{\con^{j}(\al, \be)}$. In particular, for $\om \in S^{k}(\sted)$ there holds $|\hess P^{\om}|_{h}^{2} = k^{2}(k-1)^{2}P^{\con^{2}(\om, \om)}$. Expanding $\lap P^{\al\sprod \be} = \lap(P^{\al}P^{\be})$ yields
\begin{align}\label{tralbe}
\tbinom{k+l}{2}\tr(\al \sprod \be) = \tbinom{k}{2}\tr(\al)\sprod \be + \tbinom{l}{2}\al \sprod \tr(\be) + kl \con(\al, \be),
\end{align}
which is the $i = 0$ special case of
\begin{align}\label{conalbe}
\tbinom{k+l-2i}{2}\tr\con^{i}(\al, \be) = \tbinom{k-i}{2}\con^{i}(\tr\al,\be) + \tbinom{l-i}{2}\con^{i}(\al, \tr\be) + (k-i)(l-i) \con^{i+1}(\al, \be).
\end{align}
By an induction using \eqref{conalbe} it follows that if $\tr \al = 0 = \tr \be$ then 
\begin{align}\label{contr}
&2^{i}\tbinom{k+l-2i}{k-i}\con^{i}(\al, \be) = \tbinom{k+l}{k}\tr^{i}(\al \sprod \be), & &\al \in S^{k}_{0}(\sted), \be \in S^{l}_{0}(\sted)
\end{align}
and this implies
\begin{align}\label{powerlap}
&(k-i)!(l-i)!\lap^{i}P^{\al\sprod \be} = 2^{i}k!l!P^{\con^{i}(\al, \be)},& &\al \in S^{k}_{0}(\sted), \be \in S^{l}_{0}(\sted),
\end{align}
so that $\lap^{i}P^{\al\sprod \be} = 2^{i}\lb D^{(i)}P^{\al}, D^{(i)}P^{\be}\ra$. In particular 
\begin{align}\label{lapsquarepsquare}
\lap^{2}P^{\om \sprod \om} = 4|\hess P^{\om}|^{2}_{h},& &\text{for}\,\, \om \in S^{k}_{0}(\sted).
\end{align}
From $\con(h, \al) = \al$, $\met(h) = n$, and \eqref{tralbe} there results for $\al \in S^{k}(\sted)$ the commutation identity
\begin{align}\label{trhcommute}
\tbinom{k+2}{2}\tr \met(\al) = \tbinom{k}{2}\met(\tr \al) + (n+2k)\al.
\end{align}
Define operators $E$, $F$, and $H$ on $S^{k}(\sted)$ by $E(\al) = -\tfrac{k+1}{2}\met(\al)$, $F(\al) = \tfrac{k}{2}\tr(\al)$, and $H(\al) = (\tfrac{n}{2} + k)\al$. Then $[E, F] = H$, $[H, E] = 2E$, and $[H, F] = -2F$, so $\{H, E, F\}$ generate an action of $\sll(2, \rea)$ on $S(\sted)$. Using this observation, most of the claims made in this section and the next could be deduced from the representation theory of $\sll(2, \rea)$.

\subsubsection{} 
Let $S_{0}(\sted) = \oplus_{k \geq 0}S^{k}_{0}(\sted)$ which is a graded subspace of $S(\sted)$. The map $\tf:S(\sted) \to S_{0}(\sted)$ sending $\al \in S^{k}(\ste)$ to its trace-free part is a graded linear projection. If $\al \in S^{k}_{0}(\sted)$ then \eqref{trhcommute} shows $\al = \tfrac{(k+2)(k+1)}{2(n+2k)}\tr \met(\al)$. Hence $\tr^{i}\met(\al) =0$ for $i \geq 2$, and \eqref{tfom} gives $(\Id - \tf)(\met(\al)) = \met(\al)$. From this observation it is straightforward to deduce that the ideal $\ideal_{h}$ in $(S(\sted), \sprod)$ generated by $h$ is equal to $\ker \tf$, so there is a commutative algebra structure $\cprod$ on $S_{0}(\sted)$ induced via $\tf$ from that on $S(\sted)/\ideal_{h}$. This means that by definition $ \tf(\al)\cprod \tf(\be) = \tf(\al\sprod \be)$ for $\al \in S^{k}(\sted)$ and $\be \in S^{l}(\sted)$. That this is well-defined amounts to the easily verified identity $\tf(\tf(\al)\sprod \tf(\be)) = \tf(\al\sprod \be)$. For $\al \in S^{k}_{0}(\sted)$ and $\be \in S^{l}_{0}(\sted)$ this product $\al \cprod \be = \tf(\al \sprod \be )$ is called the \textbf{Cartan product}. It is straightforward, but not terribly useful, to write down an explicit formula for the Cartan product. For example, for $X \in \sted$ and $\om \in S^{k}_{0}(\sted)$,
\begin{align}
(X\cprod \om)_{i_{1}\dots i_{k+1}} = X_{(i_{1}}\om_{i_{2}\dots i_{k+1}} - \tfrac{k}{n+2(k-1)}h_{(i_{1}i_{2}}\om_{i_{3}\dots i_{k+1})p}X^{p}.
\end{align}

\subsection{Differential operators on trace-free symmetric tensors}\label{differentialoperatorssymmetricsection}

\subsubsection{}
If $E$ is a bundle of tensors on $M$, a metric $h_{ij}$ determines a pairing $\int_{M}\lb \al, \be \ra$ of sections $\al, \be \in \Ga(E)$, at least one of which is compactly supported, by integration of the complete contraction $\lb \al, \be \ra \defeq \al^{i_{1}\dots i_{k}}\be_{i_{1}\dots i_{k}}$. If $E$ and $F$ are bundles of tensors and $\Q:\Ga(E) \to \Ga(F)$ is a differential operator of order $p$, the formal adjoint $\Q^{\ast}$ of $\Q$ is defined to be the unique differential operator mapping $\Ga(F) \to \Ga(E)$ and satisfying $\int_{M}\lb \al, \Q(\be)\ra = (-1)^{p}\int_{M}\lb \Q^{\ast}(\al), \be \ra$.

\subsubsection{}
For any bundle $E$ of covariant tensors on $M$ the divergence operator $\div:\Ga(\ctm \tensor E) \to \Ga(E)$ associated to the metric $h$ is by definition the formal adjoint $D^{\ast}$ of the covariant derivative $D$ with respect to the pairing of sections determined by integration. Explicitly $\div(\om)_{i_{1}\dots i_{k}} \defeq D^{p}\om_{pi_{1}\dots i_{k}}$ for $\om \in \Ga(E)$.

\subsubsection{}
Because the fibers of $TM$ and $\ctm$ carry canonically dual flat centroaffine structures, when $M$ is equipped with a metric $h$, there carry over unchanged to $TM$ and $\ctm$ the constructions of section \ref{symmetrictensoralgebrasection} which do not involve infinite sums. By definition $S(TM)$ (resp $S_{0}(TM)$) is the graded algebra comprising finite linear combinations of completely (trace-free) symmetric tensors with the fiberwise multiplication $\sprod$ (resp. $\cprod$). By definition $\pol(\ctm)$ is the graded subalgebra of $\cinf(\ctm)$ comprising functions polynomial in the fibers of $\ctm$ and of globally bounded degree (it is this last condition which requires comment, at least when $M$ be not necessarily compact). One writes $\pol(\ctm) = \oplus_{k \geq 0}\pol^{k}(\ctm)$, $S_{0}(TM) = \oplus_{k\geq 0}\Ga(S^{k}_{0}(TM))$, etc., for the decompositions of these algebras into their graded pieces. 
 
Regarding $\pol(\ctm)$ as a subspace of $\cinf(\ctm)$ it acquires a Poisson structure from the tautological Poisson structure on $\ctm$. The result of tranporting this Poisson structure to $S(TM)$ is sometimes called the \textbf{symmetric Schouten bracket} and is defined, for any $X \in \Ga(S^{k}(TM))$ and $Y \in \Ga(S^{l}(TM))$, and any torsion-free affine connection $\nabla$ by
\begin{align}\label{schoutendefined}
\{X, Y\}^{i_{1}\dots i_{k+l-1}} = k X^{p(i_{1}\dots i_{k-1}}\nabla_{p}Y^{i_{k}\dots i_{k+l-1})} - l Y^{p(i_{1}\dots i_{l-1}}\nabla_{p}X^{i_{l}\dots i_{k+l-1})}.
\end{align}
If a Riemannian metric $h_{ij}$ is given then the connection in \eqref{schoutendefined} may be taken to be the Levi-Civita connection $D$ of $h_{ij}$, and there holds $\{h, X\} = 2D^{(i_{1}}X^{i_{2}\dots i_{k+1})}$ for any $X \in \Ga(S^{k}(TM))$ (in the bracket $\{h, X\}$ the notation $h$ refers to the dual bivector $h^{ij}$). Using $h$, $S(TM)$ and $S(\ctm)$ are identified by index raising and lowering, and for $X \in \Ga(S^{k}(TM))$ and $\om \in \Ga(S^{k}(\ctm))$ one defines $X^{\flat}_{i_{1}\dots i_{k}} = X^{j_{1}\dots j_{k}}h_{i_{1}j_{1}}\dots h_{i_{k}j_{k}} \in \Ga(S^{k}(\ctm))$ and defines $\om^{\sharp} \in \Ga(S^{k}(TM))$ dually. Since index raising and lowering induce symmetric algebra isomorphisms, there results on $S(\ctm)$ the Poisson bracket $\{\al, \be\}_{h}$ defined by $\{\al, \be\}_{h} = \{\al^{\sharp}, \be^{\sharp}\}^{\flat}$. The subscript $h$ is included to emphasize the dependence on $h$ of the bracket on $S(\ctm)$; this matters when a conformal change of metric is made.

\subsubsection{}
If $h_{ij}$ is a Riemannian metric with Levi-Civita connection $D$, Let $\tf:\Ga(\symk) \to \Ga(\symkt)$ be the $h$-orthogonal projection onto the completely trace-free part and define $\clie:\Ga(\symk) \to \Ga(\symkp)$ by $2\clie(\om) = \tf \{h, \om\}_{h}$. Explicitly, for $\om \in \Ga(\symk)$,
\begin{align}\label{cliedefined}
\clie(\om)_{i_{1}\dots i_{k+1}} = D_{(i_{1}}\om_{i_{2}\dots i_{k+1})} - \tfrac{1}{n+2(k-1)}\left(kh_{(i_{1}i_{2}}D^{p}\om_{i_{3}\dots i_{k+1})p} + \tbinom{k}{2}h_{(i_{1}i_{2}}D_{i_{3}}\om_{i_{4}\dots i_{k+1})p}\,^{p}\right).
\end{align}
It is easily checked that $\clie$ is the formal adjoint of the composition $\div \circ \tf$ with respect to the pairing of sections of $\symkp$ determined by integration. This means that for sections $\al$ of $\symk$ and $\be$ of $\symkp$, at least one of which has compact support, there holds $\int_{M}\lb \clie(\al), \be \ra \,d\vol_{h} = -\int \lb \al , \div \tf(\be) \ra\,d\vol_{h}$. If $X \in \Ga(TM)$, then $2\clie(X^{\flat})_{ij} = 2D_{(i}X_{j)} - \tfrac{2}{n}D_{p}X^{p} h_{ij} =\tf(\lie_{X}h)_{ij}$, which motivates the notation resembling that for the Lie derivative.

\subsubsection{}
For $\om \in \Ga(\symkt)$ define
\begin{align}\label{kliedefined}
\begin{split}
\klie(\om)_{ij i_{1}\dots i_{k-1}} &\defeq D_{[i}\om_{j]i_{1}\dots i_{k-1}} - \tfrac{1}{n-3+k}\sum_{s = 1}^{k-1}h_{i_{s}[i}D^{p}\om_{j]i_{1}\dots \hat{i}_{s}\dots i_{k-1}p}\\
& = D_{[i}\om_{j]i_{1}\dots i_{k-1}} - \tfrac{k-1}{2(n-3+k)}\left(h_{i(i_{1}}D^{p}\om_{i_{2}\dots i_{k-1})jp} -h_{j(i_{1}}D^{p}\om_{i_{2}\dots i_{k-1})ip}  \right),
\end{split}
\end{align}
which is the completely trace-free part of $ D_{[i}\om_{j]i_{1}\dots i_{k-1}}$. Verifying the equality of the two different expressions for the trace part of \eqref{kliedefined} is straightforward. The operator $\klie$ is a special case of the operator defined in \eqref{colanczosdefined}.

\subsubsection{}
The operator $\klie$ maps $\Ga(\symkt)$ into trace-free $(k+1)$-tensors having the symmetries determined by the Young projector given by symmetrization over the rows followed by anti-symmetrization over the columns of the Young diagram corresponding to the partition $(n1)$. Thus $\klie(\om)_{iji_{1}\dots i_{k-1}} = \klie(\om)_{[ij]i_{1}\dots i_{k-1}}$, $\klie(\om)_{iji_{1}\dots i_{k-1}} = \klie(\om)_{ij(i_{1}\dots i_{k-1})}$, and there vanishes the skew-symmetrization of $\klie(\om)_{iji_{1}\dots i_{k-1}}$ over $ij$ and any $i_{s}$. The formal adjoint $\kliea$ of $\klie$ takes values in $\Ga(\symkt)$ and is defined by $\int_{M}\phi^{iji_{1}\dots i_{k-1}}\klie(\om)_{iji_{1}\dots i_{k-1}} = -\int_{M}\kliea(\phi)_{i_{1}\dots i_{k}}\om_{i_{1}\dots i_{k}}$; here $\om \in \Ga(\symkt)$, $\phi_{iji_{1}\dots i_{k-1}} = \phi_{[ij]i_{1}\dots i_{k-1}}$, and $\phi_{iji_{1}\dots i_{k-1})} = \phi_{ij(i_{1}\dots i_{k-1})}$. Explicit computation shows that $\kliea(\phi)_{i_{1}\dots i_{k}} = D^{p}\phi_{p(i_{1}\dots i_{k})}$, so that $\kliea\klie(\om)_{i_{1}\dots i_{k}} = D^{p}\klie(\om)_{p(i_{1}\dots i_{k})}$.

\subsubsection{}
For $\om \in \Ga(\symkt)$ define
\begin{align}\label{tliedefined}
\tlie(\om)_{ii_{1}\dots i_{k}} \defeq \tfrac{2k}{k+1}\klie(\om)_{i(i_{1}\dots i_{k})},
\end{align}
which is completely trace-free and satisfies $\tlie(\om)_{i(i_{1}\dots i_{k})} = \tlie(\om)_{ii_{1}\dots i_{k}}$ and $\tlie(\om)_{(i_{1}\dots i_{k+1})} = 0$. Using that there vanishes the skew-symmetrization of $\klie(\om)_{iji_{1}\dots i_{k-1}}$ over the first two indices and any other index it can be verified that $\tlie(\om)_{[ij]i_{1}\dots i_{k-1}} = \klie(\om)_{iji_{1}\dots i_{k-1}}$. To the Young diagram determined by a partition is associated the Young projector given by symmetrization over the rows followed by anti-symmetrization over the columns. The opposite Young projector associated to the same Young diagram is given by anti-symmetrization over the rows followed by symmetrization over the columns. The representations determined by the Young projector associated to a partition and the opposite Young projector associated to the conjugate partition are isomorphic. Writing explicitly such an isomorphism for the Young projector associated to the partition $(n1)$ and the opposite Young projector associated to the conjugate partition $(21\dots 1)$ yields $\tlie(\om)$. 

The formal adjoint $\tliea$ is defined analogously to $\kliea$. If $\phi_{ii_{1}\dots i_{k}} = \phi_{i(i_{1}\dots i_{k})}$ and $\phi_{(ii_{1}\dots i_{k})} = 0$ let $\Psi(\phi)_{iji_{1}\dots i_{k-1}} = \tfrac{2k}{k+1}\phi_{[ij]i_{1}\dots i_{k-1}}$. Using that $\Psi(\phi)_{i(i_{1}\dots i_{k})} =  \phi_{ii_{1}\dots i_{k}}$ and the explicit expression for $\kliea$, it can be verified that $\kliea(\Psi(\phi)) = \div(\phi)$. From the definitions of formal adjoints it follows that $\tliea(\phi) = \kliea(\Psi(\phi))$, so $\tliea$ equals $\div$ (acting on the appropriate space of tensors). A straightforward computation shows that $\tliea \tlie(\om) = (2k/(k+1))\kliea\klie(\om)$.

\subsubsection{}\label{twoddivsection}
Let $\tilde{h}_{ij} = fh_{ij}$ be conformally related pseudo-Riemannian metrics. The Levi-Civita connections are related by $\tilde{D} = D + 2\si_{(i}\delta_{j)}\,^{k} - h_{ij}\si^{k}$ in which $2\si_{i} = d\log f_{i}$ and $\si^{i} = h^{ip}\si_{p}$. 
Write $\sbl_{\clie}(Z)(\phi)$ for the symbol of $\clie$ applied to the vector $Z^{i}$ and $\phi \in \Ga(\symkt)$, and similarly for $\klie$ and $\div$. The Cartan product arises naturally in the symbols of differential operators. For example, $\sbl_{\clie}(Z)(\phi) = Z\cprod \phi$. The transformations of $D\om$, $\klie(\om)$, etc. under a conformal change of metric are given by specializing \eqref{eomdiff} for a metric connection. For $\om \in \Ga(\symkt)$ there hold
\begin{align}
&\clie_{\tilde{h}}(\om) = \clie_{h}(\om) -2k\sbl_{\clie}(\si^{\sharp})(\om),&
&\klie_{\tilde{h}}(\om) = \klie_{h}(\om) + (1-k)\sbl_{\klie}(\si^{\sharp})(\om),
\end{align}
so that $\clie$ and $\klie$ are conformally invariant in the sense that for $0 < f \in \cinf(M)$ there hold
\begin{align}\label{lieconf}
&\clie_{\tilde{h}}(f^{k}\om) = f^{k}\clie_{h}(\om), && \klie_{\tilde{h}}(f^{(k-1)/2}\om) = f^{(k-1)/2}\klie_{h}(\om).
\end{align}
This means that $\clie$ and $\klie$ are conformally invariant if interpreted as operators on tensors of appropriate c-weights. In fact the operator $\K$ of Lemma \ref{eopinvariantlemma} is simply $\klie$ interpreted in this way, and this remark points to the reason for studying $\klie$ here.

Define $\clie^{\sharp}:\Ga(\symktv) \to \Ga(\symkptv)$ by $\clie^{\sharp}(X) = \clie(X^{\flat})^{\sharp}$.
Then \eqref{lieconf} is equivalent to the invariance $f\clie^{\sharp}_{\tilde{h}}(X) = \clie^{\sharp}_{h}(X)$, so that while $\lie^{\sharp}$ depends on $h$, the subspace $\ker \clie^{\sharp} \cap \Ga(\symktv)$ does not. Elements of $\ker \clie^{\sharp} \cap \Ga(\symktv)$ are \textbf{conformal Killing tensors} of rank $k$. Define $\ck(TM, [h]) = \oplus_{k \geq 0}\ker \clie^{\sharp} \cap \Ga(S^{k}_{0}(TM)) \subset \symat(TM)$ to be the subspace of $S_{0}(TM)$ comprising finite linear combinations of conformal Killing tensors. Except in ranks one and two conformal Killing tensors are not so well studied as are their skew-symmetric counterparts the conformal Killing forms, for which \cite{Semmelmann} is a good starting point. Probably their most natural occurence is as the symbols of symmetries of the Laplacian, for this and more background see M. Eastwood's \cite{Eastwood-laplacian}. Some other representative references are \cite{Woodhouse} and \cite{Rani-Edgar-Barnes}. Theorem \ref{conformalkillingtheorem} shows that $\ck(TM, [h])$ is a subalgebra of $S_{0}(TM)$ with respect to the Cartan multiplication. Although this will not be used in the sequel, it fits naturally into the discussion at this point. 
\begin{theorem}\label{conformalkillingtheorem}
Given a metric $h$ the associated operator $\clie$ is a derivation with respect to the Cartan multiplication on $S_{0}(\ctm)$, which means that for $\al \in \Ga(\symkt)$ and $\be \in \Ga(\symlt)$ there holds
\begin{align}\label{cprodleibniz}
\clie(\al \cprod \be) = \clie(\al)\cprod \be + \al \cprod \clie(\be).
\end{align}
The subspace $\ck(TM, [h]) \subset \symat(TM)$ is a subalgebra with respect to the Cartan multiplication.
\end{theorem}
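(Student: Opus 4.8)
The plan is to reduce the derivation property \eqref{cprodleibniz} to two facts about the symmetric Schouten bracket, both transparent once one recalls the defining formulas $2\clie(\om) = \tf\{h, \om\}_{h}$ and $\al \cprod \be = \tf(\al \sprod \be)$. Since $\{\,\cdot\,,\,\cdot\,\}_{h}$ is the image under the symmetric-algebra isomorphism $\sharp$ of the tautological Poisson bracket on $\pol(\ctm)$, bracketing against the fixed element $h$ is a derivation of the symmetric product $\sprod$: for $\al \in \Ga(\symkt)$ and $\be \in \Ga(\symlt)$,
\begin{align*}
\{h, \al \sprod \be\}_{h} = \{h, \al\}_{h}\sprod \be + \al \sprod \{h, \be\}_{h}.
\end{align*}
Moreover the self-bracket $\{h, h\}_{h}$ vanishes (the two terms in the Schouten formula \eqref{schoutendefined} cancel), so for any $\nu$ one has $\{h, h \sprod \nu\}_{h} = h \sprod \{h, \nu\}_{h} \in \ideal_{h} = \ker \tf$; since every element of $\ideal_{h}$ has the form $h \sprod \nu$, this shows $\tf\{h, \mu\}_{h} = 0$ for all $\mu \in \ideal_{h}$.

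Then I would compute both sides directly. Writing $\al \sprod \be = \tf(\al \sprod \be) + \mu$ with $\mu = (\Id - \tf)(\al \sprod \be) \in \ideal_{h}$, the vanishing just noted gives $\tf\{h, \tf(\al \sprod \be)\}_{h} = \tf\{h, \al \sprod \be\}_{h}$, whence by the Leibniz identity
\begin{align*}
\clie(\al \cprod \be) = \tfrac{1}{2}\tf\{h, \al \sprod \be\}_{h} = \tfrac{1}{2}\tf\bigl(\{h, \al\}_{h}\sprod \be + \al \sprod \{h, \be\}_{h}\bigr).
\end{align*}
For the other side, using $2\clie(\al) = \tf\{h, \al\}_{h}$ together with the idempotency identity $\tf(\tf(\ga)\sprod \delta) = \tf(\ga \sprod \delta)$,
\begin{align*}
\clie(\al)\cprod \be + \al \cprod \clie(\be) = \tfrac{1}{2}\tf\bigl(\tf\{h, \al\}_{h}\sprod \be\bigr) + \tfrac{1}{2}\tf\bigl(\al \sprod \tf\{h, \be\}_{h}\bigr) = \tfrac{1}{2}\tf\bigl(\{h, \al\}_{h}\sprod \be + \al \sprod \{h, \be\}_{h}\bigr),
\end{align*}
which agrees with the first display and proves \eqref{cprodleibniz}.

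For the subalgebra statement I would transport the derivation property to $\clie^{\sharp}$ on $S_{0}(TM)$. Since $\flat$ and $\sharp$ are mutually inverse isomorphisms for the Cartan multiplication and $\clie^{\sharp}(X) = (\clie(X^{\flat}))^{\sharp}$, applying \eqref{cprodleibniz} to $X^{\flat}$ and $Y^{\flat}$ and raising indices yields $\clie^{\sharp}(X \cprod Y) = \clie^{\sharp}(X)\cprod Y + X \cprod \clie^{\sharp}(Y)$. Hence if $X, Y \in \ck(TM, [h])$ both terms vanish, so $X \cprod Y$ is again a conformal Killing tensor; because the conformal Killing condition is conformally invariant by \eqref{lieconf}, this is independent of the chosen representative $h$, and $\ck(TM, [h])$ is closed under $\cprod$.

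The only point requiring care is the identity $\tf\{h, \ideal_{h}\}_{h} = 0$, which is where the distinguished role of the metric $h$ actually enters; I expect it to be the sole genuine step, and it follows at once from the vanishing of the self-bracket and the Leibniz rule for the Poisson structure. Everything else is formal manipulation of the projection $\tf$ and of the two products $\sprod$ and $\cprod$, so I anticipate no substantive obstacle beyond keeping this bookkeeping straight.
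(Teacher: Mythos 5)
Your argument is correct and is essentially the paper's own proof: both rest on the Leibniz property of $\{h,\cdot\}_{h}$ for $\sprod$, the observation that $\{h,\cdot\}_{h}$ preserves $\ideal_{h}=\ker\tf$ because $\{h,h\}_{h}=0$, and the fact that $\tf$ is an algebra homomorphism onto $(S_{0}(\ctm),\cprod)$ (your idempotency identity $\tf(\tf(\ga)\sprod\delta)=\tf(\ga\sprod\delta)$ is exactly that statement). The transport to $S_{0}(TM)$ via $\sharp$ and the appeal to the conformal invariance \eqref{lieconf} for the subalgebra claim also match the paper.
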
 
\begin{proof}
There is $\ga \in \Ga(S^{k+l-2}(\ctm))$ such that $(\Id - \tf)(\al \sprod \be) = \met(\ga)$, so $\tf \{h, \al \sprod \be - \tf(\al \sprod \be)\}_{h} = \tf \{h, h\sprod \ga\}_{h} = \tf( h\sprod \{h, \ga\}_{h}) = 0$. This gives the third equality in 
\begin{align*}
\begin{split}
2\clie(\al \cprod \be)&= \tf \{h, \al \cprod \be\}_{h}  = \tf \{h, \tf(\al\sprod \be)\}_{h} = \tf \{h, \al \sprod \be\}_{h} = \tf\left(\{h, \al\}_{h}\sprod \be + \al \sprod \{h, \be\}_{h}\right)\\
& = \tf (\{h, \al\}_{h})\cprod \be + \al \cprod \tf (\{h, \be\}_{h}) = 2\clie(\al)\cprod \be + 2\al \cprod \clie(\be),
\end{split}
\end{align*}
in which the penultimate equality holds because the graded linear map $\tf:(S(\ctm), \sprod) \to (S_{0}(\ctm), \cprod)$ is a homomorphism. This shows $\ck(\ctm, h) = \oplus_{k \geq 0}\ker \clie \cap \Ga(\symkt) \subset \symat(\ctm)$ is a subalgebra of $S_{0}(\ctm)$ under Cartan multiplicaiton. While $\ck(\ctm, h)$ depends on the choice of $h$, it is linearly isomorphic to $\ck(\ctm, e^{f}h)$ by the graded linear map sending $\om_{i_{1}\dots i_{k}}$ to $f^{k}\om_{i_{1}\dots i_{k}}$, and both are are identified with $\ck(TM, [h])$ via index raising, so $\clie^{\sharp}$ is a derivation of $S_{0}(TM)$ with kernel equal to the subalgebra $\ck(M, [h])$. 
\end{proof}

\subsubsection{}
Define $\ih:\Ga(\symkmt)\to \Ga(\ctm \tensor \symkt)$ by
\begin{align}
\ih(\om)_{ii_{1}\dots i_{k}} = \tfrac{k(n+2(k-2))}{(n-3 + k)(n+2(k-1))}h_{i(i_{1}}\om_{i_{2}\dots i_{k})} + \tfrac{k(1-k)}{(n-3 + k)(n+2(k-1))}h_{(i_{1}i_{2}}\om_{i_{3}\dots i_{k})i}.
\end{align}
The properties characterizing the injective linear map $\ih$ are that is image is contained in $\Ga(\ctm \tensor \symkt)$ (rather than $\Ga(\ctm \tensor \symk)$) and that the non-trivial traces of $\ih(\om)$ equal $\om$.

\subsubsection{}
If $\om \in \Ga(\symkt)$ then $D\om$ will have a pure trace part, and parts in the submodules of $\Ga(\symkpt)$ with symmetries corresponding to the partitions $(n+1)$ and $(n1)$. The following describes explicitly the decomposition of $D\om$ into these parts. For $\om_{i_{1}\dots i_{k}} \in \symk$ there holds
\begin{align}\label{dsym2}
D_{i}\om_{i_{1}\dots i_{k}} = D_{(i}\om_{i_{1}\dots i_{k})} + \tfrac{2}{k+1}\sum_{s = 1}^{k}D_{[i}\om_{i_{s}]i_{1}\dots \hat{i}_{s}\dots i_{k}}.
\end{align}
Substituting into \eqref{dsym2} the definitions of $\clie(\om)$ and $\klie(\om)$ and simplifying the trace terms yields
\begin{align}\label{domkl}
\begin{split}
D\om & = \clie(\om) + \tlie(\om) + \ih(\div(\om)).
\end{split}
\end{align}
It is straightforward to check that the righthand sides of \eqref{dsym2} and \eqref{domkl} are the same modulo pure trace terms. On the other hand from the properties characterizing $\ih$ it follows that the traces of the righthand sides of \eqref{dsym2} and \eqref{domkl} are the same. This verifies \eqref{domkl}. Contracting \eqref{domkl} with $D^{i}\om^{i_{1}\dots i_{k}}$ gives
\begin{align}\label{normdom}
\begin{split}
|D\om|^{2} & = |\clie(\om)|^{2} + |\tlie(\om)|^{2} + \tfrac{k(n+2(k-2))}{(n-3+k)(n+2(k-1))}|\div(\om)|^{2}\\
& = |\clie(\om)|^{2} + \tfrac{2k}{k+1}|\klie(\om)|^{2} + \tfrac{k(n+2(k-2))}{(n-3+k)(n+2(k-1))}|\div(\om)|^{2}.
\end{split}
\end{align}
It is immediate from \eqref{normdom} that $\ker D \cap \Ga(S^{k}_{0}(\ctm)) = \ker \clie\cap \ker \klie \cap \ker \div \cap \Ga(S^{k}_{0}(\ctm))$. The parallel tensors (those in $\ker D \cap \Ga(S^{k}_{0}(\ctm))$) play the role played by constant functions in the scalar elliptic theory.

\subsection{Curvature operator on trace-free symmetric tensors}\label{curvatureoperatorsection}
\subsubsection{}
Let $\sY_{ijkl}$ be a tensor of metric curvature type, with Ricci trace $\sY_{ij} = \sY_{pij}\,^{p}$, and assume $\sY_{[ij]} = 0$. Define the action of $\sY_{ijkl}$ on an arbitrary rank $k$ covariant tensor $\om_{i_{1}\dots i_{k}}$ by
\begin{align}
k\sY(\om)_{i_{1}\dots i_{k}} = \sum_{s = 1}^{k}\sY_{pi_{s}}\om_{i_{1}\dots i_{s-1}}\,^{p}\,_{i_{s+1}\dots i_{k}} - \sum_{r \neq s}\sY_{pi_{r}i_{s}q}\om_{i_{1}\dots i_{r - 1}}\,^{p}\,_{i_{r+1}\dots i_{s-1}}\,^{q}\,_{i_{s+1}\dots i_{k}}.
\end{align} 
The endomorphism $\sY$ is evidently symmetric in the sense that if $\om, \mu \in \symkt$ then $\lb \mu, \sY(\om)\ra = \lb \om, \sY(\mu)\ra$. When $k = 1$ then $\sY(\om)$ is simply the endomorphism of $\ctm$ induced by the Ricci trace, $\sY(\om)_{i} = \sY_{i}\,^{p}\om_{p}$. For $\om \in \symkt$ let $\qY(\om) \defeq \lb \om, \sY(\om)\ra$ be the quadratic form determined by $\sY(\om)$. 

It is proved on page $27$ of \cite{Lichnerowicz-propagateurs} that $\sY$ commutes with taking traces on any pair of indices (in \cite{Lichnerowicz-propagateurs} the proof is given for $\sR$, but the claim requires only that $\sY$ have metric curvature symmetries). If $\om \in \Ga(\symk)$ then
\begin{align}
\sY(\om)_{i_{1}\dots i_{k}} = \sY_{p(i_{1}}\om_{i_{1}\dots i_{k})}\,^{p} + (1-k)\sY_{p(i_{1}i_{2}}\,^{q}\om_{i_{3}\dots i_{k})q}\,^{p}.
\end{align}
Note also that $\sY(h) = 0$. This generalizes to the statement that $h$ commutes with $\sY$ in the following sense. 
\begin{lemma}
For $\om \in \Ga(\symk)$ let $h_{r, s}(\om)_{i_{1}\dots i_{k+2}} = h_{i_{r}i_{s}}\om_{i_{1}\dots \hat{i}_{r}\dots \hat{i}_{s}\dots i_{k+2}}$. Then
\begin{align}\label{hycommute}
(k+2)\sY(h_{r, s}(\om)) = kh_{r, s}(\sY(\om)).
\end{align}
\end{lemma}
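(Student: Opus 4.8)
The statement to be proved is the commutation identity \eqref{hycommute}, asserting that inserting a metric factor $h_{r,s}$ and then applying $\sY$ differs from the reverse order only by a combinatorial factor $k/(k+2)$. The plan is to exploit two facts already available: that $\sY$ has metric curvature symmetries (in particular $\sY_{[ij]}=0$ and it commutes with taking traces on any pair of indices, as established in \cite{Lichnerowicz-propagateurs}), and that $\sY(h)=0$, which is the special low-rank manifestation of \eqref{hycommute}. Since both sides of \eqref{hycommute} are completely symmetric in the free indices $i_{1},\dots,i_{k+2}$ and $\sY$ respects this symmetry, I may reduce to working with a single representative placement of the inserted metric factor, say $h_{i_{1}i_{2}}$, and symmetrize at the end; the factor $(k+2)$ versus $k$ bookkeeping is then just the count of how many index pairs carry the inserted $h$.

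\textbf{Key steps.} First I would write out $\sY(h_{r,s}(\om))$ by applying the rank-$(k+2)$ formula for $\sY$ to the tensor $h_{i_{r}i_{s}}\om_{\dots}$. The terms in the defining sum split according to whether each contracted index lands on the inserted metric factor $h_{i_{r}i_{s}}$ or on $\om$. Terms in which both $\sY$-indices hit $\om$ reproduce, after accounting for symmetrization, a copy of $h_{r,s}(\sY(\om))$. The remaining terms are those in which at least one $\sY$-index contracts against the metric factor $h_{i_{r}i_{s}}$; here I would use the trace-commutation property and the symmetry $\sY_{[ij]}=0$ to show these collapse. Concretely, a contraction of $\sY_{p i_{r}}$ or $\sY_{p i_{r} i_{s} q}$ against $h_{i_{r}i_{s}}$ produces a trace of $\sY$ on a curvature-type index pair, and by $\sY(h)=0$ together with the symmetries such traces either vanish or recombine into lower-weight terms that cancel. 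The combinatorial factor emerges because among the $\binom{k+2}{2}$ index pairs of the rank-$(k+2)$ tensor, the action of $\sY$ treats the pair carrying the inserted metric differently from the $\binom{k}{2}$ pairs internal to $\om$, and the normalization $k\,\sY(\om)=\sum_{s}(\cdots)$ built into the definition supplies exactly the ratio $k/(k+2)$.

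\textbf{Main obstacle.} The delicate point is the careful cancellation of the cross terms in which one $\sY$-index contracts the inserted $h_{i_{r}i_{s}}$ and the other contracts $\om$. These are the terms that threaten to spoil a clean proportionality, and handling them requires using both that $\sY_{pij}\,^{q}h^{ij}$-type traces reduce to $\sY_{ij}$ (by the metric curvature symmetries) and that such Ricci-type contractions against the remaining metric factors vanish upon symmetrization because $\om$ is completely symmetric and $\sY_{[ij]}=0$. I expect the bulk of the genuine work to be verifying that these mixed terms sum to zero rather than to a spurious multiple of $h_{r,s}(\sY(\om))$; once that is settled, matching the coefficient is a routine index count. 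An efficient alternative, which I would pursue in parallel, is to transport the identity to $\pol(\ctm)$ via the isomorphism $\om\mapsto P^{\om}$ of section \ref{symmetricharmonictensorsection}: in polynomial language $\sY$ acts as a second-order-in-curvature differential operator, $h_{r,s}$ corresponds to multiplication by $P^{h}$, and $\sY(P^{h})=0$ is the degree-two case; the commutator $[\sY,\,P^{h}\cdot]$ then collapses by the Leibniz rule to a first-order term that is manifestly proportional, making the factor $k/(k+2)$ transparent from the homogeneity degrees.
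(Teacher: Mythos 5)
Your strategy is essentially the paper's proof: relabel so the inserted factor is $h_{i_{1}i_{2}}$, expand the two sums defining $(k+2)\sY(h_{1,2}(\om))$, note that the terms in which every contraction lands on $\om$ assemble into $h_{i_{1}i_{2}}\cdot k\,\sY(\om)_{i_{3}\dots i_{k+2}}$ (which is exactly where the $k$ versus $k+2$ normalization comes from), and verify that the remaining cross terms cancel. The one small correction to your sketch of the cancellation: the mixed terms in which exactly one index of $\sY_{pi_{r}i_{s}q}$ contracts the inserted $h$ cancel in pairs by the antisymmetries $\sY_{(ij)kl}=0$ and $\sY_{ij(kl)}=0$ of the curvature-type tensor itself (for instance $\sY_{i_{2}i_{1}i_{s}}{}^{q}+\sY_{i_{1}i_{2}i_{s}}{}^{q}=0$), not by $\sY_{[ij]}=0$ or the symmetry of $\om$, while the doubly contracted terms $\sY_{pi_{1}i_{2}q}h^{pq}$ reduce to Ricci traces and dispose of the $s\in\{1,2\}$ terms of the first sum, just as you anticipated.
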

\begin{proof}
Relabeling the indices it suffices to consider $r = 1$ and $s = 2$. Then 
\begin{align*}
\begin{split}
&\sum_{s = 1}^{k+2}\sY_{pi_{s}}h_{1, 2}(\om)_{i_{1}\dots i_{s-1}}\,^{p}\,_{i_{s+1}\dots i_{k+2}}= 2\sY_{i_{1}i_{2}}\om_{i_{3}\dots i_{k+2}} + h_{i_{1}i_{2}}\sum_{s = 3}^{k+2}\sY_{pi_{s}}\om_{i_{3}\dots i_{s-1}}\,^{p}\,_{i_{s+1}\dots i_{k+2}} ,\\
&\sum_{r \neq s}\sY_{pi_{r}i_{s}}\,^{q}h_{1, 2}(\om)_{i_{1}\dots i_{r - 1}}\,^{p}\,_{i_{r+1}\dots i_{s-1}}\,^{q}\,_{i_{s+1}\dots i_{k+2}} = - 2 \sY_{i_{1}i_{2}}\om_{i_{3}\dots i_{k+2}} \\
&+ \sum_{s = 3}^{k}\sY_{i_{2}i_{1}i_{s}}\,^{q}\om_{i_{3}\dots i_{s-1}qi_{s+1}\dots i_{k+2}} + \sum_{s = 3}^{k}\sY_{i_{1}i_{2}i_{s}}\,^{q}\om_{i_{3}\dots i_{s-1}qi_{s+1}\dots i_{k+2}} \\
&+ h_{i_{1}i_{2}}\sum_{r \neq s, r \geq 3, s \geq 3}\sY_{pi_{r}i_{s}q}\om_{i_{3}\dots i_{r - 1}}\,^{p}\,_{i_{r+1}\dots i_{s-1}}\,^{q}\,_{i_{s+1}\dots i_{k+2}}.
\end{split}
\end{align*}
which together show \eqref{hycommute}.
\end{proof}

\subsubsection{}
Any tensor $\sY_{ijkl}$ with symmetries of metric curvature type determines a symmetric endomorphism of $\Ga(S^{2}(\ctm))$ defined by $a_{ij} \to \sY_{ipjq}a^{pq}$. In general this endomorphism does not preserve the subspace $\Ga(S^{2}_{0}(\ctm))$. If $\sY_{(ij)} = \sY_{ij} = \sY_{pij}\,^{p}$ then the modified endomorphism $a_{ij} \to \sY(a)_{ij} \defeq a^{pq}(\sY_{ipjq} + \sY_{p(i}h_{j)q})$ restricts to a symmetric endomorphism of $S^{2}_{0}(\ctm)$.%

\subsubsection{}
Let $\sR_{ijk}\,^{l}$ be the curvature tensor of $D$. The curvature of the induced connection on $\symkt$ is given by $2D_{[i}D_{j]}\om_{i_{1}\dots i_{k}} = -k\sR_{ij(i_{1}}\,^{p}\om_{i_{2}\dots i_{k})p}$. Tracing this in $i$ and $i_{r}$ yields
\begin{align*}
\begin{split}
&D_{p}D_{i_{r}}\om_{i_{1}\dots i_{r-1}}\,^{p}\,_{i_{r+1}\dots i_{k}} - D_{i_{r}}D_{p}\om_{i_{1}\dots i_{r-1}}\,^{p}\,_{i_{r+1}\dots i_{k}} \\
&= \sR_{p i_{r}}\om_{i_{1}\dots i_{r-1}}\,^{p}\,_{i_{r+1}\dots i_{k}} - \sum_{s \neq r}\sR_{qi_{r}i_{s}p}\om_{i_{1}\dots i_{r-1}}\,^{p}\,_{i_{r+1}\dots i_{s-1}}\,^{q}\,_{i_{s+1} \dots i_{k}},
\end{split}
\end{align*}
and summing this in $r$ gives
\begin{align}\label{differentialr}
\begin{split}
&2\sum_{r = 1}^{k}D_{[p}D_{i_{r}]}\om_{i_{1}\dots i_{r-1}}\,^{p}\,_{i_{r+1}\dots i_{k}} = k\sR(\om)_{i_{1}\dots i_{k}}.
\end{split}
\end{align}
Since taking traces commutes with $D$, \eqref{differentialr} gives a simple proof that $\sR$ commutes with traces. %

\subsubsection{}
For a section $\om$ of a bundle $E$ of covariant tensors let $\qY(\om) \defeq \lb \om, \sY(\om)\ra$ be the quadratic form on $\Ga(E)$ corresponding to the symmetric endomorphism $\sY$. Say that \textbf{$\qY$ is positive, non-negative, zero, negative, non-negative, etc. on $E$} if $\qY(\om) > 0$, $\qY(\om) \geq 0$, etc. for all $\om \in \Ga(E)$. That $\qY$ be positive (etc.) on $\ctm$ means simply that $\sY_{ij}$ is positive (etc.) on $\ctm$.

\subsubsection{}
The tensor $\sH_{ijkl} \defeq \tfrac{2}{n(n-1)}h_{l[i}h_{j]k}$ is of metric curvature tensor type, and so determines an operator $\sH$ and an associated quadratic form $\qH$ defined in the same way as $\sY$ and $\qY$. For any $\om \in \Ga(\tensor^{k}\ctm)$ there holds
\begin{align}
k\sH(\om)_{i_{1}\dots i_{k}} = \tfrac{k(n+k-2)}{n(n-1)}\om_{i_{1}\dots i_{k}} - \tfrac{1}{n(n-1)}\sum_{r \neq s}h_{i_{r}i_{s}}\om_{i_{1}\dots i_{r-1}pi_{r+1}\dots i_{s-1}}\,^{p}\,_{i_{s+1}\dots i_{k}}.
\end{align}
It follows that $k\qH(\om) =  \tfrac{k(n+k-2)}{n(n-1)}|\om|^{2}_{h} - \tfrac{1}{n(n-1)}\sum_{r \neq s}|\tr_{rs}(\om)|^{2}_{h}$, in which $\tr_{rs}(\om)$ indicates the trace on the $r$ and $s$ slots. In particular, the quadratic form $\qH$ is positive on any bundle of completely trace-free tensors. In fact it seems likely that $\qH$ is positive on any bundle of tensors (it is not hard to check this for tensors of rank $2$ and $3$), but a proof of this seems to require an excursion into calculating weights that would take too much space to develop here.

That $h$ have constant sectional curvature $\tfrac{2\sR}{n(n-1)}$ is exactly the statement $\sR_{ijkl} = \sR \sH_{ijkl}$. Since $\qH(\om) = \tfrac{n+k-2}{n(n-1)}|\om|^{2}$ it follows that if $h$ has constant sectional curvature, then $\qR$ is positive, zero, or negative on any bundle of completely trace-free tensors according to whether the curvature of $h$ is positive, zero, or negative. When $n=2$ then $2\qR(\om) = k\sR |\om|^{2}$ for $\om \in \Ga(\symkt)$, so $\qR$ is positive, negative, etc. on $\symkt$ if and only if the scalar curvature has the same property.

\subsubsection{}
Let $X_{1}, X_{2}, X_{3}, X_{4} \in \Ga(TM)$. Then
\begin{align}\label{qy0}
\begin{split}
4\lb \sR(X_{1} \cprod X_{2}), X_{3} \cprod X_{4}\ra  =& -2\lb \sR(X_{1}, X_{3})X_{4}, X_{2} \ra  - 2 \lb \sR(X_{2}, X_{3})X_{4}, X_{1} \ra \\
&+ \sric(X_{3}, X_{1})\lb X_{4}, X_{2} \ra +  \sric(X_{3}, X_{2})\lb X_{4}, X_{1} \ra \\ &+ \sric(X_{4}, X_{1})\lb X_{3}, X_{2} \ra + \sric(X_{4}, X_{2})\lb X_{3}, X_{1} \ra.
\end{split}
\end{align}
The same identity holds with $\sR$ replaced by any metric curvature tensor $\sY$ having symmetric Ricci trace. For such a $\sY$ there follows from \eqref{qy0} that for $X, Y \in \Ga(TM)$, 
\begin{align}\label{qy1}
\begin{split}
\qY(X\cprod Y) &= -\tfrac{1}{2}X^{i}Y^{j}X^{k}Y^{l}\sY_{ijkl}  + \tfrac{1}{2}\lb X, Y \ra \sY_{pq}X^{p}Y^{q} + \tfrac{1}{4}|X|^{2}\sY_{pq}Y^{p}Y^{q} + \tfrac{1}{4}|Y|^{2}\sY_{pq}X^{p}X^{q}\\
 &= -\tfrac{1}{2}X^{i}Y^{j}X^{k}Y^{l}\sY_{ijkl}  + \tfrac{1}{2}(\lb X, Y \ra - |X||Y|)\sY_{pq}X^{p}Y^{q} + \tfrac{1}{4}\qY(|Y|X + |X|Y)\\ 
&= -\tfrac{1}{2}X^{i}Y^{j}X^{k}Y^{l}\sY_{ijkl}  + \tfrac{1}{2}(\lb X, Y \ra + |X||Y|)\sY_{pq}X^{p}Y^{q} + \tfrac{1}{4}\qY(|Y|X - |X|Y).
\end{split}
\end{align}
In particular, $\qY(X\cprod X) = |X|^{2}\sY_{pq}X^{p}X^{q} = |X|^{2}\qY(X)$. This shows that if $\qY$ is positive, non-negative, etc. on $S^{2}_{0}(\ctm)$, then it has the same property on $\ctm$. In particular, if $\qR$ is positive, etc. on $S^{2}_{0}(\ctm)$, then the Ricci curvature is positive, etc. If $X, Y \in \Ga(TM)$, let 
\begin{align*}
\ka(X, Y) \defeq -\tfrac{\lb\sR(X, Y)X, Y\ra}{|X|^{2}|Y|^{2} - \lb X, Y\ra} = -\tfrac{2\lb\sR(X, Y)X, Y\ra}{|X\wedge Y|^{2}},
\end{align*}
denote the sectional curvature of the span of $X$ and $Y$. For $\sY = \sR$, equation \eqref{qy1} can be rewritten as
\begin{align}
\begin{split}
4\qR(X\cprod Y) & = \ka(X, Y)|X \wedge Y|^{2} + |Y|^{2}\ric(X, X) + |X|^{2}\ric(Y, Y) + 2\lb X, Y \ra \ric(X, Y)\\
& = \ka(X, Y)|X \wedge Y|^{2} + \qR(|Y|X + |X|Y) - 2(|X||Y| - \lb X, Y \ra)\ric(X, Y)\\
& = \ka(X, Y)|X \wedge Y|^{2} + \qR(|Y|X - |X|Y) + 2(|X||Y| + \lb X, Y \ra)\ric(X, Y).
\end{split}
\end{align}
Similarly,
\begin{align}
\begin{split}
4\lb \sR(X\cprod X), Y \cprod Y\ra  & = -2\ka(X, Y)|X \wedge Y|^{2} + 4\lb X, Y \ra\ric(X, Y).
\end{split}
\end{align}
In particular, if $X$ and $Y$ are orthogonal and of unit norm, then
\begin{align}\label{qrsectional}
\begin{split}
&4\qR(X\cprod Y)  = 2\ka(X, Y) + \ric(X, X) + \ric(Y, Y),\\
&4\lb \sR(X\cprod X), Y \cprod Y\ra   = -4\ka(X, Y),\\
&4\qR(X\cprod X)   = 4\ric(X, X).
\end{split}
\end{align}
Let $\om$ be an arbitrary element of $\Ga(S^{2}_{0}(\ctm))$. There can be chosen an $h$-orthonormal local frame $X_{\al}$ in $TM$ with respect to which $\om$ has the form $\om = \sum_{\al = 1}^{n}\la_{\al}X_{\al}\cprod X_{\al}$ for some $\la_{\al} \in \rea$. Let $\ka_{\al\be} = \ka(X_{\al}, X_{\be})$. From \eqref{qrsectional} and $\ric(X_{\al}, X_{\al}) = \sum_{\be}\ka_{\al\be}$, there follows 
\begin{align}\label{qrsym2}
\begin{split}
2\qR(\om) &= 2\sum_{\al}\la_{\al}^{2}\qR(X_{\al}\cprod X_{\al}) + 2\sum_{\al \neq \be}\la_{\al}\la_{\be}\lb \sR(X_{\al}\cprod X_{\al}), X_{\be}\cprod X_{\be}\ra\\
& = 2\sum_{\al}\la_{\al}^{2}\sum_{\be}\ka_{\al\be} - 2\sum_{\al, \be}\la_{\al}\la_{\be}\ka_{\al\be} = \sum_{\al \neq \be}(\la_{\al} - \la_{\be})^{2}\ka_{\al\be}. 
\end{split}
\end{align}
The conclusions are all easily deduced from this equality and the following observation. Note $nX_{\al}\cprod X_{\al} = (n-1)X_{\al}\tensor X_{\al} - \sum_{\ga \neq \al}X_{\ga}\tensor X_{\ga}$. It follows that $\om = \sum_{\al = 1}^{n}\la_{\al}X_{\al}\cprod X_{\al} = \sum_{\al = 1}^{n}\mu_{\al}X_{\al}\tensor X_{\al}$ with $n\mu_{\al} = (n-1)\la_{\al} - \sum_{\ga \neq \al}\la_{\ga}$. There hold $\sum_{\al = 1}^{n}\mu_{\al} = 0$ and $\mu_{\al} - \mu_{\be} = \la_{\al} - \la_{\be}$. Hence if all the $\la_{\al}$ are equal the same is true for the $\mu_{\al}$, which implies all the $\mu_{\al}$ are $0$, and hence that $\om = 0$. Thus $\om = 0$ if and only if the $\la_{\al}$ are all equal. A computation shows that $n|\om|_{h}^{2} = n\sum_{\al = 1}^{n}\mu_{\al}^{2} = \sum_{\al \neq \be}(\la_{\al} - \la_{\be})^{2}$. From \eqref{qrsym2} it follows that a bound on the sectional curvatures of $h$ implies a bound on $\qR(\om)$, e.g. if the sectional curvatures are bounded from below by $-a^{2}$ (resp. above by $a^{2}$) then $2\qR(\om) \geq -a^{2}n|\om|^{2}$ (resp. $2\qR(\om) \leq a^{2}n|\om|^{2}$). Suppose are constants so that the sectional curvatures of $h$ satisfy $A \leq \inf_{x \in M}\inf_{L \subset Gr(2, T_{x}M)}\ka(L) \leq \sup_{x \in M}\sup_{L \subset Gr(2, T_{x}M)}\ka(L) \leq B$ where $\ka(L)$ denotes the sectional curvature of the two-dimensional subspace $L \subset Gr(2, T_{x}M)$. Then \eqref{qrsym2} shows that as quadratic forms on $S^{2}_{0}(\ctm)$ there holds $\q_{n(n-1)A\sH} \leq 2\qR \leq \q_{n(n-1)B\sH}$. In particular this implies Lemma \ref{qrsignlemma}, which is essentially Theorem $6.1$ of \cite{Berger-Ebin} (wherein only the positive case is stated). 
\begin{lemma}\label{qrsignlemma}
If the Riemannian metric $h$ has (strictly) negative, non-positive, (strictly) positive, or non-negative sectional curvature then $\qR$ has the same property on $S^{2}_{0}(\ctm)$. 
\end{lemma}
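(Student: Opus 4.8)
The plan is to deduce the lemma directly from the pointwise formula \eqref{qrsym2}, since the substantive computation has already been carried out there. First I would fix a point $x \in M$ and an arbitrary $\om \in \Ga(S^{2}_{0}(\ctm))$; because $\om_{ij}$ is symmetric it can be diagonalized in an $h$-orthonormal frame $X_{1}, \dots, X_{n}$ of $T_{x}M$, so that $\om = \sum_{\al = 1}^{n}\la_{\al}X_{\al}\cprod X_{\al}$ for real numbers $\la_{\al}$. Writing $\ka_{\al\be} = \ka(X_{\al}, X_{\be})$ for the sectional curvatures of the coordinate two-planes, the identity \eqref{qrsym2} gives
\[
2\qR(\om) = \sum_{\al \neq \be}(\la_{\al} - \la_{\be})^{2}\ka_{\al\be}.
\]

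The key observation is that every coefficient $(\la_{\al} - \la_{\be})^{2}$ is non-negative. Hence, if the sectional curvature is non-negative (resp. non-positive) everywhere, every summand is non-negative (resp. non-positive), and so is $\qR(\om)$; as $x$ and $\om$ were arbitrary, $\qR \geq 0$ (resp. $\qR \leq 0$) on $S^{2}_{0}(\ctm)$. This settles the two non-strict cases.

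For the strict cases I would invoke the observation recorded just before \eqref{qrsym2}: $\om = 0$ if and only if all the $\la_{\al}$ coincide. Thus if $\om \neq 0$, at least one pair satisfies $\la_{\al} \neq \la_{\be}$, so the corresponding coefficient $(\la_{\al} - \la_{\be})^{2}$ is strictly positive. If the sectional curvature is strictly positive (resp. strictly negative) everywhere, this term, and hence the whole sum, is strictly positive (resp. strictly negative), giving $\qR(\om) > 0$ (resp. $\qR(\om) < 0$) for every $\om \neq 0$.

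Since formula \eqref{qrsym2} does all the analytic work, there is no real obstacle beyond bookkeeping; the only point requiring care is the strict inequalities, where one must rule out the degenerate possibility that all the diagonalizing differences $(\la_{\al} - \la_{\be})^{2}$ vanish, which is exactly the content of the $\om = 0$ characterization. Alternatively, I could phrase the argument entirely through the comparison $\q_{n(n-1)A\sH} \leq 2\qR \leq \q_{n(n-1)B\sH}$ already noted in the text, together with the positivity of $\qH$ on completely trace-free tensors; both routes are routine once \eqref{qrsym2} is in hand.
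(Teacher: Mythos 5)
Your proposal is correct and follows essentially the same route as the paper, which derives the lemma directly from the identity \eqref{qrsym2} together with the observation that $\om = 0$ if and only if all the eigenvalues $\la_{\al}$ coincide. The handling of the strict cases via a nonvanishing difference $(\la_{\al}-\la_{\be})^{2}$ is exactly the point the paper relies on as well.
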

It follows, for example, that positive sectional curvature implies $\qR$ is positive on $S^{2}_{0}(\ctm)$, which implies positive Ricci curvature, but it is evident from \eqref{qrsym2} that it is unlikely that in general either of these implications is reversible. More precisely, the non-negativity of the coefficients $(\la_{\al} - \la_{\be})^{2}$ means that a sign condition on the sectional curvatures results in a sign condition for $\qR$ on $S^{2}_{0}(\ctm)$. However, a collection of $n(n-1)/2$ real numbers of the form $(\la_{\al} - \la_{\be})^{2}$ is not an arbitrary collection of non-negative numbers. For example, let $a, b, c \in \rea$ and let $x$, $y$, and $z$ be non-negative real numbers such that $x^{2} = (a-b)^{2}$, $y^{2} = (b-c)^{2}$, and $z^{2} = (c - a)^{2}$. Then $(x + y - z)(x - y + z)(-x + y + z) = 0$. A vector having a negative coordinate but positive inner product with $(x^{2}, y^{2}, z^{2})$ is given by $(x^{-1}, y^{-1}, \la z^{-1})$ for any $0 > \la > -1$. The point here is simply that, for example, the positivity of $\qR$ on $S^{2}_{0}(\ctm)$ does not in any obvious way imply the positivity of the sectional curvature for purely numerical reasons; if there were to be such an implication, it would have its origins in geometrical considerations.

This suggests that although one does not feel that one understands geometrically an algebraic condition such as the positivity of $\qR$ on $S^{2}_{0}(\ctm)$, such a condition is a reasonable condition in its own right, just as is a condition such as the positivity of isotropic curvature.

\subsubsection{}
When $n > 2$, $\qR$ can be rewritten in terms of the conformal Weyl and Schouten tensors $\sW_{ijk}\,^{l}$ and $\sW_{ij}$. Namely,
\begin{align}\begin{split}
\qR(\om) &= \qW(\om) + \tfrac{n+2(k-2)}{n-2}\sR_{p}\,^{q}\om_{qi_{1}\dots i_{k-1}}\om^{pi_{1}\dots i_{k-1}} +  \tfrac{1-k}{(n-1)(n-2)}\sR |\om|_{h}^{2}\\
& = \qW(\om) -(n+2(k-2))\sW_{p(i_{1}}\om_{i_{2}\dots i_{k})}\,^{p}\om^{i_{1}\dots i_{k}} +  \tfrac{1}{2(n-1)}\sR |\om|_{h}^{2}\\
& = \qW(\om) -(n+2(k-2))\mr{\sW}_{p(i_{1}}\om_{i_{2}\dots i_{k})}\,^{p}\om^{i_{1}\dots i_{k}} +  \tfrac{n-2+k}{n(n-1)}\sR |\om|_{h}^{2}.
\end{split}
\end{align}
If the Ricci curvature is bounded from below, $\sR_{ij} \geq \tfrac{c}{n}h_{ij}$, (resp. above, $ \sR_{ij} \leq \tfrac{c}{n}h_{ij}$) then $\qR(\om) \geq \qW(\om) + \tfrac{n+k-2}{(n-1)(n-2)}c|\om|^{2}$ (resp. $\qR(\om) \leq \qW(\om) + \tfrac{n+k-2}{(n-1)(n-2)}c|\om|^{2}$). It follows that on a conformally flat manifold $\qR$ is positive, non-negative, zero, negative, etc. according to whether the Ricci tensor has the corresponding property. For Einstein metrics understanding $\qR$ reduces to understanding $\qW$. Evidently stronger results can only be obtained when there is further control of the term $\qW(\om)$.

\subsection{Weitzenb\"ock type identities for trace-free symmetric tensors}\label{weitzenbocksection}
\subsubsection{}
For $\om \in \Ga(\symkt)$ straightforward computations using the Ricci identity and \eqref{cliedefined} show
\begin{align}
\label{liediv}
\begin{split}
\clie \div(\om)_{i_{1}\dots i_{k}} &= D_{(i_{1}}D^{p}\om_{i_{2}\dots i_{k})p} + \tfrac{1-k}{(n+2(k-2))}h_{(i_{1}i_{2}}D^{p}D^{q}\omega_{i_{3}\dots i_{k})pq},
\end{split}\\
\begin{split}
\label{divlie}
\tfrac{k+1}{k}\div \clie(\om) &  = \tfrac{n+2(k-2)}{n+2(k-1)}\clie\div(\om) + \tfrac{1}{k}\lap_{h}\om + \sR(\om) .
\end{split}
\end{align}
Contracting \eqref{domkl} with $D^{i}$ and using \eqref{liediv} gives
\begin{align}\label{lapom}
\lap_{h}\om& = \div \clie(\om) + \tfrac{k(n+2(k-2))}{(n-3+k)(n+2(k-1))}\clie \div(\om)+ \tfrac{2k}{k+1}\kliea\klie(\om),
\end{align}
in which has been used $D^{p}\klie(\om)_{p(i_{1}\dots i_{k})} = \kliea\klie(\om)_{i_{1}\dots i_{k}}$. Solving \eqref{divlie} for $\lap_{h}\om$ and substituting the result into \eqref{lapom} yields
\begin{align}\label{klieweitzenbock}
\begin{split}
&\sR(\om)  = \div \clie(\om) - \tfrac{(n-2+k)(n+2(k-2))}{(n-3+k)(n+2(k-1))}\clie\div(\om) -\tfrac{2}{k+1}\kliea\klie(\om).
\end{split}
\end{align}
Equation \eqref{divlie} and \eqref{klieweitzenbock} are the analogues of Equations $(2.8)$ and $(2.9)$ of \cite{Semmelmann}. Rewriting \eqref{lapom} in two different ways using \eqref{divlie} gives
\begin{align}
\label{lapom2} \lap_{h} \om + \tfrac{k}{n-2+k}\sR(\om)& =  \tfrac{n+2(k-1)}{n - 2 + k}\div \clie(\om)  + \tfrac{2k(n-3+k)}{(k+1)(n-2+k)}\kliea\klie(\om)\\
\label{lapom3} \lap_{h} \om - \sR(\om)& =  \tfrac{n+2(k-2)}{n - 3 + k}\clie \div(\om)   + 2\kliea\klie(\om).
\end{align}
Contracting each of \eqref{lapom2} and \eqref{lapom3} with $\om^{i_{1}\dots i_{k}}$ yields the Weitzenb\"ock identities
\begin{align}
\label{lapomdivlie}\tfrac{1}{2}\lap_{h}|\om|^{2} & = |D\om|^{2} + \tfrac{n+2(k-1)}{n - 2 + k}\lb \om, \div \clie(\om)\ra  + \tfrac{2k(n-3+k)}{(k+1)(n-2+k)}\lb \om, \kliea\klie(\om)\ra%
- \tfrac{k}{n-2+k}\qR(\om),\\
\label{lapomliediv}\tfrac{1}{2}\lap_{h}|\om|^{2} & = |D\om|^{2} + \tfrac{n+2(k-2)}{n - 3 + k}\lb\om, \clie \div(\om)\ra+ 2\lb\om, \kliea\klie(\om)\ra + \qR(\om) . 
\end{align}
On a compact manifold integrating either \eqref{lapomdivlie} or \eqref{lapomliediv} by parts against the Riemannian volume $\vol_{h}$ and using \eqref{normdom} to simplify the result gives
\begin{align}\label{bigbochner}
\begin{split}
\tfrac{2}{k+1}\int_{M}|\klie(\om)|^{2} &  + \tfrac{(n-2+k)(n+2(k-2))}{(n-3+k)(n+2(k-1))}\int_{M}|\div(\om)|^{2} - \int_{M}|\clie(\om)|^{2} =   \int_{M}\qR(\om).
\end{split}
\end{align}
The identity \eqref{bigbochner} generalizes the usual integrated Bochner identities for harmonic one-forms and conformal Killing vector fields. 

\begin{theorem}\label{bochnerliedivtheorem}
Let $h$ be a Riemannian metric on a compact manifold $M$ of dimension $n > 2$. If $\qR$ is non-negative on $S^{k}_{0}(\ctm)$ then any $\om \in \Ga(\symkt) \cap \ker \klie \cap \ker \div$ is parallel. If moreover $\qR$ is at some point of $M$ strictly positive on $S^{k}_{0}(\ctm)$ then $\Ga(\symkt) \cap \ker \klie \cap \ker \div = \{0\}$. If $\qR$ is non-positive on $S^{k}_{0}(\ctm)$ then any rank $k$ conformal Killing tensor is parallel, and if, moreover, $\qR$ is at some point of $M$ strictly negative on $S^{k}_{0}(\ctm)$, then any rank $k$ conformal Killing tensor is identically zero.
\end{theorem}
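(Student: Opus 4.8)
The plan is to deduce Theorem \ref{bochnerliedivtheorem} directly from the integrated Weitzenb\"ock identity \eqref{bigbochner}, which is the key structural input. The identity reads
\begin{align*}
\tfrac{2}{k+1}\int_{M}|\klie(\om)|^{2} + \tfrac{(n-2+k)(n+2(k-2))}{(n-3+k)(n+2(k-1))}\int_{M}|\div(\om)|^{2} - \int_{M}|\clie(\om)|^{2} = \int_{M}\qR(\om),
\end{align*}
valid for any $\om \in \Ga(\symkt)$ on a compact manifold. The first two coefficients on the left are non-negative for $n > 2$ and $k \geq 1$ (the factor $n+2(k-2)$ is positive once $n > 2$, and $n-3+k$, $n+2(k-1)$ are positive), so the two integrals they multiply are each non-negative. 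First I would treat the non-negative curvature case: take $\om \in \ker \klie \cap \ker \div \cap \Ga(\symkt)$, so that the two leftmost integrals vanish and the identity collapses to $-\int_{M}|\clie(\om)|^{2} = \int_{M}\qR(\om)$. The left side is non-positive; the hypothesis that $\qR \geq 0$ on $S^{k}_{0}(\ctm)$ makes the right side non-negative, forcing both to vanish. Hence $\clie(\om) = 0$ as well, and now $\om$ lies in $\ker \clie \cap \ker \klie \cap \ker \div$, which by the observation following \eqref{normdom} equals $\ker D \cap \Ga(S^{k}_{0}(\ctm))$; thus $\om$ is parallel. If in addition $\qR$ is strictly positive at some point, then $\int_{M}\qR(\om) = 0$ together with $\qR(\om) \geq 0$ pointwise forces $\qR(\om) \equiv 0$, and since $\om$ is parallel its pointwise norm $|\om|^2$ is constant; evaluating at the point of strict positivity gives $\qR(\om) > 0$ unless $\om$ vanishes there, whence $\om \equiv 0$.

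For the non-positive curvature case the argument is dual. I would take $\om$ to be a rank $k$ conformal Killing tensor, meaning $\clie^{\sharp}(\om^{\sharp}) = 0$, equivalently $\clie(\om) = 0$ after identifying via $h$. Then \eqref{bigbochner} reduces to
\begin{align*}
\tfrac{2}{k+1}\int_{M}|\klie(\om)|^{2} + \tfrac{(n-2+k)(n+2(k-2))}{(n-3+k)(n+2(k-1))}\int_{M}|\div(\om)|^{2} = \int_{M}\qR(\om).
\end{align*}
Now the left side is non-negative while the hypothesis $\qR \leq 0$ makes the right side non-positive, so both sides vanish. The vanishing of the left side, using positivity of the two coefficients, yields $\klie(\om) = 0$ and $\div(\om) = 0$; combined with $\clie(\om) = 0$ this again places $\om$ in $\ker D \cap \Ga(S^{k}_{0}(\ctm))$, so $\om$ is parallel. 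Under the strict negativity hypothesis, $\int_{M}\qR(\om) = 0$ with $\qR(\om) \leq 0$ pointwise forces $\qR(\om) \equiv 0$, and the constancy of $|\om|^2$ (from parallelism) together with strict negativity at one point forces $\om \equiv 0$ there and hence everywhere.

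The reasoning is essentially formal once \eqref{bigbochner} and the norm decomposition \eqref{normdom} are in hand, so there is no serious analytic obstacle; the proof is a clean sign-chasing argument in the spirit of the classical Bochner technique. The one point requiring a little care is the final "strict positivity at a point implies vanishing" step: I must invoke that a parallel tensor has constant pointwise norm (immediate from $D\om = 0$ and $D$ being a metric connection, so $D|\om|^2 = 2\langle \om, D\om\rangle = 0$), so that if $\om$ were nonzero it would be nowhere zero, contradicting $\qR(\om) \equiv 0$ at the point where $\qR$ is strictly definite on $S^k_0(\ctm)$. I would also note that the coefficients' positivity genuinely requires $n > 2$ (and the mild degeneracy at $k=1$, where $n+2(k-2) = n-2 > 0$ and $n-3+k = n-2 > 0$, is harmless), matching the hypothesis of the theorem. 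No further input beyond the already-established identities is needed.
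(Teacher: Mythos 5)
Your proof is correct and follows essentially the same route as the paper's: both rest on the integrated identity \eqref{bigbochner} together with the norm decomposition \eqref{normdom} to conclude parallelism, and then use the vanishing of $\qR(\om)$ at a point of strict definiteness (the paper cites the pointwise identities \eqref{lapomliediv} and \eqref{lapomdivlie}, which for parallel $\om$ reduce to exactly your observation $\qR(\om)\equiv 0$) to force $\om\equiv 0$. No gaps.
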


\begin{proof}
Throughout the proof $\om \in \Ga(\symkt)$. If $\om \in \ker \klie \cap \ker \div$ and $\qR \geq 0$ then \eqref{bigbochner} shows that $\om \in \ker \clie$ and from \eqref{normdom} it follows that $D\om = 0$. If moreover $\qR$ is somewhere positive then \eqref{lapomliediv} shows $\om \equiv 0$. If $\om \in \ker \clie$ and $\qR \leq 0$ then \eqref{bigbochner} shows that $\om \in \ker \klie \cap \ker \div$ and from \eqref{normdom} it follows that $D\om = 0$. If, moreover, $\qR$ is somewhere negative then \eqref{lapomdivlie} shows $\om \equiv 0$.
\end{proof}

\subsubsection{}
For $\al \in \rea$ define a formally self-adjoint second order elliptic differential operator $\culap_{\al}$ on $\Ga(S^{k}_{0}(\ctm))$ by $\culap_{\al}\om = \lap_{h}\om +\al \sR(\om)$. Let $M$ be a compact manifold. Define a functional $\cubic_{\al}$ with arguments a Riemannian metric $h_{ij}$ and a tensor $\om \in \Ga(S^{k}_{0}(\ctm))$ by 
\begin{align*}
\cubic_{\al}(h, \om) \defeq \int_{M}\left( |\clie(\om)|_{h}^{2} - \al \qR(\om) \right) \,d\vol_{h} = -\int_{M}\lb\om,\culap_{\al}\om\ra\,d\vol_{h}.
\end{align*}
For fixed $h$ the first variation of $\cubic_{\al}(h, \om)$ in $\om$ yields the equation $\culap_{\al}\om = 0$. Lemma \ref{culaplemma} essentially amounts to the observation that for $-1 \leq \al \leq \tfrac{k}{n-2+k}$ the functional $\cubic_{\al}(h, \om)$ is non-negative.

\begin{lemma}\label{culaplemma}
For a Riemannian metric $h$ on a compact manifold $M$ of dimension $n \geq 2$ there hold
\begin{enumerate}
\item If $\al = -1$ then $\ker \culap_{\al} \cap \Ga(S^{k}_{0}(\ctm)) = \ker \klie \cap \ker \div \cap \Ga(S^{k}_{0}(\ctm))$.
\item If $-1 < \al < \tfrac{k}{n-2+k}$ then $\ker \culap_{\al} \cap \Ga(S^{k}_{0}(\ctm)) = \ker D \cap \Ga(S^{k}_{0}(\ctm))$.
\item If $\al = \tfrac{k}{n-2+k}$ then $\ker \culap_{\al} \cap \Ga(S^{k}_{0}(\ctm)) = \ker \klie \cap \ker \clie \cap \Ga(S^{k}_{0}(\ctm))$.
\end{enumerate}
\end{lemma}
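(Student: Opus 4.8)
The plan is to read off each case directly from the Weitzenböck-type identities \eqref{lapomdivlie} and \eqref{lapomliediv}, which already isolate the relevant combinations of $\clie$, $\klie$, $\div$, and $\qR$. The common mechanism is that $\culap_{\al}\om = 0$ on a compact manifold, when paired against $\om$ and integrated, forces the vanishing of a manifestly non-negative integrand, whence the individual terms vanish pointwise. So first I would record the basic pairing identity: since $\culap_{\al}$ is formally self-adjoint and $M$ is compact, $\culap_{\al}\om = 0$ implies $\int_M \lb \om, \culap_{\al}\om\ra\,d\vol_{h} = 0$, i.e. $\cubic_{\al}(h,\om) = 0$. The whole lemma then reduces to interpreting $\cubic_{\al}(h,\om) = 0$ for the three values of $\al$.

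For $(1)$, $\al = -1$, I would integrate $\tfrac12\lap_h|\om|^2$ expression \eqref{lapomliediv} over $M$ (the left side integrates to zero), which after using \eqref{normdom} to rewrite $|D\om|^2$ and the adjointness relations $\lb\om,\clie\div(\om)\ra$, $\lb\om,\kliea\klie(\om)\ra$ gives exactly \eqref{bigbochner}. With $\al=-1$ the combination $\cubic_{-1}(h,\om) = \int_M(|\clie(\om)|^2 + \qR(\om))$, and comparing with \eqref{bigbochner} shows $\cubic_{-1}(h,\om) = \int_M\big(\tfrac{2}{k+1}|\klie(\om)|^2 + c_{n,k}|\div(\om)|^2\big)$ with $c_{n,k}=\tfrac{(n-2+k)(n+2(k-2))}{(n-3+k)(n+2(k-1))} > 0$ for $n\geq 2$, $k\geq 1$; since both integrands are non-negative, $\cubic_{-1}=0$ forces $\klie(\om)=0$ and $\div(\om)=0$, and conversely those conditions make $\cubic_{-1}$ vanish, whence by self-adjointness $\culap_{-1}\om=0$. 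This gives $(1)$.

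For the endpoint $(3)$, $\al = \tfrac{k}{n-2+k}$, the natural identity is \eqref{lapomdivlie} (equivalently \eqref{lapom2} contracted with $\om$), which is organized so that the $\qR$ coefficient matches precisely this $\al$: integrating \eqref{lapomdivlie} and again using \eqref{normdom} and adjointness yields $\cubic_{k/(n-2+k)}(h,\om) = \int_M\big(\lambda_1|\klie(\om)|^2 + \lambda_2|\clie(\om)|^2\big)$ with positive constants $\lambda_1,\lambda_2$, so $\culap_{\al}\om=0$ is equivalent to $\klie(\om)=0$ and $\clie(\om)=0$. For the interior range $(2)$, $-1<\al<\tfrac{k}{n-2+k}$, I would write $\cubic_{\al}$ as a convex combination of $\cubic_{-1}$ and $\cubic_{k/(n-2+k)}$ (both of which are non-negative quadratic forms), so that $\cubic_{\al}(h,\om)=0$ forces simultaneously $\klie(\om)=\div(\om)=\clie(\om)=0$; by \eqref{normdom} this is exactly $D\om=0$, giving $\ker\culap_{\al}\cap\Ga(\symkt)=\ker D\cap\Ga(\symkt)$.

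The only genuinely delicate point is bookkeeping of the positive constants and verifying that the endpoints $\al=-1$ and $\al=\tfrac{k}{n-2+k}$ really bracket the range on which $\cubic_\al$ is a non-negative combination; this is where the convex-combination argument for $(2)$ must be made honestly, checking that the coefficient of $|\div(\om)|^2$ stays positive throughout $(-1,\tfrac{k}{n-2+k})$ and does not force an extra constraint at the interior. Everything else is a direct consequence of the identities already assembled in section \ref{weitzenbocksection}, together with compactness (to kill the total divergence $\int_M\lap_h|\om|^2$) and formal self-adjointness of $\culap_{\al}$ (to pass between $\culap_{\al}\om=0$ and the vanishing of the quadratic functional). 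I expect the main obstacle to be purely algebraic: confirming the signs and ranges of the coefficients $c_{n,k}$, $\lambda_1$, $\lambda_2$ for all admissible $n\geq 2$ and $k\geq 1$, which I would handle by direct inspection of \eqref{bigbochner}, \eqref{lapomdivlie}, and \eqref{lapomliediv} rather than by any new estimate.
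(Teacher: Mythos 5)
Your strategy is in substance the paper's: both arguments come down to exhibiting $-\int_{M}\lb\om,\culap_{\al}\om\ra\,d\vol_{h}$ as a linear combination of $\int_{M}|\clie(\om)|^{2}$, $\int_{M}|\div(\om)|^{2}$, $\int_{M}|\klie(\om)|^{2}$ whose coefficients are non-negative for $-1\leq\al\leq\tfrac{k}{n-2+k}$ and vanish exactly in the pattern dictated by the three cases. The paper packages this as the single pointwise identity \eqref{culap1}, obtained from \eqref{lapom} and \eqref{lapom3}, writing $\culap_{\al}\om$ as $(1+\al)\div\clie(\om)$ plus explicit multiples of $\clie\div(\om)$ and $\kliea\klie(\om)$; this gives the easy inclusions immediately (and without compactness), and one integration by parts then yields all three reverse inclusions at once. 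Your integrated route through \eqref{bigbochner} and \eqref{lapomdivlie}, with the convex-combination observation for the interior range, is the same computation carried out under the integral sign, and the convex-combination step is a clean way to avoid re-checking the sign of the $|\div(\om)|^{2}$ coefficient for interior $\al$, since $\culap_{\al}$ depends affinely on $\al$.

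One step needs repair. You pass from $\culap_{\al}\om=0$ to $\cubic_{\al}(h,\om)=0$ by identifying $\cubic_{\al}(h,\om)=\int_{M}(|\clie(\om)|_{h}^{2}-\al\qR(\om))\,d\vol_{h}$ with $-\int_{M}\lb\om,\culap_{\al}\om\ra\,d\vol_{h}$; but the pairing actually produces $\int_{M}(|D\om|_{h}^{2}-\al\qR(\om))\,d\vol_{h}$, since $-\int_{M}\lb\om,\lap_{h}\om\ra=\int_{M}|D\om|_{h}^{2}$ and not $\int_{M}|\clie(\om)|_{h}^{2}$ (the displayed equality in the paper's own definition of $\cubic_{\al}$ is off in the same way). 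By \eqref{normdom} the two functionals differ by $\tfrac{2k}{k+1}\int_{M}|\klie(\om)|^{2}$ plus a positive multiple of $\int_{M}|\div(\om)|^{2}$, which is nonzero precisely in the situations you are trying to rule out, so as written the forward implications in $(1)$ and $(3)$ rest on an equality that fails there. The repair is one line: work with $\int_{M}(|D\om|_{h}^{2}-\al\qR(\om))$, expand $|D\om|^{2}$ by \eqref{normdom}, and combine with \eqref{bigbochner}; the result is again a non-negative combination of the same three squares with the same vanishing loci (it is exactly the integral of \eqref{culap1} paired against $\om$), so every conclusion survives. Alternatively, take the easy inclusions pointwise from \eqref{culap1}, which also spares you the minimizer-of-a-non-negative-form argument for the converse directions.
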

\begin{proof}
By \eqref{lapom3} there holds
\begin{align}\label{culap1}
\culap_{\al} \om = (1+\al)\div \clie(\om) + \tfrac{(n+2(k-2))(k - \al(n-2+k))}{(n - 3 + k)(n+2(k-1))}\clie \div(\om)  + \tfrac{2(k-\al)}{k+1}\kliea\klie(\om)
\end{align}
from which one of the inclusions in each of the claims is obvious (this does not use the compactness of $M$). If $M$ is compact and $\om \in \ker \culap \cap \Ga(S^{k}_{0}(\ctm))$ then integrating the righthand side of \eqref{culap1} gives 
\begin{align*}
0 = \int_{M}\left( (1+\al)|\clie(\om)|_{h}^{2} +  \tfrac{(n+2(k-2))(k - \al(n-2+k))}{(n - 3 + k)(n+2(k-1))}|\div(\om)|_{h}^{2} + \tfrac{2(k-\al)}{k+1}|\klie(\om)|_{h}^{2}\right) \, d\vol_{h},
\end{align*}
from which the opposite inclusions follow.
\end{proof}

The special case of $\culap_{\al}$ for $\al = -1$ acting on sections of $S^{2}_{0}(\ctm)$ was studied by J. Simons in \cite{Simons}, and this case of Lemma \ref{culaplemma} is given in section $6$.c of \cite{Berger-Ebin}. The operator $\culap_{-1}$ plays an important role in Section \ref{eigensection}.

The \textbf{Lichnerowicz Laplacian} $\lich$ defined on page $27$ of \cite{Lichnerowicz-propagateurs} is the formally self-adjoint operator which acts on an arbitary rank $k$ covariant tensor $\om_{i_{1}\dots i_{k}}$ by $-\lap_{h}\om + k\sR(\om)$. The linearization of the Ricci curvature of the metric $h$ at the symmetric two-tensor $a_{ij}$ is $\vr\ric_{h}(a)_{ij} = \tfrac{1}{2}\lich a_{ij} + D_{(i}D^{p}a_{j)p}$. On differential forms the Lichnerowicz Laplacian restricts to the usual Hodge Laplacian. Since $\sR$ and $\lap_{h}$ commute with taking traces, the Lichnerowicz operator restricts to $-\culap_{-k}$ on $\Ga(S^{k}_{0}(\ctm))$.

\subsubsection{Refined Kato inequalities}
\begin{lemma}\label{katolemma}
Let $M$ be an $n$-dimensional manifold with a Riemannian metric $h$ having Levi-Civita connection $D$, and let $\phi_{i_{1}\dots i_{k}} \in \Ga(\symkt)$. If $\phi \in \Ga(\symkt)\cap \ker \klie \cap \ker \div$ (that is $D_{[i}\phi_{j]i_{1}\dots i_{k-1}} = 0$) then where $|\phi|^{2} \neq 0$ there holds
\begin{align}\label{kato}
|d|\phi||^{2} \leq \tfrac{n-2+k}{n+2(k-1)}|D\phi|^{2}.
\end{align}
If $\phi \in \Ga(\symkt)\cap \ker \clie \cap \ker \div$ (that is $D_{(i_{1}}\phi_{i_{2}\dots i_{k+1})} = 0$) then where $|\phi|^{2} \neq 0$ there holds
\begin{align}\label{kato2}
|d|\phi||^{2} \leq \tfrac{k}{k+1}|D\phi|^{2}.
\end{align}
If $\phi \in \Ga(\symkt)\cap \ker \clie \cap \ker \klie$ (that is $D\phi$ is pure trace) then where $|\phi|^{2} \neq 0$ there holds
\begin{align}\label{kato3}
|d|\phi||^{2} \leq \tfrac{k}{n+2(k-1)}|D\phi|^{2}.
\end{align}
\end{lemma}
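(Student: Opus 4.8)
The plan is to prove the three refined Kato inequalities in Lemma \ref{katolemma} uniformly, by exploiting the orthogonal decomposition \eqref{domkl} of $D\phi$ into its three components $\clie(\phi)$, $\tlie(\phi)$ (equivalently $\klie(\phi)$), and the pure-trace part $\ih(\div(\phi))$, whose squared norms are recorded in \eqref{normdom}. The key observation is that $d|\phi|$ sees only the symmetrized gradient: at a point where $|\phi|^{2}\neq 0$ one has $|\phi|\,\partial_{i}|\phi| = \tfrac{1}{2}\partial_{i}|\phi|^{2} = \phi^{i_{1}\dots i_{k}}D_{i}\phi_{i_{1}\dots i_{k}}$, so $|d|\phi||^{2} = |\phi|^{-2}\,\phi^{i_{1}\dots i_{k}}\phi^{j_{1}\dots j_{k}}(D_{i}\phi_{i_{1}\dots i_{k}})(D^{i}\phi_{j_{1}\dots j_{k}})$. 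The strategy is to estimate this contraction against the full $|D\phi|^{2}$ by a pointwise Cauchy--Schwarz argument, where the constant is determined by how much of $D\phi$ can lie in the direction $\phi\tensor(\,\cdot\,)$ when one of the three components of $D\phi$ is forced to vanish.

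First I would fix a point $p$ and set up the computation in an $h$-orthonormal frame, writing $v_{i,i_{1}\dots i_{k}} = D_{i}\phi_{i_{1}\dots i_{k}}$. The vector $w_{i} \defeq |\phi|^{-1}\phi^{i_{1}\dots i_{k}}v_{i,i_{1}\dots i_{k}}$ satisfies $|d|\phi||^{2} = |w|^{2}$, and $|w|^{2}$ is the squared norm of the projection of $D\phi$ onto the rank-one (in the first slot) subspace spanned by contractions against the unit tensor $|\phi|^{-1}\phi$. The heart of the matter is that the symmetrization map and the contraction against $\phi$ interact so that $w$ receives a \emph{fixed fraction} of its mass from each of the three irreducible pieces of $D\phi$; more precisely, because $\ih$ and the Cartan product govern exactly how $\clie(\phi)$, $\klie(\phi)$ and $\ih(\div\phi)$ re-symmetrize, one can compute the norm of the projection of each piece onto the $w$-direction. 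Concretely, when $\klie(\phi)=0$ and $\div(\phi)=0$ (the first case), the whole of $D\phi$ equals $\clie(\phi)$, which is the totally symmetric trace-free part, and the standard Kato-type bound for a totally symmetric trace-free tensor gives exactly the constant $\tfrac{n-2+k}{n+2(k-1)}$; I would verify this constant by a direct algebraic computation of $|P_{w}(\clie(\phi))|^{2}/|\clie(\phi)|^{2}$ where $P_{w}$ is projection onto the image of the map $\xi\mapsto |\phi|^{-1}\phi^{i_{1}\dots i_{k}}\,(\,\cdot\,)$.

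The three cases then differ only in which component survives. In the third case ($\clie(\phi)=\klie(\phi)=0$, so $D\phi$ is pure trace, $D\phi=\ih(\div\phi)$), I would compute the proportion of $\ih(\div\phi)$ seen by $w$ directly from the explicit formula for $\ih$, yielding the constant $\tfrac{k}{n+2(k-1)}$. In the second case ($\clie(\phi)=\div(\phi)=0$, so $D\phi=\tlie(\phi)$), I would use $|\tlie(\phi)|^{2} = \tfrac{2k}{k+1}|\klie(\phi)|^{2}$ from \eqref{normdom} together with the symmetry type of $\tlie$ to extract the fraction $\tfrac{k}{k+1}$. In each instance the inequality follows from Cauchy--Schwarz applied after projecting onto the $w$-direction, with equality constants matching the stated bounds. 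The main obstacle I anticipate is the bookkeeping in computing the projection ratios cleanly for general rank $k$: the explicit expression for $\ih$ and the re-symmetrization of the mixed-symmetry piece $\tlie(\phi)$ involve several trace terms, and isolating the exact scalar multiple of $\phi\tensor w$ contained in each component requires careful contraction identities rather than any conceptual difficulty. I would organize this by first treating the totally symmetric model case (which pins down the method and the constant $\tfrac{n-2+k}{n+2(k-1)}$), then reusing the same contraction lemmas for the pure-trace and mixed-symmetry pieces, so that the three stated constants emerge as the three projection ratios determined by the decomposition \eqref{normdom}.
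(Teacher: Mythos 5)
Your proposal is correct and follows essentially the same route as the paper's proof: the paper likewise starts from $\tfrac{1}{2}Z^{i}d_{i}|\phi|^{2}=\lb \sbl_{D}(Z)(\phi),D\phi\ra$, uses the decomposition \eqref{domkl}--\eqref{normdom} so that only one component survives in each case, and obtains the three constants as the squared norms of the corresponding symbols $|\sbl_{\clie}(Z)(\phi)|^{2}$, $|\sbl_{\klie}(Z)(\phi)|^{2}$, and the trace part, finishing with Cauchy--Schwarz. Your "projection ratio of each component onto the $w$-direction" is exactly the adjoint formulation of those symbol-norm computations (the only ingredient you will need to extract along the way is the bound $\tfrac{n+2(k-2)}{n-3+k}|i(Z)\phi|^{2}\leq|\phi|^{2}$, which the paper gets from the non-negativity of the $\klie$ symbol norm).
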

\begin{proof}
The inequality \eqref{kato} generalizes the estimate for the second fundamental form of a minimal hypersurface proved in \cite{Schoen-Simon-Yau}. With a $1$ in place of $\tfrac{n+k-2}{n+2(k-1)}$, it follows from the Cauchy-Schwarz inequality, and is known as a \textbf{Kato inequality}. The estimates \eqref{kato}-\eqref{kato3} are \textbf{refined Kato inequalities} in the sense of \cite{Calderbank-Gauduchon-Herzlich} and \cite{Branson-kato}, and can be deduced from the results in either of those papers. In particular the results of Section $6$ of \cite{Calderbank-Gauduchon-Herzlich} include the present lemma, and the discussion at the very end of Section $6$ of \cite{Calderbank-Gauduchon-Herzlich} gives the explicit constants of \eqref{kato}-\eqref{kato3} for the cases $k = 1, 2$. To keep the exposition self-contained, here a direct proof of \eqref{kato}-\eqref{kato3} is given following the general procedure described in the introduction of \cite{Branson-kato}. 

Recall that  $\sbl_{\clie}(Z)(\phi)$ is the symbol of $\clie$ applied to the vector $Z^{i}$ and $\phi \in \Ga(\symkt)$, and similarly for $\klie$ and $\div$. Write $(i(Z)\phi)_{i_{1}\dots i_{k-1}} = Z^{p}\phi_{pi_{1}\dots i_{k-1}}$ and define $|i(Z)\phi|^{2}$, $|\phi|^{2}$, $|\Phi|^{2}$, etc. by contracting indices using $h^{ij}$. Take $Z^{i}$ to be unit norm in what follows. Straightforward computations show
\begin{align}\label{cliesymbolnorm}
\begin{split}
|\sbl_{\clie}(Z)(\phi)|^{2}
& = \tfrac{1}{k+1}|\phi|^{2} + \tfrac{k(n+2(k-2))}{(k+1)(n+2(k-1))}|i(Z)\phi|^{2}.
\end{split}\\
\label{kliesymbolnorm}
\begin{split}
|\sbl_{\klie}(Z)(\phi)|^{2} 
& =  \tfrac{1}{2}\left(|\phi|^{2} -|i(Z)\phi|^{2}\right) + \tfrac{k-1}{2(3-n-k)}|i(Z)\phi|^{2} =  \tfrac{1}{2}|\phi|^{2} + \tfrac{n+2(k-2)}{2(3-n-k)}|i(Z)\phi|^{2}.
\end{split}
\end{align} 
When $k = 1$ and $n = 2$ the coefficient of the pure trace terms in \eqref{kliesymbolnorm} should be understood in a limiting sense. The non-negativity of $|\sbl_{\klie}(Z)(\phi)|^{2}$ in \eqref{kliesymbolnorm} yields
\begin{align}\label{klieinequality}
 \tfrac{n+2(k-2)}{n-3+k}|i(Z)\phi|^{2} \leq |\phi|^{2}.
\end{align}
Together \eqref{cliesymbolnorm} and \eqref{klieinequality} give 
\begin{align}\label{sblclie}
|\sbl_{\clie}(Z)(\phi)|^{2} \leq  \tfrac{1}{k+1}|\phi|^{2} + \tfrac{k(n-3+k)}{(k+1)(n+2(k-1))}|\phi|^{2} = \tfrac{n-2+k}{n+2(k-1)}|\phi|^{2}.
\end{align}
Contracting \eqref{domkl} with $\sbl_{D}(Z)(\om)$ shows
\begin{align}\label{katoineq1}
\begin{split}
&\tfrac{1}{2}Z^{i}d_{i}|\phi|^{2} = Z^{i}\phi^{i_{1}\dots i_{k}}D_{i}\phi_{i_{1}\dots i_{k}} = \lb \sbl_{D}(Z)(\phi), D\phi\ra \\
& = \lb\sbl_{\clie}(Z)(\phi), \clie(\phi)\ra + \tfrac{2k}{k+1} \lb \sbl_{\klie}(Z)(\phi), \klie(\phi)\ra + \tfrac{k(n+2(k-2))}{(n-3+k)(n+2(k-1))}\lb i(Z)\phi, \div(\phi)\ra,\\
& \leq |\sbl_{\clie}(Z)(\phi)||\clie(\phi)| + \tfrac{2k}{k+1}| \sbl_{\klie}(Z)(\phi)|| \klie(\phi)| + \tfrac{k(n+2(k-2))}{(n-3+k)(n+2(k-1))}|i(Z)\phi|| \div(\phi)|,
\end{split}
\end{align}
in which angled brackets denote the inner product given by $h_{ij}$, and the inequality is simply that of Cauchy-Schwarz.

Suppose $\phi \in \Ga(\symkt) \cap \ker \klie\cap \ker \div$. By \eqref{normdom} there holds $|D\phi|^{2} = |\clie(\phi)|^{2}$. Substituting this and \eqref{sblclie} into \eqref{katoineq1} gives
\begin{align*}
|\phi|^{2}|Z^{i}d_{i}|\phi||^{2} = \tfrac{1}{4}|Z^{i}d_{i}|\phi|^{2}|^{2}\leq |\sbl_{\clie}(Z)(\phi)|^{2}|\clie(\phi)|^{2}  = |\sbl_{\clie}(Z)(\phi)|^{2}|D\phi|^{2} \leq   \tfrac{n-2+k}{n+2(k-1)}|\phi|^{2}|D\phi|^{2}.
\end{align*}
This holds for all unit norm $Z^{i}$, so shows \eqref{kato}.

Suppose $\phi \in \Ga(\symkt) \cap \ker \clie\cap \ker \div$. By \eqref{normdom} there holds $|D\phi|^{2} = \tfrac{2k}{k+1}|\klie(\phi)|^{2}$, and by \eqref{kliesymbolnorm} there holds $2|\sbl_{\klie}(Z)(\phi)|^{2} \leq |\phi|^{2}$. In \eqref{katoineq1} these give
\begin{align*}
|\phi|^{2}|Z^{i}d_{i}|\phi||^{2} = \tfrac{1}{4}|Z^{i}d_{i}|\phi|^{2}|^{2}\leq (\tfrac{2k}{k+1})^{2}|\sbl_{\klie}(Z)(\phi)|^{2}|\klie(\phi)|^{2}  = \tfrac{2k}{k+1}|\sbl_{\klie}(Z)(\phi)|^{2}|D\phi|^{2} \leq   \tfrac{k}{n+1}|\phi|^{2}|D\phi|^{2}.
\end{align*}
This holds for all unit norm $Z^{i}$, so shows \eqref{kato2}.

Suppose $\phi \in \Ga(\symkt) \cap \ker \clie\cap \ker \klie$. There holds $|D\phi|^{2} =\tfrac{k(n+2(k-2))}{(n-3+k)(n+2(k-1))}|\div(\phi)|^{2}$ by \eqref{normdom}. With \eqref{klieinequality} in \eqref{katoineq1} this gives
\begin{align*}
|\phi|^{2}|Z^{i}d_{i}|\phi||^{2}& = \tfrac{1}{4}|Z^{i}d_{i}|\phi|^{2}|^{2}     \leq\left(\tfrac{k(n+2(k-2))}{(n-3+k)(n+2(k-1))}\right)^{2}|i(Z)\phi|^{2}| \div(\phi)|^{2} \\
&= \tfrac{k(n+2(k-2))}{(n-3+k)(n+2(k-1))}|i(Z)\phi|^{2}|D\phi|^{2} \leq  \tfrac{k}{n+2(k-1)}|\phi|^{2}|D\phi|^{2}.
\end{align*}
This holds for all unit norm $Z^{i}$, so shows \eqref{kato3}.
\end{proof}

\begin{remark}
It can be proved that when $n = 2$ the inequalities \eqref{kato}, \eqref{kato2}, and \eqref{kato3} are in fact equalities. Since in the two-dimensional case stronger results are available using Riemann-Roch, and attention here is focused on the case $n > 2$, the proof is omitted.
\end{remark}

\begin{remark}\label{katoremark}
The proof of Lemma \ref{katolemma} shows that if $\phi \in \Ga(\symkt)$ then the largest eigenvalue $\mu$ of the symmetric two-tensor $\phi_{ij} \defeq \phi_{ia_{1}\dots a_{k-1}}\phi_{j}\,^{a_{1}\dots a_{k-1}}$ satisfies $\mu \leq \tfrac{n-3+k}{n+2(k-2)}|\phi|^{2}$. By \eqref{klieinequality} for any vector field $X^{i}$ there holds $X^{i}X^{j}\phi_{ij} = |i(X)\phi|^{2} \leq \tfrac{n-3+k}{n+2(k-2)}|\phi|^{2}|X|^{2}$, which suffices to show the claim. This means $\phi_{ij} \leq \tfrac{n-3+k}{n+2(k-2)}|\phi|^{2}h_{ij}$. In particular, for any AH structure there holds $\bt_{ij} \leq \tfrac{n}{n+2}\bt H_{ij}$.
\end{remark}

\subsubsection{}
Let $\om \in \Ga(\symkt)$. Wherever $|\om| > 0$ there holds
\begin{align}\label{lapnormla}
\tfrac{1}{2\la}|\om|^{2(1-\la)}\lap_{h}|\om|^{2\la} = \tfrac{1}{2}\lap_{h}|\om|^{2} + 2(\la - 1)|d|\om||^{2}.
\end{align}
Combining \eqref{lapnormla} with \eqref{katolemma} and the equations \eqref{lapom}, \eqref{lapomdivlie}, and \eqref{lapomliediv} yields
\begin{lemma}\label{swlemma}
Let $h$ be a Riemannian metric on a manifold $M$ of dimension $n > 2$ and let $\om \in \Ga(\symkt)$. Then wherever $\om \neq 0$ there hold
\begin{align}
\label{sharplapom}& |\om|^{(n+2(k-1))/(n-2+k)}\lap_{h}|\om|^{(n-2)/(n-2+k)}\geq \tfrac{n-2}{n-2+k}\qR(\om), && \text{if}\,\, \om \in \ker \klie \cap \ker \div,\\
& |\om|^{(k+1)/k}\lap_{h}|\om|^{(k-1)/k}\geq \tfrac{2(k-1)}{k+1}\lb \om, \kliea\klie(\om)\ra, && \text{if}\,\, \om \in \ker \clie \cap \ker \div,\\
& |\om|^{(n+2(k-1))/k}\lap_{h}|\om|^{(2-n)/k}\leq \tfrac{n-2}{n-2+k}\qR(\om),&& \text{if}\,\, \om \in \ker \klie \cap \ker \clie.
\end{align}
\end{lemma}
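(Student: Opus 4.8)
The plan is to derive each of the three displayed inequalities by combining a refined Kato inequality from Lemma \ref{katolemma} with the pointwise Weitzenb\"ock identity \eqref{lapom} (equivalently one of its contracted forms \eqref{lapomdivlie}, \eqref{lapomliediv}) via the algebraic identity \eqref{lapnormla}. The key structural observation is that on each of the three subspaces named in the hypotheses, two of the three first-order operators $\clie$, $\klie$, $\div$ annihilate $\om$, so the norm identity \eqref{normdom} collapses $|D\om|^{2}$ to a single surviving term, and simultaneously the relevant Weitzenb\"ock identity relates $\tfrac{1}{2}\lap_{h}|\om|^{2}$ to $\qR(\om)$ (or to $\lb\om,\kliea\klie(\om)\ra$) plus $|D\om|^{2}$.

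First I would treat the case $\om \in \ker\klie \cap \ker\div$. Contracting \eqref{lapom3} with $\om^{i_{1}\dots i_{k}}$, or directly using \eqref{lapomliediv}, and dropping the terms involving $\clie\div(\om)$ and $\kliea\klie(\om)$ which vanish by hypothesis, gives $\tfrac{1}{2}\lap_{h}|\om|^{2} = |D\om|^{2} + \qR(\om)$. I then apply \eqref{lapnormla} with the exponent $\la = \tfrac{n-2}{2(n-2+k)}$ chosen so that the coefficient $2(\la-1)$ in front of $|d|\om||^{2}$ exactly matches the deficit in the refined Kato inequality \eqref{kato}, namely $|d|\om||^{2} \leq \tfrac{n-2+k}{n+2(k-1)}|D\om|^{2}$. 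Substituting Kato into \eqref{lapnormla} and using $|D\om|^{2}=|\clie(\om)|^{2}$ from \eqref{normdom}, the $|D\om|^{2}$ terms cancel and one is left with the asserted lower bound on $|\om|^{(n+2(k-1))/(n-2+k)}\lap_{h}|\om|^{(n-2)/(n-2+k)}$ in terms of $\tfrac{n-2}{n-2+k}\qR(\om)$; I would verify the arithmetic of the exponents matches $2\la = (n-2)/(n-2+k)$ and $2(1-\la) = (n+2(k-1))/(n-2+k)$.

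The other two cases follow the same template with the roles of the operators permuted. For $\om \in \ker\clie \cap \ker\div$ I would use the surviving term $|D\om|^{2} = \tfrac{2k}{k+1}|\klie(\om)|^{2}$ from \eqref{normdom}, the Weitzenb\"ock relation $\tfrac{1}{2}\lap_{h}|\om|^{2} = |D\om|^{2} + \lb\om,\kliea\klie(\om)\ra$ obtained by contracting \eqref{lapom} (after deleting the $\clie(\om)$ and $\clie\div(\om)$ contributions), the Kato inequality \eqref{kato2} with the sharp constant $\tfrac{k}{k+1}$, and the exponent $\la = \tfrac{k-1}{2k}$. For $\om \in \ker\clie \cap \ker\klie$, i.e. $D\om$ pure trace, I would use $|D\om|^{2}$ proportional to $|\div(\om)|^{2}$, the identity $\tfrac{1}{2}\lap_{h}|\om|^{2} = |D\om|^{2} + \qR(\om)$ (from \eqref{lapomliediv} with the $\clie\div$ and $\kliea\klie$ terms vanishing), the Kato inequality \eqref{kato3} with constant $\tfrac{k}{n+2(k-1)}$, and the exponent $\la = \tfrac{2-n}{2k}$; here the deficit $2(\la-1)$ is negative, which reverses the inequality and yields the upper bound in the third line.

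The only genuine obstacle I anticipate is bookkeeping rather than conceptual: one must choose in each case the correct power $\la$ so that the coefficient $2(\la-1)$ in \eqref{lapnormla} precisely absorbs the Kato constant and forces the $|d|\om||^{2}$ and $|D\om|^{2}$ contributions to cancel, leaving only the curvature term. Since the refined Kato constants $\tfrac{n-2+k}{n+2(k-1)}$, $\tfrac{k}{k+1}$, and $\tfrac{k}{n+2(k-1)}$ are exactly the sharp ones supplied by Lemma \ref{katolemma}, these cancellations are forced and the resulting exponents reproduce those in the statement; I would simply check each matching and note that the sign of $2(\la-1)$ (negative in the third case because $n>2$) accounts for the reversal of the inequality in the final line. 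No compactness or integration is needed, as these are pointwise inequalities valid wherever $\om\neq 0$.
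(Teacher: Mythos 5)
Your approach is exactly the paper's: the proof given there consists precisely of combining \eqref{lapnormla} with the refined Kato inequalities of Lemma \ref{katolemma} and the Weitzenb\"ock identities \eqref{lapom}, \eqref{lapomdivlie}, \eqref{lapomliediv}, and your choices of exponent $\la$ in each case are the correct ones. Two of your intermediate identities need repair, however, before the bookkeeping closes. In the second case, contracting \eqref{lapom} with $\om$ gives $\tfrac{1}{2}\lap_{h}|\om|^{2} = |D\om|^{2} + \tfrac{2k}{k+1}\lb \om, \kliea\klie(\om)\ra$, not $|D\om|^{2} + \lb \om, \kliea\klie(\om)\ra$; the factor $\tfrac{2k}{k+1}$ is needed to produce the stated constant $\tfrac{2(k-1)}{k+1}$ after multiplying through by $2\la = \tfrac{k-1}{k}$. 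In the third case your identity $\tfrac{1}{2}\lap_{h}|\om|^{2} = |D\om|^{2} + \qR(\om)$ is false: on $\ker\clie\cap\ker\klie$ the term $\tfrac{n+2(k-2)}{n-3+k}\lb\om,\clie\div(\om)\ra$ in \eqref{lapomliediv} does \emph{not} vanish, since $\clie\div$ is the composition $\clie\circ\div$ and $\div(\om)$ need not be zero there. The correct identity comes from \eqref{lapomdivlie}, where $\div\clie(\om)$ and $\kliea\klie(\om)$ do vanish, giving $\tfrac{1}{2}\lap_{h}|\om|^{2} = |D\om|^{2} - \tfrac{k}{n-2+k}\qR(\om)$; with $\la = \tfrac{2-n}{2k}$ the Kato constant $\tfrac{k}{n+2(k-1)}$ again cancels $|D\om|^{2}$, and the reversal of the inequality in the final line is caused by dividing by $2\la<0$ (i.e.\ by $\la$ being negative, which happens only in this case since $n>2$), not by $2(\la-1)$ being negative, which holds in all three cases. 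With these corrections the argument is complete.
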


\begin{remark}
A number of classical results are contained in the estimates leading to Lemma \ref{swlemma}. Here are mentioned a few typical ones. Suppose $h$ is flat and $f \in \cinf(M)$ is harmonic. Then it is easy to check that $\om_{i_{1}\dots i_{k}}\defeq D_{i_{1}}\dots D_{i_{k}}f$ is in $\Ga(\symkt) \cap \ker \klie \cap \ker \div$. By Lemma \ref{swlemma} the function $|\om|^{p}$ is subharmonic for all $p \geq (n-2)/(n-2+k)$. For the flat Euclidean connection on $\rea^{n}$ and $k = 1$ this is Theorem $A$ of \cite{Stein-Weiss-harmonic}, and for $k >1$ it is Theorem $1$ of \cite{Calderon-Zygmund}. In the opposite direction, Theorem $2(b)$ of \cite{Stein-Weiss} shows that on flat Euclidean space the best $p$ for which $|\om|^{p}$ is subharmonic is $(n-2)/(n-2+k)$, and Theorem $2(a)$ of \cite{Stein-Weiss} shows that on flat Euclidean space, given any section $\om$ of $\symkt$ there is around every point a neighborhood $U$ and a harmonic function $f \in \cinf(U)$ such that on $U$ there holds $\om_{i_{1}\dots i_{k}}= D_{i_{1}}\dots D_{i_{k}}f$.

If $f \in \cinf(M)$ is harmonic then $df \in \ker \klie \cap \ker \div$, and Lemma \ref{swlemma} shows that $(n-1) |df|^{n/(n-1)}\lap_{h}|df|^{(n-2)/(n-1)}\geq (n-2)\ric(df, df)$; if $h$ has non-negative Ricci curvature it follows that $|df|^{p}$ is subharmonic for any $p \geq (n-2)/(n-1)$. If there is $\ka \in \reap$ such that $\ric \geq - \ka(n-1)h$, then any harmonic $f$ satisfies $\lap_{h}|df|^{(n-2)/(n-1)}\geq -\ka (n-2)|df|^{(n-2)/(n-1)}$.

As is shown in \cite{Berger-Ebin}, an infinitesimal deformation of an Einstein metric $h$ on a compact manifold is identified with an $\om \in \Ga(S^{2}_{0}(\ctm))\cap \ker \div$ solving the equation $\lap \om = 2\sR(\om) - \tfrac{2\sR}{n}\om$. As is summarized in section $12$.H of \cite{Besse} (the notations there are somewhat different than those here), using this equation in conjunction with the positivity conditions given by integrating \eqref{divlie} and \eqref{lapom3} gives a proof of the criterion of N. Koiso, (Theorem $3.3$ of \cite{Koiso-nondeformability}), for the rigidity of an Einstein metric, in particular showing that an Einstein metric of negative sectional curvature is rigid provided $n \geq 3$.

Suppose $\om_{ij} \in \Ga(S^{2}_{0}(\ctm))\cap \ker \klie \cap \div$. If the sectional curvature is non-negative then $\qR$ is non-negative on $S^{2}_{0}(\ctm)$ so it follows from \eqref{sharplapom} that $|\om|^{(n-2)/n}$ is subharmonic; if $M$ is compact this means $|\om|$ is constant, and by \eqref{lapomliediv} this implies $\om$ is parallel. Moreover, if the sectional curvature is positive at some point, then $\qR$ is positive on $S^{2}_{0}(\ctm)$ at that point and $\om$ must be identically zero. This recovers the well-known
\begin{theorem}[M. Berger - D. Ebin, \cite{Berger-Ebin}]\label{codazzitensortheorem}
On a compact Riemannian manifold with non-negative sectional curvature a tensor $b_{ij} \in \Ga(S^{2}(\ctm))$ such that $D_{[i}b_{j]k} = 0$ and $D_{i}b_{p}\,^{p} = 0$ is parallel; if, moreover, the sectional curvature is somewhere positive, then $b_{ij}$ is a constant multiple of the metric.
\end{theorem}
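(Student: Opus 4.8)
The plan is to reduce the statement to the trace-free setting, where the Weitzenb\"ock machinery of Theorem \ref{bochnerliedivtheorem} applies directly. First I would use the hypothesis $D_{i}b_{p}\,^{p} = 0$ to conclude that on each connected component the scalar $b_{p}\,^{p}$ is constant, so that the pure-trace part $\tfrac{1}{n}(b_{p}\,^{p})h_{ij}$ is $D$-parallel. Setting $\om_{ij} \defeq b_{ij} - \tfrac{1}{n}(b_{p}\,^{p})h_{ij}$, the tensor $\om$ then lies in $\Ga(S^{2}_{0}(\ctm))$, and since $h_{ij}$ is $D$-parallel it inherits the Codazzi condition $D_{[i}\om_{j]k} = 0$ from $b_{ij}$.

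Next I would verify that $\om$ lies in the kernel of both $\klie$ and $\div$. For $k = 2$ the operator $\klie(\om)$ is by definition the completely trace-free part of $D_{[i}\om_{j]k}$, so the Codazzi condition gives $\klie(\om) = 0$ outright. Contracting the Codazzi identity $D_{i}\om_{jk} = D_{j}\om_{ik}$ with $h^{jk}$ gives $\div(\om)_{i} = D^{p}\om_{ip} = D_{i}(\om_{p}\,^{p}) = 0$, since $\om$ is trace-free. Hence $\om \in \Ga(S^{2}_{0}(\ctm)) \cap \ker \klie \cap \ker \div$, exactly the hypothesis required by Theorem \ref{bochnerliedivtheorem}.

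Now I would apply Lemma \ref{qrsignlemma} to translate the sectional curvature hypothesis into a sign condition on the quadratic form $\qR$ on $S^{2}_{0}(\ctm)$: non-negative sectional curvature gives $\qR \geq 0$ on $S^{2}_{0}(\ctm)$ everywhere, and positivity of the sectional curvature at a point $p$ gives $\qR > 0$ on $S^{2}_{0}(\ctm)$ at $p$. Feeding these into Theorem \ref{bochnerliedivtheorem} yields that $\om$ is $D$-parallel in the non-negative case, and that $\om \equiv 0$ when the curvature is additionally positive somewhere. Finally, $b_{ij} = \om_{ij} + \tfrac{1}{n}(b_{p}\,^{p})h_{ij}$ is a sum of two $D$-parallel tensors, hence parallel; and when $\om \equiv 0$ it reduces to the constant multiple $\tfrac{1}{n}(b_{p}\,^{p})h_{ij}$ of the metric, as claimed.

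The argument is essentially a bookkeeping exercise once the trace decomposition is made, so there is no serious analytic obstacle. The one point demanding care is the passage from \emph{sectional curvature positive at a point} to \emph{$\qR$ positive on $S^{2}_{0}(\ctm)$ at that point}: Lemma \ref{qrsignlemma} is phrased globally, whereas the hypothesis here is pointwise, so this pointwise strict positivity should be extracted directly from the identity \eqref{qrsym2}, which expresses $2\qR(\om) = \sum_{\al \neq \be}(\la_{\al} - \la_{\be})^{2}\ka_{\al\be}$ at a point in terms of the eigenvalues $\la_{\al}$ of $\om$ and the sectional curvatures $\ka_{\al\be}$; since a nonzero element of $S^{2}_{0}(\ctm)$ has eigenvalues that are not all equal, all $\ka_{\al\be} > 0$ forces $\qR(\om) > 0$.
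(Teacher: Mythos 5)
Your proof is correct and follows essentially the same route as the paper: reduce to the trace-free part, observe that it lies in $\Ga(S^{2}_{0}(\ctm))\cap\ker\klie\cap\ker\div$, and convert the sectional curvature hypothesis into a (pointwise) sign condition on $\qR$ via \eqref{qrsym2}. The only cosmetic difference is that you conclude via the integrated Bochner identity of Theorem \ref{bochnerliedivtheorem}, whereas the paper's inline derivation uses the pointwise subharmonicity of $|\om|^{(n-2)/n}$ coming from \eqref{sharplapom} together with the maximum principle; both rest on the same Weitzenb\"ock identities.
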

\begin{corollary}
On a compact manifold of dimension $n > 2$ a Riemannian metric $h$ has harmonic curvature if and only if the trace-free part $\mr{\sR}_{ij}$ of its Ricci tensor is contained in $\ker \klie \cap \ker \div$. In this case if $h$ has non-negative sectional curvature which is somewhere positive then it is Einstein. 
\end{corollary}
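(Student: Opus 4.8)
The plan is to reduce the asserted equivalence to the standard characterization of harmonic curvature as the Codazzi condition on the Ricci tensor, and then to read off the Einstein conclusion from the Weitzenb\"ock machinery of Section~\ref{weitzenbocksection}. Recall that $h$ has harmonic curvature precisely when its curvature tensor is divergence-free, $D^{p}\sR_{ijkp} = 0$; tracing the differential Bianchi identity exactly as in Section~\ref{preliminariessection}, now for the Levi-Civita connection $D$, gives $D^{p}\sR_{ijkp} = 2D_{[i}\sR_{j]k}$, so harmonic curvature is equivalent to the Ricci tensor being a Codazzi tensor, $D_{[i}\sR_{j]k} = 0$. Writing $\sR_{ij} = \mr{\sR}_{ij} + \tfrac{1}{n}\sR h_{ij}$ and using $Dh = 0$, this reads $D_{[i}\mr{\sR}_{j]k} + \tfrac{1}{n}(D_{[i}\sR)h_{j]k} = 0$. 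The first task is to match this single tensorial identity with the pair of conditions $\klie(\mr{\sR}) = 0$ and $\div(\mr{\sR}) = 0$, and the structural fact to exploit is that, by its description following~\eqref{kliedefined}, $\klie(\mr{\sR})_{ijk}$ is exactly the completely trace-free part of $D_{[i}\mr{\sR}_{j]k}$, the complementary pure-trace part being governed by $\div(\mr{\sR})$; for $k=2$ the explicit form~\eqref{kliedefined} reads $D_{[i}\mr{\sR}_{j]k} = \klie(\mr{\sR})_{ijk} + \tfrac{1}{n-1}h_{k[i}\div(\mr{\sR})_{j]}$.

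Two consequences of the contracted Bianchi identities drive the equivalence. First, the twice-contracted Bianchi identity $D^{p}\sR_{ip} = \tfrac{1}{2}D_{i}\sR$ together with $\sR_{ip} = \mr{\sR}_{ip} + \tfrac{1}{n}\sR h_{ip}$ yields $\div(\mr{\sR})_{i} = \tfrac{n-2}{2n}D_{i}\sR$, and since $n > 2$ this shows $\div(\mr{\sR}) = 0$ if and only if $\sR$ is constant. Second, tracing the Codazzi condition $D_{[i}\sR_{j]k} = 0$ over $j$ and $k$ gives $\tfrac{1}{4}D_{i}\sR = 0$, so harmonic curvature forces $\sR$ to be constant. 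For the forward direction I would then argue: harmonic curvature gives $D_{[i}\sR_{j]k}=0$, hence $\sR$ is constant and $\div(\mr{\sR}) = 0$; taking the completely trace-free part of the identity $D_{[i}\mr{\sR}_{j]k} + \tfrac{1}{n}(D_{[i}\sR)h_{j]k} = 0$ annihilates the second, pure-trace, summand and identifies the first with $\klie(\mr{\sR})$, giving $\klie(\mr{\sR}) = 0$. For the converse, $\div(\mr{\sR}) = 0$ forces $\sR$ constant, so $D_{[i}\sR_{j]k} = D_{[i}\mr{\sR}_{j]k}$; feeding $\klie(\mr{\sR}) = 0$ and $\div(\mr{\sR})=0$ into the displayed decomposition of $D_{[i}\mr{\sR}_{j]k}$ makes both terms vanish, so $D_{[i}\sR_{j]k} = 0$, i.e. $h$ has harmonic curvature. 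This part is pure trace bookkeeping and contains no real obstacle.

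For the second assertion, with $h$ of harmonic curvature I have $\mr{\sR} \in \Ga(S^{2}_{0}(\ctm)) \cap \ker\klie \cap \ker\div$, so Theorem~\ref{bochnerliedivtheorem} in rank $k = 2$ applies: if $\qR$ is non-negative on $S^{2}_{0}(\ctm)$ then $\mr{\sR}$ is parallel, and if $\qR$ is somewhere strictly positive on $S^{2}_{0}(\ctm)$ then $\mr{\sR} \equiv 0$. The remaining step is to translate the sectional curvature hypotheses into these positivity statements, which is supplied by Lemma~\ref{qrsignlemma}: non-negative sectional curvature gives $\qR \geq 0$ on $S^{2}_{0}(\ctm)$; and at a point where all sectional curvatures are positive, expression~\eqref{qrsym2} gives $2\qR(\om) = \sum_{\al \neq \be}(\la_{\al} - \la_{\be})^{2}\ka_{\al\be} > 0$ for every nonzero $\om \in S^{2}_{0}$, so $\qR$ is strictly positive there. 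Hence $\mr{\sR} \equiv 0$ and $h$ is Einstein. Equivalently, since harmonic curvature makes $\sR_{ij}$ a Codazzi tensor of constant trace, one may invoke Theorem~\ref{codazzitensortheorem} directly to conclude that $\sR_{ij}$ is a constant multiple of $h$. The only point demanding any care is the trace algebra of the first step, namely confirming that the completely trace-free projection kills the $h_{j]k}D_{[i}\sR$ term and that the pure-trace remainder of $D_{[i}\mr{\sR}_{j]k}$ is precisely the $\div(\mr{\sR})$ term of~\eqref{kliedefined}; this is routine, so no genuine difficulty is anticipated.
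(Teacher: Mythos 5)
Your proof is correct and follows essentially the same route as the paper: the two trace identities $\div(\mr{\sR})_{i} = \tfrac{n-2}{2n}D_{i}\sR$ and the expression of $\klie(\mr{\sR})$ in terms of $D_{p}\sR_{ijk}\,^{p}$ (equivalently, via the traced Bianchi identity, the Codazzi condition $D_{[i}\sR_{j]k}=0$) give the stated equivalence for $n>2$, and the Einstein conclusion is the rank-two Bochner vanishing of Theorem \ref{bochnerliedivtheorem} together with Lemma \ref{qrsignlemma}, which is exactly the content of Theorem \ref{codazzitensortheorem} that the paper invokes. The only difference is expository: you route the bookkeeping through the Codazzi-tensor formulation of harmonic curvature, while the paper states the two identities directly, but these are the same computation.
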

\begin{proof}
There hold $\div(\mr{\sR})_{i} = \tfrac{n-2}{2n}D_{i}\sR$ and $2\klie(\mr{\sR})_{ijk} = D_{p}\sR_{ijk}\,^{p} + \tfrac{1}{n-1}h_{k[i}D_{j]}\sR$, from which it follows that if $n > 2$ then $\mr{\sR}_{ij} \in \ker \klie \cap \ker \div$ if and only if $D_{p}\sR_{ijk}\,^{p} = 0$. The claim follows by applying Theorem \ref{codazzitensortheorem} to $\mr{\sR}_{ij}$.
\end{proof}

\end{remark}

\subsection{Alternative formulations of the Riemannian Einstein AH equations}\label{eigensection}
The main result of this section is Theorem \ref{eigentheorem} which shows that the Einstein AH equations for a Riemannian AH structure with self-conjugate curvature can be formulated entirely in terms of equations for the Ricci curvature of a Gauduchon metric and partial differential equations on the cubic torsion. Conversely, given a metric and a tensor satisfying these equations, there results an Einstein AH structure. It is anticipated that the systematic construction of Einstein AH equations can be achieved by solving the equations of Theorem \ref{eigentheorem}, or by studying flows associated to these equations (not discussed here).

\subsubsection{}\label{estimatenotationsection}
Let $(\en, [h])$ be a Riemannian AH structure on a manifold of dimension at least $3$. Let $h \in [h]$ and let $\ga_{i}$ be the corresponding one-form and $D$ the Levi-Civita connection. In this section indices will be raised and lowered using $h^{ij}$ and $h_{ij}$. Define $L_{ijk} = \bt_{ij}\,^{p}h_{pk}$ and $L_{ij} = L_{ip}\,^{q}L_{jq}\,^{p} = \bt_{ij}$. The unweighted analogues of the tensors $\tbt_{ijkl}$, $\bt_{ijkl}$, $\bt_{ij}$, and $\bt$ are $C_{ijkl} \defeq \tbt_{ijk}\,^{p}h_{pl}$, $L_{ijkl} \defeq 2L_{k[i}\,^{p}L_{j]lp}$, $L_{ij} \defeq  L_{pij}\,^{p} = L_{ip}\,^{q}L_{jq}\,^{p} = \bt_{ij}$, and $L \defeq L_{p}\,^{p} = h^{ij}L_{ij} = |L|_{h}^{2} = L^{ijk}L_{ijk} = |\det h|^{-1/n}\bt$. Although an expression such as $|L|_{h}^{2}$ is ambiguous, here it will be used always to mean $L^{ijk}L_{ijk}$. Sometimes, for readability, the subscript $h$ will be omitted, in particular when $|L|_{h}$ is taken to some power other than $2$. Write $R^{\flat}_{ijkl} = R_{ijk}\,^{p}h_{pl}$ to indicate the unweighted analogue of $R_{ijkl}$ defined by lowering the last index using $h_{ij}$ rather than $H_{ij}$, and do likewise for other tensors, e.g. $A_{ijkl}^{\flat}$. An unweighted tensor such as $R_{ijk}\,^{l}$ or $R_{ij}$ is indicated in the same way as before.

\subsubsection{}
For any Riemannian signature AH structure $(\en, [h])$ and any $h \in [h]$ there result from \eqref{ddivbt} and \eqref{uplusv},
\begin{align}\label{cl1}
D_{[i}L_{j]kl} &= -E_{ijkl}^{\flat} + 2h_{l[i}E_{j]k} + 2h_{k[i}E_{j]l} + h_{l[i}L_{j]k}\,^{p}\ga_{p} + h_{k[i}L_{j]l}\,^{p}\ga_{p},\\
\label{divbt}\div(L)_{ij} & = D_{p}L_{ij}\,^{p} = 2nE_{ij} + n\ga_{p}L_{ij}\,^{p},\\
\label{kliebt}\klie(L)_{ijkl} &= -E_{ijkl}.
\end{align}
Expanding $L^{abc}E^{\flat}_{iabc}$ using \eqref{cl1}, observing $L^{abc}D_{i}L_{abc} - L^{abc}D_{a}L_{bci} = \tfrac{1}{2}D_{i}|L|^{2}_{h} - D^{a}L_{ia} + L_{bci}D_{a}L^{abc}$, using \eqref{ddivbt} to simplify the last expression, and using \eqref{ebtw} gives
\begin{align}\label{conservativediv}
2(2-n)A_{i}^{\flat} = \tfrac{1}{2}D^{p}\left(L_{ip} - \tfrac{1}{2}|L|^{2}h_{ip}\right) + (2-n)L_{i}\,^{pq}E_{pq} + \tfrac{2-n}{2}L_{i}\,^{p}\ga_{p}.
\end{align}
In particular, it follows from \eqref{conservativediv} that an exact naive Einstein AH structure is conservative (and hence Einstein) if and only if $L_{ij} - \tfrac{1}{2}|L|^{2}h_{ij}$ is $D$-divergence free. Also, by Theorem \ref{todtheorem} if $h$ is a Gauduchon metric for an Einstein AH structure on a compact manifold then $L_{ij} - \tfrac{1}{2}|L|^{2}h_{ij}$ is $D$-divergence free.
\begin{lemma}
Let $(\en, [h])$ be a Riemannian signature AH structure on an $n$-manifold $M$. There holds $E_{ijkl} = 0$ if and only if for any $h \in [h]$ there holds $L_{ijk} \in \Ga(S^{3}_{0}(\ctm))\cap \ker \klie$. If $M$ is compact then $E_{ijkl} = 0$ if and only if $\kliea\klie(L)_{ijk} = -D^{p}E_{p(ijk)} = 0$. If $(\en, [h])$ has self-conjugate curvature and either 
\begin{enumerate}
\item $(\en, [h])$ is Einstein, $M$ is compact, and $h \in [h]$ is a Gauduchon metric with associated Faraday form $\ga_{i}$; 
\item or $(\en, [h])$ is exact ($M$ is not necessarily compact and $(\en, [h])$ is not necessarily Einstein) with distinguished metric $h \in [h]$, 
\end{enumerate}
then $L_{ijk} \in  \Ga(S^{3}_{0}(\ctm))\cap \ker \klie \cap \ker \div$.
\end{lemma}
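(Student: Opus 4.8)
The plan is to deduce every assertion from the identities \eqref{cl1}, \eqref{divbt} and \eqref{kliebt} recorded above, together with Theorem \ref{todtheorem}; essentially no new computation is needed. First I would dispose of the membership $L_{ijk} \in \Ga(S^{3}_{0}(\ctm))$: the cubic torsion satisfies $\bt_{ijk} = \bt_{(ijk)}$ and $\bt_{ip}\,^{p} = 0$, so $L_{ijk} = \bt_{ij}\,^{p}h_{pk}$ is completely symmetric and completely $h$-trace-free for \emph{every} representative $h \in [h]$, with no hypotheses whatsoever. Thus all the content of the lemma lies in the kernel conditions.

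For the first equivalence I would take the completely trace-free part of \eqref{cl1}. Every term on its right-hand side except $-E^{\flat}_{ijkl}$ carries an explicit factor of $h_{ij}$ (the terms $h_{l[i}E_{j]k}$, $h_{k[i}E_{j]l}$, $h_{l[i}L_{j]k}\,^{p}\ga_{p}$, $h_{k[i}L_{j]l}\,^{p}\ga_{p}$), hence is annihilated by $\tf$; since $E^{\flat}_{ijkl}$ is already completely trace-free, $\klie(L)_{ijkl}$ (by definition the trace-free part of $D_{[i}L_{j]kl}$) equals $-E^{\flat}_{ijkl}$, which is exactly \eqref{kliebt}. As $E^{\flat}_{ijkl}$ is a nowhere-vanishing positive multiple of the Codazzi projectively invariant $E_{ijkl}$, the conditions $\klie(L) = 0$ and $E_{ijkl} = 0$ are equivalent, and this holds for each $h$. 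On a compact manifold I would then use $\kliea\klie(L)_{ijk} = D^{p}\klie(L)_{p(ijk)} = -D^{p}E^{\flat}_{p(ijk)}$ together with the formal adjointness of $\klie$ and $\kliea$: integrating gives $\int_{M}|\klie(L)|^{2} = -\int_{M}\langle \kliea\klie(L), L\rangle$, so $\kliea\klie(L) = 0$ forces $\klie(L) = 0$, and conversely; this yields the stated equivalence with $-D^{p}E_{p(ijk)} = 0$.

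For the final statement I would first record that self-conjugate curvature means $\bar{R}_{ijkl} = R_{ijkl}$, which by \eqref{conjugateblaschkecurvature2} (at $t = 1$) forces $\uf_{ijkl} = 0$, and hence, via the decomposition \eqref{aijkldefined}, both $E_{ijkl} = 0$ and its trace $E_{ij} = 0$. The first vanishing already gives $L \in \ker \klie$ by the equivalence above, so only $\div(L) = 0$ remains, and for this I would invoke \eqref{divbt}, namely $\div(L)_{ij} = 2nE_{ij} + n\ga_{p}L_{ij}\,^{p}$. In case (1) Theorem \ref{todtheorem} applies directly --- the structure is Einstein, $M$ is compact, and $h$ is the Gauduchon gauge --- giving $n\ga_{p}\bt_{ij}\,^{p} = D_{p}\bt_{ij}\,^{p} = 0$, so both terms vanish. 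In case (2) the distinguished metric satisfies $\nabla|\det h| = 0$, whence $2n\ga_{i} = h^{pq}\nabla_{i}h_{pq} = |\det h|^{-1}\nabla_{i}|\det h| = 0$, killing the second term, while $E_{ij} = 0$ by self-conjugacy kills the first; again $\div(L) = 0$, and here compactness is not needed.

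The only genuine input beyond bookkeeping is Theorem \ref{todtheorem}, which is already established; the main thing to be careful about is the weighted-versus-unweighted distinction between $E_{ijkl}$ and $E^{\flat}_{ijkl}$ (they differ by the positive density $|\det h|^{-1/n}$), but since every assertion concerns vanishing this causes no difficulty.
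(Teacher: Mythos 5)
Your proof is correct and follows essentially the same route as the paper's: the first equivalence is read off from \eqref{kliebt}, the compact case is an integration by parts on $M$, and the final claim combines \eqref{divbt}, \eqref{kliebt}, self-conjugacy, and Theorem \ref{todtheorem} (resp.\ exactness of the structure) exactly as the paper does. The only cosmetic difference is that in the compact case you invoke the formal adjointness $\int_{M}|\klie(L)|_{h}^{2} = -\int_{M}\lb L, \kliea\klie(L)\ra$ directly, whereas the paper routes the same integration by parts through its explicit identity \eqref{lbt1} involving $D^{p}A_{p}^{\flat}$ and $|E|_{h}^{2}$; both yield the same conclusion.
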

\begin{proof}
The first claim is immediate from \eqref{kliebt}. From \eqref{ebtw}, \eqref{cl1}, and \eqref{kliebt} there follows
\begin{align}\label{lbt1}
-L^{ijk}\kliea\klie(L)_{ijk} = L^{ijk}D^{p}E^{\flat}_{pijk} = D^{p}(L^{ijk}E^{\flat}_{pijk}) - E^{\flat}_{pijk}D^{p}L_{ijk} = 2(2-n)D^{p}A_{p}^{\flat} + |E|_{h}^{2}.
\end{align}
If $M$ is compact, integrating \eqref{lbt1} shows that $E_{ijkl} = 0$ if and only if $\kliea\klie(L)_{ijk} = -D^{p}E^{\flat}_{p(ijk)} = 0$. 
The final claim (treating self-conjugate curvature) follows from \eqref{divbt} and \eqref{kliebt}, in conjunction with Theorem \ref{todtheorem} in case $(1)$.
\end{proof}

\subsubsection{}
Let $h$ be a Riemannian metric. From \eqref{kliedefined} it follows that for any $\om \in \Ga(\symkt)$ there holds
\begin{align}\label{omklieom}
\begin{split}
\om^{a_{1}\dots a_{k}}\klie(\om)_{ia_{1}\dots a_{k}} &= \om^{a_{1}\dots a_{k}}D_{[i}\om_{a_{1}]a_{2}\dots a_{k}} + \tfrac{1-k}{2(n-3+k)}\om_{i}\,^{a_{1}\dots a_{k-1}}\div(\om)_{a_{1}\dots a_{k-1}}\\
& = \tfrac{1}{4}D_{i}|\om|^{2} - \tfrac{1}{2}\omega^{a_{1}\dots a_{k}}D_{a_{1}}\om_{a_{2}\dots a_{k}i} + \tfrac{1-k}{2(n-3+k)}\om_{i}\,^{a_{1}\dots a_{k-1}}\div(\om)_{a_{1}\dots a_{k-1}}.
\end{split}
\end{align}
Differentiating \eqref{divsi} shows that for the associated tensor $\si_{ij} \defeq \om_{ia_{1}\dots a_{k-1}}\om_{j}\,^{a_{1}\dots a_{k-1}}$ there hold
\begin{align}\label{divsi}
\begin{split}
\div(\si)_{i} &= \tfrac{n-2}{n-3+k}\div(\om)^{a_{1}\dots a_{k-1}}\om_{ia_{1}\dots a_{k-1}} - 2\om^{a_{1}\dots a_{k}}\klie(\om)_{ia_{1}\dots a_{k}} + \tfrac{1}{2}D_{i}|\om|^{2}.
\end{split}
\end{align}
In particular, if $\om \in \Ga(\symkt)\cap \ker \klie \cap \ker \div$ then $\si_{ij} - \tfrac{1}{2}|\om|_{h}^{2}$ is divergence free.

Let $\culap$ be the formally self-adjoint second order elliptic differential operator on $\Ga(S^{k}_{0}(\ctm))$ defined by $\culap\om = \lap_{h}\om - \sR(\om)$. 

\begin{lemma}\label{tracefreeconstantlemma}
Let $M$ be a manifold of dimension at least $3$ and let $h$ be a Riemannian metric. Let $\om \in\Ga(S^{k}_{0}(\ctm))$ and write $\si_{ij} = \om_{ia_{1}\dots a_{k-1}}\om_{j}\,^{a_{1}\dots a_{k-1}}$ and let $X^{i}$ be an $h$-Killing field and write $\ga_{i} = X^{p}h_{ip}$ for the dual one-form. If $\om \in \Ga(\symkt)  \cap \ker \div$ satisfies $\om^{a_{1}\dots a_{k}}\klie(\om)_{ia_{1}\dots a_{k}} = 0$, then the equations 
\begin{align}\label{stressenergy2}
\sR_{ij} - \tfrac{1}{2}\sR_{h}h_{ij} + \tfrac{n-2}{2n}\ka h_{ij} = \tfrac{1}{4}\left(\si_{ij} -  \tfrac{1}{2}|\om|_{h}^{2}h_{ij}\right) + (2-n)\left(\ga_{i}\ga_{j} + \tfrac{1}{2}|\ga|_{h}^{2}h_{ij}\right),
\end{align}
for $\om$, $X$, and $h$ are consistent and $\ka =  \sR_{h} - \tfrac{1}{4}|\om|_{h}^{2} - (n+2)|\ga|_{h}^{2}$ is a constant. In particular, these conclusions hold if $M$ is compact and $\om \in \Ga(\symkt)\cap \ker\culap$. In particular, if $M$ is compact and $\om \in \Ga(\symkt)\cap \ker \culap$, the equations \eqref{stressenergy2} hold for some constant $\ka$ if and only if there holds 
\begin{align}\label{tracefreericcondition}
\mr{R}_{ij} -\tfrac{1}{4} \mr{\si}_{ij} + (n-2)\mr{\ga \tensor \ga}_{ij} =0.
\end{align}
\end{lemma}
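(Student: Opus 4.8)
The plan is to exploit the splitting of the symmetric two-tensor equation \eqref{stressenergy2} into its pure-trace and completely trace-free parts, and then to run a divergence (conservation) argument against the contracted second Bianchi identity.

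First I would observe that every term of \eqref{stressenergy2} proportional to $h_{ij}$ is pure trace, so that the completely trace-free part of \eqref{stressenergy2} is exactly $\mr{\sR}_{ij} - \tfrac{1}{4}\mr{\si}_{ij} + (n-2)\mr{\ga\tensor\ga}_{ij} = 0$, which is \eqref{tracefreericcondition} (in that equation $\mr{R}_{ij}$ is to be read as $\mr{\sR}_{ij}$, the trace-free part of the Ricci tensor of $h$, which is what appears on the left of \eqref{stressenergy2}). Contracting \eqref{stressenergy2} with $h^{ij}$, using $h^{ij}\si_{ij} = |\om|_{h}^{2}$ and $h^{ij}\ga_{i}\ga_{j} = |\ga|_{h}^{2}$, collapses the left side to $\tfrac{n-2}{2}(\ka - \sR_{h})$ and the right side to $\tfrac{2-n}{8}|\om|_{h}^{2} + \tfrac{(2-n)(n+2)}{2}|\ga|_{h}^{2}$; since $n\geq 3$ I may divide by $\tfrac{n-2}{2}$ to obtain precisely $\ka = \sR_{h} - \tfrac{1}{4}|\om|_{h}^{2} - (n+2)|\ga|_{h}^{2}$. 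This shows the trace part of \eqref{stressenergy2} is an identity once $\ka$ is given by that formula, so that, as a tensor equation with this (a priori non-constant) $\ka$, equation \eqref{stressenergy2} is equivalent to \eqref{tracefreericcondition}. This is the asserted consistency.

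Next I would show that the right-hand side of \eqref{stressenergy2} is divergence free. For the cubic-type term I would invoke \eqref{divsi}, which expresses $\div(\si)_{i}$ as a sum of a term in $\div(\om)$, a term in $\om^{a_{1}\dots a_{k}}\klie(\om)_{ia_{1}\dots a_{k}}$, and $\tfrac{1}{2}D_{i}|\om|_{h}^{2}$; the two hypotheses $\om\in\ker\div$ and $\om^{a_{1}\dots a_{k}}\klie(\om)_{ia_{1}\dots a_{k}} = 0$ annihilate the first two, leaving $\div(\si)_{i} = \tfrac{1}{2}D_{i}|\om|_{h}^{2}$, whence $D^{p}(\si_{pj} - \tfrac{1}{2}|\om|_{h}^{2}h_{pj}) = 0$. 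For the $\ga$-term I would use that $X$ is $h$-Killing, so $D_{(i}\ga_{j)} = 0$ and $D^{p}\ga_{p} = 0$; then $D^{p}(\ga_{p}\ga_{j} + \tfrac{1}{2}|\ga|_{h}^{2}h_{pj}) = \ga^{p}(D_{p}\ga_{j} + D_{j}\ga_{p}) = 2\ga^{p}D_{(p}\ga_{j)} = 0$. Hence the whole right-hand side is divergence free. On the left, the Einstein tensor $\sR_{ij} - \tfrac{1}{2}\sR_{h}h_{ij}$ is divergence free by the contracted second Bianchi identity, so $D^{p}(\mathrm{LHS})_{j} = \tfrac{n-2}{2n}D_{j}\ka$. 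Equating the two divergences, which is legitimate precisely when \eqref{stressenergy2} holds (equivalently when \eqref{tracefreericcondition} holds), yields $\tfrac{n-2}{2n}D_{j}\ka = 0$, and since $n\geq 3$ this forces $\ka$ to be constant.

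Finally I would dispatch the specializations. If $M$ is compact and $\om\in\Ga(\symkt)\cap\ker\culap$, then, as $\culap = \culap_{-1}$ in the notation of Lemma \ref{culaplemma}, part $(1)$ of that lemma identifies $\ker\culap\cap\Ga(\symkt)$ with $\ker\klie\cap\ker\div\cap\Ga(\symkt)$; in particular $\div(\om) = 0$ and $\om^{a_{1}\dots a_{k}}\klie(\om)_{ia_{1}\dots a_{k}} = 0$, so the running hypotheses are in force and the preceding applies verbatim. The concluding equivalence then reads off: by the first step, \eqref{stressenergy2} with the displayed $\ka$ is equivalent to \eqref{tracefreericcondition}, and by the divergence step that $\ka$ is automatically constant; conversely, if \eqref{stressenergy2} holds for any constant $\ka$, its trace-free part is \eqref{tracefreericcondition} and its trace pins $\ka$ to the displayed value. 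The main obstacle is the divergence-freeness of the right-hand side: the content is recognizing that the identity \eqref{divsi} is engineered so that exactly the two hypotheses on $\om$ render $\si_{ij} - \tfrac{1}{2}|\om|_{h}^{2}h_{ij}$ conserved, matching the Bianchi identity satisfied by the Einstein tensor; everything else is routine tracing and bookkeeping.
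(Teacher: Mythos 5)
Your proof is correct and follows essentially the same route as the paper's: divergence-freeness of the right-hand side via \eqref{divsi} and the Killing property, the contracted Bianchi identity on the left forcing $\ka$ constant, the trace identifying $\ka$, and Lemma \ref{culaplemma} for the compact specialization. The only difference is that you spell out the trace and divergence computations that the paper leaves implicit.
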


\begin{proof}
From \eqref{divsi} it follows that $\si_{ij} - \tfrac{1}{2}|\om|_{h}^{2}$ is divergence free. Since $X$ is Killing, also $\ga_{i}\ga_{j} + \tfrac{1}{2}|\ga|_{h}^{2}h_{ij}$ is divergence free. This forces $\ka$ to be constant, and the explicit form for $\ka$ follows by taking traces. If $M$ is compact, then Lemma \ref{culaplemma} shows that $\ker \culap \cap \Ga(S^{k}_{0}(\ctm)) = \ker \klie \cap \ker \div \cap \Ga(S^{k}_{0}(\ctm))$, and the remaining claims followfrom the preceeding.
\end{proof}

The equations \eqref{stressenergy2} resemble formally (the metric has the wrong signature) the gravitational field equations with a possibly non-zero cosmological constant and stress energy tensor equal to the righthand side of \eqref{stressenergy2}. Observe also that for the equations \eqref{stressenergy2} to make sense it need not be the case that $h$ be Riemannian; the operators $\klie$ and $\div$ are defined in any signature and the equation \eqref{divsi} is always valid, which is all that is needed to show the consistency of \eqref{stressenergy2} for $\om \in \Ga(\symkt)  \cap \ker \div$ satisfying $\om^{a_{1}\dots a_{k}}\klie(\om)_{ia_{1}\dots a_{k}} = 0$.

\begin{corollary}\label{stressenergycorollary}
Let $(\en, [h])$ be a Riemannian signature Einstein AH structure on a manifold $M$ of dimension $n \geq 3$. If either $M$ is compact and $h \in [h]$ is a Gauduchon metric with associated Faraday form $\ga_{i}$, or $(\en, [h])$ is exact (and $M$ is not necessarily compact) with distinguished metric $h \in [h]$ then $L_{ij} - \tfrac{1}{2}|L|_{h}^{2}$ is divergence free and the Ricci curvature $\sR_{ij}$ of $h$ satisfies
\begin{align}\label{stressenergy}
\sR_{ij} - \tfrac{1}{2}\sR_{h}h_{ij} + \tfrac{n-2}{2n}\ka h_{ij} = \tfrac{1}{4}\left(L_{ij} - \tfrac{1}{2}|L|_{h}^{2}h_{ij}\right) + (2-n)\left(\ga_{i}\ga_{j} + \tfrac{1}{2}|\ga|_{h}^{2}h_{ij}\right),
\end{align}
with $\ka = \sR_{h} - \tfrac{1}{4}|L|_{h}^{2} - (n+2)|\ga|_{h}^{2} = \uR_{h} + n(n-4)|\ga|_{h}^{2}$ a constant. 
\end{corollary}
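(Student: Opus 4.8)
The plan is to derive Corollary \ref{stressenergycorollary} as a direct specialization of Lemma \ref{tracefreeconstantlemma} with $k=3$ and $\om = L$, so the work consists in verifying that the hypotheses of that lemma are met in each of the two cases. First I would observe that for a Riemannian signature AH structure the cubic torsion gives $L_{ijk}\in\Ga(S^{3}_{0}(\ctm))$ by construction, and that the tensor $\si_{ij}$ of the lemma specializes to $\si_{ij}=L_{ia_{1}a_{2}}L_{j}{}^{a_{1}a_{2}}=L_{ij}$ in the notation of section \ref{estimatenotationsection}. Thus it suffices to produce, in each case, an $h$-Killing field $X^{i}$ with dual one-form $\ga_{i}=X^{p}h_{ip}$ such that $L\in\ker\div$ and $L^{a_{1}a_{2}a_{3}}\klie(L)_{ia_{1}a_{2}a_{3}}=0$; equation \eqref{stressenergy} is then exactly \eqref{stressenergy2} and \eqref{stressenergy} follows, with the constancy of $\ka$ and its explicit value coming from the lemma together with \eqref{confscal}.

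The two cases are then handled by invoking the appropriate vanishing of the Faraday primitive and the divergence. In case $(1)$, $M$ is compact and $h$ is a Gauduchon metric, so Theorem \ref{todtheorem} gives that $\ga^{\sharp}=h^{ip}\ga_{p}$ is $h$-Killing and that $D_{p}\bt_{ij}{}^{p}=n\ga_{p}\bt_{ij}{}^{p}=0$; in lowered-index form this reads $\div(L)_{ij}=0$ and $\ga_{p}L_{ij}{}^{p}=0$. Because $(\en,[h])$ is Einstein it is in particular naive Einstein, so $E_{ij}=0$ and $E_{ijkl}=0$ (self-conjugacy is not needed here, only $E_{ij}=0$ for the divergence and the conservation identity for $\klie$), whence by \eqref{kliebt} one has $\klie(L)_{ijkl}=-E_{ijkl}=0$, so a fortiori $L^{a_{1}a_{2}a_{3}}\klie(L)_{ia_{1}a_{2}a_{3}}=0$ and $L\in\ker\div$. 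In case $(2)$, $(\en,[h])$ is exact with distinguished metric $h$, so $\ga_{i}\equiv0$ and the trivial field $X^{i}=0$ is Killing; here I would note that for an exact AH structure one still has the required vanishing: being exact the structure is closed, and by \eqref{divbt} with $\ga_{i}=0$ one gets $\div(L)_{ij}=2nE_{ij}$, which vanishes since the Einstein hypothesis forces $E_{ij}=0$, while \eqref{kliebt} again gives $\klie(L)_{ijkl}=-E_{ijkl}=0$. Thus in both cases the hypotheses of Lemma \ref{tracefreeconstantlemma} hold and $L_{ij}-\tfrac12|L|_{h}^{2}h_{ij}$ is divergence free, as asserted.

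Finally I would reconcile the two expressions for $\ka$. Lemma \ref{tracefreeconstantlemma} yields $\ka=\sR_{h}-\tfrac14|L|_{h}^{2}-(n+2)|\ga|_{h}^{2}$, and the alternative form $\ka=\uR_{h}+n(n-4)|\ga|_{h}^{2}$ comes from substituting the expression for $\sR_{h}$ furnished by \eqref{confscal}, namely $\sR_{h}=\uR_{h}+\tfrac14|L|_{h}^{2}+2(n-1)\dad_{h}\ga+(n-1)(n-2)|\ga|_{h}^{2}$, and using that $h$ is Gauduchon (so $\dad_{h}\ga=0$) together with the elementary identity $(n-1)(n-2)-(n+2)=n(n-4)$; in case $(2)$ the term $|\ga|_{h}^{2}$ vanishes and both expressions reduce to $\uR_{h}=\sR_{h}$. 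I do not anticipate a serious obstacle: the only point requiring care is the bookkeeping between the density-weighted quantities $\bt_{ij},\bt$ and their unweighted counterparts $L_{ij},|L|_{h}^{2}$ fixed in section \ref{estimatenotationsection}, and the verification that the sign and trace conventions in \eqref{stressenergy} match those in \eqref{stressenergy2} after setting $k=3$. The genuinely substantive inputs, namely Theorem \ref{todtheorem} and the consistency statement of Lemma \ref{tracefreeconstantlemma}, are already available, so the corollary reduces to assembling them.
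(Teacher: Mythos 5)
Your overall strategy is the paper's: reduce to Lemma \ref{tracefreeconstantlemma} with $k=3$, $\om=L$, supply $\div(L)=0$ and the Killing property of $\ga^{\sharp}$ from Theorem \ref{todtheorem} (resp.\ from $\ga\equiv 0$ in the exact case), and read off the two expressions for $\ka$ from \eqref{confscal}. However, there is one genuine error in the way you verify the hypothesis $L^{a_{1}a_{2}a_{3}}\klie(L)_{ia_{1}a_{2}a_{3}}=0$. You assert, in both cases, that the Einstein (indeed, naive Einstein) condition gives $E_{ijkl}=0$, and then conclude $\klie(L)_{ijkl}=-E_{ijkl}=0$. This is false: the naive Einstein equations only kill the rank-two traces ($\mr{R}_{ij}=0$, $\mr{Q}_{ij}=0$, equivalently $\mr{T}_{ij}=0$ and $E_{ij}=0$); the vanishing of the full tensor $E_{ijkl}$ is the self-conjugacy of the curvature, which the paper is at pains to keep separate from the Einstein condition precisely because it can fail — the exact strongly Einstein structures on $S^{3}$ of section \ref{s3example} have $E_{ijkl}\neq 0$. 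Followed literally, your argument proves the corollary only for Einstein AH structures with self-conjugate curvature, which is a strictly weaker statement.

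The fix is the one the paper uses, and you gesture at it parenthetically without actually invoking it: what Lemma \ref{tracefreeconstantlemma} requires is not $\klie(L)=0$ but only the contraction $L^{abc}\klie(L)_{iabc}=0$, and by \eqref{kliebt} this equals $-L^{abc}E_{iabc}$, which by \eqref{ebtw} is $-2(2-n)A_{i}$ (up to the weight bookkeeping of section \ref{estimatenotationsection}). Since for $n>2$ an Einstein AH structure is conservative, i.e.\ $A_{i}=0$ (Lemma \ref{einsteinconservativelemma}), the contraction vanishes even though $\klie(L)$ itself generally does not. With that substitution your argument closes; the remaining steps — $\div(L)=0$ via \eqref{divbt} together with Theorem \ref{todtheorem} (or with $E_{ij}=0$ and $\ga=0$ in the exact case), the verification of the trace-free Ricci identity \eqref{tracefreericcondition} from \eqref{confric}, and the reconciliation of the two formulas for $\ka$ — are all as in the paper.
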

\begin{proof}
By \eqref{ebtw} and \eqref{kliebt}, that $(\en, [h])$ be Einstein implies $L^{abc}\klie(L)_{iabc} =0$. By \eqref{divbt} and Theorem \ref{todtheorem}, $\div(L) = 0$. The claim follows from Lemma \ref{tracefreeconstantlemma}.
\end{proof}

\begin{theorem}\label{eigentheorem}
On a manifold of dimension $n \geq 3$, let $(\en, [h])$ be a Riemannian signature Einstein AH structure with self-conjugate curvature and cubic torsion $\bt_{ij}\,^{k}$. If either $M$ is compact and $h \in [h]$ is a Gauduchon metric with Faraday primitive $\ga_{i}$, or $(\en, [h])$ is exact with distinguished metric $h \in [h]$, and in either case $L_{ijk} \defeq \bt_{ij}\,^{p}h_{pk}$, then $h$, $\ga$, and $L$ solve the equations
\begin{align}
\label{lapeinstein1}&\div\clie(L) = \lap_{h}L = \sR(L),&\\
\label{lapeinstein3}
\begin{split}&\div(L) = 0,\\
&\klie(L) = 0,
\end{split}\\
\label{dicheq}
\begin{split}
&\ga_{p}L_{ij}\,^{p} = 0,\\
&D_{(i}\ga_{j)} = 0,
\end{split}\\
\label{stressenergy3}&\sR_{ij} - \tfrac{1}{2}\sR_{h}h_{ij} + \tfrac{n-2}{2n}\ka h_{ij} = \tfrac{1}{4}\left(L_{ij} - \tfrac{1}{2}|L|_{h}^{2}h_{ij}\right) + (2-n)\left(\ga_{i}\ga_{j} + \tfrac{1}{2}|\ga|_{h}^{2}h_{ij}\right),
\end{align}
in which $\ka = \sR_{h} - \tfrac{1}{4}|L|_{h}^{2} - (n+2)|\ga|_{h}^{2} = \uR_{h} + n(n-4)|\ga|_{h}^{2}$ is constant. If $(\en, [h])$ is exact then \eqref{stressenergy3} can be replaced by
\begin{align}
\label{lapeinstein2}&\sR_{ij} = \tfrac{1}{4}L_{ij} + \tfrac{\ka}{n} h_{ij},
\end{align}
while \eqref{dicheq} is vacuous. 

Given a Riemannian metric $h$ with Levi-Civita connection $D$, $L \in \Ga(\S^{3}_{0}(\ctm))$ solving \eqref{lapeinstein3}, and an $h$-Killing vector field $\ga^{i}$ such that $\ga_{i} = \ga^{p}h_{ip}$ solves \eqref{dicheq}, which all together solve \eqref{stressenergy3}, the connection $\nabla = D - \tfrac{1}{2}h^{kp}L_{ijp} - 2\ga_{(i}\delta_{j)}\,^{k} + h_{ij}\ga^{k}$ is the aligned representative of an Einstein AH structure with self-conjugate curvature for which $h$ is a Gauduchon metric with Faraday primitive $\ga_{i}$. If $M$ is compact then the same conclusion obtains provided $h$, $L$, and $\ga$ together solve \eqref{dicheq}, \eqref{tracefreericcondition}, and the second equation of \eqref{lapeinstein1}.
\end{theorem}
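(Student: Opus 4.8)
The plan is to prove Theorem~\ref{eigentheorem} in two directions, treating the forward direction (from an Einstein AH structure to the system \eqref{lapeinstein1}--\eqref{stressenergy3}) and the converse (reconstructing the Einstein AH structure) separately, and within each direction distinguishing the compact/Gauduchon case from the exact case.

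For the forward direction, I would first record that $(\en,[h])$ being Einstein and having self-conjugate curvature forces $E_{ijkl}=0$ and $E_{ij}=0$; indeed, self-conjugacy gives $E_{ijkl}=0$ directly, and then Lemma~\ref{einsteinconservativelemma} (or the relation \eqref{brteij}) gives $E_{ij}=0$. From $E_{ijkl}=0$ and \eqref{kliebt} I get $\klie(L)=0$, which is the second equation of \eqref{lapeinstein3}. The vanishing of $\div(L)$ and the Killing/annihilation equations \eqref{dicheq} come from Theorem~\ref{todtheorem} in the compact Gauduchon case (where $\ga^{\sharp}$ is $h$-Killing and $D_p\bt_{ij}{}^p=n\ga_p\bt_{ij}{}^p=0$, hence via \eqref{divbt} also $\div(L)=0$), and from exactness in the exact case (where $\ga\equiv 0$ so \eqref{dicheq} is vacuous and \eqref{divbt} with $E_{ij}=0$ gives $\div(L)=0$). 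Equation \eqref{stressenergy3} is then exactly Corollary~\ref{stressenergycorollary}, and in the exact case $\ga=0$ reduces it to \eqref{lapeinstein2} after using $\uR_h=\sR_h-\tfrac14|L|^2$ from \eqref{confscal} with $\ga=0$. Finally, \eqref{lapeinstein1}: since $\klie(L)=0$ and $\div(L)=0$, the Weitzenb\"ock identity \eqref{lapom} collapses to $\lap_h L=\div\clie(L)$, while solving \eqref{divlie} (or using \eqref{klieweitzenbock} with $\klie(L)=\div(L)=0$) gives $\div\clie(L)=\sR(L)$; here I would be careful about which sign convention for $\sR(\cdot)$ is in force, since this section reverses the index-raising convention and uses $\culap\om=\lap_h\om-\sR(\om)$.

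For the converse, given $h$, $L\in\Ga(S^3_0(\ctm))$ solving \eqref{lapeinstein3}, and an $h$-Killing $\ga^\sharp$ with \eqref{dicheq} and \eqref{stressenergy3} holding, I would set $\nabla = D - \tfrac12 h^{kp}L_{ijp} - 2\ga_{(i}\delta_{j)}{}^k + h_{ij}\ga^k$ and verify directly from \eqref{dnabladiff} that $\nabla$ is the aligned representative of the AH structure $(\en,[h])$ with cubic torsion $\bt_{ij}{}^k=h^{kp}L_{ijp}$ and that $h$ is a Gauduchon metric with Faraday primitive $\ga_i$ (the Gauduchon condition $\dad_h\ga=0$ follows from $D_{(i}\ga_{j)}=0$, which forces $D_p\ga^p=0$). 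Then I must check the three Einstein equations \eqref{einsteinequations}. The condition $\klie(L)=0$ gives $E_{ijkl}=0$ via \eqref{kliebt}, and $\div(L)=0$ together with $\ga_pL_{ij}{}^p=0$ gives $E_{ij}=0$ through \eqref{divbt}; this yields $\nabla_p\bt_{ij}{}^p=\bt_{ij}$, the second Einstein equation, and also self-conjugacy of the curvature (since $E_{ijkl}=0$). The trace-free Ricci equation $\mr R_{ij}=0$ should follow by feeding \eqref{stressenergy3} (equivalently \eqref{tracefreericcondition}) into \eqref{confricfree}, using $D_{(i}\ga_{j)}=0$, $\ga_pL_{ij}{}^p=0$, and $D_pL_{ij}{}^p=0$ to kill the correction terms. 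For the conservation condition, since $E_{ij}=E_{ijkl}=0$ the structure is naive Einstein with self-conjugate curvature, so Lemma~\ref{einsteinconservativelemma} gives \eqref{einstein} automatically. The compact variant replaces the pointwise \eqref{lapeinstein1} by its weak form: assuming only the second equation of \eqref{lapeinstein1} together with \eqref{dicheq} and \eqref{tracefreericcondition}, I would integrate the Weitzenb\"ock identity \eqref{bigbochner} to promote $\culap(L)=0$ (with the correct $\al$) to $\klie(L)=\div(L)=0$ via Lemma~\ref{culaplemma}, recovering the full strength of \eqref{lapeinstein3}.

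The main obstacle I anticipate is the converse trace-free Ricci computation: verifying that \eqref{stressenergy3}, which is an equation for the \emph{full} Ricci tensor $\sR_{ij}$ of $h$ written in the wrong signature as a field equation, is equivalent under the present hypotheses to $\mr R_{ij}=0$ for the AH structure. This requires carefully matching the conformal-to-aligned curvature dictionary \eqref{confricfree} against \eqref{tracefreericcondition}, keeping the density weights and the switched index conventions of Section~\ref{eigensection} straight, and confirming that the constant $\ka$ produced by the divergence-free argument in Lemma~\ref{tracefreeconstantlemma} matches $\uR_h+n(n-4)|\ga|_h^2$ as in Corollary~\ref{stressenergycorollary}. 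A secondary subtlety is ensuring, in the compact case, that the elliptic operator appearing in the hypothesis ``the second equation of \eqref{lapeinstein1}'' is exactly $\culap=\lap_h-\sR$ so that Lemma~\ref{culaplemma}(1) applies with $\al=-1$; once that identification is made the Bochner argument is routine.
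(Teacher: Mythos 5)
Your proposal is correct and follows essentially the same route as the paper's proof: the forward direction extracts \eqref{lapeinstein3} and \eqref{dicheq} from \eqref{kliebt}, \eqref{divbt}, and Theorem \ref{todtheorem} (or exactness), obtains \eqref{stressenergy3} from \eqref{confric} and Corollary \ref{stressenergycorollary}, and gets \eqref{lapeinstein1} from the Weitzenb\"ock identities \eqref{lapom}/\eqref{lapom3}; the converse and the compact variant via Lemma \ref{culaplemma} are likewise the same. The subtleties you flag (sign conventions for $\sR(\cdot)$, matching \eqref{confricfree} with \eqref{tracefreericcondition}) are genuine but are resolved exactly as you anticipate.
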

\begin{proof}
If $(\en, [h])$ is Riemannian Einstein with self-conjugate curvature then $L \in \ker \klie$ by \eqref{kliebt}. If $M$ is compact then Theorem \ref{todtheorem} implies \eqref{dicheq} and with \eqref{divbt} this shows $L \in \ker \div$, while if $(\en, [h])$ is exact, then $L \in \ker \div$ follows from \eqref{divbt} alone. In either case, by \eqref{lapom3} there holds the second equation of \eqref{lapeinstein1}; the first equation of \eqref{lapeinstein1} then follows from \eqref{lapom}. Equation \eqref{stressenergy3} results from \eqref{confric} and \eqref{dicheq}, while that $\ka$ is constant was concluded in Corollary \ref{stressenergycorollary}.

The converse is completely routine. If there are given $h$, $L$, and $\ga$ solving \eqref{lapeinstein3}, \eqref{dicheq}, and \eqref{stressenergy3}, then $\nabla = D - \tfrac{1}{2}h^{kp}L_{ijp} - 2\ga_{(i}\delta_{j)}\,^{k} + h_{ij}\ga^{k}$ generates with $[h]$ an AH structure for which $\ga_{i}$ is the Faraday primitive associated to $h$ and with cubic torsion $h^{kp}L_{ijp}$. Because \eqref{dicheq} implies that $\dad_{h}\ga = 0$, $h$ is a Gauduchon metric. By \eqref{dicheq} and \eqref{divbt} there holds $E_{ij} = 0$, while by \eqref{kliebt} and $\klie(L) = 0$ there holds $E_{ijkl} = 0$, so that the curvature of $(\en, [h])$ is self-conjugate. Together \eqref{dicheq}, \eqref{stressenergy3}, and \eqref{confric} show that $\mr{R}_{ij} = 0$, so that $(\en, [h])$ is Einstein. 

If $M$ is compact then by Lemma \ref{culaplemma} the second equation of \eqref{lapeinstein1} implies \eqref{lapeinstein3}; together \eqref{lapeinstein3} and \eqref{lapom} imply the first equation of \eqref{lapeinstein1}. By Corollary \ref{stressenergycorollary} and \eqref{tracefreericcondition} this means that there is a constant $\ka$ such that there holds \eqref{stressenergy3}, so that there hold all of \eqref{lapeinstein1}-\eqref{stressenergy3} and the preceeding paragraph applies.
\end{proof}

In the statement that \eqref{lapeinstein1} implies \eqref{lapeinstein3}, the compactness of $M$ could be relaxed to some condition, such as that $L$ have bounded $W^{1,2}$ norm, in the presence of which $(n+2)\clie \div(L)  + 2n\kliea\klie(L) = 0$ implies the vanishing of $\div(L)$ and $\klie(L)$.

In the remainder of this subsection there are made some remarks about the equations of Theorem \ref{eigentheorem} and some questions are raised.

\begin{corollary}
Let $M$ be a compact manifold of dimension $n \geq 3$ and let $h$ be a Riemannian metric such that $\qR$ is positive on $S^{3}_{0}(\ctm)$. If $h$ is the distinguished metric of an exact Einstein AH structure, then the AH structure is exact Weyl and $h$ is itself an Einstein metric.
\end{corollary}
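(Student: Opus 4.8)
The plan is to run an integrated Bochner argument against the completely trace-free symmetric cubic tensor $L_{ijk} \defeq \bt_{ij}\,^{p}h_{pk}$ determined by the distinguished metric $h$, the point being that the positivity of $\qR$ on $S^{3}_{0}(\ctm)$ lets a single sign-definite Weitzenb\"ock identity pin down $L$ completely. Throughout, indices are raised and lowered with $h$, as in section \ref{eigensection}.

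First I would extract the first-order information. Since $(\en,[h])$ is exact with distinguished metric $h$, its Faraday primitive vanishes, and the equations \eqref{lapeinstein3} of Theorem \ref{eigentheorem} apply in the exact case, giving $L \in \Ga(S^{3}_{0}(\ctm))$ with $\div(L) = 0$ and $\klie(L) = 0$. Here it is essential that the curvature be self-conjugate, so that $\klie(L) = -E_{ijkl}$ vanishes by \eqref{kliebt}; I will return to this point below. The key step is then to specialize the integrated identity \eqref{bigbochner} to $k = 3$. Because both $\klie(L)$ and $\div(L)$ vanish, every term except the conformal Killing term and the curvature term drops out, leaving
\begin{align*}
-\int_{M}|\clie(L)|^{2}\,d\vol_{h} = \int_{M}\qR(L)\,d\vol_{h}.
\end{align*}
The left-hand side is manifestly non-positive, while the hypothesis that $\qR$ is positive on $S^{3}_{0}(\ctm)$ makes the right-hand side non-negative, with $\qR(L) > 0$ wherever $L \neq 0$. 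Hence both sides vanish and $L \equiv 0$; that is, the cubic torsion is identically zero.

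Finally I would read off the two conclusions. Vanishing cubic torsion, together with the vanishing conformal torsion built into the definition of an AH structure, means the CP pair is Weyl, and being exact it is exact Weyl. Moreover, with $L_{ijk} = 0$ the quadratic tensor $L_{ij} = L_{ip}\,^{q}L_{jq}\,^{p}$ vanishes, so \eqref{lapeinstein2} (equivalently \eqref{confricfree} in the gauge $\ga_{i} = 0$) collapses to $\sR_{ij} = \tfrac{\ka}{n}h_{ij}$ with $\ka$ constant, exhibiting $h$ as an Einstein metric.

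The main obstacle, and the reason the argument must be organized this way, is securing the input $\klie(L) = 0$. Without self-conjugacy of the curvature the anti-self-conjugate Weyl tensor $E_{ijkl}$ reintroduces the term $\tfrac{1}{2}\int_{M}|\klie(L)|^{2}$ into \eqref{bigbochner}, which has the wrong sign to combine with $-\int_{M}|\clie(L)|^{2}$, and the sign-definite conclusion is lost. This is not merely a limitation of the method: the left-invariant structures of section \ref{s3example} on $S^{3}$ are exact and Einstein, their distinguished metric has constant positive sectional curvature (so $\qR$ is positive on $S^{3}_{0}(\ctm)$ by the constant-curvature computation of section \ref{curvatureoperatorsection}), yet their cubic torsion is nonzero. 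Thus self-conjugacy of the curvature is indispensable, and the proof accordingly relies on $\klie(L)=0$ at its crux.
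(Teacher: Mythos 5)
Your proof is correct and is essentially the paper's: the paper integrates \eqref{lapeinstein1} against $L$, which is exactly your specialization of \eqref{bigbochner} using $\div(L)=0$ and $\klie(L)=0$, yielding $\int_{M}|\clie(L)|^{2} = -\int_{M}\qR(L)$ and hence $L\equiv 0$ by positivity of $\qR$ on $S^{3}_{0}(\ctm)$. Your closing remark is also well taken: self-conjugacy of the curvature is tacitly carried over from the hypotheses of Theorem \ref{eigentheorem} (it is what gives $\klie(L)=-E_{ijkl}=0$), the corollary as literally stated omits it, and the left-invariant structures on $S^{3}$ from section \ref{s3example} do show that without it the conclusion fails.
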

\begin{proof}
Integrating \eqref{lapeinstein1} gives $0 \leq \int_{M}|\clie(L)|^{2}_{h} = -\int_{M}\qR(L)$, and so $L$ is identically zero. 
\end{proof}
On the other hand, on a compact manifold of dimension at least $3$ a necessary condition for the existence of a Riemannian signature Einstein AH structure with self-conjugate curvature and cubic torsion $h^{kp}L_{ijp}$ for a given $L_{ijk}$ and a distinguished metric $h_{ij}$ is that $\int_{M}\qR(L)\,d\vol_{h} \leq 0$. This suggests that it should be possible to solve \eqref{lapeinstein1}-\eqref{lapeinstein2} on a manifold for which $\qR$ is non-positive on $S^{3}_{0}$.

\subsubsection{}

Let $M$ be a compact $n$-manifold and $h$ a Riemannian metric of constant sectional curvature $\sR/n(n-1)$. From $\sR_{ijkl} = \tfrac{2\sR}{n(n-1)}h_{l[i}h_{j]k}$, equation \eqref{lapeinstein1} simplifies to $\lap_{h}L_{ijk} =\tfrac{\sR(n+1)}{n(n-1)}L_{ijk}$. Supposing $L \in \ker \klie \cap \ker \div$, integrating $\lap_{h}|L|^{2}_{h}$ and using Lemma \ref{katolemma} shows 
\begin{align}
\tfrac{n+4}{n+1}\int_{M}|d|L||^{2}_{h}\,d\vol_{h} \leq \int_{M}|DL|^{2}_{h}\,d\vol_{h} = - \tfrac{\sR(n+1)}{n(n-1)}\int_{M}|L|^{2}_{h}\,d\vol_{h}.
\end{align}
In particular, if $\sR > 0$ then $L$ must vanish identically, while if $\sR = 0$, then $L$ must be parallel. These observations suggest that Theorem \eqref{eigentheorem} is not useful in finding exact Einstein AH structures the distinguished metrics of which have in some sense positive curvature (and moreover, that there may not be many such examples). If $\sR < 0$ then computing using \eqref{katolemma} shows that where $|L| \neq 0$ there holds
\begin{align}\label{hyperineq}
\lap |L|^{(n-2)/(n+1)} \geq \tfrac{\sR(n-2)}{n(n-1)}|L|^{(n-2)/(n+1)}.
\end{align}
For a hyperbolic $n$-manifold with $n > 2$ it is not clear whether one should expect the pair of equations \eqref{lapeinstein1} and \eqref{lapeinstein3} to be solvable. Starting with a Riemannian metric with some non-positivity condition on its curvature, there needs to be solved the eigenvalue problem \eqref{lapeinstein1} subject to the algebraic constraint \eqref{lapeinstein2}; the first part is a typical problem of analyzing the spectrum of the Laplacian on a tensor bundle, but it is less clear how to analyze it in the presence of the second part. While the case of Riemann surfaces suggests yes, rigidity phenomena for higher dimensional hyperbolic manifolds suggest that \eqref{lapeinstein3} could present problems. 
It seems unlikely that the inequality \eqref{hyperineq} by itself gives any obstruction to solving \eqref{lapeinstein1} on a flat hyperbolic manifold. In section \ref{cubicformsection} it is shown that on flat Euclidean space there are solutions to \eqref{lapeinstein1} and \eqref{lapeinstein3}, and this suggests that there should be solutions on, for example, Cartan-Hadamard manifolds.

\subsubsection{}\label{variationalsection}
Let $M$ be a compact manifold. Define a functional $\cubic$ with arguments a Riemannian metric $h_{ij}$ and a tensor $A_{ijk} \in \Ga(S^{3}_{0}(\ctm))$ by 
\begin{align*}
\cubic(h, A) \defeq \int_{M}\left( |\clie(A)|_{h}^{2} + \qR(A) \right) \,d\vol_{h}.
\end{align*}
For fixed $h$ the first variation of $\cubic(h, A)$ in $A$ yields the equations \eqref{lapeinstein1}. Similarly, the first variation in $h$ of the functional
\begin{align*}
\mathscr{S}(h, A) \defeq (\vol_{h}(M))^{(2-n)/n}\int_{M}\left( \sR + \tfrac{1}{12}|A|^{2}_{h}\right)\,d\vol_{h},
\end{align*}
yields \eqref{lapeinstein2}. However, it is not clear how to give a variational formulation which directly leads to both \eqref{lapeinstein1} and \eqref{lapeinstein2}.

\subsubsection{}
Let $(\en, [h])$ be an Einstein AH structure on a compact $n$-manifold with $n > 2$ and let $h \in [h]$ be a Gauduchon metric. For a Gauduchon metric, because of Theorem \ref{todtheorem}, \eqref{confcurvijkl} simplifies to
\begin{align}
\begin{split}
\sR_{ijkl} &= T^{\flat}_{ijkl} + 2|\ga|_{h}^{2}h_{l[i}h_{j]k} + \tfrac{1}{4}L_{ijkl} - 4\ga_{[i}h_{j][k}\ga_{l]} ,
\end{split}
\end{align}
and if $(\en, [h])$ is Einstein this reduces further to
\begin{align}\label{srijklexact}
\begin{split}
\sR_{ijkl} &= A^{\flat}_{ijkl} +  \left(\tfrac{2}{n(n-1)}\uR_{h}  + 2|\ga|_{h}^{2}\right) h_{l[i}h_{j]k} + \tfrac{1}{4}L_{ijkl} - 4\ga_{[i}h_{j][k}\ga_{l]} \\
& = A^{\flat}_{ijkl} +  \left(\tfrac{2}{n(n-1)}\left(\sR_{h} - \tfrac{1}{4}|L|_{h}^{2}\right) -\tfrac{4}{n} |\ga|_{h}^{2}\right) h_{l[i}h_{j]k} + \tfrac{1}{4}L_{ijkl} - 4\ga_{[i}h_{j][k}\ga_{l]}.
\end{split}
\end{align}
In this case, using \eqref{confric} and \eqref{confscal} yields
\begin{align}\label{srl}
\begin{split}
\sR(L)_{ijk} & = A^{\flat}(L)_{ijk} - \tfrac{1}{2}L_{ia}\,^{b}L_{jb}\,^{c}L_{kc}\,^{a} + \tfrac{3}{4}L_{p(i}L_{jk)}\,^{p} + \left(\tfrac{n+1}{n(n-1)}\uR_{h} + n |\ga|_{h}^{2} \right)L_{ijk}\\
& =  A^{\flat}(L)_{ijk} - \tfrac{1}{2}L_{ia}\,^{b}L_{jb}\,^{c}L_{kc}\,^{a} + \tfrac{3}{4}L_{p(i}L_{jk)}\,^{p} \\ &\quad - \tfrac{n+1}{4n(n-1)}|L|_{h}^{2}L_{ijk} + \left(\tfrac{n+1}{n(n-1)}\sR_{h} + \tfrac{n+2}{n} |\ga|_{h}^{2} \right)L_{ijk}.
\end{split}
\end{align}
Substituting this into \eqref{lapeinstein1} is not useful in general because the righthand side of \eqref{srl} depends on $A_{ijkl}$, which is not determined by $h$, $\ga$, and $L$. For example, if $(\en, [h])$ is an exact Einstein AH structure, then by \eqref{srijklexact}, \eqref{srl}, and \eqref{lapom} there holds
\begin{align}\label{laphlexact}
\begin{split}
\lap_{h}L_{ijk} &- 2\kliea\klie(L)_{ijk} = \sR(L)_{ijk}\\
& = \tfrac{(n+1)}{n(n-1)}\uR_{h}L_{ijk} - 2A_{p(ij}\,^{q}L_{k)q}\,^{p} - \tfrac{1}{2}L_{p(ij}\,^{q}L_{k)q}\,^{p} + \tfrac{1}{4}L_{p(i}L_{jk)}\,^{p},\\
& =  \tfrac{(n+1)}{n(n-1)}\uR_{h}L_{ijk} - 2A_{p(ij}\,^{q}L_{k)q}\,^{p} -\tfrac{1}{2}L_{ia}\,^{b}L_{jb}\,^{c}L_{kc}\,^{a} + \tfrac{3}{4}L_{pa}\,^{b}L_{(jk}\,^{p}L_{i)b}\,^{a}.
\end{split}
\end{align}
However, by Theorem \ref{projflatahlemma} an Einstein AH structure is projectively and conjugate projectively flat if and only if it is exact with self-conjugate curvature and $A_{ijkl} = 0$. Hence, one can try to find projectively and conjugate projectively flat (exact) Einstein AH structures by trying to find $h$ and $L$ solving the result of setting equal to zero $A_{ijkl}$ and $E_{ijkl}$ in \eqref{laphlexact},
\begin{align}\label{modifiedlapeinstein}
\lap_{h}L_{ijk} = - \tfrac{1}{2}L_{ia}\,^{b}L_{jb}\,^{c}L_{kc}\,^{a} + \tfrac{3}{4}L_{p(i}L_{jk)}\,^{p} - \tfrac{n+1}{4n(n-1)}|L|_{h}^{2}L_{ijk} + \tfrac{n+1}{n(n-1)}\sR_{h}L_{ijk},
\end{align}
(which also results from substituting \eqref{srl} with $A_{ijkl} = 0$ into \eqref{lapeinstein1} and setting $\ga = 0$) in conjunction with \eqref{lapeinstein3} (which does not change). The point is that the equation \eqref{modifiedlapeinstein} involves only $h$ and $L$. When $(\en, [h])$ is projectively flat, so $A_{ijkl} = 0 = E_{ijkl}$, \eqref{modifiedlapeinstein} recovers Equation $2.10$ of Calabi's \cite{Calabi-completeaffine} for affine hyperspheres (Calabi's $n(n-1)H$ is $\uR_{h}$ and his $2A_{ijk}$ is $L_{ijk}$).

Theorem \ref{convextheorem} shows that the pair of equations \eqref{modifiedlapeinstein} and \eqref{lapeinstein3} has non-trivial solutions. However, except in the two-dimensional case, one does not have a good idea how to characterize the metrics $h$ which arise in this way. It seems an interesting and difficult problem to find $h$ for which the pair of equations \eqref{modifiedlapeinstein} and \eqref{lapeinstein3} has solutions.

\subsection{Growth of the cubic torsion}\label{einsteinestimatesection}
Let $(\en, [h])$ be a Riemannian AH structure on a manifold of dimension at least $3$ and let the notational conventions be as in section \ref{estimatenotationsection}.
\subsubsection{}
For convenience there is recorded
\begin{align}\label{mrcdefined}
\begin{split}
C_{ijkl} &\defeq L_{ijkl} - \tfrac{2}{n-2}\left(h_{l[i}L_{j]k} - h_{k[i}L_{j]l}\right) + \tfrac{2}{(n-1)(n-2)}|L|_{h}^{2} h_{l[i}h_{j]k},\\
& = L_{ijkl} - \tfrac{2}{n-2}\left(h_{l[i}\mr{L}_{j]k} - h_{k[i}\mr{L}_{j]l}\right) - \tfrac{2}{n(n-1)}|L|_{h}^{2} h_{l[i}h_{j]k}.
\end{split}
\end{align}
From the non-negativity of the norm of $\mr{L}_{ij}$ there results $nL^{ij}L_{ij} \geq |L|_{h}^{4}$. From \eqref{mrcdefined} there follows
\begin{align}
\begin{split}
L^{ijkl}L_{ijkl} &= C^{ijkl}C_{ijkl} + \tfrac{4}{n-2}L^{ij}L_{ij} - \tfrac{2}{(n-1)(n-2)}|L|_{h}^{4} \\ &=  C^{ijkl}C_{ijkl} + \tfrac{4}{n-2}\mr{L}^{ij}\mr{L}_{ij} + \tfrac{2}{n(n-1)}|L|^{4}_{h}.
\end{split}
\end{align}
From the non-negativity of $C^{ijkl}C_{ijkl}$ and $\mr{L}^{ij}\mr{L}_{ij}$ there results $L^{ijkl}L_{ijkl}\geq  \tfrac{2}{n(n-1)}|L|_{h}^{4}$, but a slightly sharper result will be obtained by maintaining these terms in subsequent formulas. What will be needed is
\begin{align}\label{btnormestimate}
\begin{split}
L^{ijkl}L_{ijkl} + L^{ij}L_{ij} - C^{ijkl}C_{ijkl}&= \tfrac{n+2}{n-2}\mr{L}^{ij}\mr{L}_{ij} +  \tfrac{n+1}{n(n-1)}|L|_{h}^{4}.
\end{split}
\end{align}
Rewritten in terms of $L_{ijk}$, \eqref{btnormestimate} yields
\begin{align}\label{clest1}
&L^{pq}L_{pq} - \mt \geq \tfrac{1}{n-1}L^{pq}L_{pq} \geq \tfrac{1}{n(n-1)}|L|^{4}_{h},&
&\tfrac{3}{4}L^{ij}L_{ij} -\tfrac{1}{2}\mt \geq \tfrac{n+1}{4n(n-1)}|L|_{h}^{4},
\end{align}
in which $\mt = L_{ic}\,^{a}L_{ja}\,^{b}L_{kb}\,^{c}L^{ijk}$.
The estimates \eqref{clest1} are essentially the form in which \eqref{btnormestimate} was used in section $2$ of \cite{Calabi-improper}; here their use will be avoided. 

\subsubsection{}
Because $A_{ijkl}$ is completely trace-free there holds $L^{ijkl}A_{ijkl}^{\flat} = C^{ijkl}A_{ijkl}^{\flat}$. By \eqref{brtaijkl} the conformal Weyl tensor of the Weyl structure underlying the Codazzi projective structure generated by $(\en, [h])$ is $A_{ijkl} + \tfrac{1}{4}\C_{ijkl}$. Because the Levi-Civita connection of $h$ and the aligned representative of the underlying Weyl structure are conformal projectively equivalent, Theorem \ref{aeinvariant} shows that the (usual) conformal Weyl tensor $\sW_{ijkl}$ of $h$ is equal to the conformal Weyl tensor of the underlying Weyl structure, that is to $A^{\flat}_{ijkl} + \tfrac{1}{4}C_{ijkl}$. Hence
\begin{align}\label{qwl}
\qW(L) = L^{ijkl}\sW_{ijkl} = C^{ijkl}\sW_{ijkl} = C^{ijkl}A^{\flat}_{ijkl} + \tfrac{1}{4}C^{ijkl}C_{ijkl} = \qA(L) + \tfrac{1}{4}C^{ijkl}C_{ijkl},
\end{align} 
Later the condition $\qW(L) \geq 0$ will be imposed. By \eqref{qwl} this is a slightly weaker assumption than is $\qA(L) = C^{ijkl}A^{\flat}_{ijkl} \geq 0$.

\subsubsection{}
In order to utilize \eqref{lapomliediv} to estimate $|L|_{h}^{2}$, it is necessary to express the term $\qR(L)$ in a more useful form. From the definitions of $\sR(L)$ and $L_{ijkl}$ there follows $\qR(L) = L^{ijkl}\sR_{ijkl} + L^{ij}\sR_{ij}$. Straightforward computation  using \eqref{confcurvijkl}, \eqref{confric}, and \eqref{gd2} shows
\begin{align}\label{qrbt}
\begin{split}
\qR(L) &= L^{ijkl}\sR_{ijkl} + L^{ij}\sR_{ij} \\
& = L^{ijkl}T_{ijkl}^{\flat} + \tfrac{1}{4}L^{ijkl}L_{ijkl} + L^{ij}T_{ij} + \tfrac{1}{4}L^{ij}L_{ij} \\
&\qquad + \tfrac{n+2}{2}L^{ij}L_{ij}\,^{p}\ga_{p} - (n+2)L^{ij}\ga_{(ij)}+ \dad_{h}\ga |L|^{2}_{h} + \tfrac{n-2}{2}|\ga|^{2}|L|_{h}^{2}\\
& = L^{ijkl}T_{ijkl}^{\flat} + \tfrac{1}{4}L^{ijkl}L_{ijkl} + L^{ij}T_{ij} + \tfrac{1}{4}L^{ij}L_{ij} \\
&\qquad - (n+2)L^{ij}D_{(i}\ga_{j)} - (n+2)L^{ij}\ga_{i}\ga_{j} +  |L|_{h}^{2}\dad_{h}\ga + n|\ga|^{2}|L|_{h}^{2}.
\end{split}
\end{align}
Using \eqref{qwl} to simplify \eqref{qrbt} yields
\begin{align}\label{btt}
\begin{split}
L^{ijkl}T_{ijkl}^{\flat} + L^{ij}T_{ij} & = C^{ijkl}A_{ijkl}^{\flat} +\tfrac{n+2}{n-2}L^{ij}\mr{T}_{ij} + \tfrac{n+1}{n(n-1)}\uR_{h}|L|_{h}^{2}\\
& =  C^{ijkl}\sW_{ijkl} - \tfrac{1}{4}C^{ijkl}C_{ijkl} +\tfrac{n+2}{n-2}L^{ij}\mr{T}_{ij} + \tfrac{n+1}{n(n-1)}\uR_{h}|L|_{h}^{2},
\end{split}
\end{align}
Substituting \eqref{btnormestimate} and \eqref{btt} into \eqref{qrbt}, and using \eqref{qwl} gives
\begin{align}\label{qrlestimate}
\begin{split}
\qR(L) &=  \qW(L) + \left(n|\ga|^{2} + \tfrac{n+1}{n(n-1)}\left(\uR_{h} + \tfrac{1}{4}|L|_{h}^{2}\right)\right)|L|_{h}^{2}  \\ & + \tfrac{n+2}{4(n-2)}\mr{L}^{ij}\mr{L}_{ij}  - (n+2)(L^{ij}D_{(i}\ga_{j)} + L^{ij}\ga_{i}\ga_{j}) + \dad_{h}\ga |L|_{h}^{2} .
\end{split}
\end{align}
Together \eqref{lapomliediv} and \eqref{lbt1} yield
\begin{align}\label{lapombt2}
\tfrac{1}{2}\lap_{h}|L|_{h}^{2} = |DL|^{2}_{h} + \tfrac{n+2}{n}\lb L, \clie \div(L)\ra  + \qR(L) + 4(n-2)D^{p}A_{p}^{\flat} -2 |E|_{h}^{2}.
\end{align}
Substituting \eqref{normdom} into \eqref{lapombt2} and using \eqref{kliebt} and \eqref{qrlestimate} gives
\begin{align}\label{lapombt3}
\begin{split}
\tfrac{1}{2}\lap_{h}|L|_{h}^{2} &= \qR(L) +4(n-2)D^{p}A_{p}^{\flat} - \tfrac{1}{2} |E|_{h}^{2} + |\clie(L)|^{2}\\&\qquad  + \tfrac{n+2}{n}\lb L, \clie \div(L)\ra  +   \tfrac{3(n+2)}{n(n+4)}|\div(L)|^{2}\\ 
& = \qW(L)  + \left(n|\ga|^{2} + \tfrac{n+1}{n(n-1)}\left(\uR_{h} + \tfrac{1}{4}|L|^{2}_{h}\right)\right)|L|_{h}^{2} +4(n-2)D^{p}A_{p}^{\flat} - \tfrac{1}{2} |E|_{h}^{2} \\ &+ |\clie(L)|^{2}  + \tfrac{n+2}{4(n-2)}\mr{L}^{ij}\mr{L}_{ij}  + \tfrac{n+2}{n}\lb L, \clie \div(L)\ra  +   \tfrac{3(n+2)}{n(n+4)}|\div(L)|^{2} \\ &- (n+2)(L^{ij}D_{(i}\ga_{j)} + L^{ij}\ga_{i}\ga_{j}) + |L|_{h}^{2}\dad_{h}\ga.
\end{split}
\end{align}
If $h$ is chosen to be a Gauduchon metric, then the term $\dad_{h}\ga$ in \eqref{lapombt3} vanishes; if moreover, $M$ is compact, so that Theorem \ref{todtheorem} and Theorem \ref{negexacttheorem} apply, then there vanish all terms in \eqref{lapombt3} containing a $\ga$ except for the term containing $|\ga|^{2}$, and the terms containing $\div(L)$, and there results
\begin{lemma}
If $(\en, [h])$ is a Riemannian signature Einstein AH structure on manifold $M$ of dimension at least $3$ and if either $M$ is compact and $h \in [h]$ is a Gauduchon metric with associated Faraday form $\ga_{i}$, or $(\en, [h])$ is exact with distinguished metric $h \in [h]$, then 
\begin{align}
\label{lapombt4}
\begin{split}
\tfrac{1}{2}\lap_{h}|L|_{h}^{2}  & =   \qW(L)  + \left(n|\ga|^{2} + \tfrac{n+1}{n(n-1)}\left(\uR_{h} + \tfrac{1}{4}|L|_{h}^{2}\right)\right)|L|_{h}^{2} \\&\qquad - \tfrac{1}{2} |E|_{h}^{2} + |\clie(L)|^{2}  + \tfrac{n+2}{4(n-2)}\mr{L}^{ij}\mr{L}_{ij}. 
\end{split}
\end{align}
\end{lemma}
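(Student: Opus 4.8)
The plan is to obtain \eqref{lapombt4} as a direct specialization of the general identity \eqref{lapombt3}, which already holds for an arbitrary Riemannian AH structure and an arbitrary representative $h \in [h]$. The whole task is thus to show that, under the stated hypotheses, every term of \eqref{lapombt3} absent from the right-hand side of \eqref{lapombt4} vanishes, while the surviving terms assemble into the asserted expression. I would organize the argument around the vanishing of the four quantities $A_i$, $\div(L)$, $D_{(i}\ga_{j)}$, and $\ga^{p}L_{ijp}$, together with $\dad_{h}\ga = 0$.

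First I would record the consequences of the Einstein hypothesis that are independent of the case. Since $n \geq 3$ and $(\en, [h])$ is Einstein, Lemma \ref{einsteinconservativelemma} shows it is conservative, i.e.\ $A_i = 0$; hence $A_i^{\flat} = 0$ and the term $4(n-2)D^{p}A_p^{\flat}$ in \eqref{lapombt3} is identically zero. The naive Einstein equations give $E_{ij} = 0$, which is the input needed below to kill $\div(L)$. Next I would treat the two alternatives in the hypothesis uniformly. In the exact case a distinguished metric $h$ satisfies $\nabla_i|\det h| = 0$, so its associated Faraday primitive is $\ga_i = 0$, whence $|\ga|^{2}_{h} = 0$, $\dad_{h}\ga = 0$, $D_{(i}\ga_{j)} = 0$, and $\ga^{p}L_{ijp} = 0$ all hold trivially, and by \eqref{divbt} with $E_{ij} = 0$ one gets $\div(L) = 0$. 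In the compact Gauduchon case, $\dad_{h}\ga = 0$ holds by the defining property of a Gauduchon metric, while Theorem \ref{todtheorem} supplies exactly the remaining facts: $\ga^{i}$ is $h$-Killing, so $D_{(i}\ga_{j)} = 0$, and $D_p\bt_{ij}{}^{p} = n\ga_p\bt_{ij}{}^{p} = 0$, so $\ga^{p}L_{ijp} = 0$ and, via \eqref{divbt} with $E_{ij} = 0$, again $\div(L) = 0$.

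Finally I would substitute these vanishings into \eqref{lapombt3}. Because $\div(L) = 0$, the terms $\tfrac{n+2}{n}\langle L, \clie\div(L)\rangle$ and $\tfrac{3(n+2)}{n(n+4)}|\div(L)|^{2}$ drop out, the former since $\clie$ applied to the zero section is zero. The term $|L|^{2}_{h}\dad_{h}\ga$ vanishes since $\dad_{h}\ga = 0$, and $(n+2)L^{ij}D_{(i}\ga_{j)}$ vanishes since $D_{(i}\ga_{j)} = 0$. The one slightly less immediate cancellation is $(n+2)L^{ij}\ga_i\ga_j$: writing $L_{ij} = L_i{}^{ab}L_{jab}$ and using the complete symmetry of $L$, one has $\ga^{i}\ga^{j}L_{ij} = (\ga^{i}L_{i}{}^{ab})(\ga^{j}L_{jab}) = 0$, because $\ga^{i}L_{iab} = \ga^{p}L_{abp} = 0$. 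What remains is precisely $\qW(L)$, the coefficient term $(n|\ga|^{2} + \tfrac{n+1}{n(n-1)}(\uR_{h} + \tfrac14|L|^{2}_{h}))|L|^{2}_{h}$, together with $-\tfrac12|E|^{2}_{h}$, $|\clie(L)|^{2}$, and $\tfrac{n+2}{4(n-2)}\mr{L}^{ij}\mr{L}_{ij}$, which is \eqref{lapombt4}.

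The main obstacle lies not in this Lemma itself---once \eqref{lapombt3} is granted the argument is a bookkeeping exercise---but in the fact that it rests on Theorem \ref{todtheorem}, whose proof deploys the full strength of the conservation condition through the Bochner-type identities \eqref{bochner5b} and \eqref{einsteinf}. The only genuinely non-mechanical point internal to the present proof is the identity $\ga^{i}\ga^{j}L_{ij} = 0$ deduced from $\ga^{p}L_{ijp} = 0$, which I would verify explicitly as above, and the care of packaging the exact and compact Gauduchon cases so that a single statement of the vanishing of the $\ga$-dependent and $\div(L)$-dependent terms covers both.
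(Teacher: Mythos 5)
Your proposal is correct and follows exactly the route the paper takes: the lemma is obtained by specializing \eqref{lapombt3}, using conservativity ($A_i=0$), Theorem \ref{todtheorem} (or exactness) to kill the $\div(L)$ terms and all $\ga$-dependent terms except $n|\ga|^2|L|^2_h$. Your explicit verification that $\ga^i\ga^jL_{ij}=0$ follows from $\ga^pL_{ijp}=0$ is a detail the paper leaves implicit, but nothing in your argument diverges from the paper's.
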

Because of the non-positivity of the term involving $|E|_{h}^{2}$, it is not clear how to handle this term except by assuming $E_{ijkl} = 0$, in which case \eqref{sharplapom} yields the sharper estimate of Theorem \ref{calabiestimatetheorem}. Theorem \ref{calabiestimatetheorem} will be used to refine Theorem \ref{negexacttheorem}. 

\begin{theorem}\label{calabiestimatetheorem}
Let $(\en, [h])$ be a Riemannian signature Einstein AH structure with self-conjugate curvature on a manifold $M$ of dimension $n \geq 3$. If either $M$ is compact and $h \in [h]$ is a Gauduchon metric with associated Faraday form $\ga_{i}$, or $(\en, [h])$ is exact (and $M$ is not necessarily compact) with distinguished metric $h \in [h]$, then wherever $L_{ijk} \neq 0$ there holds
\begin{align}\label{calabi1}
\begin{split}
|L|^{(n+4)/(n+1)}\lap_{h}|L|^{(n-2)/(n+1)}  &\geq  \left(\tfrac{n-2}{n(n-1)}\left(\uR_{h} + \tfrac{1}{4}|L|^{2}\right)+ \tfrac{n(n-2)}{n+1}|\ga|^{2}\right)|L|^{2}_{h} +  \tfrac{n-2}{n+1}\qW(L)\\
& = \tfrac{n-2}{n(n-1)} \left(\sR_{h} + (n+2)|\ga|^{2}\right)|L|^{2} +  \tfrac{n-2}{n+1}\qW(L).
\end{split}
\end{align}
in which in the latter case $\ga \equiv 0$. If, moreover, $\qW(L)\geq 0$, then in either of these cases, wherever $L_{ijk} \neq 0$ there holds
\begin{align}\label{calabi2}
\begin{split}
\lap_{h}|L|^{(n-2)/(n+1)}  &\geq  \left(\tfrac{n-2}{n(n-1)}\left(\uR_{h} + \tfrac{1}{4}|L|^{2}\right)+ \tfrac{n(n-2)}{n+1}|\ga|^{2}\right)|L|^{(n-2)/(n+1)}\\
 & = \tfrac{n-2}{n(n-1)} \left(\sR_{h} + (n+2)|\ga|^{2}\right)|L|^{(n-2)(n+1)} .
\end{split}
\end{align}
\end{theorem}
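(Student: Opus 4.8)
The plan is to assemble the sharp Weitzenb\"ock-type estimate \eqref{sharplapom} with the identity \eqref{qrlestimate} for $\qR(L)$; the genuine analytic content has already been isolated in the refined Kato inequality of Lemma \ref{katolemma} and in Lemma \ref{swlemma}, so what remains is bookkeeping. First I would record that self-conjugacy of the curvature means $E_{ijkl} = 0$, hence $E_{ij} = 0$; by \eqref{kliebt} this gives $\klie(L)_{ijkl} = -E_{ijkl} = 0$, so that $L \in \Ga(S^{3}_{0}(\ctm)) \cap \ker \klie$. To see that also $L \in \ker \div$ I would use \eqref{divbt}, namely $\div(L)_{ij} = 2nE_{ij} + n\ga_{p}L_{ij}\,^{p}$: in the exact case $\ga \equiv 0$, while in the compact Gauduchon case Theorem \ref{todtheorem} supplies $\ga_{p}L_{ij}\,^{p} = 0$; either way $\div(L) = 0$. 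Thus $L \in \Ga(S^{3}_{0}(\ctm)) \cap \ker \klie \cap \ker \div$, which is exactly the hypothesis under which the first estimate of Lemma \ref{swlemma} (equation \eqref{sharplapom} with $k = 3$) applies, yielding, wherever $L \neq 0$,
\[
|L|^{(n+4)/(n+1)}\lap_{h}|L|^{(n-2)/(n+1)} \geq \tfrac{n-2}{n+1}\qR(L).
\]

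Next I would substitute the identity \eqref{qrlestimate} for $\qR(L)$ and discard the terms that vanish in each case. In the compact Gauduchon case Theorem \ref{todtheorem} gives $D_{(i}\ga_{j)} = 0$ and $\ga_{p}L_{ij}\,^{p} = 0$, the latter forcing $L^{ij}\ga_{i}\ga_{j} = 0$ since the contraction $L_{ij} = L_{ip}\,^{q}L_{jq}\,^{p}$ paired with $\ga^{i}\ga^{j}$ factors through $\ga^{i}L_{ip}\,^{q} = 0$; together with $\dad_{h}\ga = 0$ this kills the three $\ga$-bilinear terms of \eqref{qrlestimate}. In the exact case those terms are simply absent because $\ga \equiv 0$. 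What survives is
\[
\qR(L) = \qW(L) + \left(n|\ga|^{2} + \tfrac{n+1}{n(n-1)}\big(\uR_{h} + \tfrac14|L|^{2}\big)\right)|L|^{2} + \tfrac{n+2}{4(n-2)}\mr{L}^{ij}\mr{L}_{ij}.
\]
Since $h$ is Riemannian, $\mr{L}^{ij}\mr{L}_{ij} \geq 0$; dropping this non-negative term and multiplying through by $\tfrac{n-2}{n+1}$ gives precisely the first form of \eqref{calabi1}. The second ($\sR_{h}$) form is then the specialization to the exact case, where $\ga \equiv 0$ forces $\sR_{h} = \uR_{h} + \tfrac14|L|^{2}$ by \eqref{confscal}. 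For \eqref{calabi2} I would impose $\qW(L) \geq 0$, so that the term $\tfrac{n-2}{n+1}\qW(L)$ may be dropped from \eqref{calabi1}, and then divide both sides by the positive quantity $|L|^{(n+4)/(n+1)}$, using the exponent identity $|L|^{2}/|L|^{(n+4)/(n+1)} = |L|^{(n-2)/(n+1)}$.

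I do not expect any single step to be a serious obstacle, since the sharp differential inequality and the curvature identity are both in hand. The points most easily mishandled are twofold. First, the invocation of Theorem \ref{todtheorem} both to annihilate the $\ga$-terms and to secure $L \in \ker \div$ in the compact Gauduchon case: this is precisely where the full strength of the Einstein (conservation) condition, rather than merely the naive Einstein condition, enters, and is what makes the Gauduchon gauge the correct one to work in. Second, one must note that the two displayed forms of the right-hand sides of \eqref{calabi1} and \eqref{calabi2} agree only when $\ga \equiv 0$ — a short computation with \eqref{confscal} shows the $|\ga|^{2}$ coefficients differ by a factor of $\tfrac{n}{n+1}$ otherwise — so the $\sR_{h}$-form is asserted only in the exact case, as the parenthetical remark in the statement indicates. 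Finally, it is worth emphasizing that the use of the refined Kato inequality through \eqref{sharplapom}, rather than a naive Bochner estimate on $\lap_{h}|L|^{2}$ as in \eqref{lapombt4}, is exactly what makes the exponent $(n-2)/(n+1)$ and the stated coefficients come out; a crude estimate would be insufficient for the subsequent maximum-principle argument underlying Theorem \ref{cubictorsiontheorem}.
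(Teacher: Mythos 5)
Your proof is correct and follows essentially the same route as the paper's: self-conjugacy and Theorem \ref{todtheorem} (the paper cites its repackaging, Theorem \ref{classtheorem}) put $L$ in $\ker\klie\cap\ker\div$ and annihilate the $\ga$-terms of \eqref{qrlestimate}, after which \eqref{sharplapom} with $k=3$ gives \eqref{calabi1} and dropping $\mr{L}^{ij}\mr{L}_{ij}\geq 0$ and $\qW(L)\geq 0$ finishes the argument exactly as in the text. Your observation that the two displayed right-hand sides coincide only when $\ga\equiv 0$ is a fair and careful reading of the statement (though the precise discrepancy is not a factor of $n/(n+1)$), and is consistent with the fact that the paper's subsequent applications use only the first form when $\ga\neq 0$.
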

\begin{proof}
If $E_{ijkl} = 0$ then $L_{ijk} \in \ker \klie$ by \eqref{kliebt}. If $(\en, [h])$ is Einstein, then $\div(L)_{ij} = n\ga_{p}L_{ij}\,^{p}$. If $h$ is a Gauduchon metric and $M$ is compact, then by Theorem \ref{classtheorem}, there holds $\ga_{p}L_{ij}\,^{p} =0$, so $\div(L) = 0$, while if $(\en, [h])$ is exact, then for a distinguished metric $h \in [g]$ there holds $\ga_{i} = 0$, so $\div(L) = 0$. Similarly, if $h$ is Gauduchon and $M$ is compact, by Theorem \ref{classtheorem} there vanish $L^{ij}D_{(i}\ga_{j)}$ and $L^{ij}\ga_{i}\ga_{j}$, while these of course vanish for the distinguished metric of an exact AH structure. Hence in either case, \eqref{qrlestimate} shows $\qR(L) \geq  C^{ijkl}\sW_{ijkl} + \left(n|\ga|^{2} + \tfrac{n+1}{n(n-1)}\left(\uR_{h} + \tfrac{1}{4}|L|^{2}_{h}\right)\right)L$, and \eqref{sharplapom} of Lemma \ref{swlemma} shows \eqref{calabi1}. The inequality \eqref{calabi2} is immediate from \eqref{calabi1} when $\qW(L) \geq 0$.
\end{proof}

\begin{remark}
The condition $\qW(L) \geq 0$ is satisfied if $[h]$ is locally conformally flat, or if $(\en, [h])$ is conjugate projectively flat (in the latter case $A_{ijkl} = 0$ so $\qW(L) = C^{ijkl}\sW_{ijkl}= \tfrac{1}{4}C^{ijkl}C_{ijkl} \geq 0$).
\end{remark}

\begin{remark}
The conclusion of Theorem \ref{calabiestimatetheorem} is strongest when $n = 3$ for in this case $\sW_{ijkl} \equiv 0$ so the condition $\qW(L) \geq 0$ is satisfied trivially.
\end{remark}

The inequality \eqref{calabi2} is slightly stronger than the similar inequality for improper affine hyperspheres in Proposition $1$ of \cite{Calabi-improper}; the improvement is due to the use of the refined Kato inequality \eqref{katolemma}.

\subsubsection{}
Theorem \ref{cyestimatetheorem} is an estimate due to S.Y. Cheng and S.T. Yau of the growth on a complete Riemannian manifold of a non-negative function the Laplacian of which satisfies a certain inequality. The content of Theorem \ref{cyestimatetheorem} is essentially that of Theorem $5$ and its Corollaries $1$ and $2$ in Section $4$ of \cite{Cheng-Yau-affinehyperspheresI}, and the argument is also the same one used by Cheng and Yau to prove Theorem $2$ of their \cite{Cheng-Yau-maximalspacelike}, which theorem estimates the growth of the norm of the second fundamental form of a complete maximal spacelike hypersurface in Minkowski space. %
For the convenience of the reader a full proof is given following Section $4$ of \cite{Cheng-Yau-affinehyperspheresI} (equivalently Section $3$ of \cite{Cheng-Yau-maximalspacelike}) and the proof of Yau's gradient estimate for harmonic functions given in I.$3$ of \cite{Schoen-Yau}.

\begin{theorem}[Cheng-Yau. \cite{Cheng-Yau-maximalspacelike}, \cite{Cheng-Yau-affinehyperspheresI}]\label{cyestimatetheorem}
Let $(M, h)$ be a complete Riemannian manifold with Ricci curvature bounded from below by $-\ka^{2}(n-1)h_{ij}$ for some real constant $\ka \geq 0$. Suppose $u \in \cinf(M)$ is non-negative and not identically $0$ and satisfies $\lap_{h}u \geq Bu^{1 + \si} - Au$ for constants $B > 0$, $\si > 0$, and $A \in \rea$. Then for any $x \in M$ at which $u(x) \neq 0$, and any $a \in \reat$, on the open ball $B(x, a)$ of radius $a$ centered at $x$ there holds 
\begin{align}\label{cyestimate}
u \leq (a^{2} - r^{2})^{-2/\si}\left|(\tfrac{A}{B})a^{4} + (\tfrac{4 \ka(n-1)}{B\si })a^{3} + (\tfrac{4((n+2)\si + 4)}{B\si^{2}})a^{2}\right|^{1/\si}
\end{align}
in which $r \defeq d(x, \dum)$ is the distance from $x$ determined by the Riemannian metric $h$. In particular, letting $a \to \infty$, there holds $\sup_{M}u \leq |A/B|^{1/\si}$.
\end{theorem}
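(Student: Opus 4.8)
The plan is to follow the Cheng--Yau maximum principle argument in the form used for the gradient estimate of Yau, adapted to the differential inequality $\lap_{h} u \geq Bu^{1+\si} - Au$. First I would fix $x\in M$ with $u(x)\neq 0$ and $a\in\reat$, and introduce the cutoff-type auxiliary function
\begin{align*}
G \defeq (a^{2}-r^{2})^{2}u
\end{align*}
on the closed ball $\bar{B}(x,a)$, where $r = d(x,\cdot)$. Since $u$ is non-negative and $G$ vanishes on $\partial B(x,a)$ while $G(x)>0$, the continuous function $G$ attains a positive interior maximum at some point $x_{0}\in B(x,a)$. The strategy is to evaluate the necessary conditions $\nabla G(x_{0}) = 0$ and $\lap_{h}G(x_{0})\leq 0$ (in the barrier/support sense, to handle the possible non-smoothness of $r$ at the cut locus), and to extract from these an inequality on $u(x_{0})$ that, after unwinding, bounds $G$ everywhere by the right-hand side of \eqref{cyestimate}.

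The key steps, in order, are as follows. At $x_{0}$ the vanishing of $\nabla G$ gives $u\,\nabla(a^{2}-r^{2})^{2} = -(a^{2}-r^{2})^{2}\nabla u$, which lets me express $\nabla u$ in terms of $\nabla r$ and replaces first-derivative terms of $u$ by explicit powers of $(a^{2}-r^{2})$ and $r$. The condition $\lap_{h}G(x_{0})\leq 0$ expands via the product rule into three groups: the term $(a^{2}-r^{2})^{2}\lap_{h}u$, which I bound below using the hypothesis $\lap_{h}u\geq Bu^{1+\si}-Au$; the term $u\,\lap_{h}(a^{2}-r^{2})^{2}$, for which I need an upper bound on $\lap_{h}(a^{2}-r^{2})^{2}$; and the cross term $2\langle\nabla(a^{2}-r^{2})^{2},\nabla u\rangle$, which I rewrite using the gradient relation just derived. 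To bound $\lap_{h}(a^{2}-r^{2})^{2} = 2(a^{2}-r^{2})\lap_{h}(a^{2}-r^{2}) + 2|\nabla(a^{2}-r^{2})|^{2}$ I invoke the Laplacian comparison theorem: the Ricci lower bound $\ric\geq -\ka^{2}(n-1)h$ gives $\lap_{h} r \leq (n-1)\big(\tfrac1r + \ka\big)$ (in the barrier sense), hence $\lap_{h}(a^{2}-r^{2}) = -2r\lap_{h}r - 2|\nabla r|^{2} \geq -2(n-1)(1+\ka r) - 2$, and $|\nabla r| = 1$. Collecting, multiplying through by $(a^{2}-r^{2})^{2}$ to homogenize to powers of $G$, and using $r<a$ to replace $r$ by $a$ in error terms, the inequality $\lap_{h}G(x_{0})\leq 0$ becomes a polynomial inequality of the schematic form
\begin{align*}
B\,G(x_{0})^{1+\si}\,(a^{2}-r^{2})^{-2\si} \leq A\,G(x_{0})\,a^{\text{(power)}} + \ka(n-1)\,G(x_{0})\,a^{\text{(power)}} + ((n+2)\si+4)\,G(x_{0})\,a^{\text{(power)}},
\end{align*}
from which dividing by $G(x_{0})$ and solving for $G(x_{0})^{\si}$ yields the stated bound at the maximum point. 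Since $G(x_{0})\geq G(y)$ for every $y$, and in particular $G\geq (a^{2}-r^{2})^{2}u$ at an arbitrary point, evaluating at a general point and taking the $1/\si$ power produces \eqref{cyestimate}; letting $a\to\infty$ kills the $a^{3},a^{2}$ corrections relative to the $a^{4}$ term and gives $\sup_{M}u\leq |A/B|^{1/\si}$.

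The main obstacle I anticipate is bookkeeping the exact numerical coefficients so that they match $\tfrac{4\ka(n-1)}{B\si}$ and $\tfrac{4((n+2)\si+4)}{B\si^{2}}$ in \eqref{cyestimate}; this requires carefully tracking the powers of $(a^{2}-r^{2})$ that arise when the gradient relation at $x_{0}$ is substituted into the cross term, and applying Young's inequality (in the form $pq\leq \tfrac{1}{2}p^{2}+\tfrac{1}{2}q^{2}$ with a suitably chosen weight $\epsilon$ depending on $\si$) to absorb the first-derivative contribution $|\nabla r|^{2}(a^{2}-r^{2})^{2}u$ into the leading $B G^{1+\si}$ term without spoiling the coefficient. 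A secondary technical point is the justification of the maximum-principle computation at a point where $r$ fails to be smooth; this is handled by Calabi's standard trick of replacing $r$ with a smooth upper barrier that agrees with $r$ to first order at $x_{0}$ and has Laplacian obeying the comparison bound, so that the inequality $\lap_{h}G(x_{0})\leq 0$ remains valid in the supporting sense. Beyond these, every step is a routine, if lengthy, computation.
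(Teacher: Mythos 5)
Your overall strategy is the right one and matches the paper's: an interior maximum of a cutoff-weighted version of $u$ on $B(x,a)$, the vanishing of the gradient there to eliminate first derivatives, the Laplacian comparison $r\lap_{h} r \leq (n-1)(1+\ka r)$ from the Ricci lower bound, and Calabi's barrier trick at the cut locus. The gap is in your choice of auxiliary function: you take $G = (a^{2}-r^{2})^{2}u$ with the exponent $2$ fixed, whereas the argument requires $f = (a^{2}-r^{2})^{\al}u$ with $\al = 2/\si$. This is not mere bookkeeping. The quantity that the maximum-principle computation bounds by a polynomial in $a$ is $(a^{2}-r^{2})^{2}u^{\si}$ evaluated at the maximum point $x_{0}$, and this equals $f^{\si}$ only for $\al = 2/\si$; with your $G$ it equals $G^{\si}(a^{2}-r^{2})^{2-2\si}$, which is not a monotone function of $G$ alone, so the bound at $x_{0}$ does not transfer to other points via the maximality of $G$. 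Concretely, after multiplying the inequality $u(x_{0})^{\si}\leq \tfrac{A}{B} + \tfrac{2\al(n+\ka(n-1)r)}{B(a^{2}-r^{2})} + \tfrac{4\al(\al+1)r^{2}}{B(a^{2}-r^{2})^{2}}$ by $(a^{2}-r^{2})^{2\si}$ to express the left side as $G(x_{0})^{\si}$, the error terms acquire factors $(a^{2}-r^{2})^{2\si-1}$ and $(a^{2}-r^{2})^{2\si-2}$; for $\si<1$ these blow up as $r\to a$ and cannot be dominated by powers of $a$, so the argument genuinely fails there. Even for $\si\geq 1$, where a bound can be extracted, what you get is $u\leq (a^{2}-r^{2})^{-2}\,a^{4}\,(\cdots)^{1/\si}$ rather than the stated $u\leq(a^{2}-r^{2})^{-2/\si}(\cdots)^{1/\si}$, and the coefficients you obtain are independent of $\si$, so they cannot match $\tfrac{4\ka(n-1)}{B\si}$ and $\tfrac{4((n+2)\si+4)}{B\si^{2}}$; those denominators are precisely the fingerprint of $\al=2/\si$ via $2\al = 4/\si$ and $4\al(\al+1)=\tfrac{8(\si+2)}{\si^{2}}$.

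Two smaller remarks. First, no Young's inequality is needed anywhere: writing $f=(a^{2}-r^{2})^{\al}u$ and computing $\lap_{h}\log$-style, the gradient condition at $x_{0}$ gives $\tfrac{du}{u}=\tfrac{2\al r\,dr}{a^{2}-r^{2}}$ exactly, so the cross term and $|du|^{2}/u^{2}$ are evaluated exactly and combine into $-4\al(\al+1)r^{2}/(a^{2}-r^{2})^{2}$; introducing a weighted Young inequality would only degrade the constants. Second, your treatment of the cut locus is the right one and is what the paper does: replace $r$ by $\bar{r}+\ep$ for $\bar{r}$ the distance from a nearby point $\bar{x}$ on the minimizing geodesic, observe the modified function still peaks at $x_{0}$ where $\bar{r}$ is smooth, and let $\ep\to 0$.
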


\begin{proof}
Let $u \in \cinf(M)$ and suppose $u \geq 0$ and $u$ is not identically zero. Suppose $x$ chosen so that $u(x) \neq 0$. Let $a > 0$ and $\al > 0$ and define $f = (a^{2} - r^{2})^{\al}u$ which is by assumption not identically zero on $B(x, a)$. Recall that $r$ is smooth on the complement of the cut locus $\cut(x)$ of $x$, and so $f$ is smooth on the complement of $\cut(x)$ in the ball $B(x, a)$, and there hold
\begin{align}
\label{cy1}Df & = \left(\tfrac{du}{u} - \tfrac{2\al r dr}{a^{2} - r^{2}} \right)f,\\
\label{cy2}\lap f & = \left|\tfrac{du}{u} - \tfrac{2\al r dr}{a^{2} - r^{2}}\right|^{2}f + \left(\tfrac{\lap u}{u} - \tfrac{|du|^{2}}{u^{2}} - \tfrac{2\al(r\lap r + 1)}{a^{2} - r^{2}} - \tfrac{4\al r^{2}}{(a^{2} - r^{2})^{2}} \right)f,
\end{align}
in which $\lap$ is the Laplacian determined by $h$. The restriction to the boundary $\pr B(x, a)$ of $f$ is identically $0$, and since by construction $f$ is not identically $0$ on $B(x, a)$, it must be that its restriction to the closure of $B(x, a)$ attains its maximum at some point $x_{0} \in B(x, a)$. First suppose $x_{0} \notin \cut(x)$. The proof in the case $x_{0} \in \cut(x)$ is similar, and will be sketched at the end. Since $f(x_{0}) \neq 0$, also $u(x_{0}) \neq 0$. It follows from \eqref{cy1} that at $x_{0}$ there holds $\tfrac{du}{u} = \tfrac{2\al r dr}{a^{2} - r^{2}}$ and, as at $x_{0}$ there holds $0 \geq \lap f$, in \eqref{cy2} this implies that at $x_{0}$ there holds
\begin{align}\label{cy3}
\tfrac{\lap u}{u}  \leq \tfrac{2\al(r\lap r + 1)}{a^{2} - r^{2}} + \tfrac{4\al (\al + 1)r^{2}}{(a^{2} - r^{2})^{2}}.
\end{align}
By standard comparison arguments the assumed lower bound on the Ricci curvature implies that on $M \setminus \cut(x)$ there holds $r \lap r \leq (n-1)(1 + \ka r)$. In \eqref{cy3} this shows that at $x_{0}$ there holds
 \begin{align}\label{cy4}
Bu^{\si} - A \leq \tfrac{\lap u}{u}  \leq \tfrac{2\al(n + \ka(n-1)r)}{a^{2} - r^{2}} + \tfrac{4\al (\al + 1)r^{2}}{(a^{2} - r^{2})^{2}}.
\end{align}
Since $B$ is by assumption positive, \eqref{cy4} can be rewritten to show that at $x_{0}$ there holds
\begin{align}\label{cy5}
u^{\si} \leq \tfrac{A}{B} + \tfrac{2\al(n + \ka(n-1)r)}{B(a^{2} - r^{2})} + \tfrac{4\al (\al + 1)r^{2}}{B(a^{2} - r^{2})^{2}}
\end{align}
Let $\al = 2/\si$ and multiply \eqref{cy5} by $(a^{2} - r^{2})^{2}$ to obtain that at $x_{0}$ there holds
\begin{align}\label{cy6}
\begin{split}
f^{\si} &\leq \tfrac{A}{B}(a^{2} - r^{2})^{2} + \tfrac{4(n + \ka(n-1)r)(a^{2} - r^{2})}{B\si} + \tfrac{8\si (2 + \si)r^{2}}{B\si^{2}} \\
&\leq (\tfrac{A}{B})a^{4} + (\tfrac{4 \ka(n-1)}{B\si})a^{3} + (\tfrac{4((n + 2)\si ) + 4)}{B\si^{2}} )a^{2}.
\end{split}
\end{align}
which implies \eqref{cyestimate}. 
For the proof in the case $x_{0} \in \cut(x)$ the argument is modified as in the proof of Yau's gradient estimate on page $21$ in Section I.$3$ of \cite{Schoen-Yau}. For the convenience of the reader this is recalled here. By the assumption that $x_{0} \in \cut(x)$ there is a minimizing geodesic joining $x$ to $x_{0}$ the image $\si$ of which necessarily lies in $B(x, a)$ (because no point of $\si$ is farther from $x$ than is $x_{0}$). 
Let $\bar{x}$ be a point on $\si$ lying strictly between $x$ and $x_{0}$ at some distance $\ep> 0$ from $x$. Since $\si$ is minimizing, no point of the interior of $\si$ can be conjugate to $\bar{x}$. Were $x$ or $x_{0}$ conjugate to $\bar{x}$ then $\bar{x}$ would be in its cut locus, which it is not because $x_{0} \in \cut(x)$ (and so $x \in \cut(x_{0}))$. Thus no point of $\si$ is a conjugate point of $\bar{x}$ and 
hence there is some $\delta > 0$ for which there is an open $\delta$ neighborhood $N \subset B(x, a)$ of $\si$ containing no conjugate point of $\bar{x}$. Let $\bar{r} = d(\bar{x}, \dum)$. By the triangle inequality, $\bar{r} + \ep \geq r$. On the other hand $\bar{r}(x_{0}) + \ep = r(x_{0})$. Define $\bar{f} = (a^{2} - (\bar{r} + \ep)^{2})^{\al}u$. Then $\bar{f} \leq f$ on $N$ and $\bar{f}(x_{0}) = f(x_{0})$, so $\bar{f}$ attains its maximum value on $N$ at $x_{0}$. As $\bar{r}$ is smooth near $x_{0}$ the preceeding argument goes through with $\bar{f}$ in place of $f$, and letting $\ep \to 0$ at the end yields \eqref{cyestimate}.
\end{proof}

\subsubsection{Notions of completeness}
Since a metric is complete if and only if any positively homothetic metric is complete, it makes sense to say that an exact Riemannian AH structure $(\en, [h])$ is \textbf{metrically complete} if any distinguished metric is complete. More generally any condition distinguishing a positive homothety class within $[h]$ determines a notion of completeness; for example, a Riemannian signature AH structure on a compact manifold is \textbf{Gauduchon complete} if it is complete with respect to any Gauduchon metric. The reason for the qualifier \textit{metrically} is that for any CP pair $(\en, [h])$ it also makes sense to consider the completeness of the aligned representative $\nabla \in \en$. A CP pair is \textbf{affinely complete} if its aligned representative is complete. An interesting problem not addressed here is to understand the relatationships between different notions of completeness. 

By Lemma \ref{conjprojflatae} if $(\en, [h])$ is conjugate projectively flat then it is exact and $A_{ijk}\,^{l}$ and $E_{ijk}\,^{l}$ vanish identically. In particular it makes sense to a speak of a metrically complete conjugate projectively flat Riemannian AH structure.

\subsubsection{Growth of the cubic torsion}
Theorem \ref{cubictorsiontheorem} is the main structural result of this paper. %

\begin{theorem}\label{cubictorsiontheorem}
  Let $(\en, [h])$ be a Riemannian signature Einstein AH structure with self-conjugate curvature on a manifold $M$ of dimension $n \geq 3$. Suppose either $M$ is compact or $(\en, [h])$ is exact and metrically complete. Suppose that $C^{ijkl}\sW_{ijkl} \geq 0$ (this is automatic if $n = 3$). Then one of the following mutually exclusive possibilities holds:
\begin{enumerate}
\item $R > 0$ and $(\en, [h])$ is an Einstein Weyl structure which is either not closed or is exact. 
\item $R \equiv 0$ and $(\en, [h])$ is closed Einstein Weyl. If $M$ is compact and $(\en, [h])$ is not exact then the universal cover of $M$ equipped with the pullback of a Gauduchon metric is isometric to a product metric on $\rea \times N$ where $N$ is simply-connected with an Einstein metric of positive scalar curvature; if $3 \leq n \leq 4$, then $N$ is diffeomorphic to $S^{n-1}$. There is induced on $N$ an exact Riemannian Einstein Weyl structure which has positive scalar curvature, and for which the induced metric $g$ is a distinguished metric.
\item $R$ is negative and $\nabla$-parallel and $(\en, [h])$ is exact. A distinguished metric $h \in [h]$ has nonpositive scalar curvature $\sR_{h}$ and Ricci curvature $\sR_{ij}$ satisfying
\begin{align}\label{riccibound}
\sR_{ij} \leq  \tfrac{(n-2)(n+1)}{4n(n+2)}\bt H_{ij} = \tfrac{(n-2)(n+1)}{4n(n+2)}|L|_{h}^{2}h_{ij} \leq -\tfrac{(n-2)(n+1)}{n(n+2)}RH_{ij}.
\end{align}
Moreover, $\sR_{h} = 0$ if and only if $|L|^{2}_{h}$ is constant, in which case $L$ is $h$-parallel, $C^{ijkl}\sW_{ijkl} = 0$ and $4\sR_{ij} = \mr{L}_{ij}$. 
\item $n = 3$, $R$ is somewhere positive and somewhere non-positive, and $(\en, [h])$ is not closed. The scalar curvature $\sR$ of a Gauduchon metric $h \in [h]$ satisfies $\sR \leq 5|\ga|^{2}_{h}$. \end{enumerate}
If $[h]$ is locally conformally flat, then in cases $(1)$ and $(2)$, $(\en, [h])$ is exact and a distinguished metric has constant sectional curvature, positive or identically zero according to whether $R$ is positive or zero. If $(\en, [h])$ is conjugate projectively flat, then case $(4)$ does not occur, and in case $(1)$, $(\en, [h])$ is exact and a distinguished metric has constant positive sectional curvature.
\end{theorem}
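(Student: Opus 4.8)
The plan is to read the four cases off the sign of the weighted scalar curvature $R$ and then, in each, to extract the extra conclusions from the sharp differential inequality of Theorem \ref{calabiestimatetheorem} together with a maximum principle. First I would reduce to the sign of $R$: in the compact case Theorem \ref{negexacttheorem} already furnishes the classification by the sign of $R$ (negative and $\nabla$-parallel and exact; identically zero and closed; positive; or $n\le 3$ with $R$ of mixed sign), matching cases (3), (2), (1), (4) respectively, while in the exact complete case $R$ is $\nabla$-parallel by Lemma \ref{parallelexactlemma}, hence of constant sign, so that cases (1)--(3) exhaust the non-compact possibilities and case (4) occurs only for compact $M$. Since the curvature is self-conjugate, $E_{ijkl}=0$, and by hypothesis $\qW(L)=C^{ijkl}\sW_{ijkl}\ge 0$ via \eqref{qwl}; thus the hypotheses of Theorem \ref{calabiestimatetheorem} hold, and writing $u=|L|^{(n-2)/(n+1)}$ the inequality \eqref{calabi2} reads $\lap_h u\ge c\,u$ wherever $L\ne 0$, with $c=\tfrac{n-2}{n(n-1)}\bigl(\uR_h+\tfrac14|L|^2\bigr)+\tfrac{n(n-2)}{n+1}|\ga|^2$. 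Throughout, the Cheng--Yau estimate will be applicable because $\sR_{ij}=\tfrac14 L_{ij}+\tfrac{\ka}{n}h_{ij}$ by \eqref{lapeinstein2} with $L_{ij}$ positive semidefinite, so the Ricci curvature is bounded below.

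For cases (1) and (2) the goal is to force $L\equiv 0$, i.e.\ that $(\en,[h])$ be Weyl. When $R>0$ one has $\uR_h>0$, so $c>0$ everywhere; when $R\equiv 0$ one has $\uR_h\equiv 0$, so $c\ge 0$ and $c>0$ wherever $L\ne 0$. On a compact manifold I would evaluate at a maximum of $u$: a positive maximum would give $0\ge\lap_h u\ge c\,u>0$, a contradiction, so $L\equiv 0$. In the exact complete case $\ga=0$, and since $\tfrac{n-2}{n(n-1)}\uR_h\,u\ge 0$ the right side dominates $Bu^{1+\si}$ with $B=\tfrac{n-2}{4n(n-1)}$ and $\si=\tfrac{2(n+1)}{n-2}$ (note $u^\si=|L|^2$); Theorem \ref{cyestimatetheorem} then applies with effective $A=0$ and yields $\sup u=0$. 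Once $(\en,[h])$ is Weyl, the remaining assertions in (1) and (2) (Einstein Weyl, closed versus exact, and the de Rham splitting in (2)) follow from Theorem \ref{negexacttheorem} in the compact case and are immediate in the exact case.

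For cases (3) and (4) I would rewrite $c$ using the constant $\ka=\uR_h+n(n-4)|\ga|^2$ of Corollary \ref{stressenergycorollary}; a short computation shows the resulting $|\ga|^2$-coefficient is non-negative, so $\lap_h u\ge Bu^{1+\si}-Au$ with $A=-\tfrac{n-2}{n(n-1)}\ka$. Since $\ka\le 0$ in both cases, Theorem \ref{cyestimatetheorem} gives $\sup|L|^2\le A/B=-4\ka$. In case (3) ($\ga=0$, $\ka=\uR_h<0$) I would combine $|L|^2\le -4\uR_h$ with the algebraic pinch $L_{ij}\le\tfrac{n}{n+2}|L|^2 h_{ij}$ of Remark \ref{katoremark} and with $\sR_{ij}=\tfrac14 L_{ij}+\tfrac{\uR_h}{n}h_{ij}$ to obtain \eqref{riccibound}, and with $\sR_h=\uR_h+\tfrac14|L|^2$ from \eqref{confscal} to obtain $\sR_h\le 0$; in case (4) the bound $|L|^2\le -4\ka$ together with $\sR_h=\ka+5|\ga|^2+\tfrac14|L|^2$ gives $\sR\le 5|\ga|_h^2$. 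The rigidity clause is the equality case: $\sR_h\equiv 0$ forces $|L|^2\equiv -4\uR_h$ constant, whence $\lap_h|L|^2=0$ in \eqref{lapombt4} makes the non-negative terms $\qW(L)$, $|\clie(L)|^2$ and $\mr{L}^{ij}\mr{L}_{ij}$ all vanish, so $L$ is parallel by \eqref{normdom}, $C^{ijkl}\sW_{ijkl}=0$, and $4\sR_{ij}=\mr{L}_{ij}$.

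Finally, the two addenda follow by specialization. If $[h]$ is locally conformally flat, then in cases (1) and (2) the structure is Einstein Weyl, hence closed by Theorem \ref{eastwoodtodtheorem} and thus exact with a distinguished metric of constant sectional curvature (positive or zero according to the sign of $R$) by the corollary following that theorem. If $(\en,[h])$ is conjugate projectively flat, then $A_{ijkl}=0$ and $F_{ij}=0$ by Corollary \ref{conjprojflatae}, so $\qW(L)=\tfrac14 C^{ijkl}C_{ijkl}\ge 0$ holds automatically and closedness excludes case (4), while in case (1) the Weyl conclusion together with projective flatness forces constant positive sectional curvature. The part I expect to be genuinely delicate is the converse direction of the rigidity in case (3)---that constant $|L|^2$ forces $\sR_h\equiv 0$---since the inequalities at hand bound $|L|^2$ by $-4\uR_h$ but do not obviously saturate it; making this precise should require tracking the equality cases of the refined Kato inequality \eqref{kato} and of \eqref{lapombt4} rather than the inequalities alone, and is where I would concentrate the technical effort.
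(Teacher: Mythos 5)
Your proposal is correct and follows essentially the same route as the paper's proof: reduce to the sign of $R$ via Theorem \ref{negexacttheorem} (respectively Lemma \ref{parallelexactlemma} in the exact complete case), feed the differential inequality of Theorem \ref{calabiestimatetheorem} into the Cheng--Yau maximum principle of Theorem \ref{cyestimatetheorem}, and read off the conclusions. Your two small deviations are harmless: the interior-maximum argument for (1)--(2) in the compact case is an elementary special case of what Cheng--Yau already gives with $A=0$, and your direct substitution of $\uR_h\le-\tfrac14|L|_h^2$ into $\sR_{ij}=\tfrac14 L_{ij}+\tfrac{\uR_h}{n}h_{ij}$ together with $L_{ij}\le\tfrac{n}{n+2}|L|_h^2h_{ij}$ yields the sharper bound \eqref{riccibound} with exactly the same content as the paper's proof by contradiction. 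On the one point you flag as genuinely delicate --- the converse direction of the rigidity clause in (3), that $|L|_h^2$ constant forces $\sR_h=0$ --- the paper offers no additional argument: it simply asserts the equivalence from the identity $\sR_h=\uR_h+\tfrac14|L|_h^2$ with $\uR_h$ constant, which only shows that $\sR_h$ is then a non-positive constant, not that it vanishes. Your suspicion is well-founded; as literally stated that converse even fails for $L\equiv0$ (e.g.\ a compact hyperbolic manifold), so the substantive content of the rigidity clause is the forward implication, which you establish exactly as the paper does via the vanishing of the non-negative terms in \eqref{lapombt4}.
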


\begin{proof}
If $M$ is compact let $h \in [h]$ be a Gauduchon metric, and if $(\en, [h])$ is exact let $h \in [h]$ be a distinguished metric (which is complete by assumption). Recall Theorem \ref{negexacttheorem}. In the first case, by Theorem \ref{negexacttheorem}, $R$ can have values both positive and non-positive if $n = 3$, while in the second case $R$ is parallel, so cannot change sign. In either case, by Theorem \ref{calabiestimatetheorem} there holds
\begin{align}
\lap_{h}|L|^{(n-2)/(n+1)}\geq \tfrac{n-2}{n(n-1)}\left(\uR_{h} + \tfrac{1}{4}|L|^{2}\right)|L|^{(n-2)/(n+1)},
\end{align}
wherever $L_{ijk}$ is not zero. If $(\en, [h])$ is exact, then $\uR_{h}$ is constant and \eqref{confric} implies $n\sR_{ij} \geq \uR_{h}h_{ij}$, so in either case the Ricci curvature of $h$ is bounded from below, and Theorem \ref{cyestimatetheorem} can be applied. If $R$ is positive or identically zero then Theorem \ref{cyestimatetheorem} with $A = 0$, $B = \tfrac{(n-2)}{4n(n-1)}$, and $\si = \tfrac{2(n+1)}{(n-2)}$ shows $L_{ijk} \equiv 0$. If $R$ is negative, then Theorem \ref{cyestimatetheorem} with $A = \tfrac{(n-2)}{n(n-1)}\uR_{h}$, $B = \tfrac{(n-2)}{4n(n-1)}$, and $\si = \tfrac{2(n+1)}{(n-2)}$ implies $|L|^{2}_{h} \leq -4\uR_{h}$, or, what is the same, $\sR_{h} = \uR_{h} + \tfrac{1}{4}|L|^{2}_{h} \leq 0$. By Remark \ref{katoremark} there holds $L_{ij} \leq \tfrac{n}{n+2}|L|^{2}h_{ij} \leq -\tfrac{4n}{n+2}\uR_{h}h_{ij}$, and with \eqref{confric} this shows that $\sR_{ij} \leq -\tfrac{(n-2)(n+1)}{n(n+2)}RH_{ij}$. However the bound \eqref{riccibound} claimed in $(3)$ is slightly stronger. To see it, suppose there is $p \in M$, a constant $a \neq 0$, and a vector $X \in T_{p}M$ such that at $p$ there holds $\sR_{ij}X^{i}X^{j} \geq a^{2}|X|^{2} > 0$. By \eqref{confric} and Remark \ref{katoremark} there holds $\sR_{ij}X^{i}X^{j} = \tfrac{1}{n}\uR_{h}|X|^{2} + \tfrac{1}{4}|i(X)L|^{2}_{h} \leq \left(\tfrac{1}{n}\uR_{h} + \tfrac{n}{4(n+2)}|L|^{2}\right)|X|^{2}$, so that, because $\sR_{h} \leq 0$, at $p$ there holds
\begin{align*}
\tfrac{(n-2)(n+1)}{4n(n+2)}|L|_{h}^{2} = \tfrac{n}{4(n+2)}|L|^{2} - \tfrac{1}{4n}|L|^{2}\geq a^{2} - \tfrac{1}{n}\uR_{h} - \tfrac{1}{4n}|L|^{2} = a^{2} - \tfrac{1}{n}\sR_{h} \geq a^{2}.
\end{align*}
This yields a contradiction if $a^{2}$ is bigger than the value at $p$ of $\tfrac{(n-2)(n+1)}{4n(n+2)}|L|_{h}^{2}$, and so there holds \eqref{riccibound}. Because $\uR_{h}$ is constant and $(\en, [h])$ is exact, there holds $\sR_{h} = 0$ if and only if $|L|_{h}^{2}$ is constant. With \eqref{lapombt2} and \eqref{calabi1} this implies $D_{i}L_{jkl} = 0$ and $C^{ijkl}\sW_{ijkl} = 0$, and with \eqref{confric}, that $4\sR_{ij} = \mr{L}_{ij}$.

The remaining conclusions of $(1)$ and $(2)$ in the compact case follow from Theorem \ref{negexacttheorem}. In particular, if $M$ is compact and $R \equiv 0$ and $(\en, [h])$ is not exact, then by Theorem \ref{negexacttheorem} the universal cover of $M$ equipped with the pullback of the Gauduchon metric is isometric to a product metric on $\rea \times N$ where $N$ is simply connected and compact and there is induced on $N$ an exact Riemannian Einstein AH structure of positive scalar curvature which by the preceeding this must in fact be an Einstein Weyl, and because the cubic torsion on $M$ is annihilated by the Faraday one-form, it follows that the original AH structure must be Weyl. 

If $n = 3$ and $R$ is somewhere positive and somewhere non-positive, then by Theorem \ref{negexacttheorem}, for a Gauduchon metric $\uR_{h} - 3|\ga|^{2}$ is a negative constant $\ka$ and by Theorem \ref{calabiestimatetheorem} there holds
\begin{align}
\lap_{h}|L|^{1/4}\geq \left(\tfrac{\ka}{6} + \tfrac{5}{4}|\ga|^{2} + \tfrac{1}{4}|L|^{2}\right)|L|^{1/4}\geq \left(\tfrac{\ka}{6} + \tfrac{1}{4}|L|^{2}\right)|L|^{1/4}.
\end{align}
By Theorem \ref{cyestimatetheorem} with $A = -\ka/6$, $B = 1/24$, and $\si = 8$ it follows that $\ka  + \tfrac{1}{4}|L|^{2} \leq 0$, so that $\sR = \uR_{h} + \tfrac{1}{4}|L|^{2} +2|\ga|^{2} = \ka  + \tfrac{1}{4}|L|^{2} + 5|\ga|^{2} \leq 5|\ga|^{2}$. 

For the conformally flat case the conclusions follow from Theorem \ref{eastwoodtodtheorem}. The conclusions in the conjugate projectively flat case follow because a conjugate projectively flat AH structure is necessarily closed.
\end{proof}
As is explained in section \ref{calabiriccisection} below, in the projectively flat case one has in case $(3)$, by a theorem of Calabi, the stronger result that the Ricci curvature is non-positive. 

\subsubsection{}\label{s3examplerevisited}
Recall the example of Section \ref{s3example} of a one-parameter family of strongly Einstein AH structures on $S^{3}$. Since the dimension is $3$ the metric $h$ is conformally flat, so there holds $\qW(L) =0$. Since when $t < 3/\sqrt{2}$ this AH structure is strongly Einstein with positive scalar curvature and non-zero cubic torsion, and satisfies $\qW(L) = 0$, this example shows that in Theorem \ref{cubictorsiontheorem} the condition $E_{ijk}\,^{l} = 0$ is necessary for conclusion $(1)$. Since $L_{ijk} = 2\Ga_{ijk}$, it follows from \eqref{confscal} that the scalar curvature of the distinguished metric $h_{ij}$ is $\sR_{h} = 3/4$, which is positive. Since for $t > 3/\sqrt{2}$ the scalar curvature of $(\en, [h])$ is negative, this shows that in Theorem \ref{cubictorsiontheorem} the condition $E_{ijk}\,^{l} = 0$ is necessary for conclusion $(3)$.

In this example $D$ is the invariant connection given by $A = \tfrac{1}{2}\ad$, so that $\sqrt{2}D_{i}X_{j} = -Y_{[i}Z_{j]}$, from which it follows that $D_{(i}\Ga_{jkl)} = 0$, so $\clie(\Ga) = 0$. Thus $\Ga_{ijk}$ is an example of a conformal Killing tensor.

\subsection{Infinitesimal automorphisms of convex flat projective structures}\label{automorphismsection}
In this section some results are deduced about infinitesimal projective automorphisms of convex flat real projective structures. The best result is obtained in two dimensions:

\begin{theorem}\label{twodconvexautotheorem}
A convex flat real projective structure on a compact, orientable surface of genus at least $2$ admits no non-trivial infinitesimal projective automorphism.
\end{theorem}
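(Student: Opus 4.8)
The plan is to deduce Theorem~\ref{twodconvexautotheorem} from the Bochner-type vanishing machinery developed for Einstein AH structures, specialized to the two-dimensional case. The starting point is Theorem~\ref{convextheorem}: a convex flat real projective structure $\en$ on the surface $M$ determines a unique conformal structure $[h]$ such that $(\en, [h])$ is an exact Riemannian signature Einstein AH structure with negative scalar curvature. Thus $\en$ comes canonically equipped with all the auxiliary data (the conformal class, a distinguished metric, the cubic torsion) needed to run the analytic arguments. The first step is to observe that an infinitesimal projective automorphism $X$ of $\en$ is in particular a projective harmonic vector field in the sense described in section~\ref{automorphismsection}, i.e. $H^{ij}(\lie_X \en)_{ij}{}^k = 0$, so that the sought-after vanishing follows from the more general vanishing theorem for projective harmonic vector fields (Theorem~\ref{projkilltheorem}, announced as the source of Theorem~\ref{convexoneparametertheorem}).

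The key input is the sign of the curvature. By Theorem~\ref{calabitheorem} (Calabi's theorem, in the projectively flat case) the Ricci curvature of a distinguished metric of the Einstein AH structure associated to a convex flat projective structure is non-positive, and because the scalar curvature is negative it is somewhere strictly negative. With this in hand the plan is to mimic Couty's result \cite{Couty}: on a compact manifold a Riemannian metric with non-positive curvature that is somewhere negative admits no non-trivial infinitesimal projective automorphisms. Concretely, I would apply the Bochner argument to the one-form dual to $X$ (or to the relevant tensorial object built from $X$), using the Weitzenb\"ock identities of section~\ref{weitzenbocksection} together with the curvature hypothesis to force $X$ to be parallel, and then use the strict negativity of the curvature at some point to force $X$ to vanish there, hence identically. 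In dimension two there is the additional clean feature, emphasized at the end of the introduction, that the Gau\ss--Bonnet theorem ties the curvature hypothesis directly to the topology: on a surface of genus at least $2$ the total curvature is negative, so the requisite negativity is automatic and one need not assume it separately.

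The main obstacle I anticipate is bookkeeping the relationship between an infinitesimal \emph{projective} automorphism and the associated conformal/metric data, so as to legitimately reduce to a Bochner vanishing statement for a genuine tensor field with respect to the distinguished metric $h$. The issue is that the natural object is the Lie derivative $\lie_X \en$ of the projective structure, and one must extract from its vanishing (together with the AH and Einstein conditions) the precise divergence- or Codazzi-type equation that feeds into \eqref{lapomdivlie} or \eqref{lapomliediv}. Once $X$ is shown to annihilate the trace $H^{ij}(\lie_X\en)_{ij}{}^k$, the remaining step is to verify that projective harmonicity plus the Einstein condition places $X^\flat$ into the kernel of the appropriate first-order operator ($\clie$ or $\klie\cap\div$), at which point the integrated identity \eqref{bigbochner} and the sign of $\qR$ (controlled by the non-positive Ricci curvature via the computation of $\qR$ in rank one) do the work.

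A cleaner route, which I would present as the actual argument, avoids reproving everything: deduce Theorem~\ref{twodconvexautotheorem} as the immediate specialization to $n=2$ of the general Theorem~\ref{convexoneparametertheorem}, whose hypotheses are met here because the surface has genus at least $2$. The non-existence of non-trivial infinitesimal projective automorphisms then reduces, via Theorem~\ref{projkilltheorem} and Theorem~\ref{calabitheorem}, to the standard fact that a compact hyperbolic-type surface (negative Euler characteristic) carries no non-trivial conformal Killing field, since in two dimensions the vanishing theorem identifies projective harmonic fields with conformal Killing fields whose existence is obstructed by the topology. The substance of the proof is therefore already contained in the earlier Bochner vanishing theorem and Calabi's Ricci estimate; the final statement is the geometrically transparent two-dimensional corollary.
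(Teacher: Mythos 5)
Your proposal is correct and follows the paper's own route: deduce the statement from the $n=2$ case of Theorem~\ref{convexoneparametertheorem}, which rests on Theorem~\ref{convextheorem}, Calabi's Ricci non-positivity (Theorem~\ref{calabitheorem}), and the Bochner vanishing Theorem~\ref{projkilltheorem}, and then derive the contradiction with genus at least $2$ from the Gau\ss--Bonnet theorem. One small remark: the mechanism in the paper is not the absence of conformal Killing fields but rather that a non-trivial projective harmonic field would be parallel for a distinguished metric, forcing that metric to be flat (equivalently, forcing Euler characteristic zero), which is what Gau\ss--Bonnet excludes.
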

As mentioned in the introduction it would be unsurprising were Theorem \ref{twodconvexautotheorem}, but the more general Theorem \ref{convexoneparametertheorem} from which it follows is new, and the method of proof is as well. Theorem \ref{twodconvexautotheorem} is an illustration of how the AH formalism allows analytic tools to be applied to flat projective structures.

\subsubsection{}\label{calabiriccisection}
The proof of Theorem $5.1$ of Calabi's \cite{Calabi-completeaffine} implies 
\begin{theorem}[Theorem $5.1$ of \cite{Calabi-completeaffine}]\label{calabitheorem}
If for a projectively flat Riemannian signature Einstein AH structure with negative scalar curvature on a manifold of dimension at least $2$ a distinguished metric is complete, then the Ricci curvature of a distinguished metric is non-positive.
\end{theorem}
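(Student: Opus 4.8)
The plan is to reduce to the exact case and then run a maximum-principle argument on the largest eigenvalue of the Ricci tensor, using in an essential way that projective flatness removes the indefinite self-conjugate Weyl contribution. Since $(\en,[h])$ is projectively flat and Einstein with negative weighted scalar curvature, Lemma \ref{projprojflateinsteinlemma} shows that it has self-conjugate curvature, and Lemma \ref{projflatahlemma} gives $F_{ij}=0$, $A_{ijkl}=0$, and $E_{ijkl}=0$; in particular it is closed, so its weighted scalar curvature is parallel, and being negative, Lemma \ref{parallelexactlemma} shows it is proper and exact. I would fix a distinguished metric $h\in[h]$, which is complete by hypothesis and has vanishing Faraday primitive, and write $L_{ijk}=\bt_{ij}\,^{p}h_{pk}$. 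By Theorem \ref{eigentheorem} one has $\sR_{ij}=\tfrac14 L_{ij}+\tfrac{\uR_{h}}{n}h_{ij}$, where $\uR_{h}=h^{ij}R_{ij}$ is a negative constant, and, because $A_{ijkl}=0$, the tensor $L$ solves the Calabi-type equation \eqref{modifiedlapeinstein}. Since $L_{ij}=L_{ip}\,^{q}L_{jq}\,^{p}$ satisfies $L_{ij}X^{i}X^{j}=|i(X)L|_{h}^{2}\geq 0$, the tensor $L_{ij}$ is non-negative, whence $\sR_{ij}\geq\tfrac{\uR_{h}}{n}h_{ij}$; thus the Ricci curvature is bounded below, which is exactly what is needed to invoke the Cheng--Yau maximum principle on the complete manifold.

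Non-positivity of $\sR_{ij}$ is equivalent to the pointwise eigenvalue bound $L_{ij}\leq -\tfrac{4\uR_{h}}{n}h_{ij}$, and it is crucial that this be a bound on the largest eigenvalue of $L_{ij}$, not merely on $|L|_{h}^{2}$: the global bound $|L|_{h}^{2}\leq -4\uR_{h}$ coming from $\sR_{h}\leq 0$ (case $(3)$ of Theorem \ref{cubictorsiontheorem}) together with Remark \ref{katoremark} only yields $\sR_{ij}\leq\tfrac{(n-2)(n+1)}{n(n+2)}|\uR_{h}|\,h_{ij}$, which is positive. Accordingly I would let $v$ denote the largest eigenvalue of the symmetric endomorphism $\tfrac14 L^{i}{}_{j}$, so that the largest eigenvalue of $\sR^{i}{}_{j}$ is $v+\tfrac{\uR_{h}}{n}$, and set $u=(v+\tfrac{\uR_{h}}{n})_{+}$. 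The goal is to show $u\equiv 0$.

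The analytic heart is a differential inequality for $v$. Freezing a unit eigenvector $e$ of $L_{ij}$ for its largest eigenvalue at a point $p$ and extending $e$ by parallel transport along the radial geodesics (so that $De|_{p}=0$), I would differentiate $\Lambda_{pq}=e^{i}L_{ipq}$ and use \eqref{modifiedlapeinstein} to compute $\lap_{h}(\tfrac14 L_{ij}e^{i}e^{j})$ in the barrier sense at $p$. The quartic self-interaction of $L$ produces a genuinely superlinear term $\beta v^{2}$ with $\beta>0$, the curvature term of \eqref{modifiedlapeinstein}, whose coefficient simplifies to $\tfrac{n+1}{n(n-1)}\uR_{h}$, produces a linear term $\gamma\,\uR_{h}\,v$, and the gradient terms $|DL|_{h}^{2}$ are non-negative and may be discarded, giving an inequality of the schematic form
\[
\lap_{h} v \;\geq\; \beta v^{2} + \gamma\,\uR_{h}\,v \qquad\text{wherever } v>0 .
\]
The decisive point, and the reason projective flatness is used, is that with $A_{ijkl}=0$ the constants $\beta,\gamma$ can be computed explicitly and should satisfy $\beta\geq\gamma n$; substituting $v=u+\tfrac{|\uR_{h}|}{n}$ then absorbs the linear term, yielding $\lap_{h}u\geq \beta u^{2}$ wherever $u>0$, with no surviving zeroth- or first-order term of unfavorable sign.

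Finally I would apply the Cheng--Yau maximum principle (Theorem \ref{cyestimatetheorem}) to $u$, which is legitimate because $h$ is complete with Ricci curvature bounded below and the inequality has the form $\lap_{h}u\geq \beta u^{1+\sigma}$ with vanishing linear coefficient $A=0$; the conclusion $\sup_{M}u\leq |A/\beta|^{1/\sigma}=0$ forces $u\equiv 0$, that is $\sR_{ij}\leq 0$. The main obstacle is the third step: extracting the precise coefficients in the Bochner computation from \eqref{modifiedlapeinstein}, including the curvature corrections arising from $D^{2}e$, and verifying the sharp algebraic inequality $\beta\geq\gamma n$ among the cubic contractions of the completely trace-free tensor $L$, which is exactly the refinement that distinguishes Calabi's estimate from the cruder $|L|_{h}^{2}$ bound behind Theorem \ref{calabiestimatetheorem}. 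A secondary, routine obstacle is the non-smoothness of the largest eigenvalue, handled by the frozen parallel-eigenvector barrier just described. The degenerate case $n=2$, where these constants break down, I would treat separately via the holomorphic description of two-dimensional Einstein AH structures indicated in Section \ref{twodimensionalsection}.
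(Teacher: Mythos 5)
Your proposal follows essentially the same route as the paper's proof, which simply observes that Calabi's original argument (Theorem $5.1$ of \cite{Calabi-completeaffine}) carries over verbatim once $A_{ijkl}=0=E_{ijkl}$: one applies a barrier-sense Cheng--Yau maximum principle to the largest eigenvalue of $L_{ij}$ (Calabi works with its square root, a cosmetic difference), using the differential inequality extracted from \eqref{modifiedlapeinstein} to obtain exactly the eigenvalue bound $nL_{ij}\leq -4\uR_{h}h_{ij}$ that you identify as the target. The coefficient computation you flag as the main obstacle is precisely what the paper defers to Lemma $5.2$ of \cite{Calabi-completeaffine}, so your outline matches the intended proof.
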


It seems likely that this need not be the case if $A_{ijkl}$ is non-zero. 

\begin{proof}
Theorem $5.1$ of \cite{Calabi-completeaffine} implies the statement for hyperbolic affine hyperspheres, and by Theorem \ref{affinehyperspheretheorem} it follows also if $M$ is compact and the underlying projective structure is known to be convex. However, it is easier to observe that Calabi's proof carries over to this more general setting unchanged, as will now be explained. Theorem $5.1$ of \cite{Calabi-completeaffine} is proved by applying a suitable modification of Theorem \ref{cyestimatetheorem} to a (weak) differential inequality for the Laplacian of the square-root of the largest eigenvalue of $L_{ij}$ obtained using \eqref{modifiedlapeinstein}. Precisely, Calabi considers a constant multiple of 
\begin{align*}
\phi(x) \defeq \sup_{X \in T_{x}M}\left(\tfrac{X^{i}X^{j}L_{ij}}{|X|_{h}^{2}}\right)^{1/2}
\end{align*}
and proves an estimate for $\lap_{h} \phi$. Of course $\phi$ is only continuous, and so some sense has to be made of $\lap_{h} \phi$; the resulting differential inequality is Lemma $5.2$ of \cite{Calabi-completeaffine}. %
When $A_{ijkl} = 0 = E_{ijkl}$, Calabi's estimate goes through unchanged in the \textit{a priori} more general setting of the present theorem to show that $nL_{ij} \leq - 4\uR_{h}h_{ij}$, which is exactly what is needed to show that $\sR_{ij}$ is non-positive. 
\end{proof}

\begin{remark}
A very similar estimate is made in section $4$ of \cite{Cheng-Yau-maximalspacelike}.
\end{remark}

When $A_{ijk}\,^{l}$ is not identically zero, Calabi's argument goes through with the only change being the appearance of some algebraically complicated terms involving $A_{ijk}\,^{l}$ and $L_{ijk}$ and corresponding to the term $\qW(L)$ in \eqref{calabi1}. Assuming the non-negativity of these terms, Calabi's original argument goes through to show that the Ricci curvature of a distinguished metric is non-positive, but the geometric meaning of such an assumption is so far completely opaque. When $A_{ijkl}$ is not identically zero but is globally bounded in norm by some constant, Calabi's estimate still goes through, and $L_{ij}$ is bounded from above by some constant multiple of $h_{ij}$; however this does not force $\sR_{ij}$ to be non-positive, and it seems likely that in general it need not be. Although no example showing this is in hand, the examples constructed in section \ref{cubicformsection} show that an exact Einstein AH structure with self-conjugate curvature and non-vanishing self-conjugate Weyl tensor can have a distinguished metric which is flat.

\subsubsection{}
By Theorem \ref{calabitheorem}, the Ricci curvature of a distinguished metric $h$ of the exact Riemannian Einstein AH structure determined by a convex flat real projective structure on a compact manifold is non-positive; Lemma \ref{ricflatflatlemma} shows that if it is zero, then $h$ is flat.
\begin{lemma}\label{ricflatflatlemma}
If a distinguished metric $h \in [h]$ of the exact Riemannian Einstein AH structure $(\en, [h])$ determined by a convex flat real projective structure on a compact, orientable $n$-manifold is Ricci flat, then it is flat, and the cubic torsion is parallel with respect to $h$.
\end{lemma}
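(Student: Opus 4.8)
The plan is to reduce the statement to case $(3)$ of Theorem~\ref{cubictorsiontheorem} together with the observation that the underlying conformal structure is conformally flat, after which flatness follows from the elementary fact that a conformally flat Ricci-flat Riemannian metric is flat. First I would record the structure of $(\en,[h])$. By Theorem~\ref{convextheorem} it is an exact Riemannian signature Einstein AH structure with negative scalar curvature $R$, and its underlying projective structure $\en$ is flat. Being projectively flat and Einstein, Lemma~\ref{projprojflateinsteinlemma} shows it is also conjugate projectively flat and has self-conjugate curvature; in particular, by Lemma~\ref{conjprojflatae} there hold $A_{ijkl}=0$ and $E_{ijkl}=0$. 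Since it is conjugate projectively flat, $\qW(L)=\tfrac14 C^{ijkl}C_{ijkl}\geq 0$, so the hypothesis $C^{ijkl}\sW_{ijkl}\geq 0$ of Theorem~\ref{cubictorsiontheorem} is satisfied, and since $M$ is compact and $h$ is a distinguished metric with $R<0$ we are placed in case $(3)$.

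Next I would exploit Ricci flatness. If $h$ is Ricci flat then $\sR_{ij}=0$, whence $\sR_h=0$. The final assertion of case $(3)$ of Theorem~\ref{cubictorsiontheorem} then gives immediately that $\lvert L\rvert^{2}_{h}$ is constant, that $L$ is $h$-parallel, that $C^{ijkl}\sW_{ijkl}=0$, and that $4\sR_{ij}=\mr{L}_{ij}$. The statement that $L$ is $h$-parallel is precisely the assertion that the cubic torsion is parallel with respect to $h$, so that half of the lemma is done, and it remains only to prove that $h$ is flat.

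For flatness I would first show that $[h]$ is conformally flat. Because $A_{ijkl}=0$, the identity recorded before \eqref{qwl} (which rests on Theorem~\ref{aeinvariant}) identifies the usual conformal Weyl tensor of $h$ with $\sW_{ijkl}=A^{\flat}_{ijkl}+\tfrac14 C_{ijkl}=\tfrac14 C_{ijkl}$, where $C_{ijkl}$ is the conformal non-associativity tensor. Hence $C^{ijkl}\sW_{ijkl}=\tfrac14 C^{ijkl}C_{ijkl}$, and the vanishing of this quantity, in Riemannian signature, forces $C_{ijkl}\equiv 0$, so $\sW_{ijkl}=0$ and $h$ is conformally flat (this step is vacuous when $n=3$). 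Finally, a conformally flat Ricci-flat Riemannian metric is flat, since its Riemann tensor is the sum of the conformal Weyl tensor and a contribution built algebraically from the Schouten tensor, both of which now vanish; thus $\sR_{ijkl}=0$.

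The genuinely hard analytic content, namely the Calabi and Cheng--Yau maximum-principle argument producing the rigidity dichotomy that $\sR_h=0$ forces $L$ to be parallel and $C^{ijkl}\sW_{ijkl}=0$, is already packaged inside Theorem~\ref{cubictorsiontheorem}. Consequently the main obstacle here is purely the bookkeeping of identifying the conformal Weyl tensor of $h$ with the conformal non-associativity tensor and of checking that positive definiteness converts $C^{ijkl}C_{ijkl}=0$ into $C_{ijkl}=0$; once these are in place the conclusion is immediate.
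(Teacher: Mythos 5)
Your proof is correct and follows essentially the same route as the paper: the paper's own argument runs the Calabi estimate \eqref{calabi1} directly to get $C^{ijkl}C_{ijkl}=0$ and computes $4\sR_{ijkl}=C_{ijkl}$ from \eqref{confcurvijkl} and \eqref{mrcdefined}, which is exactly the content you access by citing the ``Moreover'' clause of case $(3)$ of Theorem~\ref{cubictorsiontheorem} and then invoking conformal flatness plus Ricci flatness. The only (harmless) omission is the $n=2$ case, which the paper dismisses as trivial and which your appeal to Theorem~\ref{cubictorsiontheorem} (stated for $n\geq 3$) does not cover.
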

\begin{proof}
When $n = 2$ there is nothing to prove, so assume $n > 2$. Because $\en$ is projectively flat, $A_{ijk}\,^{l} = 0 = E_{ijk}\,^{l}$. By \eqref{confric}, that $\sR_{ij} = 0$ implies $\mr{L}_{ij} = 0$, and $4\uR_{h} = -|L|^{2}_{h}$. By \eqref{confcurvijkl} and \eqref{mrcdefined}, $4\sR_{ijkl} = L_{ijkl} - \tfrac{2}{n(n-1)}|L|^{2}_{h}h_{l[i}h_{j]k} = C_{ijkl}$, and so $4\sW_{ijkl} = C_{ijkl}$ and $4\qW(L) = C^{ijkl}C_{ijkl} \geq 0$. Because $R$ is parallel, $|L|^{2}_{h} = -4 \uR_{h}$ is constant. If it is $0$ there is nothing to show. It it is positive, then by \eqref{calabi1} of Theorem \ref{calabiestimatetheorem} there holds $0 = |L|^{(n+4)/(n+1)}\lap_{h}|L|^{(n-2)/(n+1)} \geq  \tfrac{n-2}{n+1}\qW(L) = \tfrac{n-2}{4(n+1)}C^{ijkl}C_{ijkl}\geq 0$, so that $4\sR_{ijkl} = C_{ijkl} \equiv 0$, showing that $h$ is flat. From \eqref{lapombt2} it follows that the cubic torsion is parallel with respect to $h$.
\end{proof}

The Einstein AH structure induced by the density $u$ of \eqref{orthantexample} on the positive orthant $\{x \in \rea^{n}: x^{i} > 0\}$ is of the sort appearing in Lemma \ref{ricflatflatlemma}. For this Einstein AH structure, the aligned representative is $\nabla = \pr + 2\ga_{(i}\delta_{j)}\,^{k}$ where $\pr$ is the standard flat affine connection on $\rea^{n}$ and $\ga_{i} = -u^{-1}\pr_{i}u$. A distinguished metric is $h_{ij} = -u^{-1}\B(u)_{ij} = \pr_{i}\ga_{j} - \ga_{i}\ga_{j}$. The curvature of $\nabla$ is $R_{ijk}\,^{l} = 2h_{k[i}\delta_{j]}\,^{l}$, so that $\uR_{h} = n(1-n)$, while it can be checked that the metric $h$ is flat, so that from \eqref{confscal} it follows that $|L|_{h}^{2} = 4n(n-1)$. Thus this is an exact Einstein AH structure with negative scalar curvature for which a distinguished metric is flat and the cubic torsion is non-zero and parallel with respect to a distinguished metric. For this example, these facts were already observed by Calabi in the last paragraph of section $3$ of \cite{Calabi-completeaffine}, where they were noted as consequences of the homogeneity of the affine hypersphere given by the radial graph of $u$. More non-trivial examples of the sort appearing in Lemma \ref{ricflatflatlemma} are easily constructed by the methods described in Section \ref{cubicformsection}.

\subsubsection{}
The Lie derivative, $\lie_{X}\nabla$, along the vector field, $X$ of an affine connection, $\nabla$, is defined to be the derivative $\lie_{X}\nabla = \tfrac{d}{dt}_{|t = 0}\left((\phi^{t})^{\ast}\nabla  - \nabla\right)$ of the difference tensor with $\nabla$ of its pullback via the flow $\phi^{t}$ of $X$. By its definition, $\lie_{X}\nabla$ is a $\binom{1}{2}$-tensor. Formally $(\lie_{X}\nabla)(A, B)$ may be computed as if $\nabla$ were a tensor, with the result that for a torsion-free $\nabla$, $(\lie_{X}\nabla)_{ij}\,^{k} = \nabla_{i}\nabla_{j}X^{k} + X^{p}R_{pij}\,^{k}$. If $\nabla$ is an affine connection on $M$ write $\Aut(\nabla) = \{\phi \in \diff(M): \phi^{\ast}(\nabla) = \nabla\}$ and $\aut(\nabla) = \{X \in \Ga(TM): \lie_{X}\nabla = 0\}$ for the group of affine automorphisms and the Lie algebra of infinitesimal affine automorphisms of $\en$. 

\subsubsection{}\label{projdiffsection}
The difference tensor $[\bnabla] - \en$ of two projective structures is defined to be the trace-free part of the difference tensor $\bnabla - \nabla$ of any torsion-free representatives $\bnabla \in [\bnabla]$ and $\nabla \in \en$; this does not depend on the choices of representatives because the difference of any two torsion-free representatives is pure trace. The Lie derivative $\lie_{X}\en$ of $\en$ along $X \in \Ga(TM)$ is defined to be $\tfrac{d}{dt}_{|t = 0}\left((\phi^{t})^{\ast}\en - \en\right)$ where $\phi^{t}$ is the flow generated by $X$. By definition the difference tensor $(\phi^{t})^{\ast}\en - \en$ is the trace-free part of $(\phi^{t})^{\ast}\nabla - \nabla$ for any torsion-free representative $\nabla \in \en$, and it follows that $\lie_{X}\en$ is the completely trace-free part of $\lie_{X}\nabla$ for any torsion-free representative $\nabla \in \en$. Explicitly,
\begin{align}\label{lieproj}
(\lie_{X}\en)_{ij}\,^{k} & = \nabla_{i}\nabla_{j}X^{k} + X^{p}R_{pij}\,^{k} - \tfrac{2}{n+1}\delta_{(i}\,^{k}\nabla_{j)}\nabla_{p}X^{p} -2\delta_{i}\,^{k}X^{p}P_{[pj]} - 2\delta_{j}\,^{k}X^{p}P_{[pi]}\\
\notag & = \nabla_{i}\nabla_{j}X^{k} - \tfrac{2}{n+1}\delta_{(i}\,^{k}\nabla_{j)}\nabla_{p}X^{p} 
 + \left(B_{pij}\,^{k} - \delta_{p}\,^{k}P_{ij} + \delta_{i}\,^{k}P_{jp}\right)X^{p}.%
\end{align}
While by definition $(\lie_{X}\en)_{[ij]}\,^{k} = 0$, this can also be checked using \eqref{lieproj} and the Bianchi and Ricci identities. If $\en$ is a projective structure on $M$ write $\Aut(\en) = \{\phi \in \diff(M): \phi^{\ast}(\en) = \en\}$ and $\aut(\en) = \{X \in \Ga(TM): \lie_{X}\en = 0\}$ for the group of projective automorphisms and the Lie algebra of infinitesimal projective automorphisms of $\en$. Evidently $\aut(\nabla)$ is a subalgebra of $\aut(\en)$.

\subsubsection{}
If $h$ is a pseudo-Riemannian metric write $\aut(h) = \{X \in \Ga(TM): \lie_{X}h = 0\}$ for the Lie algebra of $h$-Killing fields. If $D$ is the Levi-Civita connection of $h$ then $\aut(h)$ is a subalgebra of $\aut(D)$. Given a torsion-free affine connection $\nabla$ and a pseudo-Riemannian metric $h$ define $\affhar(\nabla, h) = \{X \in \Ga(TM): h^{ij}(\lie_{X}\nabla)_{ij}\,^{k} = 0\}$ which is a subspace of $\vect(M)$. Likewise, given a projective structure $\en$ and a conformal metric $[h]$ define $\projhar(\en, [h]) = \{X \in \Ga(TM): H^{ij}(\lie_{X}\en)_{ij}\,^{k} = 0\}$, which is a subspace of $\vect(M)$. Evidently $\aut(\nabla)$ and $\aut(\en)$ are subspaces of $\affhar(\nabla, h)$ and $\projhar(\en, [h])$, respectively. Elements of $\affhar(\nabla, h)$ and $\projhar(\en, [h])$ are called respectively \textbf{affine harmonic} and \textbf{projective harmonic}. It would require too much of a digression to justify the terminology.

Tracing \eqref{lieproj} in $jk$ shows that $X \in \projhar([D], [h])$ if and only if 
\begin{align}\label{projkill2}
0 = h^{pq}(\lie_{X}[D])_{pq}\,^{k}h_{ki} = \lap_{h}X_{i} + X^{p}\sR_{pi} - \tfrac{2}{n+1}D_{i}D_{p}X^{p} %
\end{align}
Theorem $4$ of K. Yano's \cite{Yano} shows that for a Riemannian metric $h$ with Levi-Civita connection $D$ on a compact manifold there holds $\aut(D) = \aut(h)$. In \cite{Couty}, R. Couty proved that if on a compact manifold $h$ has non-positive Ricci curvature which is somewhere negative then $\aut([D]) = 0$. The proof is a Bochner type argument, and motivates the proof of Theorem \ref{projkilltheorem}.

\begin{theorem}\label{projkilltheorem}
Let $(\en, [h])$ be an exact, Riemannian signature AH structure on a compact, orientable manifold $M$ of dimension $n \geq 2$. 
\begin{enumerate}
\item If the tensor $T_{ij} + \tfrac{1}{4}\bt_{ij}+ \tfrac{n(n+1)}{(n+3)}E_{ij}$ is non-positive then a non-trivial $X \in \projhar(\en, [h])$ is parallel with respect to the Levi-Civita connection $D$ of a distinguished metric $h \in [h]$, and satisfies $X^{i}X^{j}E_{ij} = 0$ and $X^{i}X^{j}\sR_{ij} = 0$, in which $\sR_{ij} = T_{ij} + \tfrac{1}{4}\bt_{ij}$ is the Ricci curvature of $h$. As a consequence, the subspace $\projhar(\en, [h])$ is an abelian Lie subalgebra of $\vect(M)$ and spans an integrable totally $h$-geodesic flat subbundle of $TM$; hence $M$ is foliated by totally $h$-geodesic flat parallelizable submanifolds of dimension $\dim \projhar(\en)$. 
\item If $n = 2$ and $T_{ij} + \tfrac{1}{4}\bt_{ij}$ is non-positive and $M$ admits a non-trivial $X \in \projhar(\en, [h])$, then $M$ is a torus and a distinguished metric $h$ is flat.
\item If $T_{ij} + \tfrac{1}{4}\bt_{ij}+ \tfrac{n(n+1)}{(n+3)}E_{ij}$ is non-positive and somewhere strictly negative definite, then $\projhar(\en, [h]) = \{0\}$.
\end{enumerate}
\end{theorem}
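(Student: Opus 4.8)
Since $(\en,[h])$ is exact, fix a distinguished metric $h\in[h]$; then the associated Faraday primitive vanishes, $F_{ij}=0$, and by \eqref{dnabladiff} the aligned representative is $\nabla=D-\tfrac12\bt_{ij}{}^{k}$, where $D$ is the Levi--Civita connection of $h$. By \eqref{confric} with $\ga=0$ the Ricci tensor of $h$ is exactly $\sR_{ij}=T_{ij}+\tfrac14\bt_{ij}$, which is why the hypothesis tensor is $\sR_{ij}+\tfrac{n(n+1)}{n+3}E_{ij}$. The condition $X\in\projhar(\en,[h])$ is $h^{ij}(\lie_{X}\en)_{ij}{}^{k}=0$ (the weight factor $|\det h|^{-1/n}$ is irrelevant). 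First I would expand this using \eqref{lieproj}, replace every $\nabla$ by $D-\tfrac12\bt$, and simplify the resulting second--order operator. The curvature contribution $X^{p}h^{ij}R_{pij}{}^{k}$ equals $Q_{(pk)}X^{p}=(T_{pk}+nE_{pk})X^{p}$ (using $F=0$, so $Q_{[ij]}=0$), and the extra terms produced when differentiating $\bt$ are controlled by the distinguished--metric identity $D^{a}\bt_{abc}=2nE_{bc}$ coming from \eqref{divbt}. The target is to show that the equation reduces to a Couty--type equation
\begin{align}
\lap_{h}X_{i}+\big(\sR_{ij}+c\,E_{ij}\big)X^{j}-\tfrac{2}{n+1}D_{i}D_{p}X^{p}=0,
\end{align}
modulo terms that integrate trivially, with $c$ a definite dimensional constant.

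\textbf{The Weitzenb\"ock combination.} Contracting the equation with $X^{i}$ and integrating by parts over the compact oriented $M$ gives one identity relating $\int|DX|^{2}$, $\int X^{i}X^{j}(\sR_{ij}+cE_{ij})$ and $\int(\div X)^{2}$; a second, \emph{universal}, identity comes from the Hodge--Bochner formula for $\xi=X^{\flat}$, namely $\hodge\xi_{i}+\lap_{h}\xi_{i}=\sR_{ip}\xi^{p}$ together with \eqref{hodgebyparts}. Eliminating $\int|DX|^{2}$ between the two (exactly as in Couty's argument for metrics) the scalar--curvature cross--terms cancel and I expect the clean identity
\begin{align}
\int_{M}D_{[i}X_{j]}D^{[i}X^{j]}\dvh+\tfrac{n-1}{2(n+1)}\int_{M}(\div X)^{2}\dvh=\int_{M}X^{i}X^{j}\Big(\sR_{ij}+\tfrac{n(n+1)}{n+3}E_{ij}\Big)\dvh,
\end{align}
together with an auxiliary relation of the form $\int|\clie(X)|^{2}=\text{(positive const)}\cdot\int(\div X)^{2}$. \textbf{This bookkeeping is the main obstacle:} one must check that the $\bt$--derivative corrections reassemble (via $D^{a}\bt_{abc}=2nE_{bc}$) into precisely the Ricci tensor $T_{ij}+\tfrac14\bt_{ij}$ and that the $E_{ij}$--coefficient works out to the stated $\tfrac{n(n+1)}{n+3}$; the denominator $n+3$ is what makes this delicate, since it arises only from combining the projective normalization $\tfrac{2}{n+1}$ with the dimensional constants in the decomposition \eqref{uijsym2}, \eqref{bijkl}, \eqref{middlebtraces} of the curvature.

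\textbf{Conclusion of (1).} Granting the displayed identity, non--positivity of $\sR_{ij}+\tfrac{n(n+1)}{n+3}E_{ij}$ makes the right side $\le0$ while the left side is $\ge0$; hence both vanish, forcing $D_{[i}X_{j]}=0$ and $\div X=0$, and then the auxiliary relation gives $\clie(X)=0$, so $DX=0$, i.e. $X$ is $D$--parallel. A parallel field satisfies $\sR_{ij}X^{j}=0$ (contract $2D_{[i}D_{j]}X^{k}=\sR_{ijp}{}^{k}X^{p}=0$), so $X^{i}X^{j}\sR_{ij}=0$ pointwise; since the Bochner integrand vanishes pointwise we also get $X^{i}X^{j}(\sR_{ij}+\tfrac{n(n+1)}{n+3}E_{ij})=0$, whence $X^{i}X^{j}E_{ij}=0$. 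The structural consequences are then formal: parallel fields commute ($[X,Y]=D_{X}Y-D_{Y}X=0$), so $\projhar$ is abelian; parallel fields are determined by one value, so linearly independent elements are pointwise independent and span a parallel (hence integrable, totally $h$--geodesic, flat, parallelizable) distribution of rank $\dim\projhar(\en)$, foliating $M$.

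\textbf{Parts (3) and (2).} For (3), if $\sR+\tfrac{n(n+1)}{n+3}E$ is somewhere strictly negative definite and $X\in\projhar$ is nontrivial, then by (1) $X$ is parallel and nowhere zero, so at that point $X^{i}X^{j}(\sR_{ij}+\tfrac{n(n+1)}{n+3}E_{ij})<0$, contradicting the pointwise vanishing established above; hence $\projhar(\en,[h])=\{0\}$. For (2), when $n=2$ the hypothesis is only $\sR_{ij}=T_{ij}+\tfrac14\bt_{ij}\le0$. Here I would use the distinguished--metric identity again to write $\int_{M}E_{ij}X^{i}X^{j}=-\tfrac1n\int_{M}\bt_{abc}X^{b}\clie(X)^{ac}$, so the $E$--contribution is carried entirely by $\clie(X)$; combined with the two--dimensional identities (Lemmas \ref{twodcubicformlemma} and \ref{twodxblemma}, which control $|i(X)\bt|$ by $|X||\bt|$), the $E$--term can be absorbed into the $|\clie(X)|^{2}$ term, so that the vanishing argument goes through under $\sR\le0$ alone and yields a nontrivial $D$--parallel $X$. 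A nowhere--zero parallel vector field on a surface forces the distinguished metric to be flat (its holonomy fixes $X$, hence is trivial), so by Gauss--Bonnet $\chi(M)=0$, and orientability forces $M\cong T^{2}$. The delicate point in (2) is precisely the two--dimensional absorption that removes the $E$--hypothesis present in the general case.
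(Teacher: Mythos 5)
Your overall strategy is the paper's: contract the projective--harmonic equation (the cubic-torsion--corrected version of \eqref{projkill2}) with $X^{i}$, integrate, combine with the universal Bochner--Yano identity for $X^{\flat}$, and conclude parallelism from non-positivity of the curvature quadratic form. You also correctly identify the one non-obvious mechanism, namely that the first-order term $L_{i}\,^{pq}D_{p}X_{q}$ produced by the cubic torsion integrates against $X^{i}$ to $-n\int_{M}X^{i}X^{j}E_{ij}$ via $D_{p}\bt_{ij}\,^{p}=2nE_{ij}$. However, the step you defer as ``bookkeeping'' is exactly where your sketch goes wrong, and the error is not cosmetic. Eliminating $\int_{M}|DX|^{2}_{h}$ between the two integral identities (the paper's \eqref{projkill4b} and \eqref{projkill0}, combined in the ratio that cancels the $(\dad_{h}X^{\flat})^{2}$ terms, i.e. $\tfrac{2}{n+1}$ times \eqref{projkill0} plus \eqref{projkill4b}) yields
\begin{align*}
\tfrac{1}{4}\int_{M}\Bigl(|dX^{\flat}|^{2}+\tfrac{n-1}{n+3}|\lie_{X}h|^{2}\Bigr)=\int_{M}X^{i}X^{j}\Bigl(\sR_{ij}+\tfrac{n(n+1)}{n+3}E_{ij}\Bigr),
\end{align*}
in which the second term on the left is the full squared norm of the Killing operator, not a multiple of $(\div X)^{2}$. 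This matters: with the correct identity, non-positivity of the right-hand side forces $dX^{\flat}=0$ and $\lie_{X}h=0$ simultaneously, hence $DX=0$ with no further input. Your version would only yield $dX^{\flat}=0$ and $\div X=0$, and you then invoke an ``auxiliary relation'' $\int_{M}|\clie(X)|^{2}=c\int_{M}(\div X)^{2}$ to upgrade this to $DX=0$; no such relation holds for a general projective harmonic field and nothing in your argument produces it. So as written the proof of part (1) does not close.

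Your treatment of part (2) also diverges from the paper's and has its own gap. The paper disposes of $n=2$ by asserting that $E_{ij}$ vanishes in two dimensions, so the hypothesis reduces to $T_{ij}+\tfrac{1}{4}\bt_{ij}\leq 0$ and part (1) applies verbatim; the torus/flatness conclusion then follows from the Gau\ss--Bonnet theorem since $\sR_{ij}=\tfrac{\sR}{2}h_{ij}\leq 0$ and $M$ carries a nonvanishing vector field. Your proposed absorption of the $E$-term into $|\clie(X)|^{2}$ does not work as described: after Cauchy--Schwarz and Lemma \ref{twodxblemma} you can bound $|\int_{M}X^{i}X^{j}E_{ij}|$ by a constant times $\int_{M}|X|\,|L|\,|\clie(X)|$, but Young's inequality then leaves an uncontrolled positive remainder proportional to $\int_{M}|X|^{2}|L|^{2}$ that cannot be absorbed by anything of fixed sign in the identity. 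The conclusions you draw \emph{after} parallelism is established---abelianness, the parallel totally geodesic foliation, part (3), and the holonomy/Gau\ss--Bonnet argument---are all correct and agree with the paper's \eqref{projkill5b}-based proof.
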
%
\begin{proof}

Fix a distinguished metric $h \in [h]$ and raise and lower indices using $h$. Write $L_{ijk} = \bt_{ij}\,^{p}h_{pk}$ and $\nabla = D - \tfrac{1}{2}\bt_{ij}\,^{k}$. By definition of the Lie derivative, $(\lie_{X}\nabla)_{ij}\,^{k} = (\lie_{X}D)_{ij}\,^{k} - \tfrac{1}{2}(\lie_{X}L)_{ij}\,^{k}$. Since $(\lie_{X}\bt)_{ip}\,^{p} = 0$, the definition of $\lie_{X}\en$ implies
\begin{align}\label{projlied}
\begin{split}
(\lie_{X}\en)_{ij}\,^{k} &= (\lie_{X}[D])_{ij}\,^{k} - \tfrac{1}{2}(\lie_{X}\bt)_{ij}\,^{k}\\
& = (\lie_{X}[D])_{ij}\,^{k} - \tfrac{1}{2}X^{p}D_{p}L_{ij}\,^{k} - L_{p(i}\,^{l}D_{j)}X^{p} + \tfrac{1}{2}L_{ij}\,^{p}D_{p}X^{k}.
\end{split}
\end{align}
If $X\in \projhar(\en, [h])$, then contracting \eqref{projlied} with $h^{ij}$ and using \eqref{projkill2} gives
\begin{align}\label{projkill2b}
0 = \lap_{h}X_{i} + X^{p}\sR_{pi} - \tfrac{2}{n+1}D_{i}D_{p}X^{p} - L_{i}\,^{pq}D_{p}X_{q}.
\end{align}
Integrating by parts and using \eqref{ddivbt} shows 
\begin{align*}
\int_{M}L^{ijk}X_{i}D_{j}X_{k} = - \int_{M} L^{ijk}X_{k}D_{j}X_{i} - \int_{M}X_{i}X_{k}D_{j}L^{ikj} = - \int_{M}L^{ijk}X_{i}D_{j}X_{k} - 2n\int_{M}X^{i}X^{j}E_{ij},
\end{align*}
so that $\int_{M}L^{ijk}X_{i}D_{j}X_{k} = - n\int_{M}X^{i}X^{j}E_{ij}$. Hence integrating $\lap_{h}|X|^{2}$ using \eqref{projkill2b} gives
\begin{align}\label{projkill4b}
0 =  \int_{M}\left( - \tfrac{2}{n+1}(\dad_{h}X^{\flat})^{2} + \tfrac{1}{4}|dX^{\flat}|^{2} + \tfrac{1}{4}|\lie_{X}h|^{2}   - X^{i}X^{j}\sR_{ij} - nX^{i}X^{j}E_{ij}\right)
\end{align} 
For any $h$ and any $X \in \Ga(TM)$ with dual one-form $X^{\flat} = i(X)h$ there holds
\begin{align}\label{projkill0}
0 = \int_{M}\left((\dad_{h}X^{\flat})^{2} + \tfrac{1}{4}|dX^{\flat}|^{2} - \tfrac{1}{4}|\lie_{X}h|^{2} - X^{i}X^{j}\sR_{ij}\right).
\end{align}
Taking  the appropriate linear combination of \eqref{projkill0} and \eqref{projkill4b} and using \eqref{confric} yields
\begin{align}\label{projkill5b}
\begin{split}
\tfrac{1}{4}\int_{M} \left( |dX^{\flat}|^{2} + \tfrac{n-1}{n+3}|\lie_{X}h|^{2}\right) &= \int_{M}X^{i}X^{j}\left(\sR_{ij} + \tfrac{n(n+1)}{n+3}E_{ij}\right)\\
&= \int_{M}X^{i}X^{j}\left(T_{ij} + \tfrac{1}{4}\bt_{ij} + \tfrac{n(n+1)}{n+3}E_{ij}\right).
\end{split}
\end{align}
If $\sR_{ij} + \tfrac{n(n+1)}{(n+3)}E_{ij} = T_{ij} + \tfrac{1}{4}\bt_{ij}+ \tfrac{n(n+1)}{(n+3)}E_{ij}$ is non-positive \eqref{projkill5b} implies $D_{i}X^{j} = 0$. In \eqref{projkill2b} there results $X^{i}\sR_{ij} = 0$, and so the assumed non-positivity of $\sR_{ij} + \tfrac{n(n+1)}{(n+3)}E_{ij}$ implies that $X^{i}X^{j}E_{ij} \leq 0$, which is consistent with \eqref{projkill5b} if and only if $X^{i}X^{j}E_{ij} \equiv 0$. If $X, Y \in \projhar(\en, [h])$ then $0 = D_{X}Y - D_{Y}X = [X, Y]$, so $\projhar(\en, [h])$ is an abelian Lie subalgebra of $\vect(M)$. If $X \in \projhar(\en, [h])$ is not identically zero, then it is nowhere zero, so the span of $\projhar(\en, [h])$ has constant rank, so is a totally $h$-geodesic flat integrable subbundle because $\projhar(\en, [h])$ is abelian and comprises parallel vector fields. Since each $X \in \projhar(\en, [h])$ is non-vanishing, an integrable submanifold of this subbundle is parallelizable. 

If $n = 2$ then $E_{ij} \equiv 0$ tautologically. If $T_{ij} + \tfrac{1}{4}\bt_{ij}$ is non-positive and $X \in \projhar(\en, [h])$ is non-trivial, then $M$ admits a non-vanishing vector field, so must have Euler characteristic zero. Since the curvature $\tfrac{\sR}{2}h_{ij} = \sR_{ij} =  T_{ij} + \tfrac{1}{4}\bt_{ij}$ is by assumption non-positive this is consistent with the Gau\ss-Bonnet Theorem if and only if $\sR \equiv 0$, that is $h$ is flat.

If $T_{ij} + \tfrac{1}{4}\bt_{ij} + \tfrac{n(n+1)}{n+3}E_{ij}$ is non-positive and somewhere strictly negative definite, then it is so on some open $U \subset M$, in which case equality can hold in \eqref{projkill5b} if and only if $X$ vanishes on $U$; because $X$ is $D$-parallel this implies $X \equiv 0$. 
\end{proof}

If $(\en, [h])$ is a Riemannian AH structure, then because $E_{ij}$ is symmetric and trace-free, it can be non-positive if and only if it vanishes identically. 

Note that Theorem \ref{projkilltheorem} shows that on a compact manifold, a Riemannian metric $h$ with Levi-Civita connection $D$, for which the Ricci tensor is non-positive and somewhere negative there holds $\projhar([D], [h]) = \{0\}$, which is a strengthening of Couty's theorem mentioned above.

\begin{theorem}\label{convexoneparametertheorem}
Let $(\en, [h])$ be a projectively flat Riemannian signature Einstein AH structure with negative scalar curvature on a compact, orientable manifold $M$ of dimension $n \geq 2$. Any distinguished metric $h \in [h]$ has non-positive Ricci curvature and if $\en$ admits a non-trivial infinitesimal projective automorphism $X \in \aut(\en)$ then the Ricci curvature of $h$ is degenerate in the direction of $X$. In particular, 
\begin{enumerate}
\item If $h$ has Ricci curvature which is negative definite at some point of $M$ then $X \equiv 0$.
\item If $n = 2$ then the existence of a non-trivial infinitesimal projective automorphism implies $M$ is a torus and a distinguished metric is flat. 
\item If $n > 2$ and there exists a non-trivial infinitesimal projective automorphism then $\aut(\en) = \aut(\nabla)$ is a Lie subalgebra of $\vect(M)$ of rank $1 \leq r \leq n$, and the universal cover $\tilde{M}$ of $M$ equipped with the pullback of a distinguished metric $h  \in [h]$ splits isometrically $\tilde{M} \simeq \rea^{r} \times N$ where $\rea^{r}$ is flat Euclidean space, and the $(n-r)$-dimensional Riemannian manifold $N$ is connected and simply-connected. Hence $M$ is foliated by totally $h$-geodesic flat $r$-dimensional submanifolds each of which is finitely covered by a flat Riemannian vector bundle over a torus.
\end{enumerate}
\end{theorem}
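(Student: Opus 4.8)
The plan is to reduce everything to Calabi's Theorem \ref{calabitheorem} together with the Bochner-type vanishing result Theorem \ref{projkilltheorem}, after first extracting the structural consequences of the hypotheses. Since $\en$ is projectively flat, Lemma \ref{projflatahlemma} gives $F_{ij} = 0$, $A_{ijkl} = 0$, and $E_{ijkl} = 0$; in particular the structure is closed, so by the conservation condition \eqref{einstein} its weighted scalar curvature is parallel, and being negative (hence nowhere zero) Lemma \ref{parallelexactlemma} shows $(\en, [h])$ is proper and exact. Thus a distinguished metric $h \in [h]$ exists, and because $M$ is compact it is complete. Moreover, being naive Einstein forces $E_{ij} = 0$, so that $T_{ij} + \tfrac14 \bt_{ij}$ is literally the Ricci curvature $\sR_{ij}$ of $h$, and the tensor $T_{ij} + \tfrac14 \bt_{ij} + \tfrac{n(n+1)}{n+3}E_{ij}$ appearing in the hypothesis of Theorem \ref{projkilltheorem} is exactly $\sR_{ij}$.

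By Theorem \ref{calabitheorem} (applicable since $h$ is complete) the Ricci curvature $\sR_{ij}$ is non-positive, which is the first assertion of the theorem and simultaneously verifies the hypothesis of Theorem \ref{projkilltheorem}. Now any $X \in \aut(\en)$ lies in $\projhar(\en, [h])$, so Theorem \ref{projkilltheorem}(1) applies: a non-trivial such $X$ is parallel with respect to the Levi-Civita connection $D$ of $h$ and satisfies $X^{i}X^{j}\sR_{ij} = 0$, i.e. the Ricci curvature is degenerate in the direction of $X$, giving the second main assertion. Assertion (1) is then immediate from Theorem \ref{projkilltheorem}(3), and the two-dimensional assertion (2) from Theorem \ref{projkilltheorem}(2), using in both cases the non-positivity of $\sR_{ij} = T_{ij} + \tfrac14 \bt_{ij}$.

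For assertion (3) the first task is to upgrade $\aut(\en)$ to $\aut(\nabla)$. The inclusion $\aut(\nabla) \subseteq \aut(\en)$ is automatic. Conversely, for $X \in \aut(\en)$, Theorem \ref{projkilltheorem}(1) gives $DX = 0$; since $D$ is the Levi-Civita connection of $h$ this makes $X$ an $h$-Killing field, so $X$ preserves $|\det h|$ and hence the $\nabla$-parallel distinguished density $\mu$, which has non-trivial weight. Because $\lie_{X}\en = 0$, the symmetric tensor $\lie_{X}\nabla$ is pure trace, $(\lie_{X}\nabla)_{ij}\,^{k} = 2\delta_{(i}\,^{k}\phi_{j)}$ for some one-form $\phi_{i}$; evaluating the induced action of $\lie_{X}\nabla$ on $\mu$ and using $\lie_{X}\mu = 0$ and $\nabla \mu = 0$ forces $\phi_{i} = 0$, whence $\lie_{X}\nabla = 0$ and $X \in \aut(\nabla)$. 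Thus $\aut(\en) = \aut(\nabla)$, which by Theorem \ref{projkilltheorem}(1) is an abelian Lie algebra of $D$-parallel vector fields. A basis $X_{1}, \dots, X_{r}$ consists of everywhere linearly independent parallel fields (a parallel field vanishing at a point vanishes identically), spanning a rank-$r$ parallel, flat, totally $h$-geodesic distribution $V \subseteq TM$ with $1 \le r \le n$.

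Finally I would invoke the de Rham decomposition theorem: the parallel distribution $V$ and its (necessarily parallel) orthogonal complement split the universal cover $(\tilde{M}, \rho^{\ast}(h))$ isometrically as $\rea^{r}\times N$, where the $\rea^{r}$-factor, carrying trivial holonomy because it is spanned by parallel fields, is flat Euclidean space, and $N$ is the connected, simply-connected $(n-r)$-dimensional complementary factor. The leaves of the foliation integrating $V$ are totally $h$-geodesic and flat; since $M$ is compact, $h$ is complete, and a geodesic tangent to $V$ stays in a leaf and extends for all time, so each leaf inherits a complete flat metric and is a complete flat $r$-manifold. The Bieberbach structure theorems then identify each leaf, up to finite cover, with $\rea^{r-k}\times T^{k}$, that is, with a flat Riemannian vector bundle over a torus. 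The main obstacle is not any single isolated computation but rather the identity $\aut(\en) = \aut(\nabla)$: the delicate point is tracking the action of $\lie_{X}\nabla$ on the distinguished density carefully enough to conclude that the projective factor $\phi_{i}$ vanishes. Once that is secured, the splitting and the description of the leaves are standard consequences of de Rham and Bieberbach.
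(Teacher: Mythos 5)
Your proposal is correct and follows essentially the same route as the paper: exactness and completeness of the distinguished metric, Calabi's Theorem \ref{calabitheorem} for the non-positivity of the Ricci curvature, Theorem \ref{projkilltheorem} for parallelism and degeneracy along $X$ (and hence claims (1) and (2)), the identification $\aut(\en)=\aut(\nabla)$, and then de Rham plus Bieberbach for claim (3). The only variation is in killing the pure-trace part of $\lie_{X}\nabla$: the paper computes $\nabla_{p}X^{p}=0$ directly from $D X=0$ and the trace-freeness of $L$ and then invokes \eqref{lieproj}, whereas you argue via the action on the $\nabla$-parallel distinguished density; both are valid and rest on the same input $DX=0$.
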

\begin{proof}
By Theorem \ref{convextheorem} there is a unique conformal structure $[h]$ such that $(\en, [h])$ is a Riemannian Einstein AH structure with negative scalar curvature. By Theorem \ref{calabitheorem} the Ricci curvature of a distinguished metric is non-positive. Theorem \ref{projkilltheorem} applies and yields immediately the first two claims. If $X \in \aut(\en)$ then $\nabla_{i}X^{j} = - \tfrac{1}{2}X^{p}L_{ip}\,^{j}$, so $\nabla_{p}X^{p} = 0$. With \eqref{lieproj} this implies $\lie_{X}\nabla = 0$, so $X \in \aut(\nabla)$. The containment $\aut(\nabla) \subset \aut(\en)$ is obvious, so $\aut(\en) = \aut(\nabla)$. By Theorem \ref{projkilltheorem}, $\aut(\en)$ is an abelian subalgebra of $\vect(M)$ comprising $D$-parallel vector fields, so generates a totally $h$-geodesic flat integrable subbundle; the claims about the universal cover follow from the argument showing the de Rham decomposition. The distribution on $M$ generated by $\aut(\en)$ is totally $h$-geodesic and so each leaf is complete in the induced metric, which is flat. By Bieberbach's theorem each leaf is finitely covered by a flat Riemannian vector bundle on a torus.
\end{proof}

In particular, Theorem \ref{convexoneparametertheorem} applies to the Einstein AH structure induced as in Theorem \ref{convextheorem} on a manifold with convex flat real projective structure. In this context, Theorem \ref{convexoneparametertheorem} can also be deduced from the usual Bochner theorem for conformal Killing fields on compact Riemannian manifolds with non-positive Ricci curvature coupled with Theorem \ref{calabitheorem} and the uniqueness claim in Theorem \ref{convextheorem}; this last implies that an infinitesimal projective automorphism of $\en$ preserves the induced $[h]$.

\begin{proof}[Proof of Theorem \ref{twodconvexautotheorem}]
By Theorem \ref{convexoneparametertheorem} were there a non-trivial infinitesimal projective automorphism then $M$ would admit a flat metric, which is impossible by the Gau\ss-Bonnet theorem.
\end{proof}

\section{Einstein AH structures from commutative nonassociative algebras}\label{cubicformsection}
This section presents a method which yields an abundance of complete exact Einstein Riemannian AH structures with negative scalar curvature and self-conjugate curvature for which the standard Euclidean metric is a distinguished metric, but which are neither projectively flat nor conjugate projectively flat (or, what is the same, for which $A_{ijk}\,^{l}$ is not identically zero). As a byproduct of the main construction there is obtained in section \ref{naiveexample} an example of a naive Einstein AH structure which is not Einstein. 

In section \ref{polynomialpdesection} it is shown how to construct the desired examples by finding a homogeneous cubic polynomial satisfying a certain system of partial differential equations, and some solutions are found. In section \ref{codazzialgebrasection} it is shown that these cubic polynomials can be interepreted as the structure tensors of a certain kind of algebra, and the resulting formalism is used to show that some of the resulting Einsten AH structures are not projectively flat. 

\setcounter{subsubsection}{0}

\subsection{Partial differential equations for homogeneous cubic polynomials}\label{polynomialpdesection}
The purpose of this section is the construction of solutions to \eqref{einsteinpolynomials}, which seems interesting in its own right.

\subsubsection{}\label{polynomialsection}
Let the notations be as in section \ref{symmetricharmonictensorsection}. Consider $\rea^{n}$ with the standard Euclidean metric $\delta_{ij}$, having Levi-Civita connection $D$, and let $\eul$ be the radial vector field generating dilations by $e^{t}$. Suppose there is given a homogeneous cubic polynomial $P(x) \in \pol^{3}(\rea^{n})$ such that 
\begin{align}\label{einsteinpolynomials}
&\lap_{\delta} P = 0,& &\text{and}& &|\hess P|^{2}_{\delta} = \ka E(x),
\end{align} 
for some constant $\ka > 0$ and $E(x) = \eul^{i}\eul^{j}\delta_{ij}$ the quadratic polynomial corresponding to the metric. By \eqref{lapsquarepsquare}, or simply $\lap_{\delta}^{2}P^{2} = 2\lap_{\delta}|DP|_{\delta}^{2} = 4|\hess P|^{2}_{\delta}$, the second equation of \eqref{einsteinpolynomials} is equivalent in the presence of the first, to either $\lap_{\delta}^{2} P^{2} = 4\ka E(x)$ or $\lap_{\delta}|DP|_{\delta}^{2} = 2\ka E(x)$, for the same constant  $\ka > 0$. 
Write $P_{i_{1}\dots i_{k}} = D_{i_{1}}\dots D_{i_{k}}P$ and raise and lower indices with $\delta_{ij}$ and the dual bivector $\delta^{ij}$. Let $L_{ij}\,^{k} = P_{ij}\,^{k}$. Differentiating the second equality of \eqref{einsteinpolynomials} and using that $P$ is a cubic polynomial gives $L_{ij} = L_{ipq}L_{j}\,^{pq} = \ka\delta_{ij}$. By the arguments proving Theorem \ref{eigentheorem} the connection $\nabla = D - \tfrac{1}{2}L_{ij}\,^{k}$ is the aligned representative of an exact proper strongly Einstein AH structure $(\en, [\delta])$ with distinguished metric $\delta_{ij}$ and satisfying $E_{ijk}\,^{l} = 0$. By \eqref{confcurvijkl} there holds $T_{ijk}\,^{l} = -\tfrac{1}{4}L_{ijk}\,^{l}$. The scalar curvature $\uR_{\delta}$ is $-\tfrac{1}{4}|L|_{\delta}^{2}$, which is negative if $L_{ijk}$ is not identically zero. Taking the $h$-trace free part of $L_{ijkl} = 2P_{k[i}\,^{p}P_{j]lp}$ gives $-4A_{ijk}\,^{p}h_{pl} = C_{ijkl}$. If $n = 3$ then $C_{ijkl} = 0$ necessarily, but if $n > 3$ then it can be that $C_{ijkl}$ is not identically zero, in which case the preceeding gives a construction of exact Riemannian proper strongly Einstein AH structures with self-conjugate curvature but which by Lemma \ref{projflatahlemma} are neither projectively flat nor conjugate projectively flat. 

Let $\Ga$ be a lattice in $\rea^{n}$. Since the tensor $L_{ij}\,^{k}$ is constant and the generators of $\Ga$ act by translations, the Einstein AH structures on $\rea^{n}$ obtained in this way descend to the quotient torus $\rea^{n}/\Ga$.

To find examples in this way two things must be done. First, the equations \eqref{einsteinpolynomials} must be solved. Second, there must be found ways of checking the non-vanishing of $C_{ijkl}$. In what follows there are first given examples of polynomials solving \eqref{einsteinpolynomials}; in particular examples are given for all $n \geq 2$. Later, it is shown how such a cubic polynomial determines a particular kind of algebra, and this formulation is used to check the non-vanishing of $C_{ijkl}$ in some cases. Examples for which $C_{ijkl}$ is not identically zero are constructed for all $n > 3$.

\subsubsection{}
If $P$ solves \eqref{einsteinpolynomials} then $e^{r}P$ solves \eqref{einsteinpolynomials} with $e^{2r}\ka$ in place of $\ka$. Also the equations \eqref{einsteinpolynomials} are orthogonally invariant in the sense that if $P \in \pol^{3}(\rea^{n})$ solves \eqref{einsteinpolynomials} then so too does $(g \cdot P)(x) \defeq P(gx)$ for any $g \in O(n)$. Hence it is desirable to describe their solutions modulo the action of the group $CO(n) = O(n) \times \reap$ of conformal linear transformations.

\subsubsection{}\label{2dharpolysection}
Writing $z = x + \j y$, the most general harmonic polynomial on $\rea^{2}$ is 
\begin{align}\label{2dharpoly}
r\cos\theta (x_{1}^{3} - 3x_{1}x_{2}^{2}) + r\sin \theta(x_{2}^{3}- 3x_{2}x_{1}^{2}) = r\re(e^{\j\theta/3}z)^{3}, 
\end{align}
for some $r \in [0, \infty)$ and $\theta \in [0, 2\pi)$. By Lemma \ref{twodcubicformlemma} or direct computation using \eqref{2dharpoly}, any harmonic polynomial $P \in \pol^{3}(\rea^{2})$ solves \eqref{einsteinpolynomials}, with $\ka = 72r^{2}$ for \eqref{2dharpoly}. Hence, after the orthogonal transformation sending $z$ to $e^{-\j\theta/3}z$, the polynomial $P$ has the form $r(x_{1}^{3} - 3x_{1}x_{2}^{2})$. Thus every non-trivial solution of \eqref{einsteinpolynomials} on $\rea^{2}$ is in the $CO(2)$-orbit of $x_{1}^{3} - 3x_{1}x_{2}^{2}$.

\subsubsection{}
If $P \in \pol^{3}(\rea^{p})$ and $Q \in \pol^{3}(\rea^{q})$ solve \eqref{einsteinpolynomials} for the same constant $\ka$, then $P \circ \pi_{p} + \Q \circ \pi_{q} \in \pol^{3}(\rea^{p+q})$, in which $\pi_{p}$ and $\pi_{q}$ are the projections from $\rea^{p+q}= \rea^{p}\oplus \rea^{q}$ onto $\rea^{p}$ and $\rea^{q}$, solves \eqref{einsteinpolynomials}. Since any $P \in \har^{3}(\rea^{2})$ satisfies \eqref{einsteinpolynomials}, there are solutions to \eqref{einsteinpolynomials} on $\rea^{2n}$ for any $n$. For example, for any $\theta_{i} \in [0, 2\pi)$,
\begin{align*}
P(x) = \sum_{i = 1}^{n}\left(\cos \theta_{i} (x_{2i-1}^{3} - 3x_{2i-1}x_{2i}^{2}) + \sin \theta_{i} (x_{2i}^{3} - 3x_{2i}x_{2i-1}^{2})\right).
\end{align*}
solves \eqref{einsteinpolynomials} on $\rea^{2n}$ with $\ka = 72$. 

As the goal is to produce examples for which $A_{ijk}\,^{l}$ does not vanish, and such examples can neither arise in $2$ and $3$ dimensions, nor by composing in the sense just described examples for which $A_{ijk}\,^{l}\equiv 0$, something more has to be done. However, as the simplest construction of such examples will utilize the $n = 2$ and $n = 3$ cases, the $n=3$ case is analyzed completely in the next section.

\subsubsection{}
Suppose $P \in \pol^{3}(\rea^{n+1})$ solves \eqref{einsteinpolynomials}. The restriction of $P$ to the sphere $E = 1$ has a maximum, at which point $D_{i}P$ is proportional to $D_{i}E$. By an orthogonal change of variables it may be supposed that the maximum occurs at the point $x_{1} = 0, \dots, x_{n} = 0$, $x_{n+1} = 1$, and that at this point $D_{i}P$ is proportional to $dx^{n+1}$. It follows that $P$ has the form $P = cx_{n+1}^{3} + x_{n+1}A(x_{1}, \dots x_{n}) + B(x_{1}, \dots, x_{n})$, in which $A$ and $B$ are homogeneous polynomials on $\rea^{n}$ of degrees $2$ and $3$, respectively, and $c$ is a non-negative constant. By an orthogonal change of variables it may be further supposed that $A$ has the form $A = \sum_{i = 1}^{n}a_{i}x_{1}^{2}$, so that, modulo $O(n+1)$, $P$ may be supposed to have the form
\begin{align}\label{pprenormal}
P = cx_{n+1}^{3} + x_{n+1}\sum_{i = 1}^{n}a_{i}x_{i}^{2} + B(x_{1}, \dots, x_{n}). 
\end{align}
Moreover, after a dilation, it may be supposed that $c$ is either $0$ or $1$. The equations \eqref{einsteinpolynomials} are equivalent to the following equations for $A$ and $B$
\begin{align}\label{epreduced}
\begin{split}
&\lap_{\delta}B = 0,\qquad \sum_{i = 1}^{n}a_{i} = -3c, \qquad 4(9c^{2} + \sum_{i = 1}^{n}a_{i}^{2}) = \ka,\\
&\sum_{i = 1}^{n}a_{i}\tfrac{\pr^{2}B}{\pr^{2} x^{i}} = 0,\qquad 8\sum_{i= 1}^{n}a_{i}^{2}x_{i}^{2} + |\hess B|_{\delta}^{2} = 4(9c^{2} + \sum_{i = 1}^{n}a_{i}^{2})E_{n}(x),
\end{split}
\end{align}
in which the subscript $n$ on $E_{n}$ indicates that it is the quadratic form defined by the standard Euclidean structure on $\rea^{n}$. It should be possible to describe all $O(n)$-equivalence classes of solutions to \eqref{einsteinpolynomials} by studying the equations \eqref{epreduced}. As is explained in section \ref{isoparametricsection} below, these equations generalize one case of the equations for isomparametric hypersurfaces solved by E. Cartan in \cite{Cartan-cubic}. It would be interesting to understand what the equations \eqref{einsteinpolynomials} say about the geometry of the level sets of $P$.

\begin{lemma}\label{3dpolylemma}
Any two non-trivial solutions of \eqref{einsteinpolynomials} (for the same $\ka > 0$) are in the same $O(3)$-orbit. In particular, any $P \in \pol^{3}(\rea^{3})$ solving \eqref{einsteinpolynomials} is on the $O(3)$ orbit of $\sqrt{\ka}/6\sqrt{3}$ multiplied by 
\begin{align}
\label{poly2}
\begin{split}
 x_{3}^{3}& - \tfrac{3}{2}x_{3}(x_{1}^{2} + x_{2}^{2}) + \tfrac{1}{\sqrt{2}}\left(x_{1}^{3} - 3x_{1}x_{2}^{2}\right) = x_{3}^{3} - \tfrac{3}{2}x_{3}|z|^{2} + \tfrac{1}{\sqrt{2}}\re z^{3}.
\end{split}
\end{align}
in which $z = x_{1} + \j x_{2}$ (\eqref{poly2} solves \eqref{einsteinpolynomials} with $\ka = 54$). Moreover, the polynomials \eqref{poly2} and $3\sqrt{3}x_{1}x_{2}x_{3}$ are on the same $O(3)$-orbit.
\end{lemma}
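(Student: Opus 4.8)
The plan is to carry out the normalization already set up in the text and then run a short case analysis; the point is that in three variables the harmonicity of the planar part forces the solution to be essentially unique. First I would specialize the reduction \eqref{pprenormal}--\eqref{epreduced} to $\rea^3$, i.e.\ take $n=2$ there, so that after applying a suitable element of $O(3)$ any solution $P$ of \eqref{einsteinpolynomials} takes the form $P = c x_3^3 + x_3(a_1 x_1^2 + a_2 x_2^2) + B(x_1,x_2)$ with $B\in\pol^3(\rea^2)$ harmonic, $c\in\{0,1\}$, and the relations \eqref{epreduced} in force. The crucial preliminary observation is that for a \emph{nontrivial} $P$ one has $c>0$: in the reduction $c = P(e_3)$ is the maximum of $P$ on the unit sphere, and since $P$ is homogeneous of odd degree $P(-x)=-P(x)$, so $\max_{S^2}P=-\min_{S^2}P$, which is strictly positive unless $P\equiv0$. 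Hence for nontrivial $P$ a dilation (to be absorbed later into the $\reap$ factor of $CO(3)$) reduces to $c=1$, and the branch $c=0$ contributes only $P\equiv0$. This is what shows that apparent extra solutions such as $x_3(x_1^2-x_2^2)$ are not genuinely new: they are $O(3)$-equivalent to a $c=1$ form once the maximum is rotated to $e_3$.

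Second, with $c=1$ I would exploit $\lap_\delta B=0$, which gives $B_{22}=-B_{11}$, to rewrite the mixed equation $a_1 B_{11}+a_2 B_{22}=0$ of \eqref{epreduced} as $(a_1-a_2)B_{11}=0$. If $a_1\neq a_2$ then $B_{11}\equiv0$, and a harmonic cubic on $\rea^2$ with $B_{11}\equiv0$ vanishes (immediate from the explicit form \eqref{2dharpoly}), so $B=0$; the norm equation of \eqref{epreduced} then reduces to $8(a_1^2x_1^2+a_2^2x_2^2)=4(9+a_1^2+a_2^2)(x_1^2+x_2^2)$, whose coefficient matching forces $a_1^2=a_2^2$, incompatible with $a_1+a_2=-3$ and $a_1\neq a_2$. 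Therefore $a_1=a_2=-\tfrac32$, whence $\ka=54$ and the norm equation becomes $|\hess B|^2_\delta=36\,E_2(x)$ with $B\neq0$. By the planar analysis of section \ref{2dharpolysection}, every nonzero harmonic cubic on $\rea^2$ lies on the $CO(2)$-orbit of $\re z^3$, and the normalization $|\hess B|^2_\delta=36E_2$ pins the scale to $B=\tfrac1{\sqrt2}\re z^3$. Substituting produces exactly \eqref{poly2}.

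Finally I would assemble the conclusion: modulo $O(3)$ and dilation the only nontrivial solution is \eqref{poly2}, which solves \eqref{einsteinpolynomials} with $\ka=54$. For a fixed value of $\ka$ the dilation factor is determined by the homogeneity $\ka\mapsto\lambda^2\ka$ under $P\mapsto\lambda P$ (the sign being absorbed by $-\Id\in O(3)$ since $P$ is odd), so the solution set for a given $\ka$ is a single $O(3)$-orbit, represented by the stated multiple of \eqref{poly2}. The ``moreover'' is then immediate: a direct computation shows $3\sqrt3\,x_1x_2x_3$ is harmonic with $|\hess|^2_\delta=54\,E(x)$, hence solves \eqref{einsteinpolynomials} with $\ka=54$, so by the uniqueness just established it lies on the $O(3)$-orbit of \eqref{poly2}; explicitly, the $45^{\circ}$ rotation in the $(x_1,x_2)$-plane realizes the equivalence after the maximum is moved back to a pole.

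The main obstacle is conceptual rather than computational: one must recognize that the $c=0$ branch of \eqref{epreduced} does not enlarge the moduli, which is exactly what the oddness/maximum argument guarantees; everything afterward is the short case split above together with the already-proved two-dimensional classification. I would also double-check the bookkeeping of the normalizing scalar against the value $\ka=54$ computed for \eqref{poly2}, since that is where the explicit constant in the statement is pinned down.
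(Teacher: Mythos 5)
Your argument is correct, and the skeleton is the same as the paper's: reduce to the normal form \eqref{pprenormal} with $n=2$, impose \eqref{epreduced}, and match coefficients. The one place you genuinely diverge is the $c=0$ branch. The paper does \emph{not} discard it: it works that branch out, finds the nontrivial solution $\tfrac{3\sqrt3}{2}x_3(x_1^2-x_2^2)$ there (after ruling out $P=B$ via $\ka\neq0$), and then must separately exhibit an explicit factorization into linear forms to see that this solution and \eqref{poly2} lie on one $SO(3)$-orbit. Your observation that $c=P(e_3)=\max_{S^2}P>0$ for nontrivial odd $P$ — so that under the maximum normalization used to derive \eqref{pprenormal} the $c=0$ branch is empty — is a clean shortcut that collapses that entire case, and it correctly explains why $x_3(x_1^2-x_2^2)$ is not a new solution (its maximum on the sphere is not at $e_3$). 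Correspondingly, you obtain the ``moreover'' as a corollary of uniqueness after checking that $3\sqrt3\,x_1x_2x_3$ is harmonic with $|\hess|^2_\delta=54E$, whereas the paper proves it by the explicit factorization; both are valid, and the paper's version has the minor advantage of producing the change of variables explicitly. Within the $c=1$ case your route through $(a_1-a_2)B_{11}=0$ and the paper's route (substituting $a_2=-3-a_1$ and $|\hess B|^2_\delta=72r^2E_2$ directly into the norm equation, which alone forces $a_1=-3/2$, $r=1/\sqrt2$) arrive at the same place.

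Your instinct to recheck the normalizing scalar is well placed: since \eqref{poly2} solves \eqref{einsteinpolynomials} with $\ka=54$ and $P\mapsto\lambda P$ sends $\ka\mapsto\lambda^2\ka$, the representative for general $\ka$ should be $\sqrt{\ka/54}=\sqrt{\ka}/(3\sqrt6)$ times \eqref{poly2}, whereas the stated factor $\sqrt{\ka}/(6\sqrt3)$ squares to $\ka/108$ and so produces the constant $\ka/2$; the constant in the statement appears to be off by $\sqrt2$, and your derivation gives the right one.
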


\begin{proof}
Suppose that $P$ is in the form \eqref{pprenormal}. First suppose $c = 1$. Since $B$ is harmonic it has the form \eqref{2dharpoly}. The second and last equations of \eqref{epreduced} yield $8a_{1}^{2}x_{1}^{2} + 8(a_{1} + 3)^{2}x_{2} = ( 36 + 4(2a_{1}^{2} + 6a_{1} + 9) -72r^{2})(x_{1}^{2} + x_{2}^{2})$, which forces $a_{1} = -3/2$ and $s = 1/\sqrt{2}$, so that $P$ has the form \begin{align*}
\begin{split}
P(x) &= x_{3}^{3} - \tfrac{3}{2}x_{3}(x_{1}^{2} + x_{2}^{2}) + \tfrac{1}{\sqrt{2}}\cos \theta \left(x_{1}^{3} - 3x_{1}x_{2}^{2}\right)+\tfrac{1}{\sqrt{2}}\sin \theta \left(x_{2}^{3} - 3x_{2}x_{1}^{2}\right).
\end{split}
\end{align*}
A rotation in the $(x_{1}, x_{2})$ plane preserves the first two terms, and, as in the discussion following \eqref{2dharpoly}, after such a rotation it may be supposed that $\theta = 0$, so that $P$ has the the form \eqref{poly2}.

If $c = 0$, then the second equation of \eqref{epreduced} yields $a_{2} = -a_{1} = -a$, and as in the preceeding $B$ may be assumed to have the form \eqref{2dharpoly}. The penultimate equation of \eqref{epreduced} yields $12ar(\cos \theta x_{1} - \sin \theta x_{2}) = 0$, so that either $a = 0$ or $r = 0$. If $a = 0$ then $P = B$, and, as in section \ref{2dharpolysection}, $B$ is orthogonally equivalent to a constant multiple of $x_{1}^{3} - 3x_{1}x_{2}^{2}$; however, in this case the third equation of \eqref{epreduced} implies $\ka = 0$, but $\ka$ is assumed non-zero in \eqref{einsteinpolynomials}, so this is not possible. If $r = 0$, then $P$ is a constant multiple of $\tfrac{3\sqrt{3}}{2}x_{3}(x_{1}^{2} - x_{2}^{2}) = \tfrac{3\sqrt{3}}{2}x_{3}(x_{1} - x_{2})(x_{1} + x_{2})$; here the constant factor is chosen so that the resulting $\ka$ is the same as for \eqref{poly2}, namely $54$. Via the orthogonal change of variables of the form $\sqrt{2}\bar{x}_{1} = x_{1} - x_{2}$ and $\sqrt{2}\bar{x}_{2} = x_{1} + x_{2}$, $P$ is equivalent to $3\sqrt{3}x_{1}x_{2}x_{3}$. That the polynomial \eqref{poly2} and $3\sqrt{3}x_{1}x_{2}x_{3}$ lie on the same $SO(3)$-orbit is evident from the factorization
\begin{align*}
 x_{3}^{3}& - \tfrac{3}{2}x_{3}(x_{1}^{2} + x_{2}^{2}) + \tfrac{1}{\sqrt{2}}\left(x_{1}^{3} - 3x_{1}x_{2}^{2}\right)\\
& = 3\sqrt{3}(\tfrac{\sqrt{2}}{\sqrt{3}}x_{1} + \tfrac{1}{\sqrt{3}}x_{3})(-\tfrac{1}{\sqrt{6}}x_{1} + \tfrac{1}{\sqrt{2}}x_{2} + \tfrac{1}{\sqrt{3}}x_{3})(-\tfrac{1}{\sqrt{6}}x_{1} - \tfrac{1}{\sqrt{2}}x_{2} + \tfrac{1}{\sqrt{3}}x_{3}),
\end{align*}
which exhibits \eqref{poly2} as the pullback of $3\sqrt{3}x_{1}x_{2}x_{3}$ via an element of $SO(3)$.
\end{proof}

\subsubsection{}
Next it is shown that there exist non-trivial solutions to \eqref{einsteinpolynomials} for all $n \geq 2$. let $P \in \pol^{3}(\rea^{n+1})$ be as in \eqref{pprenormal} and choose all the $a_{i}$ to be equal. By \eqref{epreduced} this forces $a_{i} = -3c/n$. If $B \in \har^{3}(\rea^{n})$, then the first and fourth equations of \eqref{epreduced} are satisfied. The last equation of \eqref{epreduced} will be satisfied if and only if $B$ solves \eqref{einsteinpolynomials} with constant $\ka_{n} = 36c^{2}(n+2)(n-1)/n^{2}$. The resulting $P$ solves \eqref{einsteinpolynomials} with constant $\ka_{n+1} = 36c^{2}(n+1)/n = \ka_{n}n(n+1)/((n+2)(n-1))$. If $c$ is chosen to be $n/6$ then $\ka_{n} = (n+2)n(n-1)$ and $\ka_{n+1} = n(n+1)$. This proves

\begin{lemma}
Suppose $Q_{n} \in \pol^{3}(\rea^{n})$ solves \eqref{einsteinpolynomials} with constant $\ka_{n} = n(n-1)$. Then
\begin{align}\label{qns}
Q_{n+1}(x_{1}, \dots, x_{n+1})\defeq \tfrac{n}{6}x_{n+1}^{3} - \tfrac{1}{2}x_{n+1}E_{n}(x_{1}, \dots, x_{n}) + \sqrt{\tfrac{n+2}{n}}Q_{n}(x_{1}, \dots, x_{n})
\end{align}
solves \eqref{einsteinpolynomials} with constant $\ka_{n+1} = n(n+1)$.
\end{lemma}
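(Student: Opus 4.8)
The plan is to verify the two equations \eqref{einsteinpolynomials} for $Q_{n+1}$ by direct computation, exploiting the two hypotheses on $Q_n$, namely $\lap_{\delta}Q_n = 0$ and $|\hess Q_n|^2_{\delta} = n(n-1)E_n$. In fact $Q_{n+1}$ is exactly the special case of the construction \eqref{pprenormal}--\eqref{epreduced} in which all the $a_i$ are taken equal, with $c = n/6$ (so that $\sum_i a_i = -3c$ forces $a_i = -1/2$) and $B = \sqrt{(n+2)/n}\,Q_n$; the scaling factor $\sqrt{(n+2)/n}$ is forced because \eqref{epreduced} demands that $B$ solve \eqref{einsteinpolynomials} with constant $(n+2)(n-1) = \tfrac{n+2}{n}\cdot n(n-1)$, while $Q_n$ is normalized to have constant $n(n-1)$. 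Thus the statement is essentially an instantiation of the preceding paragraph, and what remains is to confirm the arithmetic.

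First I would check harmonicity. Writing $\lap_{n+1} = \lap_n + \partial_{n+1}^2$ and using $\lap_n E_n = 2n$, the cubic term contributes $\partial_{n+1}^2(\tfrac{n}{6}x_{n+1}^3) = n x_{n+1}$, the middle term contributes $-\tfrac12 x_{n+1}\lap_n E_n = -n x_{n+1}$, and the last term contributes $\sqrt{(n+2)/n}\,\lap_n Q_n = 0$; these cancel, so $\lap_{n+1}Q_{n+1} = 0$. Next I would record the Hessian of $Q_{n+1}$, separating the index $n+1$:
\begin{align*}
(Q_{n+1})_{ij} &= -x_{n+1}\delta_{ij} + \sqrt{\tfrac{n+2}{n}}\,(Q_n)_{ij} \quad (i,j \leq n),\\
(Q_{n+1})_{i,n+1} &= -x_i \quad (i \leq n), \qquad (Q_{n+1})_{n+1,n+1} = n x_{n+1}.
\end{align*}

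Then $|\hess Q_{n+1}|^2$ is the sum of squares over all index pairs. The crucial point, and the one place where harmonicity of $Q_n$ enters, is the expansion of the pure block $\sum_{i,j\leq n}(Q_{n+1})_{ij}^2$: its cross term $-2x_{n+1}\sqrt{(n+2)/n}\sum_{i,j}\delta_{ij}(Q_n)_{ij}$ vanishes because $\sum_i (Q_n)_{ii} = \lap_n Q_n = 0$, leaving $n x_{n+1}^2 + \tfrac{n+2}{n}|\hess Q_n|^2 = n x_{n+1}^2 + (n+2)(n-1)E_n$ after inserting the norm identity. The mixed entries contribute $2\sum_i x_i^2 = 2E_n$ and the last entry contributes $n^2 x_{n+1}^2$. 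Collecting,
\begin{align*}
|\hess Q_{n+1}|^2 &= (n+n^2)x_{n+1}^2 + \bigl((n+2)(n-1)+2\bigr)E_n = n(n+1)\bigl(x_{n+1}^2 + E_n\bigr) = n(n+1)E_{n+1},
\end{align*}
which is the second equation of \eqref{einsteinpolynomials} with $\ka_{n+1} = n(n+1)$, as claimed. I anticipate no genuine obstacle here: the computation is routine, and the only substantive inputs are the vanishing of the cross term (harmonicity of $Q_n$) and the deliberate choice of the coefficient $\sqrt{(n+2)/n}$ in \eqref{qns} matching the norm constants.
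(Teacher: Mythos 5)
Your proof is correct and follows essentially the same route as the paper: the paper obtains the lemma by specializing the reduced system \eqref{epreduced} to equal $a_{i}$ with $c=n/6$, which is exactly the reduction you identify before carrying out the verification directly. Your explicit Hessian computation just unwinds that reduction, and the arithmetic checks out.
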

By section \ref{2dharpolysection}, $Q_{2}$ can be taken to be $6Q_{2} = x_{1}^{3} - 3x_{1}x_{2}^{2}$, in which case $Q_{3}$ is one third of \eqref{poly2}. The resulting $Q_{n}$ for $n > 3$ show that \eqref{einsteinpolynomials} admits non-trivial solutions for all $n$. These polynomials $Q_{n}$ are determined uniquely up to $O(n)$ equivalence, as the $Q_{n+1}$ defined by \eqref{qns} using $Q_{n}$ and using $g\cdot Q_{n}$ for $g \in O(n)$ are equivalent modulo $O(n+1)$. What distinguishes the $O(n)$-orbits of the polynomials $Q_{n}$ among the $O(n)$ orbits of all solutions of \eqref{einsteinpolynomials} is best described in the algebraic formalism introduced in section \ref{codazzialgebrasection}, and is implicit in Lemma \ref{confasslemma} (and the discussion following its proof) and Theorem \ref{confassclassificationtheorem}. It turns out that the Einstein AH structures arising from these $Q_{n}$ \textit{are} projectively flat, so more solutions to \eqref{einsteinpolynomials} are needed.

\subsubsection{}\label{explicitpolynomialsection}
Some more examples of cubic polynomials satisfying the second equation of \eqref{einsteinpolynomials} for $n > 3$ are (the polynomials \eqref{poly3} -\eqref{cartanpoly} are also harmonic):
\begin{align}
\label{poly3} 
\begin{split}
P(x) &= - \tfrac{1}{6}x_{3}^{3} + \tfrac{1}{2}x_{3}(x_{1}^{2} - x_{2}^{2} + x_{4}^{2}) - x_{1}x_{2}x_{4},
\end{split}
& &x \in \rea^{4},\\
\label{nahmpolyso3}
\begin{split}
P(x)  & = \tfrac{1}{2}\det \begin{pmatrix} x_{1} & x_{2} & x_{3} \\ x_{4} & x_{5} & x_{6}\\ x_{7} & x_{8} & x_{9}\end{pmatrix}\\
 &= \tfrac{1}{2}\left(x_{1}x_{5}x_{9} + x_{2}x_{6}x_{7} + x_{3}x_{4}x_{8} - x_{1}x_{6}x_{8} - x_{2}x_{4}x_{9} - x_{3}x_{5}x_{7}\right),
\end{split}&& x \in \rea^{9},\\
\label{cartanpoly}
\begin{split}
P(x) &= x_{5}^{3} + \tfrac{3}{2}x_{5}(x_{1}^{2} + x_{2}^{2} - 2x_{3}^{2} - 2x_{4}^{2})  + \tfrac{3\sqrt{3}}{2}x_{4}(x_{1}^{2} - x_{2}^{2}) + 3\sqrt{3}x_{1}x_{2}x_{3},
\end{split}&& x\in \rea^{5},\\
\label{prehomogpoly}
\begin{split}
P(x)  & = \det \begin{pmatrix} x_{11} & x_{12}/\sqrt{2} & x_{13}/\sqrt{2} \\ x_{12}/\sqrt{2} & x_{22} & x_{23}/\sqrt{2}\\ x_{13}/\sqrt{2} & x_{23}/\sqrt{2} & x_{33}\end{pmatrix}\\
 &= x_{11}x_{22}x_{33} - \tfrac{1}{2}\left(x_{11}x_{23}^{2} + x_{22}x_{13}^{2} + x_{33}x_{12}^{2}\right) + \tfrac{1}{\sqrt{2}}x_{12}x_{23}x_{13},
\end{split}&& x \in \rea^{6},\\
\label{prehomogpoly2}
\begin{split}
P(x)  & = \text{Pfaff}\, X \end{split}&& X \in \mathfrak{so}(6, \rea).
\end{align}
In these examples $\ka$ is, respectively, $4$, $1$, $126$, $3$, and $6$. In what follows there are described several approaches to solving \eqref{einsteinpolynomials}, which partly explain the origins of the polynomials \eqref{poly3}-\eqref{prehomogpoly2}. In each case the polynomial can be interpreted as structure constants of a certain kind of commutative nonassociative algebra, and this motivates the discussion of such algebras in section \ref{codazzialgebrasection}. The polynomial \eqref{poly3} is $1/6$ times the real part of the polynomial $3(x_{1} + \j x_{2})^{2}(x_{3} + \j x_{4}) - (x_{3} + \j x_{4})^{3}$ on $\com^{2}$. The polynomial \eqref{nahmpolyso3} corresponds to the multiplication on the Nahm algebra on $\mathfrak{so}(3)$, as is explained in section \ref{nahmsection}. As is explained in section \ref{isoparametricsection}, the polynomial \eqref{cartanpoly} arises from Cartan's classification of isoparametric hypersurfaces in spheres having three distinct principal curvatures. In section \ref{codazzialgebrasection} it will be proved that the Einstein AH structures resulting from these two examples have non-zero $A_{ijk}\,^{l}$. Finally, the polynomials \eqref{prehomogpoly} and \eqref{prehomogpoly2} (and also \eqref{nahmpolyso3}) arise as the relative invariants of prehomogeneous vector spaces (see the first three entries of table $I$ in section $7$ of \cite{Sato-Kimura}). In all the cases that have been checked, for a regular prehomogeneous vector space for which the irreducible relative invariant polynomial has degree $3$, this polynomial solves \eqref{einsteinpolynomials}, an observation for which the explanation is not yet available.

\subsubsection{}\label{naiveexample}
Here is made a digression to exhibit an example of a naive Einstein AH structure which is not Einstein. If at least one of the real numbers $\mu$ and $\la$ is non-zero then 
\begin{align}
\label{poly1} &P(x)  = \la x_{1}x_{2}x_{3} + \mu x_{2}(x_{1}^{2} - x_{3}^{2}),
\end{align}
solves \eqref{einsteinpolynomials} with $\ka$ equal to $8\mu^{2} + 2\la^{2}$. By Lemma \ref{3dpolylemma} all the polynomials \eqref{poly1} are on the same $O(3)$ orbit as is \eqref{poly2}. Let $P^{\mu, \la}(x)$ be the polynomial \eqref{poly1} and let $L(\mu, \la)_{ijk} = D_{i}D_{j}D_{k}P^{\mu, \la}$, which is a tensor depending on the parameters $\mu$ and $\la$. It makes sense to substitute into $L(\mu, \la)_{ijk}$ functions $f(x)$ and $g(x)$ in place of constants. Let $x^{i}$ be coordinates such that $dx^{i}$ is a parallel frame and let $\pr_{i}$ be the dual coframe. Let $\pr_{i_{1}\dots i_{k}}$ be the completely symmetric part of the tensor product $\pr_{i_{1}}\tensor \dots \tensor \pr_{i_{k}}$ The resulting tensor is explicitly
\begin{align}
L(f, g) = 6f\pr_{112} - 6f\pr_{233} + 6g \pr_{123}.
\end{align}
The divergence of $L(f, g)$ is 
\begin{align}
\div(L(f, g)) = (2g_{3} + 4f_{1})\pr_{12} + 2f_{2}(\pr_{11} +2g_{2}\pr_{12} - \pr_{33}) + (2g_{1}- 4f_{3})\pr_{23}, 
\end{align}
in which $f_{i} = \tfrac{\pr f}{\pr x^{i}}$ and similarly for $g$. Hence if $f$ and $g$ solve $f_{2} = 0 = g_{2}$, $g_{1} = 2f_{3}$, and $g_{3} = -2f_{1}$, then $L(f, g)$ is divergence free. In this case the exact AH structure $(\en, [\delta])$ with aligned representative $\nabla \defeq D - \tfrac{1}{2}L_{ij}\,^{k}$ and distinguished metric $\delta_{ij}$ has $\mr{R}_{ij} = 0$ and $E_{ij} = 0$, the former because $\mr{L}_{ij} = 0$ still holds pointwise, and the latter by \eqref{ddivbt}, so is naive Einstein. On the other hand, by \eqref{confric} its scalar curvature is $-4\uR_{\delta} = |L(f, g)|_{\delta}^{2} = 24f^{2} + 6g^{2}$. If $f$ and $g$ are properly chosen, then $\uR_{\delta}$ is not constant, and so $(\en, [\delta])$ is not Einstein. An explicit example is $f(x) = x_{1} + x_{3}$ and $g(x) = 2(x_{1} - x_{3})$, for which $\uR_{\delta} = -12(x_{1}^{2} + x_{3}^{2})$ is evidently non-constant.

\subsubsection{}
In this section a remark is made about an equivalent description of the equations \eqref{einsteinpolynomials} when $n = 3$, and a Bernstein type problem is proposed. The material of this section is not needed in what follows and the reader mainly interested in the construction of examples as in section \ref{polynomialsection} can skip ahead to section \ref{codazzialgebrasection}. 

The group $GL(3, \rea)$ acts on $\pol^{3}(\rea^{3})$ by $(g\cdot P)(x) \defeq P(gx)$. The equation 
\begin{align}\label{dethpkap}
 \det \hess P = \ka P,
\end{align}
transforms under this action by $\H(g \cdot P) = (\det g)^{2} g \cdot H(P)$, in which here, as in the rest of this section, it is convenient to write $\H(P) \defeq \det \hess P$. In particular, \eqref{dethpkap} is covariant with respect to the group $SL^{\pm}(3, \rea)$ of unimodular linear transformations. Theorem \ref{dethesstheorem} describes the solutions of \eqref{dethpkap} with non-zero $\ka$ up to $SL^{\pm}(3, \rea)$ equivalence. The sign of $\ka$ matters. 
Observe that the two-parameter family $P_{a, b}(x) = \tfrac{a}{6}(x_{1}^{3} + x_{2}^{3} + x_{3}^{3}) + bx_{1}x_{2}x_{3}$ of cubics, known in the nineteenth century as the \textbf{syzygetic pencil} of cubics, satisfies $\H(P_{a,b}) = P_{-ab^{2}, a^{3} + 2b^{3}}$. In the syzygetic pencil there are two one-parameter families of solutions to \eqref{dethpkap} for non-zero $\ka$, and distinguished by the sign of $\ka$. Namely $P_{6a, -3a}$ solves $\H(P) = -54 a^{2} P$, and $P_{0, b}(x) = bx_{1}x_{2}x_{3}$ solves $\H( P) = 2b^{2}P$. Unlike $P_{0, b}$, the polynomial $P_{6a, -3a}$ is not decomposable as a product of linearly independent linear forms, as is most easily seen from its explicit expression:
\begin{align*}
\begin{split}
P_{6a, -3a}(x) &= a(x_{1}^{3} + x_{2}^{3} + x_{3}^{3}) - 3ax_{1}x_{2}x_{3}  = \tfrac{3a}{2}(x_{1} + x_{2} + x_{3})E(x) - \tfrac{a}{2}(x_{1} + x_{2} + x_{3})^{3},\\
& = a(x_{1} + x_{2} + x_{3})(x_{1}^{2} + x_{2}^{2} + x_{3}^{2} - x_{1}x_{2} - x_{2}x_{3} - x_{3}x_{1})\\
& = \tfrac{a}{4}(x_{1} + x_{2}+x_{3})(3(x_{1} - x_{2})^{2} + (x_{1} + x_{2} - 2x_{3})^{2}).
\end{split}
\end{align*} 
The linear change of variables $2y_{1} = x_{1} - x_{2}$, $2\sqrt{3} y_{2} = x_{1} + x_{2} - 2x_{3}$, $3y_{3} = x_{1}+x_{2}+x_{3}$, sends $P_{6a, -3a}$ into $9a y_{3}(y_{1}^{2} + y_{2}^{2})$, which is evidently the product of a linear form and an irreducible degenerate quadratic form.

\begin{theorem}\label{dethesstheorem}
Let $P \in \pol^{3}(\rea^{3})$ be a homogeneous ternary cubic polynomial not identically zero.
\begin{enumerate}
\item $P$ solves \eqref{dethpkap} for some $\ka > 0$ if and only if $P$ decomposes as a product of linearly independent homogeneous linear forms, or, what is equivalent, $P$ is in the $SL^{\pm}(3, \rea)$ orbit of a positive multiple of $x_{1}x_{2}x_{3}$.
\item $P$ solves \eqref{dethpkap} for some $\ka < 0$ if and only if $P$ decomposes as a product of a non-zero linear form $\ell$ and an irreducible degenerate quadratic form $A$ such that the point in $\proj^{2}(\rea)$ determined by the kernel of $A$ is not contained in the line in $\proj^{2}(\rea)$ determined by the kernel of $\ell$, or, what is equivalent, $P$ is in the $SL^{\pm}(3, \rea)$ orbit of a positive multiple of $x_{1}(x_{2}^{2} + x_{3}^{2})$.
\item $P$ is harmonic and solves \eqref{dethpkap} for some $\ka > 0$ if and only if it is a product of orthogonal homogeneous linear forms, or, what is equivalent, $P$ is in the $O(3)$ orbit of a positive multiple of $x_{1}x_{2}x_{3}$. 
\end{enumerate}
Moreover, a harmonic $P \in \har^{3}(\rea^{3})$ solves \eqref{einsteinpolynomials} if and only if it solves \eqref{dethpkap}.
\end{theorem}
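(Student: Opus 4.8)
The plan is to exploit the $GL(3,\rea)$-covariance of \eqref{dethpkap} recorded just above the theorem. Since $\H(g\cdot P) = (\det g)^{2}(g\cdot \H(P))$ and a rescaling $P\mapsto cP$ sends $\ka\mapsto c^{2}\ka$, both of the conditions ``$\H(P)=\ka P$ for some $\ka>0$'' and ``for some $\ka<0$'' are invariant under all of $GL(3,\rea)$; in particular, when $\ka\neq 0$ its sign is an invariant of the $GL(3,\rea)$-orbit of $P$. Thus it suffices to classify the orbits on which \eqref{dethpkap} holds with $\ka\neq 0$ and to read off the sign on one representative of each. The ``if'' directions and the sign bookkeeping then reduce to the direct computations $\H(x_{1}x_{2}x_{3})=2x_{1}x_{2}x_{3}$ and $\H(x_{1}(x_{2}^{2}+x_{3}^{2}))=-8x_{1}(x_{2}^{2}+x_{3}^{2})$, together with the observation that a product of three linearly independent real linear forms (resp.\ a product $\ell\cdot A$ of the stated type) is carried by some $g\in GL(3,\rea)$, after a positive rescaling, onto $x_{1}x_{2}x_{3}$ (resp.\ onto $x_{1}(x_{2}^{2}+x_{3}^{2})$); covariance then transports the equation and the sign of $\ka$ across the orbit.

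The substance is the converse. I would show that $\H(P)=\ka P$ with $\ka\neq 0$ forces $\{P=0\}$ to be a triangle of three distinct, non-concurrent lines over $\com$. First, $P$ has no repeated linear factor: a cubic with a repeated factor is, after a linear change of coordinates, a binary cubic in two of the variables, and such a form has $\H\equiv 0$, forcing $\ka=0$; hence $\{P=0\}$ is reduced. The key step is to read $\det\hess P=\ka P$ pointwise: at every zero of $P$ the matrix $\hess P$ is singular, so every smooth point of $\{P=0\}$ is an inflection point, because a smooth point of a plane cubic is a zero of its Hessian determinant precisely when it is a flex. An irreducible plane cubic has only finitely many flexes (nine, three, or one according as it is smooth, nodal, or cuspidal) and a smooth conic has none; since the smooth locus of each component is dense, no component can be an irreducible cubic or a conic. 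Therefore $P=\ell_{1}\ell_{2}\ell_{3}$ with the $\ell_{i}$ distinct over $\com$, and they cannot be concurrent, for a common point placed at a vertex would again exhibit $P$ as a binary cubic with $\H\equiv 0$. Over $\rea$ a complex triangle consists either of three real lines or of one real line and a complex-conjugate pair; the latter pair is exactly an irreducible rank-two quadratic $A$ whose radical is the common point of the two complex lines, and non-concurrency is the condition that this point not lie on $\ell$. Matching these two real forms against the signs computed on the normal forms finishes (1) and (2). Part (3) then follows from (1) via $\lap(\ell_{1}\ell_{2}\ell_{3})=2\bigl(\langle\ell_{1},\ell_{2}\rangle\ell_{3}+\langle\ell_{1},\ell_{3}\rangle\ell_{2}+\langle\ell_{2},\ell_{3}\rangle\ell_{1}\bigr)$, so that harmonicity of a product of three independent real forms is equivalent to their pairwise orthogonality, hence after an orthogonal normalization to $P$ being an $O(3)$-multiple of $x_{1}x_{2}x_{3}$.

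For the final equivalence I would combine Lemma \ref{3dpolylemma} with part (3). By Lemma \ref{3dpolylemma} a nonzero harmonic solution of \eqref{einsteinpolynomials} lies on the $O(3)$-orbit of a positive multiple of $x_{1}x_{2}x_{3}$, which solves \eqref{dethpkap} with $\ka=2>0$; by $O(3)$-covariance this gives \eqref{einsteinpolynomials}$\,\Rightarrow\,$\eqref{dethpkap}. Conversely, a harmonic $P$ solving \eqref{dethpkap} with $\ka\neq 0$ must have $\ka>0$: were $\ka<0$, part (2) would place $P$ on the $GL(3,\rea)$-orbit of $x_{1}(x_{2}^{2}+x_{3}^{2})$, but no member of that orbit is harmonic, since for $S=gg^{T}$ one has $\lap\bigl(g\cdot x_{1}(x_{2}^{2}+x_{3}^{2})\bigr)=\sum_{i,j}S_{ij}\,\partial_{i}\partial_{j}\bigl(x_{1}(x_{2}^{2}+x_{3}^{2})\bigr)$ evaluated along $g$, which vanishes identically only if $S_{12}=S_{13}=0$ and $S_{22}+S_{33}=0$, impossible for positive-definite $S$. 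Hence by part (3) $P$ is an $O(3)$-multiple of $x_{1}x_{2}x_{3}$, for which $|\hess P|^{2}_{\delta}=2E$, so $P$ solves \eqref{einsteinpolynomials}; $O(3)$-covariance of \eqref{einsteinpolynomials} completes the equivalence.

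The main obstacle is the ``only if'' direction of (1) and (2), and specifically the exclusion of smooth and irreducible singular cubics, where I rely on the classical incidence between a plane cubic and its Hessian curve (smooth points on the Hessian are the flexes) and on the finiteness of flexes. One could instead grind through the real $GL(3,\rea)$-normal forms of ternary cubics and compute $\H$ on each, but the one-parameter family of smooth cubics makes the flex argument decisively cleaner, whereas the reducible and non-reduced cases collapse to the elementary fact that a binary cubic has vanishing Hessian. The remaining real-structure bookkeeping (splitting the complex triangle and identifying $A$ with an irreducible rank-two quadratic) and the sign determination are then routine linear algebra given the two explicit Hessian computations.
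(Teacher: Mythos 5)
Your proof is correct, but the core ``only if'' step runs along a genuinely different route than the paper's. The paper first reduces to real factorizations (showing irreducibility over $\rea$ and over $\com$ coincide via conjugation bookkeeping), then excludes irreducible $P$ by citing Dolgachev's criterion that a multiplicity-one factor of a polynomial dividing its own Hessian determinant must be linear, and finally disposes of the case $P=\ell A$ with $A$ a \emph{nondegenerate} irreducible quadratic by brute-force: it normalizes $A$ to $E$ or $F=x_{1}^{2}+x_{2}^{2}-x_{3}^{2}$, normalizes $\ell$ using the isometry group of $F$ (spacelike/timelike/null subcases), and computes in each case that $\H(P)$ is not a multiple of $P$. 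You replace both of these steps with a single uniform argument: $\H(P)=\ka P$ with $\ka\neq 0$ forces every smooth point of $\{P=0\}$ to lie on the Hessian curve, hence to be a flex, which the classical flex counts (nine, three, or one for an irreducible cubic; none for a smooth conic) rule out for any non-linear component. This is in effect the classical proof of the very fact the paper imports from Dolgachev, applied once to kill the irreducible cubic and the smooth conic simultaneously, so you avoid all of the paper's normalized subcase computations; what you pay is reliance on complex projective curve theory (and you should note, to avoid circularity, that the 9/3/1 counts are independent classical facts, since ``infinitely many flexes'' for an irreducible cubic is literally the hypothesis $P\mid\H(P)$). The remaining shared ingredients are the same in both proofs: the $GL(3,\rea)$-covariance and sign invariance of $\ka$, the observation that repeated factors or concurrent lines make $P$ a binary cubic with $\H\equiv 0$, the two explicit computations $\H(x_{1}x_{2}x_{3})=2x_{1}x_{2}x_{3}$ and $\H(x_{1}(x_{2}^{2}+x_{3}^{2}))=-8x_{1}(x_{2}^{2}+x_{3}^{2})$, the harmonicity-forces-orthogonality identity for part (3), and Lemma \ref{3dpolylemma} for the forward direction of the final sentence. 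One point where your write-up is actually \emph{more} complete than the paper's: for the converse of the final equivalence you must exclude a harmonic $P$ solving \eqref{dethpkap} with $\ka<0$, and since harmonicity is only $O(3)$-invariant while the paper's normalization to $cx_{3}(x_{1}^{2}+x_{2}^{2})$ uses non-orthogonal transformations, this does not follow from the normal form alone; your computation that $\lap(g\cdot x_{1}(x_{2}^{2}+x_{3}^{2}))$ vanishes identically only if $S=gg^{T}$ satisfies $S_{12}=S_{13}=0$ and $S_{22}+S_{33}=0$, impossible for positive-definite $S$, supplies exactly the step the paper leaves implicit.
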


\begin{proof}
For $c \neq 0$, the polynomials $P = cx_{1}x_{2}x_{3}$ and $Q = cx_{3}(x_{1}^{2} + x_{2}^{2})$ solve \eqref{dethpkap} with $\ka$ equal to $2c^{2}$ and $-8c^{2}$, respectively, and hence so too do any polynomials in their $SL^{\pm}(3, \rea)$ orbits. By Lemma \ref{3dpolylemma}, it follows that any solution $P \in \pol^{3}(\rea^{3})$ of \eqref{einsteinpolynomials} solves \eqref{dethpkap} for the same $\ka$, which is necessarily positive.
The converse claims will now be proved.

A $P \in \pol^{3}(\rea^{3})$ is irreducible over $\rea$ if and only if it is irreducible over $\com$. The non-trivial direction follows from the claim that if $P$ is reducible over $\com$ then it is reducible over $\rea$. If $P$ factors over $\com$ into irreducible factors, then either $P = \ell Q$ with $\ell$ linear and $Q$ quadratic, or $P = \ell_{1}\ell_{2}\ell_{3}$ with the $\ell_{i}$ linear. In the former case $\bar{\ell}\bar{Q} = \bar{P} = P = \ell Q$, which implies that $\bar{\ell} = e^{2\j \theta}\ell$, in which case $e^{\j \theta}\ell$ is a real factor of $P$, and $e^{-\j\theta}Q$ must be real as well. In the latter case, $\bar{\ell}_{1}\bar{\ell}_{2}\bar{\ell}_{3} = \ell_{1}\ell_{2}\ell_{3}$, and one of the following occurs: the $\ell_{i}$ have multiplicity $1$ and there is $\si \in S_{3}$ such that $\bar{\ell}_{i}$ divides $\ell_{\si(i)}$; one of the factors has multiplicty two; or, there is a single factor of multiplicity three. %
In all the cases it is straightforward to check that there is at least one factor which, possibly after rescaling by a complex factor, is real. The following fact proved in chapter $1$ of \cite{Dolgachev-topics} will be used: a polynomial $A \in \pol^{k}(\com^{3})$ is a factor (over $\com$) of its Hessian determinant $\H(A)$ if and only if each multiplicity one factor of $A$ is linear. If $P \in \pol^{3}(\rea^{3})$ were irreducible over $\rea$ and solved \ref{dethpkap} with $\ka \neq 0$ then it would be irreducible over $\com$ and a multiplicity one factor of $\H(P)$, but not linear, a contradiction. Hence if $P$ solves \eqref{dethpkap} with $\ka \neq 0$ it is reducible over $\rea$ so, by the preceeding paragraph, has a real linear factor.

Now suppose $P = \ell_{1}\ell_{2}\ell_{3}$, in which $\ell_{i} = \lb v_{i}, \eul\ra$ for $v_{i} \in \rea^{3}$. If the $v_{i}$ are linearly dependent, then they lie in a plane, so, after a rotation, $P$ can be assumed to depend on only two variables, which yields $\H(P) = 0$. Hence, for $P$ to solve \eqref{dethpkap}, the $v_{i}$ must be linearly independent; it is evident that such $P$ is in the $SL^{\pm}(3, \rea)$ orbit of a positive multiple of $x_{1}x_{2}x_{3}$. Such a $P$ is moreover harmonic if and only if $\sum_{\si \in S_{3}}\lb v_{\si(1)}, v_{\si(2)}\ra v_{\si(3)} = 0$, and, since the $v_{i}$ are linearly independent, this holds if and only if they are pairwise orthogonal; in this case $P$ is in the $O(3)$-orbit of a multiple of $x_{1}x_{2}x_{3}$, which by Lemma \ref{3dpolylemma} implies also that $P$ solves \eqref{einsteinpolynomials}. 

Otherwise $P$ must be a product $\ell A$ where $\ell$ is linear and $A$ is a homogeneous quadratic polynomial irreducible over $\rea$. If $A$ is non-degenerate then after applying an element of $SL^{\pm}(3, \rea)$, it may be assumed to be a positive multiple of $E$ or $F \defeq x_{1}^{2} + x_{2}^{2} - x_{3}^{2}$. In the former case, after applying a rotation (which preserves $A$), it can be assumed $\ell$ is a multiple of $x_{1}$, so that $P = x_{1}E$. In the latter case, after applying an element of $O(2, 1)$, it can be assumed that $\ell_{1}$ is one of $x_{1}$, $x_{3}$, or $x_{1} + x_{2}$, according to whether the coefficient vector of $\ell$ is spacelike, timelike, or null with respect to $F$. In the null case, it simplifies computations to instead send $A$ to a multiple of $x_{1}x_{2} + x_{3}^{2}$ and $\ell$ to $x_{1}$. In all of the preceeding cases, direct computation shows that $\H(P)$ is not a multiple of $P$. 

There remains to consider the case $P = \ell A$ with $A$ an irreducible degenerate quadratic form. This can be only if $A$ has rank two and negative index of inertia equal to $0$, so after applying an element of $SL^{\pm}(3, \rea)$ 
 it can be supposed $A$ has the form $A = \pm(x_{1}^{2} + x_{2}^{2})$. If $\ell$ depends on only $x_{1}$ and $x_{2}$ then $P$ depends on only two variables, so $\H(P) = 0$; this condition can be stated invariantly as that $\ker \ell$ contains $\ker A$. If $\ker \ell$ does not contain $\ker A$, there is a triangular element of $SL(3, \rea)$ with diagonal entries equal to $1$ that fixes $x_{1}$ and $x_{2}$ and sends $\ell$ into a multiple of $x_{3}$. Hence $P$ can be assumed to have the form $P = cx_{3}(x_{1}^{2} + x_{2}^{2})$ with $c \in \reat$. In this case $\H(P) = -8c^{2} P$ and so $P$ solves \eqref{dethpkap} with negative $\ka$.
\end{proof}

Theorem \ref{dethesstheorem} gives credibility to the speculation that the following Bernstein like problem has an affirmative resolution. Namely, \textit{must a sufficiently smooth function $F$ on $\rea^{3}$ solving \eqref{dethpkap} for positive $\ka$ be equivalent modulo the action of the group of unimodular affine transformations to a multiple of $x_{1}x_{2}x_{3}$?} 

A homogeneous polynomial $P$ is \textbf{homaloidal} if the map of (complex) projective spaces defined by the differential $DP$ is birational.  Theorem \ref{dethesstheorem} is closely related to Theorem $4$ of I. Dolgachev's \cite{Dolgachev-polarcremona}, which characterizes (over $\com$) ternary homaloidal polynomials having no multiple factors. While it is not clear what form a characterization of cubic solutions of \eqref{einsteinpolynomials} might take for $n > 3$, the most natural generalization of \eqref{dethpkap} for $P \in \pol^{n}(\rea^{n})$ is $\H( P) = (-1)^{n-1}\ka P^{n-2}$. In Remark $3.5$ of \cite{Ciliberto-Russo-Simis} such polynomials are called \textbf{totally Hessian}, and it is asked whether they must be homaloidal. In this regard observe that a simple determinantal computation exploiting homogeneity proves:
\begin{lemma}\label{inductiveplemma}
If $Q \in \pol^{n}(\rea^{n})$ solves $\H( Q) = (-1)^{n-1}(n-1)Q^{n-2}$, then $P \in \pol^{n+1}(\rea^{n+1})$ defined by $P(x) = x_{n+1}Q(x_{1}, \dots, x_{n})$ solves $\H( P) = (-1)^{n}nP^{n-1}$.
\end{lemma}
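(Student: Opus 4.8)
The plan is to compute $\H(P) = \det\hess P$ directly, exploiting the bordered block structure of the Hessian of $P = x_{n+1}Q$ and then reducing the resulting quadratic form in the gradient of $Q$ to an expression in $\H(Q)$ and $Q$ purely by homogeneity. First I would write out the Hessian of $P$. Using coordinates $x_1,\dots,x_{n+1}$ and abbreviating $Q_i = D_iQ$, $Q_{ij} = D_iD_jQ$ (functions of $x_1,\dots,x_n$ only), the nonzero second derivatives of $P$ are $D_iD_jP = x_{n+1}Q_{ij}$, $D_iD_{n+1}P = Q_i$, and $D_{n+1}D_{n+1}P = 0$ for $1\le i,j\le n$, so that
\begin{align*}
\hess P = \begin{pmatrix} x_{n+1}\,\hess Q & DQ \\ (DQ)^{T} & 0 \end{pmatrix},
\end{align*}
where $\hess Q$ is the $n\times n$ Hessian of $Q$ and $DQ$ is the gradient column $(Q_1,\dots,Q_n)^{T}$.

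Next I would apply the bordered-determinant (Schur complement) identity $\det\begin{pmatrix} A & b \\ b^{T} & 0\end{pmatrix} = -\,b^{T}\adj(A)\,b$, valid for any symmetric $n\times n$ matrix $A$. The key remark is that both sides are polynomials in the entries of $A$ and $b$, so this is an identity requiring no invertibility of $A$; this matters because $\det\hess P$ must be evaluated at all points, including where $\hess Q$ degenerates. Combining this with the scaling $\adj(\lambda A) = \lambda^{n-1}\adj(A)$ for an $n\times n$ matrix yields
\begin{align*}
\H(P) = -\,x_{n+1}^{\,n-1}\,(DQ)^{T}\adj(\hess Q)\,DQ.
\end{align*}

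Finally I would reduce the quadratic form using homogeneity. Euler's identity for the degree $n$ polynomial $Q$ gives $x^{i}Q_{ij} = (n-1)Q_j$, that is $\hess Q\cdot x = (n-1)\,DQ$; substituting $DQ = (n-1)^{-1}\hess Q\cdot x$ and using $\hess Q\,\adj(\hess Q)\,\hess Q = \det(\hess Q)\,\hess Q = \H(Q)\,\hess Q$ gives
\begin{align*}
(DQ)^{T}\adj(\hess Q)\,DQ = \tfrac{1}{(n-1)^{2}}\H(Q)\,x^{T}\hess Q\,x = \tfrac{n}{n-1}\H(Q)\,Q,
\end{align*}
the last equality being Euler applied twice, $x^{i}x^{j}Q_{ij} = n(n-1)Q$. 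Substituting the hypothesis $\H(Q) = (-1)^{n-1}(n-1)Q^{n-2}$ collapses this to $n(-1)^{n-1}Q^{n-1}$, whence
\begin{align*}
\H(P) = -\,x_{n+1}^{\,n-1}\,n(-1)^{n-1}Q^{n-1} = (-1)^{n}n\,(x_{n+1}Q)^{n-1} = (-1)^{n}nP^{n-1},
\end{align*}
as claimed.

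The computation is essentially routine, so there is no serious obstacle; the only points demanding care are the correct exponent $n-1$ in the adjugate scaling, the sign in the bordered-determinant formula, and the observation that the latter is a polynomial identity (hence applicable where $\hess Q$ is singular). A consistency check on degrees corroborates the result: the nonzero entries of $\hess P$ all have degree $n-1$, so $\det\hess P$ has degree $(n+1)(n-1) = n^{2}-1$, which matches $\deg P^{\,n-1} = (n+1)(n-1)$.
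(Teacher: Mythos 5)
Your proof is correct, and it is precisely the "simple determinantal computation exploiting homogeneity" that the paper alludes to without writing out: the bordered-determinant identity $\det\bigl(\begin{smallmatrix} A & b \\ b^{T} & 0\end{smallmatrix}\bigr) = -b^{T}\adj(A)b$ combined with the two Euler identities $\hess Q\cdot x = (n-1)DQ$ and $x^{T}\hess Q\,x = n(n-1)Q$. The signs, the adjugate scaling exponent $n-1$, and the observation that the bordered-determinant formula is a polynomial identity (so no invertibility of $\hess Q$ is needed) are all handled correctly.
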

In particular there is always the solution $P_{n}(x) = \prod_{i = 1}^{n}x_{i}$, for which $(-1)^{n-1}\ka = (-1)^{n-1}(n-1)$. It also seems interesting to ask what are all the smooth solutions to $\H( F) = \ka F^{n-2}$ up to unimodular affine equivalence. 

\subsection{Einstein commutative Codazzi algebras}\label{codazzialgebrasection}
The formalism of Einstein commutative Codazzi algebras is introduced with the immediate aim of organizing the computations necessary to analyze the vanishing or not of the tensor $A_{ijk}\,^{l}$ associated to the Einstein AH structures constructed as in section \ref{polynomialsection}. As it seems a classification of these algebras should be feasible, some basic structural features are elucidated in more detail than is immediately necessary.

\subsubsection{}
An \textbf{algebra} \textbf{$(\alg, \mprod)$} is a finite-dimensional real vector space $\alg$ and an element $\mprod$ of $\alg^{\star}\tensor \alg^{\star}\tensor \alg$, referred to as the \tbf{multiplication}. Here the base field has always characteristic zero; in all applications it will be $\rea$. An algebra is \textbf{unital} if it contains a left and right unit (in which case these are necessarily equal). Algebras need not be unital, and those of interest here will not be. That an algebra be \tbf{nonassociative} means that it need not be associative, although it could be. The \tbf{left (resp. right) regular representation} is the representation $L:\alg \to \eno(\alg)$ (resp. $R$) given by left (resp. right) multiplication, $L(x)(y) \defeq x \mprod y$. Here $\eno(\alg)$ means vector space endomorphisms of $\alg$ and is regarded as an algebra under composition. The multiplication $\mprod$ is identified with its \textbf{structure tensor} given in abstract index notation as $\mtens_{ij}\,^{k}$. As tensors $L(x)_{i}\,^{j} = x^{p}\mtens_{pi}\,^{j}$. The algebra is \tbfs{commutative}{algebra} if $\mtens_{[ij]}\,^{k} = 0$; equivalently $L(x) = R(x)$ for all $x \in \alg$. The \textbf{associator} of $\alg$ is the $\alg$-valued trilinear form $[x, y, z] \defeq (x\mprod y)\mprod z - x \mprod (y \mprod z)$ given by $[x, y, z]^{l} = x^{i}z^{j}y^{k}\mu_{ijk}\,^{l}$ (note the ordering convention) for the tensor $\mtens_{ijk}\,^{l} = \mtens_{pj}\,^{l}\mtens_{ik}\,^{p} - \mtens_{ip}\,^{l}\mtens_{kj}\,^{p}$. The algebra $\alg$ is \textbf{associative} if and only if $\mtens_{ijk}\,^{l} = 0$; equivalently the left regular representation is a homomorphism, $L(x\mprod y) = L(x)\circ L(y)$. If $\alg$ is commutative then the associator has the algebraic symmetries $\mtens_{[ij]k}\,^{l} = \mtens_{ijk}\,^{l}$ and $\mtens_{[ijk]}\,^{l} = 0$ of an affine curvature tensor. The former corresponds to $[x, y, z]  = -[z, y, x]$, or, what is the same (by polarization), $[x, y, x] = 0$. An algebra satisfying $[x, y, x] = 0$ for all $x, y \in \alg$ is called \tbf{flexible}, so the preceeding remarks say that any commutative algebra is flexible. For a commutative algebra there holds the stronger identity
\begin{align}\label{commassoc}
[x, y, z] - [x, z, y] = [y, x, z],
\end{align}
which corresponds to the Bianchi symmetry $\mtens_{[ijk]}\,^{l} = 0$. 

Many of the standard notions for associative algebras do not work as simply for nonassociative algebras, e.g. the obvious notions of semisimplicity, namely that the solvable radical be zero and that the algebra be expressible as a direct sum of simple ideals, need not coincide, and so when necessary such conditions will be stated explicitly. As is explained in \cite{Schafer} it makes sense to speak of a bimodule $\B$ over an arbitrary nonassociative algebra $\alg$. This means there are given linear maps $L$ and $R$ from $\alg$ to the algebra $\eno(\B)$ of linear endomorphisms of $\B$. In general there are no further conditions imposed. If $\alg$ is a member of some class of nonassociative algebras defined by multilinear identities then $L$ and $R$ are required to be compatible with these identities (see section $6$ of \cite{Schafer} for a precise statement). For example, if $\alg$ is commutative then there must hold $L_{a} = R_{a}$ for all $a \in \alg$, so that in this case $L = R$, and to specify a module $\B$ over a commutative algebra it suffices to give a single arbitrary linear map $\rho:\alg \to \eno(\B)$. For associative algebras, the compatibility conditions are $R_{b}R_{a} = R_{ab}$, $L_{a}L_{b} = L_{ab}$ and $R_{a}L_{b} = L_{b}R_{a}$ for all $a, b \in \alg$. In this case it makes sense to speak of a left or right module, which is just the structure corresponding to $L$ or $R$ with the compatibility condition involving only $L$ or $R$. For any finite-dimensional algebra $\alg$ the vector space dual $\alg^{\ast} \defeq \hom_{\rea}(\alg, \rea)$ is a (bi)module over $\alg$ with left action $L$ and right action $R$ defined respectively by $L_{a}\mu \defeq \mu \circ R_{a}$ and $R_{a}\mu \defeq \mu \circ L_{a}$ for $a \in \alg$ and $\mu \in \alg^{\ast}$. It is sometimes more readable to write the pairing of dual vector spaces using angled brackets so that the result of applying $L_{a}\mu \in \alg^{\ast}$ to $b \in \alg$ is $\lb L_{a}\mu, b\ra = \lb \mu, ba\ra$. Thus, although for a general nonassociative algebra $\alg$ it does not make sense to distinguish a left action from a right action, it does always make sense to speak of the left and right actions of $\alg$ on its dual $\alg^{\ast}$. The interchange of $L$ and $R$ in the definitions of the actions on the dual is made so that in the case that $\alg$ is associative, the map $L$ (resp. $R$) makes $\alg^{\ast}$ a left (resp. right) module over $\alg$.

In what follows there will be considered a finite-dimensional commutative nonassociative (nonunital) algebra equipped with a non-degenerate symmetric invariant bilinear form, and some terminology and context is needed for discussing such an object. Although there are many papers written about this or that nonassociative algebra, there are substantial theories only for special cases arising in geometry and representation theory of Lie groups, e.g. Jordan algebras and composition algebras. Some references are \cite{Schafer-book} and \cite{Knus-Merkurjev-Rost-Tignol}. One context in which commutative nonassociative algebras appear naturally is that of vertex algebras; in that context such algebras come equipped with a lot of other structure. They appear also in the study of autonomous systems of ordinary differential equations quadratic in the dependent variables initiated by L. Markus in \cite{Markus}; see the survey \cite{Kinyon-Sagle-quadratic} for background. Still, it seems no one has developed a theory of general commutative nonassociative algebras, and it may be that such structure is simply too weak to admit a general theory. However, as is familiar from the study of Lie algebras, the extra data of a non-degenerate symmetric bilinear form having some invariance property allows for a good structure theory. Part of the reason for the algebraic digression which follows is that it may suggest some interesting classification problems.

\subsubsection{}
A bilinear form $h$ on $\alg$ is \textbf{left invariant} if $h(x\mprod y, z) = h(x, y \mprod z)$ for all $x, y, z \in \alg$, \textbf{right invariant} if $h(x, y \mprod z) = h(z \mprod x, y)$ for all $x, y, z \in \alg$, and \textbf{braided} if it is both left and right invariant, or, what is the same it satisfies the braid relations
\begin{align}\label{braid}
h(z, x \mprod y) = h(y \mprod z, x) = h(y, z \mprod x) = h(x\mprod y, z) = h(x, y \mprod z) = h(z \mprod x, y)
\end{align}
for all $x, y, z \in \alg$. A braided bilinear form necessarily satisfies $h(x \mprod y, z) = h(z, x \mprod y)$ for all $x, y, z \in \alg$. For example, the Killing form of a Lie algebra is braided. Evidently a left or right invariant bilinear form on a commutative algebra is automatically braided.

A bilinear form $h$ is right (resp. left) invariant if and only if the \textbf{opposite} bilinear form $\op{h}$ defined by $\op{h}(x, y) = h(y, x)$ is left (resp. right) invariant. For this reason it is customary to speak only of left invariant forms, and these are called simply \textbf{invariant} (or \textbf{associative}). A bilinear form is \textbf{symmetric} if it equals its opposite. Evidently, for a symmetric bilinear form the three notions, left invariant, right invariant, and braided are mutually equivalent, and so such a form will be called simply \textbf{invariant}.

\begin{lemma}\label{frobeniuslemma}
For a finite-dimensional nonassociative algebra $\alg$ any two of the following sets are in canonical bijection.
\begin{enumerate}
\item Isomorphisms from $\alg$ to $\alg^{\ast}$ intertwining the left actions of $\alg$.
\item Non-degenerate right invariant bilinear forms on $\alg$.
\item Isomorphisms from $\alg$ to $\alg^{\ast}$ intertwining the right actions of $\alg$.
\item Non-degenerate left invariant bilinear forms on $\alg$.
\end{enumerate}
Also, the following sets are in canonical bijection.
\begin{enumerate}
\setcounter{enumi}{4}
\item $\alg$-bimodule isomorphisms from $\alg$ to $\alg^{\ast}$.
\item Non-degenerate braided bilinear forms on $\alg$.
\end{enumerate}
\end{lemma}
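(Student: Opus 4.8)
The plan is to reduce all six assertions to the single observation that a bilinear form and a linear map $\alg \to \alg^{\ast}$ are the same data, and that each invariance condition introduced above is exactly the condition that this map be equivariant for one of the two actions of $\alg$ on $\alg^{\ast}$. First I would fix, for a bilinear form $h$ on $\alg$, the linear map $\phi_{h}:\alg \to \alg^{\ast}$ defined by $\lb \phi_{h}(x), y\ra = h(x, y)$. The assignment $h \mapsto \phi_{h}$ is a linear bijection between bilinear forms on $\alg$ and linear maps from $\alg$ to $\alg^{\ast}$, and $h$ is non-degenerate if and only if $\phi_{h}$ is a linear isomorphism. Thus it remains only to match the equivariance of $\phi_{h}$ with the invariance of $h$.

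The heart of the argument is two dual computations using the definitions $L_{a}\mu = \mu \circ R_{a}$ and $R_{a}\mu = \mu \circ L_{a}$ of the actions on $\alg^{\ast}$. For the left action I would compute
\begin{align*}
\lb \phi_{h}(a\mprod x), y\ra = h(a\mprod x, y), \qquad \lb L_{a}\phi_{h}(x), y\ra = \lb \phi_{h}(x), y \mprod a\ra = h(x, y\mprod a),
\end{align*}
so that $\phi_{h}$ intertwines the left actions (of $\alg$ on itself and on $\alg^{\ast}$) if and only if $h(a\mprod x, y) = h(x, y \mprod a)$ for all $a, x, y$, which is exactly right invariance. Symmetrically, $\lb \phi_{h}(x\mprod a), y\ra = h(x\mprod a, y)$ and $\lb R_{a}\phi_{h}(x), y\ra = h(x, a\mprod y)$, so $\phi_{h}$ intertwines the right actions if and only if $h(x\mprod a, y) = h(x, a\mprod y)$ for all $a, x, y$, which is exactly left invariance. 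This produces the bijections (1)$\leftrightarrow$(2) and (3)$\leftrightarrow$(4) at once, via the single map $h \mapsto \phi_{h}$ restricted to the relevant subsets, with $\phi_{h}$ an isomorphism matching $h$ non-degenerate.

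To complete the first group I would invoke the opposite-form involution already recorded above: $h \mapsto \op{h}$ carries non-degenerate right invariant forms bijectively onto non-degenerate left invariant forms (it is an involution preserving non-degeneracy and exchanging the two invariance conditions), thereby linking (2) and (4) and putting all four sets in canonical bijection. For the second group, a bimodule homomorphism $\alg \to \alg^{\ast}$ is by definition one intertwining both actions, so by the two computations above $\phi_{h}$ is a bimodule map exactly when $h$ is simultaneously left and right invariant, i.e. braided; again $\phi_{h}$ is an isomorphism iff $h$ is non-degenerate, giving (5)$\leftrightarrow$(6).

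There is no serious obstacle here; the content is entirely the adjunction between forms and maps. The only point demanding care is bookkeeping with the conventions: the interchange of $L$ and $R$ in the definition of the dual actions means the left action of $\alg$ on $\alg^{\ast}$ pairs against right multiplication, so one must verify that ``intertwining the left action'' genuinely yields right invariance (and conversely), and that the opposite-form involution is inserted in the correct place so that the nominally distinct sets (2) and (4) really do correspond. Since $\alg$ is assumed neither associative, nor unital, nor commutative, I would take care to use only the bare definitions of $L$, $R$, and the dual actions, and nothing more.
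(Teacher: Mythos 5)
Your proposal is correct and follows essentially the same route as the paper: both arguments identify a bilinear form $h$ with the linear map $x \mapsto h(x,\cdot)$, unwind the definitions $L_{a}\mu = \mu\circ R_{a}$ and $R_{a}\mu = \mu\circ L_{a}$ to match left (resp.\ right) equivariance with right (resp.\ left) invariance, use the opposite-form involution to link the two families, and observe that the bimodule case is the conjunction of the other two. The computations you display are precisely the paper's equations relating $\lb \Psi(a\mprod b), c\ra$ to $h(b, c\mprod a)$ and their right-action analogues, so there is nothing further to add.
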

\begin{proof}
Suppose given a linear isomorphism $\Psi:\alg \to \alg^{\ast}$ such that $L_{a}(\Psi(b)) = \Psi \circ L_{a}(b)$ for all $a, b \in \alg$. That is, $L_{a}(\Psi(b)) = \Psi(a\mprod b)$. Define $h$ by $h(a, b) = \lb \Psi(a), b\ra$. Obviously $h$ is bilinear and non-degenerate. By the assumption on $\Psi$, 
\begin{align}\label{frob1}
\begin{split}
h(a \mprod b, c) & = \lb \Psi(a\mprod b), c\ra = \lb \Psi \circ L_{a}(b) , c\ra = \lb L_{a}\circ\Psi(b), c\ra = \lb \Psi(b), R_{a}(c)\ra  = h(b, c \mprod a),
\end{split}
\end{align}
so $h$ is right-invariant. On the other hand, if given a non-degenerate right-invariant bilinear form $h$, then define $\Psi$ by $\lb \Psi(a), b\ra = h(a, b)$. Because $h$ is non-degenerate, $\Psi$ is injective, so a linear isomorphism because $\alg$ is finite-dimensional. For all $a, b, c \in \alg$ there holds 
\begin{align}\label{frob2}
\begin{split}
\lb L_{a}(\Psi(b)), c\ra &= \lb \Psi(b), c \mprod a \ra = h(b, c \mprod a) = h(a \mprod b, c) = \lb \Psi(a \mprod b), c\ra = \lb \Psi \circ L_{a}(b), c\ra,\\
\end{split}
\end{align}
showing that $L_{a}\circ\Psi = \Psi \circ L_{a}$. Evidently these associations are inverses so establish a bijection between $(1)$ and $(2)$. Given $\Psi$ as in $(3)$, the form $h$ defined as above will be left invariant by a computation like \eqref{frob1}, while given $h$ as in $(4)$ the map $\Psi$ defined as above will intertwine the right actions by a computation like \eqref{frob2}; that these associations are inverse is evident. The bijection between $(2)$ and $(4)$ is given by passing to the opposite bilinear form. Given $(5)$ define $h$ as before; it will be both left and right invariant, so braided. Given $(6)$ define $\Psi$ as before; it will intertwine both the left and right actions, so will be a bimodule map.
\end{proof}

Usually a finite-dimensional, unital, associative algebra $\alg$ over a field $\fie$ is called a \textbf{Frobenius algebra} if it is equipped with one of the following structures, each of which determines the others in a canonical manner:
\begin{enumerate}
\item A left $\alg$-module isomorphism from $\alg$ to $\alg^{\ast}$.
\item A right $\alg$-module isomorphism from $\alg$ to $\alg^{\ast}$.
\item A non-degenerate (left) invariant bilinear form $h$ on $\alg$.
\item A linear form $f:\alg \to \fie$ such that $\ker f$ contains no non-trivial ideal of $\alg$.
\end{enumerate}
The first three structures determine one another even if there is no unit. Thus it makes sense to call \textbf{Frobenius} a nonassociative algebra equipped with a non-degenerate left-invariant bilinear form (or one of the equivalent structures of $(2)$ of Lemma \ref{frobeniuslemma}), and to call a Frobenius algebra \textbf{braided} if this form is braided (or, equivalently, the corresponding linear map satisfies $(5)-(6)$ of Lemma \ref{frobeniuslemma}). Really these should be called \textit{left} Frobenius algebras. As by Lemma \ref{frobeniuslemma}, by replacing $h$ with $h^{\op}$, a left Frobenius algebra determines a right Frobenius algebra , and conversely, it suffices to work with the former. With this terminology, what is usually called a \textit{Frobenius algebra} is a unital, associative, Frobenius algebra. A Frobenius algebra is \textbf{symmetric} if the invariant form is symmetric. A symmetric Frobenius algebra is braided. With this terminology, a semisimple real Lie algebra is a symmetric Frobenius algebra. 
\begin{lemma}\label{braidedsymmetriclemma}
A unital braided Frobenius algebra $(\alg, h)$ is symmetric. 
\end{lemma}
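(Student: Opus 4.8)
The plan is to use the unit element $e \in \alg$, which by the unitality hypothesis satisfies $e \mprod x = x = x \mprod e$ for every $x \in \alg$, as a probe that converts the two halves of the braiding condition into the symmetry of $h$. Concretely, I would fix arbitrary $x, y \in \alg$ and identify the single scalar $h(e, x \mprod y)$ with both $h(x,y)$ and $h(y,x)$, reading it off in two different ways from the left- and right-invariance of $h$.

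First I would rewrite $h(x,y) = h(e \mprod x, y)$ using that $e$ is a left unit, and then apply the left-invariance identity $h(a \mprod b, c) = h(a, b \mprod c)$ with $a = e$, $b = x$, $c = y$ to get $h(x, y) = h(e, x \mprod y)$. Next I would apply the right-invariance identity $h(a, b \mprod c) = h(c \mprod a, b)$ with $a = e$, $b = x$, $c = y$ to get $h(e, x \mprod y) = h(y \mprod e, x)$; since $e$ is also a right unit, $y \mprod e = y$, so this reads $h(e, x \mprod y) = h(y, x)$. Concatenating the two computations yields the single chain $h(x,y) = h(e \mprod x, y) = h(e, x \mprod y) = h(y \mprod e, x) = h(y,x)$, which is exactly the asserted symmetry. (Equivalently, one can simply substitute $z = e$ into the chain of equalities \eqref{braid} and simplify every occurrence of $e \mprod (\,\cdot\,)$ and $(\,\cdot\,) \mprod e$ via the unit axiom; the first and last terms of the resulting chain become $h(x,y)$ and $h(y,x)$.)

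There is no real obstacle here: the entire content is the observation that left invariance expresses $h(x,y)$ through $h(e, x \mprod y)$ while right invariance expresses that same quantity through $h(y,x)$, and the unit reconciles the two. The only point that deserves emphasis is that both invariance properties are genuinely used, so the braiding hypothesis cannot be relaxed to mere (left-)Frobenius-ness; indeed, dropping braiding allows the conclusion to fail, since there exist unital associative Frobenius algebras whose invariant form is not symmetric. Thus I expect the write-up to be only a few lines, with care taken simply to cite left and right invariance in the correct slots.
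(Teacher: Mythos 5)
Your proof is correct and is essentially the paper's own argument: insert the unit into one slot of $h$, then use left and right invariance in turn to move the multiplication to the other argument (the paper's chain is $h(x,y)=h(x,e\mprod y)=h(y\mprod x,e)=h(y,x\mprod e)=h(y,x)$, which is the same computation with the unit probed in the second slot instead of the first). No gaps.
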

\begin{proof}
If $e \in \alg$ is the unit then $h(x, y) = h(x, e\mprod y) = h(y\mprod x, e) = h(y, x \mprod e) = h(y, x)$.
\end{proof}

A (not necessarily unital) \textbf{composition algebra} is an algebra $\alg$ equipped with a non-degenerate quadratic form $E$ which is multiplicative in the sense that $E(x \mprod y) = E(x)E(y)$. A \textbf{symmetric composition algebra} is a composition algebra for which the symmetric bilinear form $h(x, y) = E(x + y) - E(x) - E(y)$ determined by $E$ is left invariant. In the present terminology these are symmetric Frobenius algebras for which the quadratic form $E$ associated to the metric $h$ is multiplicative. Such algebras have been classfied by A. Elduque and H.~C. Myung in \cite{Elduque-Myung}; not quite all examples are given by the para-Hurwitz and Okubo algebras. Basic results about such algebras are given in chapter $34$ of \cite{Knus-Merkurjev-Rost-Tignol}. In particular they are non-unital if $\dim \alg > 1$ and they are alternative. However, there are no interesting examples of symmetric composition algebras which are commutative, so these will not serve present aims.

The multiplication defining an algebra $\alg$ is \textbf{trivial} if the product of any two-elements is $0$. An algebra is \textbf{simple} if its multiplication is non-trivial and it has no non-trivial proper (two-sided) ideal; note that the definition in the nonassociative setting is the same as the usual definition. The commutative case of Lemma \ref{simplelemma} is Lemma $7.4$ of \cite{Dong-Griess-Lam}, and the easy proof is essentially the same. 
\begin{lemma}\label{simplelemma}
The space of non-degenerate braided bilinear forms on a simple finite-dimensional nonassociative algebra $\alg$ over a field $\fie$ of characteristic $0$ is at most one-dimensional. 
\end{lemma}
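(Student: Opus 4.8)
The plan is to show that if $h$ and $g$ are two non-degenerate braided bilinear forms on the simple algebra $\alg$, then $g$ is a scalar multiple of $h$. By Lemma \ref{frobeniuslemma}, each of $h$ and $g$ corresponds to an $\alg$-bimodule isomorphism $\alg \to \alg^{\ast}$; write these as $\Phi_{h}$ and $\Phi_{g}$. The key device is to compose one with the inverse of the other to produce an endomorphism of $\alg$ intertwining the algebra actions, and then to use simplicity to conclude this endomorphism is a scalar.

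First I would set $T \defeq \Phi_{h}^{-1}\circ \Phi_{g} \in \eno(\alg)$. Since both $\Phi_{h}$ and $\Phi_{g}$ are bimodule maps (they intertwine both the left action $L$ and the right action $R$ of $\alg$ on $\alg$ and on $\alg^{\ast}$), the composite $T$ commutes with every $L_{a}$ and every $R_{a}$; that is, $T(a \mprod b) = a \mprod T(b)$ and $T(a\mprod b) = T(a)\mprod b$ for all $a, b \in \alg$. Equivalently, $g(a, b) = h(T(a), b)$ for all $a, b$. The commutation relations say precisely that $T$ lies in the centroid of $\alg$ (the commutant of the left and right multiplication operators). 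The standard fact I would then invoke is that for a simple finite-dimensional algebra over a field of characteristic zero the centroid is a field (this is the nonassociative analogue of Schur's lemma, and its proof is the same: the kernel and image of any centroid element are two-sided ideals, so by simplicity any nonzero centroid element is invertible, making the centroid a division algebra, and the relevant part is that over a field it is at least a division ring — but for the one-dimensionality statement I only need that $T$ has an eigenvalue in $\fie$ and that an eigenspace is an ideal).

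Concretely, the cleanest route avoiding centroid terminology is as follows. Because $\alg$ is finite-dimensional over $\fie$ and $\fie$ has characteristic zero, I would pass to an algebraic closure or simply observe that $T$ has a nonzero eigenvalue $\lambda$ in some extension; then consider the eigenspace $\alg_{\lambda} = \ker(T - \lambda \Id)$. Since $T$ commutes with all $L_{a}$ and $R_{a}$, for $x \in \alg_{\lambda}$ and any $a \in \alg$ there holds $T(a \mprod x) = a \mprod T(x) = \lambda (a \mprod x)$ and similarly $T(x \mprod a) = \lambda(x\mprod a)$, so $\alg_{\lambda}$ is a two-sided ideal. To stay over $\fie$ I would instead use the minimal polynomial: let $p(t) \in \fie[t]$ be the minimal polynomial of $T$ and let $q(t)$ be an irreducible factor; then $\ker q(T)$ is a nonzero $T$-invariant subspace which, by the same commutation argument applied to $q(T)$ in place of $T - \lambda$, is a two-sided ideal of $\alg$. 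By simplicity $\ker q(T) = \alg$, so $q(T) = 0$ and $p = q$ is irreducible. Thus $\fie[T] \cong \fie[t]/(q)$ is a field extension $K$ of $\fie$ acting on $\alg$ making $\alg$ a $K$-vector space, and the multiplication is $K$-bilinear.

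The main obstacle — and the only genuinely delicate point — is to promote this to the conclusion that $T$ is actually a scalar in $\fie$, i.e. that $K = \fie$, which is what forces the space of braided forms to be one-dimensional rather than merely of dimension $[K:\fie]$. Here I would exploit the \emph{symmetry} of the bilinear forms, which has not yet been used: both $h$ and $g$ are braided, hence in particular symmetric is not assumed, but the braid relations \eqref{braid} give $h(a\mprod b, c) = h(c \mprod a, b)$, and from $g(a,b) = h(T(a), b)$ together with the corresponding invariance of $g$ one derives that $T$ is self-adjoint with respect to $h$, namely $h(T(a), b) = h(a, T(b))$. A self-adjoint operator with respect to a (possibly indefinite) non-degenerate form whose minimal polynomial is irreducible over $\fie$ must have that polynomial of degree one when $\fie$ carries no ordering obstruction; more robustly, the self-adjointness forces the extension field $K$ to admit an involution fixing $\fie$ compatible with $h$, and combined with irreducibility this pins down $\dim_{\fie} K = 1$ in characteristic zero. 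I expect the careful handling of the indefinite case to be where the real work lies; the commutation and ideal arguments are routine, but ruling out a nontrivial field extension $K$ using only braidedness and simplicity is the crux, and I would devote the bulk of the proof to that step.
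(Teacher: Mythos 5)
Your construction of $T = \Phi_{h}^{-1}\circ \Phi_{g}$, the verification that it commutes with every $L_{a}$ and $R_{a}$ (equivalently $g(x,y) = h(T(x),y)$ with $T(x\mprod y) = T(x)\mprod y = x \mprod T(y)$), and the observation that eigenspaces of $T$ — or kernels of polynomials in $T$ — are two-sided ideals, is exactly the paper's argument; the paper obtains the same $T$ directly from non-degeneracy rather than via Lemma \ref{frobeniuslemma}, but that is cosmetic. Where you part ways is the endgame. The paper extends scalars to $\bar{\fie}$, asserts that $\bar{\alg} \defeq \alg\tensor_{\fie}\bar{\fie}$ is still simple, picks an eigenvalue $\la \in \bar{\fie}$ of $T$, and concludes that its eigenspace, being a non-zero two-sided ideal, is all of $\bar{\alg}$, whence $T = \la\Id$ with $\la \in \fie$ since $T$ preserves $\alg$. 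You instead stay over $\fie$, correctly deduce that $\fie[T]$ is a field $K$, and identify the passage from ``$K$ is a field'' to ``$K = \fie$'' as the crux, to be settled by a self-adjointness argument.

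That final step is a genuine gap, and the implication you propose to close it is false. Take $\alg = \com$ viewed as a two-dimensional $\rea$-algebra: it is simple, and $h(x,y) = \re (xy)$ and $g(x,y) = \im (xy)$ are two non-proportional non-degenerate braided forms, with $T$ equal to multiplication by $-\j$, which is $h$-self-adjoint and has irreducible minimal polynomial $t^{2}+1$. So ``self-adjoint with irreducible minimal polynomial over a characteristic-zero field implies degree one'' is not a theorem, and in fact no argument can close the gap in the stated generality: what is needed is precisely that $T$ have an eigenvalue in $\fie$ (equivalently that the centroid of $\alg$ be $\fie$), which is automatic over an algebraically closed field but not otherwise. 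Your instinct that this is where the real difficulty lies is sound — the same example shows that the paper's ``obvious extension argument'' (that $\bar{\alg}$ is simple whenever $\alg$ is) fails, since $\com\tensor_{\rea}\com \cong \com\times\com$. Under an added hypothesis such as $\fie$ algebraically closed, either your eigenspace argument over $\bar{\fie}$ or the paper's finishes immediately; as written, your proposal does not complete the proof, but the step it fails to supply is the same one the paper elides.
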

\begin{proof}
Let $\bar{h}$ and $h$ be two such forms. From the non-degeneracy of $h$ and $\bar{h}$ it follows that the linear map $\Psi:\alg \to \alg$ defined by $\bar{h}(x, y) = h(\Psi(x), y)$ is a linear isomorphism. Then 
\begin{align*}
h(\Psi(x\mprod y), z) = \bar{h}(x \mprod y, z) = \bar{h}(y,  z\mprod x) = h(\Psi(y), z \mprod x) = h(x \mprod \Psi(y), z),\\
h(\Psi(y\mprod x), z) = \bar{h}(y \mprod x, z) = \bar{h}(y,  x\mprod z) = h(\Psi(y), x \mprod z) = h( \Psi(y)\mprod x, z),
\end{align*}
in which the second and fourth equalities of the first line (resp. second line) use respectively the right (resp. left) invariance of $\bar{h}$ and of $h$. By the non-degeneracy of $h$ the first chain of equalities implies $\Psi(x \mprod y) = x \mprod \Psi(y)$, while the second implies $\Psi(y \mprod x) = \Psi(y) \mprod x$. Since $\Psi$ is invertible there is a non-zero $\la$ in the algebraic closure $\bar{\fie}$ of $\fie$ such that the $\la$-eigenspace in $\bar{\alg} = \alg \tensor_{\fie}\bar{\fie}$ of $\Psi$ is non-zero. By an obvious extension argument, $\bar{\alg}$ equipped with the extended multiplication is simple if and only if $\alg$ is. Since $\Psi(x)\mprod y = \Psi(x \mprod y) = x \mprod \Psi(y)$, the $\la$-eigenspace of $\Psi$ is a two-sided ideal in $\bar{\alg}$, so must equal $\bar{\alg}$. It follows that $\Psi$ is multiplication by $\la$, and moreover that $\la \in \fie$. Hence $\bar{h} = \la h$.
\end{proof}

The multiplication defining an algebra $\alg$ is \textbf{left (resp. right) non-degenerate} if for any $ x\in \alg$ there is $y \in \alg$ such that $x\mprod y \neq 0$ (resp. such that $y\mprod x \neq 0$). The multiplication is non-degenerate if it is both left and right non-degenerate.

\begin{lemma}
The multiplication of a simple finite-dimensional Frobenius algebra is non-degenerate.
\end{lemma}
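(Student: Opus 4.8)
The plan is to prove the contrapositive: if the multiplication of a finite-dimensional Frobenius algebra $\alg$ is degenerate, then $\alg$ is not simple (assuming its multiplication is non-trivial, as required in the definition of simple). Recall that by Lemma \ref{frobeniuslemma} a left invariant non-degenerate bilinear form $h$ is available, giving a left-module isomorphism $\Psi:\alg \to \alg^{\ast}$, and $h$ may be used to transfer statements about annihilators from $\alg^{\ast}$ back to $\alg$. The key observation is that left non-degeneracy and right non-degeneracy of the multiplication are each equivalent, via the invariance of $h$, to the vanishing of a naturally associated ideal.

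First I would suppose the multiplication is, say, right degenerate, so there is a non-zero $x \in \alg$ with $y \mprod x = 0$ for all $y \in \alg$; that is, $R_{x} = 0$. Let $\N = \{x \in \alg : R_{x} = 0\}$ be the set of all such elements, a linear subspace of $\alg$, non-zero by assumption. I claim $\N$ is a two-sided ideal. Indeed, for $x \in \N$ and $a \in \alg$, I must check that $a\mprod x$ and $x \mprod a$ again right-annihilate everything. Since $x \mprod a = R_{a}x$ and we want to show $y \mprod (x\mprod a) = 0$ and $y \mprod (a \mprod x) = 0$ for all $y$, the cleanest route is to pass through the form $h$: for all $y, z \in \alg$, left invariance gives $h(y \mprod(a\mprod x), z) = h(y, (a\mprod x)\mprod z)$, and one reduces $(a\mprod x)\mprod z$ using the right-invariance identities of \eqref{braid} (available once one restricts attention to the braided case) or directly from left invariance to rewrite everything in terms of $z \mprod x = R_{x}z = 0$. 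The point is that $R_{x} = 0$ propagates: any product with $x$ on an appropriate side is detected by $h$ to pair trivially against all of $\alg$, and non-degeneracy of $h$ forces it to vanish. Thus $\N$ is a proper (it is proper because the multiplication is non-trivial, so $R_{a} \neq 0$ for some $a$) non-zero two-sided ideal, contradicting simplicity.

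If instead the multiplication is left degenerate, I would run the symmetric argument with $L_{x}$ in place of $R_{x}$, using that a left Frobenius algebra also carries a right-invariant form (its opposite, by Lemma \ref{frobeniuslemma}), so the roles of left and right annihilator are interchanged by passing to $h^{\op}$. This reduces the left-degenerate case to the right-degenerate case already treated. Combining the two cases shows that if $\alg$ is simple then the multiplication is neither left nor right degenerate, i.e.\ it is non-degenerate, which is the assertion.

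The main obstacle I anticipate is verifying carefully that $\N$ (or its left analogue) is genuinely a \emph{two-sided} ideal rather than merely a one-sided one, since in the nonassociative setting closure under multiplication on each side must be checked independently and cannot be inferred from associativity. Here the invariance of $h$ is exactly the tool that compensates for the lack of associativity: it converts the defining condition $R_{x} = 0$ into a statement about the pairing $h(\alg \mprod x, \alg)$ that is symmetric enough to control both $a \mprod x$ and $x \mprod a$. I would want to confirm that left invariance alone suffices to close $\N$ under both-sided multiplication, or else invoke the braided hypothesis; since the statement as phrased does not restrict to the braided case, the safe approach is to carry out the annihilator argument entirely through the left-invariance relation $h(u\mprod v, w) = h(u, v \mprod w)$ and its consequence for $\N$, and to use $h^{\op}$ to handle the left-degenerate case. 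Once the ideal property is established, the contradiction with simplicity is immediate.
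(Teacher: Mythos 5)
Your strategy differs from the paper's and does work, but the step you yourself flag as the obstacle is exactly where your written computation misses the target. For the right annihilator $\N=\{x\in\alg: y\mprod x=0 \text{ for all } y\}$, the inclusion $a\mprod x\in\N$ is trivial: $a\mprod x=0$ already, by the defining property of $x$ applied with $y=a$. Yet that is the case your displayed identity $h(y\mprod(a\mprod x),z)=h(y,(a\mprod x)\mprod z)$ addresses, and it is written in the unhelpful direction. The inclusion that actually needs proof is $x\mprod a\in\N$, i.e.\ $y\mprod(x\mprod a)=0$ for all $y$, and for that you should apply left invariance so as to pull factors out of the \emph{second} slot: for all $u$,
\[
h(u,\, y\mprod(x\mprod a)) \;=\; h(u\mprod y,\, x\mprod a) \;=\; h((u\mprod y)\mprod x,\, a) \;=\; 0,
\]
since $(u\mprod y)\mprod x=0$; non-degeneracy of $h$ in its first argument then gives $y\mprod(x\mprod a)=0$. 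So left invariance alone suffices and no braided hypothesis is needed, confirming your suspicion. The left-degenerate case is even easier: if $x\mprod y=0$ for all $y$ then $h(a\mprod x,w)=h(a,x\mprod w)=0$ for all $a,w$, so $a\mprod x=0$ by non-degeneracy, and the left annihilator is a two-sided ideal with no further work (indeed this shows a left annihilator is automatically a right annihilator). With these repairs, and your observation that the annihilator is proper because a simple algebra has non-trivial multiplication, the argument is complete.

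The paper takes a shorter route that avoids verifying the ideal property of the annihilator altogether: the square $\alg^{2}$ is always a two-sided ideal, so simplicity forces $\alg^{2}=\alg$; then $x\mprod y=0$ for all $y$ gives $h(x,y\mprod z)=h(x\mprod y,z)=0$ for all $y,z$, so $x$ is $h$-orthogonal to $\alg^{2}=\alg$ and hence $x=0$, with the opposite form $h^{\op}$ handling the other side. Both proofs funnel simplicity through an ideal; yours uses the annihilator, the paper's uses $\alg^{2}$, whose ideal property is a one-line observation. The paper's version is therefore a bit cleaner, but your approach is a legitimate alternative once the computation above is inserted.
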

\begin{proof}
For any algebra the square $\alg^{2}$, defined to be the subspace spanned by products of elements of $\alg$, is a two-sided ideal. If $\alg$ is simple then the multiplication is by assumption non-trivial and so there must hold $\alg^{2} = \alg$. Let $h$ be the non-degenerate left invariant bilinear form defining the Frobenius algebra structure. Suppose there is $x \in \alg$ such that $x\mprod y = 0$ for all $y \in \alg$. Then $0 = h(x\mprod y, z) = h(x, y\mprod z)$ for all $y, z \in \alg$. Since $\alg^{2} = \alg$, every element of $\alg$ can be written as a finite sum of terms of the form $y \mprod z$, and so the non-degeneracy of $h$ implies that $x = 0$. Thus the multiplication on $\alg$ is left non-degenerate. The analogous argument with $h^{\op}$ in place of $h$ and left and right multiplication interchanged shows the right non-degeneracy of the multiplication. 
\end{proof}

\subsubsection{}
This section shows that an arbitrary Frobenius algebra determines a unital Frobenius algebra of one dimension more. This construction is something like passing from projective space to the affine space over it. It is needed in the proof of Theorem \ref{confassclassificationtheorem} below.

Let $(\alg, h)$ be a finite-dimensional $\fie$-algebra equipped with a bilinear form $h$. Define a unital algebra $(\hat{\alg}_{h}, \hat{h})$ with bilinear form $\hat{h}$ as follows. As a vector space $\hat{\alg}_{h}$ comprises pairs $(x, \la)$ with $x \in \alg$ and $\la \in \fie$. The multiplication on $\hat{\alg}_{h}$ is defined by $(x, \la)\mprod(y, \mu) = (x\mprod y + \la y + \mu x, h(x, y) + \la \mu)$ and has unit $(0, 1)$. Evidently $\hat{\alg}_{h}$ is commutative if and only if $\alg$ is commutative and $h$ is symmetric. However, associativity is not preserved by this construction for
\begin{align}\label{alghassoc}
\begin{split}
[(x, \la), (y, \mu), (z, \ga)] = ([x, y, z] + h(x, y)z - h(y, z)x, h(x \mprod y, z) - h(x, y \mprod z)).
\end{split}
\end{align}
On the other hand, it follows from \eqref{alghassoc} that $\hat{\alg}_{h}$ is associative if and only if $h$ is left invariant and $[x, y, z] = h(y, z)x - h(x, y)z$ for all $x, y, z \in \alg$. The bilinear form $\hat{h}$ on $\hat{\alg}_{h}$ is defined by $\hat{h}((x, \la), (y, \mu)) = h(x, y) + \la \mu$. The $\hat{h}$ norm of the unit in $\hat{\alg}_{h}$ is $1$. Evidently $\hat{h}$ is non-degenerate or symmetric if and only if $h$ has the same property. Routine computations show
\begin{align*}
\begin{split}
&\hat{h}((x, \la)\mprod (y, \mu), (z, \ga)) - h((x, \la), (y, \mu)\mprod (z, \ga)) = h(x\mprod y, z) - h(x, y \mprod z),
\end{split}
\end{align*}
from which it is evident that $\hat{h}$ is left invariant if and only if $h$ is left invariant. 
On the other hand,
\begin{align}\label{alghright}
\begin{split}
&\hat{h}((z, \ga)\mprod(x, \la), (y, \mu)) - \hat{h}((x, \la), (y, \mu)\mprod (z, \ga)) \\
&= h(z \mprod x, y) - h(x, y \mprod z) + \mu(h(z, y) - h(y, z)) + \la (h(z, x) - h(x, z)),
\end{split}
\end{align}
from which it follows that $\hat{h}$ is right invariant if and only if $h$ is both right invariant and symmetric. The preceeding proves the first part of
\begin{lemma}
Let $\alg$ be a finite-dimensional algebra with a bilinear form $h$. Then $(\hat{\alg}_{h}, \hat{h})$ is a unital Frobenius algebra if and only if $(\alg, h)$ is a Frobenius algebra. In this case the following are equivalent: $\hat{h}$ is braided; $\hat{h}$ is symmetric; $h$ is symmetric and braided.
\end{lemma}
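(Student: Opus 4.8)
The first assertion is already furnished by the displayed computations preceding the statement: $\hat{h}$ is non-degenerate exactly when $h$ is, and the computation just before \eqref{alghright} shows that $\hat{h}$ is left invariant exactly when $h$ is, while $(0,1)$ serves as a unit of $\hat{\alg}_{h}$; so $(\hat{\alg}_{h},\hat{h})$ is a unital Frobenius algebra iff $(\alg, h)$ is a Frobenius algebra. The plan for the remaining equivalence—among the conditions that $\hat{h}$ be braided $(i)$, that $\hat{h}$ be symmetric $(ii)$, and that $h$ be symmetric and braided $(iii)$—is to exploit that, once we are in the Frobenius case, both $h$ and $\hat{h}$ are automatically left invariant. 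I would then use three already-available facts: symmetry transfers, i.e. $\hat{h}$ is symmetric iff $h$ is symmetric; for a \emph{symmetric} bilinear form the notions left invariant, right invariant, and braided coincide; and Lemma \ref{braidedsymmetriclemma}, that a unital braided Frobenius form is symmetric.

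With these in hand the equivalence closes quickly. For $(i)\Rightarrow(ii)$, since $(\hat{\alg}_{h},\hat{h})$ is a unital Frobenius algebra, braidedness of $\hat{h}$ makes it a unital braided Frobenius algebra, so Lemma \ref{braidedsymmetriclemma} yields symmetry; concretely one evaluates $\hat{h}(x,y)=\hat{h}(x,e\mprod y)=\hat{h}(y\mprod x,e)=\hat{h}(y,x\mprod e)=\hat{h}(y,x)$ at the unit $e=(0,1)$. For $(ii)\Rightarrow(i)$, the form $\hat{h}$ is symmetric and left invariant, hence braided by the symmetric-form equivalence. Finally $(ii)\Leftrightarrow(iii)$: symmetry of $\hat{h}$ is equivalent to symmetry of $h$, and a symmetric left-invariant $h$ is by the same equivalence braided, so $(ii)$ holds iff $h$ is symmetric and braided. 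This gives $(i)\Leftrightarrow(ii)\Leftrightarrow(iii)$.

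The one point requiring genuine care—and the closest thing here to an obstacle—is the asymmetry between left and right invariance under the passage $h\mapsto\hat{h}$. Left invariance transfers unconditionally, but \eqref{alghright} shows that right invariance of $\hat{h}$ is equivalent to right invariance \emph{and} symmetry of $h$, the obstruction being exactly the antisymmetric terms $\mu\,(h(z,y)-h(y,z))+\la\,(h(z,x)-h(x,z))$ appearing there. Thus right invariance can fail independently of left invariance precisely when $h$ is not symmetric, which is why symmetry is the linchpin of all three conditions and why $(iii)$ must bundle symmetry together with braidedness rather than with bare right invariance. Everything beyond this bookkeeping is a direct invocation of the transfer identities and Lemma \ref{braidedsymmetriclemma}, so no further structural input is needed.
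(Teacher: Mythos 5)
Your proof is correct and follows essentially the same route as the paper's: the first claim is read off from the displayed computations, and the equivalence of the three conditions is closed using that a symmetric left-invariant form is braided, that symmetry of $\hat{h}$ is equivalent to symmetry of $h$, and Lemma \ref{braidedsymmetriclemma} for the implication from braided to symmetric. The paper's primary route for that last implication extracts symmetry of $h$ directly from the obstruction terms in \eqref{alghright}, but it explicitly offers the Lemma \ref{braidedsymmetriclemma} argument you chose as an alternative, so nothing substantive differs.
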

\begin{proof}
There remains to prove only the last claim. If left invariant $h$ or $\hat{h}$ is symmetric, then it is braided. If $\hat{h}$ is braided, then by \eqref{alghright}, $h$ is symmetric, and so by definition $\hat{h}$ is symmetric as well. Alternatively, if $\hat{h}$ is braided then by Lemma \ref{braidedsymmetriclemma}, $\hat{h}$ is symmetric.
\end{proof}

In any unital Frobenius algebra $(\alg, h)$ there holds $h(e, y) = h(y, e)$ for all $y \in \alg$, for $h(e, y) = h(e \mprod e, y) = h(e, e\mprod y) = h(e, y\mprod e) = h(e \mprod y, e) = h(y, e)$. Hence if $h(e, e) \neq 0$, the projection $P_{e}$ onto the subspace $\{y \in \alg: h(y, e) = 0 = h(e, y)\}$ can be written as $P_{e}(y) =  y - h(y, e)h(e, e)^{-1}e$. 

\begin{lemma}\label{reconstructionlemma}
Let $(\balg, g)$ be a finite-dimensional unital Frobenius algebra such that the unit $e \in \balg$ satisfies $g(e, e) \neq 0$. Then the subspace $\alg \defeq \{x \in \balg: g(x, e) = 0 = g(e, x)\}$ equipped with the multiplication $x \star y  \defeq x \mprod y - g(x, y)g(e, e)^{-1}e$ and the restriction of $g$ is a Frobenius algebra. 
\end{lemma}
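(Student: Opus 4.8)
The plan is to verify directly that the pair $(\alg, \star)$ equipped with the restricted form $g|_{\alg}$ satisfies the defining properties of a Frobenius algebra, namely that $\star$ maps $\alg \times \alg$ into $\alg$, and that $g|_{\alg}$ is a non-degenerate left-invariant bilinear form on $(\alg, \star)$. The whole argument is a sequence of routine computations exploiting the splitting $\balg = \alg \oplus \fie e$ induced by the projection $P_{e}$, which is available precisely because $g(e, e) \neq 0$; I would write $c \defeq g(e, e)^{-1}$ throughout to lighten notation.

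First I would check that $\star$ is well-defined, i.e. that $x \star y \in \alg$ whenever $x, y \in \alg$. Since $x \star y = x \mprod y - c\,g(x, y)e$, it suffices to show $g(x\star y, e) = 0 = g(e, x \star y)$. For the first, using left invariance of $g$ on $\balg$ and that $e$ is the unit, $g(x\mprod y, e) = g(x, y \mprod e) = g(x, y)$, so $g(x \star y, e) = g(x\mprod y, e) - c\,g(x, y)g(e, e) = g(x, y) - g(x, y) = 0$; the computation of $g(e, x\star y)$ is identical after noting, as recorded just before the statement, that $g(e, z) = g(z, e)$ for all $z \in \balg$ in a unital Frobenius algebra. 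Thus $x \star y$ indeed lies in $\alg$, and the restriction of $g$ to $\alg$ is non-degenerate because $\alg$ is by definition the $g$-orthogonal complement of the anisotropic line $\fie e$, so $\balg = \alg \oplus \fie e$ is a $g$-orthogonal direct sum and non-degeneracy passes to each summand.

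Next I would verify left invariance of $g|_{\alg}$ with respect to $\star$. For $x, y, z \in \alg$, expand
\begin{align*}
g(x \star y, z) &= g(x \mprod y - c\,g(x, y)e, z) = g(x \mprod y, z) - c\,g(x, y)g(e, z),\\
g(x, y \star z) &= g(x, y \mprod z - c\,g(y, z)e) = g(x, y \mprod z) - c\,g(y, z)g(x, e).
\end{align*}
Because $x, z \in \alg$ we have $g(e, z) = 0 = g(x, e)$, so both correction terms vanish, and left invariance of $g$ on $\balg$ gives $g(x \mprod y, z) = g(x, y \mprod z)$; hence $g(x \star y, z) = g(x, y \star z)$, as required. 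This establishes that $(\alg, \star, g|_{\alg})$ is a Frobenius algebra by the definition given in the excerpt (a non-degenerate left-invariant bilinear form, or equivalently one of the structures of Lemma \ref{frobeniuslemma}).

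There is essentially no serious obstacle here: the construction is the inverse of the unitalization $(\alg, h) \mapsto (\hat{\alg}_{h}, \hat{h})$ described immediately above, and the content is bookkeeping with the orthogonal splitting. The only point demanding a little care, which I expect to be the ``hard'' step insofar as anything is, is confirming that the corrections by multiples of $e$ are exactly what is needed to land back in $\alg$ and to kill the cross terms in the invariance computation; this is where the hypothesis $g(e, e) \neq 0$ is used in an essential way, since it is what makes $P_{e}$ and hence the subtracted term $c\,g(x,y)e$ meaningful. I would also remark, for completeness and to close the circle with the preceding lemma, that one may optionally check $(\widehat{\alg}_{g|_{\alg}}, \widehat{g|_{\alg}}) \cong (\balg, g)$, but this is not required for the statement as posed and I would omit the verification.
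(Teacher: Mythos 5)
Your proposal is correct and follows essentially the same route as the paper: direct verification that $x\star y$ lands in $\alg$, that the restriction of $g$ is non-degenerate, and that left invariance survives because the correction terms pair trivially with elements of $\alg$. The only cosmetic difference is in the non-degeneracy step, where you invoke the $g$-orthogonal splitting $\balg = \alg \oplus \fie e$ while the paper replaces witnesses $y, z$ by $P_{e}(y), P_{e}(z)$ — but these are the same observation, since $P_{e}$ is exactly the projection onto $\alg$ in that splitting.
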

\begin{proof}
If $x, y \in \alg$ then $g(x \star y, e) = g(x \mprod y, e) - g(x, y) = 0$ and $g(e, x\star y) = g(e, x\mprod y) - g(x, y) = 0$, both by left invariance of $g$, so $x \star y \in \alg$. If $x \in \alg$, by non-degeneracy of $g$ there exist $y, z \in \balg$ such that $g(x, y) \neq 0$ and $g(z, x) \neq 0$, and since $x \in \alg$ there hold $g(x, P_{e}(y)) = g(x, y) \neq 0$ and $g(P_{e}(z), x) = g(z, x) \neq 0$, so $y$ and $z$ could have been chosen from $\alg$. This shows the non-degeneracy of the restriction of $g$ to $\alg$. Finally, for $x, y, z \in \alg$ there holds $g(x \star y, z) - g(x, y \star z) = g(x \mprod y, z) - g(x, y, \mprod z) = 0$, so that $g$ is left invariant with respect to the multiplication $\star$ on $\alg$.
\end{proof}
If $(\alg, h)$ is Frobenius, the construction of Lemma \ref{reconstructionlemma} applied to $(\hat{\alg}_{h}, \hat{h})$ recovers $(\alg, h)$. It makes sense to refer to $(\hat{\alg}_{h}, \hat{h})$ as the \textbf{unitalization} of the Frobenius algebra $(\alg, h)$.

\begin{lemma}\label{unitalizationuniquelemma}
Two finite dimensional Frobenius algebras are isomorphic if and only if their unitalizations are isomorphic.
\end{lemma}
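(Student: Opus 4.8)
The plan is to prove the two implications separately. The forward direction is a direct verification that the unitalization is functorial for isomorphisms, while the reverse direction exploits the fact—noted just before the statement—that the construction of Lemma \ref{reconstructionlemma} applied to $(\hat{\alg}_{h}, \hat{h})$ recovers $(\alg, h)$, so that the unitalization can be canonically inverted.

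First I would dispose of the forward direction. Suppose $\phi:(\alg, h) \to (\balg, g)$ is an isomorphism of Frobenius algebras, meaning $\phi$ is a linear isomorphism which is multiplicative and satisfies $g(\phi(x), \phi(y)) = h(x, y)$ for all $x, y \in \alg$. Define $\hat{\phi}: \hat{\alg}_{h} \to \hat{\balg}_{g}$ by $\hat{\phi}(x, \la) = (\phi(x), \la)$. Using that $\phi$ is multiplicative and form-preserving, one checks routinely that $\hat{\phi}$ respects the multiplication $(x,\la)\mprod(y,\mu) = (x\mprod y + \la y + \mu x,\, h(x,y) + \la\mu)$ and that $\hat{g}(\hat{\phi}(a), \hat{\phi}(b)) = \hat{h}(a, b)$; since $\hat{\phi}$ is visibly a linear isomorphism carrying the unit $(0,1)$ of $\hat{\alg}_{h}$ to the unit $(0,1)$ of $\hat{\balg}_{g}$, it is an isomorphism of unital Frobenius algebras.

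For the reverse direction, let $\Phi:(\hat{\alg}_{h}, \hat{h}) \to (\hat{\balg}_{g}, \hat{g})$ be an isomorphism of Frobenius algebras. The two facts I would use are that $\Phi$ carries the unit to the unit—automatic for an isomorphism of unital algebras, since $\Phi(e)$ is then a two-sided unit in the surjective image—and that $\Phi$ preserves the bilinear forms. Because $\hat{h}((x,\la), (0,1)) = \la$, the subspace $\{z \in \hat{\alg}_{h} : \hat{h}(z, e) = 0 = \hat{h}(e, z)\}$ is exactly the image $\{(x,0) : x \in \alg\}$ of the canonical embedding of $\alg$, and by the remark preceding the statement the reconstruction of Lemma \ref{reconstructionlemma} (which applies since $\hat{h}(e,e) = 1 \neq 0$) identifies this subspace, with the product $z \star w = z\mprod w - \hat{h}(z,w)\hat{h}(e,e)^{-1}e$ and the restricted form, with $(\alg, h)$; the same holds for $(\hat{\balg}_{g}, \hat{g})$ and $(\balg, g)$. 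Since $\Phi$ preserves both $e$ and the form, it maps the first distinguished subspace isomorphically onto the second, and since it is multiplicative and form-preserving it intertwines the two $\star$-products. Hence $\phi \defeq \Phi|_{\alg}$ is an isomorphism of the reconstructed Frobenius algebras, i.e. of $(\alg, h)$ with $(\balg, g)$.

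The only point requiring care—and the nearest thing to an obstacle—is the reverse direction's reliance on the canonicity of the inversion: one must be sure that an arbitrary isomorphism of the unitalizations, a priori involving the extra scalar coordinate, necessarily restricts to the distinguished subspaces and descends to an isomorphism of the original algebras. This is exactly where preservation of the unit and of the form is essential, since the recovered subspace $\{z : \hat{h}(z,e)=0=\hat{h}(e,z)\}$ and the recovered product $\star$ are built solely from $e$ and $\hat{h}$, both of which $\Phi$ respects. Everything else is the bookkeeping already encapsulated in Lemma \ref{reconstructionlemma}, so no further computation is needed.
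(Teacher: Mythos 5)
Your proof is correct, but your argument for the converse is genuinely different from the paper's. The paper proves the reverse implication by brute force: it writes the most general linear endomorphism of the unitalization as $\Phi(x, \la) = (Ax + \la b, h(c, x) + \la d)$ and then uses the isometry condition together with multiplicativity against the unit to force $c = 0$, $b = 0$, $d = 1$, concluding that $\Phi = \hat{\Psi}$ for an automorphism $\Psi$ of $(\alg, h)$. You instead observe that an algebra isomorphism of the unitalizations must carry unit to unit, that the $\hat{h}$-orthogonal complement of the unit is exactly the embedded copy of $\alg$ (since $\hat{h}((x,\la),(0,1)) = \la$), and that the reconstruction of Lemma \ref{reconstructionlemma} — which the paper itself notes recovers $(\alg, h)$ from $(\hat{\alg}_{h}, \hat{h})$ — is built entirely out of data ($e$, $\hat{h}$, $\mprod$) that the isomorphism respects. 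Your route is more conceptual and avoids the coordinate computation; it also makes transparent why the recovered subalgebra is canonical. What the paper's computation buys is an explicit normal form for \emph{every} isomorphism of unitalizations without invoking the reconstruction lemma, though your argument in fact yields the same conclusion ($\Phi = \hat{\phi}$ with $\phi = \Phi|_{\alg}$) since $\Phi$ preserves the decomposition $\hat{\alg}_{h} = \alg \oplus \spn\{e\}$. Both proofs rely essentially on the isomorphism being an isometry, not merely multiplicative, and you correctly flag this as the crux.
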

\begin{proof}
An isomorphism $\Psi$ of Frobenius algebras extends to an isomorphism $\hat{\Psi}$ of their unitalizations defined by $\hat{\Psi}(x, \la) = (\Psi(x), \la)$, so the unitalizations of isomorphic Frobenius algebras are isomorphic. Since an isomorphism of Frobenius algebras is an isometry, to prove the converse it suffices to prove that an automorphism $\Phi$ of the unitalization $(\hat{\alg}_{h}, \hat{h})$ is necessarily of the form $\hat{\Psi}$. The most general linear endomorphism of $\hat{\alg}_{h}$ has the form $\Phi(x, \la) = (Ax + \la b, h(c, x) + \la d)$ for some $A \in \eno(\alg)$, $b, c \in \alg$, and $d \in \rea$. Since 
\begin{align}\label{unt1}
\begin{split}
0 &= \hat{h}((0, 1), (x, 0)) = \hat{h}(\Phi(0, 1), \Phi(x, 0)) = \hat{h}((b, d), (Ax, h(c, x))) = h(b, Ax) + dh(c, x),
\end{split}
\end{align}
there holds
\begin{align}\label{unt2}
\begin{split}
0 & = \Phi(0, 1)\mlt \Phi(x, 0) - \Phi((0,1)\mlt (x,0)) = (b, d)\mlt(Ax, h(c, x)) - \Phi(x, 0) \\
& = (b \mlt Ax + (d-1)Ax + h(c, x)b, h(b, Ax) + (d-1)h(c, x))\\
& = (b \mlt Ax + (d-1)Ax + h(c, x)b, -h(c, x)).
\end{split}
\end{align}
Hence $h(c, x) = 0$ for all $x \in \alg$, and since $h$ is non-degenerate this implies $c = 0$. Since $c = 0$, $A$ must be invertible. In \eqref{unt1} this gives $h(b, AX) = 0$, and by the invertibility of $A$ and the non-degeneracy of $h$ this implies $b = 0$. In \eqref{unt2} this gives $(d-1)Ax$, which implies $d = 1$, so that $\Phi = \hat{\Psi}$ where $\Psi(x) = Ax$ is an automorphism of $(\alg, h)$.
\end{proof}

\subsubsection{}
The \textbf{(left) trace form} $\tau$ on a nonassociative algebra $\alg$ is the symmetric bilinear form defined by $\tau(x, y) = \tr L(x)L(y) = L(x)_{p}\,^{q}L(y)_{q}\,^{p} = x^{a}y^{b}\mu_{ab}$, where, consistent with the notations used throughout this paper, $\mu_{ab} \defeq \mtens_{ap}\,^{q}\mtens_{bq}\,^{p}$.

\subsubsection{}
There is a useful analogy between algebras and connections which explains the point of view behind much of what follows. The structure tensor $\mtens_{ij}\,^{k}$ is thought of as analogous to a connection, and its skew part $\mtens_{[ij]}\,^{k}$ is the analogue of the torsion. The associator is the analogue of the curvature tensor. The trace form is something like the Ricci tensor. A non-degenerate invariant form should be regarded as a metric (for which there is not assumed any \textit{a priori} relation with the connection/multiplication). This analogy can be made a genuine correspondence by using the formalism of algebroids, but this will not be needed here. Some ideas along these lines are explored by L. Ionescu in \cite{Ionescu}.

\subsubsection{}
The only systematic study of  non-degenerate left invariant bilinear forms on nonassociative algebras of which I am aware is that made by M. Bordemann in \cite{Bordemann}. In \cite{Bordemann} a nonassociative algebra admitting a non-degenerate left invariant symmetric bilinear form $h$ is called \textbf{metrizable} and the pair $(\alg, h)$ is called a \textbf{metric algebra}. In Bordemann's terminology, a unital associative symmetric Frobenius algebra is a \textit{unital associative metric algebra}. Following the analogy between algebras and connections described above, the terminology \textit{metric} is not quite apt, for by this analogy such terminology should apply to an algebra equipped with a non-degenerate invariant symmetric bilinear form $h$ compatible with the multiplication in the sense that $h(x\mprod y, z) + h(y, x \mprod z) = 0$; this last condition is the algebraic condition that the multiplication $x \mprod y = \nabla_{x}y$ determined by a flat connection $\nabla$ with respect to a parallel frame be compatible with a metric $h$. What is of interest here is the algebraic condition corresponding to the Codazzi compatibility between a connection and a metric, which is
\begin{align}\label{codazzialgebra}
h(x\mprod y, z) + h(y, x\mprod z) = h(y\mprod x, z) + h(x, y \mprod z),
\end{align}
A nonassociative algebra $\alg$ with a non-degenerate symmetric bilinear form $h$ satisfying \eqref{codazzialgebra} is a \textbf{Codazzi algebra}. A Codazzi algebra is \textbf{Riemannian} if $h$ is positive definite.

Plainly if a non-degenerate symmetric bilinear form $h$ on $\alg$ is invariant then $(\alg, h)$ is Codazzi. From \eqref{codazzialgebra} it is evident that $(\alg, h)$ be commutative Codazzi is equivalent to the complete symmetry of $h(x\mprod y, z)$ in $x$, $y$, and $z$. That is, $\mtens_{ijk} = \mtens_{ij}\,^{p}h_{pk}$ satisfies $\mtens_{ijk} = \mtens_{(ijk)}$. In particular, if $(\alg, h)$ is Codazzi and $\alg$ is commutative, then $h$ is invariant, so a commutative Codazzi algebra is the same thing as a commutative symmetric Frobenius algebra. Since commutative Frobenius algebras are exactly the objects obtained by naively relaxing the associativity and unitality conditions in the usual definition of a symmetric Frobenius algebra, commutative Codazzi algebras seem to be the right generalization to the nonunital, nonassociative setting of unital associative symmetric Frobenius algebras.

\begin{remark}
An algebra is a \textbf{left symmetric algebra} (\textbf{LSA}) (also called \textit{pre-Lie algebras} or \textit{Vinberg algebras}) if its associator satisfies $[x, y, z] = [y, x, z]$ for all $x, y, z \in \alg$. From \eqref{commassoc} it is evident that a commutative algebra is left symmetric if and only if it is associative. A left symmetric Codazzi algebra is called a \textbf{Hessian algebra}. Hessian algebras arise naturally in the description of homogeneous convex cones, and this is not unrelated to the appearance here of Codazzi algebras, although this point will not be pursued; see H. Shima's book \cite{Shima} for background and references.
\end{remark}

\subsubsection{}
To an algebra $\alg$ with a metric $h$ there is associated a cubic polynomial $P = P^{\mtens}  \in S^{3}(\alg^{\ast})$ defined by $6P^{\mtens}(x) = h(x \mprod x, x) = x^{i}x^{j}x^{k}\mtens_{(ijk)}$. For example, the cubic polynomial $\hat{P}(x, r)$ associated to the unitalization of a Frobenius algebra $(\alg, h)$ is given in terms of the cubic polynomial $P(x)$ associated to $(\alg, h)$ by $\hat{P}(x, r) = P(x) + \tfrac{1}{2}rE(x) + \tfrac{1}{6}r^{3}$. Let $D$ be the Levi-Civita connection of $h_{ij}$, which is just the flat affine connection on $\alg$, and write $P_{i_{1}\dots i_{k}} = D_{i_{1}}\dots D_{i_{k}}P$. Raise and lower indices with $h_{ij}$ and the dual bivector $h^{ij}$. Observe $P_{i_{1}\dots i_{k}} \equiv 0$ if $k > 3$. If $(\alg, h)$ is a commutative Codazzi algebra, then $\mtens_{ijk} \defeq \mtens_{ij}\,^{p}h_{pk}$ is completely symmetric, so that by construction $P_{ij}\,^{k} = \mtens_{ij}\,^{k}$. That is, a commutative Codazzi algebra is completely determined by its associated cubic polynomial.

\subsubsection{}
An algebra $\alg$ is \textbf{(left) special} if $\tr L(x) = 0$ for all $x \in \alg$. A special algebra cannot have a (left) unit. The Lie algebra of a unimodular Lie group is special in this sense. That $\alg$ be special is equivalent to $\mtens_{ip}\,^{p} = 0$, and it follows in particular that a commutative Codazzi algebra is special if and only if its associated cubic polynomial is harmonic. A special commutative algebra satisfies $\mtens_{pij}\,^{p} = \mu_{ij}$; that is the Ricci trace of its associativity tensor equals its trace form.

\begin{lemma}\label{flatalgebralemma}
A finite-dimensional real Riemannian commutative associative Codazzi algebra $(\alg, h)$ is isomorphic to Euclidean space $(\rea^{n}, \delta)$ with a multiplication given in the standard basis $e_{i}$ by $e_{i}\mprod e_{i} = \la_{i}e_{i}$ (no summation) and $e_{i}\mprod e_{j} = 0$ if $i \neq j$ for some real constants $\la_{i} \in \rea$. If $(\alg, h)$ is moreover special, then it is isomorphic to $(\rea^{n}, \delta)$ with the trivial multiplication.
\end{lemma}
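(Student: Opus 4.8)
The statement asserts that a finite-dimensional real Riemannian commutative associative Codazzi algebra is, up to isomorphism, a diagonal algebra on Euclidean space, and in the special case a trivial one. Since a commutative Codazzi algebra is the same as a commutative symmetric Frobenius algebra, the bilinear form $h$ is invariant, and because it is positive definite the analogy with a metric connection is exact. The core idea is that commutativity together with associativity makes each left multiplication operator $L(x)$ into a commuting family of operators which are \emph{self-adjoint} with respect to $h$; self-adjointness comes directly from the invariance $h(x\mprod y,z)=h(y,x\mprod z)$, and the commuting property is exactly associativity rewritten as $L(x\mprod y)=L(x)L(y)=L(y)L(x)$. A commuting family of $h$-self-adjoint operators on a positive-definite inner product space is simultaneously orthogonally diagonalizable by the spectral theorem, and this diagonalization will produce the claimed basis.

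\textbf{Key steps, in order.} First I would record that for a commutative Codazzi algebra the complete symmetry of $\mtens_{ijk}=\mtens_{ij}{}^{p}h_{pk}$ forces $h$ to be invariant and each $L(x)$ to be $h$-self-adjoint: $h(L(x)y,z)=h(x\mprod y,z)=h(y,x\mprod z)=h(y,L(x)z)$. Next, associativity for a commutative algebra says $[x,y,z]=0$, i.e. $\mtens_{ijk}{}^{l}=0$, which is equivalent to $L(x\mprod y)=L(x)L(y)$; combined with commutativity this gives $L(x)L(y)=L(x\mprod y)=L(y\mprod x)=L(y)L(x)$, so $\{L(x):x\in\alg\}$ is a commuting family of symmetric operators. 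By the spectral theorem for commuting self-adjoint operators on the Euclidean space $(\alg,h)$, there is an $h$-orthonormal basis $e_{1},\dots,e_{n}$ simultaneously diagonalizing all the $L(x)$; rescaling so that $h(e_{i},e_{j})=\delta_{ij}$ identifies $(\alg,h)$ isometrically with $(\rea^{n},\delta)$. In this basis $L(e_{i})e_{j}=\mu_{ij}e_{j}$ (no sum) for scalars $\mu_{ij}$, i.e. $e_{i}\mprod e_{j}=\mu_{ij}e_{j}$. Commutativity forces $\mu_{ij}e_{j}=\mu_{ji}e_{i}$, which for $i\neq j$ (using the linear independence of $e_{i},e_{j}$) gives $\mu_{ij}=0=\mu_{ji}$; hence $e_{i}\mprod e_{j}=0$ for $i\neq j$, and writing $\la_{i}\defeq\mu_{ii}$ gives $e_{i}\mprod e_{i}=\la_{i}e_{i}$. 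This is exactly the asserted diagonal form. Finally, if $\alg$ is special then $\tr L(x)=0$ for all $x$; taking $x=e_{i}$ gives $\tr L(e_{i})=\la_{i}=0$ for each $i$, so all structure constants vanish and the multiplication is trivial.

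\textbf{Expected main obstacle.} The argument is essentially an application of the spectral theorem, so the only genuine subtlety is verifying that the diagonalizing basis can be taken \emph{common} to all the $L(x)$ and that this diagonalization is compatible with the multiplicative structure, rather than merely diagonalizing each operator separately. The commuting property $L(x)L(y)=L(y)L(x)$ handles the simultaneity, but one must be careful that after simultaneous diagonalization the basis vectors really are eigenvectors in the algebraic sense $e_{i}\mprod e_{j}=\mu_{ij}e_{j}$; this follows because $e_{j}$ is an eigenvector of every $L(x)$, in particular of $L(e_{i})$. The step deserving the most care is the passage from ``$e_{j}$ is a joint eigenvector'' to pinning down the eigenvalue as a function $\mu_{ij}$ and then using commutativity to kill the off-diagonal terms; this is a short but essential bookkeeping argument, and it is where positivity of $h$ (ensuring genuine orthogonal diagonalizability rather than mere triangularizability, which could fail in indefinite signature) is used. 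Everything else is routine once the spectral theorem is invoked.
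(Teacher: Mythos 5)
Your proof is correct and follows essentially the same route as the paper: both diagonalize the commuting family of $h$-self-adjoint operators $L(e_{i})$ simultaneously by the spectral theorem and then observe that a symmetry (you use commutativity $e_{i}\mprod e_{j}=e_{j}\mprod e_{i}$; the paper uses the complete symmetry of $h(e_{i}\mprod e_{j},e_{k})$) kills the off-diagonal structure constants, with $\tr L(e_{i})=\la_{i}$ handling the special case. The two ways of eliminating the off-diagonal terms are interchangeable bookkeeping, so there is no substantive difference.
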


\begin{proof}
It can be assumed without loss of generality that the invariant bilinear form is $\delta_{ij}$. Let $e_{i}$ be a standard basis of $\rea^{n}$. Associativity means that the symmetric endomorphisms $L(e_{i})$ given by left multiplication by $e_{i}$ form a commuting family, so are simultaneously diagonalizable by an orthogonal linear transformation, so it may be assumed from the beginning that the $L(e_{i})$ are diagonal. Then by the complete symmetry of $\delta(L(e_{i})( e_{j}), e_{k})$ in $ijk$ and the fact that $L(e_{i})$ is diagonal, $ \delta(L(e_{i})( e_{j}), e_{k})$ vanishes unless $i = j = k$. This means that $L(e_{i})$ has a non-zero entry $\la_{i}$ only in its $ii$ entry. If $\alg$ is special, then each $L(e_{i})$ must be trace-free, and this evidently forces $\la_{i} = 0$.
\end{proof}
With the obvious modifications Lemma \ref{flatalgebralemma} holds for $h$ of other signatures. For a unital, associative Frobenius algebra, Lemma \ref{flatalgebralemma} is well known; see e.g. Proposition $2.2$ of \cite{Hitchin-frobenius}. From the point of view taken here, Lemma \ref{flatalgebralemma} should be interpreted as saying that a special associative commutative Riemannian Codazzi algebra is flat.   

\begin{remark}
Given a Frobenius algebra $(\alg, h)$ associate to each pair of functions $f, g \in \cinf(\alg)$ the vector field $Q(f, g)^{i}\defeq df_{a}dg_{b}\mu^{abi}$, in which indices are raised using $h^{ij}$. In particular, taking $f = g = E(x) \defeq \tfrac{1}{2}h(\eul, \eul)$, where $\eul$ is the vector field generating radial dilations by $e^{t}$, the dynamics of the integral curves of the vector field $Q^{i} \defeq dE_{a}dE_{b}\mu^{abi}$ reflect the algebraic properties of $\alg$. It was the idea of L. Markus, \cite{Markus}, to associate to an autonomous system of quadratic differential equations the nonassociative algebra determined by its coefficients. It seems the idea might be useful also in the opposite direction; that is using the associated system of differential equations to study the algebra.
\end{remark}

\subsubsection{}\label{einsteincodazzisection}
A commutative Codazzi algebra $(\alg, \mprod, h)$ is \textbf{Einstein} if its trace form $\mu_{ij}$ is a constant multiple of $h_{ij}$. An Einstein commutative Codazzi algebra is \textbf{proper} or \textbf{improper} according to whether this constant is non-zero or zero. In either case, the constant is $\tfrac{1}{n}|\mtens|_{h}^{2} = \tfrac{1}{n}\mtens_{ip}\,^{q}\mtens_{jq}\,^{p}h^{ij}$, in which $n = \dim \alg$. The trace form of a proper Einstein commutative Codazzi algebra is invariant. A commutative algebra with non-degenerate invariant trace-form is a proper Einstein commutative Codazzi algebra with $h_{ij} = c\mu_{ij}$ for any non-zero $c$ in the base field. By Lemma \ref{simplelemma} these are the only Einstein structures on a simple commutative algebra with invariant non-degenerate trace-form. 

\begin{remark}
Something like infinite-dimensional Einstein commutative Codazzi algebras have been studied by N. Sasakura in a series of papers of which \cite{Sasakura} and \cite{Sasakura-emergent} are representative.
\end{remark}

\subsubsection{}\label{hessianmultiplicationsection}
Let $\eul$ be the vector field generating dilations by $e^{t}$ on $\alg$. That $P$ be a cubic polynomial implies $P_{ij} = \eul^{k}P_{ijk}$, so that a commutative Codazzi algebra $(\alg, h)$ satisfies the Einstein condition if and only if $|\hess P|_{h}^{2} = P^{ij}P_{ij} = \eul^{a}\eul^{b}\mtens_{ija}\mtens_{b}\,^{ij} = \tfrac{1}{n}|\mtens|_{h}^{2}E(x)$ where $E(x) \defeq \eul^{i}\eul^{j}h_{ij}$.  

On the other hand, suppose that as in section \ref{polynomialsection} there is given on $\rea^{n}$ with the Euclidean metric $\delta_{ij}$ a homogeneous cubic polynomial $P(x)$ solving \eqref{einsteinpolynomials} for some constant $\ka > 0$. The multiplication $\mprod$ defined by $\mtens_{ij}\,^{k} = P_{ij}\,^{k}$ makes $\rea^{n}$ into a special proper Einstein commutative Codazzi algebra. Because $(x\mprod y)^{j} = \eul_{y}^{i}\hess P(x)_{i}\,^{j}$, the multiplication table can be found by computing the Hessian of $P$. Concretely this means that for the standard basis $e_{i}$ of $\rea^{n}$ the product $e_{i}\circ e_{j}$ is the result of applying to $e_{j}$ the matrix corresponding to $\hess P$ evaluated at $e_{i}$. For example, the algebra on $\rea^{4}$ determined by the polynomial $P$ of \eqref{poly3} has the multiplication table:
\begin{align}\label{poly3table}
&e_{1}\mprod e_{1} = e_{3},& &e_{1}\mprod e_{2} = -e_{4},& &e_{1}\mprod  e_{3} = e_{1},&& e_{1}\mprod e_{4} = - e_{2},& &e_{2}\mprod e_{2} = -e_{3},&\\
\notag &e_{2}\mprod e_{3} = - e_{2},& &e_{2}\mprod e_{4} = -e_{1},& &e_{3}\mprod e_{3} = -e_{3}, & &e_{3}\mprod e_{4} = e_{4},& &e_{4}\mprod e_{4} = e_{3}.
\end{align}
in which $\{e_{i}\}$ is the standard basis of $\rea^{3}$. From \eqref{poly3table} it can be seen that this algebra is special. The polynomial can be reconstructed from the multiplication table as follows. The polynomial is $1/6$ time the sum of the monomials constructed according to the rule: to each occurrence of $ce_{k}$ in the expansion of the product $e_{i} \mprod e_{j}$ associate the monomial $cx^{i}x^{j}x^{k}$. The sum is take over all ordered triples $\{i, j, k\}$. %

\subsubsection{}\label{confasssection}
Suppose $\alg \simeq \rea^{n}$ carries a Riemannian special proper Einstein commutative Codazzi structure $(\alg, \mprod, h_{ij})$ (so $h_{ij}$ is a constant tensor). Then, as explained in section \ref{polynomialsection}, the connection $\nabla = D - \tfrac{1}{2}\mtens_{ij}\,^{k}$ is the aligned representative of an exact proper Einstein AH structure $(\en, [h])$ with distinguished metric $h_{ij}$ and satisfying $E_{ijk}\,^{l} = 0$. By \eqref{confcurvijkl} there holds $T_{ijk}\,^{l} = -\tfrac{1}{4}\mu_{ijk}\,^{l}$. Hence $-4A_{ijk}\,^{p}h_{pl} = C_{ijkl}$, which is the $h$-trace free part of $\mu_{ijkl}$. Say that a commutative Codazzi algebra is \tbf{conformally associative} if the $h$-trace free part of $\mu_{ijkl}$ vanishes identically. By Lemma \ref{weylcriterion}, a three-dimensional commutative Codazzi algebra is necessarily conformally associative. The goal of this section is the construction of Riemannian special proper Einstein commutative Codazzi algebras which are \textit{not} conformally associative, because by the preceeding discussion these yield exact Riemannian Einstein AH structures satisfying $E_{ijk}\,^{l} = 0$ but for which $A_{ijk}\,^{l}$ is not identically zero, and which are therefore, by Lemma \ref{projflatahlemma}, neither projectively nor conjugate projectively flat. The polynomial $P = P^{\mtens}$ associated to a Riemannian special proper Einstein commutative Codazzi structure $(\alg, \mprod, h_{ij})$ solves \eqref{einsteinpolynomials} for some $\ka > 0$. The trace-free part of $\mu_{ijkl}$ is $\mu_{ijkl} - \tfrac{2\ka}{(n-1)}h_{l[i}h_{j]k}$, and so there needs to be shown that there are $x, y, z, v \in \alg$ such that $x^{i}y^{j}z^{k}v^{l}(\mu_{ijkl} - \tfrac{2\ka}{(n-1)}h_{l[i}h_{j]k}) \neq 0$, or, what is the same
\begin{align}\label{confasscriterion}
 h(x \mprod z, y \mprod v) - h(y \mprod z, x \mprod v) = h([x, z, y], v)  \neq \tfrac{\ka}{(n-1)}(h(x, v)h(y, z) - h(y, v)h(x, z)).
\end{align}
For the algebra associated to \eqref{poly3}, taking $x = e_{1}$, $y = e_{2}$, $z = e_{3}$, and $v = e_{4}$ in \eqref{confasscriterion}, the righthand side is $0$, while it follows from \eqref{poly3table} that the left hand side is $-2$; hence this algebra is not conformally associative and yields an example of the desired sort.

If $(\alg, \mlt, h)$ is a Riemannian special Einstein commutative Codazzi algebra with positive Einstein constant $\ka$, then $(\alg, \mlt, \tfrac{\ka}{n-1}h)$ is an algebra of the same sort, with Einstein constant $(n-1)$.
\begin{lemma}\label{unitalassociativelemma}
The unitalization $(\hat{\alg}_{h}, \hmlt, \hat{h})$ of an $n$-dimensional special proper Einstein commutative Codazzi algebra $(\alg, \mlt, h)$ with Einstein constant $(n-1)$ is an Einstein commutative Codazzi algebra with Einstein constant $(n+1)$. Moreover, $(\hat{\alg}_{h}, \hmlt, \hat{h})$ is associative if and only if $(\alg, \mlt, h)$ is conformally associative.
\end{lemma}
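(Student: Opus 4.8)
The plan is to establish the two assertions separately, in both cases reducing everything to the structure tensor $\mtens_{ij}{}^{k}$ of $(\alg, \mlt)$ and the hypotheses that $\alg$ is special ($\mtens_{ip}{}^{p}=0$) and proper Einstein with constant $(n-1)$ ($\mu_{ij}=(n-1)h_{ij}$, where $\mu_{ij}=\mtens_{ip}{}^{q}\mtens_{jq}{}^{p}$). Since $(\alg,\mlt,h)$ is commutative Codazzi, the form $h$ is invariant, so by the lemma characterizing the unitalization, $(\hat{\alg}_{h}, \hmlt, \hat{h})$ is a unital commutative Frobenius algebra whose form $\hat{h}$ is symmetric and braided; hence it is itself a commutative Codazzi algebra. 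It then remains only to compute its trace form and to characterize when it is associative.

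For the Einstein claim I would write left multiplication on $\hat{\alg}_{h}$ in block form with respect to the splitting $\hat{\alg}_{h}=\alg\oplus\rea$. Writing $L(x)$ for left multiplication by $x$ in $\alg$, the definition of $\hmlt$ gives
\begin{align*}
\hat{L}(x,\la)=\begin{pmatrix} L(x)+\la\,\Id & x \\ h(x,\dum) & \la \end{pmatrix},
\end{align*}
and a direct trace computation for the product $\hat{L}(x,\la)\hat{L}(y,\mu)$ yields
\begin{align*}
\hat{\tau}\big((x,\la),(y,\mu)\big)=\tr\!\big(L(x)L(y)\big)+\mu\,\tr L(x)+\la\,\tr L(y)+2h(x,y)+(n+1)\la\mu .
\end{align*}
Here the hypotheses enter decisively: specialness gives $\tr L(x)=0$, while the Einstein condition with constant $(n-1)$ gives $\tr(L(x)L(y))=\mu_{ab}x^{a}y^{b}=(n-1)h(x,y)$. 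Substituting, $\hat{\tau}=(n+1)\big(h(x,y)+\la\mu\big)=(n+1)\hat{h}$, so the trace form of $\hat{\alg}_{h}$ is $(n+1)$ times its metric; this is exactly the statement that $\hat{\alg}_{h}$ is Einstein with constant $(n+1)$ (and proper, as $n+1\neq 0$).

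For the associativity claim I would start from the associator formula \eqref{alghassoc}. Because $h$ is invariant its scalar component vanishes, so $\hat{\alg}_{h}$ is associative if and only if, for all $x,y,z\in\alg$,
\begin{align*}
[x,y,z]=h(y,z)x-h(x,y)z .
\end{align*}
Pairing with an arbitrary $v$ and invoking the non-degeneracy of $h$, this is equivalent to $h([x,y,z],v)=h(y,z)h(x,v)-h(x,y)h(z,v)$ for all $x,y,z,v$. On the other hand, the associated cubic polynomial $P=P^{\mtens}$ satisfies $|\hess P|^{2}_{h}=\eul^{a}\eul^{b}\mu_{ab}=(n-1)E$, so the constant $\ka$ of \eqref{einsteinpolynomials} equals the Einstein constant $(n-1)$; hence $\ka/(n-1)=1$ and the conformal-associativity criterion \eqref{confasscriterion} (taken as an equality for all arguments) reads $h([x,z,y],v)=h(x,v)h(y,z)-h(y,v)h(x,z)$. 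Interchanging the roles of the dummy variables $y$ and $z$ shows this is literally the previous associativity condition, giving the desired equivalence.

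The routine parts are the block-matrix trace computation and the verification $\ka=n-1$. The point demanding care — the main obstacle — is the index bookkeeping in the last step: one must respect the transposed convention $[x,y,z]^{l}=x^{i}z^{j}y^{k}\mtens_{ijk}{}^{l}$ when converting the associator identity into the contracted form appearing in \eqref{confasscriterion}, and confirm that the pure-trace remainder there is exactly $2h_{l[i}h_{j]k}$. Equivalently, because the associator $\mtens_{ijkl}$ is a metric-type algebraic curvature tensor whose Ricci trace is $(n-1)h_{ij}$ (by specialness and the Einstein condition), its trace part is forced to be the constant-curvature tensor $2h_{l[i}h_{j]k}$, so that the vanishing of the trace-free remainder $C_{ijkl}$ of \eqref{mrcdefined} coincides precisely with the identity $\mtens_{ijkl}=2h_{l[i}h_{j]k}$ extracted from the associativity of $\hat{\alg}_{h}$.
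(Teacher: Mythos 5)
Your proof is correct and follows essentially the same route as the paper: the same block-matrix computation of $\tr\bigl(\hat{L}(x,\la)\hat{L}(y,\mu)\bigr) = \tau(x,y)+2h(x,y)+(n+1)\la\mu$ for the Einstein claim, and the same comparison of \eqref{alghassoc} with \eqref{confasscriterion} (using non-degeneracy of $\hat{h}$) for the associativity claim. You merely make explicit two details the paper leaves to the reader, namely that the Einstein constant $(n-1)$ forces $\ka=n-1$ in \eqref{confasscriterion} and that the associator's ordering convention turns one identity into the other under relabeling $y\leftrightarrow z$; both checks are accurate.
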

\begin{proof}
For $(x, \la)\in \hat{\alg}_{h}$, the left multiplication operator $L(x, \la)$ is related to $L(x)$ by
\begin{align*}
L(x, \la) = \begin{pmatrix} L(x) + \la I_{n} & x\\ x^{\flat}& \la \end{pmatrix},
\end{align*}
and so the trace form $\hat{\tau}$ of $\hat{\alg}_{h}$ is $\hat{\tau}((x, \la), (y, \mu)) = \tr L(x, \la)\circ L(y,\mu) = \tau(x, y) + 2h(x, y) + (n+1)\la\mu = (n+1)\hat{h}((x, \la), (y, \mu))$, showing the first claim. The second part follows by comparing \eqref{alghassoc} and \eqref{confasscriterion} and using the non-degeneracy of $\hat{h}$.
\end{proof}

\subsubsection{Example: Nahm Algebras}\label{nahmsection}
In \cite{Kinyon-Sagle}, M. Kinyon and A. Sagle define the \tbf{Nahm algebra $\nahm(\g)$} of a finite-dimensional (real) Lie algebra $\g$ to be the vector space $\nahm(\g) = \g\oplus\g\oplus\g$ equipped with the evidently commutative multiplication
\begin{align}
&\begin{pmatrix}x_{1}\\x_{2}\\ x_{3} \end{pmatrix}\mprod\begin{pmatrix}y_{1}\\y_{2}\\ y_{3} \end{pmatrix} = \frac{1}{2}\begin{pmatrix}[x_{2}, y_{3}] + [y_{2}, x_{3}]\\
[x_{3}, y_{1}] + [y_{3}, x_{1}] \\ [x_{1}, y_{2}] + [y_{1}, x_{2}]\end{pmatrix},& &x_{i}, y_{j} \in \g.
\end{align}
The left multiplication $L(x)$ has the block matrix form
\begin{align}
L(x) = \tfrac{1}{2}\begin{pmatrix} 0 & -\ad(x_{3}) & \ad(x_{2}) \\ \ad(x_{3}) & 0 & -\ad(x_{1})\\ -\ad(x_{2})) & \ad(x_{1}) & 0\end{pmatrix},
\end{align}
from which it is evident that $\tr L(x) = 0$ and so $\nahm(\g)$ is special. A straightforward calculation (see Theorem $7.2$ of \cite{Kinyon-Sagle}) shows that the trace form $\tau$ is related to the Killing form $B_{\g}$ by 
\begin{align}\label{tracebg}
-2\tau(x, y) = \sum_{i = 1}^{3}B_{g}(x_{i}, y_{i}). 
\end{align}
From this and the invariance of the Killing form it follows that $\tau$ is invariant (Theorem $7.4$ of \cite{Kinyon-Sagle}). Theorem $5.1$ of \cite{Kinyon-Sagle} shows that $\nahm(\g)$ is simple if and only if $\g$ is simple, and Theorem $5.3$ of \cite{Kinyon-Sagle} shows that $\nahm(\g)$ is a direct sum of simple ideals if and only if $\g$ is semisimple. From these and the preceeding remarks there follows Corollary $7.7$ of \cite{Kinyon-Sagle}.
\begin{theorem}[Kinyon-Sagle]
For a finite-dimensional real Lie algebra $\g$ the following are equivalent: $(1).$ $\g$ is semisimple; $(2).$ $B_{\g}$ is non-degenerate; $(3).$ $\nahm(\g)$ is a direct sum of simple ideals; $(4).$ $\tau$ is non-degenerate. 
\end{theorem}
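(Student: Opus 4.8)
The plan is to prove the chain of equivalences $(1)\Leftrightarrow(2)\Leftrightarrow(3)\Leftrightarrow(4)$ by exhibiting it as a loop, invoking the structural results of Kinyon--Sagle that have just been cited and the trace-form identity \eqref{tracebg}. The equivalence $(1)\Leftrightarrow(2)$ is simply Cartan's criterion: a finite-dimensional real Lie algebra $\g$ is semisimple if and only if its Killing form $B_{\g}$ is non-degenerate, a classical fact I would cite rather than reprove. The implication $(1)\Rightarrow(3)$ is exactly Theorem $5.3$ of \cite{Kinyon-Sagle}, which asserts that $\nahm(\g)$ is a direct sum of simple ideals if and only if $\g$ is semisimple; so $(1)$ and $(3)$ are already equivalent by that cited theorem, and I would state this explicitly.

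The substantive remaining link is $(2)\Leftrightarrow(4)$, relating the non-degeneracy of the Killing form $B_{\g}$ to the non-degeneracy of the trace form $\tau$ of $\nahm(\g)$. Here I would use \eqref{tracebg}, which gives $-2\tau(x, y) = \sum_{i=1}^{3}B_{\g}(x_i, y_i)$ under the identification $\nahm(\g) = \g\oplus\g\oplus\g$. This displays $-2\tau$ as the orthogonal direct sum of three copies of $B_{\g}$ on the three summands. First I would observe that if $B_{\g}$ is non-degenerate, then so is each summand, and hence $\tau$ is non-degenerate. Conversely, if $B_{\g}$ has a non-trivial radical, pick $0\neq z\in\g$ with $B_{\g}(z,\g)=0$; then the element $(z,0,0)\in\nahm(\g)$ satisfies $\tau((z,0,0),(y_1,y_2,y_3)) = -\tfrac{1}{2}B_{\g}(z,y_1) = 0$ for all $(y_1,y_2,y_3)$, so $(z,0,0)$ lies in the radical of $\tau$ and $\tau$ is degenerate. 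Thus $(2)\Leftrightarrow(4)$ follows directly from the block structure of \eqref{tracebg}.

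Assembling the loop, I would run $(1)\Leftrightarrow(2)$ via Cartan's criterion, $(2)\Leftrightarrow(4)$ via \eqref{tracebg} as above, and $(1)\Leftrightarrow(3)$ via the cited Theorem $5.3$ of \cite{Kinyon-Sagle}; since all four conditions are then pairwise linked through $(1)$, the theorem follows. I do not anticipate a genuine obstacle, since the needed structural facts about Nahm algebras (simplicity criteria and the trace-form computation) are precisely what the excerpt has just quoted from \cite{Kinyon-Sagle}; the only point requiring care is the converse direction of $(2)\Leftrightarrow(4)$, where one must verify that the radical of $\tau$ is nonzero whenever the radical of $B_{\g}$ is, and this is immediate from the orthogonal-sum form of \eqref{tracebg}. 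The entire argument is therefore essentially a matter of correctly citing the two Kinyon--Sagle results and reading off the consequence of the trace-form identity.
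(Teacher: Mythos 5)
Your argument is correct and follows essentially the same route as the paper, which derives the statement (citing Corollary $7.7$ of \cite{Kinyon-Sagle}) from exactly the ingredients you use: Cartan's criterion for $(1)\Leftrightarrow(2)$, Theorem $5.3$ of \cite{Kinyon-Sagle} for $(1)\Leftrightarrow(3)$, and the block-orthogonal form of the trace identity \eqref{tracebg} for $(2)\Leftrightarrow(4)$.
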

It follows that if $\g$ is a semisimple real Lie algebra, then $(\nahm(\g), \mprod, \tau)$ is a special proper Einstein commutative Codazzi algebra. The simplest example is $\g = \so(3)$ regarded as $\rea^{3}$ with the cross product. Precisely, identifying $x \in \rea^{3}$ with $\hat{x} \in \so(3)$ defined by $\hat{x}y = x \times y$ gives an isomorphism $(\rea^{3}, \times) \to (\so(3), [\dum, \dum])$. Then $\nahm(\so(3))$ is $\rea^{9}$ with the multiplication
\begin{align}
x \mprod y = 
\frac{1}{2}\begin{pmatrix} 
x_{5}y_{9} - x_{6}y_{8} + y_{5}x_{9} - y_{6}x_{8}\\ 
x_{6}y_{7} - x_{4}y_{9} + y_{6}x_{7} - y_{4}x_{9} \\ 
x_{4}y_{8} - x_{5}y_{7} + y_{4}x_{8} - y_{5}x_{7}\\ 
x_{8}y_{3} - x_{9}y_{2} +y_{8}x_{3} - y_{9}x_{2} \\ 
x_{9}y_{1} - x_{7}y_{3} +y_{9}x_{1} - y_{7}x_{3}\\ 
x_{7}y_{2} - x_{8}y_{1} +y_{7}x_{2} - y_{8}x_{1}\\ 
x_{2}y_{6} - x_{3}y_{5} +y_{2}x_{6} - y_{3}x_{5} \\ 
x_{3}y_{4} - x_{1}y_{6} +y_{3}x_{4} - y_{1}x_{6}\\ 
x_{1}y_{5} - x_{2}y_{4}+y_{1}x_{5} - y_{2}x_{4}
\end{pmatrix}.
\end{align} 
The Killing form is $B_{\so(3)}(x, y) = \tr \widehat{x\times y} = \tr \hat{x}\hat{y} = -2\delta_{ij}x^{i}y^{j}$, where $\delta_{ij}$ is the usual Euclidean inner product on $\rea^{3}$. By \eqref{tracebg} the trace-form on $\nahm(\so(3))$ is $\delta_{ij}$ where $\delta_{ij}$ is the usual Euclidean inner product on $\rea^{9}$. The harmonic polynomial $P(x) = \tfrac{1}{6}h(x\mprod x, x)$ corresponding to $\mprod$ is \eqref{nahmpolyso3}, and it is easily checked that $|\hess P|^{2} = E(x) = \tfrac{1}{9}P^{ijk}P_{ijk}E(x)$. 

Finally, $\nahm(\so(3))$ is not conformally associative. In \eqref{confasscriterion} take $x = e_{1}$, $y = e_{2}$, $z = e_{4}$, and $v = e_{5}$, and $h$ to be the trace-form on $\nahm(\g)$. From \eqref{tracebg} it is evident that these vectors are pairwise $h$-orthogonal so the righthand side of \eqref{confasscriterion} vanishes. On the other hand, the lefthand side is $h(e_{1}\mprod e_{4}, e_{2} \mprod e_{5}) - h(e_{2}\mprod e_{4}, e_{1}\mprod e_{5})$ and computing the Hessian of \eqref{nahmpolyso3} shows that $2e_{2}\mprod e_{4} = -e_{9} = -2 e_{1}\mprod e_{5}$, so that the lefthand side of \eqref{confasscriterion} equals $1/4$, which suffices to show that $\nahm(\so(3))$ is not conformally associative. That this same argument applies to any compact simple real $\g$ is the content of Theorem \ref{compactnahmtheorem}.

If $\h \subset \g$ is a subalgebra then the subset $\{x\in \nahm(\g): x_{i} \in \h \,\, \text{for}\,\, i = 1, 2, 3\}$ is a subalgebra of $\nahm(\g)$ isomorphic to $\nahm(\h)$.
\begin{theorem}\label{compactnahmtheorem}
Let $\g$ be a compact simple real Lie algebra. Then $\nahm(\g)$ is not conformally associative. Hence in this case $(\nahm(\g), \mprod, \tau)$ is a special proper Einstein commutative Codazzi algebra which is not conformally associative.
\end{theorem}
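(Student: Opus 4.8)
The plan is to reduce the claim for a general compact simple real Lie algebra $\g$ to the computation already carried out for $\so(3)$, exploiting the structural remark made just before the theorem statement: if $\h \subset \g$ is a subalgebra, then $\{x \in \nahm(\g): x_{i} \in \h \text{ for } i = 1, 2, 3\}$ is a subalgebra of $\nahm(\g)$ isomorphic to $\nahm(\h)$. The key point enabling the reduction is that the condition defining conformal associativity, namely \eqref{confasscriterion}, is a condition on the associator and the trace form, and the restriction of the trace form $\tau$ of $\nahm(\g)$ to such a subalgebra is, by \eqref{tracebg}, a constant multiple of the trace form of $\nahm(\h)$ provided the Killing form $B_{\g}$ restricts to a multiple of $B_{\h}$ on $\h$. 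For a compact simple $\g$, the Killing form is negative definite, so any three-dimensional subalgebra carries a non-degenerate restricted form.

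First I would invoke the standard fact that every compact simple real Lie algebra $\g$ of rank at least one contains a subalgebra isomorphic to $\so(3)$; indeed, picking any nonzero root and the associated $\mathfrak{su}(2)$-triple in the complexification, its compact real form sits inside $\g$ as a copy of $\so(3)$ (equivalently, $\g$ contains $\mathfrak{su}(2) \cong \so(3)$). Call this subalgebra $\h \cong \so(3)$. Then $\nahm(\h) \cong \nahm(\so(3))$ embeds as a subalgebra of $\nahm(\g)$, and I would select the four elements $x, y, z, v$ lying in this copy of $\nahm(\so(3))$ that were used in the $\so(3)$ computation preceding the theorem, i.e. the images of $e_{1}, e_{2}, e_{4}, e_{5}$. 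Because these four vectors lie in $\h \oplus \h \oplus \h$ and the multiplication of $\nahm(\g)$ restricts on this subspace to the multiplication of $\nahm(\h)$, the associator $[x, z, y]$ computed in $\nahm(\g)$ agrees with that computed in $\nahm(\so(3))$ up to the identification, and in particular the relevant products $x \mprod z$, $y \mprod v$, $y \mprod z$, $x \mprod v$ all stay inside the embedded $\nahm(\so(3))$.

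The one subtlety to check is that the trace form matches up correctly: $\tau_{\g}$ restricted to the subalgebra is, by \eqref{tracebg}, governed by $B_{\g}|_{\h}$, whereas in the $\so(3)$ computation one uses $B_{\so(3)}$. Since $\g$ is simple and $\h$ is a simple subalgebra, $B_{\g}|_{\h}$ is a positive multiple of $B_{\h}$ (both are negative definite invariant forms on the simple algebra $\h$, hence proportional by Lemma \ref{simplelemma} applied in the Lie-algebra setting, or by the uniqueness of invariant forms on a simple Lie algebra), say $B_{\g}|_{\h} = c\, B_{\h}$ with $c > 0$. Consequently $\tau_{\g}$ restricted to the embedded $\nahm(\h)$ equals $c\,\tau_{\h}$. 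Both sides of \eqref{confasscriterion} scale by the same factor $c$ under this rescaling of the metric (the left side is linear in $h$ through one factor, and the right side is linear in $h$ through the products $h(x,v)h(y,z) - \dots$, but one must confirm the Einstein constant $\ka$ also rescales compatibly). I would therefore verify that rescaling $h \mapsto c\,h$ sends a special proper Einstein commutative Codazzi structure to another one with Einstein constant scaled appropriately, so that the failure of \eqref{confasscriterion} is preserved; this is exactly the observation recorded before Lemma \ref{unitalassociativelemma} that rescaling the metric rescales the Einstein constant. With the four chosen vectors pairwise orthogonal (the right side of \eqref{confasscriterion} vanishing) and the left side equal to a nonzero multiple of $1/4$, the inequality \eqref{confasscriterion} holds, so $\nahm(\g)$ is not conformally associative.

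The main obstacle I anticipate is the bookkeeping around the normalization constant $c$ and the Einstein constant: one must make sure that the inequality $\ne$ in \eqref{confasscriterion}, which compares the associator to a multiple of $h_{l[i}h_{j]k}$ with coefficient $\ka/(n-1)$ where $n = \dim \nahm(\g) = 3\dim\g$, is not accidentally restored by the change in dimension and in $\ka$ when passing from $\nahm(\so(3))$ (dimension $9$) to $\nahm(\g)$ (dimension $3\dim\g$). The cleanest way to avoid this difficulty is to note that since the four chosen vectors are pairwise $\tau_{\g}$-orthogonal (which follows from \eqref{tracebg} and the orthogonality of $e_{1}, e_{2}, e_{4}, e_{5}$ in $\nahm(\so(3))$), the entire right-hand side of \eqref{confasscriterion} vanishes regardless of $\ka$ and $n$, so only the non-vanishing of the left-hand side $\tau_{\g}([x,z,y], v)$ matters, and that is a purely multiplicative computation insensitive to the dimension. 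Thus the dimension issue dissolves, and the proof reduces to confirming orthogonality and the single nonzero associator pairing, both inherited from the $\so(3)$ case. The final sentence then follows from the Kinyon--Sagle theorem quoted just above, which guarantees $(\nahm(\g), \mprod, \tau)$ is a special proper Einstein commutative Codazzi algebra whenever $\g$ is semisimple.
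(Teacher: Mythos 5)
Your proposal is correct and takes essentially the same approach as the paper: both locate a copy of $\so(3)$ inside $\g$ via an $\sll(2)$-triple, test \eqref{confasscriterion} on the same four vectors $\nu_{1}(e), \nu_{1}(f), \nu_{2}(e), \nu_{2}(f)$, and observe that pairwise orthogonality kills the right-hand side while the left-hand side reduces to a nonzero multiple of $B_{\g}([e,f],[e,f])$. The only difference is packaging: the paper computes directly in $\nahm(\g)$ using \eqref{tracebg}, which makes your discussion of the proportionality constant between $B_{\g}|_{\h}$ and $B_{\h}$ and the rescaled Einstein constant unnecessary, as you yourself correctly note at the end.
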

\begin{proof}
It follows from the general theory of $\mathfrak{sl}(2)$ triples that there can be found elements $e, f \in \g$ such that $\spn\{e, f, [e,f]\}$ is a subalgebra isomorphic to $\so(3)$ in such a way that $e$, $f$, and $[e, f]$ map to the standard basis elements of $\rea^{3}$, and the Lie bracket is identified with the cross-product. Let $\nu_{i}: \g \to \nahm(\g)$ be the inclusion in the $i$th component. In \eqref{confasscriterion} take $x = \nu_{1}(e)$, $y = \nu_{1}(f)$, $z = \nu_{2}(e)$, and $v = \nu_{2}(f)$, and $h$ to be the trace-form on $\nahm(\g)$. From \eqref{tracebg} it is evident that these vectors are pairwise $h$-orthogonal, so that the righthand side of \eqref{confasscriterion} vanishes. By definition of the Nahm product, $\nu_{i}(a)\mprod \nu_{i}(a) = 0$ for any $a \in \g$, so the first term on the lefthand side of \eqref{confasscriterion} vanishes. On the other hand $2 \nu_{1}(e) \mprod \nu_{2}(f) = \nu_{3}([e, f])$ and so $8h(\nu_{1}(f) \mprod \nu_{2}(e), \nu_{1}(e) \mprod \nu_{2}(y)) = -2 h(\nu_{3}([e, f]), \nu_{3}([e, f])) = B_{\g}([e, f], [e, f])$, which is not $0$ because $[e, f]$ is not zero and $\g$ is compact. Hence the second term on the lefthand side of \eqref{confasscriterion} is non-zero, and so $\nahm(\g)$ is not conformally associative.
\end{proof}

\subsubsection{Example: isoparametric hypersurfaces}\label{isoparametricsection}
A hypersurface in the round sphere $S^{n-1}$ is \textbf{isoparametric} if its principal curvatures (the eigenvalues of its Riemannian shape operator) are constant. See \cite{Cecil} for background and references. In \cite{Cartan-cubic}, Cartan classified the isoparametric hypersurfaces with at most three distinct principal curvatures. In the case of three distinct principal curvatures, the principal curvatures must all have the same multiplicity $m$, which must be one of $1$, $2$, $4$, or $8$, and the hypersurface must be a tube of constant radius over the image in $S^{3m+1}$ of the Veronese embedding of the projective plane over one of the real definite signature composition algebras. This yields four one-parameter families (the parameter is the radius), each of which can be realized as the level sets $P(x) = \cos 3t$ of a homogeneous cubic polynomial $P \in \pol^{3}(\rea^{n})$ solving 
\begin{align}\label{cartaniso}
&\lap_{\delta}P = 0, && |DP|_{\delta}^{2} = 9E^{2}.
\end{align}
\begin{lemma}
If $P \in \pol^{3}(\rea^{n})$ solves \eqref{cartaniso} then it solves \eqref{einsteinpolynomials} with $\ka = 18(n+2)$.
\end{lemma}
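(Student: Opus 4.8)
The plan is to show that Cartan's isoparametric equation \eqref{cartaniso} implies the Einstein polynomial equation \eqref{einsteinpolynomials}, by differentiating the norm condition $|DP|_{\delta}^{2}=9E^{2}$ and exploiting harmonicity. First I would set up notation consistent with section \ref{polynomialsection}: write $P_{i_{1}\dots i_{k}}=D_{i_{1}}\dots D_{i_{k}}P$, raise and lower indices with $\delta_{ij}$, and recall $E=\eul^{p}\eul^{q}\delta_{pq}$ so that $D_{i}E=2\eul_{i}$ and $D_{i}D_{j}E=2\delta_{ij}$. Since $P$ is a homogeneous cubic, $P_{i_{1}\dots i_{k}}\equiv 0$ for $k>3$, and Euler's identity gives $\eul^{p}P_{ijp}=P_{ij}$ and $\eul^{p}P_{ijkp}=0$, i.e.\ $L_{ij}\,^{k}=P_{ij}\,^{k}$ is a constant tensor. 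The harmonicity $\lap_{\delta}P=0$ is exactly the first equation of \eqref{einsteinpolynomials}, so the whole content is to deduce the second equation, $|\hess P|_{\delta}^{2}=\ka E$, from the norm condition in \eqref{cartaniso}.

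The key step is to differentiate $|DP|_{\delta}^{2}=P^{p}P_{p}=9E^{2}$ twice. Differentiating once gives $P^{p}P_{ip}=9EE_{i}=18E\eul_{i}$. Differentiating again gives
\begin{align*}
P^{p}P_{ijp}+P^{pq}P_{ip}P_{jpq}\text{-type terms}
\end{align*}
so I would write it carefully as $P^{p}\,_{j}P_{ip}+P^{p}P_{ijp}=18\eul_{i}\eul_{j}+18E\delta_{ij}$, using $D_{j}E=2\eul_{j}$ and $D_{j}\eul_{i}=\delta_{ij}$. Now $P^{p}P_{ijp}=P^{p}L_{ijp}=P_{ij}$ by Euler's identity applied to the cubic, and I would contract the whole relation with $\delta^{ij}$: the left side becomes $P^{pq}P_{pq}+P^{p}\,_{p}\,^{p}$ where the second term vanishes because $P$ is harmonic (so $P_{p}\,^{p}=\lap_{\delta}P=0$ and hence its derivative vanishes as well), while the right side becomes $18E+18nE=18(n+1)E$. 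This would directly yield $|\hess P|_{\delta}^{2}=P^{pq}P_{pq}=18(n+1)E$, not quite the claimed constant. I expect the main obstacle is getting the constant exactly right: I must take a further trace or track the term $P_{ij}$ on the left more carefully, since contracting $P^{p}\,_{j}P_{ip}+P_{ij}$ with $\delta^{ij}$ gives $|\hess P|^{2}+P_{p}\,^{p}$, and $P_{p}\,^{p}=\lap_{\delta}P=0$, giving $18(n+1)E$ rather than $18(n+2)E$.

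To reconcile this I would instead use the uncontracted identity $P^{p}\,_{j}P_{ip}+P_{ij}=18\eul_{i}\eul_{j}+18E\delta_{ij}$ and combine it with the once-differentiated relation, since the $18\eul_{i}\eul_{j}$ term is itself controllable: contracting $P^{p}P_{ip}=18E\eul_{i}$ with a further derivative or with $\eul^{i}$ recovers $9E^{2}$ and provides the relation $\eul^{i}\eul^{j}(P^{p}\,_{j}P_{ip})=9E^{2}\cdot\text{(const)}$ needed to eliminate the $\eul_{i}\eul_{j}$ contribution. The careful bookkeeping of the factor of $E$ coming from $\eul^{i}\eul^{j}\delta_{ij}=E$ versus $\eul_{i}\eul_{j}$ traced against $\delta^{ij}$ giving $E$ is where the discrepancy between $(n+1)$ and $(n+2)$ is resolved, and this is the one genuinely delicate point. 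Once $|\hess P|_{\delta}^{2}=18(n+2)E$ is established with the correct constant, I conclude that $P$ solves \eqref{einsteinpolynomials} with $\ka=18(n+2)$, which is precisely the claim. The hard part will be ensuring no arithmetic slip in the double differentiation and trace, since the entire statement rests on pinning down the numerical coefficient $\ka=18(n+2)$ exactly.
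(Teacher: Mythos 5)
Your overall strategy --- differentiate $|DP|_{\delta}^{2}=9E^{2}$ twice and trace --- is sound and is essentially a component-wise version of what the paper does, but as written the argument does not establish the claimed constant, and the place where it goes wrong is identifiable. Differentiating $P^{p}P_{ip}=18E\eul_{i}$ in $x^{j}$ gives
\begin{align*}
P^{p}\,_{j}P_{ip}+P^{p}P_{ijp}=18(D_{j}E)\eul_{i}+18E\,\delta_{ij}=36\,\eul_{i}\eul_{j}+18E\,\delta_{ij},
\end{align*}
whereas you wrote $18\,\eul_{i}\eul_{j}+18E\,\delta_{ij}$; you applied $D_{j}E=2\eul_{j}$ once but dropped the factor of $2$ in the product rule. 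Tracing the corrected identity with $\delta^{ij}$ gives $|\hess P|_{\delta}^{2}+P^{p}\,\delta^{ij}P_{ijp}=36E+18nE=18(n+2)E$, and the second term on the left vanishes because $\delta^{ij}P_{ijp}=D_{p}(\lap_{\delta}P)=0$. So the constant $18(n+2)$ comes out directly and no ``reconciliation'' of $(n+1)$ versus $(n+2)$ is needed; the paragraph you devote to eliminating the $\eul_{i}\eul_{j}$ term by further contractions is chasing an artifact of the arithmetic slip. Separately, the identity $P^{p}P_{ijp}=P_{ij}$ that you invoke is false: Euler's identity gives $\eul^{p}P_{ijp}=P_{ij}$, but $P^{p}=\delta^{pq}D_{q}P$ is the gradient (quadratic in $x$), not the Euler field, so $P^{p}P_{ijp}$ is quadratic while $P_{ij}$ is linear. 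This error happens not to matter after tracing, since the term dies by harmonicity as above, but it must not appear in the uncontracted identity.

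For comparison, the paper's proof avoids all index bookkeeping: it computes $\lap_{\delta}|DP|_{\delta}^{2}=9\lap_{\delta}E^{2}=18(E\lap_{\delta}E+|DE|_{\delta}^{2})=18(2n+4)E=36(n+2)E$ and then invokes the remark immediately following \eqref{einsteinpolynomials}, which records that for harmonic cubic $P$ the condition $|\hess P|_{\delta}^{2}=\ka E$ is equivalent to $\lap_{\delta}|DP|_{\delta}^{2}=2\ka E$. Your traced double derivative is exactly that Laplacian computation done by hand, so once the coefficient is fixed the two arguments coincide.
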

\begin{proof}
By \eqref{cartaniso} there holds $\lap_{\delta}|DP|_{\delta}^{2} = 9\lap_{\delta}E^{2} = 18(E\lap_{\delta}E + |DE|_{\delta}^{2}) = 36(n+2)E$. By the remark immediately following \eqref{einsteinpolynomials}, this means $P$ solves \eqref{einsteinpolynomials} with $\ka = 18(n+2)$.
\end{proof}
Let $\fie$ be one of $\rea$, $\com$, $\quat$ (quaternions), or $\cayley$ (octonions), and let $m = \dim_{\rea}\fie$. Let $z_{1}, z_{2}, z_{3} \in \fie$ and let $\bar{z}_{i}$ denote the canonical conjugation on $\fie$ fixing the real subfield. Let $x, y \in \rea$. By \cite{Cartan-cubic} any solution of \eqref{cartaniso} is equivalent modulo a rotation to one of 
\begin{align}\label{cartanformula}
\begin{split}
P(x, y, z_{1}, z_{2}, z_{3}) &= x^{3} - 3xy^{2} + \tfrac{3}{2}x\left(z_{1}\bar{z}_{1} + z_{2}\bar{z}_{2} - 2z_{3}\bar{z}_{3}\right) \\
&\qquad + \tfrac{3\sqrt{3}}{2}y\left(z_{1}\bar{z}_{1} - z_{2}\bar{z}_{2}\right) + \tfrac{3\sqrt{3}}{2}\left((z_{1}z_{2})z_{3} + \bar{z}_{3}(\bar{z}_{2}\bar{z}_{1}) \right)
\in \pol^{3}(\rea^{3m+2}),
\end{split}
\end{align}
which is, up to changes of notation, equation $(17)$ of \cite{Cartan-cubic}. The parentheses in the last term of \eqref{cartanformula} are necessary when $m = 8$, because $\cayley$ is not associative. The case $m = 1$ of \eqref{cartanformula} is \eqref{cartanpoly}.

\begin{lemma}
For $m \in \{1, 2, 4, 8\}$ the multiplication on $\rea^{3m+2}$ determined by the homogeneous cubic polynomial $P$ of \eqref{cartanformula} associated to the unique family of isoparametric hypersurfaces in $S^{3m+1}$ having three distinct principal curvatures forms with the Euclidean metric a special proper commutative Codazzi Einstein algebra which is not conformally associative.
\end{lemma}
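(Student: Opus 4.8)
The plan is to verify the two separate assertions: first that the multiplication determined by Cartan's polynomial $P$ of \eqref{cartanformula} gives a special proper Einstein commutative Codazzi algebra, and second that this algebra is not conformally associative. The first assertion follows almost immediately from the general framework already established. Since $P \in \pol^{3}(\rea^{3m+2})$ solves \eqref{cartaniso}, the preceding lemma shows it solves \eqref{einsteinpolynomials} with $\ka = 18(n+2)$, where $n = 3m+2$. By the discussion in section \ref{hessianmultiplicationsection}, the multiplication $\mtens_{ij}\,^{k} = P_{ij}\,^{k}$ determined by the Hessian of $P$ makes $\rea^{n}$ into a special proper Einstein commutative Codazzi algebra with respect to the Euclidean metric $\delta_{ij}$: the harmonicity $\lap_{\delta}P = 0$ gives $\mtens_{ip}\,^{p} = 0$ (special), the complete symmetry of $P_{ijk}$ gives the Codazzi condition, and the second equation of \eqref{einsteinpolynomials} gives that the trace form is a positive multiple of $\delta_{ij}$ (proper Einstein). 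So the only substantive work is in showing non-conformal-associativity.

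For the second assertion I would use the criterion \eqref{confasscriterion}: the algebra fails to be conformally associative precisely when there exist $x, y, z, v \in \alg$ with $h([x,z,y], v) \neq \tfrac{\ka}{n-1}(h(x,v)h(y,z) - h(y,v)h(x,z))$, where now $h = \delta$ and $[\cdot,\cdot,\cdot]$ is the associator of the multiplication. The natural strategy, paralleling the treatment of the Nahm algebra in Theorem \ref{compactnahmtheorem} and the explicit check for the $\so(3)$ case, is to exhibit four coordinate basis vectors that are pairwise $\delta$-orthogonal (so the right-hand side of \eqref{confasscriterion} vanishes) and for which the associator pairing on the left is nonzero. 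Concretely I would compute the relevant entries of $\hess P$ from \eqref{cartanformula} to read off a few products $e_{i}\mprod e_{j}$ in the standard basis, choosing indices drawn from distinct blocks among $x$, $y$, and the three $\fie$-valued coordinates $z_{1}, z_{2}, z_{3}$, so that orthogonality is transparent. The cross term $\tfrac{3\sqrt{3}}{2}\left((z_{1}z_{2})z_{3} + \bar{z}_{3}(\bar{z}_{2}\bar{z}_{1})\right)$ is the crucial one, since it couples the three $\fie$-factors nontrivially and is responsible for the genuine nonassociativity; picking $x, z$ from the $z_{1}$-block and $y, v$ from the $z_{2}$- and $z_{3}$-blocks should produce a product landing in a third block, giving a nonzero associator pairing.

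An alternative, and perhaps cleaner, route would be to invoke Lemma \ref{unitalassociativelemma}: after rescaling $\delta$ by $\tfrac{\ka}{n-1}$ to normalize the Einstein constant to $n-1$, conformal associativity of $(\alg, \mprod, h)$ is equivalent to associativity of the unitalization $(\hat{\alg}_{h}, \hmlt, \hat{h})$. One could then argue that the unitalized algebra cannot be associative because, by Lemma \ref{flatalgebralemma}, a special associative commutative Riemannian Codazzi algebra is forced to have trivial multiplication, whereas $P$ is manifestly not the zero polynomial; the unitalization is a proper (not special) Einstein Codazzi algebra, so one would have to track the interaction with the unit carefully, but the underlying point is that associativity would impose a rigidity incompatible with the Veronese geometry encoded in $P$. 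I would likely present the direct computational verification via \eqref{confasscriterion} as the primary proof, since it is self-contained and mirrors the $m=1$ case already embedded in \eqref{cartanpoly} and handled in section \ref{confasssection}.

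The main obstacle will be the bookkeeping in the octonionic case $m = 8$, where $\cayley$ is nonassociative, so the grouping of parentheses in the last term of \eqref{cartanformula} matters and the multiplication table entries coming from differentiating that term are more delicate; I expect that a judicious choice of the four test vectors keeps one inside an associative subfield (e.g. a copy of $\com$ or $\quat$ sitting inside $\cayley$), so that the relevant associator computation reduces to the same pattern as in the lower-dimensional cases and the nonzero pairing persists uniformly in $m$. The only real risk is that a carelessly chosen quadruple accidentally lands on the right-hand side's value of \eqref{confasscriterion}, so I would double-check that the chosen indices make the right-hand side vanish by orthogonality while leaving the left-hand side nonzero.
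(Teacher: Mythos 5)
Your primary argument is essentially the paper's proof: the special/proper/Einstein/Codazzi claims follow from the general framework of \eqref{einsteinpolynomials}, and non-conformal-associativity is checked via \eqref{confasscriterion} on four pairwise orthogonal basis vectors chosen inside an associative subalgebra --- the paper takes $e_{1}, e_{2}, e_{3}$ (the real parts of the three $\fie$-blocks) together with $e_{x}$, so the computation reduces uniformly in $m$ to the $m=1$ case and the octonionic parenthesization issue never arises, exactly as you anticipate. One caution about your aside: the unitalization route does not work as sketched, since nontriviality of $P$ alone cannot preclude conformal associativity (the nonzero polynomials $Q_{n}$ give conformally associative examples, cf.\ Theorem \ref{confassclassificationtheorem}), so you would still need an explicit computation there.
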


\begin{proof}
It suffices to verify that the resulting multiplication is not conformally associative. This is shown using \eqref{confasscriterion}, as in the proof of Theorem \ref{compactnahmtheorem}. For $z_{i} \in \fie$, write $z_{i} = x_{i} + y_{i}$, where $x_{i}$ is the real part, and $y_{i}$ is the imaginary part. Let $e_{x}$ and $e_{y}$ be the basis vectors in the $x$ and $y$ diretions, and for $i = 1, 2, 3$, let $e_{i}$ be the basis vector in the $x_{i}$ direction (that is spanning the real part of the $i$th copy of $\fie$). There will be used \eqref{confasscriterion} applied to the vectors $e_{1}$, $e_{2}$, $e_{3}$ and $e_{x}$. The polynomial \eqref{cartanformula} becomes 
\begin{align*}%
\begin{split}
P(x, y,& z_{1}, z_{2}, z_{3}) = x^{3} - 3xy^{2} + \tfrac{3}{2}x\left(x_{1}^{2} + x_{2}^{2} - 2x_{3}^{2} + y_{1}^{2} + y_{2}^{2} - 2y_{3}^{2}\right)  + \tfrac{3\sqrt{3}}{2}y\left(x_{1}^{2} - x_{2}^{2} + y_{1}^{2} - y_{2}^{2}\right) \\
&\qquad + \tfrac{3\sqrt{3}}{2}\left(2x_{1}x_{2}x_{3} + x_{3}(y_{1}y_{2} + y_{2}y_{1}) + x_{1}(y_{2}y_{3} + y_{3}y_{2}) + x_{2}(y_{3}y_{1} + y_{1}y_{3}) + [y_{1}, y_{2}, y_{3}]\right).
\end{split}
\end{align*}
Now it is easy to compute the components of the Hessian involving $x$, $y$, and the $x_{i}$ alone (in all cases these are just as in the $\fie = \rea$ case). In computing a product such as $e_{1}\mlt e_{2}$ there matter only terms in $P$ in which after differentiating by $x_{1}$ and then some other variable there remains an $x_{2}$; such a term must contain $x_{1}x_{2}$, and there is only one such term. The relevant products in the associated algebra are
\begin{align*}
\begin{split}
&e_{1}\mlt e_{1} = 3e_{x} + 3\sqrt{3}e_{5},\quad e_{1}\mlt e_{2} = 3\sqrt{3}e_{3},\quad e_{2}\mlt e_{3} = 3\sqrt{3}e_{1}, \quad e_{3}\mlt e_{3} = -6e_{x},
\end{split}
\end{align*}
so that
\begin{align*}
[e_{1}, e_{2}, e_{3}] = (e_{1}\mlt e_{2})\mlt e_{3} - e_{1}\mlt(e_{2}\mlt e_{3}) = 3\sqrt{3} e_{3}\mlt e_{3} - 3\sqrt{3}e_{1} \mlt e_{1} =  -27\sqrt{3}e_{x} - 27 e_{y}.
\end{align*}
Hence $h([e_{1}, e_{2}, e_{3}], e_{x}) \neq 0$, showing that the lefthand side of \eqref{confasscriterion} is non-zero, while the righthand side of \eqref{confasscriterion} is zero, because the considered vectors are pairwise orthogonal. 
\end{proof}

\subsubsection{}
Next it is shown that an $n$-dimensional special proper Einstein commutative Codazzi structure gives rise to one of $(n+1)$-dimensions. 
 
Let $\ka_{n}$ and $\ka_{n+1}$ be positive constants such that $\ka_{n+1} > n$. Let $(\alg, \mlt,  h)$ be an $n$-dimensional Einstein commutative Codazzi structure with trace form $\mu_{ij}$ satisfying $\mu_{ij} = \ka_{n} h_{ij}$. Let $\halg = \alg \oplus \spn_{\rea}\{e\}$. For $x, y \in \alg$ and $r, s \in \rea$ define $\hat{h}(x + re, y + rs) = h(x, y) + rs$ and define a commutative multiplication $\hmlt$ on $\halg$ by
\begin{align}\label{extensionmlt}
\begin{split}
(x + re)\hmlt(y + se) &= \sqrt{\tfrac{\ka_{n+1}}{(n+1)n}}\left( \sqrt{\tfrac{(n+2)(n-1)}{\ka_{n}}}x \mlt y - sx - ry + (nrs - h(x, y))e\right),
\end{split}
\end{align}
This is the transcription in the language of Codazzi algebras of the identity \eqref{qns} exhibiting a solution on $\rea^{n+1}$ of \eqref{einsteinpolynomials} in terms of a solution of the same equations on $\rea^{n}$. The multiplications determined by $Q_{n}$ and $Q_{n+1}$ can be computed from their Hessians, as in section \ref{hessianmultiplicationsection}, and the results are related as in \eqref{extensionmlt}. That $\hat{h}$ is invariant follows directly from \eqref{extensionmlt} and the definition of $\hat{h}$, and so $(\halg, \hmlt, \hat{h})$ is a commutative Codazzi algebra by \eqref{codazzialgebra}. Let $L_{x}$ and $\hat{L}_{x + re}$ denote the left multiplications on $\alg$ and $\halg$, respectively. Schematically,
\begin{align*}
\hat{L}_{x + re} 
& = \sqrt{\tfrac{\ka_{n+1}}{n(n+1)}} \begin{pmatrix} \sqrt{\tfrac{(n+2)(n-1)}{\ka_{n}}} L_{x} - rI_{n} & - x \\ - x^{\flat} & nr\end{pmatrix}.
\end{align*}
Hence $\tr \hat{L}_{x + re} = \sqrt{\tfrac{\ka_{n+1}(n+2)(n-1)}{\ka_{n}(n+1)n}} \tr L_{x}$, so that $(\halg, \hmlt)$ is special if and only if $(\alg, \mlt)$ is special. In this case 
\begin{align*}
\begin{split}
\tr (\hat{L}_{x + re}\circ \hat{L}_{y + se}) & = \tfrac{\ka_{n+1}}{n(n+1)}\left( \tfrac{(n+2)(n-1)}{\ka_{n}}\tr(L_{x}\circ L_{y}) + 2h(x, y) + n(n+1)rs\right)\\
&= \ka_{n+1}\hat{h}(x + re, y + se),
\end{split}
\end{align*}
showing that $(\halg, \hmlt, \hat{h})$ is Einstein with constant $\ka_{n+1}$. The multiplication \eqref{extensionmlt} takes a reasonably nice form for the choices $\ka_{n} = n(n-1)$ and $\ka_{n+1} = (n+1)n$.

\begin{lemma}\label{confasslemma}
Let $(\alg_{n}, \mlt_{n},  h_{n})$ be an $n$-dimensional Einstein special commutative Codazzi structure with constant $\ka_{n} = n(n-1)$ and let $(\alg_{n+1}, \mlt_{n+1}, h_{n+1})$ be the associated $(n+1)$-dimensional Einstein special commutative Codazzi structure with constant $\ka_{n+1} = n(n+1)$. Then $(\alg_{n+1}, \mlt_{n+1}, h_{n+1})$ is conformally associative if and only if $(\alg_{n+1}, \mlt_{n+1}, h_{n+1})$ is conformally associative.
\end{lemma}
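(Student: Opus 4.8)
The statement as printed contains an evident typo: what is meant is that the $(n+1)$-dimensional algebra $(\alg_{n+1},\mlt_{n+1},h_{n+1})$ obtained via \eqref{extensionmlt} is conformally associative if and only if $(\alg_{n},\mlt_{n},h_{n})$ is. The plan is to prove this by relating the associators of the two algebras directly, using the reformulation of conformal associativity as the coincidence of the associator form with the constant-curvature model form. Since $\alg_{n+1}$ is an $(n+1)$-dimensional Einstein special commutative Codazzi algebra with constant $\ka_{n+1}=n(n+1)$, the definition of conformal associativity (the vanishing of the completely $h$-trace-free part of $\mtens_{ijkl}$) together with the displayed formula for that trace-free part shows that $\alg_{n+1}$ is conformally associative precisely when the quadrilinear form $\Phi(x,y,z,v)\defeq \hat h([x,z,y]_{\hmlt},v)$ equals the model form $\Psi(x,y,z,v)\defeq \tfrac{\ka_{n+1}}{n}\big(\hat h(x,v)\hat h(y,z)-\hat h(y,v)\hat h(x,z)\big)$, where for dimension $n+1$ the denominator $n-1$ becomes $n$. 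Likewise $\alg_{n}$ is conformally associative precisely when $\Phi_{\alg}=\Psi_{\alg}$, with $\Phi_{\alg}(x,y,z,v)=h([x,z,y]_{\mlt},v)$ and $\Psi_{\alg}=\tfrac{\ka_{n}}{n-1}\big(h(x,v)h(y,z)-h(y,v)h(x,z)\big)=n\big(h(x,v)h(y,z)-h(y,v)h(x,z)\big)$. Set $\Theta\defeq\Phi-\Psi$; then $\alg_{n+1}$ is conformally associative iff $\Theta\equiv0$.

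The central step is to compute $\Theta$. First I would specialize \eqref{extensionmlt} to the present constants, where $\sqrt{\ka_{n+1}/((n+1)n)}=1$ and $\sqrt{(n+2)(n-1)/\ka_{n}}=\sqrt{(n+2)/n}=:c$, so that $(x+re)\hmlt(y+se)=c\,(x\mlt y)-sx-ry+(nrs-h(x,y))e$ and $\hat h(x+re,y+se)=h(x,y)+rs$. Two elementary features of this product organize everything: the element $e$ acts on $\alg$ by $e\hmlt x=-x$, and $e\hmlt e=ne$. For all-$\alg$ arguments $x,y,z,v$, a short expansion of the associator gives $[x,z,y]_{\hmlt}=c^{2}[x,z,y]_{\mlt}+h(x,z)\,y-h(y,z)\,x$, the would-be $e$-component cancelling because invariance of $h$ yields $h(x\mlt z,y)=h(x,z\mlt y)$. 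Pairing with $v$ and using $c^{2}=(n+2)/n$ then gives $\Phi|_{\alg}=c^{2}\Phi_{\alg}-\big(h(x,v)h(y,z)-h(y,v)h(x,z)\big)$, and subtracting $\Psi|_{\alg}=(n+1)\big(\dots\big)$ produces exactly $\Theta|_{\alg}=c^{2}(\Phi_{\alg}-\Psi_{\alg})=\tfrac{n+2}{n}(\Phi_{\alg}-\Psi_{\alg})$. As $c^{2}>0$, this shows $\Theta|_{\alg}\equiv0$ if and only if $\alg_{n}$ is conformally associative; the precise values $\ka_{n}=n(n-1)$ and $\ka_{n+1}=n(n+1)$ are what make the two dimension-dependent model constants assemble correctly here.

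It then remains to show $\Theta\equiv0\iff\Theta|_{\alg}\equiv0$, for which I would verify that $\Theta$ vanishes whenever any one of its four arguments lies in $\rea e$. Using $e\hmlt x=-x$ and $e\hmlt e=ne$ one computes $x\hmlt e=-x$, $[x,e,y]_{\hmlt}=0$, and $[e,z,y]_{\hmlt}=(n+1)h(y,z)\,e$ for $x,y,z\in\alg$; inserting these into $\Phi$ and $\Psi$ shows each $e$-component of $\Theta$ cancels (for instance $\Phi(e,y,z,e)=(n+1)h(y,z)=\Psi(e,y,z,e)$, and $\Phi=\Psi=0$ in the cases with a single $e$-argument paired against $\alg$). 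Because $\alg_{n+1}$ is commutative Codazzi, $\Phi$ carries the algebraic symmetries of a curvature tensor, as does the model form $\Psi$, so $\Theta$ does too; this lets me reduce the verification to the few inequivalent placements of $e$-arguments rather than all sixteen. Consequently $\Theta$ is supported on $\alg$, whence $\Theta\equiv0\iff\Theta|_{\alg}\equiv0$, and combining with the previous paragraph yields the equivalence.

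The computation is entirely routine once \eqref{extensionmlt} is normalized and the two structural identities $e\hmlt x=-x$, $e\hmlt e=ne$ are recorded; the only point demanding care — and the place I expect to spend most effort bookkeeping — is the cancellation of the $e$-components of $\Theta$, since there both the Peirce-type action of $e$ and the exact matching of the model constants $\tfrac{\ka_{n+1}}{n}$ and $\tfrac{\ka_{n}}{n-1}$ must conspire. Exploiting the curvature symmetries of $\Theta$ to trim the case analysis is the main device that keeps this step short.
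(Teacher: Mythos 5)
Your proof is correct (including your reading of the obvious typo in the statement, which should compare $\alg_{n+1}$ with $\alg_{n}$) and takes essentially the same route as the paper: both arguments directly compute the associator of the extended algebra from \eqref{extensionmlt} and show that the deviation of the quadrilinear form $h([x,z,y],v)$ from the constant-curvature model is $\tfrac{n+2}{n}$ times the corresponding deviation for $\alg_{n}$. The only difference is organizational — the paper evaluates the associator once for general arguments $x+re$, obtaining the single identity \eqref{confassn} that subsumes your separate verification that the components involving $e$ cancel.
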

\begin{proof}
Let $\hat{x} = x + re$, $\hat{y} = y + se$, $\hat{z} = z + te$, and $\hat{u} = u + qe$ for $x, y, z, u \in \alg_{n}$ and $r, s, t, q \in \rea$. 
The associators of $(\alg_{n+1}, \mlt_{n+1}, h_{n+1})$ are related to those of $(\alg_{n}, \mlt_{n},  h_{n})$ by 
\begin{align*}
\begin{split}
[\hat{x}, \hat{y}, \hat{z}]_{n+1} &= \tfrac{n+2}{n}[x, y, z]_{n} \\
&\quad + h_{n}(x, y)z - h_{n}(y, z)x + (n+1)\left(stx - srz\right) + (n+1)\left(rh_{n}(y, z) - th_{n}(x, y) \right)e.
\end{split}
\end{align*}
Hence
\begin{align*}
\begin{split}
h_{n+1}([\hat{x}, \hat{y}, \hat{z}]_{n+1}, \hat{u}) &= \tfrac{n+2}{n}h_{n}([x, y, z]_{n}, u) + h_{n}(x, y)h_{n}(z, u) - h_{n}(y, z)h_{n}(x, u) \\
&\quad + (n+1)\left(sth_{n}(x, u) - srh_{n}(z, u) + qrh_{n}(y, z) - qth_{n}(x, y) \right),
\end{split}
\end{align*}
and so
\begin{align}\label{confassn}
\begin{split}
&h_{n+1}([\hat{x}, \hat{y}, \hat{z}]_{n+1}, \hat{u})  - (n+1)\left(h_{n+1}(\hat{x}, \hat{u})h_{n+1}(\hat{y}, \hat{z}) - h_{n+1}(\hat{x}, \hat{y})h_{n+1}(\hat{z}, \hat{u})\right)\\
& = \tfrac{n+2}{n}\left[h_{n}([x, y, z]_{n}, u) - n\left(h_{n}(x, u)h_{n}(y, z) - h_{n}(x, y)h_{n}(z, u)\right)\right],
\end{split}
\end{align}
from which the claim is evident.
\end{proof}

While Lemma \ref{confasslemma} means that the algebras $(\alg_{n}, \mlt_{n},  h_{n})$ constructed from the polynomials $Q_{n}$ as in \eqref{qns} \textit{are} conformally associative, it also means that once there has been found a non conformally associative Einstein special commutative Codazzi algebra in dimension $n$, there result such algebras in all dimensions $m > n$. In particular, applying the preceeding construction to the algebra of section \ref{confasssection} constructed from the polynomial \eqref{poly3}, shows that such algebras exist in all dimensions $n \geq 4$.

\subsubsection{}
The classification of the finite-dimensional Riemannian Einstein commutative Codazzi algebras over $\rea$ seems to be a reasonable problem. Here nothing much is said about this, except to treat the conformally associative case. Lemma \ref{3dpolylemma} implies that up to isomorphism there is a unique three-dimensional Riemannian signature Einstein special commutative Codazzi algebra with a given Einstein constant. Because in three dimensions conformal associativity is automatic, this is the $n = 3$ special case of the following.
\begin{theorem}\label{confassclassificationtheorem}
Up to isomorphism there is a unique $n$-dimensional conformally associative Riemannian signature Einstein special commutative Codazzi algebra with a given Einstein constant.
\end{theorem}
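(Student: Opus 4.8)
Theorem \ref{confassclassificationtheorem} asserts a uniqueness statement, and the plan is to reduce it, via the unitalization construction of Lemma \ref{unitalassociativelemma} and the bijection of Lemma \ref{unitalizationuniquelemma}, to the classification of unital, associative, symmetric Frobenius algebras of the appropriate type. By Lemma \ref{frobeniuslemma} and the discussion of unitalizations, a conformally associative Riemannian Einstein special commutative Codazzi algebra $(\alg, \mprod, h)$ with Einstein constant $\ka$ may first be rescaled so that its constant is $n-1$; then its unitalization $(\hat{\alg}_{h}, \hmlt, \hat{h})$ is, by Lemma \ref{unitalassociativelemma}, an Einstein commutative Codazzi algebra with Einstein constant $n+1$ which is \emph{associative} precisely because $(\alg, \mprod, h)$ is conformally associative. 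Moreover $(\hat{\alg}_{h}, \hat{h})$ is unital, commutative, associative, and carries the non-degenerate invariant symmetric form $\hat{h}$; that is, it is an $(n+1)$-dimensional unital associative symmetric Frobenius algebra whose trace form is a non-zero multiple of $\hat{h}$. By Lemma \ref{unitalizationuniquelemma}, two such Codazzi algebras are isomorphic if and only if their unitalizations are isomorphic, so it suffices to show that the unitalization is determined up to isomorphism by $n+1$ and the Einstein constant.

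The key step is therefore to classify the finite-dimensional unital commutative associative Riemannian Frobenius algebras whose trace form is a prescribed positive multiple of the invariant metric, and to verify that the unitalizations arising above are exactly of this form and account for all of them. Here I would invoke the Wedderburn theory for the commutative associative case: a finite-dimensional commutative associative real algebra with non-degenerate trace form is semisimple (the radical lies in the kernel of the trace form, which is forced to vanish), hence by the Artin--Wedderburn theorem is a direct product of fields, each isomorphic to $\rea$ or $\com$. The Riemannian (positive-definite) hypothesis on $\hat{h}$, together with the condition that the trace form be a positive multiple of $\hat{h}$, rules out the $\com$ factors and the indefinite gluings, forcing $\hat{\alg}_{h} \simeq \rea^{n+1}$ with componentwise multiplication in a suitable orthonormal basis; this is precisely the content of Lemma \ref{flatalgebralemma} applied in the unital associative setting (cf. the cited Proposition $2.2$ of \cite{Hitchin-frobenius}), and pinning down the metric normalization from the Einstein constant fixes the idempotent structure uniquely. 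Thus the unitalization is forced to be the standard $(n+1)$-fold product algebra, which is manifestly unique.

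Running the unitalization backwards via Lemma \ref{reconstructionlemma} then recovers $(\alg, \mprod, h)$ as the trace-zero hyperplane $\{x : \hat{h}(x,e)=0\}$ in $\rea^{n+1}$ with the projected multiplication $x \star y = x \hmlt y - \hat{h}(x,y)\hat{h}(e,e)^{-1}e$, where $e$ is the diagonal idempotent; this is exactly the algebra whose cubic polynomial is the harmonic cubic $Q_{n}$ of \eqref{qns} (up to $O(n)$, equivalently up to the isometry absorbing the metric normalization). Since any two candidates unitalize to isomorphic algebras and any isomorphism of unitalizations restricts to an isomorphism of the trace-zero parts, uniqueness follows. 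The main obstacle I anticipate is the bookkeeping ensuring that the \emph{special} condition ($\tr L(x)=0$) and the precise value of the Einstein constant are compatible with, and uniquely pin down, the idempotent/metric data after unitalization: one must check that the shift from Einstein constant $n-1$ on $\alg$ to $n+1$ on $\hat{\alg}_{h}$ (Lemma \ref{unitalassociativelemma}) leaves no residual scaling freedom beyond an overall isometry, and that the positivity of $\hat{h}$ genuinely excludes the non-split semisimple factors. Once the semisimplicity and the exclusion of $\com$ factors are established, the remaining verifications are routine linear algebra.
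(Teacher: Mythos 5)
Your proposal is correct and follows essentially the same route as the paper: rescale the Einstein constant to $n-1$, pass to the unitalization via Lemma \ref{unitalassociativelemma}, identify the resulting unital associative Riemannian Einstein commutative Codazzi algebra with the standard diagonal algebra on $(\rea^{n+1},\delta)$ (your Wedderburn detour is just an alternative derivation of what Lemma \ref{flatalgebralemma} already supplies by simultaneous diagonalization), and conclude by Lemma \ref{unitalizationuniquelemma}. The only detail the paper handles that your rescaling step silently skips is the Einstein constant zero case, where Riemannian signature forces the multiplication to be trivial.
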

\begin{proof}
Because the signature is Riemannian, the Einstein constant is zero if and only if the multiplication is trivial. If the Einstein constant of $(\alg, \mlt, h)$ is $\ka > 0$ then by Lemma \ref{unitalassociativelemma} the unitalization of $(\alg, \mlt, \tfrac{\ka}{n-1}h)$ is an associative Riemannian signature Einstein commutative Codazzi algebra with Einstein constant $(n+1)$. By Lemma \ref{flatalgebralemma} such an algebra is isomorphic to $(\rea^{n+1}, \delta)$ with the multiplication $e_{i}\mlt e_{j} = \sqrt{n+1} \delta_{ij} e_{i}$. Thus the unitalization of $(\alg, \mlt,  \tfrac{\ka}{n-1}h)$ is unique up to isomorphism, and by Lemma \ref{unitalizationuniquelemma}, $(\alg, \mlt,  \tfrac{\ka}{n-1}h)$ is unique up to isomorphism.
\end{proof}

In the $n = 3$ case, the multiplication of such an algebra with Einstein constant $2c^{2}$ is given by
\begin{align}\label{n3alg}
\begin{split}
&e_{1}\mlt e_{1} = e_{2}\mlt e_{2} = e_{3}\mlt e_{3} = 0, \qquad e_{1}\mlt e_{2} = ce_{3}, \qquad e_{2}\mlt e_{3} = ce_{1}, \qquad e_{3}\mlt e_{1} = ce_{2},
\end{split}
\end{align}
corresponding to the cubic polynomial $P(x) = cx_{1}x_{2}x_{3}$. Note that this algebra is not power associative; e.g. for $x = e_{2} + e_{3}$, $(xx)(xx) = 0$, while $((xx)x)x = 4c^{3}e_{1}$. The unitalization of the $c = 1$ case of this algebra is the four-dimensional unital associative commutative Codazzi algebra with multiplication determined by the polynomial $P = \tfrac{1}{6}x_{4}^{3} + \tfrac{1}{2}x_{4}(x_{1}^{2} + x_{2}^{2} + x_{3}^{2}) + x_{1}x_{2}x_{3}$. The $f_{i}$ defined by $4f_{1} = e_{1} - e_{2} - e_{3} + e_{4}$, $4f_{2} = -e_{1} + e_{2} - e_{3} + e_{4}$, $4f_{3} = -e_{1}-e_{2} + e_{3} + e_{4}$, $4f_{4} = e_{1}+e_{2}+e_{3}+e_{4}$ are orthogonal idempotents of square-norm $1/4$. The coordinates $y_{i}$ defined by $y = 2\sum_{i = 1}^{4}y_{i}f_{i}$, are related to the coordinates $x_{i}$ by $2y_{1} = x_{1} - x_{2} - x_{3} + x_{4}$, etc., and in these coordinates the polynomial $P$ has the form $\tfrac{1}{3}(y_{1}^{3} + y_{2}^{3} + y_{3}^{3} + y_{4}^{3})$, as predicted by Lemma \ref{flatalgebralemma}. That the non power associative algebra \eqref{n3alg} is really a slice of an extremely simple algebra, was not evident without the unitalization construction coupled with Lemma \ref{flatalgebralemma}.

\section{The Einstein AH structure on a mean curvature zero Lagrangian submanifold of a para-K\"ahler space form}\label{lipkcurvaturesection}

\setcounter{subsubsection}{0}

In this section it is shown that there is induced on a mean curvature zero spacelike Lagrangian submanifold of a para-K\"ahler manifold of constant para-holmorphic sectional curvature an exact Einstein AH structure. 

\subsubsection{}
If $V$ and $H$ are transverse subbundles of $TN$ the unique idempotent endomorphism $P_{HV}\in \Ga(\eno(TN))$ having kernel $H$ and image $V$ is called \textbf{projection onto $V$ along $H$}. There holds $\Id = P_{HV} + P_{VH}$. A splitting $TN = V \oplus H$ is \textbf{ordered} if there is distinguished an ordering of the pair $(V, H)$. An ordered splitting is identified with the involutive endomorphism $A \defeq P_{HV}- P_{VH}$. %

Given an ordered splitting $TN = T^{+} \oplus T^{-1}$, a completely symmetric or completely anti-symmetric contravariant $(p+q)$ tensor has \textbf{type} $(p, q)$ if it is contained in the span of the monomials formed by the images of tensor products of $p$ sections of $T^{+}$ and $q$ sections of $T^{-}$.

\subsubsection{}
A section $A$ of the bundle $\eno(TM)$ of fiberwise endomorphisms of $TM$ is an \textbf{almost para-complex} structure if $A_{I}\,^{P}A_{P}\,^{J} = \delta_{I}\,^{J}$ and its fiberwise $\pm 1$ eigensubspaces $T^{\pm}$ have the same constant rank, and a \textbf{para-complex} structure if moreover it is integrable in the sense that there vanishes its Nijenhuis tensor. The terminology is justified by the equivalence of $(2)$ and $(6)$ in Theorem \ref{lagrangiansplittingtheorem} below. A \textbf{para-Hermitian structure} is a triple $(G, A, \Omega)$ comprising a pseudo-Riemannian metric $G_{IJ}$, a para-complex structure, and an almost symplectic form $\Omega_{IJ}$, which are \textbf{compatible} in the sense that there hold
\begin{align}\label{aphcompatibility}
&A_{I}\,^{P}A_{P}\,^{J} = \delta_{I}\,^{J}, &\Omega_{IJ} = A_{I}\,^{P}G_{PJ},& &A_{I}\,^{P}\Omega_{PJ} =  G_{IJ},& &\Omega_{IP}G^{JP} = A_{I}\,^{J}. 
\end{align}
 A para-Hermitian structure is \textbf{para-K\"ahler} if $d\Omega = 0$. Any two of $G$, $A$, and $\Omega$ determine the third by \eqref{aphcompatibility}. The metric $G$ must have split signature and equals its $(1,1)$ part with respect to the splitting $T = T^{+}\oplus T^{-}$ determined by $A$. The subbundles $T^{\pm}$ are Lagrangian and $G$-null, and this Lagrangian splitting completely determines the para-Hermitian structure. For background on para-Hermitian structures see the papers of S. Kaneyuki and collaborators, e.g. \cite{Kaneyuki-classification}.

\begin{theorem}\label{lagrangiansplittingtheorem}
There is associated to a para-Hermitian structure $(G, A, \Omega)$ a unique affine connection $\hnabla$ having torsion $\htau$ such that $\hnabla \Omega = 0$, $\hnabla A= 0$, $\hnabla  G = 0$, and the torsion is pure in the sense that there vanishes its $(1,1)$ part $\htau^{(1,1)} = 0$. Let $D$ be the Levi-Civita connection of $G$. The following are equivalent: $(1).$ $DA = 0$;
$(2).$ $A$ is integrable; $(3).$ $D = \hnabla$; $(4).$ $D \Omega = 0$; $(5).$ $\hnabla$ is torsion free; $(6).$ $T^{+}$ and $T^{-}$ are integrable. Hence $(G, \Omega, A)$ is para-K\"ahler if and only if there holds one and hence all of these conditions.
\end{theorem}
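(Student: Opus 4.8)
The plan is to construct the connection $\hnabla$ explicitly and then prove the uniqueness and the chain of equivalences in a cyclic fashion. First I would write $\hnabla = D + \Gamma$, where $D$ is the Levi-Civita connection of $G$ and $\Gamma_{I}\,^{J}\,_{K}$ is an unknown difference tensor, and translate the four defining conditions ($\hnabla G = 0$, $\hnabla \Omega = 0$, $\hnabla A = 0$, and $\htau^{(1,1)} = 0$) into linear equations for $\Gamma$. Because $G$, $\Omega$, and $A$ are related by \eqref{aphcompatibility} (any two determine the third), the conditions $\hnabla A = 0$, $\hnabla \Omega = 0$, and $\hnabla G = 0$ are not independent: metric compatibility $\hnabla G = 0$ forces $\Gamma_{(IJ)K} = 0$ when the last index is lowered with $G$, and given this, compatibility with $A$ is equivalent to compatibility with $\Omega$. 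So the real content is $\hnabla G = 0$ together with $\hnabla A = 0$ and the purity condition on the torsion, and I would show these pin down $\Gamma$ uniquely.

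The key computational step is to solve for $\Gamma$. Writing $\Gamma_{IJK} = \Gamma_{IJ}\,^{P}G_{PK}$, metric compatibility gives $\Gamma_{I(JK)} = 0$, i.e. $\Gamma_{IJK}$ is skew in its last two indices. The torsion is $\htau_{IJ}\,^{K} = 2\Gamma_{[IJ]}\,^{K}$, and I would use the standard Koszul-type manipulation (as in the identity \eqref{lcidentity} used to build the Levi-Civita connection) to express $\Gamma$ in terms of the torsion. The purity condition $\htau^{(1,1)} = 0$ selects the torsion: decomposing $\htau$ by type with respect to $T^{+}\oplus T^{-}$ and using $\hnabla A = 0$ (which forces $\hnabla$ to preserve the eigenbundles $T^{\pm}$, hence constrains the allowable types of $\htau$), I would show that the remaining freedom is exactly consumed, giving a unique $\Gamma$. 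Concretely, $\hnabla A = 0$ means the only possibly nonzero components of $\htau$ are of types $(2,0)$ and $(0,2)$, and since $G$ has no $(2,0)$ or $(0,2)$ part while $\Omega$ (being closed or not) interacts with these through $d\Omega$, the torsion is determined by the exterior derivative $d\Omega$; this is where the main work lies.

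For the equivalences I would argue cyclically. The implication $(1)\Rightarrow(3)$ is immediate: if $DA = 0$ then $D$ itself satisfies all four defining properties (its torsion vanishes, so \emph{a fortiori} $\htau^{(1,1)}=0$), and uniqueness gives $D = \hnabla$. Then $(3)\Rightarrow(5)$ is trivial since $D$ is torsion-free, and $(5)\Rightarrow(1)$ follows because a torsion-free connection preserving $A$ means $D$ preserves $A$ after identifying $\hnabla = D$ via uniqueness again. The equivalence $(1)\Leftrightarrow(4)$ follows from \eqref{aphcompatibility}: since $\Omega_{IJ} = A_{I}\,^{P}G_{PJ}$ and $DG = 0$, one has $D\Omega = (DA)\cdot G$, so $D\Omega = 0$ iff $DA = 0$. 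The equivalences $(2)\Leftrightarrow(6)$ and $(2)\Leftrightarrow(1)$ are the genuinely geometric ones: $(6)$ says $T^{\pm}$ are integrable, $(2)$ says the Nijenhuis tensor of $A$ vanishes, and for an involution $A$ with equidimensional eigenbundles these are equivalent by the para-complex analogue of the Newlander--Nirenberg criterion, which for an involution is purely formal (the Nijenhuis tensor computes the obstruction to integrability of each eigenbundle). Finally $(2)\Rightarrow(1)$ comes from the formula relating the Nijenhuis tensor to $DA$ for a $G$-compatible involution: the Nijenhuis tensor is, up to sign and symmetrization, the skew part of $DA$, and vanishing of the torsion of $\hnabla$ forces $DA$ to be symmetric in the appropriate indices, so integrability kills $DA$.

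\textbf{Main obstacle.} The hard part will be the purity-of-torsion step: showing that $\htau^{(1,1)} = 0$ together with $\hnabla A = 0$ and $\hnabla G = 0$ determines $\hnabla$ uniquely, and correctly identifying which type-components of $\htau$ survive. The bookkeeping of the type decomposition (types $(2,0)$, $(1,1)$, $(0,2)$ for the torsion, together with the skew-in-last-two-indices constraint from metric compatibility) is delicate, and one must verify that the linear system for $\Gamma$ is neither over- nor under-determined. Once uniqueness is established, the equivalences fall out quickly from \eqref{aphcompatibility} and the standard Nijenhuis-tensor identity, so I expect essentially all the difficulty to be concentrated in the construction and uniqueness of $\hnabla$.
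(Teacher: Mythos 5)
The paper does not actually prove this theorem: it writes down the explicit formula $\hnabla_{X}Y = [X_{+}, Y_{-}]_{-} + [X_{-}, Y_{+}]_{+} - (\lie_{X_{+}}Y_{+}^{\sflat})^{\ssharp}_{+}- (\lie_{X_{-}}Y_{-}^{\sflat})^{\ssharp}_{-}$ for the canonical connection and refers the reader to Hess and Libermann. Your Koszul-type route to existence and uniqueness --- solving for the difference tensor $\Gamma = \hnabla - D$ block by block in the type decomposition --- is therefore a genuinely different approach, and the accounting is essentially right: $\hnabla G = 0$ makes $\Gamma_{IJK}$ skew in its last two indices, $\hnabla A = 0$ fixes the blocks whose second and third slots have opposite types, purity of the torsion fixes the mixed blocks in the first two slots, and skewness then recovers the pure blocks; consistency of the overdetermined system uses only that $T^{\pm}$ are $G$-null. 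One slip there: the assertion that ``$\hnabla A = 0$ means the only possibly nonzero components of $\htau$ are of types $(2,0)$ and $(0,2)$'' is false --- killing the $(1,1)$ part of the torsion is exactly the content of the purity hypothesis, not a consequence of $A$-parallelism.

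The genuine gap is in the equivalences, precisely at the bridge between the parallelism conditions $(1),(3),(4),(5)$ (whose mutual equivalence you handle correctly) and the integrability conditions $(2),(6)$. Your argument for $(2)\Rightarrow(1)$ invokes ``vanishing of the torsion of $\hnabla$,'' i.e.\ condition $(5)$, which is the very thing the cycle is supposed to deliver, so as written it is circular. More importantly, the implication cannot be repaired without bringing $d\Omega$ into play: a direct computation with the formula above gives $\Omega(\htau(X_{+},Y_{+}),Z_{-}) = d\Omega(X_{+},Y_{+},Z_{-})$, while the $T^{-}$-component of $\htau(X_{+},Y_{+})$ is $-[X_{+},Y_{+}]_{-}$; hence $\htau = 0$ if and only if $T^{\pm}$ are both integrable \emph{and} the mixed-type components of $d\Omega$ vanish. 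Integrability of $A$ alone does not force $D\Omega = 0$, any more than integrability of $J$ makes a Hermitian metric K\"ahler, so your chain cannot close as proposed. Relatedly, your sketch never addresses the final assertion --- that the conditions are equivalent to $(G,\Omega,A)$ being para-K\"ahler, i.e.\ to $d\Omega = 0$ --- which is exactly where the missing hypothesis lives. The statement you can actually prove is: $(1),(3),(4),(5)$ are always mutually equivalent, each implies $d\Omega=0$ together with $(2)$ and $(6)$, and conversely $(6)$ plus $d\Omega = 0$ yields $(5)$.
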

The connection $\hnabla$ of Theorem \ref{lagrangiansplittingtheorem} is the \textbf{canonical connection}. It is the para-Hermitian analogue of the canonical or Chern connection associated to a Hermitian structure. A version of Theorem \ref{lagrangiansplittingtheorem} in terms of Lagrangian splittings is well known and was proved by H. Hess in \cite{Hess}, although some version of Theorem \ref{lagrangiansplittingtheorem} can be found in the much older paper of P. Libermann, \cite{Libermann}.

For $X \in \Ga(TM)$ let $X^{\sflat} = i(X)\Omega$, and for $\al \in \Ga(\ctm)$ define $\al^{\ssharp}$ by $i(\al^{\ssharp})\Omega = -\al$. The canonical connection is defined for $X, Y \in \Ga(TN)$ by
\begin{align}
\hnabla_{X}Y = [X_{+}, Y_{-}]_{-} + [X_{-}, Y_{+}]_{+} - (\lie_{X_{+}}Y_{+}^{\sflat})^{\ssharp}_{+}- (\lie_{X_{-}}Y_{-}^{\sflat})^{\ssharp}_{-},
\end{align}
in which $X_{\pm}$ denote the projections of $X$ onto $T^{+}$ along $T{-}$ and vice-versa. The torsion can be expressed explicitly in terms of the Nijenhuis tensor of $A$, but the explicit expression will not be needed here.

\subsubsection{}
The curvature of the canonical connection will be written $\hat{R}_{IJK}\,^{L}$, and the tensors derived from it will similarly be decorated with hats. The \textbf{Ricci form $\hrho$} of a (para)-K\"ahler structure $(G, A, \Omega)$ is defined by $\hrho_{IJ} = A_{I}\,^{P}\hat{R}_{PJ}$. The proof of Lemma \ref{ricciformlemma} in the para-K\"ahler case is formally analogous to the proof in the K\"ahler case.
\begin{lemma}\label{ricciformlemma}
The Ricci form of a para-K\"ahler structure is a closed $(1,1)$-form, and the curvature of the connection induced on the bundle of $(n,0)$-forms by its canonical connection $\hnabla$ is $-\hrho$.
\end{lemma}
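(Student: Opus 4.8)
The plan is to mimic the standard K\"ahler proof, exploiting the equivalences in Theorem \ref{lagrangiansplittingtheorem} so that on a para-K\"ahler manifold the canonical connection $\hnabla$ coincides with the Levi-Civita connection $D$, $\hnabla A = 0$, $\hnabla \Omega = 0$, and $\hnabla G = 0$ all hold simultaneously, and $\hnabla$ is torsion-free. These parallelism conditions are what make the argument go through. First I would prove that $\hrho_{IJ}$ is a $(1,1)$-form, i.e.\ that it satisfies $\hrho_{PQ}A_I{}^P A_J{}^Q = \hrho_{IJ}$. Because $\hnabla A = 0$, the curvature endomorphism $\hat R_{IJK}{}^L$ commutes with $A$ in its last two slots, which gives the identity $A_K{}^P \hat R_{IJP}{}^Q = \hat R_{IJK}{}^P A_P{}^Q$. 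Combining this with the symmetry $A_I{}^P A_P{}^J = \delta_I{}^J$ and the definition $\hrho_{IJ} = A_I{}^P \hat R_{PJ}$, together with the fact that $\hat R_{IJ}$ has the appropriate symmetry inherited from $\hat R_{IJKL} = \hat R_{[IJ]KL}$ and $\hnabla G = 0$ (so $\hat R_{IJKL} = \hat R_{IJ[KL]}$), should yield both the antisymmetry $\hrho_{[IJ]} = \hrho_{IJ}$ and the type $(1,1)$ condition.

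Next I would establish closedness, $d\hrho = 0$. The cleanest route is to identify $\hrho$ with the curvature of the connection induced by $\hnabla$ on the canonical line bundle of $(n,0)$-forms, which is the content of the second assertion; this is really the crux of the proof, so I would prove that identification first and deduce closedness as a corollary. Since $\hnabla A = 0$, the splitting $TN = T^+ \oplus T^-$ is $\hnabla$-parallel, and hence $\hnabla$ induces a connection on $\Lambda^n (T^+)^\ast$, the bundle of $(n,0)$-forms. The curvature of a connection induced on a top exterior power $\Lambda^n E$ of a rank $n$ bundle $E$ is the trace of the curvature of the connection on $E$. Applying this to $E = (T^+)^\ast$, whose curvature is dual to the curvature of $\hnabla$ restricted to $T^+$, gives a trace of $\hat R$ over the $T^+$-indices. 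The main calculation is to show that this trace equals $-\hrho_{IJ}$. Here I would use that $A$ acts as $+1$ on $T^+$ and $-1$ on $T^-$, so that the trace over $T^+$ can be rewritten as $\tfrac12(\hat R_{IJP}{}^P + A^P{}_Q \hat R_{IJP}{}^Q)$, i.e.\ in terms of the full Ricci-type trace and the $A$-weighted trace; the first is the Faraday-type term $\hat R_{IJP}{}^P$ and the second reproduces $-2\hrho_{IJ}$ after using the $(1,1)$ property and the antisymmetry of $\hat R$ in its first two indices.

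The closedness of $\hrho$ then follows immediately: the curvature of any connection on a line bundle is a closed two-form (it is locally exact, being $d$ of a connection one-form, and the differential Bianchi identity for a line bundle reads $d\hrho = 0$ since there is no bracket term). Thus once the identification $\hrho = -(\text{curvature on }(n,0)\text{-forms})$ is in hand, closedness is automatic and need not be recomputed from the second Bianchi identity for $\hat R$ directly, though one could alternatively obtain it by tracing $\hnabla_{[I}\hat R_{JK]L}{}^M = 0$ against $A$. The step I expect to be the genuine obstacle is the sign- and trace-bookkeeping in identifying the $T^+$-trace of $\hat R$ with $-\hrho$: one must be careful that the duality between $T^+$ and $(T^+)^\ast$ introduces the correct sign, that the $A$-weighted trace is handled consistently with the convention $\Omega_{IJ} = A_I{}^P G_{PJ}$, and that the split-signature null structure of $G$ on $T^\pm$ does not introduce spurious factors. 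Everything else is a formal transcription of the K\"ahler argument, using parallelism of $A$, $G$, and $\Omega$ in place of their K\"ahler analogues.
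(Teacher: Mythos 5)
Your proposal is correct and follows exactly the route the paper intends: the paper's own proof consists of the single remark that the argument is formally analogous to the K\"ahler case, and your elaboration (parallelism of $A$, $G$, $\Omega$ under $\hnabla$ from Theorem \ref{lagrangiansplittingtheorem}, the $(1,1)$ property via commutation of $\hat{R}$ with $A$, identification of $-\hrho$ with the curvature of the induced connection on $\Lambda^{n}(T^{+})^{\ast}$ by tracing over $T^{+}$ with the projector $\tfrac{1}{2}(\operatorname{Id}+A)$, and closedness because line-bundle curvatures are closed) is precisely that transcription. The sign and trace bookkeeping you flag is indeed the only place requiring care, and it goes through as in the K\"ahler case.
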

A $2n$-dimensional para-K\"ahler structure is \textbf{Einstein} if $G$ is Einstein, or, what is the same, $\hrho = c\Omega$. Since both $\hrho$ and $\Omega$ are closed, this can be the case only if $dc \wedge \Omega = 0$, and because wedge product with $\Omega$ is injective on one-forms, this means $c$ must be locally constant. 

The duality pairing gives rise to a tautological flat para-K\"ahler structure on the direct sum of a vector space with its dual; see \cite{Hitchin-speciallagrangian} for the definition.

\subsubsection{}
While compact examples of para-K\"ahler manifolds are not abundant, there are many such structures arising on cotangent bundles. A \tbf{para-Hermitian symmetric space} is an affine symmetric space $G/H$ with an almost para-Hermitian structure such that the symmetries act as automorphisms of the almost para-Hermitian structure. The almost para-Hermitian structure of a para-Hermitian symmetric space is necessarily para-K\"ahler, and $G$ acts by para-K\"ahler automorphisms. This and other basic facts about these spaces are due to S. Kaneyuki and collaborators in a series of papers, from which there results 

\begin{theorem}[\cite{Hou-Deng-Kaneyuki-Nishiyama}, \cite{Kaneyuki-compactification}, \cite{Kaneyuki-Kozai}]
Let $G$ be a connected, semisimple Lie group and $H \subset G$ a closed subgroup. The following are equivalent
\begin{enumerate}
\item $G/H$ is a homogeneous para-K\"ahler manifold.
\item $H$ is an open subgroup of a Levi subgroup of a parabolic subgroup $P$ of $G$ having abelian nilradical.
\item $G/H$ is a $G$-equivariant covering space of the adjoint orbit of a hyperbolic semisimple element of $\g$.
\end{enumerate}
Up to covering para-Hermitian symmetric spaces of semisimple Lie groups are in bijection with semsimimple graded Lie algebras $\g = \g_{-1} \oplus \g_{0}\oplus \g_{1}$ in such a way that $\g = \mathfrak{lie}(G)$ and $\g_{0} = \mathfrak{lie}(H)$. 
\end{theorem}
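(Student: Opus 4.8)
The plan is to pass to the infinitesimal level and recognize the invariant para-K\"ahler data as a gradation of $\g$ of the first kind. Fix the base point $o = eH$ and let $\m$ be an $\Ad(H)$-invariant complement to $\h$ in $\g$, identified with $T_o(G/H)$; this exists because $G$ is semisimple, taking the $B$-orthocomplement of $\h$ for the Killing form $B$. A $G$-invariant almost para-complex structure $A$ is the same as an $\Ad(H)$-equivariant involution $A_o$ of $\m$, and its $\pm 1$-eigenspaces $\m_{\pm}$ are $\h$-submodules of equal dimension spanning the tangent distributions $T^{\pm}$. Likewise the invariant two-form $\Omega$ and the metric $G$ are $\Ad(H)$-invariant tensors on $\m$ subject to \eqref{aphcompatibility}; the compatibility forces $\m_{\pm}$ to be $G$-null and $\Omega$-Lagrangian, so $\Omega$ restricts to a perfect pairing between $\m_+$ and $\m_-$. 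First I would dispose of the symplectic content: for $G$ semisimple the Killing form identifies $\g \cong \g^{\ast}$ and intertwines the adjoint and coadjoint actions, so every $G$-invariant symplectic form on $G/H$ is the Kirillov--Kostant--Souriau form pulled back from an adjoint orbit. Concretely there is an element $E \in \g$ with $\h = \mathfrak{z}_{\g}(E)$ up to the open-subgroup ambiguity, and $\Omega_{o}(\ad(Y)E, \ad(Z)E) = B(E, [Y,Z])$; to pin $E$ down intrinsically I would use Lemma \ref{ricciformlemma}, since the Ricci form is a closed invariant $(1,1)$-form, hence itself of KKS type.

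Next I would convert the para-complex and integrability hypotheses into bracket relations. By Theorem \ref{lagrangiansplittingtheorem} the para-K\"ahler condition is equivalent to integrability of both $T^{+}$ and $T^{-}$, which at $o$ reads $[\m_{\pm}, \m_{\pm}] \subseteq \h \oplus \m_{\pm}$, while $\Ad(H)$-invariance of $A$ gives $[\h, \m_{\pm}] \subseteq \m_{\pm}$. Writing $\g_{\pm 1} = \m_{\pm}$ and $\g_{0} = \h$, these say $[\g_{i}, \g_{j}] \subseteq \g_{i+j}$ for the values already produced; the only relations not yet controlled are $[\g_{1}, \g_{1}]$ and $[\g_{-1}, \g_{-1}]$, which integrability places in $\g_{0} \oplus \g_{\pm 1}$. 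The decisive step is to combine the Lagrangian property with the closedness cocycle condition $d\Omega = 0$, evaluated on triples drawn from $\m_{+}$ and $\m_{-}$, to force $[\g_{1}, \g_{1}]$ and $[\g_{-1}, \g_{-1}]$ to vanish; equivalently, the eigenvalues of $\ad(E)$ on $\g$ are exactly $\{-1, 0, 1\}$ after normalization. This exhibits $\g = \g_{-1} \oplus \g_{0} \oplus \g_{1}$ as a gradation of the first kind, so $\p = \g_{0} \oplus \g_{1}$ is parabolic with abelian nilradical $\g_{1}$, since $[\g_{1}, \g_{1}] \subseteq \g_{2} = 0$, and Levi factor $\g_{0} = \h$, giving $(1) \Rightarrow (2)$; the eigenvalue description shows $E$ is hyperbolic semisimple with orbit $G/Z_{G}(E)$, completing $(1) \Rightarrow (2) \Rightarrow (3)$.

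The converses are the easy directions. From a three-grading one defines $A$, $G$, and $\Omega$ on $\g_{-1} \oplus \g_{1}$ by declaring $\g_{\pm 1}$ the eigenspaces of $A$ and using $B$ to pair them, and then checks \eqref{aphcompatibility}, the invariance, and $d\Omega = 0$ directly from the bracket relations $[\g_{i}, \g_{j}] \subseteq \g_{i+j}$; Theorem \ref{lagrangiansplittingtheorem} then upgrades this to a genuine para-K\"ahler structure. For the bijection with para-Hermitian symmetric spaces I would attach to a three-grading the involution $s = \exp(\pi \sqrt{-1}\, \ad E)$, which acts as $(-1)^{k}$ on $\g_{k}$ and is therefore a real involutive automorphism of $\g$ with fixed algebra exactly $\g_{0} = \h$; this makes $G/H$ symmetric and realizes the geodesic symmetry as an automorphism of $(G, A, \Omega)$, so the para-Hermitian structure is symmetric. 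Conversely a para-Hermitian symmetric structure supplies such an involution, whose $+1$-eigenspace is $\h$ and whose $-1$-eigenspace carries $A_{o}$, reproducing the grading; uniqueness up to covering is the statement that the pair $(\g, \g_{-1} \oplus \g_{0} \oplus \g_{1})$ determines $G/H$ up to the covering ambiguity already present in passing between $H$ and $Z_{G}(E)$.

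I expect the main obstacle to be the second step above: ruling out multistep gradations, that is, showing that invariance, integrability, the Lagrangian condition, and $d\Omega = 0$ together force $\ad(E)$ to have no eigenvalues beyond $\{-1, 0, 1\}$. A general hyperbolic semisimple $E$ already produces an invariant almost para-complex symplectic manifold by grouping the positive against the negative root spaces, so the content of the theorem is precisely that only the first-kind gradations satisfy the full para-K\"ahler compatibility. Isolating this is exactly what makes the nilradical abelian, and it is the point where the symmetric-space structure is genuinely used; the rest of the argument is bookkeeping with the Chevalley--Eilenberg differential and the compatibility identities \eqref{aphcompatibility}.
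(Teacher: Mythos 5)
The paper offers no proof of this statement: it is quoted from the cited works of Kaneyuki and his collaborators, and the text that follows it only records consequences and the example $SL(2,\rea)/\reat$. So there is nothing in the paper to compare your argument against, and I can only assess it on its own terms.

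Your framework is reasonable: Whitehead's lemma does identify every $G$-invariant symplectic form on $G/H$ with the Kirillov--Kostant form of an element $E$ with $\h=\mathfrak{z}_{\g}(E)$, invariance and integrability do translate into $[\h,\m_{\pm}]\subseteq\m_{\pm}$ and $[\m_{\pm},\m_{\pm}]\subseteq\h\oplus\m_{\pm}$, and $\exp(\pi\sqrt{-1}\,\ad E)$ is the right involution for the symmetric case. The gap is exactly at the step you call decisive: the claim that invariance, integrability of $T^{\pm}$, the Lagrangian condition, and $d\Omega=0$ together force the eigenvalues of $\ad(E)$ into $\{-1,0,1\}$ is \emph{false}, so no manipulation of the Chevalley--Eilenberg cocycle identity can establish it. Take $G=SL(3,\rea)$, $E=\mathrm{diag}(1,0,-1)$, $H=Z_{G}(E)$ the diagonal torus, $\m_{\pm}$ the sums of the positive and negative root spaces, and $\Omega$ the Kirillov--Kostant form of $E$. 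Both $\m_{\pm}$ are subalgebras, so both distributions are integrable; they are isotropic for $B(E,[\cdot,\cdot])$ because $B$ pairs $\ad(E)$-eigenspaces of opposite eigenvalue, hence Lagrangian; $d\Omega=0$; and $G(X,Y)=\Omega(AX,Y)$ is symmetric and nondegenerate, so all of \eqref{aphcompatibility} holds and Theorem \ref{lagrangiansplittingtheorem} makes $G/H$ homogeneous para-K\"ahler. Yet $\ad(E)$ has eigenvalues $0,\pm1,\pm2$ and the associated parabolic is the Borel, whose nilradical is a non-abelian Heisenberg algebra.

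The resolution is that the three-way equivalence, as established in the cited references, holds for Levi subgroups of \emph{arbitrary} parabolic subgroups; the centralizer of a general hyperbolic semisimple element is the Levi of a general parabolic, and every such orbit is homogeneous para-K\"ahler. The first-kind gradation $\g=\g_{-1}\oplus\g_{0}\oplus\g_{1}$, equivalently the abelian nilradical, is forced only in the final assertion about para-Hermitian \emph{symmetric} spaces, and there the mechanism is not $d\Omega=0$ but the symmetric-space condition $[\m,\m]\subseteq\h$: for $a,b>0$ it gives $[\g_{a},\g_{b}]\subseteq\g_{a+b}\cap\g_{0}=0$, so the nilradical $\bigoplus_{a>0}\g_{a}$ is abelian and the gradation normalizes to one of the first kind. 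You should therefore restructure: prove the equivalences for general parabolics without attempting to kill $[\g_{1},\g_{1}]$, and invoke $[\m,\m]\subseteq\h$ only for the concluding bijection with three-gradings.
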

$G/H$ is diffeomorphic to the cotangent bundle of $G/P$. The symplectic form on $G/H$ is the pullback of the Kostant-Kirillov symplectic form pulled back from the coadjoint orbit. The simplest example is $G = SL(2, \rea)$ with $H = \reat$ as the diagonal. The symmetric space $SL(2, \rea)/\reat$ is the one-sheeted hyperboloid and the para-K\"ahler structure is given by the two rulings. 

The para-Hermitian symmetric spaces are Einstein. The proof is similar to the proof that Hermitian symmetric spaces are Einstein.

\subsubsection{}
A (para)-K\"ahler manifold $(N, G, A)$ has \textbf{constant (para)-holomorphic sectional curvature $4c$} if its curvature has the form
\begin{align}\label{chsc}
\hat{R}_{IJK}\,^{L} 
= 2c\left( \delta_{[I}\,^{L}G_{J]K}- A_{[I}\,^{L}\Omega_{J]K} + \Omega_{IJ}A_{K}\,^{L}\right).
\end{align}
 The para-Hermitian symmetric space structure on the coadjoint orbit of the element 
\begin{align}
\begin{pmatrix}\tfrac{n}{n+1} & 0 \\ 0 & -\tfrac{1}{n+1}\delta_{i}\,^{j} \end{pmatrix}
\end{align}
of $\sll(n+1, \rea)$ has constant non-zero para-holomorphic sectional curvature. This orbit is identified with either of the connected components $\{([u], [\mu]) \in \projp(\ste) \times \projp(\sted): \pm \mu(u) < 0\}$ of the complement of the tautological incidence correspondence, and the para-K\"ahler structures on it having constant para-holomorphic sectional curvature can be obtained from the flat para-K\"ahler structure on $\ste \oplus \sted$ via an analogue of the construction of the Fubini-Study metric on $\proj^{n}(\com)$ by reduction of the flat K\"ahler structure on complex Euclidean space via the Hopf fibration.

\subsubsection{}
The map sending $X \to X^{\sflat}$ identifies the normal bundle of the Lagrangian immersion $i:M \to N$ with the cotangent bundle $\ctm$, and via this identification the second fundamental form $\Pi^{\nabla}(i)_{ij}\,^{Q}$ is identified with the covariant $3$-tensor $\Pi_{ijk} \defeq \Pi^{\nabla}(i)_{ij}\,^{Q}\Omega_{Qk}$ on $M$ (that is, $\Pi(X, Y, Z) = \Omega(\nabla_{X}Ti(Y), Ti(Z))$). Here lowercase (resp. uppercase) Latin abstract indices indicate tensors on $M$ (resp. $N$). Because $\nabla_{I}\Omega_{JK} = 0$ there holds $\Pi_{i[jk]} = 0$, while because $i$ is Lagrangian there holds $2\Pi_{[ij]k} = i^{\ast}(\tau)_{ijk}$, so that $\Pi_{ijk}$ is completely symmetric if and only if $\nabla$ is torsion-free. In a slight abuse of terminology, the tensor $\Pi_{ijk}$ will be called the \tbfs{second fundamental form}{lagrangian immersion} of the Lagrangian immersion $i$.

\subsubsection{}\label{lagrangianimmersionsection}
An immersion $i:M \to N$ into a para-Hermitian manifold $(N, \Omega, A, G)$ is \textbf{non-degenerate} if the induced tensor $h \defeq i^{\ast}(G)$ is non-degenerate. A non-degenerate immersion for which the induced metric $h$ is Riemannian is called \textbf{positive definite} (\textbf{spacelike} is an alternative terminology). 
\begin{lemma}
A Lagrangian immersion into a para-Hermitian manifold is non-degenerate if and only if it is transverse to each of the subbundles of the associated Lagrangian splitting. 
\end{lemma}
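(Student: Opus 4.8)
The statement is pointwise, so the plan is to fix $p \in M$, set $V = T_{i(p)}N$, and regard $L = Ti(p)(T_pM)$ as a Lagrangian subspace of the symplectic vector space $(V, \Omega)$ carrying also the compatible data $(G, A)$ satisfying \eqref{aphcompatibility}. First I would record the bilinear-algebraic consequences of \eqref{aphcompatibility}: since $A^2 = \mathrm{Id}$ with $\pm 1$-eigenspaces $T^{\pm}$ of equal rank $n$, and $\Omega_{IJ} = A_I{}^P G_{PJ}$ together with $A_I{}^P\Omega_{PJ}=G_{IJ}$, one gets $\Omega(X,Y) = G(AX, Y)$ and $G(X,Y) = \Omega(AX, Y)$. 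For $X,Y \in T^{+}$ these give $\Omega(X,Y) = G(X,Y)$, and comparing with $\Omega(Y,X) = -\Omega(X,Y)$, $G(Y,X)=G(X,Y)$ forces $\Omega(X,Y)=0=G(X,Y)$; the same holds on $T^{-}$. Thus each $T^{\pm}$ is $\Omega$-Lagrangian and $G$-null, and $V = T^{+}\oplus T^{-}$ (as eigenspaces for distinct eigenvalues of complementary dimension). Because $L$, $T^{+}$, $T^{-}$ are each $n$-dimensional in the $2n$-dimensional $V$, transversality of $L$ to $T^{\pm}$ is, by a dimension count, equivalent to $L \cap T^{\pm} = \{0\}$; so the whole claim reduces to showing that $h = i^{\ast}(G) = G|_L$ is non-degenerate if and only if $L \cap T^{+} = \{0\} = L \cap T^{-}$.

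Next I would dispatch the implication that non-degeneracy implies transversality, by contraposition. If $0 \neq X \in L \cap T^{\pm}$, then $AX = \pm X$, so for every $Y \in L$ one computes $G(X, Y) = \Omega(AX, Y) = \pm \Omega(X, Y) = 0$, the last equality because $L$ is $\Omega$-Lagrangian. Hence $X$ lies in the radical of $G|_L$, so $h$ is degenerate.

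For the converse I would again argue by contraposition. Suppose $h = G|_L$ is degenerate and pick a nonzero radical vector $X \in L$, so $G(X, Y) = 0$ for all $Y \in L$. Decomposing $X = X^{+} + X^{-}$ along $V = T^{+}\oplus T^{-}$, I have $AX = X^{+}-X^{-}$, and the two relations $\Omega(AX, Y) = G(X,Y) = 0$ and $\Omega(X, Y) = 0$ (the latter from $L$ Lagrangian) hold for all $Y \in L$. Adding and subtracting these yields $\Omega(X^{+}, Y) = 0 = \Omega(X^{-}, Y)$ for all $Y \in L$. Since $L$ is Lagrangian, $L^{\perp_{\Omega}} = L$, whence $X^{+}, X^{-} \in L$, that is $X^{\pm} \in L \cap T^{\pm}$. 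As $X \neq 0$, at least one of $X^{\pm}$ is nonzero, so $L$ meets $T^{+}$ or $T^{-}$ nontrivially and fails to be transverse. Running this pointwise over $M$ gives the equivalence of the global statements.

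There is no genuine analytic obstacle here; the content is entirely the linear algebra of a Lagrangian subspace of a para-Hermitian vector space. The only point requiring care is the consistent use of the sign conventions in \eqref{aphcompatibility} relating $G$, $\Omega$, and $A$ (so that $\Omega(AX,Y)=G(X,Y)$ and $G(AX,Y)=\Omega(X,Y)$ are applied in the right places), together with the identification $L^{\perp_{\Omega}} = L$ for the Lagrangian $L$; these are exactly what make the radical computation in the forward direction and the splitting $X = X^{+}+X^{-}$ in the converse go through cleanly.
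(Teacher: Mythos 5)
Your proof is correct. Note that the paper states this lemma without any proof at all, so there is nothing to compare against; your pointwise linear-algebraic argument — using $\Omega(AX,Y)=G(X,Y)$ from \eqref{aphcompatibility} to identify the radical of $G|_{L}$ with $(L\cap T^{+})\oplus(L\cap T^{-})$ via the decomposition $X=X^{+}+X^{-}$ and $L^{\perp_{\Omega}}=L$ — is exactly the kind of routine verification the author leaves to the reader, and it is complete and sound.
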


\subsubsection{}
Let $(N, G, A, \Omega)$ be a $2n$-dimensional para-K\"ahler manifold with canonical (Levi-Civita) connection $\hnabla$ and let $i:M \to N$ be a non-degenerate Lagrangian immersion with second fundamental form $\Pi_{ijk} = \Pi_{(ijk)}$. Let $h_{ij} = i^{\ast}(G)_{ij}$ be the metric induced on $M$ and let $D$ be its Levi-Civita connection. In the rest of this section indices of tensors defined on $M$ are raised and lowered using $h_{ij}$ and $h^{ij}$. Define the \textbf{mean curvature one-form} $H_{i}$ by $H_{i} = \Pi_{ip}\,^{p} = h^{jk}\Pi_{ijk}$. The usual mean curvature vector is the image under $A$ of $H^{i}$. For para-K\"ahler structures the condition $H_{i} = 0$ will be called \textbf{mean curvature zero}; in fact when critical for the induced volume such immersions are locally volume \textit{maximizing}. 

The K\"ahler analogue of Lemma \ref{dazordlemma} is well known and is due to P. Dazord, \cite{Dazord}. The proof in the para-K\"ahler case is formally the same.
\begin{lemma}\label{dazordlemma}
Let $i:M \to N$ be a non-degenerate Lagrangian immersion into the para-K\"ahler manifold $(N, \Omega, A, G)$ having Ricci form $\hat{\rho}$. Then $(dH)_{ij} = 2D_{[i}H_{j]} = i^{\ast}(\hat{\rho})_{ij}$. In particular, if $(N, \Omega, A, G)$ is para-K\"ahler Einstein, then the mean curvature one-form $H_{i}$ is closed. 
\end{lemma}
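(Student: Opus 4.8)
Here $(N,\Omega,A,G)$ is para-K\"ahler, so by Theorem \ref{lagrangiansplittingtheorem} the canonical connection $\hnabla$ coincides with the Levi-Civita connection of $G$ and is torsion-free, and the second fundamental form $\Pi_{ijk}$ of the non-degenerate Lagrangian immersion $i$ is completely symmetric. The plan is to identify the exterior derivative $dH$ of the mean curvature one-form $H_i = \Pi_{ip}{}^p$ with the pullback $i^\ast(\hrho)$ of the Ricci form, where $\hrho_{IJ} = A_I{}^P\hat R_{PJ}$. The key computational input is the Codazzi-type equation for the Lagrangian second fundamental form, obtained by comparing $\hnabla_{[X}\hnabla_{Y]}Ti(Z)$ with the intrinsic curvature terms, exactly as in the classical derivation of the Gau\ss-Codazzi relations but now using that the normal bundle is identified with $\ctm$ via $X \mapsto X^{\sflat} = i(X)\Omega$, and that $\hnabla\Omega = 0$.

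First I would write the structure equation $\hnabla_X Ti(Y) = Ti(\nabla_X Y) + (\Pi(X,Y,\dum))^{\ssharp}$, where $\nabla$ is the induced torsion-free connection on $M$ and the normal part is reinterpreted as a one-form via $\Omega$. Taking $\hnabla$ of this relation and antisymmetrizing in the two tangent arguments produces, on the one hand, a term coming from the ambient curvature $\hat R$ and, on the other, the covariant derivative $D_{[i}\Pi_{j]kl}$ of the second fundamental form plus terms quadratic in $\Pi$. Contracting the resulting Codazzi equation on the pair $(k,l)$ with $h^{kl}$ collapses the quadratic-in-$\Pi$ terms (they are symmetric in the contracted indices and cancel upon antisymmetrization in $ij$), leaving $2D_{[i}H_{j]}$ on one side and a single contracted curvature term on the other. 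The point is then to recognize that the contracted ambient curvature term is precisely $i^\ast(\hrho)_{ij}$: because the immersion is Lagrangian, the trace over the tangent directions of the normal-valued curvature, transported through the identification $A_I{}^P\Omega_{PJ} = G_{IJ}$, reproduces the definition $\hrho_{IJ} = A_I{}^P\hat R_{PJ}$ of the Ricci form restricted to $TM$. Here I would lean on Lemma \ref{ricciformlemma}, which characterizes $\hrho$ as a closed $(1,1)$-form and as the curvature of the connection induced on the top exterior power; this gives an invariant, frame-free route to matching the contracted curvature with the pullback of $\hrho$ and avoids index-chasing through an adapted frame.

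The main obstacle will be bookkeeping the identifications carefully: the normal bundle is $\ctm$ rather than $TM$, so the raising/lowering that turns $\Pi^{\nabla}(i)_{ij}{}^Q$ into the symmetric tensor $\Pi_{ijk}$ uses $\Omega$, while the trace producing $H_i$ uses $h$, and one must be sure the ambient and induced metrics interact correctly through the relations \eqref{aphcompatibility}. In particular I must verify that the quadratic-in-$\Pi$ terms really do cancel after the trace and the antisymmetrization, which is where the complete symmetry of $\Pi_{ijk}$ (equivalently the torsion-freeness of $\hnabla$, hence the para-K\"ahler hypothesis) is used essentially; were $\hnabla$ to have torsion these terms would survive. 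Once $2D_{[i}H_{j]} = i^\ast(\hrho)_{ij}$ is established, the final assertion is immediate: in the Einstein case $\hrho = c\,\Omega$ with $c$ locally constant, and since $i$ is Lagrangian $i^\ast(\Omega) = 0$, so $i^\ast(\hrho) = 0$ and therefore $dH_{ij} = 2D_{[i}H_{j]} = 0$, i.e. $H_i$ is closed.
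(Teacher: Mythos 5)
Your proposal is correct and is essentially the standard Codazzi--equation argument of Dazord that the paper invokes only by citation (it gives no proof beyond ``the proof in the para-K\"ahler case is formally the same''); the computation you outline is exactly what is encoded later in the paper's own identities $\sN_{ijk}\,^{l} = 2D_{[i}\Pi_{j]k}\,^{l}$ in \eqref{kic} and the traces \eqref{nic1}, together with the observation that for a Lagrangian the trace of $A\hat{R}$ over the tangent directions reproduces $i^{\ast}(\hrho)$. One minor correction: the quadratic-in-$\Pi$ terms appear only in the tangential (Gau\ss) part of the ambient curvature, not in the normal (Codazzi) part, so there is nothing to cancel after contracting --- this only simplifies the step you were worried about.
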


\subsubsection{}\label{lagpathsection}
Let $\lag$ be the Grassmannian of Lagrangian subspaces of the symplectic vector space $(\ste, \Omega)$. For transverse $L, K \in \lag$, define $P_{KL} \in \eno(\ste)$ to be the projection onto $L$ along $K$ (the unique idempotent linear operator with kernel $K$ and image $L$). Suppose $L, K_{1}, K_{2} \in \lag$ are pairwise transverse. From the identity 
\begin{align*}
P_{K_{1}L}P_{K_{2}L} + P_{K_{2}L}P_{K_{1}L} = (\Id - P_{LK_{1}})P_{K_{2}L} + (\Id - P_{LK_{2}})P_{K_{1}L} = P_{K_{2}L} + P_{K_{1}L},
\end{align*}
it follows that $P(t) \defeq tP_{K_{1}L} + (1-t)P_{K_{2}L} \in \eno(\ste)$ satisfies $P(t)^{2} = P(t)$, so defines a one-parameter family of projections. Because $\im P(t)$ is contained in $L$, it is an isotropic subspace. The complementary projection $\Id - P(t)$ has the form $ tP_{LK_{1}} + (1-t)P_{LK_{2}}$. If $K$ and $L$ are transverse and Lagrangian the operators $P_{KL}$ and $P_{LK}$ are symplectic adjoints in the sense that there holds $\Omega(P_{KL}x, y) = \Omega(x, P_{LK}y)$ for all $x, y \in \ste)$. Using this and the observation $P_{K_{1}L}P_{LK_{2}} + P_{K_{2}L}P_{LK_{1}} = 0$, it follows that for all $x, y \in \ste$,
\begin{align*}
\Omega((\Id - P(t))x, (\Id - P(t))y)  &= t(1-t)\left(\Omega(P_{LK_{1}}x, P_{LK_{2}}y) + \Omega(P_{LK_{2}}x, P_{LK_{1}}y)\right)\\
& = t(1-t)\Omega(x, (P_{K_{1}L}P_{LK_{2}} + P_{K_{2}L}P_{LK_{1}})y) = 0,
\end{align*}
which shows that $\ker P(t) = \im (\Id - P(t))$ is isotropic.
Since $\ste = \im P(t) \oplus \im (\Id - P(t)) = \im P(t) \oplus \ker P(t)$, and an isotropic subspace has dimension at most half the dimension of $\ste$, it must be that $\im P(t)$ and $\ker P(t)$ are Lagrangian.  Because $P(t)(L) = L$, the one-parameter family $K^{t}_{L} \defeq \ker P(t) \in \lag$ is everywhere transverse to $L$ and has $K^{0}_{L} = K_{2}$, $K^{1}_{L} = K_{1}$.

\subsubsection{}
Let $\hnabla$ be an affine connection on $N$ with torsion $\htau$, and let $V \subset TN$ be a subbundle. If $i:M \to N$ is an immersion transverse to $V$ (that is $i^{\ast}(TN) = i^{\ast}(V) \oplus TM$) there is induced on $M$ a connection $\tnabla$ as in section \ref{inducedconnectionsection}. Explicitly $Ti(\tnabla_{X}Y) = P_{V L} \hnabla_{X}Ti(Y)$ in which $\hnabla$ is used also for the connection induced on $i^{\ast}(TN)$, $L$ is $Ti(TM)$ viewed as a subbundle of $i^{\ast}(TN)$, and $P_{VL}$ is viewed as a section of $i^{\ast}(\eno(TN))$. 

Let $(G, A, \Omega)$ be a para-Hermitian structure on $N$ with canonical connection $\hnabla$, and let $i:M \to N$ be a non-degenerate Lagrangian immersion. Write $L = Ti(TM)$, and define $P(t)$ as in section \ref{lagpathsection}, taking for $K_{1}$ and $K_{2}$ the Lagrangian subbundles $i^{\ast}(T^{-})$ and $i^{\ast}(T^{+})$ of the pullback $i^{\ast}(TN)$. Then $P(t) \defeq tP_{T^{-}L} + (1-t)P_{T^{+}L}$ is a projection operator and $V^{t}\defeq \ker P(t)$ is a one-parameter family of Lagrangian subbundles of $i^{\ast}(TN)$ transverse to $L$ such that $V^{0} = T^{+}$ and $V^{1} = T^{-}$. Let $\pktnabla$ be the affine connection induced on $M$ by $\hnabla$ and $V^{t}$, and let $\brt{\tau}$ be its torsion, which satisfies $Ti(\brt{\tau}(X, Y)) = P(t)\htau(Ti(X), Ti(Y))$. Hence if $(G, A, \Omega)$ is para-K\"ahler then $\pktnabla$ is torsion-free for all $t$. From $P(t)A = tP_{T^{-}L} - (1-t)P_{T^{+}L}$ it follows that if $l \in L$ then $P(1/2)A(l) = (2t-1)l$. Hence $L \subset \ker P(1/2)A$, and since $\dim  \ker P(1/2)A = \dim \ker P(1/2) = \dim L$, there must hold $L = \ker P(1/2)A$, and so also $V^{1/2} = \ker P(1/2) = A(L)$. Since $A(L)$ is the $G$ orthocomplement of $L$, this means that $\tnabla^{1/2}$ is the Levi-Civita connection of the pseudo-Riemannian metric $i^{\ast}(G)$ on $M$, as is also evident from Theorem \ref{inducedcodazzitheorem}.

\begin{theorem}\label{inducedcodazzitheorem}
Let $(G, A, \Omega)$ be a para-Hermitian structure on $N$ and let $i:M \to N$ be a non-degenerate Lagrangian immersion. Let $h = i^{\ast}(G)$ be the induced metric. Let $\Pi_{ijk} = \Pi_{(ijk)}$ be the second fundamental form viewed as a section of $S^{3}(\ctm)$. The connections $\pktnabla$ satisfy 
\begin{align}\label{dhijk}
&\pktnabla_{i}h_{jk} = 2(1-2t)\Pi_{ijk}, &&\pktnabla_{[i}h_{j]k} = 2(1-2t)\Pi_{[ij]k} = (1-2t)i^{\ast}(\htau^{\sflat})_{ijk}.
\end{align}
For all $t$ the connections $\pktnabla$ and $\brtopt{\tnabla}$ are $h$-conjugate. If $(G, A, \Omega)$ is para-K\"ahler then the pencil $(\pktnabla, h)$ interpolates between the conjugate Codazzi structures $(\brtzero{\tnabla}, h)$ and $(\brtone{\tnabla}, h)$. %
\end{theorem}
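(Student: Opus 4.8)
\textbf{Proof proposal for Theorem \ref{inducedcodazzitheorem}.}
The plan is to compute $\pktnabla_{i}h_{jk}$ directly from the definition of the induced connection and then read off every claim as a consequence of the resulting formula \eqref{dhijk}. First I would fix $X,Y,Z\in\Ga(TM)$ and compute $(\pktnabla_{X}h)(Y,Z) = X(h(Y,Z)) - h(\pktnabla_{X}Y,Z) - h(Y,\pktnabla_{X}Z)$. Using that $h=i^{\ast}(G)$ and that $G$ is $\hnabla$-parallel, $X(h(Y,Z)) = G(\hnabla_{X}Ti(Y),Ti(Z)) + G(Ti(Y),\hnabla_{X}Ti(Z))$, where as usual everything is interpreted along $i(M)$ in $i^{\ast}(TN)$. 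By definition of $\pktnabla$ as the connection induced by $\hnabla$ and the transverse Lagrangian subbundle $V^{t}$, one has $Ti(\pktnabla_{X}Y) = (\Id - P(t))\hnabla_{X}Ti(Y) = P_{V^{t}L}\hnabla_{X}Ti(Y)$, so $\hnabla_{X}Ti(Y) = Ti(\pktnabla_{X}Y) + P(t)\hnabla_{X}Ti(Y)$, and the second term is the $V^{t}$-component. Substituting and cancelling the $Ti(\pktnabla_{X}Y)$ terms against $h(\pktnabla_{X}Y,Z)$ leaves $(\pktnabla_{X}h)(Y,Z) = G(P(t)\hnabla_{X}Ti(Y),Ti(Z)) + G(Ti(Y),P(t)\hnabla_{X}Ti(Z))$. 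Now the key computation is to evaluate the symplectic/metric pairing of the $V^{t}$-valued normal part against a tangent vector; since $V^{t}=\ker P(t)$ is Lagrangian and $L=Ti(TM)$ is Lagrangian, and recalling from section \ref{lagpathsection} that $P(t) = tP_{T^{-}L} + (1-t)P_{T^{+}L}$ while $P(1/2)A$ has image exactly $A(L)=L^{\perp_{G}}$, the pairing $G(P(t)w,Ti(Z))$ picks out precisely the $(1-2t)$-weighted combination of the $T^{+}$ and $T^{-}$ components of the second fundamental form. Writing the normal part $P(t)\hnabla_{X}Ti(Y)$ in terms of $\Pi^{\hnabla}(i)(X,Y)$ and using $\Omega_{IJ}=A_{I}{}^{P}G_{PJ}$ together with the definition $\Pi_{ijk}=\Pi^{\hnabla}(i)_{ij}{}^{Q}\Omega_{Qk}$, this collapses to $(\pktnabla_{X}h)(Y,Z) = 2(1-2t)\Pi(X,Y,Z)$, which is the first equation of \eqref{dhijk}.

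Given \eqref{dhijk}, the remaining assertions follow formally. The second equation of \eqref{dhijk} is obtained by skew-symmetrizing the first over $ij$: since $\Pi_{i[jk]}=0$ one has $2\Pi_{[ij]k} = i^{\ast}(\htau^{\sflat})_{ijk}$ by the identity $2\Pi_{[ij]k}=i^{\ast}(\tau)_{ijk}$ recorded in the paragraph defining the second fundamental form of a Lagrangian immersion, where $\htau^{\sflat}$ denotes the torsion with its vector slot lowered via $\Omega$ as in the convention $X^{\sflat}=i(X)\Omega$. For the conjugacy statement I would invoke the notion of $h$-conjugate connection: two torsion-free (or, more generally, appropriately torsioned) connections $\nabla$ and $\nabla'$ are $h$-conjugate when $\nabla_{i}h_{jk} + \nabla'_{i}h_{jk}$ is the derivative of $h$ by its Levi-Civita connection, equivalently when the completely symmetric parts of $\nabla h$ and $\nabla' h$ are opposite. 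Since replacing $t$ by $1-t$ sends $2(1-2t)$ to $-2(1-2t)=2(1-2(1-t))\cdot(-1)$, formula \eqref{dhijk} gives $\pktnabla_{i}h_{jk} = -\brtopt{\tnabla}_{i}h_{jk}$ (up to the torsion-type terms, which match because $\brt{\tau}$ transforms by the same weight $(1-2t)$), and this is exactly the defining relation for $\pktnabla$ and $\brtopt{\tnabla}$ to be $h$-conjugate; I would check this against the curve construction of section \ref{lagpathsection}, noting that $V^{t}$ and $V^{1-t}$ are the two Lagrangian subbundles symmetrically placed about $V^{1/2}=A(L)$.

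Finally, the para-K\"ahler case: by Theorem \ref{lagrangiansplittingtheorem} the canonical connection $\hnabla$ is torsion-free when $(G,A,\Omega)$ is para-K\"ahler, hence $\brt{\tau}\equiv 0$ and each $\pktnabla$ is torsion-free, so the second equation of \eqref{dhijk} reduces to $\pktnabla_{[i}h_{j]k}=0$. Thus $(\pktnabla,h)$ is a Codazzi structure for every $t$, and in particular the endpoints $(\brtzero{\tnabla},h)$ and $(\brtone{\tnabla},h)$ are Codazzi; they are conjugate by the previous paragraph with $t=0$. I expect the main obstacle to be the bookkeeping in the first computation — specifically, carefully tracking how the projection $P(t)$ distributes the second fundamental form between the $T^{+}$ and $T^{-}$ summands and verifying that the coefficient emerges as exactly $(1-2t)$ rather than some other affine function of $t$. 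This requires using the symplectic-adjointness relation $\Omega(P_{KL}x,y)=\Omega(x,P_{LK}y)$ from section \ref{lagpathsection} together with the compatibility identities \eqref{aphcompatibility}, and it is the one place where a sign or normalization error would propagate into all the subsequent claims. Everything after \eqref{dhijk} is purely formal manipulation of symmetrizations.
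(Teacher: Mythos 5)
Your plan for \eqref{dhijk} is essentially the paper's computation: write $-h(\pktnabla_{X}Y,Z)=\Omega(P(t)\hnabla_{X}Y,AZ)$, move $P(t)$ across $\Omega$ onto $AZ$ as its symplectic adjoint $\Id-P(t)$ using the relation from section \ref{lagpathsection}, and then use the identity $(\Id-P(t))AZ=AZ+(1-2t)Z$ for $Z$ tangent to $M$ (which follows from $AZ=Z-2Z_{-}=2Z_{+}-Z$). That identity is exactly where the coefficient $(1-2t)$ comes from, and since you flagged this as the risk point you should make it explicit. One bookkeeping caution: with the paper's conventions $P(t)=tP_{T^{-}L}+(1-t)P_{T^{+}L}$ has image $L$ and kernel $V^{t}$, so $P(t)=P_{V^{t}L}$ is the \emph{tangential} projection and $Ti(\pktnabla_{X}Y)=P(t)\hnabla_{X}Ti(Y)$; your line ``$Ti(\pktnabla_{X}Y)=(\Id-P(t))\hnabla_{X}Ti(Y)=P_{V^{t}L}\hnabla_{X}Ti(Y)$'' has the complementary projections swapped, which is precisely the kind of slip that would flip the sign of the final coefficient.

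The genuine gap is in the conjugacy step. You take as the definition of $h$-conjugacy that $\pktnabla_{i}h_{jk}+\brtopt{\tnabla}_{i}h_{jk}=0$ (or that the symmetric parts be opposite). That is a \emph{consequence} of $h$-conjugacy, not its definition, and it does not determine one connection from the other: the $h$-conjugate of $\nabla$ is the specific connection $\bar\nabla=\nabla+h^{kp}\nabla_{i}h_{jp}$, characterized by the duality $Xh(Y,Z)=h(\nabla_{X}Y,Z)+h(Y,\bar\nabla_{X}Z)$ (this is the relation used in \eqref{conjhg0}--\eqref{conjhg}), and any connection differing from $\bar\nabla$ by a tensor $S_{ij}\,^{k}$ with $S_{i(j}\,^{p}h_{k)p}=0$ satisfies the same equation $\nabla'h=-\nabla h$. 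Such $S$ are necessarily skew in $ij$, so for torsion-free connections your condition does pin $\nabla'$ down; but in the general para-Hermitian case the $\pktnabla$ carry torsion $P(t)\htau\neq 0$, and to finish your argument you would have to verify in addition that the torsion of $\brtopt{\tnabla}$, namely $P(1-t)\htau$, equals $P(t)\htau+2h^{kp}\pktnabla_{[i}h_{j]p}$. Your remark that the torsion terms ``match because $\brt{\tau}$ transforms by the same weight'' asserts this rather than proving it, and checking it requires the purity $\htau^{(1,1)}=0$. The paper avoids all of this by verifying the duality identity directly: it computes $h(\pktnabla_{X}Y,Z)+h(Y,\brtopt{\tnabla}_{X}Z)$, applies the same adjointness trick to both terms via $(\Id-P(t))AZ=AZ+(1-2t)Z$ and $(\Id-P(1-t))AY=AY+(2t-1)Y$, and observes that the $t$-dependent contributions combine into $(2t-1)X\Omega(Y,Z)=0$, leaving $X(h(Y,Z))$. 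I recommend doing the same; it is shorter than patching the torsion bookkeeping. Your treatment of the skew part of \eqref{dhijk} and of the para-K\"ahler case is fine.
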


\begin{proof}
For the sake of readability notation will be abused throughout this proof by there being written $X$ in place of $Ti(X)$ for $X \in \Ga(TM)$. Observe that $(\Id - P(t))A = A  - tP_{T^{-}L} + (1-T)P_{T^{+}L}$, so for $X \in \Ga(TM)$ there holds $(\Id - P(t))A X = A X + (1-2t)X$. Hence
\begin{align*}
&(\pktnabla_{X}h)(Y, Z) = \lie_{X}(G(Y, Z)) + \Omega(P(t)\hnabla_{X}Y, A Z) + \Omega(P(t)\hnabla_{X}Z, A Y)\\
& = \lie_{X}(G(Y, Z)) + \Omega(\hnabla_{X}Y, (\Id - P(t))A Z) + \Omega(\hnabla_{X}Z, (\Id - P(t))A Y)\\
& = (\hnabla_{X}G)(Y, Z)+ (1-2t)\Omega(\hnabla_{X}Y, Z) + (1-2t)\Omega(\hnabla_{X}Z, Y)\\
& = 2(1-2t)\Pi(X, Y, Z).
\end{align*}
Using again $(\Id - P(t))A X = A X + (1-2t)X$ for $X \in \Ga(TM)$,
\begin{align*}
&h(\pktnabla_{X}Y, Z) + h(Y, \brtopt{\tnabla}_{X}Z) = \Omega(A Z, P(t)\hnabla_{X}Y)+ \Omega(A Y, P^{1-t}\hnabla_{X}Z)\\
& = \Omega((\Id - P(t))A Z, \hnabla_{X}Z) + \Omega((\Id - P^{1-t})A Y, \hnabla_{X}Z)\\
& = G(\hnabla_{X}Y, Z) + G(Y, \hnabla_{X}Z) + (2t-1)\left(\Omega(\hnabla_{X}Y, Z) + \Omega(Y, \hnabla_{X}Z)\right)\\
& = X(h(Y, Z)) + X(\Omega(Y, Z)) =  X(h(Y, Z)),
\end{align*}
which shows that $\pktnabla$ and $\brtopt{\tnabla}$ are $h$-conjugate. From \eqref{dhijk} it follows that if $T^{\pm}$ are integrable then $\pktnabla_{[i}h_{j]k} = 0$, so $(\pktnabla, h)$ is a Codazzi structure. 
\end{proof}

\subsubsection{}\label{kisection}
Let $(N, G, A, \Omega)$ be a $2n$-dimensional para-K\"ahler manifold with canonical connection $\hnabla$ and let $i:M \to N$ be a non-degenerate Lagrangian immersion with second fundamental form $\Pi_{ijk} = \Pi_{(ijk)}$ and induced metric $h_{ij} = i^{\ast}(G)_{ij}$. Note that it need not be the case that $\pktnabla$ is the aligned representative of the AH structure $(\pkten, [h])$ because it need not be the case that $\det h$ is $\pktnabla$-parallel. In fact, the aligned representative of $(\pkten, [h])$ is $\pnabla = \pktnabla + \tfrac{4(1-2t)}{n+2}H_{(i}\delta_{j)}\,^{k}$. The Faraday primitive of $(\pkten, [h]) = (\pen, [h])$ corresponding to $h$ is $\tfrac{(2t -1)}{n+2}H_{i}$, and
\begin{align}\label{pkinduced}
\pnabla = D + (2t - 1)L_{ij}\,^{k} + \tfrac{(2t - 1)}{n+2}(h_{ij}H^{k} - 2H_{(i}\delta_{j)}\,^{k}),
\end{align}
in which $D$ is the Levi-Civita connection of $h$ and $L_{ijk} = L_{(ijk)} = \Pi_{ijk} - \tfrac{3}{n+2}H_{(i}h_{jk)}$ is the completely $h$-trace-free part of the second fundamental form. Define the \textbf{induced pencil of AH structures $(\pen, [h])$} to be the family of AH structures $(\pen, [h])$ having cubic torsions $\brt{\bt}_{ij}\,^{k} = (2t -1)L_{ij}\,^{k}$ the aligned representatives $\pnabla$ of which have the forms \eqref{pkinduced}. Write $\nabla$ for the specialization $t = 1$, and define the \textbf{AH structure induced by $i:M \to N$} to be the AH structure $(\en, [h])$ given by specializing $t = 1$ in $\pen$. The Faraday form satisfies $(n+2)F_{ij} = 2 (1-2t) D_{[i}H_{j]}$, and so by Lemma \ref{dazordlemma} it is a multiple of the pullback via $i$ of the Ricci form. Hence if the given para-K\"ahler structure is Einstein then $(\pen, [h])$ is closed.

Define $|L|_{h}^{2} \defeq L_{ijk}L_{abc}h^{ia}h^{jb}h^{kc}$ and define similarly $|\Pi|_{h}^{2}$. Then $|\Pi|_{h}^{2} = |L|^{2}_{h} + \tfrac{3}{n+2}|H|_{h}^{2}$. Define also $L_{ij} \defeq L_{ip}\,^{q}L_{jq}\,^{p}$. There hold
\begin{align}
\label{kic5} 
\begin{split}
2\Pi_{p[i}\,^{l}\Pi_{j]k}\,^{p}& = 2L_{p[i}\,^{l}L_{j]k}\,^{p} + \tfrac{2}{n+2}H^{p}\left(L_{p[i}\,^{l}h_{j]k} + \delta_{[i}\,^{l}L_{j]kp} \right) \\ \notag &+ \tfrac{2}{(n+2)^{2}}\left(H_{[i}h_{j]k}H^{l} +\delta_{[i}\,^{l}h_{j]k}|H|_{h}^{2} + \delta_{[i}\,^{l}H_{j]}H_{k}\right),\\
 D_{i}\Pi_{jk}\,^{l} & = D_{i}L_{jk}\,^{l} + \tfrac{1}{n+2}\left(h_{jk}D_{i}H^{l} + \delta_{k}\,^{l}D_{i}H_{j} + \delta_{j}\,^{l}D_{i}H_{k} \right),\\
 2D_{[p}\Pi_{i]j}\,^{p} &  = D_{p}L_{ij}\,^{p} - \tfrac{n}{n+2}\left(D_{(i}H_{j)} - \tfrac{1}{n}h_{ij}D_{p}H^{p} \right) - D_{[i}H_{j]}.
\end{split}
\end{align}
Note that the last line simplifies when $H^{i}$ is a $[h]$-conformal Killing vector field.

Let $P \in \Ga(\eno(i^{\ast}(TN)))$ be projection onto $Ti(TM)$ along $ATi(TM)$. For $X, Y \in \Ga(TM)$ there hold $G(PX, Y) = 0$ and
\begin{align}
&D_{X}Y = P(\hnabla_{X}Y),& &\hnabla_{X}Y = D_{X}Y + A\Pis(X, Y),& &\hnabla_{X}AY = AD_{X}Y + \Pis(X, Y),
\end{align}
in which $\Pis(X, Y)$ is defined by $h(\Pis(X, Y), Z) = \Pi(X, Y, Z)$, and notation has been abused by writing $X$ and $Y$ where what is meant are extensions of $Ti(X)$ and $Ti(Y)$ defined near $i(M)$. 

Let $\hat{R}_{IJK}\,^{L}$ be the curvature of $\hnabla$ and $\sR_{ijk}\,^{l}$ be the curvature of $D$. By definition of $P$ it makes sense to define tensors $\sN_{ijk}\,^{l}$ and $\sT_{ijk}\,^{l}$ on $M$ by $\sT_{ijk}\,^{l} \defeq (P\hat{R})_{ijk}\,^{l}$ and $\sN_{ijk}\,^{l} \defeq (PA\hat{R})_{ijk}\,^{l}$, and there hold
\begin{align}\label{kic}
&\sR_{ijk}\,^{l}  = \sT_{ijk}\,^{l} - 2 \Pi_{p[i}\,^{l}\Pi_{j]k}\,^{p},& %
&\sN_{ijk}\,^{l} = 2D_{[i}\Pi_{j]k}\,^{l}.
\end{align}
Let $\sT_{ij} \defeq \sT_{pij}\,^{p}$, $\sN_{ij} \defeq \sN_{pij}\,^{p}$, and $\sT \defeq \sT_{p}\,^{p}$, and note $\sN_{p}\,^{p} = 0$. Plugging \eqref{kic5} into \eqref{kic} and simplifying yields
\begin{align}
\label{gic2}&\sR_{ijkl} = \sT_{ijkl} + L_{ijkl} - \tfrac{2}{n+2}H^{p}\left(h_{l[i}L_{j]kp} + h_{k[j}L_{i]lp} \right)  \\ \notag &\qquad \qquad - \tfrac{2}{(n+2)^{2}}\left( H_{[i}h_{j]k}H_{l} + h_{l[i}h_{j]k}|H|_{h}^{2} + h_{l[i}H_{j]}H_{k}\right),& \\
\label{gic2b}&\sR_{ij} = \sT_{ij} +  L_{ij} - \tfrac{n }{(n+2)^{2}}|H|_{h}^{2}h_{ij}  + \tfrac{(2-n)}{(n+2)^{2}}H_{i}H_{j} + \tfrac{(2-n)}{n+2}H^{p}L_{ijp} ,\\
\label{gic3}&\sR = \sT +|L|^{2}_{h} + \tfrac{(1-n)}{n+2}|H|_{h}^{2},\\
\label{qr}& \qR(L) = \qT(L) + (L_{ijkl}L^{ijkl} + L^{ij}L_{ij} - H^{p}L^{ij}L_{ijp} - \tfrac{1}{n+2} H^{i}H^{j}L_{ij} - \tfrac{1}{n+2}|H|^{2}|L|^{2}),\\
\label{nic1}&\div(L)_{ij} = D_{p}L_{ij}\,^{p} =  \sN_{ij} + \tfrac{n}{n+2}\left(D_{(i}H_{j)} - \tfrac{1}{n}h_{ij}D_{p}H^{p}\right) +  D_{[i}H_{j]},\\
\label{nic0}&2D_{[i}L_{j]kl}  = \sN_{ijkl} + \tfrac{2}{n+2}\left(h_{k[i}D_{j]}H_{l} + h_{l[i}D_{j]}H_{k} - h_{kl}D_{[i}H_{j]} \right),\\
\label{nic2}&\klie(L)_{ijkl} = \tfrac{1}{2}\left(\sN_{ijkl} - \tfrac{1}{2n}h_{k[i}\sN_{j]l} - \tfrac{1}{2n}h_{l[i}\sN_{j]k}\right)\\
\notag & \qquad \qquad - \tfrac{1}{2n(n+2)}\left(2h_{k[i}dH_{j]l} +2 h_{l[i}dH_{j]k} + nh_{kl}dH_{ij}\right),
\end{align}
in which $dH_{ij} = 2D_{[i}H_{j]}$ and $L_{ijkl} \defeq 2L_{k[i}\,^{p}L_{j]lp}$.

Recall that $L_{ij} \defeq L_{ip}\,^{q}L_{jq}\,^{p}$, so $h^{ij}L_{ij} = |L|_{h}^{2}$. The relations between the curvature $\sR_{ijk}\,^{l}$ of $D$ and the curvature $\brt{R}_{ijk}\,^{l}$ of $\pnabla$ follow from substituting $\tfrac{1}{2}\bt_{ij}\,^{k} = (1-2t)L_{ij}\,^{k}$ and $\ga_{i} = \tfrac{2t-1}{n+2}H_{i}$ into \eqref{confcurvijkl}-\eqref{confricfree}, and using \eqref{gic2}-\eqref{nic2}. Writing $\la = (2t - 1)$ and $\mu = |\det h|^{1/n}$, 
\begin{align}
\brt{T}_{ij} & = \sR_{ij}   - \la^{2}L_{ij} + \tfrac{(n-2)\la}{n+2}D_{(i}H_{j)} + \tfrac{\la}{n+2}D^{p}H_{p} h_{ij}  + \tfrac{(n-2)\la^{2}}{(n+2)^{2}}\left(H_{i}H_{j} - |H|_{h}^{2}h_{ij}\right),\\
\notag & =  \sT_{ij} + (1-\la^{2}) \left(L_{ij}  + \tfrac{2-n}{(n+2)^{2}}H_{i}H_{j}\right) - \tfrac{(n +(n-2)\la^{2})}{(n+2)^{2}}|H|_{h}^{2}h_{ij} + \tfrac{(2-n)}{n+2}H^{p}L_{ijp} \\
\notag & + \tfrac{(n-2)\la}{n+2} D_{(i}H_{j)}  + \tfrac{\la}{n+2}D^{p}H_{p} h_{ij},\\
\label{mrtscal} \mu \,\brt{R} & \defeq h^{ij}\brt{R}_{ij}  = \sT + (1 - \la^{2})|L|^{2}_{h} + \tfrac{1-n}{n+2}(1 +\tfrac{n-2}{n+2} \la^{2})|H|_{h}^{2}   + \tfrac{2(n-1)\la}{n+2}D^{p}H_{p},\\
\label{mrmrtr}  \mr{\brt{T}}_{ij} & = \mr{\sT}_{ij} + (1-\la^{2})\mr{L}_{ij}+ (n-2)\left( \tfrac{(\la^{2} - 1)}{(n+2)^{2}}\mr{H\tensor H}_{ij} - \tfrac{1}{n+2}H^{p}L_{ijp} + \tfrac{\la}{n+2}\mr{DH}_{ij} \right) ,\\
\label{mrtu}\brt{\uf}_{ij} & = \la \left(D_{p}L_{ij}\,^{p} - \tfrac{n\la}{n+2}H_{p}L_{ij}\,^{p}\right),\\
\notag & = \la \left(\tfrac{1}{2}\sN_{ij} + \tfrac{n}{2(n+2)}\left(D_{(i}H_{j)} - \tfrac{1}{n}D^{p}H_{p}h_{ij}\right) + \tfrac{1}{2}D_{[i}H_{j]}  - \tfrac{n\la}{n+2}H_{p}L_{ij}\,^{p} \right).
\end{align}
Using $(n+2)|\det h|^{-1}\pnabla_{i}|\det h| = 2n\la H_{i}$ gives 
\begin{align}\label{cke}
\pnabla_{i}\brt{R} &=  D_{i}\brt{R} + \tfrac{2\la}{n+2}H_{i}R.
\end{align}

\begin{theorem}\label{mczerotheorem}
If a positive definite Lagrangian immersion $i:M \to N$ of $M$ into an $2n$-dimensional para-K\"ahler manifold $(N, G, A)$ of constant holomorphic sectional curvature $4c$ has zero mean curvature then the induced AH structure $(\en, [h])$ (corresponding to $t = 1$) on $M$ is exact Einstein with the induced metric $h$ as a distinguished metric, and there hold $R = 2c|\det h|^{-1/n}$, $T_{ijk}\,^{l} = 2c \delta_{[i}\,^{l}h_{j]k}$, and $E_{ijkl} \equiv 0$. In particular, if $n > 2$ then $(\en, [h])$ is projectively flat and conjugate projectively flat. 
\end{theorem}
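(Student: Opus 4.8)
The plan is to reduce everything to the Gauss--Codazzi relations \eqref{gic2}--\eqref{mrtu} of section \ref{kisection} by computing the two ambient curvature tensors $\sT_{ijk}\,^{l} = (P\hat{R})_{ijk}\,^{l}$ and $\sN_{ijk}\,^{l} = (PA\hat{R})_{ijk}\,^{l}$ from the constant para-holomorphic sectional curvature formula \eqref{chsc}. First I would record the consequences of the hypothesis $H_{i}=0$: the second fundamental form equals its trace-free part, $\Pi_{ijk}=L_{ijk}$; the Faraday primitive $\ga_{i}=\tfrac{2t-1}{n+2}H_{i}$ vanishes for every $t$, so by \eqref{pkinduced} the aligned representative at $t=1$ is $\nabla = D + L_{ij}\,^{k}$ and satisfies $\nabla_{i}|\det h|=0$; hence $(\en,[h])$ is exact with $h$ a distinguished metric, and its cubic torsion is $\bt_{ij}\,^{k}=-2L_{ij}\,^{k}$ (equivalently $\tfrac12\bt = (1-2t)L$ at $t=1$). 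Throughout I set $\la = 2t-1 = 1$.

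The computational heart is the evaluation of $\sT$ and $\sN$. For $X,Y,Z$ tangent, $M$ being Lagrangian kills all the $\Omega$-terms in \eqref{chsc}, leaving $\hat{R}(X,Y)Z = c(h(Y,Z)X - h(X,Z)Y)$, which is again tangent; hence $\sT_{ijk}\,^{l} = 2c\,\delta_{[i}\,^{l}h_{j]k}$, so $\sT_{ij} = c(n-1)h_{ij}$ is pure trace. Because $A\hat{R}(X,Y)Z \in A(Ti(TM)) = \ker P$, I get $\sN_{ijk}\,^{l}=0$. These are the only inputs needed.

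Feeding $H=0$, $\sN=0$, and $\sT_{ij}=c(n-1)h_{ij}$ into the relations then gives the theorem almost mechanically. From \eqref{nic1}, $\div(L)_{ij}=\sN_{ij}=0$, and \eqref{mrtu} yields $\uf_{ij}=0$, i.e. $E_{ij}=0$; from \eqref{mrmrtr} (with $1-\la^{2}=0$ and $\mr{\sT}_{ij}=0$) I get $\mr{T}_{ij}=0$, so the structure is naive Einstein. From \eqref{nic2} (with $H=0$, $\sN=0$) the tensor $L$ is $\klie$-closed, and since $\klie$ of the cubic torsion is the anti-self-conjugate Weyl tensor (Lemma \ref{ascweyllemma}, cf. \eqref{kliebt}) it follows that $E_{ijkl}=0$; equivalently $\sN=0$ makes $L$ Codazzi via \eqref{nic0} and then \eqref{uplusv} gives $\uf_{ijkl}=0$. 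The crucial cancellation is that, because $\bt=-2L$, the term $\tfrac14\bt_{ijkl}$ of \eqref{confcurvijkl} equals the quadratic term $L_{ijkl}$ of the Gauss equation \eqref{gic2}, so comparing the two expressions for $\sR_{ijkl}$ forces $T_{ijkl}=\sT_{ijkl}=2c\,h_{l[i}h_{j]k}$, i.e. $T_{ijk}\,^{l}=2c\,\delta_{[i}\,^{l}h_{j]k}$; being of constant-curvature type its completely trace-free part vanishes, so $A_{ijkl}=0$. Tracing then gives the value of $R$ (read off from \eqref{mrtscal}, which with $\la=1$, $H=0$ reduces to $\mu R = \sT$), and $\nabla_{i}R=0$ follows from \eqref{cke}. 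Since both anti-self-conjugate tensors $E_{ij}$ and $E_{ijkl}$ vanish the curvature is self-conjugate, so for $n>2$ Lemma \ref{einsteinconservativelemma} upgrades naive Einstein to Einstein (for $n=2$ one instead uses $\nabla_{i}R=0$ together with $F_{ij}=0$ to verify \eqref{einstein} directly).

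Finally, for $n>2$ the structure is closed ($F_{ij}=0$, being exact), has $A_{ijkl}=0$ and $E_{ijkl}=0$, and, being naive Einstein, satisfies $\mr{W}_{ij}=\mr{\bar{W}}_{ij}=0$; Lemma \ref{projflatahlemma} and Corollary \ref{conjprojflatae} then give projective and conjugate projective flatness. I expect the main obstacle to be purely bookkeeping: getting the normalization of the cubic torsion right, namely $\bt=-2L$ rather than $L$, so that the $\tfrac14\bt_{ijkl}$ cancellation between \eqref{confcurvijkl} and \eqref{gic2} is exact. This is precisely what forces $T_{ijk}\,^{l}$ to be of pure constant-curvature type, hence $A_{ijkl}=0$, and it is the only place where the full strength of the constant para-holomorphic sectional curvature hypothesis (rather than some weaker ambient condition) is used, since it is what makes $\sT$, and therefore $T$, free of any Weyl-type trace-free part. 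Tracking the density weights in the scalar curvature $R$ is the other point requiring care.
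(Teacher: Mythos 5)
Your proposal is correct and follows essentially the same route as the paper's proof: compute $\sT_{ijk}\,^{l}=2c\,\delta_{[i}\,^{l}h_{j]k}$ and $\sN_{ijk}\,^{l}=0$ from \eqref{chsc} using the Lagrangian condition, then read off $\div(L)=0$, $\klie(L)=0$, $\mr{T}_{ij}=0$, $E_{ij}=0$ from \eqref{nic1}, \eqref{nic2}, \eqref{mrmrtr}, \eqref{mrtu} with $H=0$, get $E_{ijkl}=0$ from \eqref{kliebt}, and conclude via exactness, parallel scalar curvature, and Lemma \ref{projflatahlemma}. The only difference is cosmetic: you obtain the conservation condition via self-conjugacy of the curvature and Lemma \ref{einsteinconservativelemma}, while the paper invokes exactness plus parallel $R$ directly; both are valid.
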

\begin{proof}
Since $(N, G, A)$ is Einstein, $D_{[i}H_{j]} = 0$. From \eqref{chsc} there follow 
\begin{align}
&\sN_{ijk}\,^{l} = 0, &&\sT_{ijk}\,^{l} = 2c\delta_{[i}\,^{l}h_{j]k},& &\sT_{ij} = c(n-1)h_{ij}, && \sT = cn(n-1). 
\end{align}From \eqref{nic1} and \eqref{nic2} there follow
\begin{align}\label{knic}
&(n+2)\div(L)_{ij} =n\mr{\lie_{H^{\sharp}}h}_{ij} ,&&\klie(L)_{ijkl} = 0.
\end{align}
The vanishing of $\mr{T}_{ij}$ and $E_{ij}$ is immediate from \eqref{mrmrtr} and \eqref{mrtu}. In this case $(\en, [h])$ is exact and \eqref{mrtscal} shows $\mu R = 2c$, so that $R$ is parallel and $(\en, [h])$ is Einstein. The claim about the curvature follows from \eqref{confcurvijkl} and \eqref{gic2}. Since $\klie(L) = 0$ there holds $E_{ijkl} = 0$ by \eqref{kliebt}.
\end{proof}

Theorem \ref{mczerotheorem} suggests a relation between mean curvature zero submanifolds of a para-K\"ahler manifold of constant para-holomorphic sectional curvature and affine hyperspheres. In fact, as will be explained elsewhere, there is a kind of local equivalence between the two.

\section{Analogy with conformal structures}\label{codprojsection}
It has been claimed that Codazzi projective structures should be viewed as analogues of conformal structures; in the analogy subordinate AH structures play the role of representative metrics. This final section is intended to give a bit more evidence for this claim, and to propose that many of the standard constructions and problems of conformal geometry, e.g. the ambient metric construction or the Yamabe problem, should have analogues/generalizations in this context.

Throughout this section it is convenient to work with c-weights (see section \ref{cweightsection}).

\subsection{M\"obius structures, the Codazzi Laplacian, and an analogue of the Yamabe problem}\label{mobiussection}
Recall from \cite{Calderbank-mobius} that a \textbf{M\"obius structure} on a conformal manifold $(M, [h])$ is a smooth second order linear differential operator from $\cmf^{1}$ to covariant $\cmf^{1}$-valued symmetric $[h]$-trace free tensors which is equal modulo zeroth order linear operators to the symmetric trace-free part of the Hessian determined by some covariant derivative on $\cmf^{1}$. In dimensions greater than $2$ a conformal structure determines a canonical M\"obius structure; namely, the operator obtained from a torsion-free conformal connection by modifying the trace-free Hessian by subtracting the trace-free Schouten tensor does not depend on the choice of torsion-free conformal connection. This is a special case of the more general occurrence that in dimension $n > 2$ a Codazzi projective structure determines a canonical M\"obius structure. This is described next.

\subsubsection{}
Let $\nabla$ be the aligned representative of an AH structure generating a given Codazzi projective structure and for $u \in \Ga(\cmf^{\la})$ define associated operators $\lap$, $\clap$, and $\mob$ by $\lap(u)\defeq  \nabla^{p}\nabla_{p}u$ and
\begin{align*}
  & \clap(u) \defeq \lap(u) + \tfrac{2-n}{4(n-1)}Ru,&
&\mob(u)_{ij} \defeq \mr{\nabla\nabla u}_{ij} - \mr{W}_{ij}u.%
\end{align*}

\begin{lemma}
If $\nabla$ and $\tnabla$ are the aligned representatives of AH structures generating the same Codazzi projective structure and 
\begin{align*}
\tilde{\mob}, \mob: \Ga(\cmf^{1}) \to \Ga(\cmf^{1}\tensor S^{2}(\ctm)) \,\, \text{and}\,\, \tilde{\clap}, \clap: \Ga(\cmf^{(2-n)/2}) \to \Ga(\cmf^{-(n+2)/2})
\end{align*}
are the associated operators, then $\tilde{\mob}(u)_{ij} = \mob(u)_{ij}$ and $\tilde{\clap}(u) = \clap(u)$. 
\end{lemma}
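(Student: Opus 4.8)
The plan is to reduce both claims to direct computations of how the two second-order operators transform under a change of aligned representative, and then to check that these transformations cancel against the transformation laws already recorded in Theorem \ref{codazzitransformtheorem}. By Lemma \ref{conformalprojectivediff}, since $\tnabla$ and $\nabla$ are aligned representatives of AH structures generating the same Codazzi projective structure, their difference tensor is $\tnabla - \nabla = 2\al_{(i}\delta_{j)}\,^{k} - \al^{k}H_{ij}$ for some one-form $\al_{i}$. The trace of this difference is $n\al_{i}$, so by the c-weight convention of section \ref{cweightsection} the induced covariant derivatives on densities satisfy $\tnabla_{i}u = \nabla_{i}u + \la \al_{i}u$ for $u \in \Ga(\cmf^{\la})$. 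This first-order law is the only input about the density bundle that I need; everything else is a matter of iterating it carefully, remembering that $\tnabla$ acts on a covariant index by subtracting $\Pi_{ij}\,^{p}(\,\cdot\,)_{p}$ and on the density weight by adding $\la\al_{i}(\,\cdot\,)$.

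For the M\"obius operator I would take $\la = 1$ and compute $\tnabla_{i}\tnabla_{j}u$ by applying $\tnabla_{i}$ to the weighted one-form $\tnabla_{j}u = \nabla_{j}u + \al_{j}u$. Substituting $\Pi_{ij}\,^{k} = 2\al_{(i}\delta_{j)}\,^{k} - \al^{k}H_{ij}$ and collecting terms, the off-diagonal gradient contributions cancel and, after invoking the definition \eqref{alijshort} of $\al_{ij}$, the non-pure-trace part organizes into $\mr{\tnabla\tnabla u}_{ij} = \mr{\nabla\nabla u}_{ij} + \mr{\al}_{ij}\,u$, the pure-trace terms $H_{ij}\al^{p}\nabla_{p}u$ and $H_{ij}\al_{p}\al^{p}u$ being annihilated by the trace-free projection. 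On the other hand, \eqref{wijdiff} gives $\tilde{W}_{ij} - W_{ij} = \al_{ij}$, so taking trace-free symmetric parts yields $\mr{\tilde{W}}_{ij} - \mr{W}_{ij} = \mr{\al}_{ij}$. Subtracting, the two corrections cancel and $\tilde{\mob}(u)_{ij} = \mob(u)_{ij}$.

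For the Codazzi Laplacian I would carry out the analogous iteration for general weight $\la$ and then contract the resulting expression for $\tnabla_{i}\tnabla_{j}u$ with $H^{ij}$, using $H^{ij}\Pi_{ij}\,^{p} = (2-n)\al^{p}$. The coefficient of the gradient term $\al^{p}\nabla_{p}u$ comes out to $2\la - (2-n)$, which vanishes precisely at the weight $\la = (2-n)/2$ carried by the argument of $\clap$; with this same weight the residual $\al_{p}\al^{p}u$ terms combine, via the second identity in \eqref{alijshort}, to give $\tilde{\lap}u - \lap u = \tfrac{2-n}{2}(\al_{p}\,^{p})u$. Finally \eqref{cpdiffscalar} gives $\tilde{R} - R = 2(1-n)\al_{p}\,^{p}$, and a direct check shows $\tfrac{2-n}{4(n-1)}(\tilde{R} - R)u = \tfrac{n-2}{2}(\al_{p}\,^{p})u$, which is exactly the negative of the Laplacian correction, so $\tilde{\clap}u = \clap u$. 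The genuine difficulty here is not conceptual but lies in the bookkeeping: one must track the density weight through every contraction and verify that the specific coefficient $\tfrac{2-n}{4(n-1)}$ together with the distinguished weight $(2-n)/2$ are exactly what force all the $\al_{p}\,^{p}$- and $\al_{p}\al^{p}$-terms to cancel. In other words, the invariance is a conformal-weight tuning, and the main risk is an arithmetic slip in the transformation of the second covariant derivative of a weighted density.
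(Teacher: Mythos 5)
Your proposal is correct and follows essentially the same route as the paper: compute the transformation of $\tnabla_{i}\tnabla_{j}u$ for general c-weight $\la$, specialize to $\la = 1$ and take trace-free symmetric parts against \eqref{wijdiff} for $\mob$, then trace and specialize to $\la = (2-n)/2$ against \eqref{cpdiffscalar} for $\clap$. The coefficient checks you flag (the vanishing of $2\la + n - 2$ at the distinguished weight and the cancellation of the $\al_{p}\,^{p}$ terms) are exactly the computations the paper records in \eqref{lambdahessian} and the displayed formula for $\tilde{\lap}(u) - \lap(u)$.
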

\begin{proof}
Let $u \in \Ga(\cmf^{\la})$ and recall that $\tnabla - \nabla$ has the form $2\al_{(i}\delta_{j)}\,^{k} - H_{ij}\al^{k}$%
for some one-form $\al_{i}$. Then, as $\tnabla_{i}u - \nabla_{i} u = \la \al_{i}u$,
\begin{align}\label{lambdahessian}
&\tnabla_{i}\tnabla_{j}u - \nabla_{i}\nabla_{j}u = 2(\la - 1)\al_{(i}\nabla_{j)}u + H_{ij}\al^{p}\nabla_{p}u + \la\left(\nabla_{i}\al_{j} + (\la -2)\al_{i}\al_{j} + H_{ij}\al^{p}\al_{p} \right)u,
\end{align}
so that when $\la = 1$ there holds $\mr{\tnabla\tnabla u}_{ij} = \mr{\nabla \nabla u}_{ij} + \mr{\al}_{ij}$,
in which $\mr{\al}_{ij} = \al_{ij} - \tfrac{1}{n}H_{ij}\al_{p}\,^{p}$ is defined as in \eqref{alijshort}. The claimed independence of $\mob(u)$ from the choice of subordinate AH structure follows from \eqref{wijdiff}. Tracing \eqref{lambdahessian} shows that the operator $\lap:\Ga(\cmf^{\la}) \to \Ga(\cmf^{\la - 2})$ transforms as
\begin{align*}
\tilde{\lap}(u) - \lap(u) = (2\la + n-2)\al^{p}\nabla_{p}u + \la (\nabla_{p}\al^{p} + (n-2 + \la)\al^{p}\al_{p})u,
\end{align*}
so that if $\la = \tfrac{2-n}{2}$ then $\tilde{\lap}(u) - \lap(u) = \tfrac{2-n}{2}\al_{p}\,^{p}$, which with \eqref{cpdiffscalar} shows that the operator $\clap$ from $\cmf^{(2-n)/2} \to \cmf^{-(n+2)/2}$ depends only on the Codazzi projective structure, and not on the choice of representative AH structure.
\end{proof}

\subsubsection{}
Evidently the \textbf{Codazzi Laplacian} $\clap$ and $\mob$ respectively generalize the usual conformally invariant Laplacian and M\"obius operator. The following makes this more precise. Given a conformal structure $[h]$ the usual conformal Laplacian is the operator $\clap_{[h]}(u)$ defined for $u \in \Ga(\cmf^{(2-n)/2})$ and any $h \in [h]$ by $\clap_{[h]}(u) \defeq |\det h|^{1/n}\left(\lap_{h}u + \tfrac{2-n}{4(n-1)}\sR_{h}u\right)$, and the usual conformal M\"obius operator is the operator $\mob_{[h]}(u)_{ij}$ defined for $u \in \Ga(\cmf^{1})$ by $\mob_{[h]}(u)_{ij} \defeq \mr{DDu}_{ij} - \mr{\sW}_{ij}u$, in which $\sW_{ij}$ is the usual conformal Schouten tensor. Straightforward computations using $D_{i}u = \nabla_{i}u + \la \ga_{i} u$ for $u \in \Ga(\cmf^{\la})$ and \eqref{confscal} show that for $u \in \Ga(\cmf^{(2-n)/2})$ there holds
\begin{align}\label{claps}
\clap(u) + \tfrac{2-n}{16(n-1)}\bt u = \clap_{[h]}(u).
\end{align}
Given an AH structure, by \eqref{aijdiff} the first order operator defined on $\cmf^{\la}$ by $-\tfrac{1}{2}\bt_{ij}\,^{p}\nabla_{p}u +\la E_{ij}u$ depends only on the Codazzi projective structure generated by the AH structure. Straightforward computations using \eqref{confric} and \eqref{ddivbt} show that for $u \in \Ga(\cmf^{1})$ there holds 
\begin{align}
\begin{split}
\mob_{[h]}(u)_{ij} - \mob(u)_{ij} & = - \tfrac{1}{2}\bt_{ij}\,^{p}\nabla_{p}u + E_{ij}u = -\tfrac{1}{2}\bt_{ij}\,^{p}D_{p}u + \tfrac{1}{2n}(D_{p}\bt_{ij}\,^{p})u .
\end{split}
\end{align}
Where $u \neq 0$ there holds $-n\bt_{ij}\,^{p}D_{p}u + (D_{p}\bt_{ij}\,^{p})u = u^{n+1}D_{p}(u^{-n}\bt_{ij}\,^{p})$ so that
\begin{align}
2n(\mob_{[h]}(u)_{ij} - \mob(u)_{ij})= u^{n+1}D_{p}(u^{-n}\bt_{ij}\,^{p}).
\end{align}

\subsubsection{Yamabe problem}\label{yamabesection}
Now it will be shown that the usual Yamabe problem admits a generalization in terms of AH structures and Codazzi projective structures. Two standard references for the Yamabe problem are \cite{Lee-Parker} and \cite{Aubin}, although here nothing is used from either.

Let $M$ be a compact $n$-dimensional manifold. Define a functional $\gfunc^{k}(\en, [h])$ on the space of Riemannian signature AH structures by
\begin{align}
\gfunc^{k}(\en, [h]) \defeq \vol_{h}(M)^{(2k-n)/n}\int_{M}R^{k}|\det h|^{(n-2k)/2n} = \vol_{h}(M)^{(2k-n)/n}\int_{M}\uR_{h}^{k}|\det h|^{1/2},
\end{align}
in which $h \in [h]$ is any Gauduchon metric. The volume term is included so that the functional does not depend on the choice of Gauduchon metric. 

By part $(1)$ of Lemma \ref{conformalprojectivediff} it makes sense to say that two AH structures are \textbf{restricted conformal projectively equivalent} if they are conformal projectively equivalent and they induce the same equivalence class of Faraday primitives; that is the one form determining the difference of their aligned representatives is exact. Define a \textbf{restricted Codazzi projective structure} $(\enbr, [h])$ to be an equivalence class of AH structures modulo this notion of equivalence. A generalization of the Yamabe problem is to minimize $\gfunc \defeq \gfunc^{1}$ over a restricted Codazzi projective structure. If $(\enbr, [h])$ contains an exact Weyl structure $(\en, [h])$, then the aligned representative of $\en$ is the Levi-Civita connection of a distinguished metric $h \in [h]$, and $\gfunc(\en, [h])$ is just the volume normalized total scalar curvature of $h$; the restricted conformal projectively equivalent AH structure $(\ben, [h])$ determined by $\si_{i} = df$ is exact Weyl with distinguished metric $e^{2f}h$, and so minimizing $\gfunc$ over such a restricted Codazzi projective structure is the same as minimizing the volume normalized total scalar curvature functional over a conformal class, which is the usual Yamabe problem. 

Let $M$ be compact of dimension $n > 2$ and let $(\ten, [h])$ and $(\en, [h])$ be AH structures generating the same Codazzi projective structure. By Lemma \ref{conformalprojectivediff} there is a one-form $\al_{i}$ such that $\tnabla - \nabla = 2\al_{(i}\delta_{j)}\,^{k} - h_{ij}\al^{k}$. Let $\tilde{h}, h \in [h]$ be representative metrics and write $\tilde{h} = fh$. In what follows raise and lower indices using $h$. It is easily verified that the Faraday primitives are related by $\tilde{\ga}_{i} = \ga_{i} + \si_{i} - \al_{i}$ in which $2\si = d\log{f}$, and the Levi-Civita connections are related by $\tD - D = 2\si_{(i}\delta_{j)}\,^{k} - h_{ij}\si^{k}$. Straightforward computation using \eqref{cpdiffscalar} shows that the unweighted scalar curvatures of $(\ten, [h])$ and $(\en, [h])$, taken with respect to the chosen metrics are related by
\begin{align}\label{twogaud2}
f\tilde{\uR}_{\tilde{h}} = \uR_{h} + 2(n-1)\dad_{h}\al - (n-1)(n-2)|\al|_{h}^{2} + 2(n-1)(n-2)\al^{p}\ga_{p}.
\end{align}
If now it is assumed that $(\ten, [h])$ and $(\en, [h])$ generate the same restricted Codazzi projective structure, so that $\al$ is exact, and there is written $(n-2)\al = 2d\log{u}$ for $0 < u \in \cinf(M)$, then \eqref{twogaud2} becomes
\begin{align}\label{twogaud2b}
\begin{split}
f\tilde{\uR}_{\tilde{h}} &= \uR_{h} + \tfrac{4(1-n)}{n-2}u^{-1}\lap_{h}u  + 4(n-1)(n-2)u^{-1}h^{pq}du_{p}\ga_{q}\\
& = \tfrac{4(1-n)}{n-2}|\det h|^{-(n+2)/4n}u^{-1}\clap(u|\det h|^{(n-2)/4n})\\&\quad - 2(n-1)\dad_{h}\ga - (n-1)(n-2)|\ga|_{h}^{2} + 4(n-1)(n-2)u^{-1}h^{pq}du_{p}\ga_{q},
\end{split}
\end{align}
the second equality following from \eqref{claps}. Imposing some condition on the representative metrics has the effect of relating $f$ and $u$.

If now there is imposed the requirement that $\tilde{h}$ and $h$ be Gauduchon and there is written $f = \phi^{2/(n-2)}$ then $\al$ and $\phi$ are related by 
\begin{align}\label{twogaud}
\dad_{h}\left(d\phi + (n-2)\phi(\ga - \al) \right) = 0
\end{align}
By Theorem \ref{gauduchontheorem}, there is for given $\al$ a positive $\phi \in \cinf(M)$ solving \eqref{twogaud}, and such $\phi$ is determined uniquely up to multiplication by a positive constant. Now suppose $(\en, [h])$ is exact so that, in particular, $\ga = 0$. That $(\ten, [h])$ be in the same restricted Codazzi projective equivalence class as $(\en, [h])$, means there is $q \in \cinf(M)$ such that $\al = dq$, in which case one sees by inspection that $\phi = e^{(n-2)q}$ solves \eqref{twogaud}. It follows from the uniqueness (modulo positive constants) of the solution to \eqref{twogaud} that $(n-2)\al = d\log{\phi} = (n-2)\si$. This shows that $(\ten, [h])$ is also exact. This proves that on a compact manifold of dimension $n > 2$, a restricted Codazzi projective structure contains a subordinate AH structure which is exact if and only if any subordinate AH structure is exact (the same is true for $n = 2$, with a similar proof). Thus it makes sense to speak of an exact restricted Codazzi projective structure. 

In this case the difference tensor of the aligned representatives is generated by the logarithmic differential of the conformal factor of the Gauduchon metric. In particular, since $\tilde{\ga}_{i} = 0 = \ga_{i}$ there holds $\al_{i} = \si_{i}$, and so, writing $f$ and $u$ as before, $f$ is a multiple of $u^{4/(n-2)}$ by a positive constant which may be taken to be $1$. In \eqref{twogaud2b} and with \eqref{claps} this yields
\begin{align}\label{yamabe}
\begin{split}
u^{(n+2)/(n-2)}\tilde{\uR}_{\tilde{h}} & = \tfrac{4(1-n)}{n-2}|\det h|^{-(n+2)/4n}\clap(u|\det h|^{(n-2)/4n}),\\
& = \tfrac{4(1-n)}{(n-2)}\lap_{h}u + (\sR_{h} - \tfrac{1}{4}|L|_{h}^{2})u,
\end{split}
\end{align}
which generalizes the usual Yamabe equation. If $h$ is the Gauduchon metric of an exact AH structure $(\en, [h])$ and $u$ is a positive function solving 
\begin{align}\label{yamabe1}
\begin{split}
u^{(n+2)/(n-2)}\ka & = \tfrac{4(1-n)}{n-2}|\det h|^{-(n+2)/4n}\clap(u|\det h|^{(n-2)/4n}),
\end{split}
\end{align}
for a constant $\ka$, then $\tilde{h} = u^{4/(n-2)}h$ is the Gauduchon metric of an exact AH structure $(\ten, [h])$  generating the same Codazzi projective structure as $(\en, [h])$ and such that the unweighted scalar curvature $\tilde{\uR}_{\tilde{h}}$ in the Gauduchon metric is equal to the constant $\ka$.

Let $||\psi||_{p,h}$ denote the $L^{p}$ norm of $\psi$ with respect to the $h$ volume measure $ d\vol_{h}| = |\det h|^{1/2}$. The restriction to $(\enbr, [h])$ of $\gfunc$ can be expressed in terms of the functional $\gfunc_{(\en, [h])}$ on $\{\psi\in\cinf(M): \psi > 0\}$ defined in terms of the fixed representative $(\en, [h]) \subset (\enbr, [h])$ and obtained from $\gfunc$ by substitution of \eqref{twogaud2} (with the $f$ in \eqref{twogaud2} written as $\phi^{2/(n-2)}$): 
\begin{align}\label{gfunc2}
\begin{split}
\gfunc_{(\en, [h])}(\psi) &= \gfunc((\ten, [h])) \\
&= ||\phi||_{n/(n-2), h}^{-1}\int_{M}\phi\left(\uR_{h} + 2(n-1)\dad_{h}\al - (n-1)(n-2)|\al|_{h}^{2}\right)\, d\vol_{h}, %
\end{split}
\end{align}
in which $(n-2)\al \defeq d\log{\psi}$ is the one-form determining the difference $\tnabla - \nabla$, and $\phi$ is determined by $\psi$ as the solution to \eqref{twogaud}. Although \eqref{twogaud} determines $\phi$ only up to multiplication by a positive constant the functional is well-defined.

Let $\phi(t)$ be a one-parameter family of positive functions such that $\phi(0) = 1$ and $\tfrac{d}{dt}_{|t = 0}\phi(t) = \dot{\phi}$. Given an exact AH structure consider the one-parameter family $(\ven, [h])$ of AH structures within its restricted conformal projective equivalence class and having aligned representative $\vnabla = \nabla + 2\al(t)_{(i}\delta_{j)}\,^{k} - h_{ij}\al(t)^{k}$, in which $(n-2)\al(t) = d\log{\phi(t)}$. Let $R(t)$ be the scalar curvature of $(\ven, [h])$, and let $h(t) = \phi(t)^{2/(n-2)}h$, which is a Gauduchon metric. Using \eqref{gfunc2} there results
\begin{align}\label{yamvar}
\begin{split}
\tfrac{d}{dt}_{|t = 0}\gfunc(\ven, [h]) & = \tfrac{d}{dt}_{|t = 0}\gfunc_{(\en, [h])}(\phi(t)) = \vol_{h}(M)^{(2-n)/n}\left(\int_{M}\uR_{h}\ctr(\dot{\phi}) \,d\vol_{h} \right),
\end{split}
\end{align}
in which $\ctr(\dot{\phi})$ denotes $\dot{\phi}$ minus its mean over $M$ (so $\int_{M}\ctr(\dot{\phi}) = 0$). Though $\phi(t)$ can be replaced by $e^{r(t)}\phi(t)$, the condition $\phi(0) = 1$ forces $r(0) = 0$, and determines $\dot{\phi}$ up to translation (by $\dot{r}(0)$), so $\ctr(\dot{\phi})$ is independent of the choice of $r(t)$ provided $r(0) = 1$. Since on a compact manifold all functions of mean zero can be realized as $\ctr(\psi)$ for some positive function $\psi$, \eqref{yamvar} shows that an exact AH structure $(\en, [h])$ is critical for $\gfunc$ with respect to variations within $(\enbr, [h])$ if and only if $\uR_{h}$ is constant. Such a critical AH structure $(\ten, [h])$ can be found by solving \eqref{yamabe} with $\tilde{\uR}_{\tilde{h}}$ constant.

In the case that $M$ is compact and $(\en, [h])$ is an exact AH structure such that $\inf_{\phi > 0}\gfunc_{(\en, [h])}(\phi)$ is non-positive it seems likely that modifications of the arguments showing the solvability of the usual Yamabe problem in the case of non-positive Yamabe constant will show that it is possible to solve \eqref{yamabe} for $\tilde{\uR}_{\tilde{h}}$ a negative constant. The non-vanishing cubic torsion only makes $\uR_{h}$ less than $\sR_{h}$, and this appears to help rather than to cause problems. 

In the case the infimum $\inf_{\psi > 0}\gfunc_{(\en, [h])}(\psi)$ is positive, then it will be necessary to consider non-exact AH structures. In this case, in addition to the difficulties one expects from the usual Yamabe problem, the dependence of $\phi$ on $\psi$ is less explicit, and this complicates the computations.

\subsection{Generalizing the Bach tensor}\label{bachsection}
For a Weyl structure $(\en, [h])$ the Bach tensor $\bach_{ij}$ is the trace-free symmetric covariant $2$-tensor usually defined to be $\bach_{ij} = \nabla^{p}W_{pij} + W^{pq}W_{pijq} = \nabla^{p}A_{pij} + W^{pq}A_{pijq} = \tfrac{1}{3-n}\nabla^{p}\nabla^{q}W_{pijq} + W^{pq}W_{pijq}$. That $\bach_{ij}$ is symmetric can be checked with multiple uses of the Ricci and Bianchi identities. When $n = 4$ then $\bach_{ij}$ is divergence free and depends only on the Codazzi projective structure underlying $(\en, [h])$. On $4$-manifolds closed Einstein Weyl structures and (anti)-self-dual conformal structures are Bach flat. The usual derivation of the Bach tensor is that the equation $\bach_{ij} = 0$ describes the critical variations of the functional $\int_{M}W_{ijkl}W^{ijkl}$ on Weyl structures on compact $4$-manifolds. Proofs of the basic properties just described can be found in many places, for example \cite{Calderbank-faraday} or \cite{Pedersen-Swann}.

From the work of C. Fefferman and C. R. Graham on their ambient metric construction for conformal metrics (see \cite{Fefferman-Graham}, \cite{Fefferman-Graham-ambient}) there emerges a better understanding of the variational origin of the Bach tensor. The Bach tensor of a metric is the $4$-dimensional specialization of the \textbf{ambient obstruction tensor} $\bach_{ij}$, which is a trace and divergence free symmetric conformally invariant two-tensor associated to a conformal structure on an even-dimensional manifold. In \cite{Graham-Hirachi-obstruction}, Graham and K. Hirachi have shown that $\bach_{ij}$ arises as the gradient of the integral of T. Branson's $Q$-curvature. The $Q$-curvature is an expression built from the derivatives of the curvature tensor of a $2n$-dimensional conformal metric. It generalizes the scalar curvature of a two-dimensional metric in the sense that when the metric is changed conformally the $Q$-curvature transforms by addition of the image of the conformal factor under a conformally invariant differential operator the top order piece of which is the $n$th power of the Laplacian (this is a GJMS operator). Because of this transformation rule, the integral of the $Q$-curvature is a conformal invariant, and in $4$-dimensions it follows from the generalized Gau\ss-Bonnet formula that its integral differs from that of $W_{ijkl}W^{ijkl}$ by a constant multiple of the Euler characteristic.

The result of Graham-Hirachi shows that the Bach tensor should be regarded as the first variation of the integrated $Q$-curvature, and that its description in terms of the variation of $W_{ijkl}W^{ijkl}$ is in some sense an artifact of the generalized Gau\ss\,-Bonnet theorem. This suggests that in the study of Codazzi projective structures one aim should be to generalize the ambient obstruction tensor, rather than the Bach tensor \textit{per se}, and that to do so will require generalizing to AH structures constructions such as the following: the GJMS conformally invariant powers of the Laplacian; the $Q$-curvature; and the Fefferman-Graham ambient construction or Poincar\'e metric construction. Evidently this is an entire research project in its own right, beyond the scope of the present article. Here, instead, there are reported attempts to generalize the Bach tensor to AH structures following an interpretation due to D. Calderbank and T. Diemer. These attempts have not succeeded simply because the computations become rather complicated, but they suggest that the research project just described should be realizable, at least for some subclass of AH structures, e.g. the conservative ones, or those with self-conjugate curvature.

The point of view of Section $9$ of \cite{Calderbank-Diemer} is that the Bach tensor results from applying to the conformal Weyl tensor the (second-order) conformally invariant differential operator formally adjoint to the operator on trace-free symmetric covariant two-tensors given by linearization of the Weyl curvature, which is in \cite{Calderbank-Diemer} called the \textit{Bach operator}. As the Weyl curvature of an AH structure depends on both the connection and the conformal structures, what is meant by its linearization requires some discussion of the nature of their simultaneous variation (because the connection and the metric are linked by the vanishing of the conformal torsion). Nonetheless, it turns out to be straightforward to defined directly such an operator for Codazzi projective structures, although in this more general context there is really a family of such operators, differing by an invariant first order operator; see Lemma \ref{bachtransformlemma} below.

Precisely, there is sought to associate to an AH structure on a $4$-manifold a trace-free, symmetric tensor $\bach_{ij}$ having c-weight $-2$ and the following properties:
\begin{enumerate}
\item $\bach_{ij}$ depends only on the underlying Codazzi projective structure.
\item $\bach_{ij}$ is self-conjugate.
\item $\nabla^{p}\bach_{ip} = 0$.
\end{enumerate}
Such a tensor will be called a \textbf{generalized Bach tensor}. Additionally, it would be desirable that
\begin{enumerate}
\setcounter{enumi}{3}
\item The equations $\bach_{ij} = 0$ are the Euler-Lagrange equations of the integral of an appropriate generalization of the Q-curvature.
\end{enumerate}If $\om_{ij} = \om_{(ij)}$ is trace-free of c-weight $\la$ then $\tnabla^{p}\om_{ip} = \nabla^{p}\om_{ip} + (n-2+\la)\al^{p}\om_{ip}$. It follows that when $n = 4$ the modified divergences $\nabla^{p}\bach_{ip} + \mu \bt_{i}\,^{pq}\bach_{pq}$ are invariant for any $\mu \in \rea$. This shows that condition $(3)$ is sensible. A tensor satisfying only $(1)$ is certainly not unique. For example to such a tensor can be added invariant expressions such as $A_{abc(i}E^{abc}\,_{j)}$ and $A_{abc(i}\tbt^{abc}\,_{j)}$; the latter expression is self-conjugate, so a tensor satisfying even both $(1)$ and $(2)$ need not be unique. Taking the self-conjugate part of a tensor satisfying $(1)$ yields a tensor satisfying $(1)$ and $(2)$. The unresolved difficulty is to produce such a tensor which is divergence free. This would almost certainly be a routine consequence of a variational description as in $(4)$.

Fix an AH structure $(\en, [h])$ with conjugate $(\ben, [h])$. In this section (only) let $\A_{\la}$ denote the vector space of completely trace-free tensors having c-weight $\la$ and the algebraic symmetries of the same type as has the self-conjugate conformal Weyl tensor. That is $\om_{ijkl} \in \A$ satisfies $\om_{ijkl} = \om_{[ij][kl]} = \om_{klij}$, $\om_{[ijk]l} = 0$, and $\om_{pij}\,^{p} =0$.

\begin{lemma}\label{bachtransformlemma}
Let $(\en, [h])$ be an AH structure on a manifold of dimension $n$. For $\om \in \A_{\la}$ define operators $\bachl$ and $\lbachl$ taking values in $\Ga(S^{2}_{0}(\ctm)\tensor \A_{\la - 4})$ by 
\begin{align}\label{bachopdefined}
&\bachl(\om)_{ij}  \defeq \nabla^{p}\nabla^{q}\om_{pijq} - (n - 5 + \la)\bar{W}^{pq}\om_{pijq},\\
&\lbachl(\om)_{ij} \defeq \bt^{abc}\nabla_{a}\om_{bijc} + 2nE^{pq}\om_{pijq} - 2\bt_{(i}\,^{ab}\nabla^{p}\om_{j)abp},
\end{align}
The skew part of $\bachl$ is $\bachl(\om)_{[ij]} = -\tfrac{1}{2}W^{ab}\,_{[i}\,^{p}\om_{j]pab}$, which is invariant for all $\la$. 
If $(\en, [h])$ and $(\ten, [h])$ are AH structures generating the same Codazzi projective structure and having aligned representatives related by $\tnabla - \nabla = 2\al_{(i}\delta_{j)}\,^{k} - H_{ij}\al^{k}$, then 
\begin{align}
\label{bachtransform}
&\tilde{\bachl}(\om)_{ij} = \bachl(\om)_{ij}  + (n-6+\la)\left(2\al^{p}\nabla^{q}\om_{p(ij)q} + (n-5+\la)\al^{p}\al^{q}\om_{p(ij)q} \right),\\
\label{lbachtransform}
&\tilde{\lbachl}(\om)_{ij} = \lbachl(\om)_{ij} + (n-6+\la)\left(\bt^{abc}\al_{a}\om_{bijc} - 2\bt_{(i}\,^{ab}\om_{j)abp}\al^{p}\right).
\end{align}
In particular on $\A_{6-n}$ any linear combination of the operators $\bachi$ and $\lbachi$ depends only on the underlying Codazzi projective structure.
\end{lemma}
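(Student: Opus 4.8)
The plan is to verify the two transformation formulas \eqref{bachtransform} and \eqref{lbachtransform} by direct computation, and then observe that the claimed invariance on $\A_{6-n}$ is an immediate corollary. The computations are straightforward applications of the conformal-projective change rules already assembled in the excerpt, organized so as to keep the bookkeeping under control. First I would record, once and for all, the behavior of the relevant first-order objects under $\tnabla - \nabla = 2\al_{(i}\delta_{j)}\,^{k} - H_{ij}\al^{k}$ acting on tensors in $\A_{\la}$ and on their divergences. The basic input is \eqref{eomdiff}, which gives the change of $\nabla \om$ for a trace-free symmetric $\om$; the very same derivation applies to tensors with the algebraic type of $\A_{\la}$, since the relevant symbol computation only uses the trace-free condition and the Cartan-product structure of the symbol. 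Thus I would first establish the analogues of the three lines of \eqref{eomdiff} for $\om \in \A_{\la}$: the change of $\tnabla_{i}\om_{jklm}$, the change of the single divergence $\tnabla^{p}\om_{p(ij)q}$ (equivalently the trace of the first), and, iterating, the change of the double divergence $\tnabla^{p}\tnabla^{q}\om_{pijq}$.

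Next I would assemble \eqref{bachtransform}. Applying the change rule twice to $\nabla^{p}\nabla^{q}\om_{pijq}$ produces the leading terms $2(n-6+\la)\al^{p}\nabla^{q}\om_{p(ij)q} + (n-6+\la)(n-5+\la)\al^{p}\al^{q}\om_{p(ij)q}$ together with curvature terms arising from commuting the two derivatives past $\al$; these curvature terms are exactly those needed to match the change of the zeroth-order piece $-(n-5+\la)\bar W^{pq}\om_{pijq}$. Here the key external inputs are the transformation \eqref{aijdiff}–\eqref{rijsymdiff} of the curvature traces, together with \eqref{wijdiff} and \eqref{alijconjugate}, which control how $\bar W_{ij}$ changes and how the conjugate object $\bar\al_{ij}$ differs from $\al_{ij}$. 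The point of subtracting precisely $(n-5+\la)\bar W^{pq}\om_{pijq}$ in the definition \eqref{bachopdefined} is that its variation cancels all terms in the double-divergence variation except the two displayed in \eqref{bachtransform}; so the bulk of the work is checking that this cancellation is exact, which is the familiar mechanism by which conformally invariant operators are built. The claim about the skew part $\bachl(\om)_{[ij]}$ follows from a separate, short Ricci-identity computation: skew-symmetrizing $\nabla^{p}\nabla^{q}\om_{pijq}$ in $ij$ turns the double divergence into a curvature contraction $-\tfrac12 R^{ab}\,_{[i}\,^{p}\om_{j]pab}$, and since $\om$ has the symmetries of the self-conjugate Weyl type, only the $W_{ijkl}$ part of $R_{ijkl}$ (via \eqref{rintermsofw}) survives the contraction, giving $-\tfrac12 W^{ab}\,_{[i}\,^{p}\om_{j]pab}$; its invariance is then read off from Theorem \ref{aeinvariant} (invariance of $A_{ijkl}$ and $E_{ijkl}$, hence of $W_{ijkl}$).

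For \eqref{lbachtransform} the computation is lower order and hence shorter: $\lbachl$ is first order in $\om$, and one applies the single-derivative change rule to each of the three terms of $\lbachl$, using in addition \eqref{aijdiff} for the variation of $E_{ij}$ and the fact that the cubic torsion $\bt_{ijk}$ is an invariant of the Codazzi projective structure. Collecting terms, the homogeneity factor $(n-6+\la)$ again factors out, leaving the two cubic-torsion terms displayed on the right of \eqref{lbachtransform}; as before the role of the zeroth-order term $2nE^{pq}\om_{pijq}$ and the correction $-2\bt_{(i}\,^{ab}\nabla^{p}\om_{j)abp}$ is to absorb the curvature- and torsion-valued terms that would otherwise spoil the transformation. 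Finally, the concluding assertion is immediate: setting $\la = 6-n$ kills the common factor $(n-6+\la)$ in both \eqref{bachtransform} and \eqref{lbachtransform}, so $\tilde{\bachi}(\om) = \bachi(\om)$ and $\tilde{\lbachi}(\om) = \lbachi(\om)$ for $\om \in \A_{6-n}$, whence any linear combination $a\,\bachi + b\,\lbachi$ is unchanged under conformal projective equivalence and so depends only on the underlying Codazzi projective structure.

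I expect the main obstacle to be purely organizational rather than conceptual: controlling the proliferation of index terms when commuting the two covariant derivatives in the double-divergence variation and matching them against the variation of $\bar W^{pq}\om_{pijq}$. The delicate points are keeping straight which curvature trace ($W_{ij}$ versus $\bar W_{ij}$, or $A_{ij}$ versus $E_{ij}$) appears after each contraction — this is exactly where \eqref{alijconjugate} and the conjugation rules \eqref{brtaij}–\eqref{brteij} must be invoked with care — and ensuring that the terms genuinely combine into the stated homogeneity factor $(n-6+\la)$ rather than some spurious polynomial in $\la$. A useful check on the algebra is to specialize to the Weyl case ($\bt_{ijk}=0$), where $\lbachl$ vanishes identically and $\bachl$ must reduce to the transformation of the classical Bach operator as in Section $9$ of \cite{Calderbank-Diemer}; matching that known case pins down the coefficients and guards against sign errors.
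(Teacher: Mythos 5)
Your proposal is correct and follows essentially the same route as the paper: establish the transformation rules for $\tnabla\om$, the single and double divergences, and the contractions $W^{pq}\om_{pijq}$, $E^{pq}\om_{pijq}$, $\bt^{abc}\tnabla_a\om_{bijc}$, $\bt_{(i}{}^{ab}\tnabla^p\om_{j)abp}$ under the change $\tnabla-\nabla = 2\al_{(i}\delta_{j)}{}^k - H_{ij}\al^k$, combine them using $\bar W_{ij} = W_{ij} - 2E_{ij}$ so that the $\nabla^p\al^q$ terms cancel and the factor $(n-6+\la)$ emerges, and obtain the skew part via the Ricci identity together with \eqref{rintermsofw} and the invariance of $W_{ijkl}$ from Theorem \ref{aeinvariant}. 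The Weyl-case consistency check against \cite{Calderbank-Diemer} is a sensible addition but not a departure from the paper's argument.
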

\begin{proof}
From the Ricci identity, $2Q_{[ij]}  = (4-n)F_{ij}$, and \eqref{rintermsofw} there follows
\begin{align*}
2\nabla^{p}\nabla^{q}\om_{ijpq} = (n-4+\la)F^{pq}\om_{ijpq} + 2R^{ab}\,_{[i}\,^{p}\om_{j]pab} = (n-5+\la)F^{pq}\om_{ijpq} + 2W^{ab}\,_{[i}\,^{p}\om_{j]pab},
\end{align*}
from which follows $\bachl(\om)_{[ij]} = -\tfrac{1}{2}W^{ab}\,_{[i}\,^{p}\om_{j]pab}$. From this it is evident that the skew-part $\bachl(\om)_{[ij]}$ is invariant.
Keeping in mind that $\nabla^{p}\om_{ijkp}$ has c-weight $(\la - 2)$, straightforward computations show
\begin{align}
\label{bl0}\begin{split}
\tnabla_{p}\om_{ijkl} &= \nabla_{p}\om_{ijkl} + (\la - 4)\al_{p}\om_{ijkl} + 2\al_{[i}\om_{j]pkl} + 2\al_{[k}\om_{l]pij}\\ 
& + 2\ga^{q}\left(H_{p[j}\om_{i]qkl} + H_{p[l}\om_{k]qij} \right),
\end{split}\\
\label{bl1}\tnabla^{p}\om_{ijkp} &= \nabla^{p}\om_{ijkp} + (n-5+\la)\al^{p}\om_{ijkp},\\
\label{bl2}\begin{split}
\tnabla^{p}\tnabla^{q}\om_{pijq}&  = \nabla^{p}\nabla^{q}\om_{pijq} + (n-5+\la)(\nabla^{p}\al^{q} + (n-7+\la)\al^{p}\al^{q})\om_{pijq}\\ & + 2(n-6+\la)\al^{p}\nabla^{q}\om_{p(ij)q}, 
\end{split}
\\
\label{bl3}\tilde{W}^{pq}\om_{pijq} & = W^{pq}\om_{pijq} + (\nabla^{p}\al^{q} - \al^{p}\al^{q} + \al^{a}\bt_{a}\,^{pq})\om_{pijq},\\
\label{bl4} \tilde{E}^{pq}\om_{pijq} & = E^{pq}\om_{pijq} + \tfrac{1}{2}\al^{a}\bt^{abc}\al_{a}\om_{pijq},\\
\label{bl5}\bt^{abc}\tnabla_{a}\om_{bijc}& = \bt^{abc}\nabla_{a}\om_{bijc}  + (\la - 6)\bt^{abc}\al_{a}\om_{bijc} + 2\bt_{(i}\,^{ab}\om_{j)abp}\al^{p},\\
\label{bl6}\bt_{(i}\,^{ab}\tnabla^{p}\om_{j)abp} &= \bt_{(i}\,^{ab}\nabla^{p}\om_{j)abp} + (n-5+\la)\bt_{(i}\,^{ab}\om_{j)abp}\al^{p}.
\end{align}
From $\bar{W}_{ij} = W_{ij} - 2E_{ij}$ and \eqref{bl2}, \eqref{bl3}, and \eqref{bl4} there follows \eqref{bachtransform}. From \eqref{bl4}, \eqref{bl5}, and \eqref{bl6} there follows \eqref{lbachtransform}. Hence when $\la = 2$ the operators $\bachi$ and $\lbachi$ depend only on the underlying Codazzi projective structure. 
\end{proof}

The operator $\bach \defeq \bachtwo$ is what is usually called the \textit{Bach operator}. When $n = 4$ it makes sense to apply the Bach operator to the self-conjugate Weyl tensor $A_{ijkl}$. By Lemma \ref{bachtransformlemma} there holds
\begin{align}\label{bachskew}
\bach(A)_{[ij]} = -\tfrac{1}{2}W^{ab}\,_{[i}\,^{p}A_{j]pab} =-\tfrac{1}{2}E^{ab}\,_{[i}\,^{p}A_{j]pab},
\end{align}
so that $\bach(A)_{ij} + \tfrac{1}{2}E^{ab}\,_{[i}\,^{p}A_{j]pab}$ is symmetric. 

\begin{lemma}
On a $4$-manifold, there holds $\bach(A) = 0$ for a closed Einstein AH structure with self-conjugate curvature.
\end{lemma}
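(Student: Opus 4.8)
The plan is to specialize the operator $\bach = \bachtwo$ to dimension $n = 4$ and c-weight $\la = 2$, where $n - 5 + \la = 1$, so that $\bach(A)_{ij} = \nabla^{p}\nabla^{q}A_{pijq} - \bar{W}^{pq}A_{pijq}$, and then to show that both terms contribute nothing. First I would record what the hypotheses give: self-conjugate curvature forces $E_{ijkl} = 0$ and $E_{ij} = 0$; closedness gives $F_{ij} = 0$ and, via the conservation condition, that the scalar curvature is parallel; and by Lemma \ref{faradaycottonlemma} a closed Einstein structure has $A_{ijk} = 0$ and $E_{ijk} = 0$. From $\bar{W}_{ij} = W_{ij} - 2E_{ij}$ together with \eqref{wijdefined} and $E_{ij} = F_{ij} = 0$ I obtain $\bar{W}_{ij} = A_{ij}$, and since the Einstein condition makes $T_{ij}$ pure trace, \eqref{aij} yields $A_{ij} = \tfrac{R}{2n(1-n)}H_{ij}$, i.e. $A_{ij}$ is a multiple of $H_{ij}$.

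Two of the pieces then vanish immediately. By \eqref{bachskew} the antisymmetric part is $\bach(A)_{[ij]} = -\tfrac{1}{2}E^{ab}{}_{[i}{}^{p}A_{j]pab}$, which is zero because $E_{ijkl} = 0$; hence $\bach(A)_{ij}$ is symmetric. For the zeroth-order term, $\bar{W}^{pq}A_{pijq} = A^{pq}A_{pijq} = \tfrac{R}{2n(1-n)}H^{pq}A_{pijq} = 0$, since $A_{ijkl}$ is completely trace-free. Therefore $\bach(A)_{ij} = \nabla^{p}\nabla^{q}A_{pijq}$, and everything is reduced to the second-order term.

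Next I would simplify the second-order term. Specializing \eqref{adiffbianchi} to $n = 4$ with $A_{ijk} = E_{ijk} = 0$ gives $\nabla^{q}A_{pijq} = (3-n)A_{pij} + (1-n)E_{pij} = 0$, and the symmetries $A_{ijkl} = A_{[ij][kl]} = A_{klij}$ make the divergence on every slot of $A$ vanish. Using this together with the non-metricity identity $\nabla_{a}H^{bq} = -\bt_{a}{}^{bq}$ (which comes from $\nabla_{a}H_{bq} = \bt_{abq}$ since the conformal torsion is zero), I would compute
\begin{align*}
\nabla^{p}\nabla^{q}A_{pijq} &= H^{ap}H^{bq}\nabla_{a}\nabla_{b}A_{pijq} = H^{ap}\nabla_{a}\!\left(\nabla^{q}A_{pijq}\right) - H^{ap}(\nabla_{a}H^{bq})\nabla_{b}A_{pijq}\\
&= \bt^{pbq}\nabla_{b}A_{pijq},
\end{align*}
the first term vanishing because $\nabla^{q}A_{pijq} \equiv 0$. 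Thus the whole statement reduces to showing that the cubic-torsion term $\bt^{pbq}\nabla_{b}A_{pijq}$ vanishes.

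This last step is the main obstacle. Because $\bt^{pbq}$ is totally symmetric, only the part of $\nabla_{b}A_{pijq}$ symmetric across the three contracted indices survives, and this is precisely the part \emph{not} directly controlled by the second Bianchi identity (which governs the antisymmetric part). My plan is to attack it by Leibniz, writing $\bt^{pbq}\nabla_{b}A_{pijq} = \nabla_{b}(\bt^{pbq}A_{pijq}) - (\nabla_{b}\bt^{pbq})A_{pijq}$ up to further non-metricity corrections quadratic in $\bt$, where the divergence $\nabla_{b}\bt^{pbq}$ is a multiple of $E^{pq}$ by \eqref{eijup} and hence vanishes. The remaining divergence of the algebraic contraction $\bt^{pbq}A_{pijq}$ I would then control using the self-conjugate second Bianchi identity \eqref{wf6} with $a_{ijk} = A_{ijk} = 0$, the algebraic relation $\bt_{[i}{}^{pq}A_{j]pqk} = 0$ from the corollary to Lemma \ref{diffbianchilemma}, the complete trace-freedom of $A_{ijkl}$, and the fact that in dimension $n = 4$ Lemma \ref{weylcriterion} annihilates the over-antisymmetrized tensors that arise. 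The delicate bookkeeping of these $\bt\cdot\nabla A$ and $\bt\cdot\bt\cdot A$ terms, and the verification of their exact cancellation, is where the real work lies, and is exactly the sort of computation the paper flags as becoming unwieldy.
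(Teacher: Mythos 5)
Your reduction follows the paper's route exactly up to the last step, and it is the last step where the proposal comes apart. The paper's (one-line) proof is precisely the argument you carry out: self-conjugacy gives $E_{ijkl}=0$, hence $W_{ijkl}=A_{ijkl}$ and $A_{i}=0$; closedness and Lemma \ref{faradaycottonlemma} give $F_{ij}=0$ and $A_{ijk}=0=E_{ijk}$; the Einstein condition makes $\bar{W}_{ij}=A_{ij}$ pure trace, so the algebraic term of \eqref{bachopdefined} dies against the trace-freeness of $A_{ijkl}$; and \eqref{adiffbianchi} together with the corollary to Lemma \ref{diffbianchilemma} (which yields $\bt_{[i}\,^{pq}A_{j]pqk}=(1-n)E_{ijk}=0$) gives $\nabla^{q}A_{pijq}=0$. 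At that point the proof is finished, because the leading term of $\bach$ is the iterated divergence $\nabla^{p}\bigl(\nabla^{q}A_{pijq}\bigr)$: the outer operator is applied to a tensor you have just shown vanishes identically, so the whole expression is zero. (A single divergence is unambiguous here since $\nabla_{p}H^{ip}=0$ for the aligned representative.)

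The term $\bt^{pbq}\nabla_{b}A_{pijq}$ you isolate is exactly the discrepancy between that iterated divergence and the reading ``differentiate twice, then contract with $H^{ap}H^{bq}$''; indeed differentiating $\nabla^{q}A_{pijq}=0$ gives $H^{ap}H^{bq}\nabla_{a}\nabla_{b}A_{pijq}=\bt^{pbq}\nabla_{b}A_{pijq}$, confirming your computation. But the plan you sketch for killing this term will not succeed: the identities available — \eqref{wf6}, \eqref{adiffbianchi}, and $\bt_{[i}\,^{pq}A_{j]pqk}=0$ — control only the antisymmetrized parts and the divergences of $\nabla A$, whereas $\bt^{pbq}\nabla_{b}A_{pijq}$ is the contraction with a \emph{totally symmetric} tensor, which those identities do not reach (every attempt to feed it through the Bianchi identity returns a tautology). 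For the flat exact examples of Section \ref{cubicformsection}, where $A_{ijkl}$ is a nonzero $D$-parallel tensor and $\nabla_{b}A_{pijq}$ is an explicit quadratic expression in the cubic torsion, this contraction is not forced to vanish by the hypotheses of the lemma. So as written your proof has a genuine gap at its final step, and the repair is not the ``delicate bookkeeping'' you defer, but the observation that the operator, read as the composition of two divergences, never produces that term in the first place.
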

\begin{proof}
This follows from the definition of $\bach$, \eqref{adiffbianchi}, and Lemma \ref{faradaycottonlemma}.
\end{proof}

\begin{lemma}\label{bachweyllemma}
If $(\en, [h])$ is a Weyl structure with self-conjugate Weyl tensor $A_{ijkl}$ on an $n$-manifold then the self-conjugate tensor $\bach_{ij}$ defined by $2(3-n)\bach_{ij} \defeq \bach(A)_{(ij)} + \bar{\bach}(A)_{(ij)}$ satisfies $\bach_{ij} = \nabla^{p}A_{pij} + A^{pq}A_{pijq} - \tfrac{1}{4}F^{pq}A_{ijpq}$ and equals the usual Bach tensor.
\end{lemma}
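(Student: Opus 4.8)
The plan is to specialize the general computations of Lemma \ref{bachtransformlemma} to the Weyl case, where several simplifications occur that let the Bach operator reduce to the classical expression. The key feature to exploit is that for a Weyl structure the curvature is self-conjugate, so that $\bar{W}_{ij} = W_{ij}$ and $E_{ijkl} = 0 = E_{ij}$ by \eqref{rfh}. In particular, $\bar{W}_{pijq}$ coincides with $W_{pijq} = A_{pijq}$ (the self-conjugate and anti-self-conjugate decomposition having no anti-self-conjugate part), and the cubic torsion $\bt_{ij}\,^{k}$ vanishes identically since a Weyl structure has $\bt = 0$. First I would write out $\bach(A)_{ij} = \nabla^{p}\nabla^{q}A_{pijq} - (n-3)W^{pq}A_{pijq}$ (taking $\la = 2$ so that $n - 5 + \la = n - 3$) and its conjugate $\bar{\bach}(A)_{ij}$, and observe that because the Weyl structure is self-conjugate, $\bnabla = \nabla$ and $\bar{A}_{ijkl} = A_{ijkl}$, so in fact $\bach(A)_{ij}$ and $\bar{\bach}(A)_{ij}$ are formed from the \emph{same} connection and the \emph{same} tensors. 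The only difference between them is bookkeeping arising from the conjugacy symmetries of the various traces, and the symmetric combination $\tfrac{1}{2(3-n)}(\bach(A)_{(ij)} + \bar{\bach}(A)_{(ij)})$ is self-conjugate by construction.

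Next I would reduce the second-derivative term $\nabla^{p}\nabla^{q}A_{pijq}$ using the differential Bianchi identities of Lemma \ref{diffbianchilemma} specialized to the self-conjugate Weyl case. For a Weyl structure, \eqref{adiffbianchi} gives $\nabla^{p}A_{ijkp} = (3-n)A_{ijk} + \bt_{[i}\,^{pq}A_{j]pqk} = (3-n)A_{ijk}$ (the cubic-torsion term dropping out), where $A_{ijk} = a_{ijk}$ is the self-conjugate Cotton tensor, which for a Weyl structure is the usual conformal Cotton tensor. Applying $\nabla^{p}$ once more and contracting appropriately, the leading term becomes $(3-n)\nabla^{p}A_{pij}$, so that after dividing by $2(3-n)$ the second-order contribution produces the $\nabla^{p}A_{pij}$ term in the claimed formula. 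The quadratic curvature term $-(n-3)W^{pq}A_{pijq}$ contributes $A^{pq}A_{pijq}$ after the division, using $W_{ij} = A_{ij} + E_{ij} + \tfrac{1}{2}F_{ij} = A_{ij} + \tfrac{1}{2}F_{ij}$ from \eqref{wijdefined} together with $E_{ij} = 0$; the symmetric trace-free part $A^{pq}$ survives while the Faraday part contributes the $-\tfrac{1}{4}F^{pq}A_{ijpq}$ term, the precise coefficient emerging from the skew contribution in \eqref{bachskew} and the fact that $A_{ijpq}$ is the self-conjugate Weyl tensor against which $F$ is contracted.

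The main obstacle, and the step requiring the most care, will be tracking the Faraday-form contributions and confirming that the symmetrization of $\bach(A) + \bar{\bach}(A)$ produces exactly the coefficient $-\tfrac{1}{4}$ on the $F^{pq}A_{ijpq}$ term rather than some other multiple. This is delicate because $F_{ij}$ is the genuinely non-self-conjugate piece of $W_{ij}$, and the interchange of barred and unbarred tensors under conjugacy (which sends $t \mapsto 1-t$ and flips the sign of $E$ but fixes $F$ by \eqref{nbscalinv} and \eqref{brtaij}) interacts nontrivially with the skew part identified in \eqref{bachskew}. I would resolve this by computing $\bach(A)_{(ij)}$ directly, isolating its symmetric part via \eqref{bachskew} (noting $E^{ab}\,_{[i}\,^{p}A_{j]pab} = 0$ for a Weyl structure since $E = 0$, so in fact $\bach(A)$ is already symmetric here), and then verifying that the quadratic Faraday term in $W^{pq}A_{pijq}$ yields the stated coefficient. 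Once the symmetry and the explicit formula are established, the equality with the classical Bach tensor follows by comparing with the standard Weyl-structure definition recalled at the start of section \ref{bachsection}, namely $\bach_{ij} = \nabla^{p}W_{pij} + W^{pq}W_{pijq}$, after substituting $W_{pij} = A_{pij}$ and $W_{pijq} = A_{pijq}$ and absorbing the $F$-dependence.
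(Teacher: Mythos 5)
Your proposal is correct and follows essentially the same route as the paper's own (very terse) proof: specialize to the Weyl case where $E_{ijkl} = 0$ and the cubic torsion vanishes, use \eqref{adiffbianchi} to reduce $\nabla^{p}\nabla^{q}A_{pijq}$ to $(3-n)\nabla^{p}A_{pij}$, split $\bar{W}^{pq} = A^{pq} + \tfrac{1}{2}F^{pq}$, and invoke \eqref{bachskew} to see that $\bach(A)_{ij}$ is already symmetric and coincides with $\bar{\bach}(A)_{ij}$. The one cosmetic imprecision is that the coefficient $-\tfrac{1}{4}$ arises from the purely algebraic identity $F^{pq}A_{pijq} = -\tfrac{1}{2}F^{pq}A_{ijpq}$ (a consequence of the first Bianchi and pair-exchange symmetries of $A_{ijkl}$) rather than from \eqref{bachskew} itself, but your stated plan to verify the coefficient by direct computation of the quadratic Faraday term is exactly what is required.
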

\begin{proof}
For a Weyl structure, \eqref{adiffbianchi} shows $\nabla^{p}\nabla^{q}A_{pijq} = (3-n)\nabla^{p}A_{pij}$, so that $\tfrac{1}{3-n}\bach(A)_{ij} = \nabla^{p}A_{pij} + A^{pq}A_{pijq} - \tfrac{1}{4}F^{pq}A_{ijpq}$. By \eqref{bachskew} there holds $\bach(A)_{[ij]} = 0$, and the claim follows.
\end{proof}

Given an AH structure $(\en, [h])$ on a $4$-manifold, let $\brtb{\bach}_{ij}$ be the Bach tensor of the associated Weyl structure $([\lcp], [h])$. By Lemma \ref{bachtransformlemma} and Remark \ref{conjugateremark} if $\brtb{\bach}_{ij}$ is viewed as a tensor associated to $(\en, [h])$, then $\brtb{\bach}_{ij}$ does not change when $(\en, [h])$ is varied within the Codazzi projective structure which it generates. Since $\lcp^{p}\brtb{\bach}_{ip} = 0$, $2\nabla^{p}\brtb{\bach}_{ip} = 2\lcp^{p}\brtb{\bach}_{ip}+ \bt_{i}\,^{pq}\brtb{\bach}_{pq} = \bt_{i}\,^{pq}\brtb{\bach}_{pq} $, which will not generally be zero. As a generalized Bach tensor of $(\en, [h])$ one would like a tensor which is divergence free in dimension $4$, so $\brtb{\bach}_{ij}$ is not the desired generalization; it needs to be modified by adding a term invariant in dimension $4$ and having divergence equal to $-\bt_{i}\,^{pq}\brtb{\bach}_{pq} $. Evidently finding such an expression by brute force computation would be both difficult and unilluminating.

\def\cprime{$'$} \def\cprime{$'$} \def\cprime{$'$} \def\cprime{$'$}
  \def\cprime{$'$} \def\cprime{$'$} \def\cprime{$'$} \def\cprime{$'$}
  \def\cprime{$'$} \def\cprime{$'$} \def\cprime{$'$} \def\cprime{$'$}
  \def\dbar{\leavevmode\hbox to 0pt{\hskip.2ex \accent"16\hss}d}
  \def\cprime{$'$} \def\cprime{$'$} \def\cprime{$'$} \def\cprime{$'$}
  \def\cprime{$'$} \def\cprime{$'$} \def\cprime{$'$} \def\cprime{$'$}
  \def\cprime{$'$} \def\cprime{$'$}
\providecommand{\bysame}{\leavevmode\hbox to3em{\hrulefill}\thinspace}
\providecommand{\MR}{\relax\ifhmode\unskip\space\fi MR }
\providecommand{\MRhref}[2]{%
  \href{http://www.ams.org/mathscinet-getitem?mr=#1}{#2}
}
\providecommand{\href}[2]{#2}

\end{document}